\newtheoremstyle{thmstyle}
  {10 pt} % Space above
  {\topsep} % Space below
  {\itshape} % Body font
  {} % Indent amount
  {\bfseries} % Theorem head font
  {.} % Punctuation after theorem head
  {.5em} % Space after theorem head
  {} % Theorem head spec (can be left empty, meaning `normal')
\newtheoremstyle{defstyle}
  {10 pt} % Space above
  {10 pt} % Space below
  {} % Body font
  {} % Indent amount
  {\bfseries} % Theorem head font
  {.} % Punctuation after theorem head
  {.5em} % Space after theorem head
  {} % Theorem head spec (can be left empty, meaning `normal')  
\newtheoremstyle{factstyle}
  {10 pt} % Space above
  {\topsep} % Space below
  {} % Body font
  {} % Indent amount
  {\itshape \bfseries} % Theorem head font
  {.} % Punctuation after theorem head
  {.5em} % Space after theorem head
  {} % Theorem head spec (can be left empty, meaning `normal')  
 \newtheoremstyle{remark}
  {15 pt} % Space above
  {\topsep} % Space below
  {\slshape} % Body font
  {} % Indent amount
  {\itshape} % Theorem head font
  {.} % Punctuation after theorem head
  {.5em} % Space after theorem head
  {} % Theorem head spec (can be left empty, meaning `normal')  
\theoremstyle{thmstyle} \newtheorem{thm}{Theorem}[section]
\theoremstyle{thmstyle} \newtheorem{lem}[thm]{Lemma}
\theoremstyle{thmstyle} \newtheorem{cor}[thm]{Corollary}
\theoremstyle{thmstyle} \newtheorem{prop}[thm]{Proposition}
\theoremstyle{defstyle}  \newenvironment{defn}
  {\pushQED{\qed}\defnx}
  {\popQED\enddefnx}
\theoremstyle{remark} \newtheorem*{claim}{Claim} \newtheorem{claimno}{Claim}
\theoremstyle{factstyle} \newtheorem{fact}[thm]{Fact} 
\theoremstyle{factstyle} \newtheorem{question}[thm]{Question}
\newtheorem{remark}[thm]{Remark}
\setlist[description]{leftmargin=5.5em,labelindent=\parindent}
\renewcommand{\P}{\mathbb{P}}
\newcommand{\Q}{\mathbb{Q}}
\newcommand{\M}{\mathbb{M}}
\newcommand{\Namba}{\mathbb{N}}
\newcommand{\D}{\mathbb{D}}
\newcommand{\N}{{\overline{N}}}
\renewcommand{\H}{\overline{H}}
\newcommand{\G}{\overline{G}}
\renewcommand{\S}{{\overline{S}}}
\newcommand{\R}{\mathbb{R}}
\newcommand{\B}{\mathbb{B}}
\newcommand{\U}{\mathcal{U}}
\renewcommand{\c}{\mathfrak{c}}
\newcommand{\PFA}{\textup{\ensuremath{\textsf{PFA}}}}
\newcommand{\MA}{\textup{\ensuremath{\textsf{MA}}}}
\newcommand{\MM}{\textup{\ensuremath{\textsf{MM}}}}
\newcommand{\SPFA}{\textup{\ensuremath{\textsf{SPFA}}}}
\newcommand{\ZFC}{\textup{\ensuremath{\textsf{ZFC}}}}
\newcommand{\SCFA}{\textup{\ensuremath{\textsf{SCFA}}}}
\newcommand{\BSCFA}{\textup{\ensuremath{\textsf{BSCFA}}}}
\newcommand{\CH}{\textup{\textsf{CH}}}
\newcommand{\Ord}{\textup{\ensuremath{\text{Ord}}}}
\newcommand{\id}{\textup{\ensuremath{\text{id}}}}
\newcommand{\MP}{\textup{\ensuremath{\textsf{MP}}}}
\newcommand{\BFA}{\textup{\ensuremath{\textsf{BFA}}}}
\DeclareMathOperator{\cof}{cof}
\DeclareMathOperator{\height}{height}
\DeclareMathOperator{\ran}{range}
\DeclareMathOperator{\otp}{otp}
\DeclareMathOperator{\cp}{cp}
\DeclareMathOperator{\dom}{dom}
\DeclareMathOperator{\Succ}{succ}
\DeclareMathOperator{\wfc}{wfc}
\DeclareMathOperator{\BA}{BA}
\DeclareMathOperator{\Add}{\mathcal A\textit{dd}\,}
\DeclareMathOperator{\Coll}{\mathcal C\textit{oll}\,}
\DeclareMathOperator*{\bigdoublevee}{\bigvee\mkern-15mu\bigvee}
\newcommand{\To}{\longrightarrow}
\newcommand{\st}{\; | \;}
\newcommand{\set}[2]{\left\{#1\st #2 \right\}}
\newcommand{\seq}[2]{\langle #1 \st #2 \rangle}
\newcommand{\Ptail}{\P_{\mathrm{tail}}}
\newcommand{\Gtail}{G_{\mathrm{tail}}}
\newcommand{\forces}{\Vdash}
\newcommand{\proves}{\vdash}
\newcommand{\rest}{\mathbin{\upharpoonright}}
\newcommand{\meet}{\wedge}
\newcommand{\Meet}{\bigwedge}
\DeclareMathOperator{\MPsc}{\textup{\textsf{MP}}_{\textit{sc}}}
\DeclareMathOperator{\MPc}{\textup{\textsf{MP}}_{<\omega_1\textup{-closed}}}
\newcommand{\lMPsc}{ \textup{\textsf{MP}}^{H_{\omega_2}}_{\textit{sc}}(H_{\omega_2}) }
\newcommand{\lflMPsc}{ \textup{\textsf{MP}}^{H_{\omega_2}}_{\textit{sc}}(\emptyset) }
\newcommand{\lMPc}{ \textup{\textsf{MP}}^{H_{\omega_2}}_{<\omega_1\textup{-closed}}(\emptyset) }
\newcommand{\bflMPc}{ \textup{\textsf{MP}}^{H_{\omega_2}}_{<\omega_1\textup{-closed}}(H_{\omega_2}) }
\DeclareMathOperator{\RAsc}{\textup{\textsf{RA}}_{\textit{sc}}}
\DeclareMathOperator{\RAc}{\textup{\textsf{RA}}_{<\omega_1\textup{-closed}}}
\DeclareMathOperator{\bfRAsc}{\textup{\textsf{\textbf{RA}}}_{\textit{sc}}}
\newcommand{\SH}{\mathcal{S}\textit{k} \,}
\newcommand{\sk}[3]{\SH^{#1}( {#2} \cup {\ran(#3)} ) }
\newcommand{\Sk}[3]{\SH^{#1}( {#2} \cup {#3} ) }
\newcommand{\TC}[1]{\mathrm{TC}(\{ #1 \})}
\begin{document}

% front matter
\frontmatter

% dissertation guide requires that title page is part of page count, but has no page number
% http://tex.stackexchange.com/questions/66562/how-to-set-page-counter-by-skipping-first-page
\begin{titlepage}

\begin{center}

~\vspace{2in}

\textsc{On Subcomplete Forcing} \\[0.5in]
by \\[0.5in]
\textsc{Kaethe Lynn Bruesselbach Minden} 

\vspace{\fill}
A dissertation submitted to the Graduate Faculty in Mathematics in partial fulfillment of the requirements for the degree of Doctor of Philosophy, The City University of New York \\[0.25in]
2017

\end{center}

\end{titlepage}

\setcounter{page}{2}

% front matter
\phantom{}\vspace{\fill}
\begin{center}
\copyright~2017\\
\textsc{Kaethe Lynn Bruesselbach Minden}\\
All Rights Reserved\\
\end{center}
\begin{center}
This manuscript has been read and accepted by the Graduate Faculty in Mathematics in satisfaction of the dissertation requirement for the degree of Doctor of Philosophy.
\end{center}

\vspace{0.75in}

\begin{tabular}{p{1.75in}p{0.5in}p{3.5in}}
~                                   & & \textbf{Professor Gunter Fuchs}\\
~                                   & & \\
\hrulefill                          & &\hrulefill \\
Date                                & & Chair of Examining Committee\\
~                                   & & \\
~                                   & & \textbf{Professor Ara Basmajian}\\
~                                   & & \\
\hrulefill                          & &\hrulefill \\
Date                                & & Executive Officer\\
\end{tabular}

\vspace{0.75in}

\begin{tabular}{l}
\textbf{Professor Gunter Fuchs} \\
\textbf{Professor Joel David Hamkins} \\
\textbf{Professor Arthur Apter} \\
%\textbf{Professor Z} \\
Supervisory Committee \\
\end{tabular}

\vspace{\fill}
\begin{center}
\textsc{The City University of New York}
\end{center}
\renewcommand{\c}{\mathfrak{c}}
\begin{center}
Abstract \\
\textsc{On Subcomplete Forcing} \\
by \\
\textsc{Kaethe Lynn Bruesselbach Minden} \\[0.25in]
\end{center}

\vspace{0.25in}

\noindent Adviser: Professor Gunter Fuchs

\vspace{0.25in}

\noindent I survey an array of topics in set theory and their interaction with, or in the context of, a novel class of forcing notions: \textit{subcomplete forcing}. Subcomplete forcing notions satisfy some desirable qualities; for example they don't add any new reals to the model, and they admit an iteration theorem. While it is straightforward to show that any forcing notion that is countably closed is also subcomplete, it turns out that other well-known, more subtle forcing notions like Prikry forcing and Namba forcing are also subcomplete. 
%A forcing notion $\P$ is subcomplete if, roughly speaking, any time it is fit into a large enough structure $H_\theta$; ie. $\P \in H_\theta$, and $\N$ is a countable and transitive structure (satisfying other properties to make the definition work) which embeds elementarily into $H_\theta$ via $\sigma: \N \to H_\theta$, then for any generic $\G$ built over $\N$ for its version of the poset $\P$, it is dense below a condition in $\P$ for an embedding $\sigma'$ sufficiently similar to $\sigma$ to lift in a generic extension via $\P$.
%One immediate observation making the definition so compelling is its use of ``small" embeddings, like $\sigma$ above, and the potential to lift such an embedding in a forcing extension. 
%%The ability of an embedding to lift is often asked about large cardinal embeddings (which require extra consistency strength to exist, unlike these small embeddings which exist in $\ZFC$.) 
Subcompleteness was originally defined by Ronald Bj\"orn Jensen around 2009. Jensen's writings make up the vast majority of the literature on the subject. Indeed, the definition in and of itself is daunting. I have attempted to make the subject more approachable to set theorists, while showing various properties of subcomplete forcing that one might desire of a forcing class.
%%Rephrase last sentence

It is well-known that countably closed forcings cannot add branches through $\omega_1$-trees. I look at the interaction between subcomplete forcing and $\omega_1$-trees. It turns out that subcomplete forcing also does not add cofinal branches to $\omega_1$-trees.
%\begin{thm}\label{scbranch} Let $T$ be an $\omega_1$-tree. If $\P$ is subcomplete then the set of branches through the tree in the ground model is equal to the set of branches in the forcing extension by $\P$. \end{thm}
%What is interesting about the proof of this theorem is that it relies on techniques reminiscent of those performed while lifting large cardinal embeddings to forcing extensions, but in the context of the small embeddings which appear in the definition of subcomplete forcing. 
I show that a myriad of other properties of trees of height $\omega_1$ as explored in \cite{Hamkins:qf} are preserved by subcomplete forcing; for example, I show that the unique branch property of Suslin trees is preserved by subcomplete forcing.

Another topic I explored is the \textit{Maximality Principle} (\textsf{MP}).
%which was originally defined by Stavi and V\"an\"a\"anen for the class of $ccc$ forcings in \cite{Stavi:2001qv}. 
Following in the footsteps of Hamkins \cite{Hamkins:2003jk}, Leibman \cite{Leibman:MP}, and Fuchs \cite{Fuchs:2008rt},\cite{Fuchs:2008ve}, I examine the subcomplete maximality principle. 
%The maximality principle (for subcomplete forcing) is the assertion that any sentence which can be forced (by a subcomplete forcing) in such a way that after any further (subcomplete) forcing, the sentence remains true, is already true. This principle has connections to modal logic in origin. 
In order to elucidate the ways in which subcomplete forcing generalizes the notion of countably closed forcing, I compare the countably closed maximality principle ($\MPc$) to the subcomplete maximality principle ($\MPsc$). Again, since countably closed forcing is subcomplete, this is a natural question to ask. 
I was able to show that many of the results about $\MPc$ also hold for $\MPsc$; for example, the boldface appropriate notion of $\MPsc$ is equiconsistent with a fully reflecting cardinal. 
%The following result also holds for countably closed forcing, and I showed that it holds for subcomplete forcing as well. 
%Kurepa trees have lots of branches, but it's possible to force to collapse cardinals in order to make the number of branches be small, and after that, these branches can't be killed by subcomplete forcing by \ref{scbranch}.
%\begin{prop}\label{prop:Kurepa} Assume $\MPsc(S)$. Then there are no Kurepa trees in $S$. So if $S=H_{\omega_2}$ then there are no Kurepa trees.\end{prop}
However, it is not the case that the subcomplete and countably closed maximality principles directly imply one another. 

I also explore the \textit{Resurrection Axiom} (\textsf{RA}). 
Hamkins and Johnstone \cite{Hamkins:2013qv} defined the resurrection axiom only relative to $H_\c$, and focus mainly on the resurrection axiom for proper forcing. They also show the equiconsistency of various resurrection axioms with an uplifting cardinal. I argue that the subcomplete resurrection axiom should naturally be considered relative to $H_{\omega_2}$, and showed that the subcomplete resurrection axiom is equiconsistent with an uplifting cardinal. 

A question reasonable to ask about any class of forcings is whether or not the resurrection axiom and the maximality principle can consistently both hold for that class. I originally had this question about the full principles, not restricted to any class, but in my thesis it was appropriate to look at the question for subcomplete forcing. I answer the question positively for subcomplete forcing using a strongly uplifting fully reflecting cardinal, which is a combination of the large cardinals needed to force the principles separately. I show that the boldface versions of $\MPsc + \RAsc$ both holding is equiconsistent with the existence of a strongly uplifting fully reflecting cardinal.

While Jensen \cite{Jensen:2012fr} shows that Prikry forcing is subcomplete, I long suspected that many variants of Prikry forcing that have a kind of genericity criterion are also subcomplete. After much work I managed to show that a variant of Prikry forcing known as \textit{Diagonal Prikry Forcing} is subcomplete, giving another example of subcomplete forcing to add to the list.

\chapter*{Acknowledgments}

Thank you Gunter, for meeting with me over the years; kindly and patiently explaining everything from the beginning, always in detail, and for dedicating all of the time and effort into editing my writing. Thank you Joel and Arthur for agreeing to serve on my committee, and for your helpful advice and suggestions. 

Thank you Miha for letting me defend first, for your friendship, and for our wonderfully long, sometimes productive, conversations about set theory. Thank you Kameryn for actually reading the proof of Theorem 5.0.3 and for pointing out a typo.

I would also like to thank my mom, my dad, my sister Janet, my cat Po Charles, Keith, Aradhana, and other friends and family for their support.

% tables of contents
\tableofcontents
%\listoftables
%\listoffigures

% content
\mainmatter

% chapter one
\chapter*{Introduction}
\addcontentsline{toc}{chapter}{Introduction}
Subcomplete forcing is a class of forcing notions defined by Ronald Bj\"orn Jensen \cite{Jensen:2012fr}. How subcompleteness fits in to the picture of commonly used classes of forcing notions is illustrated in the diagram below (where $\sigma$-cl. stands for $\sigma$-closed, or countably closed).

\begin{figure}[h!]\begin{center}
    \begin{tikzpicture}
      \draw (0:0cm) circle (3.5cm);
      \node at (90:1.8cm) {\footnotesize Preserve stationary subsets of $\omega_1$};
      \draw (90:0cm) circle (.5cm) node [text=black] {\footnotesize $\sigma$-cl.};
      \draw (180:1cm) circle (1.55cm) node [text=black,below left] {\footnotesize Proper};
       \draw (160:1.5cm) circle (.5cm) node [text=black] {\footnotesize $ccc$};
      \draw (330:1cm) circle (1.7cm);
      \node at (340:1.6cm){\footnotesize Subcomplete};
    \end{tikzpicture}\end{center}
\end{figure}

While all countably closed forcing is subcomplete, there are subcomplete forcing notions that are not proper - for example, Namba forcing (under $\CH$) and Prikry forcing. The aim here is to go deeper into the notion of subcompleteness. 

In Chapter \ref{chap:preliminaries} the terminology and background are given. In many ways this chapter serves as a kind of review or consolidation of parts of Jensen's notes, which I attempt to follow closely. I do give some results that may not be explicitly proved, but perhaps stated, in Jensen's notes; I attempt to only fill in the important gaps I will be relying upon in later sections. In Section \ref{sec:delta} the quantity $\delta(\P)$ is defined for posets $\P$, while relating the notion to chain conditions and showing how the quantity acts under dense homomorphisms. In Section \ref{sec:fullness} the stage is set for the sorts of models and elementary embeddings worked with in the definition of subcompleteness. In particular, I define a strengthening of transitivity for countable models that Jensen refers to as \textit{fullness}. I give some justification for working with these models, and give basic facts that will be refered to when working with the definition of subcompleteness. In Section \ref{sec:BarwiseTheory} the relevant background in Barwise Theory is given, which is primarily used in this context to show a certain forcing notion is subcomplete. In Section \ref{sec:Liftups} the liftup, an ultrapower-like construction, is defined. %I summarize the relevant results due to Jensen which will be refered to \emph{Interpolation} and \emph{Transfer}.

Finally subcompleteness is defined in Chapter \ref{chap:SCandrelatives}, but only in Section \ref{sec:SubcompleteForcing} after first introducing the weaker notion of subproperness in Section \ref{sec:SubproperForcing}. I show some of the defining properties of subcompleteness after defining the notion: I give Jensen's proof that subcomplete forcing doesn't add reals, that subcompleteness of $\P$ is absolute above the verification to subcompleness, that subcomplete forcing is subproper.
%, and that subcompleteness is closed under forcing equivalence. 
In \ref{subsec:SCForcingandCCForcing} it is shown that countably closed forcing is subcomplete by showing that the class of countably closed forcing notions is the same as that of complete forcing. In \ref{subsec:LevelsofSubcompleteness} the notion of subcompleteness above $\mu$ due to Jensen is defined and I show that if a forcing notion $\P$ is subcomplete above $\mu$ then it does not add new countable subsets of $\mu$.

The next two chapters spend time comparing countably closed forcing to subcomplete forcing. Chapter \ref{chap:PropertiesofSCForcing} focuses on subcomplete forcing's behavior with respect to $\omega_1$-trees (and some slightly larger trees) in Section \ref{sec:SCForcingAndTrees} and the iteration theory for subcomplete forcing in Section \ref{sec:IteratingSCForcing}. Furthermore in the former section we also give Jensen's argument that Suslin trees are not subcomplete, and we show that (nontrivial) $ccc$ forcing notions are not subcomplete.
Chapter \ref{chap:AxiomsaboutSC} looks at axioms about subcomplete forcing; focusing on the subcomplete maximality principle in Section \ref{sec:MP} and the subcomplete resurrection axiom in Section \ref{sec:RA}. The subcomplete maximality principle and the countably closed maximality principle are compared in \ref{subsec:SeparatingSCandCCMP}. More on the modal status of subcompleteness is explored in \ref{subsec:ModalLogicSC}. The local form of the maximality principle is also introduced in \ref{subsec:localMP} and its consistency strength analyzed in \ref{subsec:ConlocalMP}. In Section \ref{sec:RA+MP} it is shown that the maximality principle and the resurrection axiom may consistently be combined. 

Finally, Chapter \ref{chap:GenDiagonalPrikryForcing} is devoted to the proof that generalized diagonal Prikry forcing is subcomplete.

\chapter{Preliminaries}
\label{chap:preliminaries}
Before defining subcomplete forcing and showing consequences of it, some preliminary information is necessary. This can all be found in Jensen's lecture notes from the 2012 AII Summer School in Singapore. For the published version refer to \cite{Jensen:2012fr}. I will also refer to unpublished handwritten notes from Jensen's website.

First, a brief outline of some of the notation used in what follows:
%Define what I mean by poset/partial order/forcing notion (like they need to be separative)?
\begin{itemize}
	\item Forcing notions $\P = \langle \P, \leq \rangle$ are taken to be partial orders that are separative and contain a ``top" element weaker than all elements of $\P$, denoted $\mathbbm 1$.
	
	\item We will be working with transitive models of $\ZFC^-$, the axioms of Zermelo-Fraenkel Set Theory without the axiom of \textsf{Powerset}, and with the axiom of \textsf{Collection} instead of \textsf{Replacement}. Usually we will name these models $N$, $M$, or $\N$. %Give some motivation for this.

	\item I will follow Jensen to use the notation $\sigma: N \prec M$ for when $\sigma$ is an elementary embedding, but I will add some notation not used by Jensen, letting $N \preccurlyeq M$ denote and emphasize that $N$ is an elementary substructure of $M$. 

	\item Let $N$ be a transitive $\ZFC^-$ model. I write $\height(N)$ to mean $\Ord \cap N$. Let $\alpha$ be an ordinal. Then I write $\alpha^N$ for $\alpha \cap N$.
	
	\item Let $A$ be a set of ordinals. Then $\lim(A)$ is the set of limit points in $A$.  

	\item Let $\theta$ be a cardinal. $H_\theta$ refers to the collection of sets hereditarily of size less than $\theta$. Relativizing the concept to a particular model of set theory, $M$, I write $H_\theta^M$ to mean the collection of sets in $M$ that are hereditarily of size less than $\theta$ in $M$. In this case, if $\theta$ is determined by some computation, I mean for that computation to take place in $M$.
	
	\item Let $\tau$ be a cardinal. With abuse of notation we write $L_\tau[A]$ to refer to the structure $\langle L_\tau[A]; \in, A \cap L_\tau[A] \rangle$. 
	
	\item If $\Gamma$ is a class of forcings that preserve stationary subsets of $\omega_1$, then the $\Gamma$-forcing axiom ($\textsf{FA}_\Gamma$) posits that for all $\P \in \Gamma$, if $\mathcal D$ is a collection of $\omega_1$-many dense subsets of $\P$, then there is a $\mathcal D$-generic filter (a filter that intersects every element of $\mathcal D$ nontrivially). We write $\MA$ for Martin's Axiom, $\PFA$ for the Proper Forcing Axiom, and $\MM$ for Martin's Maximum, the forcing axiom for classes of forcing notions that preserve stationary subsets of $\omega_1$.
	
	\item We often make use of the following abbreviation: if a map $\sigma$ satisfies $\sigma(\overline a)=a$ and $\sigma(\overline b)=b$, we write $\sigma(\overline a,\overline b)=a,b$.
\end{itemize}	

\section{The Weight of a Forcing Notion}
\label{sec:delta}

\begin{defn} For a forcing notion $\P$, we write $\delta(\P)$ to denote the least cardinality of a dense subset in $\P$. This is sometimes referred to as the \textit{\textbf{weight}} of a poset. \end{defn}

Although Jensen defines $\delta(\P)$ for Boolean algebras, it's also relevant for posets. As Jensen states, for forcing notions $\P$, the weight can be replaced with the cardinality of $\P$, or even $\P$, for the purpose of defining subcompleteness. However, $\delta(\P)$ and $|\P|$ are not necessarily the same, since there could be a large set of points in the poset that all have a common strengthening. 

More can be said about the weight of forcing notions, and I give some basic results below.

\begin{lem} Let $\P$ be a poset. Then maximal antichains in $\P$ have size at most $\delta(\P)$. \end{lem}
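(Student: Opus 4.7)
The plan is to construct an injection from any maximal antichain $A$ into a dense subset $D \subseteq \P$ of minimal size $\delta(\P)$, whence $|A| \leq |D| = \delta(\P)$.

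First I would fix a dense $D \subseteq \P$ with $|D| = \delta(\P)$, which exists by definition of the weight. Given a maximal antichain $A \subseteq \P$, I would use the density of $D$ to choose, for each $p \in A$, some $d_p \in D$ with $d_p \leq p$ (invoking the Axiom of Choice to select such a $d_p$ once and for all). This defines a map $f : A \to D$ by $f(p) = d_p$.

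The key step is verifying that $f$ is injective, and this is where the antichain hypothesis enters. Suppose $p, q \in A$ with $f(p) = f(q) = d$. Then $d \leq p$ and $d \leq q$, so $d$ is a common extension of $p$ and $q$, which means $p$ and $q$ are compatible in $\P$. Since $A$ is an antichain, this forces $p = q$. Hence $f$ is an injection and $|A| \leq |D| = \delta(\P)$.

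There is no real obstacle here — the argument is essentially a one-line observation that density lets us refine any antichain element into $D$, while incompatibility of distinct antichain elements prevents collisions. The only thing worth noting is that the argument does not require $A$ to be maximal; any antichain (maximal or not) has size at most $\delta(\P)$, so the lemma actually holds in slightly greater generality than stated.
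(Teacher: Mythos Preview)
Your proof is correct and is essentially identical to the paper's: both fix a dense $D$ of size $\delta(\P)$, define $f:A\to D$ by choosing $f(a)\leq a$, and use the antichain property to conclude $f$ is injective. Your remark that maximality of $A$ is not actually needed is also correct.
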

\begin{proof}
Supposing $\P$ has a maximal antichain $A$ of size $\kappa$, any dense set $D$ (including one of smallest size) in $\P$ needs to have the property that for each element $a \in A$, there is an element $d \in D$ below $a$, satisfying $d \leq a$. Let's assume that $D$ has size $\delta(\P)$; then $D$ must have size at least $\kappa$ since $A$ is a maximal antichain; indeed we have a function $f:A \to D$, where $f(a)$ is an element of $D$ below $a$. It must be that $f$ is an injection, since otherwise there would be an element $d \in D$ below two distinct elements of $A$, contradicting the fact that $A$ is an antichain. Thus $\kappa =|A| \leq |D| = \delta(\P)$ as desired.
\end{proof}

Below we define a homomorphism between partial orders, following Schindler \cite[Definition 6.47]{schindler2014set}.

\begin{defn}  Let $\langle \P, \leq_\P \rangle$ and $\langle \Q, \leq_\Q \rangle$ be posets. A map $\pi: \P \to \Q$ is a \textbf{\emph{homomorphism}} so long as it preserves order and incompatibility: \begin{enumerate}
	\item For all $p,q \in \P$ we have that $p \leq_\P q \implies \pi(p) \leq_\Q \pi(q)$.
	\item For all $p,q \in \P$ we have that $p \perp q \implies \pi(p) \perp \pi(q)$.
\end{enumerate}

A homomorphism $\pi:\P \to \Q$ is said to be \textbf{\emph{dense}} so long as for every $q \in \Q$ there is some $p \in \P$ such that $\pi(p) \leq q$, i.e., $\ran(\pi)$ is dense in $\Q$. \end{defn}

We give some immediate remarks on our above definition. First of all, we claim property \textbf{1} automatically entails that for all $p,q \in \P$, if $\pi(p) \perp \pi(q)$ then $p \perp q$, since clearly if $p$ is compatible with $q$ we have that $\pi(p)$ is compatible with $\pi(q)$, which is the contrapositive of the claim. Furthermore, if $\P$ is separative (which indeed we assume of all of our forcing notions) then homomorphisms are automatically \textit{strong}, in that the implications of items \textbf{1} and \textbf{2} may both be reversed and we have the following:
\begin{enumerate}
	\item For all $p,q \in \P$ we have that $p \leq_\P q \iff \pi(p) \leq_\Q \pi(q)$.
	\item For all $p,q \in \P$ we have that $p \perp q \iff \pi(p) \perp \pi(q)$.
\end{enumerate}
Item \textbf{1} holds since if otherwise, meaning if $\pi(p) \leq \pi(q)$ for $p,q \in \P$ and $p$ is not stronger than $q$, then there is a $p^*\leq p$ that is incompatible with $q$ by separativity, which means $\pi(p^*) \leq p$ and $\pi(p^*) \perp \pi(q)$, a contradiction. Thus, if $\P$ is a forcing notion, by property \textbf{1}, homomorphisms from $\P$ to $\Q$ are automatically embeddings.

\begin{lem} \label{lem:deltasize} Let $\P$ and $\Q$ be forcing notions. If $\pi: \P \to \Q$ is a dense homomorphism (which means $\pi$ is a dense embedding, as explained above) then $\delta(\P)=\delta(\Q)$. \end{lem}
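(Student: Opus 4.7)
The plan is to show the two inequalities $\delta(\Q)\le\delta(\P)$ and $\delta(\P)\le\delta(\Q)$ separately, by transporting a dense subset of minimal size across $\pi$ in each direction.

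For the inequality $\delta(\Q)\le\delta(\P)$, I would fix a dense set $D\subseteq\P$ with $|D|=\delta(\P)$ and argue that $\pi[D]$ is dense in $\Q$. Indeed, given any $q\in\Q$, denseness of the homomorphism supplies some $p\in\P$ with $\pi(p)\le q$, and denseness of $D$ in $\P$ supplies some $d\in D$ with $d\le p$; order-preservation of $\pi$ then gives $\pi(d)\le\pi(p)\le q$. Since $|\pi[D]|\le|D|=\delta(\P)$, this yields the desired bound.

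For the reverse inequality $\delta(\P)\le\delta(\Q)$, I would start from a dense set $E\subseteq\Q$ with $|E|=\delta(\Q)$, and for each $e\in E$ use density of $\pi$ to pick some $p_e\in\P$ with $\pi(p_e)\le e$. Let $D'=\{p_e\st e\in E\}$, so $|D'|\le|E|=\delta(\Q)$. To see $D'$ is dense in $\P$, fix $p\in\P$ and apply density of $E$ to $\pi(p)$, obtaining $e\in E$ with $e\le\pi(p)$. Then $\pi(p_e)\le e\le\pi(p)$, and here is where the separativity remark preceding the lemma is used: because $\P$ is separative, the homomorphism $\pi$ is strong, so $\pi(p_e)\le\pi(p)$ implies $p_e\le p$. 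Hence $p_e\in D'$ witnesses density below $p$.

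The only real subtlety, and the one step I would flag as the main ``obstacle'' worth stating carefully, is the second direction: order-preservation alone does not allow one to pull inequalities back through $\pi$, so one must explicitly invoke the strong-embedding consequence of separativity that was derived just before the lemma. With that in hand, the two inequalities combine to give $\delta(\P)=\delta(\Q)$.
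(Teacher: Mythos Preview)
Your proof is correct and follows essentially the same route as the paper: both directions are argued by transporting a minimal dense set across $\pi$, and in the reverse inequality the paper, like you, invokes the strong-embedding property (from separativity) to pull back $\pi(p_e)\le\pi(p)$ to $p_e\le p$.
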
 
\begin{proof}
Suppose that $D \subseteq \P$ is dense and $|D|=\delta(\P)$. Then $\pi ``\P$ is dense in $\Q$ and $\pi``D$ is dense in $\pi``\P$, since $\pi$ is a homomorphism and preserves the partial order. This means that $\pi``D$ is dense in $\Q$. Any other dense set in $\Q$ has to have size at least $\delta(\Q)$, and thus $\delta(\Q) \leq |\pi``D| \leq |D| = \delta(\P)$ as desired. 

We now use the remarks showing that our homomorphisms are strong to find a dense subset of $\P$ that has size at most $\delta(\Q)$. Let $\Delta \subseteq \Q$ be dense satisfying $|\Delta|=\delta(\Q)$. Since $\pi `` \P$ is dense in $\Q$, for each $q \in \Delta$ there is a $p_q \in \P$ such that $\pi(p_q) \leq q$. Let $D^*$ be the set of such $p_q$'s, where there is only one $p_q$ chosen for each $q \in \Delta$. To show that $D^*$ is dense, let $p \in \P$. Then as $\Delta$ is dense, there is $q \in \Delta$ satisfying $q \leq \pi(p)$. Thus there is $\pi(p_q) \leq q \leq \pi(p)$ where $p_q \in D^*$, so we have $p_q \leq p$ showing that $D^*$ is dense. This means that $\delta(\P) \leq \delta(\Q)$ as desired.
\end{proof} 

If $\P$ is a dense subset of $\Q$, then, of course, $\delta(\P) = \delta(\Q)$.

\section{Fullness}
\label{sec:fullness}
In the definition of subcompleteness, in lieu of working directly with $H_\theta$ and its well-order for ``large enough" cardinals $\theta$ as our standard setup (as is often done for proper forcing, for example), I will follow Jensen and work with models $N$ of the form: 
	$$H_\theta \subseteq N = L_\tau[A] \models \ZFC^-,$$
where $\tau>\theta$ is a cardinal that is not necessarily regular. Such $H_\theta$ will need to be large enough so that $N$ has the correct $\omega_1$ and $H_{\omega_1}$. One justification for working with these models is that such $N$ will naturally contain a well order of $H_\theta$, along with its Skolem functions and other useful bits of information we would like to have at our disposal. Additionally a benefit of working with models of the form $L_\tau[A]$ is that $L_\tau[A]$ is easily definable in $L_\tau[A][G]$, if $G$ is generic, using $A$.
	
In the standard fashion we will look at countable elementary substructures $X$ of the $N$ as above; $X \preccurlyeq N$. We then take the countable transitive collapse of such an $X$, and write $\N \cong X$. We will refer to these embeddings by writing 	$$\sigma: \N \cong X \preccurlyeq N.$$
Often we will write $$\sigma: \N \prec N$$ to suppress mention of $X$, the range of $\sigma$.
	
In fact, for our purposes it will not be quite enough for such an $\N$ to be transitive, we need a bit more, exactly given by the property of \textit{fullness}. Fullness of a model ensures that it is not pointwise definable, so that there can be many elementary maps between the smaller structure and the larger structure $N$ in our setup. Before defining fullness exactly, let us give some more of the concepts and definitions we will be working with.

For the embeddings above that we will be working with, it is not hard to see what the critical point is. Given $\sigma: \N \prec N$ where $\N$ is countable and transitive, $\cp(\sigma)$ is exactly $\omega_1^{\N} = \sigma^{-1}(\omega_1^N)$, since $\N$ is countable.

\begin{fact} \label{fact:CPofourEmbeddings} Let $\N$, $N$ be transitive $\ZFC^-$ models, where $\N$ is countable and $H_{\omega_1} \subseteq N$, with $\sigma: \N \prec N$. Then $\cp(\sigma)=\omega_1^{\N}$ and $\sigma \rest (H_{\omega_1})^{\N} = \id$. \end{fact}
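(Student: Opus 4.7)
The plan is to prove both claims by first establishing the stronger identity $\sigma \rest H_{\omega_1}^{\N} = \id$. This immediately yields the second assertion of the fact; and since every ordinal below $\omega_1^{\N}$ is a countable ordinal of $\N$ and hence lies in $H_{\omega_1}^{\N}$, the critical point of $\sigma$ will be at least $\omega_1^{\N}$. It then suffices to check that $\sigma$ actually moves $\omega_1^{\N}$, which will fall out of elementarity together with the fact that $\N$ is countable.

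For the strong claim I would proceed by $\in$-induction on $x \in H_{\omega_1}^{\N}$. As a warm-up, $\sigma \rest \omega = \id$: by elementarity $\sigma(\emptyset) = \emptyset$, and then $\sigma(n) = n$ propagates through the recursion $n+1 = n \cup \{n\}$; and $\sigma(\omega) = \omega$ since $\omega$ is characterized as the least limit ordinal and both $\N, N$ are transitive. For the induction step, suppose $x \in H_{\omega_1}^{\N}$ and $\sigma(y) = y$ for every $y \in x$ (a legitimate appeal to the inductive hypothesis, since members of hereditarily countable sets are hereditarily countable). Because $x$ is countable in $\N$, fix a surjection $f : \omega \to x$ with $f \in \N$. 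By elementarity $\sigma(f) : \omega \to \sigma(x)$ is a surjection in $N$, and for each $n < \omega$ one computes $\sigma(f)(n) = \sigma(f(n)) = f(n)$, the first equality by elementarity (using $\sigma(n) = n$) and the second by the inductive hypothesis. Therefore $\ran(\sigma(f)) = \ran(f)$, so $\sigma(x) = x$.

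Finally, to pin down the critical point, apply elementarity to the formula ``$\omega_1^{\N}$ is the least uncountable ordinal'' to conclude $\sigma(\omega_1^{\N}) = \omega_1^N$. Since $H_{\omega_1} \subseteq N$ and $N$ is transitive, a brief two-sided argument (any bijection in $V$ from $\omega$ onto a countable ordinal is hereditarily countable, hence in $N$; conversely $N \subseteq V$) yields $\omega_1^N = \omega_1^V$. Since $\N$ is countable in $V$, we have $\omega_1^{\N} < \omega_1^V = \sigma(\omega_1^{\N})$, and combined with the identity on all smaller ordinals this gives $\cp(\sigma) = \omega_1^{\N}$.

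The only real subtlety is the bookkeeping inside the induction step, namely verifying that the inductive hypothesis legitimately applies to each $f(n) \in x$; once one notes that $H_{\omega_1}^{\N}$ is closed under $\in$, the proof is essentially an unwinding of the definitions, and I do not foresee any genuine obstacle.
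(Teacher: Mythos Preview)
Your proof is correct. It differs from the paper's in two respects worth noting. First, you reverse the order of the two conclusions: you establish $\sigma\rest H_{\omega_1}^{\N}=\id$ directly by $\in$-induction (using surjections $f:\omega\to x$ in $\N$), and only then read off $\cp(\sigma)=\omega_1^{\N}$. The paper instead first argues that $\cp(\sigma)$ must be a cardinal of $\N$ above $\omega$, hence equals $\omega_1^{\N}$ since that ordinal is visibly moved, and \emph{then} derives the identity on $H_{\omega_1}^{\N}$ by coding each $x$ there as a countable ordinal (via the order type of a well-order on $\omega$ coding $\TC{x}$), which is fixed because it lies below the critical point. Second, your argument is slightly more self-contained: the paper silently invokes the standard fact that the critical point of an elementary embedding between transitive models is a cardinal of the domain, whereas your $\in$-induction bypasses that lemma entirely. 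Both routes are short and standard; yours is marginally more elementary, while the paper's coding argument generalizes more readily once one already knows the critical point.
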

\begin{proof}
Let $\alpha = \cp(\sigma)$. Then $\alpha$ is a cardinal in $\N$, and $\alpha > \omega$ as both $\N$ and $N$ are models of $\ZFC^-$. 

It must be that $\alpha = \omega_1^\N$, since $\omega_1^\N$ is a countable ordinal in $V$ and thus in $N$, but 
	$\sigma(\omega_1^\N)=\omega_1^N>\omega_1^{\N}.$
%Moreover, this does not happen to any $\beta < \alpha$ in $\N$, and $\sigma \rest \alpha = \id$. 
Every $x \in H_\alpha^\N=(H_{\omega_1})^\N$ may be coded as a subset of $\omega$ and thus as an ordinal less than $\omega_1$ by $\gamma=\TC{x}$, and $\sigma(\gamma)=\gamma$, and the desired result follows. 
\end{proof}

Let $N = L_\tau[A]$ for some cardinal $\tau$ and set $A$, be a transitive $\ZFC^-$ model, let $X$ be a set, and let $\delta$ be a cardinal. Our notation for the \textbf{\emph{Skolem hull}}, in $N$, closing under $\delta \cup X$, is the following: 
	$$\Sk{N}{\delta}{X} = \text{the smallest } Y \preccurlyeq N \text{ satisfying } X \cup \delta \subseteq Y.$$
	
We gather two immediate, basic results that we will refer to later below.

\begin{lem} \label{lem:subsethull}
Let $N=L_\tau[A]$ be a transitive $\ZFC^-$ model, $\delta$ and $\gamma$ be cardinals, and $X$ a set. If $\delta \leq \gamma$ then $\Sk{N}{\delta}{X} \subseteq \Sk{N}{\gamma}{X}$. 
\end{lem}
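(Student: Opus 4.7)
The plan is to unwind the definition of the Skolem hull and exploit the minimality clause. First I would observe that since $\delta$ and $\gamma$ are ordinals with $\delta \leq \gamma$, we have $\delta \subseteq \gamma$, and therefore $X \cup \delta \subseteq X \cup \gamma$.

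Next, by the definition given just before the lemma, $\Sk{N}{\gamma}{X}$ is an elementary substructure of $N$ containing $X \cup \gamma$, hence in particular it contains the smaller set $X \cup \delta$. Since $\Sk{N}{\delta}{X}$ is the \emph{smallest} such elementary substructure of $N$ containing $X \cup \delta$, we immediately conclude $\Sk{N}{\delta}{X} \subseteq \Sk{N}{\gamma}{X}$.

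There is essentially no obstacle here; the only thing worth flagging is a sanity check that the notion of ``smallest $Y \preccurlyeq N$ containing a given set'' is well-defined, which is the standard fact that an intersection of elementary substructures containing a common set is again an elementary substructure (or, equivalently via the Tarski--Vaught test, that $N = L_\tau[A]$ carries definable Skolem functions, so one may generate the hull by iteratively closing $X \cup \delta$ under these functions). Either viewpoint makes the monotonicity in the ordinal parameter transparent.
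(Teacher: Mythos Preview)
Your argument is correct and essentially the same as the paper's, just phrased through the minimality clause rather than through definability: the paper observes that any $t \in \Sk{N}{\delta}{X}$ is $N$-definable from some $\xi < \delta \leq \gamma$ and parameters in $X$, hence lies in $\Sk{N}{\gamma}{X}$; you instead note that $\Sk{N}{\gamma}{X}$ is an elementary substructure containing $X \cup \delta$ and invoke minimality. These are two faces of the same triviality.

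One small caution about your parenthetical: the assertion that an arbitrary intersection of elementary substructures is again an elementary substructure is \emph{not} a general fact. It holds here precisely because $N = L_\tau[A]$ has definable Skolem functions (your second justification), which forces every elementary substructure to be closed under them and hence makes the intersection closed as well. So your ``or, equivalently'' is really ``and this is why the first claim is true in this case''. This does not affect the main argument, which is sound.
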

\begin{proof}
This is trivial; if $t \in \Sk{N}{\delta}{X}$ then $t$ is $N$-definable from some $\xi < \delta \leq \gamma$ and $\vec x \in X$ so $t \in \Sk{N}{\gamma}{X}$ as well.
\end{proof}

\begin{lem} \label{lem:hullequality} Let $N$ be a transitive $\ZFC^-$ model, $\delta$ a cardinal, and $X$, $Y$ sets. If $\Sk{N}{\delta}{X} = \Sk{N}{\delta}{Y}$ then $\Sk{N}{\gamma}{X} = \Sk{N}{\gamma}{Y}$ for all $\gamma \geq \delta$. \end{lem}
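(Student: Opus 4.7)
The plan is to exploit the minimality built into the definition of the Skolem hull, together with Lemma \ref{lem:subsethull}. By symmetry (swapping the roles of $X$ and $Y$), it suffices to establish the inclusion $\Sk{N}{\gamma}{X} \subseteq \Sk{N}{\gamma}{Y}$ for any $\gamma \geq \delta$, and then the reverse inclusion follows by the same argument applied with $X$ and $Y$ interchanged.

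To prove the one inclusion, I would first observe that $X \subseteq \Sk{N}{\delta}{X}$ trivially, so by the hypothesis $\Sk{N}{\delta}{X} = \Sk{N}{\delta}{Y}$ we get $X \subseteq \Sk{N}{\delta}{Y}$. Now apply Lemma \ref{lem:subsethull} with the inequality $\delta \leq \gamma$ to conclude $\Sk{N}{\delta}{Y} \subseteq \Sk{N}{\gamma}{Y}$, so that $X \subseteq \Sk{N}{\gamma}{Y}$. Clearly also $\gamma \subseteq \Sk{N}{\gamma}{Y}$, hence $X \cup \gamma \subseteq \Sk{N}{\gamma}{Y}$. Since $\Sk{N}{\gamma}{Y}$ is an elementary substructure of $N$ containing $X \cup \gamma$, the minimality clause in the definition of $\Sk{N}{\gamma}{X}$ (``the smallest $Z \preccurlyeq N$ satisfying $X \cup \gamma \subseteq Z$'') yields $\Sk{N}{\gamma}{X} \subseteq \Sk{N}{\gamma}{Y}$, as desired.

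There is no real obstacle here: the whole argument is a routine application of minimality plus the monotonicity result of Lemma \ref{lem:subsethull}. The only point worth flagging is that one should not try to argue term-by-term (writing each element of $\Sk{N}{\gamma}{X}$ as $\tau^N(\vec{x},\vec{\xi})$ with $\vec{x}\in X$ and $\vec{\xi}<\gamma$, and then rewriting each $x_i$ via a Skolem term over $Y\cup\delta$): the minimality formulation makes this unnecessary and avoids any bookkeeping about Skolem terms.
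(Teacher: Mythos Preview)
Your proof is correct and rests on the same key observation as the paper's, namely that $X \subseteq \Sk{N}{\delta}{X} = \Sk{N}{\delta}{Y} \subseteq \Sk{N}{\gamma}{Y}$. The mild irony is that the paper's proof does exactly what you advise against: it argues term-by-term, taking $t \in \Sk{N}{\gamma}{X}$, writing $t$ as the unique element with $N \models \varphi(t,\xi,x)$ for some $\xi<\gamma$ and $x \in X$, and then observing that $x \in \Sk{N}{\gamma}{Y}$ forces $t \in \Sk{N}{\gamma}{Y}$. Your version appeals directly to the minimality clause in the definition of the hull, which is arguably cleaner since that is how $\Sk{N}{\gamma}{X}$ is actually defined in the paper; the paper's version implicitly uses the equivalent definability description. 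Substantively the two arguments are the same, and neither requires any real bookkeeping.
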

\begin{proof}
	Let $t \in \Sk{N}{\gamma}{X}$. Then there is some formula $\varphi$ where $t$ unique such that $N \models \varphi(t, \xi, x)$ for $\xi < \gamma$, $x \in X$. But since then $x \in X \subseteq \Sk{N}{\delta}{X}= \Sk{N}{\delta}{Y} \subseteq \Sk{N}{\gamma}{Y}$, it must be the case that $t \in \Sk{N}{\gamma}{Y}$ as well. Likewise for the reverse inclusion.
\end{proof}

Before defining fullness, we need one more definition.

\begin{defn} We say that a transitive model $N$ is \emph{\textbf{regular}} in a transitive model $M$ so long as for all functions $f: x \To N$, where $x$ is an element of $N$ and $f \in M$, we have that $f``x \in N$. \end{defn}
To elucidate the definition further, we immediately have the following lemma:

\begin{lem} \label{lem:regularityequiv}
Let $N, M \models \ZFC^-$ be transitive. $N$ is regular in $M$ iff $N = H_\gamma^M$, where $\gamma = \height(N)$ is a regular cardinal in $M$. %tweak hypotheses? Like what is $M$ exactly? Solid?
\end{lem}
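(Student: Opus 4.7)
The plan is to prove the biconditional by handling each direction separately, with the forward implication requiring more work.

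For the forward direction, assume $N$ is regular in $M$ and set $\gamma = \height(N)$. My first step is to show that $\gamma$ is a regular cardinal in $M$. If not, then in $M$ there exists some $\lambda < \gamma$ together with a cofinal map $f : \lambda \to \gamma$ (either a surjection, if $\gamma$ fails to be an $M$-cardinal, or a map of length $\cof^M(\gamma) < \gamma$, if $\gamma$ is singular in $M$). Since $\lambda \in N$ and every value $f(\xi) < \gamma$ lies in $N$, the regularity hypothesis would give $f``\lambda \in N$. But $N$ is a transitive $\ZFC^-$ model with $\Ord \cap N = \gamma$, and hence is closed under unions of sets of ordinals, so $\gamma = \bigcup f``\lambda \in N$, which is the desired contradiction.

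Next I would verify $N = H_\gamma^M$. For the inclusion $N \subseteq H_\gamma^M$, any $x \in N$ satisfies $\TC{x} \in N$ (by closure of $N$) and admits an $N$-bijection with some ordinal $\alpha < \gamma$, whence $|\TC{x}|^M \le \alpha < \gamma$ and $x \in H_\gamma^M$. For the reverse inclusion $H_\gamma^M \subseteq N$, I would induct on rank: given $x \in H_\gamma^M$, each element of $x$ lies in $H_\gamma^M$ by transitivity and has strictly smaller rank, so the induction hypothesis yields $x \subseteq N$; now pick a bijection $f : \beta \to x$ in $M$ with $\beta = |x|^M < \gamma$, and since $\beta \in N$ and $f : \beta \to N$, the regularity hypothesis gives $x = f``\beta \in N$.

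For the reverse direction, assume $N = H_\gamma^M$ where $\gamma$ is regular in $M$. Given $f \in M$ with $f : x \to N$ and $x \in N$, we have $|x|^M < \gamma$ and, for each $a \in f``x \subseteq N$, also $|\TC{a}|^M < \gamma$. Since $\gamma$ is regular in $M$, the supremum of these cardinalities stays strictly below $\gamma$, so $|\mathrm{TC}(f``x)|^M < \gamma$ as well, whence $f``x \in H_\gamma^M = N$.

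The main subtlety is in the forward direction: one must recognize that a hypothetical $M$-cofinal map from below $\gamma$ falls within the scope of the regularity hypothesis (its domain is in $N$, its values land in $N$), so the contradiction is obtained internally from $N$'s own closure under unions of ordinals. The rank induction and the cardinality argument for the reverse direction are then routine, the latter crucially using that $\gamma$ is regular and not merely a cardinal in $M$.
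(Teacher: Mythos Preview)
Your proof is correct and follows essentially the same approach as the paper's: both argue the forward direction by first deriving a contradiction from a short cofinal map to show $\gamma$ is regular in $M$, then use $\in$-induction (equivalently, your rank induction) together with a surjection $f:\beta\to x$ in $M$ to pull $H_\gamma^M$ into $N$. Your backward direction is more detailed than the paper's, which simply asserts $f``x\in N$ without spelling out the cardinality computation using regularity of $\gamma$; your argument makes explicit why regularity (and not merely cardinality) of $\gamma$ is needed there.
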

\begin{proof}
For the backward direction, suppose that $N=H_\gamma^M$ where $\gamma = \height(N)$ is a regular cardinal in $M$. Then for all $f: x \longrightarrow N$, with $x \in N$ and $f \in M$, certainly $f``x \in N$ as well.

For the forward direction, indeed $\gamma$ has to be regular in $M$ since otherwise $M$ would contain a cofinal function $f: \alpha \longrightarrow \gamma$ where $\alpha < \gamma$. By the transitivity of $N$, this implies that $\alpha \in N$. Thus $\cup f``\alpha$ is in $N \models \ZFC^-$ by regularity, so $\gamma \in N$, a contradiction.
We have that $N \subseteq H_{\gamma}^M$ since $N$ is a transitive $\ZFC^-$ model, so the transitive closure of elements of $N$ may be computed in $N$ and thus have size less than $\gamma$, so they are in $H_\gamma^M$ as $\gamma \in M$. To show that $H_{\gamma}^M \subseteq N$, let $x \in H_{\gamma}^M$. We assume by $\in$-induction that $x \subseteq N$. Then there is a surjection $f: \alpha \twoheadrightarrow x$ where $\alpha < \gamma$, in $M$. Hence by regularity, $x = f``\alpha \in N$ as desired.
\end{proof}

We now define fullness.

\begin{defn} A structure $M$ is \emph{\textbf{full}} so long as $M$ is transitive, $\omega \in M$, and there is a $\gamma$ such that $M$ is regular in $L_\gamma(M)$ where $L_\gamma(M) \models \ZFC^-$.
\end{defn}

Perhaps this property seems rather mysterious, but the fullness of a countable structure guarantees that it is not pointwise definable, which we will see is a necessary property of some of the models that come up in the definition subcomplete forcing.

\begin{lem} If $M$ is countable and full, then $M$ is not pointwise definable. \end{lem}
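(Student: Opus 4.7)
The plan is to argue by contradiction: suppose $M$ is pointwise definable, and use the regularity of $M$ in $L_\gamma(M)$ to derive that $M \in M$, violating Foundation. The point of the fullness hypothesis is precisely to furnish an ambient $\ZFC^-$ model, namely $L_\gamma(M)$, in which the natural definable surjection $\omega \twoheadrightarrow M$ lives, and then to use regularity to push the range of that surjection back inside $M$.

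More concretely, I would first fix $\gamma$ witnessing fullness, so that $M$ is regular in $L_\gamma(M) \models \ZFC^-$, and note that both $\omega$ and $M$ are elements of $L_\gamma(M)$. Assuming toward a contradiction that every element of $M$ is definable in $M$ from no parameters, I would fix a recursive enumeration $\langle \varphi_n : n < \omega \rangle$ of formulas in one free variable in the language of set theory. This enumeration is absolute enough that it exists in $L_\gamma(M)$. Working in $L_\gamma(M)$, define $f : \omega \to M$ by setting $f(n)$ to be the unique $x \in M$ with $M \models \varphi_n(x)$, if such a unique $x$ exists, and $f(n) = \emptyset$ otherwise. Since $M \in L_\gamma(M)$ and $L_\gamma(M)$ satisfies enough of $\ZFC^-$ to carry out this definition by recursion (the satisfaction predicate for the set-sized structure $M$ is definable over $L_\gamma(M)$), we get $f \in L_\gamma(M)$.

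Now I would invoke regularity: $\omega \in M$, $f \in L_\gamma(M)$, and $f : \omega \to M$, so regularity of $M$ in $L_\gamma(M)$ gives $f``\omega \in M$. But by the pointwise definability assumption, every $x \in M$ is the unique witness to some $\varphi_n$, hence $f``\omega = M$. Therefore $M \in M$, contradicting Foundation (which holds in $M$ because $M$ is transitive).

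The only delicate step is justifying that $f \in L_\gamma(M)$, i.e., that the definition of $f$ can be carried out inside $L_\gamma(M)$. This relies on the fact that satisfaction $M \models \varphi_n(x)$, viewed as a relation in $n$ and $x$, is definable in $L_\gamma(M)$ from the parameter $M$; since $L_\gamma(M) \models \ZFC^-$ and $M \in L_\gamma(M)$, the corresponding function is a set there by Replacement (or Collection together with Separation). Everything else is bookkeeping.
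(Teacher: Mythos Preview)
Your proposal is correct and follows essentially the same line as the paper's proof: both assume pointwise definability, use fullness to pass to $L_\gamma(M)\models\ZFC^-$, build in $L_\gamma(M)$ the surjection $f:\omega\to M$ coming from an enumeration of formulas, and then invoke regularity of $M$ in $L_\gamma(M)$ to reach a contradiction. The only cosmetic difference is the endgame: you conclude $f``\omega=M\in M$ and cite Foundation, whereas the paper phrases it as ``$M$ witnesses its own countability'' (getting $f$ itself into $M$), but these amount to the same contradiction.
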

\begin{proof} Suppose toward a contradiction that $M$ is countable, full, and pointwise definable. By fullness there is some $L_\gamma(M) \models \ZFC^-$ such that $M$ is regular in $L_\gamma(M)$. By pointwise definability, for each element $x \in M$, we have attached to it some formula $\varphi(x)$ such that $M \models \varphi(x)$ uniquely, meaning that $\varphi(y)$ fails for every other element $y \in M$. Thus in $L_\gamma(M)$ we may define a function $f: \omega \cong M$, that takes the $n$th formula in the language of set theory to its unique witness in $M$. In particular we have that $L_\gamma(M)$ witnesses that $M$ is countable. However, this would allow $M$ to witness its own countability, since $M$ must contain $f$ as well by regularity. \end{proof} 

Let $N$ be a transitive $\ZFC^-$ model. Suppose $\N \cong X \preccurlyeq N$ where $X$ is countable and $\N$ is full. Then as we will find, there may possibly be more than one elementary embedding $\sigma: \N \prec N$. 
If there was only one unique embedding, we would be able to define $\N$ pointwise, by elementarity of the unique map and since $\N$ is countable.

The following lemma shows that fullness is in some sense not much harder than transitivity to satisfy.

\begin{lem} \label{lem:fullclubs}
Let $\theta>\omega_1$ satisfy $H_\theta \subseteq N = L_\tau[A] \models \ZFC^-$ with $\tau>\theta$ regular and $A \subseteq \tau$, and let $s \in N$. Then 
$$ \set{ \omega_1^\N }{ \text{there is $\sigma$ such that $\sigma: \N \prec N$ where $\N$ is countable and full, and $s \in \ran(\sigma)$} } \subseteq \omega_1$$ contains a club.
\end{lem}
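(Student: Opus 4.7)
The plan is to take countable elementary substructures of a sufficiently large $L_{\gamma^*}[A]$ containing $N$ and $s$, arrange them in a continuous chain, pass to their Mostowski collapses, and argue that each collapse is full by reflecting an internal witness of fullness of $N$ downward via elementarity.

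First I would establish in $V$ that there is some $\gamma > \tau$ with $L_\gamma(N) \models \ZFC^-$ and $N$ regular in $L_\gamma(N)$: unboundedly many $\gamma$ satisfy the first condition, and for any such $\gamma$ we have $L_\gamma(N) \subseteq L[A]$ (since $N \in L[A]$); because $\tau$ is regular, $N = L_\tau[A] = H_\tau^{L[A]} = H_\tau^{L_\gamma(N)}$, so by Lemma \ref{lem:regularityequiv}, $N$ is regular in $L_\gamma(N)$. Then pick $\gamma^* > \gamma$ with $L_{\gamma^*}[A] \models \ZFC^-$; this will be the ambient structure. Build a continuous $\subseteq$-chain $\langle X_\alpha \st \alpha < \omega_1 \rangle$ of countable elementary substructures $X_\alpha \preccurlyeq L_{\gamma^*}[A]$ with $\{s, N, \gamma\} \subseteq X_0$. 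Let $\pi_\alpha : Y_\alpha \cong X_\alpha$ be the inverse Mostowski collapse (so $Y_\alpha$ is transitive), and set $\bar N_\alpha = \pi_\alpha^{-1}(N)$, $\bar\gamma_\alpha = \pi_\alpha^{-1}(\gamma)$, and $\sigma_\alpha = \pi_\alpha \rest \bar N_\alpha$. Then $\sigma_\alpha : \bar N_\alpha \prec N$ is elementary, $s \in \ran(\sigma_\alpha)$, and $\omega_1^{\bar N_\alpha} = X_\alpha \cap \omega_1$ by Fact \ref{fact:CPofourEmbeddings}; moreover $\{X_\alpha \cap \omega_1 \st \alpha < \omega_1\}$ is a club in $\omega_1$.

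The key step is to verify that $\bar N_\alpha$ is full, with $\bar\gamma_\alpha$ serving as the witness. By elementarity of $\pi_\alpha$, $Y_\alpha$ satisfies ``$L_{\bar\gamma_\alpha}(\bar N_\alpha) \models \ZFC^-$ and $\bar N_\alpha$ is regular in $L_{\bar\gamma_\alpha}(\bar N_\alpha)$.'' Both assertions are absolute between the transitive $Y_\alpha$ and $V$, since the $L(\cdot)$-hierarchy is absolutely computed and the regularity clause quantifies only over functions in $L_{\bar\gamma_\alpha}(\bar N_\alpha) \in Y_\alpha$. Hence $\bar N_\alpha$ is full in $V$, and the resulting club of values $\omega_1^{\bar N_\alpha}$ witnesses the lemma.

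The main obstacle I anticipate is arranging that fullness of $N$ admits an internal witness inside the ambient $L_{\gamma^*}[A]$ so that elementarity has something to transport to the collapse; the regularity of $\tau$ is essential here, as it is precisely what guarantees that $N$ is regular in every $L_\gamma(N) \subseteq L[A]$ satisfying $\ZFC^-$.
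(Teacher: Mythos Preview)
Your argument is correct and takes a genuinely different route from the paper. The paper works directly with countable $X \preccurlyeq L_{\tau'}[A]$ for $\tau' = (\tau^+)^{L[A]}$, sets $\N = L_{\overline\tau}[\overline A]$ with $\overline\tau$ the largest cardinal of the collapse, and then proves regularity of $\N$ in $L_{\overline{\tau'}}[\overline A]$ by a hands-on Skolem-hull chain construction designed to make the collapsed predicate an initial segment of $\overline A$. You instead first observe that $N$ itself is full---using $\tau$ regular to get $N = H_\tau^{L_\gamma(N)}$ via the inclusion $L_\gamma(N)\subseteq L[A]$ and the standard condensation fact $L_\tau[A]=H_\tau^{L[A]}$---and then simply reflect the witness $\gamma$ to fullness down along the collapse by elementarity and absoluteness. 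This is more modular and conceptually cleaner: once fullness of $N$ is established, fullness of every $\bar N_\alpha$ is automatic. Note, however, that the one step you pass over (``because $\tau$ is regular, $L_\tau[A]=H_\tau^{L[A]}$'') is exactly where the real work hides; its proof is precisely the Skolem-hull chain argument the paper carries out in its Claim, so the technical content is the same, just relocated. Two small points worth tightening: ensure $\alpha \subseteq X_\alpha$ (or $X_\alpha \cap \omega_1 \in X_{\alpha+1}$) so that $\{\omega_1^{\bar N_\alpha}:\alpha<\omega_1\}$ is genuinely unbounded, and note that $\omega_1^{\bar N_\alpha}=\otp(X_\alpha\cap\omega_1)$ rather than $X_\alpha\cap\omega_1$ itself unless you arrange the latter to be transitive.
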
 

\begin{proof}
Let $\tau'=(\tau^+)^{L[A]}$. Let $\sigma' : \overline{L_{\tau'}[A]} \cong X \preccurlyeq L_{\tau'}[A]$ where $X$ is countable and $L_{\overline{\tau'}}[\overline A]$ is the Mostowski collapse of $X$. Let $\overline \tau$ be the largest cardinal of $L_{\overline{\tau'}}[\overline A]$. We would like to show that $\N = L_{\overline{\tau}}[\overline A]$ is full, by showing it is regular in $L_{\overline{\tau'}}[\overline A] = L_{\overline{\tau'}}(\N)$.
%We first claim that $N=L_\tau[A]$ is regular in $L_{\tau'}[A]$, and then shift the property down to the countable transitive collapse of these structures to show that they are full.
\begin{claim} $\N$ is regular in $L_{\overline{\tau'}}[\overline A]$. \end{claim}
\begin{proof}[Pf.] 
Let $f:x \longrightarrow \overline N$, where $x \in \N$ and $f \in L_{\overline{\tau'}}[\overline A]$, and indeed $\ran(f)= f``x \in L_{\overline{\tau'}}[\overline A]$. Firstly, $f`` x \subseteq L_\gamma[\overline A]$, for some $\gamma < \overline \tau$, because $\overline \tau$ is a regular cardinal in $L_{\overline{\tau'}}[A]$. We inductively define a sequence of Skolem hulls in order to see that ultimately $f``x$ must be an element of $\N$.
Let 
	$$X_0 = \Sk{L_{\overline{\tau'}}[\overline A]}{\gamma}{\{f``x\}}.$$
In $L_{\overline{\tau'}}[\overline A]$, $X_0$ has size $\gamma$, so in particular it has size less than $\overline \tau$. Also we have that $L_\gamma[\overline A] \subseteq X_0$. As an aside, we point out that one  might already want to take the transitive collapse of this structure to obtain something that looks like $L_{\overline{\overline{\tau'}}}[\overline{\overline A}]$. The $\overline{\overline A}$ arising from this is not very easy to work with; it is not necessarily true that in this case, $\overline{\overline A} = \overline A \cap {\overline{\overline{\tau'}}}$ as one would hope. To remedy this we will go on to define a hull whose transitive collapsed version of $\overline A$ is an initial segment of $\overline A$, ultimately making the collapsed structure definable in $L_{\overline \tau}[\overline A]$. To do this, inductively assume $X_n$ is defined with size less than $\overline \tau$, and set $\gamma_n = \sup(X_n) \cap \overline \tau$. We have that $|\gamma_n|<\overline \tau$, as $\overline \tau$ is regular in $L_{\overline{\tau'}}[\overline A]$. We let 
	$$X_{n+1} = \Sk{L_{\overline{\tau'}}[\overline A]}{\gamma_n}{X_n}.$$
This defines an elementary chain $\seq{X_n}{n<\omega}$ in $L_{\overline{\tau'}}[\overline A]$. So $X_\omega = \bigcup_{n<\omega}X_n$ is an elementary substructure of $L_{\overline{\tau'}}[\overline A]$. Additionally, let $\gamma_\omega:=X_\omega \cap \overline \tau = \sup_{n<\omega}{\gamma_n}$. 

Let $k:\overline{L_{\overline{\tau'}}[\overline{A}]} \cong X_\omega \preccurlyeq L_{\overline{\tau'}}[\overline A]$. Then $\overline{L_{\overline{\tau'}}[\overline A]}=L_{\overline{\overline{\tau'}}}[\overline{\overline{A}}]$ and by construction, $k \rest \gamma_\omega = \id$.
%and $k(\gamma_\omega)=\tau$. 
Additionally we now have $$\overline{\overline{A}} = k^{-1}``\overline A = k^{-1}``(\overline A \cap \gamma_\omega)=\overline A \cap \gamma_\omega.$$ Since $L_\gamma[\overline A] \subseteq X_0$, we have that $k^{-1}(f``x)=f``x$. Thus $f``x \in L_{\overline{\overline{\tau'}}}[\overline A \cap \gamma_\omega] \in L_{\overline{\tau}}[\overline A] = \N$, since $\overline{\overline{\tau'}} <\tau$ and $\overline A \cap \gamma_\omega < \overline \tau$.
%Take the transitive collapse $k:L_{\gamma}[A] \cong X$, where $\overline{\gamma} < \tau$. We have that $k^{-1}(f``x) = f``x$, since $f``x \subseteq L_\gamma[A]$ and $f``x \in X$. Thus $f``x \in L_{\overline{\gamma}}[A] \subseteq L_{\tau}[A]$ as desired, proving the \textit{Claim}. 
Thus $\N$ is regular in $L_{\overline{\tau'}}[\overline A]$, proving the \textit{Claim}.
\end{proof}
Thus $\N$ is full. To show the claim, we need to show that there are club-many such $\N$s. But this is true because there are club-many relevant Skolem hulls from which the $\N$ arise.

In particular, let
\begin{align*} \Gamma = \Big\{ \N = L_{\overline \tau}[\overline A] \st & \text{ for some $\gamma<\omega_1$, } L_{\overline{\tau}'}[\overline A] \cong \Sk{L_{\tau'}[A]}{\gamma}{\{s\}} \preccurlyeq L_{\tau'}[A] \\
	&\text{ where } \overline \tau \text{ is the largest cardinal in } L_{\overline{\tau'}}[\overline A]\Big\}.
	\end{align*}

Then $C = \set{ \omega_1^\N }{ \N \in \Gamma } \subseteq \omega_1$ is club. 

Clearly $C$ is unbounded, since for any $\gamma < \omega_1$, there is $\sigma' : L_{\overline{\tau}'}[\overline A] \cong \Sk{L_{\tau'}[A]}{\gamma}{\{s\}} \preccurlyeq L_{\tau'}[A]$. Let $\sigma'(\overline s)=s$. Since $\overline \tau > \omega_1^\N$ by elementarity, we have that $\sigma=\sigma' \rest L_{\overline \tau}[\overline A]$ is an elementary embedding as well; $\sigma: \N \prec N$, and $\sigma(\overline s)=s$. Then the critical point of $\sigma$, namely $\omega_1^\N$, is above $\gamma$.

To see that $C$ is closed, suppose we have an unbounded set of of ${\N} \in \Gamma$ and an associated sequence of $\alpha=\omega_1^{{\N}}$. As we saw above, each of these ${\N}$ can be thought of as the domain of some elementary $\sigma: \N \cong X \preccurlyeq N$, where $X$ is countable. Then we can take the union of these $X$ to obtain $X \preccurlyeq N$. To form $\sigma: \N \prec N$ take the Mostowski collapse. The critical point of $\sigma$ will be the supremum of all of the $\alpha$'s.
\end{proof}

The following lemma will prove useful when showing that various forcing notions are subcomplete, since subcompleteness requires certain ground-model Skolem hulls to match those in forcing extensions.

\begin{lem}[Jensen] \label{lem:Ctrick} Let $\P$ be a forcing notion, and let $\delta=\delta(\P)$ be the smallest size of a dense subset in $\P$. Suppose that $\P \in H_\theta \subseteq N$ where $N=L_\tau[A] \models \ZFC^-$. Let $\sigma : \N \prec N$ where $\N$ is countable and full, and let $\sigma(\overline \P)=\P$.

Suppose $G \subseteq \P$ is $N$-generic and $\G \subseteq \overline \P$ is $\N$-generic, and that $\sigma``\G \subseteq G$, so $\sigma$ lifts (or extends) in $V[G]$ to an embedding $\sigma^*:\N[\G] \prec N[G]$. 

Then 
	$$N \cap \sk{N[G]}{\delta}{\sigma^*} = \sk{N}{\delta}{\sigma}.$$
\end{lem}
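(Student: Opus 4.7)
I would prove the two inclusions separately, with the reverse inclusion carrying the real content.

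For the forward inclusion $\sk{N}{\delta}{\sigma} \subseteq N \cap \sk{N[G]}{\delta}{\sigma^*}$, containment in $N$ is automatic. To see containment in $\sk{N[G]}{\delta}{\sigma^*}$, I would observe that $N = L_\tau[A]$ is itself $N[G]$-definable from the single parameter $A$, computed inside $N[G]$ as $\bigcup_{\alpha \in \Ord} L_\alpha[A]$ (this uses that forcing adds no ordinals, so the ordinals of $N$ and $N[G]$ agree). Since $\N = L_{\overline\tau}[\overline A]$ with $\sigma(\overline A) = A$, the parameter $A$ lies in $\ran(\sigma) \subseteq \ran(\sigma^*)$, so any $N$-definition from $\delta \cup \ran(\sigma)$ relativizes inside $N[G]$ to an $N[G]$-definition from $\delta \cup \ran(\sigma^*)$.

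For the reverse inclusion, suppose $t \in N \cap \sk{N[G]}{\delta}{\sigma^*}$ and write $t = \tau^{N[G]}(\xi, \sigma^*(\vec y))$ for some formula $\tau$, $\xi < \delta$, and $\vec y = \dot{\vec y}^{\G} \in \N[\G]$ with $\dot{\vec y} \in \N$. Since $\sigma^*$ extends $\sigma$ and $\sigma^*(\G) = G$, we have $\sigma^*(\vec y) = \sigma(\dot{\vec y})^G$ with $\sigma(\dot{\vec y}) \in \ran(\sigma)$. Working in $N$ via the forcing relation, I would produce a canonical $\P$-name (for instance $\dot t = \{(\check s, p) : s \in N,\ p \forces \tau(\xi, \sigma(\dot{\vec y})) = \check s\}$) which is $N$-definable from $\P, \tau, \xi, \sigma(\dot{\vec y})$ and satisfies $\dot t^G = t$ since $t \in N$. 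Hence $\dot t \in \sk{N}{\delta}{\sigma}$.

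The crucial step uses the hypothesis $\delta = \delta(\P)$. Let $D \subseteq \P$ be the $<_N$-least dense subset of cardinality $\delta$, where $<_N$ is the canonical well-ordering of $N = L_\tau[A]$. Then $D$ is $N$-definable from $\P \in \ran(\sigma)$, so $D \in \sk{N}{\delta}{\sigma}$; the $<_N$-least bijection $\delta \to D$ also lies in the hull, and since $\delta \subseteq \sk{N}{\delta}{\sigma}$ it follows that $D \subseteq \sk{N}{\delta}{\sigma}$. The partial function $g : D \rightharpoonup N$ defined by $g(p) = s$ iff $p \forces \dot t = \check s$ is $N$-definable from $D, \dot t$, hence in the hull, and so is each of its values.

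To conclude, I would show $G$ meets $\dom(g)$. Since $t \in N$, some $q \in G$ forces $\dot t \in \check N$, and below $q$ the set of conditions in $D$ that decide $\dot t$ as a check name is $N$-definable and dense, so $N$-genericity of $G$ yields some $p \in G \cap \dom(g)$. For any such $p$, $g(p) = \dot t^G = t$, placing $t$ in $\sk{N}{\delta}{\sigma}$. The main obstacle is the reverse inclusion: the name $\dot t$ is effortless to put in the hull, but extracting $t$ itself requires the observation that taking a dense set of cardinality \emph{exactly} $\delta(\P)$ — not merely some dense set — forces $D$ itself into the Skolem hull via its $N$-definable enumeration by ordinals below $\delta$; the usual name-decoding then goes through.
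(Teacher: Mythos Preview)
Your proof is correct and follows essentially the same approach as the paper's. The only noteworthy variation is in how the $\delta$-sized dense set is placed into the hull: you take the $<_N$-least such set (definable from $\P \in \ran(\sigma)$), whereas the paper instead chooses a surjection $f \in \N$ from $\overline\delta$ onto a dense subset of $\overline\P$ and uses $\sigma(f)$; the paper then reads off $x$ directly as the unique $y$ such that $\sigma(f)(\nu) \forces^N_\P \varphi(\check y, \check\xi, \sigma(\dot{\overline z}))$ for some $\nu < \delta$, bypassing your intermediate name $\dot t$ and partial function $g$, but the content is the same.
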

\begin{proof}
First, we establish that $\sk{N}{\delta}{\sigma} \subseteq \sk{N[G]}{\delta}{\sigma^*} \cap N$. To see this, let $x \in \sk{N}{\delta}{\sigma}$. Then $x$ is $N$-definable from $\xi<\delta$ and $\sigma(\overline z)$ where $\overline z \in \N$. Since $\sigma^*$ extends $\sigma$, this means that $x \in N$ and that $x$ is $N[G]$-definable from $\xi$ and $\sigma^*(\overline z)$. This is because $N=L_\tau[A]$ is definable in $N[G]$ using $A$.

For the other direction, let $x \in \sk{N[G]}{\delta}{\sigma^*} \cap N$. Then $x$ is $N[G]$-definable from $\xi < \delta$ and $\sigma^*(\overline z)$ where $\overline z \in \N[\G]$; and also $x \in N$. Letting $\dot{\overline z} \in \N^{\overline{\P}}$ such that $\overline z = \dot{\overline z}^{\G}$ we have $$\sigma^*(\overline{z}) = \sigma^*({\dot{\overline z}}^{\G})=\sigma(\dot{\overline{z}})^{G}$$ so we have that there is some formula $\varphi$ such that $x$ is the unique witness:
	$$x = \text{that } y \text{ where } N[G] \models \varphi(y, \xi, \sigma(\dot{\overline z})^{G}).$$ 
Take $f \in \N$ mapping $\overline \delta$ onto a dense subset of $\overline{\P}$. Then $\sigma(f)$ maps $\delta$ onto a dense subset of $\P$. Thus there is $\nu < \delta$ such that $\sigma(f)(\nu) \in G$ and 
	$\sigma(f)(\nu) \forces \varphi(\check x, \check \xi, \sigma(\dot{\overline{z}})).$ 
Thus 
	$$x = \text{that } y \text{ where } \sigma(f)(\nu) \forces^N_{\P} \varphi(\check y, \check \xi, \sigma(\dot{\overline{z}}))$$ 
so $x \in \sk{N}{\delta}{\sigma}$.
\end{proof}

\section{Barwise Theory}
\label{sec:BarwiseTheory}
In order to show that many posets are subcomplete, Jensen takes advantage of Barwise Theory and techniques using countable admissible structures to obtain transitive models of infinitary languages. Barwise creates an $M$-finite predicate logic, a first order theory in which arbitrary, but $M$-finite, disjunctions and conjunctions are allowed. The following is an outline of \cite[Chapter 1 \& 2]{Jensen:2012fr}. 

\begin{defn} Let $M$ be a transitive structure with potentially infinitely many predicates. A theory defined over $M$ is $M$-\textbf{\textit{finite}} so long as it is in $M$. A theory is $\Sigma_1(M)$, also known as \textbf{\textit{$M$-recursively enumerable}} or $M$-$re$, if the theory is $\Sigma_1$-definable, with parameters from $M$. \end{defn}
Of course we can generalize this to the entire usual Levy hierarchy of formulae, but for our purposes we only need to know what  it means for a theory to be $\Sigma_1(M)$. If $\mathcal L$ is a $\Sigma_1$(M)-definable language or theory, the rough idea is that to check whether a sentence is in $\mathcal L$, one should imagine enumerating the formulae of $\mathcal L$ to find the sentence and a witness to it in the structure $M$. Below we elaborate on what models we are working with. %Such $M$ will need to satisfy more properties than transitivity to be useful in our context.

\begin{defn} A transitive structure $M$ is \textbf{\emph{admissible}} if it models the axioms of \textsf{Kripke-Platek Set Theory} (\textsf{KP}) which consists of the axioms of \textsf{Empty Set}, \textsf{Pairing}, \textsf{Union}, $\Sigma_0$-\textsf{Collection}, and $\Sigma_0$-\textsf{Separation}. \end{defn}

Jensen also makes use of models of $\textsf{ZF}^-$ that are not necessarily well-founded.
\begin{defn} Let $\mathfrak A = \langle A, \in_{\mathfrak A}, B_1, B_2, \dots \rangle$ be a (possibly) ill-founded model  of $\textsf{ZF}^-$, where $\mathfrak A$ is allowed to have predicates other than $\in$. The \textbf{\emph{well-founded core}} of $\mathfrak A$, denoted $\wfc(\mathfrak A)$, is the restriction of $\mathfrak A$ to the set of all $x \in A$ such that $\in_{\mathfrak A} \cap \mathcal C(x)^2$ is well founded, where $\mathcal C(x)$ is the closure of $\{x\}$ under $\in_{\mathfrak A}$. A model $\mathfrak A$ of $\textsf{ZF}^-$ is \textbf{\emph{solid}} so long as $\wfc(\mathfrak A)$ is transitive and $\in_{\wfc(\mathfrak A)}=\in \cap \wfc(\mathfrak A)^2$. \end{defn}

Jensen \cite[Section 1.2]{Jensen:2012fr} notes that every consistent set of sentences in $\textsf{ZF}^-$ has a solid model, and if $\mathfrak A$ is solid, then $\omega \subseteq \wfc(\mathfrak A)$. In addition,

\begin{fact}[Jensen] If $\mathfrak A \models \textsf{ZF}^-$ is solid, then $\wfc(\mathfrak A)$ is admissible. \end{fact}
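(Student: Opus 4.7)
The plan is to verify each axiom of \textsf{KP} in $W := \wfc(\mathfrak A)$. By solidity, $W$ is transitive and $\in$ on $W$ agrees with $\in_{\mathfrak A}$, so $\Sigma_0$-formulas with parameters in $W$ are absolute between $W$ and $\mathfrak A$. The axioms of \textsf{Empty Set}, \textsf{Pairing}, and \textsf{Union} I would dispatch quickly: apply the corresponding operation inside $\mathfrak A$ and note that the result's $\in_{\mathfrak A}$-closure is assembled from those of its operands, all of which are well-founded by hypothesis. For $\Sigma_0$-\textsf{Separation}, the set $s = \{u \in x : \varphi(u)\}$ formed in $\mathfrak A$ satisfies $\mathcal C(s) \subseteq \{s\} \cup \mathcal C(x)$, so $\mathcal C(s)$ is well-founded and $s \in W$.

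The real work is $\Sigma_0$-\textsf{Collection}, and the main obstacle will be keeping a witnessing bound inside $\Ord \cap W$ rather than letting it escape into the ill-founded part of $\mathfrak A$. As preparation I would establish two structural observations. First, $\Ord \cap W$ is an $\in_{\mathfrak A}$-initial segment of the ordinals of $\mathfrak A$: if $\beta \in_{\mathfrak A} \gamma \in W$, then $\mathcal C(\beta) \subseteq \mathcal C(\gamma)$ is well-founded, forcing $\beta \in W$. Second, for each ordinal $\gamma$ of $W$, $V_\gamma^{\mathfrak A} \in W$, since every element has $\mathfrak A$-rank below $\gamma$ and therefore already lies in $W$ by the first observation, so no infinite $\in_{\mathfrak A}$-descending chain from $V_\gamma^{\mathfrak A}$ is possible.

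With these facts in hand, suppose $x \in W$, the parameters are in $W$, and $W \models \forall u \in x \, \exists v \, \varphi(u,v)$ for a $\Sigma_0$ formula $\varphi$. Working inside $\mathfrak A$, I would introduce the function
\[
    g(u) = \min\bigl\{\mathrm{rank}(v) : \mathfrak A \models \varphi(u,v)\bigr\},
\]
well-defined by $\textsf{ZF}^-$ in $\mathfrak A$. By $\Sigma_0$-absoluteness each $u \in x$ has a witness $v \in W$, so $g(u) \leq \mathrm{rank}(v) \in \Ord \cap W$, and the initial-segment observation forces $g(u) \in \Ord \cap W$. Since $\dom(g) = x \in W$ and $\ran(g) \subseteq W$, the $\in_{\mathfrak A}$-chain argument gives $g, \ran(g) \in W$. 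The ordinal $\alpha^* := (\sup \ran(g)) + 1$, computed in $\mathfrak A$, has all of its $\mathfrak A$-predecessors in $\Ord \cap W$ by the initial-segment observation, so $\alpha^* \in W$ as well. Finally, $V_{\alpha^*}^{\mathfrak A} \in W$ by the second observation serves as the collecting set via $\Sigma_0$-absoluteness, completing $\Sigma_0$-\textsf{Collection}.
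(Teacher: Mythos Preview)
The paper does not supply a proof of this fact; it is simply stated and attributed to Jensen, so there is no argument in the paper to compare yours against.

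Your overall strategy is the standard one and most of it is sound, but there is a genuine gap in the $\Sigma_0$-\textsf{Collection} step. You take $V_{\alpha^*}^{\mathfrak A}$ as the collecting set, and your ``second observation'' asserts that $V_\gamma^{\mathfrak A} \in W$ for every $\gamma \in \Ord \cap W$. But $\mathfrak A$ is only assumed to satisfy $\textsf{ZF}^-$, i.e., Zermelo--Fraenkel \emph{without} the Power Set axiom. In such a model the cumulative hierarchy need not exist as a set at any infinite successor stage: for example, $H_{\omega_1}$ is a transitive model of $\textsf{ZF}^-$ in which $V_{\omega+1} = \mathcal P(\omega)$ is not an element. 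So $V_{\alpha^*}^{\mathfrak A}$ is in general undefined, and your collecting set evaporates.

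The repair is to replace the appeal to $V_{\alpha^*}$ by a direct use of Collection and full Separation inside $\mathfrak A$. Having produced $\alpha^* \in W$ exactly as you do, note that $\mathfrak A \models \forall u \in x\, \exists v\, \bigl(\varphi(u,v) \land \mathrm{rank}(v) < \alpha^*\bigr)$. Apply Collection in $\mathfrak A$ to obtain some $b$ with $\mathfrak A \models \forall u \in x\, \exists v \in b\, \bigl(\varphi(u,v) \land \mathrm{rank}(v) < \alpha^*\bigr)$, and then Separation (which in $\textsf{ZF}^-$ is available for all formulas) to cut down to $b' = \{v \in b : \mathrm{rank}(v) < \alpha^*\}$. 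Every element of $b'$ has $\mathfrak A$-rank below $\alpha^* \in W$, so $\mathrm{rank}^{\mathfrak A}(b') \leq \alpha^*$ and hence $b' \in W$ by the rank argument you already sketched (an infinite $\in_{\mathfrak A}$-descent from $b'$ would yield an infinite descent in the genuine ordinal $\alpha^*+1$). Then $b'$ serves as the collecting set in $W$ via $\Sigma_0$-absoluteness. With this adjustment your proof goes through.
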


\begin{defn} \label{def:InTheoriesAndBasicAxioms} The context for Barwise theory is countable admissible structures. If $M$ is admissible, we work with infinitary, axiomatized theories $\mathcal L$ in $M$-finitary logic, called \textit{\textbf{$\in$-theories}}, with a fixed predicate $\dot \in$ and \textbf{\emph{special constants}} denoted $\underline x$ for elements $x \in M$. Our underlying axioms for these $\in$-theories will always involve $\ZFC^-$ and some basic axioms ensuring that $\dot \in$ behaves nicely; 
the \textsf{Basic Axioms} are: \begin{itemize}
	\item \textsf{Extensionality}
	\item A statement positing the extensionality of $\dot \in$, which is a scheme of formulae defined for each member of $M$. For each $x \in M$, include an $M$-$re$ sentence (meaning it quantifies over $M$-finite sentence): 
	$$\forall v \left( v \ \dot \in\, \underline x \iff \bigdoublevee_{z \in x} v= \underline z \right).$$
\end{itemize} Here $\bigdoublevee$ denotes an infinite disjunction in the language.\end{defn}

An important fact ensured by our \textsf{Basic Axioms} is that the interpretations of these special constants in any solid model of the theory are the same as in $M$:

\begin{fact}[Jensen] Let $M$ be as in the above definition. Let $\mathfrak A$ be a solid model of the $\in$-theory $\mathcal L$. Then for all $x \in M$, we have that $\underline{x}^{\mathfrak A} = x \in \wfc(\mathfrak A)$. \end{fact}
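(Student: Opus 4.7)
The plan is to prove the statement by $\in$-induction on $x \in M$, which is legitimate because $M$ is admissible and transitive, so $\in$ is well-founded on $M$. Fix $x \in M$ and assume inductively that for every $z \in x$ we already have $\underline{z}^{\mathfrak A} = z \in \wfc(\mathfrak A)$. The goal is to read off both properties for $\underline{x}^{\mathfrak A}$ from the single Basic Axiom attached to $x$, together with the solidity of $\mathfrak A$.

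First I would invoke the Basic Axiom scheme at $x$, which holds in $\mathfrak A$ since $\mathfrak A \models \mathcal L$. It pins down the $\dot\in_{\mathfrak A}$-predecessors of $\underline{x}^{\mathfrak A}$: an element $v$ of $\mathfrak A$ satisfies $v \,\dot\in\, \underline{x}^{\mathfrak A}$ if and only if $v = \underline{z}^{\mathfrak A}$ for some $z \in x$. By the inductive hypothesis the right-hand side collapses to ``$v = z$ for some $z \in x$'', so the $\dot\in_{\mathfrak A}$-predecessors of $\underline{x}^{\mathfrak A}$ are exactly the elements of $x$.

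Next I would show $\underline{x}^{\mathfrak A} \in \wfc(\mathfrak A)$. Unfolding the definition of the well-founded core, we must check that $\dot\in_{\mathfrak A}$ restricted to $\mathcal C(\underline{x}^{\mathfrak A})^2$ is well-founded. By the previous step, $\mathcal C(\underline{x}^{\mathfrak A}) = \{\underline{x}^{\mathfrak A}\} \cup \bigcup_{z \in x} \mathcal C(z)$, and each $\mathcal C(z)$ already lies in $\wfc(\mathfrak A)$ by induction. Any purported infinite descending $\dot\in_{\mathfrak A}$-chain starting at $\underline{x}^{\mathfrak A}$ must, after its first step, enter some $\mathcal C(z)$ for $z \in x$, contradicting well-foundedness there. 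Hence $\underline{x}^{\mathfrak A} \in \wfc(\mathfrak A)$.

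Finally, since $\mathfrak A$ is solid, $\wfc(\mathfrak A)$ is transitive and $\dot\in$ on $\wfc(\mathfrak A)$ coincides with the genuine $\in$; thus the actual $\in$-elements of $\underline{x}^{\mathfrak A}$ are precisely the members of $x$. The \textsf{Extensionality} axiom (applied in $V$) then yields $\underline{x}^{\mathfrak A} = x$, closing the induction. The main point requiring care is the interpretation of the $M$-finite disjunction $\bigdoublevee_{z\in x}$: it is a formula of the infinitary language living in $M$, and we are using that $\mathfrak A$ satisfies each instance of the Basic Axiom scheme (one for every $x \in M$), not that $\bigdoublevee$ has any external meta-theoretic meaning. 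Once that is in hand, the rest is a clean induction built on solidity plus ordinary extensionality.
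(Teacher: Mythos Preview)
Your proposal is correct and follows exactly the approach indicated in the paper, which simply records ``Shown by $\in$-induction'' without further detail; you have supplied a faithful elaboration of that induction using the Basic Axiom instance at $x$, the inductive hypothesis to identify the $\dot\in_{\mathfrak A}$-predecessors of $\underline{x}^{\mathfrak A}$, and solidity plus extensionality to conclude.
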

\begin{proof}[Pf.] Shown by $\in$-induction. \end{proof}

Jensen uses the techniques of Barwise to come up with a proof system in this context, in which consistency of $\in$-theories can be discussed. In particular, $\in$-theories are correct: if we have a model of such a theory, then it is consistent. 
\begin{fact}[\textbf{\emph{Barwise Correctness}}] \label{fact:correctness} 
Let $\mathcal L$ be an $\in$-theory. If $A$ is a set of $\mathcal L$-statements and $\mathfrak A \models A$, then $A$ is consistent. \end{fact}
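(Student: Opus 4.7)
The plan is to prove this by the standard soundness argument for a formal proof system, adapted to the Barwise setting. Concretely, the Barwise proof calculus for $\in$-theories comes equipped with an $M$-recursive set of axioms and inference rules, some of which are infinitary (e.g.\ an $M$-finite conjunction rule, an $M$-finite disjunction rule, and the relevant infinitary analogue for $\bigdoublevee$ that comes from the \textsf{Basic Axioms}). A \emph{proof} of a statement $\varphi$ from $A$ is a well-founded tree whose nodes are sequents, whose root is $A \Rightarrow \varphi$, and whose branching at each node is justified by one of the inference rules; ``consistent'' means that there is no such proof tree ending in $A \Rightarrow \bot$ (equivalently, in a sequent of the form $A \Rightarrow (\psi \wedge \neg \psi)$).

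First I would set up the induction. Because proofs are well-founded trees of possibly infinite (but set-sized) branching, I would argue by induction on the ordinal rank of the proof tree. The inductive hypothesis to carry is the usual one: if $A \Rightarrow \varphi$ admits a proof of rank $<\alpha$, and $\mathfrak A \models A$, then $\mathfrak A \models \varphi$. The base case handles the logical axioms and the instances of the \textsf{Basic Axioms}; for these one checks directly that every model (solid or not) of $A$ satisfies them, which is immediate for \textsf{Extensionality} and for the scheme governing the special constants $\underline x$, since the semantics of $\underline x$ is fixed by our language. The inductive step handles each inference rule in turn; for the finitary rules this is exactly classical soundness, and for the infinitary rules (conjunction/disjunction over an $M$-finite index set, and any $M$-rule for universal/existential quantification over $\underline x$ with $x \in M$) one uses that the index set is a genuine set in $V$, so satisfaction is preserved under the conjunction/disjunction of all hypotheses.

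From the induction, if $A$ has a model $\mathfrak A$ and were inconsistent, there would be a proof tree with root $A \Rightarrow (\psi \wedge \neg \psi)$ for some $\psi$; by the claim, $\mathfrak A$ would satisfy $\psi \wedge \neg \psi$, which is absurd. Hence $A$ is consistent.

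The main obstacle I anticipate is handling the infinitary inference rules cleanly, specifically showing that each such rule is sound when $\mathfrak A$ is merely solid rather than transitive. The point to be careful about is that a statement of the form $\bigdoublevee_{z \in x}\, v = \underline z$ must be interpreted correctly in $\mathfrak A$; here the previous fact that $\underline{x}^{\mathfrak A} = x \in \wfc(\mathfrak A)$ is exactly what we need, since it guarantees that the semantic value of the infinitary disjunction in $\mathfrak A$ is determined by the \emph{actual} elements of $x$ in $V$, matching the indexing of the formal disjunction. Once this alignment between the external indexing of infinitary connectives and their interpretation in $\mathfrak A$ is in hand, the induction goes through uniformly and the result follows.
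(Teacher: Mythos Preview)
The paper does not prove this statement: it is recorded as a \emph{Fact} attributed to the Barwise/Jensen framework, with the surrounding text merely noting that once a proof system for $M$-finitary $\in$-theories is set up, correctness (soundness) holds. So there is no proof in the paper to compare against.

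Your proposal is the standard soundness argument and is essentially correct: induction on the rank of a well-founded proof tree, checking that each axiom is valid and each (possibly infinitary) inference rule preserves satisfaction, then concluding that no proof of a contradiction from $A$ can exist if $A$ has a model. One remark: the obstacle you anticipate concerning solidity is not really an obstacle in this direction. Soundness requires only that $\mathfrak A$ be \emph{some} structure satisfying $A$; the infinitary connectives $\bigdoublevee_{z\in x}$ are indexed externally (in $V$), and their satisfaction in $\mathfrak A$ is determined by the interpretations $\underline z^{\mathfrak A}$ regardless of whether $\mathfrak A$ is solid or whether $\underline z^{\mathfrak A}=z$. The fact $\underline x^{\mathfrak A}=x$ for solid models is used elsewhere (to pin down the meaning of special constants in applications), not for correctness itself. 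With that simplification your outline goes through.
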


Furthermore, compactness and completeness are shown, relativized to the $M$-finite predicate logics that are used here. In our context, for countable admissible structures $M$, we will obtain solid models of consistent $\Sigma_1(M)$ $\in$-theories. In particular, the form of Barwise Completeness that we make use of here is stated below.

\begin{fact}[\textbf{\emph{Barwise Completeness}}] \label{fact:completeness} Let $M$ be a countable admissible structure. Let $\mathcal L$ be a consistent $\Sigma_1(M)$ $\in$-theory such that $\mathcal L \vdash \textsf{ZF}^-$. Then $\mathcal L$ has a solid model $\mathfrak A$ such that $$\Ord \cap \wfc(\mathfrak A) = \Ord \cap M.$$ \end{fact}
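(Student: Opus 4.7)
Plan: The proof follows a Henkin-style construction adapted to Barwise's $M$-finitary logic, followed by a truncation argument to pin down the height of the well-founded core. First I would enrich $\mathcal L$ by adjoining an $M$-recursive supply of new constants (Henkin witnesses), and then build, inside $M$, an increasing chain of consistent $\Sigma_1(M)$ extensions of $\mathcal L$ that decides every $M$-finite sentence and assigns a witness to every existential sentence $\exists v\,\varphi(v)$ that enters the theory. Admissibility of $M$ (in particular $\Sigma_0$-collection and $\Sigma_0$-separation) is what permits the enumeration to be carried out within $M$, so the resulting completion $\mathcal L^*$ remains $\Sigma_1(M)$ and is a legitimate theory for Barwise's proof-theoretic machinery (Fact~\ref{fact:correctness}). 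Take $\mathfrak A$ to be the canonical term model of $\mathcal L^*$; then $\mathfrak A \models \mathcal L$, and solidity of $\mathfrak A$ follows from the \textsf{Basic Axioms} of Definition~\ref{def:InTheoriesAndBasicAxioms}, since the second scheme forces $\underline x^{\mathfrak A} = x$ for each $x \in M$, so each $x \in M$ is realised with its true $\in$-structure in $\wfc(\mathfrak A)$.

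Next I would verify that $\Ord \cap \wfc(\mathfrak A) = \Ord \cap M$. The inclusion $\supseteq$ is immediate, since each $\alpha \in \Ord \cap M$ is realised by $\underline \alpha$ and hence lies in $\wfc(\mathfrak A)$. For the reverse inclusion I would employ a truncation trick: adjoin a single new constant $c$ together with the $\Sigma_1(M)$ axiom scheme consisting of $\underline \alpha\;\dot\in\;c$ for every $\alpha \in \Ord \cap M$, along with ``$c$ is an ordinal''. The enriched theory $\mathcal L'$ is still $\Sigma_1(M)$, and Barwise Correctness shows it is consistent because any solid model of $\mathcal L$ together with a sufficiently large ordinal witness for $c$ satisfies every $M$-finite fragment of $\mathcal L'$. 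Running the Henkin construction for $\mathcal L'$ in place of $\mathcal L$ produces a solid model in which $c$ is interpreted as an ordinal sitting strictly above every element of $\Ord \cap M$, which forces the well-founded core's ordinal height to equal $\Ord \cap M$ exactly: any ordinal $\gamma \in \wfc(\mathfrak A)$ with $\gamma \geq \Ord \cap M$ would have to be named by a term, and this term together with well-foundedness of $\gamma$ would manufacture a $\Sigma_1(M)$ object that violates $\Sigma_1$-admissibility of $M$.

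The main obstacle is precisely this last extraction. One must argue rigorously that a hypothetical ``extra'' well-founded ordinal of $\mathfrak A$ gives rise to a genuinely $M$-definable object — typically a $\Sigma_1(M)$ cofinal map of some $\alpha \in \Ord \cap M$ into $\Ord \cap M$ of smaller order type — contradicting admissibility. The subtle bookkeeping is in tracking parameters so that the relevant terms, proof-trees and Henkin witnesses used to recover this map all lie in $M$, and in verifying that the compactness half of Barwise's theory is available at the right level of the Lévy hierarchy. Once this bookkeeping is handled, solidity of $\mathfrak A$ together with the truncation axioms yield the desired equality $\Ord \cap \wfc(\mathfrak A) = \Ord \cap M$.
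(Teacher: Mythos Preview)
The paper does not prove this statement: it is stated as a \emph{Fact} (attributed to Barwise via Jensen's exposition) and cited without proof, so there is no argument in the paper against which to compare your proposal.

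That said, your outline is along the standard lines of how Barwise completeness is established in the literature (Henkin construction over an admissible set, followed by a truncation/overspill argument to control $\Ord\cap\wfc(\mathfrak A)$). One small slip: when you argue that the enriched theory $\mathcal L'$ with the new constant $c$ is consistent, you invoke Barwise \emph{Correctness}, but what you actually need there is Barwise \emph{Compactness} (every $M$-finite subtheory has a model, hence the whole $\Sigma_1(M)$ theory is consistent); Correctness goes the other way. You are also right that the delicate step is extracting from a hypothetical extra well-founded ordinal a $\Sigma_1(M)$ object violating admissibility; this is where the real work lies, and your sketch does not yet supply it.
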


%\begin{fact}[\textbf{\emph{Barwise Compactness}}] \label{fact:compactness} 
%Let $M$ be countable. Let $\mathcal T = \langle \mathcal L, A \rangle$ be an axiomatized $\in$-theory where $\mathcal L$ is a language and $A$ is a set of $\mathcal L$-statements. Let $\mathcal L$ be $\mathbf{\Delta}_1$ and $A$ be $\mathbf{\Sigma}_1$. If every $M$-finite subset of $A$ has a model, then so does $A$.
%\end{fact}

We will need the following definition, which is a generalization of fullness.
\begin{defn} A transitive $\ZFC^-$ model $N$ is \emph{\textbf{almost full}} so long as $\omega \in N$ and there is a solid $\mathfrak A \models \ZFC^-$ with $N \in \wfc(\mathfrak A)$ and $N$ is regular in $\mathfrak A$. \end{defn}
Clearly if $N$ is full, then $N$ is almost full.

A useful technique when showing a particular forcing is subcomplete, once many different embeddings can be constructed that approximate the embedding required for subcompleteness, is to be able to transfer the consistency of $\in$-theories over one admissible structure to another.
\begin{defn} If $N$ is a transitive $\ZFC^-$ model, let $\delta_N$ be the least $\delta$ such that $L_\delta(N)$ is admissible. \end{defn}

\begin{fact}[\textbf{\emph{Transfer}}]\label{fact:Transfer} Suppose that for $M$ admissible, $\mathcal L(M)$ is a $\Sigma_1(M)$ infinitary $\in$-theory. Let $N_1$ be almost full, and suppose that $k: N_1 \prec N_0$ cofinally. If both $\mathcal L(L_{\delta_{N_1}}(N_1))$ is $\Sigma_1$ over parameters $N_1, p_1, \dots, p_n \in N_1$ and $\mathcal L(L_{\delta_{N_0}}(N_0))$ is $\Sigma_1$ over parameters $N_0, k(p_1), \dots, k(p_n)$, then  
if $\mathcal L(L_{\delta_{N_1}}(N_1))$ is consistent, it follows that $\mathcal L(L_{\delta_{N_0}}(N_0))$ is consistent as well. \end{fact}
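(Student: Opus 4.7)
The plan is to argue by contrapositive, exploiting that inconsistency of a $\Sigma_1$-definable $\in$-theory over a countable admissible structure is itself a $\Sigma_1$-statement, and reflecting this statement across a $\Sigma_1$-elementary lift of $k$.

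First, I would suppose toward contradiction that $\mathcal L(L_{\delta_{N_0}}(N_0))$ is inconsistent. By the soundness direction of Barwise's theorem (the contrapositive of Fact \ref{fact:completeness} together with Fact \ref{fact:correctness}), inconsistency of a $\Sigma_1(M)$ $\in$-theory over an admissible $M$ is witnessed by an $M$-finite formal proof of $\bot$, whose existence is a $\Sigma_1$-assertion in the parameters defining the theory. Thus, working in $L_{\delta_{N_0}}(N_0)$, the inconsistency of $\mathcal L(L_{\delta_{N_0}}(N_0))$ is a $\Sigma_1$-statement with parameters $N_0, k(p_1), \dots, k(p_n)$.

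Next, I would lift $k$ to a map $\tilde k : L_{\delta_{N_1}}(N_1) \to L_{\delta_{N_0}}(N_0)$ by induction on the constructible levels over $N_i$, using that the hierarchy $\alpha \mapsto L_\alpha(N)$ is uniformly $\Sigma_1$-definable in $N$. Because $k$ is fully elementary and cofinal, and because the almost fullness of $N_1$ places $N_1$ regularly inside a solid $\ZFC^-$ model so that $L_{\delta_{N_1}}(N_1)$ is a genuine admissible hull with the expected closure properties, this extension is well-defined and at least $\Sigma_1$-elementary: cofinality of $k$ guarantees that $\tilde k$ cannot miss existential witnesses sitting above $k \mathbin{''}N_1$, and full elementarity at the level of $N_1,N_0$ ensures that $\tilde k$ respects the distinguished parameters, in particular mapping $N_1 \mapsto N_0$ and each $p_i \mapsto k(p_i)$.

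Finally, I would pull the inconsistency statement back through $\tilde k$: since $\tilde k$ is $\Sigma_1$-elementary and sends the parameter tuple $(N_1,p_1,\dots,p_n)$ to $(N_0,k(p_1),\dots,k(p_n))$, and since the $\Sigma_1$-definition of the theory $\mathcal L$ is uniform in these parameters (this is precisely the hypothesis), the $\Sigma_1$-assertion ``$\mathcal L(L_{\delta_{N_0}}(N_0))$ is inconsistent'' reflects to the $\Sigma_1$-assertion ``$\mathcal L(L_{\delta_{N_1}}(N_1))$ is inconsistent,'' contradicting our consistency hypothesis.

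The main obstacle will be rigorously establishing that $\tilde k$ attains $\Sigma_1$-elementarity between the admissible hulls while correctly handling $N_1$ and $N_0$ themselves as parameters. This is where almost fullness does the work: it gives an ambient solid $\mathfrak A$ containing $N_1$ in which the levels $L_\alpha(N_1)$ can be formed without pathology, so the lift $\tilde k$ can be defined unambiguously and its elementarity extracted from the cofinality of $k$. A secondary subtlety is the book-keeping to ensure the specific $\Sigma_1$-definitions of the two theories really are instances of one uniform $\Sigma_1$-schema under the reparametrization $(N_1,\vec p) \mapsto (N_0,k(\vec p))$; once that is in place, the reflection argument is routine.
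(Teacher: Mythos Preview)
The paper does not give a proof of this statement: it is recorded as a \emph{Fact} (attributed to Jensen's Barwise-theoretic machinery) and left unproved, so there is no paper argument to compare your proposal against.

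That said, your outline is broadly the standard strategy and is in the right spirit. One point to be careful about: you propose to ``lift $k$ to a map $\tilde k : L_{\delta_{N_1}}(N_1) \to L_{\delta_{N_0}}(N_0)$ by induction on the constructible levels,'' but this is not automatic. The ordinals $\delta_{N_1}$ and $\delta_{N_0}$ are defined externally as the least heights giving admissibility, and there is no reason the level-by-level extension of $k$ should land exactly in $L_{\delta_{N_0}}(N_0)$ or be cofinal there. The role of almost fullness is more delicate than you indicate: one typically uses the ambient solid $\mathfrak A \supseteq N_1$ together with Fact~\ref{fact:solidliftup} to produce a solid liftup $\mathfrak A_*$ containing $N_0$, and then argues about admissibility and $\Sigma_1$-reflection inside the well-founded cores, rather than directly constructing $\tilde k$ between the minimal admissible hulls. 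Your contrapositive reduction via the $\Sigma_1$-expressibility of inconsistency is correct; the gap is in the construction and target of $\tilde k$.
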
 

\section{Liftups}
\label{sec:Liftups}
The following definitions are to describe a method to obtain emeddings, a technique that is ostensibly the ultrapower construction. These embeddings facilitate the use of Barwise theory to obtain the consistency of the existence of desirable embeddings. We follow \cite[Chapter 1]{Jensen:2012fr} here.
%\begin{defn} A map $\pi: \overline{\mathfrak A} \to \mathfrak A$ is $\Sigma_n$-preserving ($\pi: \overline{\mathfrak A} \to_{\Sigma_0} \mathfrak A$) if and only if for all $\Sigma_n$-formulas and all $x_1, \dots x_n \in \overline{\mathfrak A}$, $$\overline{\mathfrak A} \models \varphi[x_1, \dots x_n] \iff \mathfrak A \models \varphi[\pi(x_1), \dots, \pi(x_n)].$$ \end{defn}

\begin{defn} Let $\N$ and $N$ be transitive $\ZFC^-$ models. We say that an elementary embedding $\sigma: \N \prec N$ is \emph{\textbf{cofinal}} so long as for each $x \in N$ there is some $u \in \N$ such that $x \in \sigma(u)$. 

Let $\alpha \in \N$. We say that $\sigma$ is \emph{\textbf{$\alpha$-cofinal}} so long as every such $u$ has size less than $\alpha$ as computed in $\N$. \end{defn}
	
\begin{defn} Let $\alpha > \omega$ be a regular cardinal in $\N$, a transitive $\ZFC^-$ model. Let 
	$$\overline \sigma: H^{\N}_\alpha \prec H \text{ cofinally,}$$ where $H$ is transitive. By a \emph{\textbf{transitive liftup}} of $\langle \N, \overline \sigma \rangle$ we mean a pair $\langle N_* , \sigma_* \rangle$ such that 
\begin{itemize} 
	\item $N_*$ is transitive
	\item $\sigma_*:\N \prec N_*$ $\alpha$-cofinally
	\item $\sigma_* \upharpoonright H_{\alpha}^{\N}= \overline \sigma$ \qedhere
\end{itemize}	
\end{defn}

Reminiscent of ultrapowers, transitive liftups can be characterized in the following way:
\begin{lem}[Jensen] \label{lem:liftupchar} Let $\N$, $N$ be transitive $\ZFC^-$ models with $\sigma: \N \prec N$. Then,
$\sigma$ is $\alpha$-cofinal $\iff$ elements of $N$ are of the form $\sigma(f)(\beta)$ for some $f: \gamma \to \overline N$ where $\gamma < \alpha$ and $\beta < \sigma(\gamma)$. \end{lem}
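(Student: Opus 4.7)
The plan is to prove both directions directly from the definition of $\alpha$-cofinality, using elementarity of $\sigma$ and the fact that $\N \models \ZFC^-$ (so $\N$ has choice internally and can produce surjections from ordinals).

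For the forward direction, assume $\sigma$ is $\alpha$-cofinal and let $x \in N$. By $\alpha$-cofinality, choose $u \in \N$ with $x \in \sigma(u)$ and $|u|^{\N} < \alpha$. Inside $\N$, pick a surjection $f : \gamma \twoheadrightarrow u$ with $\gamma < \alpha$; this $f$ lies in $\N$. By elementarity, $\sigma(f) : \sigma(\gamma) \twoheadrightarrow \sigma(u)$ is a surjection in $N$, so there exists $\beta < \sigma(\gamma)$ with $\sigma(f)(\beta) = x$, which is the required representation.

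For the backward direction, suppose every $x \in N$ can be written as $\sigma(f)(\beta)$ for some $f : \gamma \to \N$ in $\N$ with $\gamma < \alpha$ and $\beta < \sigma(\gamma)$. Set $u := \ran(f) \in \N$; then $|u|^{\N} \leq \gamma < \alpha$. Since being the range of $f$ on $\gamma$ is expressible, elementarity gives $\sigma(u) = \sigma(f)\,``\,\sigma(\gamma)$, and so $x = \sigma(f)(\beta) \in \sigma(u)$. This witnesses $\alpha$-cofinality at $x$.

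There is no real obstacle here; both directions are essentially unwinding the definitions. The one place to be mildly careful is that on the forward side we must invoke AC inside $\N$ to convert ``$|u|^{\N} < \alpha$'' into an actual surjection $f : \gamma \twoheadrightarrow u$ with $\gamma < \alpha$, and on the backward side we must apply $\sigma$ to the $\N$-definable object $\ran(f)$ (rather than working externally) in order to conclude $\sigma(\ran(f)) = \sigma(f)\,``\,\sigma(\gamma)$ by elementarity. Once these points are handled, the equivalence is immediate.
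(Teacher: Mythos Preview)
Your proof is correct and follows essentially the same approach as the paper's: both directions unwind the definitions, using a surjection (the paper uses a bijection, but either suffices) from an ordinal $\gamma<\alpha$ onto $u$ in the forward direction, and taking $u=\ran(f)=f``\gamma$ in the backward direction with elementarity giving $\sigma(u)=\sigma(f)``\sigma(\gamma)$.
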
 
\begin{proof} We show each direction of the equivalence separately.
	\begin{paragraph}{``$\Longrightarrow$":} Let $x \in N$, and take $u \in \overline N$ with $x \in \sigma(u)$ such that $|u| <\alpha$ in $\overline N$. Let $|u|=\gamma$, and take $f:\gamma \to u$ a bijection in $\overline N$. Then $\sigma(f):\sigma(\gamma) \to \sigma(u)$ is also a bijection in $N$ by elementarity. Since $x \in \sigma(u)$ we also have that $x$ has a preimage under $\sigma(f)$, say $\beta$. So $\sigma(f)(\beta)=x$ as desired. \end{paragraph}
	\begin{paragraph}{``$\Longleftarrow$":} Let $x=\sigma(f)(\beta)$ be an element of $N$, for $f:\gamma \to \overline N$ where $\gamma < \alpha$ in $\overline N$ and $\beta < \sigma(\gamma)$. Define $u = f``\gamma$. Then in $\overline N$ we have that $|u|<\alpha$. In addition we have that $x \in \sigma(u)$, since $\sigma(u)$ is in the range of $\sigma(f)$, where $x$ lies. \end{paragraph}
\end{proof}

Furthermore, Jensen shows that transitive liftups exist so long as an embedding already exists, using an ultrapower-like construction, and have a uniqueness property.

\begin{fact}[\textbf{\emph{Interpolation}}]\label{fact:Interpolation} Let $\sigma:\N \prec N$ with $\N \models \ZFC^-$ transitive, and let $\alpha \in \N$ be a regular cardinal. Then: \begin{enumerate}
	\item The transitive liftup $\langle N_*, \sigma_* \rangle$ of $\langle \N, \sigma \rest H^{\N}_\alpha \rangle$ exists.
	\item There is a unique $k_*:N_* \prec N$ such that $k_* \circ \sigma_* = \sigma$ and $k_* \rest \bigcup \sigma `` H^{\N}_\alpha = \id$.
\end{enumerate}
\end{fact}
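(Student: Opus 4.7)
The plan is to construct $N_*$ as the Mostowski collapse of a suitable elementary substructure of $N$, with $k_*$ arising as the inverse of this collapse. Concretely, define
$$N_0 := \{\sigma(f)(\beta) : f \in \N,\ \dom(f) = \gamma < \alpha,\ \beta < \sigma(\gamma)\} \subseteq N.$$
I would first verify that $N_0 \prec N$ by Tarski-Vaught. Given $y_i = \sigma(f_i)(\beta_i) \in N_0$ with $\dom(f_i) = \gamma_i < \alpha$ and $N \models \exists x\,\varphi(x, y_1, \ldots, y_k)$, use collection inside $\N$ to produce a function $g \in \N$ with domain $\gamma_1 \times \cdots \times \gamma_k$ (which lies in $H_\alpha^{\N}$ by regularity of $\alpha$) that picks witnesses pointwise; by elementarity of $\sigma$, $\sigma(g)(\beta_1, \ldots, \beta_k)$ is then a witness lying in $N_0$, after coding the product as an ordinal below $\alpha$ via a bijection in $\N$.

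Next, let $\pi \colon N_0 \to N_*$ be the Mostowski collapse, which exists because the $\in$-relation on $N_0$ is well-founded (inherited from $N$) and extensional (since $N_0 \prec N$ reflects $\exists z(z \in x)$ for any $x \neq y$ witnessing their symmetric difference). Define $\sigma_* := \pi \circ \sigma$ as a map $\N \to N_*$, and $k_* := \pi^{-1}\colon N_* \to N$; both are elementary. Set $H := \bigcup \sigma `` H_\alpha^{\N}$, which is transitive since $H_\alpha^{\N}$ is closed under $\bigcup$. I would then observe $H \subseteq N_0$: any $y \in \sigma(u)$ with $u \in H_\alpha^{\N}$ equals $\sigma(h)(\nu)$ for an $\N$-enumeration $h\colon |u|^{\N} \to u$ and some $\nu < \sigma(|u|^{\N})$. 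A short $\in$-induction on $H$, using transitivity of $H$ together with the recursive definition of the Mostowski collapse, then shows $\pi \rest H = \id$. From this and the definition of $N_0$ one reads off $\sigma_* \rest H_\alpha^{\N} = \overline\sigma$, and $\alpha$-cofinality of $\sigma_*$ follows from Lemma \ref{lem:liftupchar}: for $y = \pi(\sigma(f)(\beta)) \in N_*$ one has $\beta \in \sigma(\gamma) \subseteq H$, hence $y = \sigma_*(f)(\beta)$ with $\beta < \sigma_*(\gamma)$.

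For part (2), $k_* \circ \sigma_* = \sigma$ is immediate, and $k_* \rest H = \id$ is just $\pi \rest H = \id$. Uniqueness of $k_*$ is forced by these two conditions, since every element of $N_*$ is of the form $\sigma_*(f)(\beta)$ with $\beta \in H$, so any $k_*'$ satisfying both must send it to $\sigma(f)(\beta) = k_*(\sigma_*(f)(\beta))$. The step I expect to require the most care is the verification that $\pi \rest H = \id$; although the $\in$-induction itself is short, it rests on combining transitivity of $H$, the cofinality of $\overline\sigma$ (ensuring that every element of $H$ is reached by an induction that builds up from images of $H_\alpha^{\N}$), and the recursive clause defining the Mostowski collapse. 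All other steps reduce to routine applications of elementarity of $\sigma$ and the Tarski-Vaught test.
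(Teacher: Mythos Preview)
The paper does not actually prove this fact; it is stated as a result of Jensen with only the remark that Jensen establishes it ``using an ultrapower-like construction.'' Your argument is correct and is one of the two standard routes to the liftup: define the hull $N_0 = \{\sigma(f)(\beta) : f \in \N,\ \dom f = \gamma < \alpha,\ \beta < \sigma(\gamma)\}$ inside $N$, verify $N_0 \prec N$ by Tarski--Vaught, and collapse. The alternative implicit in the paper's remark builds $N_*$ directly as equivalence classes of pairs $(f,\beta)$ modulo the relation $(f,\beta) \sim (g,\beta')$ iff $\sigma(f)(\beta) = \sigma(g)(\beta')$, with a \L o\'s-style argument for elementarity of $\sigma_*$; the factor map $k_*$ is then defined afterward by $[(f,\beta)] \mapsto \sigma(f)(\beta)$. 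Your approach has the advantage that $k_*$ comes for free as the inverse collapse and its elementarity is automatic, whereas the ultrapower route makes the existence of $N_*$ independent of any ambient $N$ (relevant when only $\overline\sigma$ is given, not a full $\sigma$). Since here a full $\sigma$ is assumed, the two constructions yield canonically isomorphic objects and your choice is the more economical one.

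One small point worth tightening in your write-up: when you verify $\sigma_* \rest H_\alpha^{\N} = \overline\sigma$, you implicitly use that $\sigma(x) \in H$ for $x \in H_\alpha^{\N}$; this holds because $\{x\} \in H_\alpha^{\N}$ and hence $\sigma(x) \in \sigma(\{x\}) \subseteq H$, but it is worth making explicit since $H$ is defined as a union of images rather than the image itself.
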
 

For the following useful lemma, we will need to define the following, more general, notion of liftups. Of course the rich theory is established by Jensen, and is explored in detail in his notes. We will only use this more general definition for the following lemma, which is why we did not introduce liftups in this way from the beginning.
\begin{defn} Let $\mathfrak A$ be a solid model of $\ZFC^-$ and let $\tau \in \wfc(\mathfrak A)$ be an uncountable cardinal in $\mathfrak A$. Let $$\sigma: H_{\tau}^\mathfrak A \prec H \text{ cofinally,}$$ where $H$ is transitive. Then by a \textit{\textbf{liftup}} of $\langle \mathfrak A, \sigma\rangle$, we mean a pair $\langle {\mathfrak A}_*, \sigma_* \rangle$ such that 
\begin{itemize}
	\item $\sigma_* \supseteq \sigma$
	\item ${\mathfrak A}_*$ is solid
	\item $\sigma_*: \mathfrak A \to_{\Sigma_0} {\mathfrak A}_*$ $\tau$-cofinally
	\item $H \in \wfc({\mathfrak A}_*)$ \qedhere
\end{itemize}
\end{defn}
\begin{fact}[Jensen]\label{fact:solidliftup} Let $\mathfrak A$ be a solid model of $\ZFC^-$. Let $\tau > \omega$, $\tau \in \wfc(\mathfrak A)$, and let $$\sigma: H_{\tau}^{\mathfrak A} \prec H \text{ cofinally,}$$  where $H$ is transitive. Then $\langle \mathfrak A, \sigma \rangle$ has a liftup $\langle \mathfrak A_*, \sigma_* \rangle$.
\end{fact}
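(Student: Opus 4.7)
The plan is to perform an ultrapower-style construction, using $\sigma$ in place of an ultrafilter. Let $F$ be the class of pairs $(f, a)$ such that, in $\mathfrak{A}$, $f$ is a function with domain $d_f \in H_\tau^{\mathfrak A}$, and $a \in \sigma(d_f)$. Each such pair is intended to represent the element $\sigma_*(f)(a)$ of the liftup. Since $\tau$ is an uncountable cardinal of $\mathfrak{A}$, $H_\tau^{\mathfrak A}$ is closed, inside $\mathfrak{A}$, under finite products and under taking subsets of its elements, so the construction stays within $H_\tau^{\mathfrak A}$ when we need it to.

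For $(f, a), (g, b) \in F$ put $d = d_f \times d_g$, and using the above closure form the sets $A^{=} = \{(x, y) \in d : f(x) = g(y)\}$ and $A^{\in} = \{(x, y) \in d : f(x) \in g(y)\}$, both of which lie in $H_\tau^{\mathfrak A}$. Declare $(f, a) \sim (g, b)$ iff $\langle a, b \rangle \in \sigma(A^{=})$, and $(f, a) \,\widetilde E\, (g, b)$ iff $\langle a, b \rangle \in \sigma(A^{\in})$; pull back the additional predicates of $\mathfrak{A}$ by the same device. Using the elementarity of $\sigma$ on $H_\tau^{\mathfrak A}$, I would check that $\sim$ is an equivalence relation, $\widetilde E$ descends to equivalence classes, and the quotient $\mathfrak{A}_* := F/{\sim}$ satisfies a \L o\'s-style theorem for $\Sigma_0$-formulas: after arranging a common domain $d$ via products,
\[
\mathfrak{A}_* \models \varphi([f_1, a_1], \dots, [f_n, a_n]) \iff \langle a_1, \dots, a_n \rangle \in \sigma\bigl(\{\vec{x} \in d : \mathfrak{A} \models \varphi(f_1(\vec{x}), \dots, f_n(\vec{x}))\}\bigr).
\]

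Define $\sigma_*(x) = [c_x, \emptyset]$ where $c_x : \{\emptyset\} \to \{x\}$ is the constant function with value $x$. By \L o\'s, $\sigma_*$ is a $\Sigma_0$-elementary embedding, and a direct \L o\'s computation gives the identity $[f, a] = \sigma_*(f)(a)$ in $\mathfrak{A}_*$, whence $\tau$-cofinality. For each $d \in H_\tau^{\mathfrak A}$ the map $\iota_d : a \mapsto [\id_d, a]$ on $\sigma(d)$ preserves membership (by \L o\'s applied to $x \in y$); since $H = \bigcup_{d \in H_\tau^{\mathfrak A}} \sigma(d)$ by cofinality of $\sigma$, these assemble into an $\in$-isomorphism $\iota : H \hookrightarrow \mathfrak{A}_*$ realising $\sigma_* \supseteq \sigma$ under this identification. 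The element $\sigma_*(H_\tau^{\mathfrak A}) \in \mathfrak{A}_*$ then represents $H$ itself, since after normalizing representatives its $\widetilde E$-extension is exactly the range of $\iota$.

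The main obstacle is verifying solidity of $\mathfrak{A}_*$ — that $\wfc(\mathfrak{A}_*)$ is transitive with its membership relation being the genuine $\in$. This I would argue by $E$-induction: if $[f, a] \in \wfc(\mathfrak{A}_*)$ then every $\widetilde E$-predecessor of $[f, a]$ inherits well-foundedness, and inductively each corresponds to an honest set already in $\wfc(\mathfrak{A}_*)$. The delicate point is that the collection of $\widetilde E$-predecessors of $[f, a]$ is representable inside $\mathfrak{A}_*$ by a function drawn from $\mathfrak{A}$ — concretely one selects, uniformly in $x \in d_f$ and within $\mathfrak{A}$, an enumeration of $f(x)$ — so that the collapse proceeds coherently inside the model. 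Solidity of $\mathfrak{A}$ is precisely what makes this internal step succeed and yields the solidity of $\mathfrak{A}_*$, completing the construction of the liftup.
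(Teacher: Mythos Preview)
The paper does not give a proof of this fact; it is stated without argument and attributed to Jensen (the surrounding text refers the reader to Jensen's notes for the theory of liftups). So there is no proof in the paper to compare against.

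That said, your approach is the standard one and is essentially what Jensen does: build $\mathfrak{A}_*$ as a term model of pairs $(f,a)$ with $f$ an $\mathfrak{A}$-function on some $d\in H_\tau^{\mathfrak A}$ and $a\in\sigma(d)$, prove a \L o\'s-type lemma for $\Sigma_0$-formulas, and read off $\tau$-cofinality and $\sigma_*\supseteq\sigma$ from the representation $[f,a]=\sigma_*(f)(a)$. The one place where your sketch is thin is exactly the place you flag: solidity of $\mathfrak{A}_*$ and the fact that $H\in\wfc(\mathfrak{A}_*)$. The idea you describe---collapsing the $\widetilde E$-extension of a well-founded point by choosing, uniformly in $\mathfrak{A}$, enumerations of the $f(x)$'s---is the right one, but carrying it out needs a careful rank-induction inside $\wfc(\mathfrak{A}_*)$ together with the solidity of $\mathfrak{A}$ to guarantee that the relevant choice functions live in $\mathfrak{A}$. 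As written this paragraph is a promissory note rather than a proof; if you intend this as a complete argument you should spell out that induction explicitly.
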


The following lemma states that transitive liftups of full models are almost full.

\begin{lem}[Jensen]\label{lem:liftupfull} Let $N =L_\tau[A] \models \ZFC^-$ and $\sigma: \N \prec N$ where $\N$ is full. Suppose that $\langle N_*, \sigma_* \rangle$ is a transitive liftup of $\langle \N, \overline \sigma \rangle$. Then $N_*$ is almost full. \end{lem}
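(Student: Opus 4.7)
The plan is to use the fullness of $\N$ to find an ambient $\ZFC^-$-structure $L_\gamma(\N)$ in which $\N$ is regular, take its liftup along $\overline\sigma$ via Fact \ref{fact:solidliftup}, and check that the resulting solid structure witnesses the almost-fullness of $N_*$.

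By fullness, fix $\gamma$ such that $\mathfrak A := L_\gamma(\N) \models \ZFC^-$ and $\N$ is regular in $\mathfrak A$, so by Lemma \ref{lem:regularityequiv} we have $\N = H_{\height(\N)}^{\mathfrak A}$ with $\height(\N)$ regular in $\mathfrak A$. In particular $H_\alpha^{\N} = H_\alpha^{\mathfrak A}$, so $\overline\sigma\colon H_\alpha^{\mathfrak A} \prec H$ cofinally, and Fact \ref{fact:solidliftup} produces a liftup $\langle \mathfrak A_*, \sigma_{**}\rangle$ with $\mathfrak A_*$ solid, $\mathfrak A_* \models \ZFC^-$, $H \in \wfc(\mathfrak A_*)$, and $\sigma_{**}\colon \mathfrak A \to_{\Sigma_0} \mathfrak A_*$ being $\alpha$-cofinal. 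The uniqueness inherent in the ultrapower-style construction (analogous to Fact \ref{fact:Interpolation}) will give $\sigma_{**} \rest \N = \sigma_*$, since $\sigma_{**}\rest \N$ is already $\alpha$-cofinal into $\sigma_{**}(\N)^{\mathfrak A_*}$ and extends $\overline\sigma$.

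Next I would verify $\sigma_{**}(\N)^{\mathfrak A_*} = N_*$ in extension. By $\alpha$-cofinality every element of $\sigma_{**}(\N)^{\mathfrak A_*}$ has the form $\sigma_{**}(f)(\beta)$ for some $f \in \mathfrak A$ with $f\colon \delta \to \mathfrak A$, $\delta < \alpha$, and $\beta < \sigma_{**}(\delta)$; since the value lies in $\sigma_{**}(\N)$ one may arrange $f$ to map into $\N$, and regularity of $\N$ in $\mathfrak A$ then places $f \in \N$, so $\sigma_{**}(f)(\beta) = \sigma_*(f)(\beta) \in N_*$. The reverse inclusion is immediate from Lemma \ref{lem:liftupchar}. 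Since $N_*$ is transitive, the $\in_{\mathfrak A_*}$-closure of $\{\sigma_{**}(\N)\}$ is $\{N_*\}\cup N_*$, which is well-founded, so $N_* \in \wfc(\mathfrak A_*)$.

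Finally, to check that $N_*$ is regular in $\mathfrak A_*$, take $f \in \mathfrak A_*$ with $f\colon x \to N_*$ and $x \in N_*$. Writing $f = \sigma_{**}(\tilde f)(\beta_1)$ and $x = \sigma_{**}(\tilde x)(\beta_2)$ by $\alpha$-cofinality, define in $\mathfrak A$ a function $\tilde h$ on $\delta_1 \times \delta_2$ by $\tilde h(\xi_1,\xi_2) = \tilde f(\xi_1)``\tilde x(\xi_2)$ whenever $\tilde f(\xi_1)$ is a function from $\tilde x(\xi_2)$ into $\N$, and $\emptyset$ otherwise. By regularity of $\N$ in $\mathfrak A$ each value lies in $\N$, and another application of regularity (to the graph of $\tilde h$) places $\tilde h \in \N$. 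The $\Sigma_0$-elementarity of $\sigma_{**}$ applied to the defining property of $\tilde h$ then gives $f``x = \sigma_{**}(\tilde h)(\beta_1,\beta_2) = \sigma_*(\tilde h)(\beta_1,\beta_2) \in N_*$ by transitivity of $N_*$. The main obstacle will be the bookkeeping in this last step — choosing the right representatives and verifying that the defining $\Sigma_0$ conditions really transfer via $\sigma_{**}$ — but everything ultimately rests on the uniqueness of the liftup construction together with the regularity of $\N$ in $L_\gamma(\N)$ that fullness supplies.
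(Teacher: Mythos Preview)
Your overall strategy---lift $\mathfrak A = L_\gamma(\N)$ via Fact~\ref{fact:solidliftup} and show the resulting solid $\mathfrak A_*$ witnesses almost fullness of $N_*$---matches the paper's. The key difference is \emph{which map you lift along}. You apply Fact~\ref{fact:solidliftup} to $\overline\sigma\colon H_\alpha^{\mathfrak A}\prec H$, obtaining an $\alpha$-cofinal $\sigma_{**}$ that extends only $\overline\sigma$; this forces you to argue separately that $\sigma_{**}\rest\N=\sigma_*$ and that $\sigma_{**}(\N)^{\mathfrak A_*}=N_*$. The paper instead applies Fact~\ref{fact:solidliftup} to $\sigma_*\colon H_{\height(\N)}^{\mathfrak A}=\N\prec N_*$ (which is cofinal since it is $\alpha$-cofinal). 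This buys three things for free: the resulting ${\sigma_*}_*$ extends $\sigma_*$ itself; $N_*$ is literally the ``$H$'' in the liftup definition, so $N_*\in\wfc(\mathfrak A_*)$ immediately; and regularity is obtained by showing $N_*=H_{\tau_*}^{\mathfrak A_*}$ with $\tau_*={\sigma_*}_*(\height(\N))$, then invoking Lemma~\ref{lem:regularityequiv}, rather than by tracking representatives.

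Your argument can likely be completed, but there is a genuine gap in the uniqueness step. You invoke uniqueness of transitive liftups to get $\sigma_{**}\rest\N=\sigma_*$, but at that point you do not yet know that $\sigma_{**}(\N)^{\mathfrak A_*}$ lies in $\wfc(\mathfrak A_*)$: Fact~\ref{fact:solidliftup} only guarantees $H\in\wfc(\mathfrak A_*)$, and $H$ may be a proper initial segment of $N_*$. Without well-foundedness of the target you cannot invoke the uniqueness clause of Fact~\ref{fact:Interpolation}, and your subsequent claim that the $\in_{\mathfrak A_*}$-closure of $\{\sigma_{**}(\N)\}$ equals $\{N_*\}\cup N_*$ already presupposes the identification you are trying to establish. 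One can close this circularity by going into the internals of the liftup construction and checking that the representing pairs $(f,\beta)$ determine the same equivalence relation on both sides, but the paper's choice of lifting along $\sigma_*$ rather than $\overline\sigma$ simply sidesteps all of this.
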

\begin{proof}
	Let $L_\gamma(\N)$ witness the fullness of $\N$. We will now apply Interpolation (\emph{Fact} \ref{fact:Interpolation}) to $\mathfrak A= L_\gamma(\N)$, which makes sense since certainly $\mathfrak A$ is a transitive model $\ZFC^-$. Additionally, by \textbf{Lemma \ref{lem:regularityequiv}} we have that $\N = H_\tau^{\mathfrak A}$, where $\tau = \height(\N)$. Since $\langle N_*, \sigma_* \rangle$ is a transitive liftup, we have that $$\sigma_*: H_\tau^{\mathfrak A} \prec N_* \text{ cofinally,}$$ where $N_*$ is transitive. Thus since $\mathfrak A$ is transitive, $\langle \mathfrak A, \sigma_* \rangle$ has a liftup $\langle \mathfrak A_*, {\sigma_*}_* \rangle$, where $\mathfrak A_* \models \ZFC^-$ since $\mathfrak A$ does, $\mathfrak A_*$ is solid, where $${\sigma_*}_* : \mathfrak A \prec \mathfrak A_* \text{ $\tau$-cofinally.}$$ 
	We have that  $N_* \subseteq \wfc({\mathfrak A}_*)$ and $\tau_*= {\sigma_*}_*(\tau)=\height(N_*)$ is regular since $\tau$ is. Furthermore, we will show that $N_* = H_{\tau_*}^{\mathfrak A_*}$, completing the proof:
	
	Certainly it is the case that $N_* \subseteq H_{\tau_*}^{\mathfrak A^*}$. 
	But if $x \in H_{\tau_*}$ in $\mathfrak A_*$, then by regularity we have that $x \in {\sigma_*}_*(u)$ in $\mathfrak A_*$, where $u \in \mathfrak A$, and $|u| < \tau$ in $\mathfrak A$. Let $v=u \cap H_\tau$ in $\mathfrak A$. Then $v \in \N$, since $\N$ is regular in $\mathfrak A$. But then $x \in \sigma_*(v) \in N_*$. So $x \in N_*$.
\end{proof}

\chapter{Subcomplete Forcing and Its Relatives}
\label{chap:SCandrelatives}

Before defining subproper forcing, it is useful to first give the relevant definition of proper forcing. Then we will see how the prefix ``sub" alters that definition to arrive at subproper forcing. Later on, we will draw comparisons to complete forcing and subcomplete forcing; showing how the ``sub" prefix alters the definition of complete forcing to finally obtain subcomplete forcing. 

\begin{figure}[h!]\begin{center}
    \begin{tikzpicture}
      \draw (0:0cm) circle (3.5cm);
      \node at (90:2.5cm) {\footnotesize Subproper};
      \draw (90:0cm) circle (1cm) node [text=black] {\footnotesize Complete};
      \node at (90:-0.4cm){\footnotesize ($\sigma$-cl.)};
      \node at (80:-1.4cm){?};
      \draw (180:1cm) circle (2cm);
      \node at (180:2cm) {\footnotesize Proper};
      \draw (330:1cm) circle (2cm);
      \node at (340:2cm){\footnotesize Subcomplete};
    \end{tikzpicture}\end{center}
\end{figure} 

The above diagram roughly represents the picture of how these forcing notions fit together. We shall see in  \ref{subsec:SCForcingandCCForcing} that complete forcing gives rise to the same class of forcing notions as countably closed forcing, so in particular, this means that countably closed forcing notions are subcomplete, and complete forcings are proper.

From the diagram it looks as though there might be forcing notions that are both subcomplete and proper but not countably closed. It is not clear whether this is the case, and is a topic of future study.

\begin{question} Is there a forcing notion that is both proper and subcomplete but not countably closed? \end{question}

\section{Subproper Forcing}
\label{sec:SubproperForcing}
The following relevant characterization of properness, roughly speaking, replaces the notion of genericity below a master condition with the lifting of an embedding below a master condition.

First recall a standard definition of properness as in \cite{Jech:2002qr}:
\begin{defn} A forcing notion $\P$ is \textbf{\emph{proper}} so long as for sufficiently large $\theta$, for all $\P \in X \preccurlyeq \langle H_\theta, < \rangle$ where $X$ is countable and $<$ is a well-order of $H_\theta$, and all $p \in \P \cap X$ there is a \textbf{\textit{master condition}} $p^* \leq p$ that is $X$-generic. Namely, whenever $p^* \in G$ is $\P$-generic, $G \cap X$ is a $\P$-generic filter over $X$. 

Alternatively $\P$ is proper so long as there is a large enough $\theta$ for which there is a club of such countable $X \preccurlyeq \langle H_\theta, < \rangle$ where $X \in \P$.
\end{defn}

The following proposition shows how the definition of properness is understood in the context of lifting embeddings on countable transitive structures. 

\begin{prop} 
A forcing notion $\P$ is proper $\iff$
for sufficiently large $\theta$ we have that; letting: \begin{enumerate}
	\item $\P \in H_\theta \subseteq N = L_\tau[A] \models \ZFC^-$ where $\tau>\theta$ and $A \subseteq \tau$
	\item $\sigma: \N \cong X \preccurlyeq N$ where $X$ is countable and $\N$ is transitive
	\item $\sigma(\overline \theta, \overline{\P}, \overline s)=\theta, \P, s$ for some $s \in N$;
\end{enumerate}
for any $\overline p \in \overline{\P}$ there is $p^* \leq \sigma(\overline p)$ in $\P$ such that whenever $G \ni p^*$ is $\P$-generic, $\sigma^{-1}``G = \G \subseteq \overline{\P}$ is an $\N$-generic filter. 

Thus in particular, below $p^*$ we have that $\sigma$ lifts to an elementary embedding $\sigma^*:\N[\G] \prec N[G]$.
\end{prop}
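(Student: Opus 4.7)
The plan is to prove both directions by passing between the slice $X \cap H_\theta$ and the full elementary substructure $X$ of $N$.

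For the forward direction, I would assume $\P$ is proper and take the setup $(1)$--$(3)$ as given. Since $\theta = \sigma(\overline \theta) \in X$ and $H_\theta$ is definable in $N$ from $\theta$, the Tarski--Vaught criterion ensures that $X_0 := X \cap H_\theta$ is a countable elementary substructure of $H_\theta$ containing $\P$ and $\sigma(\overline p)$. Properness then delivers $p^* \leq \sigma(\overline p)$ that is $X_0$-generic. To verify the desired property, let $G \ni p^*$ be $\P$-generic over $V$ and let $\overline D \in \N$ be dense in $\overline \P$. Then $\sigma(\overline D) \in X_0$ is dense in $\P$, so by $X_0$-genericity $G \cap \sigma(\overline D) \cap X_0 \neq \emptyset$, and any $q$ in this intersection satisfies $\sigma^{-1}(q) \in \overline D \cap \G$. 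Hence $\G$ is $\N$-generic, and $\sigma$ lifts to $\sigma^* : \N[\G] \prec N[G]$ by the standard forcing lifting lemma, using that $N = L_\tau[A]$ is definable in $N[G]$ from $A$.

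For the backward direction, let $Y \preccurlyeq \langle H_\theta, < \rangle$ be countable with $\P \in Y$ and $p \in \P \cap Y$; the goal is to produce a master condition below $p$. I would choose $\tau > \theta$ and $A \subseteq \tau$ with $N = L_\tau[A] \models \ZFC^-$ and $H_\theta \subseteq N$, additionally arranging that $A$ codes $<$ so that $N$'s canonical well-order restricted to $H_\theta$ coincides with $<$. Form $X = \SH^{N}(Y \cup \{\theta, \P\})$, a countable elementary substructure of $N$ with $Y \subseteq X$; let $\sigma : \N \cong X$ be the Mostowski collapse and $\overline p := \sigma^{-1}(p)$. The hypothesis yields $p^* \leq p$ such that $\G := \sigma^{-1}``G$ is $\N$-generic for every $\P$-generic $G \ni p^*$. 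The alignment of well-orders ensures $X \cap H_\theta = Y$: any $q \in X \cap H_\theta$ is $N$-definable from parameters $\vec y \in Y$, and by the well-order alignment this definition reduces to an $H_\theta$-definition, placing $q \in Y$ by elementarity. Consequently, for any dense $D \in Y$ of $\P$ we have $D \cap X \subseteq D \cap Y$, so the $\N$-genericity of $\G$ produces a witness $q \in D \cap Y \cap G$, showing $p^*$ is a $(Y,\P)$-master condition.

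The main obstacle is establishing $X \cap H_\theta = Y$ in the backward direction: this requires both a careful coding of the well-order $<$ into $A$ and the resulting absoluteness of $N$-definitions for $H_\theta$-elements from $Y$-parameters. The forward direction is by comparison bookkeeping, relying only on the observation that $X \cap H_\theta \preccurlyeq H_\theta$ and the routine translation between $\N$-genericity of $\G$ and $X_0$-genericity of $G \cap X_0$.
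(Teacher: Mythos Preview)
Your forward direction is fine, and in fact a bit more careful than the paper's own argument: the paper applies properness directly to $X \preccurlyeq N$ without explicitly passing to $X \cap H_\theta \preccurlyeq H_\theta$ as you do.

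The backward direction, however, has a real gap in the step where you claim $X \cap H_\theta = Y$. You write that any $q \in X \cap H_\theta$ is $N$-definable from parameters in $Y$, and that ``by the well-order alignment this definition reduces to an $H_\theta$-definition.'' But this reduction does not follow. The Skolem hull $X = \SH^N(Y \cup \{\theta,\P\})$ closes under definability in all of $N = L_\tau[A]$, which sees structure well above $H_\theta$. An $N$-formula with parameters from $Y$ can easily single out an element of $H_\theta$ that is not $\langle H_\theta, <\rangle$-definable from those parameters; for instance, it may refer to levels $L_\alpha[A]$ with $\theta \leq \alpha < \tau$. Coding $<$ into $A$ only guarantees that $\langle H_\theta, <\rangle$ is definable \emph{inside} $N$; it does not guarantee that $N$ has no additional definable Skolem functions on $H_\theta$. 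So there is no reason to expect $\SH^N(Y) \cap H_\theta = Y$ for an arbitrary countable $Y \preccurlyeq \langle H_\theta, <\rangle$.

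The paper circumvents this by not attempting to prove the statement for every such $Y$. Instead it uses the club characterization of properness and shows, via an elementary-chain construction, that the set
\[
C = \bigl\{\, X \preccurlyeq \langle H_\theta, <\rangle : X \text{ countable},\ \P \in X,\ \SH^N(X) \cap \P = X \cap \P \,\bigr\}
\]
is club in $[H_\theta]^\omega$. Note that only agreement on $\P$, not on all of $H_\theta$, is required; this weaker condition is what the chain argument actually delivers, and it is exactly what is needed to transfer $\N$-genericity of $\G$ back to $(X,\P)$-genericity. Your argument would be repaired by replacing the attempted well-order trick with this club construction.
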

\begin{proof} We show each direction of the biconditional separately.
\begin{paragraph}{``$\Longrightarrow$":} Let $\P$ be proper. 
In order to show that the alternative characterization is also satisfied, let $\theta$ be large enough so that we are in the scenario in which items $\boldsymbol{\mathit{1}} - \boldsymbol{\mathit{3}}$ hold, where $\pi: \N \cong X$.
	Let $\overline p \in \overline{\P}$. Then by the properness of $\P$ there is a master condition $p^* \leq \pi(\overline p)$. Let $G$ be generic for $\P$ with $p^* \in G$. Then $G \cap X$ is a $\P$-generic filter over $X$. Since $\pi$ is an isomorphism, it is clear that $\G = \pi^{-1}``(G \cap X)$ is $\overline \P$-generic over $\N$ since $G \cap X$ is generic over $X$.
\end{paragraph}
\begin{paragraph}{``$\Longleftarrow$":} 
Let $\theta$ be large enough so that the alternative characterization is satisfied for $\P$. 
To show that $\P$ is proper, we show that there are club-many countable $X \preccurlyeq \langle H_\theta, < \rangle$ such that $\P \in X$ and for all $p \in \P \cap X$ there is a master condition $p^* \leq p$ that is $X$-generic. 

To do this, first take $\langle H_\theta, < \rangle \in N = L_\tau[A] \models \ZFC^-$ with $\tau>\theta$ and $A \subseteq \tau$. We will show that 
	$$C=\set{ X \preccurlyeq \langle H_\theta, < \rangle }{ \text{$X$ is countable, $\P \in X$, and $\SH^N(X) \cap \P = X \cap \P$}} \subseteq \mathcal P_{\omega_1}(H_\theta) \text{ is club.}$$

\begin{claim} $C$ is club. \end{claim}
\begin{proof}[Pf.]
Let's show that 	$C$ is unbounded. Let $Z$ be a countable subset of $H_\theta$. We construct $X \supseteq Z$ such that $X \preccurlyeq \langle H_\theta, < \rangle$, $\P \in X$, and $\SH^N(X) \cap \P = X \cap \P$. We define $X$ inductively, by defining a sequence of $X_n \preccurlyeq H_\theta$ and $Y_n =\SH^N(X_n) \preccurlyeq N$.  Let $X_0 = \Sk{H_\theta}{Z}{\{\P\}}$. Assuming $X_n$ has been defined, let $X_{n+1}=\Sk{H_\theta}{X_n}{(Y_n \cap \P)}$. Let $X= \cup_{n<\omega} X_n$. Then we claim that $X \in C$ is as desired. Of course $\P \in X$ as $\P \in X_0$. Additionally, letting $Y:=\cup_{n<\omega}Y_n$, we have that $Y=\SH^N(X)$ as $X \subseteq Y$ and $Y$ is the smallest elementary substructure of $N$ containing $X$. So to show that $Y \cap \P = X \cap \P$, let $y \in Y \cap \P$. Then $y \in Y_n \cap \P$ for some $n<\omega$. Thus $y \in X_{n+1}$. And the same argument shows that $x \in X \cap \P$ means $x \in X_n$ for some $n<\omega$, meaning $x \in Y_{n+1}$. 

$C$ is also closed, since supposing there is a countable elementary chain of $X_n$ for $n<\omega$, the union $X_\omega=\cup_{n<\omega}$ is also a countable, elementary substructure of $H_\theta$. Furthermore, $\P \in X$ as $\P \in X_0$, and $\SH^N(X_\omega) = \cup_{n<\omega} \SH^N(X_n)$, so in an argument similar to the above, $\SH^N(X_\omega) \cap \P = X_\omega \cap \P$ as desired.

This completes the proof of the \textit{Claim}.
\end{proof}
%To show that $\P$ is proper, suppose that $\P \in X \preccurlyeq \langle H_\theta, < \rangle$ where $X$ is countable and $<$ is a well-order of $H_\theta$. Let $p \in \P \cap X$. We need to find a master condition $p^* \leq p$ which is $X$-generic. To do this, we use the alternative characterization. First find a suitable $N = L_\tau[A]$ so that: \begin{itemize}
%		\item $\langle H_\theta, < \rangle \in N = L_\tau[A] \models \ZFC^-$ with $\tau>\theta$ and $A \subseteq \tau$
%		\item $\pi: \N \cong \SH^N(X)=Y \preccurlyeq N$
%		\item $\pi(\overline \theta, \overline{\P}, \overline p)=\theta, \P, p$.
%	\end{itemize} 
%	Take $p^* \leq p$ and $p^* \in G$ so that $\sigma^{-1}``G = \G$ is $\overline{\P}$-generic. This means that $\pi``\G$ is $Y$-generic. To see that $\pi``\G$ is $X$-generic,  
%\qedhere
To see that $\P$ is indeed proper, let $X \in C$, and let $p \in \P \cap X$. We want to find a master condition $p^*$ below $p$. We now use the alternative characterization. Indeed, let \begin{itemize}
	\item $\sigma:\N \cong \SH^N(X)=Y \preccurlyeq N$ where $\N$ is transitive.
	\item $\sigma(\overline \theta, \overline{\P}, \overline p)=\theta, \P, p$.	
\end{itemize}
By the alternative characterization, there is $p^* \leq p$ such that whenever $p^* \in G$ where $G$ is $\P$-generic over $N$, we have that $\sigma^{-1}``G=\G$ is $\overline \P$-generic over $\N$. Then $\sigma``\G=G\cap Y$ is $\P \cap Y$-generic over $Y$. But since $\P \cap Y = \P \cap X$, we have that $G \cap X$ is $\P \cap X$-generic over $X$ as desired.
\end{paragraph}
\end{proof}
Subproperness is a weakening of properness which asks that below a condition, there is \textit{some} embedding in the extension that lifts, not necessarily the one we started with. Extra conditions for the embedding and the structures it deals with are needed; namely the replacement of transitivity with fullness is crucial, since otherwise there may not consitently be more than one such elementary embedding.

While proper forcings satisfy the \textit{\textbf{countable covering property}}, meaning that every countable set of ordinals in $V[G]$ is included in a set in $V$ that is countable in $V$, subproper forcings do not necessarily have to have this property in general. We shall see that subcomplete forcing notions are clearly all subproper, and there are subcomplete forcing notions that do not satisfy the countable covering property, e.g. Prikry forcing.

\begin{defn} 
A forcing notion $\P$ is \emph{\textbf{subproper}} so long as
for sufficiently large $\theta$ we have that whenever we are in the following standard setup: \begin{itemize}
	\item $\P \in H_\theta \subseteq N = L_\tau[A] \models \ZFC^-$ where $\tau>\theta$ and $A \subseteq \tau$
	\item $\sigma: \N \cong X \preccurlyeq N$ where $X$ is countable and $\N$ is full
	\item $\sigma(\overline \theta, \overline{\P}, \overline s)=\theta, \P, s$ for some $s \in N$;
\end{itemize}
then, for any $\overline q \in \overline{\P}$ there is $p \leq \sigma(\overline q)$ in $\P$ such that whenever $G \ni p$ is $\mathbb P$-generic, there is an embedding $\sigma'\in V[G]$ satisfying: \begin{enumerate}
	\item $\sigma': \N \prec N$
	\item $\sigma'(\overline \theta, \overline{\P}, \overline s)=\theta, \P, s$
	\item $\sk{N}{\delta(\P)}{\sigma'} = \Sk{N}{\delta(\P)}{X}$
	\item $\G = \sigma'^{-1}``G$ is an $\N$-generic filter.
\end{enumerate}
In particular, below the condition $p$ the  embedding $\sigma' \in V[G]$ \textit{lifts} by \textbf{4}, since $\sigma``\G \subseteq G$, to an embedding $\sigma'^* \in V[G]$ where $\sigma'^*:\N[\overline G] \prec N[G]$.

We say that such a $\theta$ as above \textit{\textbf{verifies the subproperness}} of $\P$.

Often we write $\delta$ instead of $\delta(\P)$ when there should be no confusion as to which poset $\P$ we are working with.\footnote{See Section \ref{sec:delta} for more on $\delta(\P)$.}
\end{defn}
Condition \textbf{3} ensures that subproper forcings may be iterated. Indeed, Jensen proves that there is an iteration theorem for subproper forcing, and uses it to show the consistency of $\textsf{SuPFA}$, the subproper forcing axiom, given the existence of a supercompact cardinal.

\begin{thm} \label{thm:SubproperMM}
Subproper forcings preserve stationary subsets of $\omega_1$.
\end{thm}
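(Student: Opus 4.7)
The plan is to argue by contradiction in the standard style: assume some $p \in \P$ forces a name $\dot C$ for a club in $\omega_1$ disjoint from $\check S$, and use subproperness to produce a countable ordinal witnessing a contradiction. So suppose $S \subseteq \omega_1$ is stationary in $V$ but there is $p \in \P$ and a $\P$-name $\dot C$ with $p \Vdash \text{``}\dot C \text{ is a club in } \check \omega_1 \text{ and } \dot C \cap \check S = \emptyset\text{''}$. Fix $\theta$ large enough to verify the subproperness of $\P$, and let $N = L_\tau[A] \supseteq H_\theta$ be the standard setup containing $\P, p, \dot C, S$.

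Next I would locate a suitable elementary submodel whose ``height" lands in $S$. By Lemma \ref{lem:fullclubs}, applied with $s = \{\P, p, \dot C, S\}$, the set of countable ordinals of the form $\omega_1^{\N}$, where $\sigma : \N \prec N$, $\N$ is full, and $s \in \ran(\sigma)$, contains a club $C \subseteq \omega_1$. Since $S$ is stationary, pick $\sigma : \N \prec N$ witnessing membership in $C$ with $\omega_1^{\N} \in S$. Let $\sigma(\overline \P, \overline p, \overline{\dot C}, \overline S) = \P, p, \dot C, S$. By elementarity, in $\N$ we have $\overline p \Vdash_{\overline \P} \text{``}\overline{\dot C} \text{ is a club in } \check \omega_1^{\N}\text{''}$.

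Now I would invoke subproperness: applying it to $\overline p \in \overline \P$, there is $p^* \leq \sigma(\overline p) = p$ in $\P$ such that for any $\P$-generic $G \ni p^*$, there exists $\sigma' \in V[G]$ with $\sigma' : \N \prec N$ satisfying $\sigma'(\overline \P, \overline p, \overline{\dot C}, \overline S) = \P, p, \dot C, S$ and such that $\G = \sigma'^{-1}{}``G$ is $\N$-generic, lifting $\sigma'$ to $\sigma'^* : \N[\G] \prec N[G]$. By Fact \ref{fact:CPofourEmbeddings}, $\cp(\sigma') = \omega_1^{\N}$, and $\sigma'$ is the identity on $\omega_1^{\N}$. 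In particular $\sigma'^*$ is still the identity on $\omega_1^{\N}$, while $\sigma'^*(\omega_1^{\N}) = \omega_1^N = \omega_1$. Note crucially that $\omega_1^{\N}$ does not depend on which embedding we use: this is why it is legitimate to have chosen $\omega_1^{\N} \in S$ in advance, before knowing $\sigma'$.

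Finally, I would derive the contradiction. Work inside $V[G]$ below $p^*$. In $\N[\G]$, $\overline{\dot C}^{\G}$ is a club in $\omega_1^{\N}$; in particular it is unbounded in $\omega_1^{\N}$. Applying $\sigma'^*$ and using that it fixes $\omega_1^{\N}$ pointwise, we get $\overline{\dot C}^{\G} \subseteq \sigma'^*(\overline{\dot C}^{\G}) = \dot C^G$. So $\dot C^G$ contains an unbounded subset of $\omega_1^{\N}$, and since $p^* \leq p \in G$ forces $\dot C$ to be closed in $\omega_1$, it follows that $\omega_1^{\N} \in \dot C^G$. But also $\omega_1^{\N} \in S$ by choice of $\sigma$, contradicting $p^* \Vdash \dot C \cap \check S = \emptyset$. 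The main obstacle is really the bookkeeping in step three, ensuring that the ``full submodel'' club from Lemma \ref{lem:fullclubs} can be intersected with $S$ in such a way that the resulting $\omega_1^{\N}$ is stable across all the embeddings produced by subproperness; but this is automatic, since $\omega_1^{\N}$ is an invariant of $\N$, not of any particular embedding into $N$.
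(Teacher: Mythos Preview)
Your proof is correct and follows essentially the same approach as the paper's: contradiction via Lemma~\ref{lem:fullclubs} to land $\omega_1^{\N}$ in $S$, then subproperness applied to $\overline p$ to lift an embedding and observe that $\overline{\dot C}^{\G}$ sits inside $\dot C^G$ and is unbounded below $\omega_1^{\N}$, forcing $\omega_1^{\N}\in \dot C^G\cap S$. Your explicit remark that $\omega_1^{\N}$ is an invariant of $\N$ (not of the embedding) is a useful clarification the paper leaves implicit.
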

\begin{proof}
Suppose not. Let $S \subseteq \omega_1$ be stationary. Let $\P$ be subproper, and suppose toward a contradiction that we have $\dot C \in V^{\P}$ such that there is $p \in \P$, 
	$$p \Vdash ``\dot C \subseteq \check \omega_1 \text{ is club}" \ \land \ \check S \cap \dot C = \emptyset.$$
Choose $\theta$ large enough so that $\P \in H_\theta$, and suppose we have $H_\theta \subseteq N = L_\tau[A] \models \ZFC^-$ for some $\tau > \theta$ and $A \subseteq \tau$. 
	Recall that $$B=\set{ \omega_1^{\N} }{ \text{where } \sigma: \overline N \prec N \text{ and $\N$ is countable, full with } \sigma(\overline \theta, \overline{\P}, \overline S, \overline{\dot C}, \overline p) = \theta, \P, S, \dot C, p } \subseteq \omega_1$$ contains a club by \textbf{Lemma \ref{lem:fullclubs}}.

Take $\alpha \in S \cap B$. Thus $\alpha = \omega_1^{\N}$ for some $\N$ such that: \begin{itemize} 
	\item $\sigma: \N \cong X \preccurlyeq N$ where $X$ is countable and $\N$ is full.
	\item $\sigma(\overline \theta, \overline{\P}, \overline S, \overline{\dot C}, \overline p) = \theta, \P, S, \dot C, p$.
\end{itemize}
Furthermore by elementarity, we have in $\N$ that $\overline p \forces ``\overline{\dot C} \subseteq \delta \text{ is club}."$ Apply subproperness to obtain $q \leq \sigma(\overline p) = p$ such that whenever $G \ni q$ is $\P$-generic, there is $\sigma' \in V[G]$ satisfying: \begin{itemize}
	\item $\sigma': \N \prec N$
	\item $\sigma'(\overline \theta, \overline{\P}, \overline S, \overline{\dot C}, \overline p) = \theta, \P, S, \dot C, p$
	\item $\overline G = \sigma'^{-1}``G$ is an $\N$-generic filter. 
\end{itemize}
So $\sigma'$ lifts to an embedding $\sigma^*: \N[\G] \prec N[G]$ in $V[G]$. 
Let $\overline C = \overline{\dot C}^{\G}$, $C = \dot C^G$. Since $q \in G$ is stronger than $p$, we have that $C \subseteq \omega_1$ is club and $S \cap C = \emptyset$ in $N[G]$ by our original assumption. However, $\alpha$ is the critical point of the embedding $\sigma^*$, so $\alpha$ is a limit ordinal and below $\alpha$, the club $C$ is fixed. Thus $\overline C = C \cap \alpha$ is unbounded in $\alpha$, in $N[G]$. Since $C$ is club and thus closed under limit points, this means $\alpha \in C$, a contradiction since $\alpha$ was taken to be in $S$.
\end{proof}
Thus, we may make sense of writing $\textsf{SubPFA}$, the subproper forcing axiom, and we have that Martin's Maximum (\textsf{MM}) implies \textsf{SubPFA}.

%\newpage
\section{Subcomplete Forcing}
\label{sec:SubcompleteForcing}
Subcomplete forcing is a class of forcing notions that we shall see do not add reals, but may potentially alter cofinalities to $\omega$. Examples of subcomplete forcing include Prikry forcing and Namba forcing (under $\CH$). This separates subcomplete forcings from proper forcings, that have countable covering and thus cannot change cofinalities to $\omega$. 

Before giving the definition of subompleteness, as with the transition from properness to subproperness, subcompleteness should be seen as a weakening of the class of complete (or countably closed forcings, as we shall see in \ref{subsec:SCForcingandCCForcing}). We follow \cite[Chapter 3]{Jensen:2012fr} for the following definitions and the proofs that we cite as due to Jensen.
 
\begin{defn} A forcing notion $\P$ is \emph{\textbf{complete}} so long as
for sufficiently large $\theta$ we have; letting: \begin{itemize}
	\item $\P \in H_\theta \subseteq N = L_\tau[A] \models \ZFC^-$ where $\tau>\theta$ and $A \subseteq \tau$
	\item $\sigma: \N \cong X \preccurlyeq N$ where $X$ is countable and $\N$ is transitive
	\item $\sigma(\overline \theta, \overline{\P}, \overline s)=\theta, \P, s$ for some $s \in N$;
\end{itemize}
if $\G$ is $\overline{\P}$-generic over $\N$ then there is $p \in \P$ forcing that whenever $G \ni p$ is $\P$-generic, $\sigma ``\, \G \subseteq G$. 

In particular, below the condition $p$ we have that $\sigma$ lifts to an embedding $\sigma^*:\N[\G] \prec N[G]$.
We say that such a $\theta$ as above \textit{\textbf{witnesses the completeness of $\P$}}.
\end{defn}
The adjustment made to get subcomplete forcings is to not necessarily insist the the original embedding lifts in the forcing extension. Instead subcompleteness asks for there to be an embedding in the extension, an embedding that is sufficiently similar to the original embedding, and lifts. However, as discussed earlier, the domain of the embedding must be \textit{full} to ensure that there can even consistently be more than one embedding. The definition is given below.

\begin{defn} \label{defn:SC}
A forcing notion $\P$ is \emph{\textbf{subcomplete}} so long as,
for sufficiently large $\theta$ we have that whenever we are in a situation where: \begin{itemize}
	\item $\P \in H_\theta \subseteq N = L_\tau[A] \models \ZFC^-$ where $\tau>\theta$ and $A \subseteq \tau$
	\item $\sigma: \N \cong X \preccurlyeq N$ where $X$ is countable and $\N$ is full
	\item $\sigma(\overline \theta, \overline{\P}, \overline s)=\theta, \P, s$ for some $s \in N$;
\end{itemize}
then we have that if $\G$ is  $\overline{\P}$-generic over $\N$ then there is $p \in \P$ such that whenever $G \ni p$ is $\P$-generic, there is $\sigma' \in V[G]$ satisfying: \begin{enumerate}
	\item $\sigma': \N \prec N$
	\item $\sigma'(\overline \theta, \overline{\P}, \overline s)=\theta, \P, s$
	\item $\sk{N}{\delta(\P)}{\sigma'} = \Sk{N}{\delta(\P)}{X}$
	\item $\sigma'``\, \G \subseteq G$.
\end{enumerate}
In other words and in particular, the condition $p$ forces that there is an embedding $\sigma'$ in the extension $V[G]$ that lifts, by \textbf{4}, to an embedding $\sigma'^*:\N[\G] \prec N[G]$ in $V[G]$.

We say that such a $\theta$ as above \textit{\textbf{verifies the subcompleteness}} of $\P$.

Often we write $\delta$ instead of $\delta(\P)$ when there should be no confusion as to which poset $\P$ we are working with.\footnote{See Section \ref{sec:delta} for more on $\delta(\P)$.}
\end{defn}
Condition \textbf{3} ensures that subcomplete forcings may be iterated, and is rarely used in the following proofs. Thus we will tend to leave out of the discussion if we are showing properties of subcomplete posets.

Immediately we obtain the following.
\begin{prop}[Jensen]\label{prop:noreals}
Subcomplete forcing does not add countable subsets of countable sets. In particular, subcomplete forcing does not add reals.
\end{prop}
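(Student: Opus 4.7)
The plan is to argue by contradiction, first reducing the general claim to the statement that no new subsets of $\omega$ are added. Indeed, if $X \in V$ is a countable set with a bijection $g : \omega \to X$ in $V$, then any $A \subseteq X$ lying in a forcing extension $V[G]$ is coded by $B = \set{ n \in \omega }{ g(n) \in A } \subseteq \omega$; if $B \in V$ then $A = g``B \in V$ since $g, B \in V$. Hence it suffices to show that subcomplete forcing adds no new real.

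Suppose for contradiction some $p \in \P$ forces $\dot r \subseteq \check\omega$ and $\dot r \notin \check V$. Take $\theta$ verifying the subcompleteness of $\P$ large enough that $p, \P, \dot r \in H_\theta \subseteq N = L_\tau[A] \models \ZFC^-$. By \textbf{Lemma \ref{lem:fullclubs}} there is $\sigma : \N \prec N$ with $\N$ countable and full and $p, \P, \dot r \in \ran(\sigma)$; write $\sigma(\overline p, \overline \P, \overline{\dot r}) = p, \P, \dot r$. Since $\overline \P \in \N$ is countable in $V$, I can meet the countably many $\N$-dense subsets of $\overline\P$ below $\overline p$ to produce, already in $V$, an $\N$-generic filter $\G \subseteq \overline\P$ with $\overline p \in \G$. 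Set $\overline r = \overline{\dot r}^{\G}$; then $\overline r \in \N[\G] \subseteq V$ and $\overline r \subseteq \omega$ by elementarity.

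Now apply the subcompleteness of $\P$ with parameter $s = (p, \dot r)$: there is $q \in \P$ such that whenever $G \ni q$ is $\P$-generic, there exists $\sigma' \in V[G]$ with $\sigma'(\overline p, \overline\P, \overline{\dot r}) = p, \P, \dot r$ and $\sigma'``\G \subseteq G$. The latter forces $p = \sigma'(\overline p) \in G$ and lifts $\sigma'$ to $\sigma'^* : \N[\G] \prec N[G]$ satisfying $\sigma'^*(\overline r) = \sigma'(\overline{\dot r})^G = \dot r^G = r$. The main calculation is then that $\sigma'^*(\overline r) = \overline r$: since $\cp(\sigma'^*) = \omega_1^\N > \omega$ we have $\sigma'^* \rest \omega = \id$, so for each $n < \omega$, elementarity gives $n \in \overline r \iff \sigma'^*(n) \in \sigma'^*(\overline r) \iff n \in \sigma'^*(\overline r)$; combined with $\sigma'^*(\overline r) \subseteq \sigma'^*(\omega) = \omega$ this yields equality. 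Hence $r = \overline r \in V$, contradicting $p \in G$ together with $p \Vdash \dot r \notin \check V$. The only delicate point is ensuring that the perturbed embedding $\sigma'$ still sends $\overline{\dot r}$ to $\dot r$ (so the lift genuinely interprets $\overline{\dot r}$ as $\dot r$), which is handled by folding $\dot r$ and $p$ into the parameter $s$ of \textbf{Definition \ref{defn:SC}}.
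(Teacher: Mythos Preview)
Your proof is correct and follows essentially the same approach as the paper: set up the standard subcompleteness framework with $\dot r$ among the parameters, pick an $\N$-generic $\G$ containing $\overline p$, apply subcompleteness to obtain $\sigma'$ with $\sigma'``\G \subseteq G$, lift, and use that the critical point exceeds $\omega$ to conclude $r = \overline r \in V$. Your explicit reduction from ``countable subsets of countable sets'' to ``no new reals'' is a useful addition that the paper leaves implicit.
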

\begin{proof}
Let $\P$ be subcomplete. Suppose toward a contradiction that $\P$ adds a new real, let $p \in \P$ force that $\dot r: \check \omega \rightarrow \omega$ is new. Let $\theta$ be large enough and let $\sigma$, $\N$, and $N$ satisfy: \begin{itemize} 
	\item $\P \in H_\theta \subseteq N = L_\tau[A] \models \ZFC^-$ where $\tau>\theta$ and $A \subseteq \tau$
	\item $\sigma: \N \cong X \preccurlyeq N$ where $X$ is countable and $\overline N$ is full
	\item $\sigma(\overline \theta, \overline{\P}, \overline p, \overline{\dot r})=\theta, \P, p, \dot r.$
\end{itemize}
Let $\G \subseteq \overline{\P}$ be an $\N$-generic filter with $\overline p \in \G$. By the subcompleteness of $\P$, there is $q \in \P$ forcing that whenever $G \ni q$ is $\P$-generic, there is $\sigma' \in V[G]$ such that \begin{itemize}
	\item $\sigma': \N \prec N$
	\item $\sigma'(\overline \theta, \overline{\P}, \overline p, \overline{\dot r})=\theta, \P, p, \dot r$
	\item $\sigma'``\, \G \subseteq G$.
\end{itemize} 
Thus $\sigma'$ lifts to $\sigma':\N[\G] \prec N[G]$ in $V[G]$. Let $r = \dot r^G$, $\overline r = \overline{\dot r}^{\G}$. Then for each $n < \omega$ we have that 
	$$r(n) = \sigma'(\overline r(n)) = \overline r(n).$$ 
So $r=\overline r \in V$, contradicting our original assumption that $p \in G$ forces $\dot r$ to be new. %The point here is that $\sigma_0$ has to preserve natural numbers. If we had any other values being computed, this would simply not work.
%The point is that the critical point of all of these embeddings is $\omega_1^{\overline N}$.
\end{proof}

\begin{prop}
Subcomplete forcings are subproper.
\end{prop}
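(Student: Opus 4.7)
The plan is to reduce subproperness directly to subcompleteness by carefully choosing the generic filter and parameters fed into the subcompleteness definition. Assume $\P$ is subcomplete, fix a $\theta$ verifying subcompleteness, and suppose we are in the subproper setup with $\sigma: \N \cong X \preccurlyeq N$, parameter $\overline{s}$, and $\overline q \in \overline{\P}$. Set $q = \sigma(\overline q)$. The goal is to produce some $p \leq q$ in $\P$ forcing the existence of a $\sigma'$ as in the subproperness conclusion.

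First, I would enrich the parameter so that any $\sigma'$ produced by subcompleteness is forced to send $\overline q$ to $q$: replace $\overline s$ by $\overline{s}{}^* = \langle \overline s, \overline q \rangle$, so $\sigma(\overline{s}{}^*) = \langle s, q \rangle = s^*$. Second, since $\N$ is countable there are only countably many dense subsets of $\overline{\P}$ in $\N$, so a standard diagonal construction in $V$ produces an $\N$-generic filter $\G \subseteq \overline{\P}$ with $\overline q \in \G$. Third, apply subcompleteness to this $\G$ and the enriched parameter $\overline{s}{}^*$: obtain $p \in \P$ such that whenever $G \ni p$ is $\P$-generic, there exists $\sigma' \in V[G]$ with $\sigma': \N \prec N$, $\sigma'(\overline \theta, \overline \P, \overline{s}{}^*) = \theta, \P, s^*$, $\sk{N}{\delta(\P)}{\sigma'} = \Sk{N}{\delta(\P)}{X}$, and $\sigma' `` \G \subseteq G$.

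Now I would check that this $p$ and these $\sigma'$ witness subproperness. For the requirement $p \leq q$: because $\overline q \in \G$ and $\sigma'(\overline q) = q$ (from condition \textbf{2} applied to $\overline s^*$), the inclusion $\sigma' `` \G \subseteq G$ forces $q \in \dot G$, and by separativity $p \forces \check q \in \dot G$ implies $p \leq q$. Conditions \textbf{1}, \textbf{2}, and \textbf{3} of subproperness are immediate from the corresponding conditions of subcompleteness. For condition \textbf{4}, namely that $\sigma'^{-1}``G$ is $\N$-generic, I would show $\sigma'^{-1}``G = \G$: the inclusion $\G \subseteq \sigma'^{-1}``G$ is $\sigma' `` \G \subseteq G$, and for the reverse, if $\overline r \in \overline{\P} \cap \N$ with $\sigma'(\overline r) \in G$, apply the dense set $D = \{ \overline u \in \overline{\P} \st \overline u \leq \overline r \text{ or } \overline u \perp \overline r \} \in \N$, which meets $\G$; if the witness $\overline u \in \G$ were incompatible with $\overline r$, then $\sigma'(\overline u) \perp \sigma'(\overline r)$ would both lie in $G$, a contradiction, so $\overline r \in \G$.

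There is no serious obstacle: the key trick is simply pre-selecting $\G$ to contain $\overline q$ and promoting $\overline q$ to a parameter so that subcompleteness automatically hands back a $p$ below $\sigma(\overline q)$ in the separative poset $\P$. The filter-identity argument of the last paragraph is the only non-formal step, and it is a routine density argument that works because $\sigma'$ preserves incompatibility.
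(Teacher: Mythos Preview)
Your proof is correct and follows essentially the same approach as the paper: enrich the parameter with $\overline q$, choose $\G$ containing $\overline q$, and apply subcompleteness. The only differences are cosmetic improvements on your end: you use separativity to conclude $p \leq q$ directly (the paper instead observes $p$ and $q$ are compatible and passes to a common extension $r \leq p,q$), and you explicitly verify that $\sigma'^{-1}``G = \G$ via the standard density argument, which the paper leaves implicit.
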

\begin{proof}
Let $\delta(\P)=\delta$. If $\P$ is subcomplete, then to see that $\P$ is also subproper assume we are in the standard setup from the definition of subproperness: \begin{itemize}
	\item $\P \in H_\theta \subseteq N = L_\tau[A] \models \ZFC^-$ where $\tau>\theta$ and $A \subseteq \tau$
	\item $\sigma: \N \cong X \preccurlyeq N$ where $X$ is countable and $\overline N$ is full
	\item $\sigma(\overline \theta, \overline{\P}, \overline s)=\theta, \P, s$ for some $s \in N$.
\end{itemize}
Let $\overline q \in \overline{\P}$. Then $q = \sigma(\overline q) \in X$.
Let $\G$ be generic for $\overline{\P}$ over $\N$ such that $\overline q \in \G$. Then by subcompleteness of $\P$, we have that there is $p \in \P$ such that whenever $G$ is generic and $G$ contains $p$, then there is $\sigma' \in V[G]$ satisfying: \begin{enumerate}
	\item $\sigma': \N \prec N$
	\item $\sigma'(\overline \theta, \overline{\P}, \overline s, \overline q)=\theta, \P, s, q$
	\item $\sk{N}{\delta}{\sigma'} = \Sk{N}{\delta}{X}$
	\item $\sigma'``\, \G \subseteq G$.
\end{enumerate}
In particular, 
	$$q = \sigma(\overline q)=\sigma'(\overline q) \in G.$$ Thus $q$ is compatible with $p$, and so there is $r \in G$ satisfying $r \leq q$ and $r \leq p$. But then this $r$ verifies the subproperness of $\P$, since this $r$ is also below $p$.
\end{proof}
By \textbf{Theorem \ref{thm:SubproperMM}}, this means that subcomplete forcings also preserve stationary subsets of $\omega_1$. Thus the class of subcomplete forcing is not the same as the class of countably distributive forcing notions (those that don't add new reals). The following lemma tells us that the verification of subcompleteness is absolute to suitably large structures.

\begin{lem}[Jensen] \label{lem:scabsolute} Suppose that $\P \in H_\theta$ and let $\eta>|H_\theta|$. 

$\P$ is subcomplete as verified by $\theta$ $\iff$ $\P$ is subcomplete in $H_\eta$ as verified by $\theta$.  \end{lem}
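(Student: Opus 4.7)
The approach is to prove each direction as an absoluteness argument, leveraging the fact that once $\theta$ is fixed, the essential data in the subcompleteness definition (the countable $\N$, $\sigma$, $X$, $\G$, and the forcing $\P \in H_\theta$) has cardinality at most $|H_\theta| < \eta$ and hence lives in $H_\eta$. The only parameter that may escape $H_\eta$ is the large model $N = L_\tau[A]$, which drives the backward direction's technical work.

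For the forward direction ($V \Rightarrow H_\eta$), any setup $(N, \sigma, \N, X, \G) \in H_\eta$ satisfying the hypotheses in $H_\eta$ also satisfies them in $V$: the $L_\tau[A]$-structure is absolute, elementarity of a countable $X$ into $N$ is absolute, fullness of the countable $\N$ is absolute since it is witnessed by some $L_\gamma(\N) \models \ZFC^-$ of small rank (compare the proof of Lemma \ref{lem:fullclubs}), and $\overline{\P}$-genericity of $\G$ over $\N$ is absolute. So the $V$-hypothesis yields $p \in \P \subseteq H_\theta \subseteq H_\eta$. Given $G \ni p$ generic, any $\sigma' \in V[G]$ produced by the $V$-conclusion is a countable function with range in $N \in H_\eta$, hence $\sigma' \in H_\eta[G]$, and the four conclusion clauses are absolute for such $\sigma'$.

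For the backward direction ($H_\eta \Rightarrow V$), take a $V$-setup $(N, \sigma: \N \cong X \preccurlyeq N, \G)$. Every piece except possibly $N$ itself is already in $H_\eta$. Form an elementary substructure $Y \preccurlyeq N$ of cardinality $|H_\theta|$ containing $H_\theta \cup \ran(\sigma) \cup \{\overline\theta, \overline{\P}, \overline s\}$ and closed under Skolem functions of $N$. Code $Y$ by a subset $A^* \subseteq |H_\theta|^+$ together with a well-ordering of $H_\theta$, and let $N^* = L_{\tau^*}[A^*]$ where $\tau^*$ is least with $L_{\tau^*}[A^*] \models \ZFC^-$ and $H_\theta \subseteq L_{\tau^*}[A^*]$. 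Then $N^* \in H_\eta$ because $\tau^* \leq |H_\theta|^+ < \eta$, and $\sigma$ factors through an embedding $\sigma^* : \N \prec N^*$. Apply $H_\eta$-subcompleteness to the reduced setup to obtain $p \in \P$. Given $G \ni p$ generic, any witnessing $\sigma' \in V[G]$ for the $N^*$-version composes with the canonical elementarity $N^* \hookrightarrow N$ (decoding $A^*$ into $Y$ and using $Y \preccurlyeq N$) to yield an embedding $\sigma' : \N \prec N$ satisfying items 1, 2, and 4 of Definition \ref{defn:SC}; item 3 transfers because $\delta(\P) \leq |H_\theta|$, so the relevant Skolem hulls in $N$ are controlled by their counterparts in $N^*$ (Lemma \ref{lem:hullequality} applied to the coded structure).

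The main obstacle is item 3 in the backward direction: Skolem functions depend on the $L[A]$-structure, so hulls in $N$ and in $N^*$ need not coincide \emph{a priori}. The coding $A^*$ must be engineered so that the Skolem hulls of size $\delta(\P)$ computed in $N^*$ embed back correctly, giving the required hull equality in $N$. Once this coding is set up, both directions close off and the lemma follows.
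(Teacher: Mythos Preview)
Your overall strategy for both directions is right and matches the paper's, but your execution of the backward direction introduces an unnecessary difficulty that the paper simply sidesteps.

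In the paper, one does not re-encode $Y$ as a new $L_{\tau^*}[A^*]$ structure. Instead, one takes the Mostowski collapse $\pi: N_0 \cong Y$ of the hull $Y = \Sk{N}{H_\theta}{X \cup \delta \cup \{s\}}$. Since $N = L_\tau[A]$ satisfies ``$V = L[A]$'', elementarity guarantees that $N_0$ is automatically of the form $L_{\tau_0}[A_0]$ with $H_\theta \subseteq N_0$ (because $H_\theta \subseteq Y$ is transitive, so $\pi \rest H_\theta = \id$), and $|N_0| \le |H_\theta| < \eta$ puts $N_0 \in H_\eta$. Setting $X_0 = \pi^{-1}``X$ and $\sigma_0 = \pi^{-1} \circ \sigma$ gives a legitimate subcompleteness setup inside $H_\eta$; the $H_\eta$-hypothesis then yields $p$ and, for $G \ni p$, some $\sigma_0'$. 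One sets $\sigma' = \pi \circ \sigma_0'$.

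The point is that with this construction, your ``main obstacle'' evaporates. Because $\pi$ is an \emph{isomorphism} onto $Y \preccurlyeq N$, Skolem functions of $N_0$ are exactly the $\pi$-preimages of Skolem functions of $N$ restricted to $Y$. Hence $\pi``\sk{N_0}{\delta}{\sigma_0'} = \sk{N}{\delta}{\sigma'}$ and $\pi``\Sk{N_0}{\delta}{X_0} = \Sk{N}{\delta}{X}$ immediately, giving item~3 without any engineering. Items~2 and~4 follow from $\pi \rest H_\theta = \id$.

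Your coding approach, by contrast, builds $N^*$ as a structure that merely \emph{contains a code} for $Y$; it is not isomorphic to $Y$, so there is no obvious embedding $\sigma^*: \N \prec N^*$, and the Skolem functions of $N^*$ bear no direct relation to those of $N$. This is why you cannot close the argument for item~3. Replace the coding step with the straight Mostowski collapse and the proof goes through cleanly.
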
 
\begin{proof}
If $\P$ is subcomplete in $H_\eta$ then $\P$ is subcomplete in $V$ because in the definition of subcompleteness, only $N=L_\tau[A] \in H_\eta$ need to be considered. To see why, assume that $\P$ is subcomplete as verified by $\theta$ in $H_\eta$.  Then we show that $\P$ actually is subcomplete as verified by $\theta$.
\begin{claim} Whenever we are in a situation where \begin{itemize}
	\item $\P \in H_\theta \subseteq N = L_\tau[A] \models \ZFC^-$ where $\tau>\theta$ and $A \subseteq \tau$
	\item $\sigma: \N \cong X \preccurlyeq N$ where $X$ is countable and $\N$ is full
	\item $\sigma(\overline \theta, \overline{\P}, \overline s)=\theta, \P, s$ for some $s \in N$;
\end{itemize}
then we have that if $\G$ is  $\overline{\P}$-generic over $\N$ then there is $p \in \P$ such that whenever $G \ni p$ is $\P$-generic, there is $\sigma' \in V[G]$ satisfying: \begin{enumerate}
	\item $\sigma': \N \prec N$
	\item $\sigma'(\overline \theta, \overline{\P}, \overline s)=\theta, \P, s$
	\item $\sk{N}{\delta}{\sigma'} = \Sk{N}{\delta}{X}$
	\item $\sigma'`` \G \subseteq G$.
\end{enumerate} \end{claim}
\begin{proof}[Pf.]
We find a suitable embedding, similar to $\sigma$, but in $H_\eta$. Let $ \pi: N_0 \cong \Sk{N}{H_\theta}{X \cup \delta \cup \{s\}} =Y \preccurlyeq N$. Then $N_0 \in H_\eta$. Since $X \subseteq Y$ and $X \preccurlyeq N$, we have that $X \preccurlyeq Y$. Thus, letting $X_0=\pi^{-1}``X$, we have that $X \cong X_0 \preccurlyeq N_0$. So taking the Mostowski collapse $\N_0$, we have that $\N_0=\N$. Thus we have:
\begin{itemize}
	\item $\sigma_0:\N \cong X_0 \preccurlyeq N_0$ in $H_\eta$ 
	\item $\sigma_0(\overline \theta, \overline{\P}, \overline s)=\sigma(\overline \theta, \overline{\P}, \overline s)=\theta, \P, s$. 
	\end{itemize}
So by the subcompleteness of $\P$ in $H_\eta$, we have that if $\G \subseteq \overline{\P}$ is generic over $\N$ then there is $p \in \P$ such that whenever $G_0 \ni p$ is $\P$-generic over $H_\eta$, there is $\sigma_0'\in H_\eta[G_0]=V^{H_\eta}[G_0]$ satisfying \begin{enumerate}
	\item $\sigma_0': \N \prec N_0$
	\item $\sigma_0'(\overline \theta, \overline{\P}, \overline s)=\theta, \P, s$
	\item $\sk{N_0}{\delta}{\sigma_0'} = \Sk{N_0}{\delta}{X_0}$
	\item $\sigma_0'`` \G \subseteq G_0$.
\end{enumerate}
If $G \ni p$ is $\P$-generic over $V$, then $G$ is also $\P$-generic over $H_\eta$. We thus have that items \textsl{\textbf{1}} and \textsl{\textbf{2}} of the \textit{Claim} hold, since letting $\sigma'=\sigma_0' \circ \pi \in V[G]$ we have $\sigma':\N \prec N$ and $\sigma'(\overline \theta, \overline{\P}, \overline s)=\theta, \P, s$.  Moreover item \textsl{\textbf{3}} holds since $\pi``\sk{N_0}{\delta}{\sigma_0'} = \sk{N}{\delta}{\sigma'}$ and $\pi``\Sk{N_0}{\delta}{X_0} = \Sk{N}{\delta}{X}$. For property \textbf{\textsl{4}}, the point is that $\pi \rest H_\theta = \id$, so $\pi \circ \sigma_0' `` \G = \sigma_0'``\G \subseteq G$. This finishes the proof of the \textit{Claim}.
 \end{proof}

Finally, if $\P$ is subcomplete in $V$ as verified by $H_\theta$, then $\P$ is subcomplete in $H_\eta$ since given $\sigma \in H_\eta$ where $\sigma: \N \prec N$, we have of course that $\N, N \in H_\eta$. Additionally $\sigma \in V$, and we may apply subcompleteness in $V$ to obtain a condition $p \in \P$ such that whenever $p \in G$, there is a $\sigma' \in V[G]$ having certain properties, namely \textbf{1} through \textbf{4} of the definition. Such generic filters over $V$ are also generic over $H_\eta$, and furthermore $\sigma' \in H_\eta[G] = V^{H_\eta}[G]$ since $\N$ and $N$ are in $H_\eta$. So the rest of the properties are also clear, and $\P$ is subcomolete in $H_\eta$ as desired.
\end{proof}

\begin{remark}\label{remark:VerifyingSC} If $\P$ is subcomplete as verified by $\theta$, then $\P$ is subcomplete as verifed by $\theta' > \theta$ since $\P \in H_\theta$ obviously implies that $\P \in H_{\theta'}$. Thus if we are in a situation where $\P \in H_{\theta'} \subseteq N=L_\tau[A]$ for $\tau>\theta'$ then we may reduce down to the case where $\P \in H_{\theta} \subseteq N=L_\tau[A]$ and $\tau>\theta'>\theta$ using the same $\tau$ and apply subcompletenss. In particular, $\P$ is subcomplete as long as it is verified to be subcomplete by some $\theta$. So we may replace ``sufficiently large $\theta$" with ``some $\theta$" in the definition of subcompleteness.\footnote{See \cite[Section 3.1 Lemma 2.4]{Jensen:2012fr}.} \end{remark}

In addition we have that subcomplete forcing is closed under lottery sums, which is also true for subproper forcing. But since subcomplete forcing is the focus, we will only give the proof for subcomplete forcing. 

\begin{defn} For a family of forcing notions $\mathcal P$, the \textit{\textbf{lottery sum}} poset is defined as follows: 
	$$\oplus \mathcal P=\{ \mathbbm 1 \} \cup \set{ \langle \P, p \rangle }{ \P \in \mathcal P \ \land \ p \in \P }$$
with $\mathbbm 1$ weaker than everything and $\langle \P, p \rangle \leq \langle \P',p' \rangle$ if and only if $\P = \P'$ and $p \leq_\P p'$. \end{defn}

\begin{lem}\label{lem:sclottery} 
Lottery sums of subcomplete forcings are subcomplete.
\end{lem}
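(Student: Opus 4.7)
The plan is to reduce the subcompleteness of $\oplus\mathcal{P}$ to that of its individual summands, via the natural bijection between nontrivial generics of $\oplus\mathcal{P}$ and generics of the various $\P\in\mathcal{P}$. Since $\oplus\mathcal{P}$ is a set, so is $\mathcal{P}$, and by Remark \ref{remark:VerifyingSC} I would begin by fixing a single $\theta$ with $\oplus\mathcal{P}\in H_\theta$ that simultaneously verifies subcompleteness for every $\P\in\mathcal{P}$. Assume then the standard setup of Definition \ref{defn:SC} for $\oplus\mathcal{P}$: $\oplus\mathcal{P}\in H_\theta\subseteq N=L_\tau[A]\models\ZFC^-$, $\sigma\colon\N\cong X\preccurlyeq N$ with $\N$ countable and full, $\sigma(\overline\theta,\overline{\oplus\mathcal{P}},\overline s)=\theta,\oplus\mathcal{P},s$, and $\overline G$ is $\overline{\oplus\mathcal{P}}$-generic over $\N$. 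My aim is to produce a witness condition $p$ and, below it, an embedding $\sigma'$ as required by Definition \ref{defn:SC}.

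The key observation is that any filter on a lottery sum containing a nontrivial condition must concentrate on a single summand. Since the nontrivial conditions are dense in $\oplus\mathcal{P}$, by elementarity $\overline G$ meets them, so I can pick $\langle\overline\P_0,\overline q_0\rangle\in\overline G$. Then $\overline G_0=\{\overline r\in\overline\P_0:\langle\overline\P_0,\overline r\rangle\in\overline G\}$ is $\overline\P_0$-generic over $\N$, since any dense $\overline D\subseteq\overline\P_0$ in $\N$ gives rise to a set $\{\langle\overline\P_0,\overline r\rangle:\overline r\in\overline D\}$ which is dense below $\langle\overline\P_0,\overline q_0\rangle$ and must therefore be met by $\overline G$. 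Let $\P_0=\sigma(\overline\P_0)\in\mathcal{P}$. Applying the subcompleteness of $\P_0$ to the embedding $\sigma$, with the parameter $\overline s$ augmented to include $\overline\P_0$, and to the generic $\overline G_0$, yields a condition $p_0\in\P_0$ such that whenever $G_0\ni p_0$ is $\P_0$-generic, there is $\sigma'\in V[G_0]$ satisfying $\sigma'\colon\N\prec N$, $\sigma'(\overline\theta,\overline{\oplus\mathcal{P}},\overline s,\overline\P_0)=\theta,\oplus\mathcal{P},s,\P_0$, $\sk{N}{\delta(\P_0)}{\sigma'}=\Sk{N}{\delta(\P_0)}{X}$, and $\sigma'``\,\overline G_0\subseteq G_0$.

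I then claim that $p=\langle\P_0,p_0\rangle\in\oplus\mathcal{P}$ together with this same $\sigma'$ witnesses the subcompleteness of $\oplus\mathcal{P}$ for $\overline G$. Indeed, if $G\ni p$ is $\oplus\mathcal{P}$-generic over $V$, then $G_0=\{r\in\P_0:\langle\P_0,r\rangle\in G\}$ is $\P_0$-generic over $V$ with $p_0\in G_0$ and $V[G]=V[G_0]$, so $\sigma'\in V[G]$. Conditions (1) and (2) of Definition \ref{defn:SC} are immediate from their analogues for $\P_0$. Condition (4) holds because every nontrivial $\langle\overline\P_0,\overline r\rangle\in\overline G$ has $\overline r\in\overline G_0$, so $\sigma'(\langle\overline\P_0,\overline r\rangle)=\langle\P_0,\sigma'(\overline r)\rangle\in G$ by virtue of $\sigma'(\overline r)\in G_0$. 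For condition (3), the projection $D\mapsto\{r\in\P_0:\langle\P_0,r\rangle\in D\}$ sends a dense subset of $\oplus\mathcal{P}$ to a dense subset of $\P_0$, so $\delta(\P_0)\leq\delta(\oplus\mathcal{P})$, and Lemma \ref{lem:hullequality} then upgrades the Skolem-hull equality at $\delta(\P_0)$ to the corresponding equality at $\delta(\oplus\mathcal{P})$.

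I do not anticipate any serious obstacle. The only two places where one could get stuck are choosing $\theta$ uniformly across the family $\mathcal{P}$ and transferring the Skolem-hull clause from $\delta(\P_0)$ up to $\delta(\oplus\mathcal{P})$; both are handled cleanly by Remark \ref{remark:VerifyingSC} and Lemma \ref{lem:hullequality} respectively.
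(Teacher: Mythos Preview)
Your proposal is correct and follows essentially the same route as the paper: concentrate the $\N$-generic $\overline G$ on a single summand $\overline\P_0$, apply the subcompleteness of $\P_0=\sigma(\overline\P_0)$ to obtain $\sigma'$, and then lift the Skolem-hull clause from $\delta(\P_0)$ to $\delta(\oplus\mathcal P)$ via the inequality $\delta(\P_0)\le\delta(\oplus\mathcal P)$ and Lemma~\ref{lem:hullequality}. If anything, you are a touch more explicit than the paper in adding $\overline\P_0$ to the parameter list (so that $\sigma'(\overline\P_0)=\P_0$ is guaranteed) and in spelling out why $\overline G_0$ is $\overline\P_0$-generic over $\N$.
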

\begin{proof}
Let $\mathbb P$ be the lottery sum of $\mathcal Q = \set{\Q_\alpha}{\alpha < \kappa}$, where each $\Q_\alpha$ is subcomplete. Let $\theta$ be large enough to verify the subcompleteness of each $\Q_\alpha$. \begin{itemize}
	\item $\P \in H_\theta \subseteq N = L_\tau[A] \models \ZFC^-$ where $\tau>\theta$ and $A \subseteq \tau$
	\item $\sigma: \N \cong X \preccurlyeq N$ where $X$ is countable and $\N$ is full
	\item $\sigma(\overline \theta, \overline{\P}, \overline{\mathcal Q}, \overline s)=\theta, \P, \mathcal Q, s$ for some $s \in N$.
\end{itemize}
Let $\G$ be $\overline{\P}$-generic over $\N$. Note that if $\overline p \in \overline{\Q}_\alpha$, $\overline q \in \overline{\Q}_\beta$, $\alpha \neq \beta$, then $\langle \overline{\Q}_\alpha, \overline p \rangle$ and $\langle \overline{\Q}_\alpha, \overline q \rangle$ are incompatible. Since elements of $\G$ are pairwise compatible, it therefore must be the case that $\G \subseteq \{\mathbbm 1\} \cup (\{\overline{\Q}_\alpha\} \times \overline{\Q}_\alpha)$ for some $\Q_\alpha$ in $\mathcal Q$.

Since each poset in $\mathcal Q$ is subcomplete, there is $p \in \Q_\alpha$ such that whenever $G \ni p$ is $\Q_\alpha$-generic, there is $\sigma' \in V[G]$: \begin{enumerate}
	\item $\sigma': \N \prec N$
	\item $\sigma'(\overline \theta, \overline{\Q}_\alpha, \overline{\P}, \overline{\mathcal Q}, \overline s)=\theta, \Q_\alpha, \P, \mathcal Q, s$
	\item $\sk{N}{\delta(\Q_\alpha)}{\sigma'} = \Sk{N}{\delta(\Q_\alpha)}{X}$
	\item $\sigma'`` \, \G \subseteq G$.
\end{enumerate}

Now suppose that $p \in G \subseteq \P$ is generic over $V$. Then $G \subseteq \{\mathbbm 1\} \cup (\{\Q_\alpha\}\times\Q_\alpha)$, by the same argument as above and as $p \in \Q_\alpha$. Now all that's left to show is that $\sigma' \in V[G]$ satisfies 
	\begin{equation}\label{eqn-SkSigma'=SkX} \sk{N}{\delta(\P)}{\sigma'}= \Sk{N}{\delta(\P)}{X}.\end{equation}
To see this, note that $\delta(\Q_\alpha) \leq \delta(\P)$, since any dense set in $\P$ of smallest size will have to contain a dense set of the smallest cardinality in $\Q_\alpha$. So by item \textbf{3} above and by $\textbf{Lemma \ref{lem:hullequality}}$, we have that (\ref{eqn-SkSigma'=SkX}) holds as well.
\end{proof}

\begin{defn} Two posets $\P$ and $\Q$ are said to be \textit{\textbf{forcing equivalent}} if whenever $G \subseteq \P$ is generic, there is $H \subseteq \Q$ generic such that $V[G] = V[H]$, and similarly if $H \subseteq \Q$ is generic, there is $G \subseteq \P$ generic such that $V[G]=V[H]$. \end{defn}
%Forcing equivalence is expressible in a first order by looking at canonical names in the posets, in the forcing language by the the following: $\P$ is forcing equivalent to $\Q$ if and only if 
%	$$\forall p \in \P \ \ p \forces \exists H \; (\text{``$H \subseteq \check \Q$ is $\check V$-generic" $\land$ $\check V[H]=\check V[\Gamma]$}),$$ 
%and the analogous statement for all elements of $\Q$ with respect to generics for $\P$ (where $\Gamma=\Gamma_\Q$ is the canonical $\Q$-name for a $\Q$-generic filter).

For example, if $\P$ densely embeds into $\Q$, then $\P$ and $\Q$ are forcing equivalent. 

\begin{lem} \label{lem:deltalottery}
If $\kappa> \delta(\P)$, then we may find a forcing equivalent poset $\P_\kappa$ such that $\delta(\P_\kappa) = \kappa$. 
\end{lem}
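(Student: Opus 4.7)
The plan is to take $\P_\kappa$ to be the lottery sum of $\kappa$ isomorphic, pairwise disjoint copies of $\P$. Concretely, for each $\alpha<\kappa$ let $\Q_\alpha = \{\alpha\}\times\P$ with ordering inherited from $\P$, and set $\P_\kappa = \oplus_{\alpha<\kappa}\Q_\alpha$ using the lottery sum construction already defined before Lemma \ref{lem:sclottery}. It is routine to check that $\P_\kappa$ is a separative poset with top element $\mathbbm 1$: conditions from different copies are incompatible, so for $\alpha\neq\beta$ separativity of $\langle\alpha,p\rangle$ versus $\langle\beta,q\rangle$ is automatic, while within a single copy it reduces to separativity of $\P$.

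First I would verify $\delta(\P_\kappa) = \kappa$. For the upper bound, take $D\subseteq\P$ dense with $|D|=\delta(\P)$; the union $\bigcup_{\alpha<\kappa}\{\alpha\}\times D$ is then dense in $\P_\kappa$ and has cardinality $\kappa\cdot\delta(\P)=\kappa$, using the hypothesis $\kappa>\delta(\P)$. For the lower bound, any strengthening of $\langle\alpha,p\rangle$ lies in $\Q_\alpha$, so any dense $E\subseteq\P_\kappa$ must meet each $\Q_\alpha$ in a set whose second coordinates form a dense subset of $\P$; hence $|E|\geq\kappa\cdot\delta(\P)=\kappa$.

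Next I would verify forcing equivalence between $\P_\kappa$ and $\P$. Given a $V$-generic filter $H\subseteq\P_\kappa$, distinct copies of $\P$ are pairwise incompatible in $\P_\kappa$, so there is a unique $\alpha_H<\kappa$ with $H\setminus\{\mathbbm 1\}\subseteq\Q_{\alpha_H}$; the projection $G=\{p\in\P : \langle\alpha_H,p\rangle\in H\}$ is $\P$-generic over $V$, since every dense $D\subseteq\P$ in $V$ lifts to a dense $D^* = \bigcup_{\beta<\kappa}\{\beta\}\times D\subseteq\P_\kappa$ which $H$ must meet (necessarily inside its chosen copy). Since $\alpha_H$ is an ordinal of $V$, it already lies in $V\subseteq V[G]$, so $H$ is reconstructible inside $V[G]$ from $G$ and $\alpha_H$, giving $V[H]=V[G]$. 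Conversely, given a $V$-generic $G\subseteq\P$, fix any $\alpha_0<\kappa$ in $V$ and set $H=\{\mathbbm 1\}\cup\{\langle\alpha_0,p\rangle : p\in G\}$; a routine density check (pulling any dense $D^*\subseteq\P_\kappa$ back along the natural isomorphism $\P\cong\Q_{\alpha_0}$ to obtain a dense subset of $\P$ and invoking genericity of $G$) shows that $H$ is $\P_\kappa$-generic over $V$, and manifestly $V[H]=V[G]$.

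The only mildly subtle point is ensuring that the coordinate $\alpha_H$ chosen by the generic does not contribute genuinely new sets beyond $V[G]$, but this is immediate because $\alpha_H$ is just an ordinal, hence an element of $V$. No step presents a serious obstacle; the lemma is essentially bookkeeping for the lottery sum construction, with the hypothesis $\kappa>\delta(\P)$ used only to absorb $\delta(\P)$ into $\kappa$ via cardinal arithmetic.
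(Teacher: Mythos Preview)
Your proposal is correct and follows exactly the approach of the paper: take $\P_\kappa$ to be the $\kappa$-fold lottery sum of $\P$ with itself. The paper's proof is a two-line sketch stating this without details, so your argument is simply a careful expansion of the same idea.
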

\begin{proof}
Let $\P_\kappa = \oplus_\kappa \P$ be the $\kappa$-sized lottery sum of $\P$ with itself. The lottery sum of a forcing with itself is forcing equivalent to that forcing, but artificially will have larger dense sets. 
\end{proof} 

%\begin{prop}\label{prop:scfe} Let $\P$ and $\Q$ be forcing notions. If $\P$ is subcomplete and $\P$ is forcing equivalent to $\Q$, where $\delta(\P) \leq \delta(\Q)$, then $\Q$ is subcomplete. \end{prop}
%\begin{proof}
%We show that $\Q$ is subcomplete. Assume that $\theta$ is large enough to verify that $\P$ is subcomplete, and assume we are in the standard setup, \begin{itemize}
%	\item $\P, \Q \in H_\theta \subseteq N = L_\tau[A] \models \ZFC^-$ where $\tau>\theta$ and $A \subseteq \tau$
%	\item $\sigma: \N \cong X \preccurlyeq N$ where $X$ is countable and $\N$ is full
%	\item $\sigma(\overline \theta, \overline \P, \overline \Q, \overline s)=\theta, \P, \Q, s$ for some $s \in N$.
%\end{itemize} 
%We have that $\overline \Q$ and $\overline \P$ are also forcing equivalent by elementarity. Assume that $\overline H \subseteq \Q$ 
%\end{proof}

Being subcomplete does not appear to always be closed under forcing equivalence. However, we have the following.
\begin{prop}\label{prop:scde} If $\P$ is subcomplete and $\P$ densely embeds into $\Q$, then $\Q$ is subcomplete. \end{prop}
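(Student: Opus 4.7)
The plan is to translate the given setup for $\Q$ into one for $\P$ via the dense embedding $\pi$, apply subcompleteness of $\P$, and push the resulting condition forward to $\Q$. I would fix $\theta$ large enough to verify subcompleteness of $\P$ and such that $\P,\pi\in H_\theta$. Given the standard setup $\sigma\colon\N\cong X\prec N$ for $\Q$ with an $\N$-generic filter $\overline\G\subseteq\overline\Q$, the first step is to arrange $\P,\pi\in\ran(\sigma)$. This is done by enlarging the hull to $\Sk{N}{X}{\{\P,\pi\}}$ and replacing $\sigma$ with the inverse of its transitive collapse (passing $\overline\G$ along the canonical factor $k\colon\N\to\M$ into an appropriate generic for $k(\overline\Q)$ over $\M$); the delicate point is verifying the enlarged domain is still full, which I would handle by running the construction in the proof of Lemma~\ref{lem:fullclubs} inside $L_{\tau'}[A]$ with $X\cup\{\P,\pi\}$ as generating set.

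Now with $\P,\pi\in\ran(\sigma)$, set $\overline\P:=\sigma^{-1}(\P)$ and $\overline\pi:=\sigma^{-1}(\pi)$. By elementarity $\overline\pi\colon\overline\P\to\overline\Q$ is a dense embedding in $\N$, so the pullback $\overline G^*:=\{\overline p\in\overline\P:\overline\pi(\overline p)\in\overline\G\}$ is an $\N$-generic filter on $\overline\P$. Apply subcompleteness of $\P$ to $\sigma$ and $\overline G^*$, taking the parameter list to include $\overline\Q,\overline\pi$ alongside $\overline s$: we obtain $p\in\P$ such that whenever $G^*\ni p$ is $\P$-generic, there is $\sigma'\in V[G^*]$ with $\sigma'\colon\N\prec N$, $\sigma'(\overline\theta,\overline\Q,\overline\pi,\overline s)=\theta,\Q,\pi,s$, $\sk{N}{\delta(\P)}{\sigma'}=\Sk{N}{\delta(\P)}{X}$, and $\sigma'``\overline G^*\subseteq G^*$.

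Set $q:=\pi(p)\in\Q$. For any $\Q$-generic $G\ni q$ the pullback $G^*:=\pi^{-1}[G]$ is $\P$-generic, contains $p$, and satisfies $V[G]=V[G^*]$ by density of $\pi$, so $\sigma'\in V[G]$ is in hand. Conditions (1) and (2) of Definition~\ref{defn:SC} for the $\Q$-side are inherited directly from their $\P$-counterparts, and condition (3) follows from Lemma~\ref{lem:deltasize}, which gives $\delta(\P)=\delta(\Q)$ and so makes the relevant Skolem hulls coincide.

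For condition (4), fix $\overline q\in\overline\G$. The set $D_{\overline q}:=\{\overline p\in\overline\P:\overline\pi(\overline p)\le\overline q\text{ or }\overline\pi(\overline p)\perp\overline q\}$ is dense in $\overline\P$: given $\overline p_0\in\overline\P$, either $\overline\pi(\overline p_0)\perp\overline q$, or some $\overline q^*\le\overline\pi(\overline p_0),\overline q$ exists and density of $\overline\pi$ yields $\overline p^*\le\overline p_0$ with $\overline\pi(\overline p^*)\le\overline q$. Picking $\overline p\in\overline G^*\cap D_{\overline q}$, the perpendicular case is excluded because $\overline\pi(\overline p),\overline q\in\overline\G$, so $\overline\pi(\overline p)\le\overline q$; applying $\sigma'$ and using $\sigma'(\overline\pi)=\pi$ gives $\pi(\sigma'(\overline p))=\sigma'(\overline\pi(\overline p))\le\sigma'(\overline q)$, and since $\sigma'(\overline p)\in G^*$ we get $\pi(\sigma'(\overline p))\in G$, so upward closure yields $\sigma'(\overline q)\in G$. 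The main obstacle of the whole argument is the initial hull-enlargement step preserving fullness; everything thereafter is a clean translation through $\pi$ using Lemma~\ref{lem:deltasize}.
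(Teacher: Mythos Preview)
Your core strategy---pull $\overline H$ back to $\overline\P$ via $\overline\pi^{-1}$, apply subcompleteness of $\P$, push $p$ forward to $q=\pi(p)$, and check (1)--(4)---is exactly what the paper does, and your density argument for~(4) is in fact more careful than the paper's rather terse line. The paper, however, never enlarges the hull: it simply writes the standard setup as $\sigma(\overline\theta,\overline\P,\overline\Q,\overline\pi,\overline s)=\theta,\P,\Q,\pi,s$, placing $\P$ and $\pi$ among the tracked parameters from the start. This is the customary move when finitely many fixed auxiliary objects are needed.

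The enlargement detour is where your proposal has genuine gaps, beyond the fullness issue you already flag. First, your fullness fix does not work as stated: Lemma~\ref{lem:fullclubs} \emph{constructs} full models; it does not show that the collapse of an enlargement $\Sk{N}{X}{\{\P,\pi\}}$ of a given $X$ is again full, and nothing ties the given full $\N$ to that construction. Second, $k``\overline\G$ need not generate an $\M$-generic filter, since $\M$ may contain dense subsets of $k(\overline\Q)$ with no preimage in $\N$; if you extend to a genuine $\M$-generic you lose the link to the original $\overline\G$ needed for~(4). Third, and most seriously for condition~(3): what you ultimately owe is an embedding with domain the \emph{original} $\N$ satisfying $\sk{N}{\delta}{\sigma'}=\Sk{N}{\delta}{X}$ for the \emph{original} $X$, whereas subcompleteness of $\P$ applied to the enlarged setup only yields $\sk{N}{\delta}{\sigma_1'}=\Sk{N}{\delta}{X'}$. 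Composing with $k$ gives $\ran(\sigma_1'\circ k)=\sigma_1'``(\sigma_1^{-1}``X)$, and since $\sigma_1'\neq\sigma_1$ in general there is no evident way to recover the hull over $X$ from that over $X'$. The paper's route avoids all three issues by never leaving the original $\N$.
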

\begin{proof}
Let $\pi:\P \longrightarrow \Q$ be a dense embedding. We have that $\delta(\P)=\delta(\Q) = \delta$, by \textbf{Lemma \ref{lem:deltasize}}.
If $\P$ is subcomplete, then to see that $\Q$ is also subcomplete assume $\theta$ is large enough to verify the subcompleteness of $\P$ and that we are in the standard setup: \begin{itemize}
	\item $\P, \Q \in H_\theta \subseteq N = L_\tau[A] \models \ZFC^-$ where $\tau>\theta$ and $A \subseteq \tau$
	\item $\sigma: \N \cong X \preccurlyeq N$ where $X$ is countable and $\N$ is full
	\item $\sigma(\overline \theta, \overline \P, \overline \Q, \overline \pi, \overline s)=\theta, \P, \Q, \pi, s$ for some $s \in N$.
\end{itemize} 
We have by elementarity that $\overline \pi:\overline \P \longrightarrow \overline \Q$ is a dense embedding. 
Toward showing that $\Q$ is subcomplete, let $\overline H \subseteq \overline \Q$ be generic for $\overline \Q$ over $\N$. 
\begin{claim} There is $q \in \Q$ such that whenever $H \subseteq \Q$ is generic and $q \in H$, then there is $\sigma' \in V[H]$ satisfying: \begin{enumerate}
	\item $\sigma': \N \prec N$
	\item $\sigma'(\overline \theta, \overline{\Q}, \overline s)=\theta, \Q, s$
	\item $\sk{N}{\delta}{\sigma'} = \Sk{N}{\delta}{X}$
	\item $\sigma'`` \overline H \subseteq H$.
\end{enumerate}
\end{claim}
\begin{proof}[Pf.]
Let $\G =\overline \pi^{\ -1} `` \overline H$. Then $\G \subseteq \overline \P$ is generic over $\N$.
So by the subcompleteness of $\P$ there is $p \in \P$ such that whenever $G \subseteq \P$ is generic satisfying $p\in G$ there is an embedding $\sigma' \in V[G]$ satisfying: \begin{itemize}
	\item $\sigma': \N \prec N$
	\item $\sigma'(\overline \theta, \overline{\P}, \overline{\Q}, \overline \pi, \overline s)= \theta, \P, \Q, \pi,  s$
	\item $\sk{N}{\delta}{\sigma'} = \Sk{N}{\delta}{X}$
	\item $\sigma'`` \, \G \subseteq G$.
\end{itemize}
Indeed this is the $\sigma'$ that is required. Let $q=\pi(p)$. Then for $H=\pi``G$ we have that for $\overline q \in \overline H$, $\overline q = \overline \pi(\overline p)$ for some $\overline p \in \G$. But then 
	$$\sigma'(\overline q) = \sigma'(\overline \pi(\overline p)) \in \pi``G=H.$$ 
Since all of the other properties needed to satisfy the \textit{Claim} are true of $\sigma'$, we are done.	
%Then $\sigma'$ lifts in $V[G]$ to an embedding $\sigma^*:\N[\G] \prec N[G]$. By the forcing equivalence of $\P$ and $\Q$, we have that there is $H \subseteq \Q$ generic such that $V[G]=V[H]$. Thus $\sigma' \in V[H]$ as well, and $\sigma^*:\N[\overline H] \prec N[H]$ by our choice of $\G$ and $H$. In particular, this imples that $\sigma'``\overline H \subseteq H$ and therefore $\sigma'$ is as desired. This proves the \textit{Claim}.
\end{proof} 
Thus $\Q$ is subcomplete as desired.
\end{proof}

\subsection{Subcomplete Forcing and Countably Closed Forcing}
\label{subsec:SCForcingandCCForcing}
In order to explore the relationship between countably closed, complete, and subcomplete forcing, let's define the following, seemingly weaker class of forcings than given by completeness, where our countable transitive collapses are taken to be full as in the definition of subcompleteness.
\begin{defn} 
A forcing notion $\P$ is \emph{\textbf{weakly complete}} so long as
for sufficiently large $\theta$ we have; letting: \begin{itemize}
	\item $\P \in H_\theta \subseteq N = L_\tau[A] \models \ZFC^-$ where $\tau>\theta$ and $A \subseteq \tau$
	\item $\sigma: \N \cong X \preccurlyeq N$ where $X$ is countable and $\N$ is full
	\item $\sigma(\overline \theta, \overline{\P}, \overline s)=\theta, \P, s$ for some $s \in N$;
\end{itemize}
if $\G$ is $\overline{\P}$-generic over $\N$ then there is $p \in \P$ such that whenever $G \ni p$ is $\P$-generic, $\sigma ``\, \G \subseteq G$. 

In particular, below $p$ we have that $\sigma$ lifts to an embedding $\sigma^*:\N[\G] \prec N[G]$.
We say that $\theta$ as above \textit{\textbf{verifies the weak completeness of $\P$}}. As with subcomplete forcing (see \textit{Remark} \ref{remark:VerifyingSC}), we may replace ``sufficiently large $\theta$" with ``some $\theta$".
\end{defn}

\begin{lem}\label{lem:BA(w)c} If $\P$ is (weakly) complete, then $\BA(\P)$ is (weakly) complete. \end{lem}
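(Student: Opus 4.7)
My plan is to exploit the canonical dense embedding $\pi\colon \P \hookrightarrow \BA(\P)\setminus\{0\}$ and mimic the argument of Proposition \ref{prop:scde}, only without the third clause involving Skolem hulls, since the definition of (weak) completeness omits it. Since $\pi \in H_\theta$ for sufficiently large $\theta$, I may include $\pi$ and $\BA(\P)$ among the parameters in the standard setup: fix $\sigma\colon\N\prec N$ with $\sigma(\overline\theta,\overline\P,\overline{\BA(\P)},\overline\pi,\overline s)=\theta,\P,\BA(\P),\pi,s$, where for the complete case $\N$ is countable transitive and for the weakly complete case $\N$ is countable and full. By elementarity, $\overline\pi\colon\overline\P\to\overline{\BA(\P)}\setminus\{0\}$ is a dense embedding in $\N$.

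Now suppose $\overline H$ is $\overline{\BA(\P)}$-generic over $\N$. The key observation is that $\G := \overline\pi^{-1}``\overline H$ is a $\overline\P$-generic filter over $\N$, by the standard fact about dense embeddings (applied inside $\N$). Apply the (weak) completeness of $\P$ with respect to $\sigma$, $\N$, $N$ and the generic $\G$, to obtain a condition $p \in \P$ such that whenever $G\ni p$ is $\P$-generic, $\sigma``\G \subseteq G$. I claim that $q := \pi(p) \in \BA(\P)$ witnesses (weak) completeness of $\BA(\P)$ for the embedding $\sigma$ and the generic $\overline H$.

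So let $H \subseteq \BA(\P)$ be generic with $q \in H$, and set $G := \pi^{-1}``H$, which is $\P$-generic over $V$ and contains $p$ since $\pi(p) = q \in H$. I must verify that $\sigma``\overline H \subseteq H$. Let $\overline b \in \overline H$; by density of $\overline\pi``\overline\P$ in $\overline{\BA(\P)}$ (inside $\N$), there is $\overline r \in \overline\P$ with $\overline\pi(\overline r) \leq \overline b$, and then $\overline r \in \G$. By the choice of $p$, we have $\sigma(\overline r) \in G$, so $\pi(\sigma(\overline r)) = \sigma(\overline\pi(\overline r)) \in H$. Since $H$ is upward closed in $\BA(\P)$ and $\sigma(\overline\pi(\overline r)) \leq \sigma(\overline b)$ by elementarity, we conclude $\sigma(\overline b) \in H$, as required.

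There is essentially no obstacle here; the only subtlety is to recognize that the step from $\overline H$ to $\G$, and then back from $G$ to $H$, works uniformly because $\pi$ and $\overline\pi$ are dense embeddings preserving and reflecting the order and incompatibility, so passing between generics for $\P$ and $\BA(\P)$ is transparent. Note also that both the complete and weakly complete cases are handled by the same argument, since the definitions differ only in whether $\N$ is required to be transitive or full, and the fullness of $\N$ (if assumed) is not disturbed by the construction.
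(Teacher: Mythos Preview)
Your proposal is correct and follows essentially the same approach as the paper: the paper's proof simply points to Proposition~\ref{prop:scde} and observes that the same argument goes through with $\sigma$ in place of the lifted $\sigma'$, which is precisely what you have spelled out in detail. One minor phrasing issue: when you pick $\overline r$ with $\overline\pi(\overline r)\leq \overline b$, you need genericity of $\overline H$ (not just density of $\overline\pi``\overline\P$) to ensure such an $\overline r$ can be chosen with $\overline\pi(\overline r)\in\overline H$, hence $\overline r\in\G$; but this is clearly what you intend.
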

\begin{proof}
Let $i:\P \To \BA(\P)$ be the canonical dense embedding from $\P$ to its Boolean algebra. It is not hard to see that the proof of \textbf{Proposition \ref{prop:scde}} gives us that if $\P$ is (weakly) complete then $\BA(\P)$ is (weakly) complete: instead of working with the lifted $\sigma'$ we may work with $\sigma$ by (weak) completeness and show the same properties hold, as desired.
\end{proof}

In fact, weakly complete forcing and complete forcing give rise to the same class: that of countably closed forcing.
%\begin{fact} $\mathbb P$ is complete $\iff$ for large enough $\theta$ satisfying $\mathbb P \in H_\theta$, any time we have $X \prec H_\theta$ with $\mathbb P, \theta \in X$ and $\overline G \subseteq X \cap \mathbb P$ which is $X$-generic, there is a condition $p \in \mathbb P$ such that any $G \subseteq \mathbb P$ generic with $p \in G$ satisfies $\overline G \subseteq G$. \end{fact}
Our below proofs directly follow those of Jensen \cite[Ch.~2 p.~3]{Jensen:2009wc}.
\begin{thm}[Jensen] The following give rise to the same classes of forcing notions, up to forcing equivalence:
\begin{enumerate}
	\item Countably closed
	\item Complete
	\item Weakly complete
\end{enumerate}
\end{thm}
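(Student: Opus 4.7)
The strategy is to establish the cycle $(1)\Rightarrow(2)\Rightarrow(3)\Rightarrow(1)$, where the final arrow holds only up to forcing equivalence. For $(1)\Rightarrow(2)$, given countably closed $\P$ and the standard setup of completeness with transitive $\sigma:\N\prec N$ and an $\overline{\P}$-generic filter $\G$ over $\N$, the image $\sigma``\G$ is a countable and downward-directed subset of $\P$. I would enumerate $\G$ and, using that it is a filter, extract a cofinal descending $\omega$-sequence $\langle\sigma(\overline r_n)\rangle$; countable closure of $\P$ then produces a lower bound $p$, which by upward closure forces $\sigma``\G\subseteq\dot G$. Direction $(2)\Rightarrow(3)$ is immediate because weak completeness only verifies the conclusion of completeness on a strictly smaller class of countable $\N$'s (the full ones), so every verification required by weak completeness is a special case of one required by completeness.

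The substantial direction is $(3)\Rightarrow(1)$, up to forcing equivalence. I would pass to the Boolean completion: by Lemma~\ref{lem:BA(w)c}, $\BA(\P)$ is weakly complete, and since $\P$ densely embeds into $\BA(\P)^+$ it suffices to show that $\BA(\P)^+$ is forcing equivalent to a countably closed poset. Given a descending sequence $\langle b_n\rangle$ in $\BA(\P)^+$, I would take a full $\sigma:\N\prec N$ with the sequence in $\ran(\sigma)$ and let $\overline b_n = \sigma^{-1}(b_n)$. In the clean case that $\bigwedge\overline b_n > 0$ in $\N$, witnessed by some $\overline c$, elementarity yields $\sigma(\overline c)\leq b_n$ for every $n$, so $\sigma(\overline c)$ is a nonzero lower bound in $\BA(\P)$ and we are done. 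The delicate case is when $\bigwedge\overline b_n = 0$ in $\N$; here I would exploit weak completeness of $\BA(\P)$ together with the fullness of $\N$ --- through its regularity in some $L_\gamma(\N)\models\ZFC^-$ --- and combine master conditions produced by varying the choice of $\N$-generic, in order to extract a dense countably closed subposet of $\BA(\P)^+$.

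The principal obstacle lies in this delicate case. A naive attempt simply to apply weak completeness to an $\N$-generic filter $\G$ containing every $\overline b_n$ fails, since when $\bigwedge\overline b_n = 0$ in $\N$ the set $\{q\in\overline\P : q\leq\neg\overline b_n \text{ for some } n\}$ is dense in $\overline\P$, preventing any $\N$-generic from containing all of the $\overline b_n$. Consequently the required lower bound (or equivalent dense countably closed subforcing) must be assembled from a family of master conditions corresponding to many different generics, and it is the fullness hypothesis on $\N$ --- rather than merely its transitivity --- that permits these to align into the required object, thereby closing the cycle.
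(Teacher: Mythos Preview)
Your arguments for $(1)\Rightarrow(2)$ and $(2)\Rightarrow(3)$ are correct and match the paper.

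For $(3)\Rightarrow(1)$ there is a genuine gap. You correctly identify the ``delicate case'' $\bigwedge_n \overline b_n = 0$ in $\N$; but by elementarity this occurs exactly when $\bigwedge_n b_n = 0$ in $\BA(\P)$, and then the sequence $\langle b_n\rangle$ has \emph{no} lower bound in $\BA(\P)^+$ at all. So $\BA(\P)^+$ can genuinely fail to be countably closed, and no assembly of master conditions will manufacture a lower bound that does not exist. Your parenthetical remark about ``extracting a dense countably closed subposet'' from ``a family of master conditions corresponding to many different generics'' points in the right direction, but you never say what that subposet is, and your invocation of fullness as a device for making conditions ``align'' does not correspond to any actual mechanism.

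The paper's construction is concrete and avoids any case split on individual sequences. One \emph{defines} a new poset $\Q$ whose conditions are pairs $(X,G)$ with $X = Y\cap N$ for some countable $Y\preccurlyeq L_{\tau^+}[A]$ (this choice of $Y$ is exactly what guarantees the Mostowski collapse of $X$ is full) and $G\subseteq\B$ an $X$-generic filter, ordered by $(X,G)\leq(X',G')$ iff $X\supseteq X'$ and $G' = G\cap X'$. Countable closure of $\Q$ is then immediate: given a descending $\omega$-sequence, take unions. The map $\pi(X,G) = \bigwedge G$ is a dense homomorphism into $\B$; weak completeness is invoked precisely (and only) to check that $\bigwedge G \neq 0$, and fullness enters solely because weak completeness requires it of the collapsed hull. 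That is the missing idea in your sketch.
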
	
\begin{proof}
First we show that countably closed forcings are complete. Then we will show that both complete forcings and weakly complete forcings are forcing equivalent to countably closed posets.

\begin{paragraph}{If $\P$ is countably closed, then $\P$ is complete (and weakly complete):}
\begin{proof}[Pf.]
Below we show that if $\P$ is countably closed, then $\P$ is complete, and the same proof shows that $\P$ is weakly complete with the modifications in parentheses. Let $\theta$ be sufficiently large so that: \begin{itemize}
	\item $\P \in H_\theta \subseteq N = L_\tau[A] \models \ZFC^-$ where $\tau>\theta$ and $A \subseteq \tau$
	\item $\sigma: \N \cong X \preccurlyeq N$ where $X$ is countable and $\N$ is transitive (full)
	\item $\sigma(\overline \theta, \overline{\P}, \overline s)=\theta, \P, s$ for some $s \in N$.
\end{itemize}
Let $\G \subseteq \overline{\P}$ be generic over $\N$. Since $\overline{\P}$ is countable, we may generate $\G$ with a chain, $\seq{ \overline{p}_n }{ n < \omega }$. So use the countable closedness of $\P$ to take $p \in \P$ satisfying 
	$\forall n \ p \leq \sigma(\overline p_n).$
Then $\P$ is complete (weakly complete) as desired, since any generic $G$ containing $p$ must contain $\sigma``\,\G$ as the sequence of $\overline p_n$'s generate $\G$.
\end{proof}
\end{paragraph}

\begin{paragraph}{If $\P$ is complete, then $\P$ is weakly complete:}
\begin{proof}[Pf.]
Suppose that $\P$ is complete. To show that $\P$ is weakly complete, we need to show that if we are in a situation where \begin{itemize}
	\item $\P \in H_\theta \subseteq N = L_\tau[A] \models \ZFC^-$ where $\tau>\theta$ and $A \subseteq \tau$
	\item $\sigma: \N \cong X \preccurlyeq N$ where $X$ is countable and $\N$ is full
\end{itemize}
then there is $p \in \P$ such that below $p$, in the extension $\sigma$ lifts. But this is already true because if $\N$ is full then $\N$ is transitive, so the rest goes through as $\P$ is complete.
\end{proof}
\end{paragraph}

\begin{paragraph}{If $\P$ is complete, then $\P$ is forcing equivalent to a countably closed poset:}
\begin{proof}[Pf.]
Suppose that $\P$ is complete. Then its Boolean algebra $\B=\BA(\P)$ is also complete by \textbf{Lemma \ref{lem:BA(w)c}}. We show that $\B$ is forcing equivalent to a countably closed poset. Let $\theta$ verify the completeness of $\B$. Define a poset $\Q$ consisting of conditions $q = \langle X_q, G_q \rangle$ such that $\B, \theta \in X_q \preccurlyeq N$ where $X_q$ is countable and $G_q \subseteq \B$ is $X_q$-generic. Let 
	$$q \leq r \iff X_q \supseteq X_r \ \text{and} \ G_r = G_q \cap X_r.$$ 
We may immediately see that $\Q$ is countably closed since each $X_q$ is countable and the generics cohere. Define a dense homomorphism $\pi: \Q \to \B$ from $\Q$ to $\B$ as follows: $\pi(q) = \Meet G_q$ (the meet of all elements of $G_q$). We show that $\pi$ is as desired below:
	\begin{paragraph}{$\pi$ is well defined:} Let $q \in \Q$. That $\Meet G_q \neq 0$, where $0$ is the bottom element, follows from the completeness of $\B$. Let $\G_q$ be the isomorphic copy of $G_q$, $\sigma^{-1}``G_q$ in $\N$, where $\N$ is the Mostowski collapse of $X_q$. The completeness of $\B$ guarantees that there is $b \in \B$ such that whenever $G \ni b$ is $\B$-generic, $\sigma``\G=G_q \subseteq G$. This implies that $b \leq \Meet G_q$. (Otherwise $b \meet \neg \Meet G_q$ is in some generic $G^*$, contradicting $G_q\subseteq G$.) \end{paragraph}
	\begin{paragraph}{$q \leq r \implies \Meet G_q \leq \Meet G_r$:} If $q \leq r$ then $G_r \subseteq G_q$, so the desired result follows. \end{paragraph}
	\begin{paragraph}{$q \parallel r \iff \Meet G_q \meet \Meet G_r \neq 0$:} For the forward direction, if $s \leq q, r$ then $\Meet G_s \leq \Meet G_q \meet \Meet G_r$ since $G_s \supseteq G_q,G_r$. For the other direction, let $\Meet G_q \meet \Meet G_r \neq 0$. Then let $X \preccurlyeq N$ with $X$ countable, $X_q \cup X_r \subseteq X$, and $\Meet G_q \meet \Meet G_r \in X$. Since $X$ is countable we may obtain $G \subseteq \P$ that is $X$-generic, with $\Meet G_q \meet \Meet G_r \in G$. Then $\langle X,G \rangle \leq q,r$. \end{paragraph}
	\begin{paragraph}{$D=\set{ \cap G_q }{ q \in \Q }$ is dense in $\P$:} Let $b \in \B$. Let $X \preccurlyeq N$ with $X$ countable and $b \in X$. Then if $G$ is $X$-generic and contains $b$, we have $b \geq \Meet G$ and $\Meet G \in D$. \qedhere
\end{paragraph}
\end{proof} 
\end{paragraph}

\begin{paragraph}{If $\P$ is weakly complete, then $\P$ is forcing equivalent to a countably closed poset:}
\begin{proof}[Pf.]
Suppose that $\P$ is weakly complete. Then its Boolean algebra $\B=\BA(\P)$ is also complete by \textbf{Lemma \ref{lem:BA(w)c}}. We show that $\B$ is forcing equivalent to a countably closed poset. Let $\theta$ be large enough to verify the weak completeness of $\P$. Define a poset $\Q$ consisting of conditions $q = \langle X_q, G_q \rangle$ such that there is a $Y_q \preccurlyeq L_{\tau^+}[A]$ countable where $X_q = Y_q \cap N$ and $G_q \subseteq \B$ is $X_q$-generic, where $\B \in H_\theta \subseteq N = L_\tau[A]$ with $\tau >\theta$ and $A \subseteq \tau$.
Let 
	$$q \leq r \iff X_q \supseteq X_r \ \text{and} \ G_r = G_q \cap X_r.$$ 
We've guaranteed with our definition that if $\sigma: \N \cong X_q$ is the Mostowski collapse, then $\N$ is full, since then $\N = (L_{\overline \tau}[\overline A])^{L_{\overline{\tau^+}}[\overline A]}$ where $L_{\overline{\tau^+}}[\overline A]$ is the Mostowski collapse of $Y$, and we may reason as in the proof of \textbf{Lemma \ref{lem:fullclubs}}.

%Let $\theta$ verify the weak completeness of $\P$. Define a poset $\Q$ consisting of conditions $q = \langle X_q, G_q \rangle$ such that there is a $Y \preccurlyeq N$ such that $X_q = Y \cap L_{\tau^+}[A]$ where $Y$ is countable. Let 
%	$$q \leq r \iff X_q \supseteq X_r \ \land \ G_r = G_q \cap X_r.$$ 
%Additionally, letting $\pi: \N \cong X_q$ be the Mostowski collapse, we have that $\N$ is full.  
%As is argued in the proof of \textbf{Lemma \ref{lem:fullclubs}}, we have that $N=L_\tau[A]$ is regular in $L_{\tau^+}[A]$. Thus if we take the Mostowski collapse $\overline{L_{\tau^+}[A]} \cong Z \preccurlyeq L_{\tau^+}[A]$ where $Z$ is countable, letting $\overline \tau$ be the preimage of $\tau$, we have that $\overline{L_{\tau^+}[A]} = L_{\overline{\tau}^+}[\overline A]$. 
%Letting $\tilde N \cong Y$ be the Mostowski collapse of $Y$, we have that $\N= L_{\overline \tau}[\overline A] \cap \tilde N$.
%Thus as is argued in that proof, we've guaranteed that $\N$ is full by seeing that $\N$ is regular in $( L_{\overline{\tau^+}}[\overline A] )^{\tilde N}$.
%In addition, there is $\tilde{\pi}: \tilde N \cong Y$ thus $\N = \left( L_{\overline \tau}[\overline A] \right)^{\tilde N}$. 
%Since $\tilde N$ is itself an $L_{\rho}[A]$ structure, namely there is some $\gamma$ such that $\tilde N= L_\gamma(\N)$ sees that the height of $\N$ is regular, we have that $\N$ is full as desired %This is too sketchy. 

We wish to show that $\Q$ is countably closed. Define a dense homomorphism $\pi: \Q \to \BA(\P)$ as follows: $\pi(q) = \Meet G_q$ (the meet of all of the elements of $G_q$). Now we show that $\pi$ is as desired:

\begin{paragraph}{$\pi$ is well defined:} Let $q \in \Q$. That $\Meet G_q \neq 0$, where $0$ is the bottom element, follows from the weak completeness of $\B$. Let $\G_q$ be the isomorphic copy of $G_q$, $\sigma^{-1}``G_q$ in $\N$, where $\N$ is the Mostowski collapse via $\sigma$ of $X_q$ as above. The weak completeness guarantees that there is $b \in \B$ such that whenever $G \ni b$ is $\B$-generic, $\sigma``\G=G_q \subseteq G$. This implies that $b \leq \Meet G_q$. \end{paragraph}
	\begin{paragraph}{$q \leq r \implies \Meet G_q \leq \Meet G_r$:} If $q \leq r$ then $G_r \subseteq G_q$, so the desired result follows. \end{paragraph}
	\begin{paragraph}{$q \parallel r \iff \Meet G_q \meet \Meet G_r \neq 0$:} For the forward direction, if $s \leq q, r$ then $\Meet G_s \leq \Meet G_q \meet \Meet G_r$ since $G_s \supseteq G_q,G_r$. For the other direction, let $\Meet G_q \meet \Meet G_r \neq 0$. Then let $Y \preccurlyeq L_{\tau^+}[A]$ with $Y$ countable, $Y_q \cup Y_r \subseteq Y$, and $\Meet G_q \meet \Meet G_r \in Y$. Then letting $X=Y\cap N$ we may construct $G \subseteq \P$ that is $X$-generic, with $\Meet G_q \meet \Meet G_r \in G$. Then $\langle X,G \rangle \leq q,r$. \end{paragraph}
	\begin{paragraph}{$D=\set{ \Meet G_q }{ q \in \Q }$ is dense in $\P$:} Let $b \in \B$. Let $X \preccurlyeq N$ with $X$ countable and $b \in X$. Then if $G$ is $X$-generic and contains $b$, we have $b \geq \Meet G$ and $\Meet G \in D$. \qedhere
	\end{paragraph}
\end{proof}
\end{paragraph}
\end{proof}

\begin{thm}\label{thm:smallsc} If $\P$ is subcomplete and $|\P| = \omega_1$, then $\P$ is weakly complete, and is thus equivalent to a countably closed poset.
\end{thm}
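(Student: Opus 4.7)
The plan is to show that under the hypothesis $|\P| = \omega_1$, the embedding $\sigma'$ produced by subcompleteness must coincide with the original $\sigma$ on all of $\overline \P$, so that the conclusion $\sigma' `` \G \subseteq G$ guaranteed by subcompleteness immediately upgrades to $\sigma `` \G \subseteq G$, which is exactly what weak completeness requires. The forcing-equivalence conclusion then follows from the preceding theorem identifying weakly complete posets with countably closed ones up to equivalence.

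First I would fix $\theta$ verifying subcompleteness of $\P$ and set up the standard weak-completeness scenario: $\P \in H_\theta \subseteq N = L_\tau[A] \models \ZFC^-$, $\sigma : \N \cong X \preccurlyeq N$ with $\N$ full and countable, $\sigma(\overline \theta, \overline \P, \overline s) = \theta, \P, s$, and $\G \subseteq \overline \P$ an $\N$-generic filter. Applying subcompleteness to this $\sigma$ and $\G$ yields a condition $p \in \P$ such that whenever $G \ni p$ is $\P$-generic, there is $\sigma' \in V[G]$ satisfying clauses \textbf{1}--\textbf{4} of Definition \ref{defn:SC}; in particular $\sigma'(\overline \theta, \overline \P, \overline s) = \theta, \P, s$ and $\sigma' `` \G \subseteq G$.

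The main step is a definability-and-elementarity calculation. Because $|\P|^N = \omega_1$, we can let $h^*$ be the canonical bijection $\omega_1 \to \P$ in $N$, namely the $<_N$-least such bijection, where $<_N$ is the standard well-order of $L_\tau[A]$. Since $<_N$ and $\omega_1$ are both definable in $N$ without parameters, $h^*$ is $N$-definable from the single parameter $\P$. Setting $\overline h^* := \sigma^{-1}(h^*)$, by elementarity $\overline h^* \in \N$ is the corresponding $<_\N$-least bijection from $\omega_1^\N$ onto $\overline \P$. Crucially, reading the very same defining formula through $\sigma'$ and using only that $\sigma'(\overline \P) = \P$ and $\sigma'(\omega_1^\N) = \omega_1$ (both by elementarity), we also obtain $\sigma'(\overline h^*) = h^*$. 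Now for any $\overline p \in \overline \P$, write $\overline p = \overline h^*(\xi)$ for the unique $\xi < \omega_1^\N$; by Fact \ref{fact:CPofourEmbeddings} the critical point of both $\sigma$ and $\sigma'$ is $\omega_1^\N$, so each fixes $\xi$, and
\[
\sigma(\overline p) = \sigma(\overline h^*)(\sigma(\xi)) = h^*(\xi) = \sigma'(\overline h^*)(\sigma'(\xi)) = \sigma'(\overline p).
\]
Hence $\sigma \rest \overline \P = \sigma' \rest \overline \P$, so $\sigma `` \G = \sigma' `` \G \subseteq G$, verifying weak completeness.

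The only subtlety is checking that $h^*$ really is definable in $N$ using only $\P$ as a parameter, which is where both $|\P| = \omega_1$ and the choice of $N$ as an $L_\tau[A]$-model with its definable well-order are essential. This definability is what converts clause \textbf{2} of subcompleteness (which only forces $\sigma'$ to agree with $\sigma$ on $\overline \theta, \overline \P, \overline s$) into the much stronger pointwise agreement $\sigma \rest \overline \P = \sigma' \rest \overline \P$; clause \textbf{3} (the Skolem-hull equality) plays no role in this argument.
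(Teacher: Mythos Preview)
Your proof is correct and follows essentially the same idea as the paper's. The paper streamlines the argument by assuming without loss of generality that $\P \subseteq \omega_1$, so that $\overline\P \subseteq \omega_1^{\N}$ lies entirely below the common critical point of $\sigma$ and $\sigma'$, giving $\sigma``\G = \sigma'``\G$ immediately; your use of the $<_N$-least bijection $h^*:\omega_1 \to \P$ is just an explicit way of carrying out that reduction without first replacing $\P$ by an isomorphic copy.
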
 
\begin{proof} 
We may assume that $\P \subseteq \omega_1$, and note that $\delta(\P) \leq \omega_1$. Let $\theta$ verify the subcompleteness of $\P$, and suppose we are in the standard setup: \begin{itemize}
	\item $\P \in H_\theta \subseteq N = L_\tau[A] \models \ZFC^-$ where $\tau>\theta$ and $A \subseteq \tau$
	\item $\sigma: \N \cong X \preccurlyeq N$ where $X$ is countable and $\N$ is full
	\item $\sigma(\overline \theta, \overline{\P},  \overline s)=\theta, \P, s$ for some $s \in N$.
\end{itemize}
Toward showing that $\P$ is complete, let $\G$ be $\overline{\P}$-generic over $\N$. 
By subcompleteness, there is $p \in \P$ so that whenever $G \ni p$ is $\P$-generic, there is $\sigma' \in V[G]$ such that: \begin{itemize}
	\item $\sigma': \N \prec N$
	\item $\sigma'(\overline \theta, \overline{\P}, \overline s)=\theta, \P, s$
	\item $\sk{N}{\omega_1}{\sigma'} = \Sk{N}{\omega_1}{X}$
	\item $\sigma'``\, \G \subseteq G$.
\end{itemize}
Of course $\G \subseteq \overline{\P}$ and thus $|\G| = \omega_1^{\N}$. However, because $\cp(\sigma) = \cp(\sigma') = \omega_1^{\N}$ we have that $\sigma``\,\G = \sigma'``\,\G$ so below $p$ we have $\sigma``\,\G \subseteq G$ as desired.
\end{proof}

\subsection{Levels of Subcompleteness}
\label{subsec:LevelsofSubcompleteness}
The notion of subcompleteness above $\mu$ is an attempt to measure where exactly subcompleteness kicks in; in some sense it tells you at what level the forcing fails to be complete. The following definition is from Jensen \cite[Chapter 2 p.\ 47]{Jensen:2009fe}.
\begin{defn} 
Let $\mu$ be a cardinal. We say that a forcing notion $\P$ is \emph{\textbf{subcomplete above $\mu$}} so long as for sufficiently large $\theta > \mu$, whenever we are in the standard setup, where: \begin{itemize}
	\item $\P \in H_\theta \subseteq N = L_\tau[A] \models \ZFC^-$ with $\tau>\theta$ and $A \subseteq \tau$
	\item $\sigma: \N \cong X \preccurlyeq N$ where $X$ is countable and and $\N$ is full
	\item $\sigma(\overline \theta, \overline \mu, \overline{\P}, \overline s)=\theta, \mu, \P, s$ for some $s \in N$;
\end{itemize}
then, for any $\G \subseteq \overline{\P}$, there is $p \in \P$ such that whenever $G \ni p$ is $\P$-generic, then there is $\sigma' \in V[G]$ satisfying: \begin{enumerate}
	\item $\sigma': \N \prec N$
	\item $\sigma'(\overline \theta, \overline{\P}, \overline s)=\theta, \P, s$
	\item $\sk{N}{\delta}{\sigma'} = \Sk{N}{\delta}{X}$
	\item $\sigma'`` \, \G \subseteq G$
	\item $\sigma' \rest H_{\overline \mu}^{\N} = \sigma \rest H_{\overline \mu}^{\N}$.
\end{enumerate}
As usual, this means that in particular below the condition $p$ there is an embedding $\sigma'$ that lifts by \textbf{4} to an embedding $\sigma'^*:\N[\G] \prec N[G]$ in $V[G]$. As usual we say that such $\theta$ as above \textbf{\textit{verifies the subcompleteness above $\mu$ of $\P$}}, and we may say that $\P$ is subcomplete if there is a $\theta$ that verifies its subcompleteness above $\mu$.
\end{defn}

Immediately we have the following:
\begin{prop} If $\P$ is subcomplete, then $\P$ is subcomplete above $\omega_1^{\N}$. \end{prop}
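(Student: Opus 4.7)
The plan is to interpret the statement as saying that subcompleteness of $\P$ is the same as subcompleteness above $\mu = \omega_1$, which in the collapsed structure corresponds to $\overline\mu = \omega_1^{\N}$. The approach is simply to check that the only extra condition in the definition of ``subcomplete above $\mu$'', namely clause \textbf{5} that requires $\sigma' \rest H_{\overline{\mu}}^{\N} = \sigma \rest H_{\overline{\mu}}^{\N}$, is automatically satisfied when $\overline{\mu} = \omega_1^{\N}$.

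I would begin by fixing $\theta$ verifying the subcompleteness of $\P$ in the usual sense, and placing us in the standard setup with $\P \in H_\theta \subseteq N = L_\tau[A]$, a full $\N$ with $\sigma : \N \cong X \preccurlyeq N$, and $\sigma(\overline\theta, \overline \P, \overline s, \overline \mu) = \theta, \P, s, \omega_1$, so that $\overline\mu = \omega_1^{\N}$. Given $\G$ an $\N$-generic filter on $\overline\P$, apply the usual subcompleteness to obtain $p \in \P$ such that whenever $G \ni p$ is $\P$-generic, there is $\sigma' \in V[G]$ satisfying clauses \textbf{1}--\textbf{4} of the definition. The same $p$ and $\sigma'$ will then witness subcompleteness above $\omega_1$.

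The only thing left to verify is clause \textbf{5}. The key observation is \textbf{Fact \ref{fact:CPofourEmbeddings}}: for any elementary embedding from a countable transitive model into our $N$ (and $\N$ is transitive because full), the critical point is $\omega_1^{\N}$ and the embedding is the identity on $(H_{\omega_1})^{\N}$. Applying this to both $\sigma$ and $\sigma'$ gives $\sigma \rest (H_{\omega_1})^{\N} = \id = \sigma' \rest (H_{\omega_1})^{\N}$. Since $\overline\mu = \omega_1^{\N}$, we have $H_{\overline\mu}^{\N} = (H_{\omega_1})^{\N}$, and clause \textbf{5} follows at once. There is no real obstacle here; the proposition is essentially a remark that condition \textbf{5} is vacuous at the smallest possible value of $\mu$, because both embeddings automatically agree (in fact, are the identity) below their common critical point.
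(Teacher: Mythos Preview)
Your proposal is correct and follows essentially the same approach as the paper: both argue that clause \textbf{5} is automatic by invoking \textit{Fact}~\ref{fact:CPofourEmbeddings}, which gives $\sigma \rest (H_{\omega_1})^{\N} = \id = \sigma' \rest (H_{\omega_1})^{\N}$. The paper's proof is just the one-line citation of this fact, and your write-up simply unpacks the setup around it.
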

\begin{proof} By \textit{Fact} \ref{fact:CPofourEmbeddings}, we have that for any elementary embedding such as $\sigma$ or $\sigma'$ from countable $\N$ to $N$, $\sigma' \rest H_{\overline{\omega_1}}^{\N} = \sigma \rest H_{\overline{\omega_1}}^{\N} = \text{id}$. \end{proof}

\begin{thm} If $\P$ is subcomplete above $\mu$ then $\P$ does not add new countable subsets of $\mu$. \end{thm}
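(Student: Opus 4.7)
The plan is to mimic the proof of Proposition \ref{prop:noreals} (that subcomplete forcing adds no reals), but exploit the additional clause \textbf{5} of subcompleteness above $\mu$ which pins down $\sigma'$ on $H_{\overline\mu}^{\N}$. Suppose toward contradiction that some $p \in \P$ forces $\dot f : \check\omega \to \check\mu$ to be a new function (any new countable subset of $\mu$ can be coded this way via an enumeration in $V[G]$, so this is without loss of generality). Let $\theta$ verify the subcompleteness of $\P$ above $\mu$ and choose the standard setup
\begin{itemize}
\item $\P,\mu \in H_\theta \subseteq N = L_\tau[A] \models \ZFC^-$,
\item $\sigma : \N \cong X \preccurlyeq N$ with $X$ countable and $\N$ full,
\item $\sigma(\overline\theta,\overline\mu,\overline\P,\overline p,\overline{\dot f}) = \theta,\mu,\P,p,\dot f$.
\end{itemize}
Pick $\G \subseteq \overline\P$ which is $\N$-generic with $\overline p \in \G$, and invoke subcompleteness above $\mu$ to get a condition $q$ forcing the existence (in $V[G]$, for any generic $G \ni q$) of $\sigma' : \N \prec N$ satisfying \textbf{1}--\textbf{5} of the definition.

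Now fix such a $G$ and lift $\sigma'$ to $\sigma'^* : \N[\G] \prec N[G]$ as ensured by clause \textbf{4}. Set $\overline f = \overline{\dot f}^{\G} \in \N[\G]$ and $f = \dot f^G \in N[G]$; since $\sigma'(\overline{\dot f}) = \dot f$ we have $\sigma'^*(\overline f) = f$. The key computation is that for each $n < \omega$,
\[
    f(n) \;=\; \sigma'^*(\overline f)(\sigma'^*(n)) \;=\; \sigma'^*(\overline f(n)) \;=\; \sigma'(\overline f(n)),
\]
where the last equality uses that $\overline f(n) \in \N$. Now $\overline f(n)$ is an ordinal below $\overline\mu$, which is a cardinal of $\N$ by elementarity; since ordinals are transitive, $\overline f(n) \in H_{\overline\mu}^{\N}$. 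Clause \textbf{5} therefore gives $\sigma'(\overline f(n)) = \sigma(\overline f(n))$, so $f(n) = \sigma(\overline f(n))$ for every $n < \omega$.

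Since $\overline f \in \N \subseteq V$ and $\sigma \in V$, the function $n \mapsto \sigma(\overline f(n))$ lies in $V$; hence $f \in V$, contradicting our assumption that $p$ (and hence $q \leq p$, after taking $q$ below $p$ by further strengthening if necessary) forces $\dot f$ to be new. The only subtle point is the verification that $\overline f(n) \in H_{\overline\mu}^{\N}$ so that clause \textbf{5} applies pointwise; everything else is a direct adaptation of the argument for \textbf{Proposition \ref{prop:noreals}}, with the role of ``fixed by $\sigma'$ because it is hereditarily countable'' replaced by ``fixed by clause \textbf{5} because it lies in $H_{\overline\mu}^{\N}$''.
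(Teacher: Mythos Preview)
Your argument is correct and follows essentially the same route as the paper's proof: set up the standard configuration, pick $\G\ni\overline p$, apply subcompleteness above $\mu$, lift, and use clause \textbf{5} pointwise on the values $\overline f(n)<\overline\mu$ to conclude $f(n)=\sigma(\overline f(n))\in V$. Two small remarks: first, $\overline f=\overline{\dot f}^{\G}$ lives in $\N[\G]$ rather than $\N$, but since $\G\in V$ (it was built in $V$ over the countable $\N$) you still have $\overline f\in V$ and the conclusion stands; second, you do not need to ``strengthen $q$ below $p$'' --- the fact that $p\in G$ is automatic, since $\overline p\in\G$, $\sigma'(\overline p)=p$, and $\sigma'\,``\,\G\subseteq G$ by clause \textbf{4}.
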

\begin{proof}
Suppose not. Let $\P$ be subcomplete above $\mu$. Let $\dot B \subseteq \mu$ be countable with 
	$$p \forces \dot f: \check \omega \cong \dot B \ \land\ \dot f \notin V.$$ 
Take $\theta > \mu$ large enough to verify the subcompleteness of $\P$. \begin{itemize}
	\item $\P \in H_\theta \subseteq N = L_\tau[A] \models \ZFC^-$ where $\tau>\theta$ and $A \subseteq \tau$
	\item $\sigma: \N \cong X \preccurlyeq N$ where $\N$ is countable and full
	\item $\sigma(\overline \theta, \overline \mu, \overline{\P}, \overline p, \overline{\dot B}, \overline{\dot f})=\theta, \mu, \P, p, \dot B, \dot f$.
\end{itemize}
Let $\G$ be $\overline{\P}$-generic over $\N$ with $\overline p \in \overline G$. By the subcompleteness of $\P$ above $\mu$ there is $p \in \P$ such that whenever $p \in G$ where $G$ is $\P$-generic, there is $\sigma' \in V[G]$ satisfying: \begin{enumerate}
	\item $\sigma': \N \prec N$
	\item $\sigma'(\overline \theta, \overline \mu, \overline{\P}, \overline p, \overline{\dot B}, \overline{\dot f})=\theta, \mu, \P, p, \dot B, \dot f$
	\item $\sk{N}{\delta}{\sigma'} = \Sk{N}{\delta}{X}$
	\item $\sigma'``\G \subseteq G$
	\item $\sigma' \rest \overline \mu = \sigma \rest \overline \mu$.
\end{enumerate}
Let $\overline f = \overline{\dot f}^{\G}$ and $f = \dot f^G$. By \textbf{4} and \textbf{5}, for each $n$, $$f(n)=\sigma'(\overline f(n)) = \sigma(\overline f(n)),$$ meaning that $f \in V$ since $\sigma \in V$, which is a contradiction.
\end{proof}
Thus if $\P$ is subcomplete above $\mu$, then $\mu$'s cardinality, and even its cofinality, cannot be altered to be $\omega$ via $\P$.

\chapter{Properties of Subcomplete Forcing}
\label{chap:PropertiesofSCForcing}
We've seen that countably closed forcings are subcomplete. Countably closed forcings have many nice properties: they preserve branches through $\omega_1$-trees, or any particular $\lozenge$-sequence. Both may be framed as as $\Pi_1^1$-statements, using the sequence or the tree as predicate. In Section \ref{sec:SCForcingAndTrees} we explore questions related to the interactions between subcomplete forcing and various properties of trees of height $\omega_1$. 
Section \ref{sec:SCForcingAndTrees} is inspired by the work of Fuchs \cite{Fuchs:2008rt}, where it is shown that the maximality principle for closed forcings (an axiom that we will define in Section \ref{sec:MP}) implies countably closed-generic $\mathbf{\Sigma}_2^1(H_{\omega_1})$-absoluteness (defined below). In fact, Fuchs defines the generic absoluteness notion for $<\!\kappa$-closed forcing, where $\kappa$ is regular. Since the idea was to look at whether there are analogous results for subcomplete forcing, we define the more general notion, for reasonable forcing classes (what is meant by reasonable is described in the beginning of Chapter \ref{chap:AxiomsaboutSC}), below.

\begin{defn} \label{def:GenericAbsoluteness} Let $n,m$ be natural numbers, let $\Gamma$ be a class of forcing notions, and let $M$ be a transitive set (usually either $\omega_1$ or $H_{\omega_1}$). Then \textbf{\emph{$\Gamma$-generic $\Sigma_n^m(M)$-absoluteness}} with parameters in $S \subseteq \mathcal P(M)$ is the statement that for any $\Sigma_n^m$-sentence $\varphi(a)$ where $a \in S \cap M$ and predicate symbols $\vec{\dot A}$, the following holds: Whenever $\vec A \in S \cap \mathcal P(M)$, $\P \in \Gamma$, and $G$ is $\P$-generic over $V$, then $$(\langle M, \in, \vec A \rangle \models \varphi(a))^V \iff (\langle M, \in, \vec A \rangle \models \varphi(a) )^{V[G]},$$
where $\vec{\dot A}$ is meant to be interpreted in $M$ as $\vec A$, and the satisfaction is order $m+1$.

The case where $S = \mathcal P(M)$ is the boldface version, which we will denote \textbf{\emph{$\Gamma$-generic $\mathbf{\Sigma}_n^m(M)$-absoluteness}}. Without parameters is the lightface version, which we shall denote \textbf{\emph{$\Gamma$-generic}} \textbf{\emph{$\Sigma_n^m(M)$-absoluteness}}.

We will refer to a $\Sigma_2^1(H_{\omega_1})$-property of some object in $H_{\omega_1}$ that is subcomplete-generic absolute as being \textbf{\emph{$sc$-absolute}}, using the object as a parameter, and meaning that whether or not subcomplete forcing was performed does not affect whether this property holds or not.
\end{defn}

As a first approximation answering whether or not subcomplete forcing is available for a comparable absoluteness result, in an attempt to perhaps further concretely differentiate results about subcomplete forcing from countably closed forcing, it was only natural to look at whether subcomplete forcing preserves various examples of $\Sigma_2^1(H_{\omega_1})$ properties that we know to be preserved by countably closed forcing. As we see in the following two sections, many of the combinatorial properties and properties of $\omega_1$-trees that are preserved by countably closed forcing are also preserved by subcomplete forcing. 

\section{Subcomplete Forcing and Trees}
\label{sec:SCForcingAndTrees}
Before we talk about how subcomplete forcing interacts with trees, we should first introduce some (hopefully) familiar terminology. 

\begin{defn} \hfill \begin{itemize}
	\item A \textbf{\emph{tree}} is a partial order $T=\langle T, <_T \rangle$ in which the predecessors of any node are well-ordered and there is a unique minimal element called the root. We will tend to conflate the tree with its underlying set. 
	\item The \textbf{\emph{height of a node}} $t \in T$ is the order type of its predecessors. We write $T_\alpha$ for the $\alpha$th level of $T$, the set of nodes having height $\alpha$. The \emph{\textbf{height of a tree}} $T$, $\height(T)$, is the strict upper bound of the height of its nodes. 
	\item We write $T\rest \alpha$ for the subtree of $T$ of nodes having height less than $\alpha$. An \textbf{\emph{$\omega_1$-tree}} is a normal tree of height $\omega_1$ where all levels are countable. A tree of height $\omega_1$ is \textbf{\emph{normal}} if every node has (at least) two immediate successors, nodes on limit levels are uniquely determined by their sets of predecessors, and every node has successors on all higher levels up to $\omega_1$. 
	\item We write $T_t$ to denote the subtree of $T$ consisting of the nodes $s \in T$ with $s \geq_T t$. For nodes $t \in T$, by $\Succ_T(t)$ we mean the set of successors $s \geq_T t$ in the tree. 
	\item A \textbf{\emph{branch}} $b$ in $T$ is a maximal linearly ordered subset of $T$, and the length of the branch is its order type. By $b(\alpha)$ we mean the node on level $\alpha$ of the branch. We write $[T]$ for the set of \textbf{\emph{cofinal branches}}, those branches containing nodes on every level. If $t \in T$ is a node, then we write $b_t$ to mean the ``branch" below $t$; $b_t = \set{ s \in T}{ t \geq_T s }$.
	\item An $\omega_1$-tree is \textbf{\emph{Aronszajn}} if it has no cofinal branches. Two nodes $t$ and $s$ in $T$ are compatible, written $s \parallel t$, if there is $r \in T$ such that $r \geq_T t$ and $r \geq_T s$. With trees, this is the same as demanding that either $s <_T t$, $s>_T t$, or $s=t$, or that $s$ and $t$ are \textit{\textbf{comparable}}. Otherwise, they are incompatible, written $s \perp t$. An antichain in a tree is a set of pairwise incompatible elements. A \textbf{\emph{Suslin tree}} is an $\omega_1$-tree with no uncountable antichain. When forcing with a tree, we reverse the order, so that stronger conditions are higher up in the tree. Consequently, Suslin trees are $ccc$ as notions of forcing. A \textbf{\emph{Kurepa tree}} is an $\omega_1$-tree with $\omega_2$-many cofinal branches. \qedhere
%An automorphism of a tree is an isomorphism of the tree with itself; it is nontrivial if it is not the identity function.
\end{itemize} \end{defn}

Countably closed forcing does not add cofinal branches through $\omega_1$-trees, so it is natural to wonder whether other subcomplete forcing cannot do this either. Indeed, we see below that no subcomplete forcing can add a branch through a ground model's $\omega_1$-tree. Subcomplete forcing clearly cannot add a new branch of height less than $\omega_1$, since subcomplete forcing can't add reals (\textbf{Proposition \ref{prop:noreals}}). Below we show that subcomplete forcing cannot add cofinal branches either.

\begin{thm}\label{thm:scbranch} Let $T$ be an $\omega_1$-tree. If $\P$ is subcomplete and $G$ is $\P$-generic then $[T]=[T]^{V[G]}$.
\end{thm}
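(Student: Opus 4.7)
Suppose for contradiction that some $p \in \P$ forces $\dot b$ to be a cofinal branch through $\check T$ with $\dot b \notin V$. Fix $\theta$ verifying the subcompleteness of $\P$ with $T, \P, p, \dot b \in H_\theta \subseteq N = L_\tau[A] \models \ZFC^-$ and $A \subseteq \tau$, and take $\sigma : \N \prec N$ with $\N$ countable and full and $\sigma(\overline\theta, \overline T, \overline \P, \overline p, \dot{\overline b}) = \theta, T, \P, p, \dot b$. Set $\alpha = \omega_1^{\N}$. By \textit{Fact} \ref{fact:CPofourEmbeddings}, $\sigma \rest H_{\omega_1}^{\N} = \id$, so, identifying nodes of $T$ with sets of small rank, we may assume $\overline T = T \rest \alpha$ and that $\sigma$ fixes every node of $\overline T$.

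By elementarity $\overline p \forces_{\overline \P} \dot{\overline b} \notin \N$, so $\overline p$ cannot decide $\dot{\overline b}$ level by level. Working in $V$, and using the countability of $\overline \P$, we recursively construct a binary splitting tree of conditions $\seq{\overline p_s}{s \in 2^{<\omega}}$ and of nodes $\seq{t_s}{s \in 2^{<\omega}}$ in $\overline T$ such that $\overline p_\emptyset = \overline p$, $\overline p_s \leq \overline p_t$ whenever $t \subseteq s$, $\overline p_s \forces t_s \in \dot{\overline b}$, and $t_{s \frown 0} \perp t_{s \frown 1}$ in $\overline T$. For each $x \in 2^\omega$ pick, in $V$, an $\N$-generic filter $\G_x$ containing $\set{\overline p_{x \rest n}}{n < \omega}$ and set $\overline b_x = \dot{\overline b}^{\G_x}$. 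Then each $\overline b_x \in V$ is a branch of length $\alpha$ through $\overline T$, and since the $t_{x \rest n}$ diverge for different $x$ the $\overline b_x$ are pairwise distinct, giving $2^{\aleph_0}$ such branches in $V$.

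Now apply the subcompleteness of $\P$ to each $\G_x$ to obtain $q_x \in \P$ (which we may take below $p$, since the embedding it produces sends $\overline p$ to $p$) such that whenever $G \ni q_x$ is $\P$-generic there is $\sigma_x' \in V[G]$ with $\sigma_x'(\overline\theta, \overline T, \overline \P, \overline p, \dot{\overline b}) = \theta, T, \P, p, \dot b$ and $\sigma_x' `` \G_x \subseteq G$, lifting to $\sigma_x'^*: \N[\G_x] \prec N[G]$ with $\sigma_x'^*(\overline b_x) = b := \dot b^G$. Because $\sigma_x'$ fixes every node of $\overline T$, $\overline b_x = \sigma_x'^* `` \overline b_x \subseteq \sigma_x'^*(\overline b_x) = b$; as $b$ is a cofinal branch of $T$ and $\overline b_x$ already contains one node on every level below $\alpha$, this forces $\overline b_x = b \rest \alpha$, and hence the node $b(\alpha) \in T_\alpha$ lies above every element of $\overline b_x$ in $T$.

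The property ``$\overline b_x$ has an upper bound on level $\alpha$ of $T$'' is a statement about the ground-model tree $T$, so it is absolute between $V$ and $V[G]$; hence, already in $V$, each $\overline b_x$ is the set of predecessors of some $u_x \in T_\alpha$. Since $\alpha$ is a limit and $T$ is normal, $u_x$ is uniquely determined by $\overline b_x$, so $x \mapsto u_x$ is injective and $|T_\alpha| \geq 2^{\aleph_0}$, contradicting that $T$ is an $\omega_1$-tree. The main obstacle, and the only genuine use of subcompleteness, is the third paragraph: without it we would have no way to guarantee that each of the continuum many ground-model candidates $\overline b_x$ of length $\alpha$ actually extends to a node of $T$ at level $\alpha$ in $V$, and it is precisely the lifted embedding $\sigma_x'^*$ together with the cofinal branch $b$ it produces in the extension that provides this extension.
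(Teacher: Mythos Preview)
Your proof is correct, but it takes a different route from the paper's. The paper's argument diagonalizes directly against the countably many nodes on level $\alpha$ of $T$: it enumerates the downward closures $\seq{b_n}{n<\omega}$ of the nodes in $T_\alpha$, and builds a \emph{single} $\N$-generic $\G$ by choosing at stage $n$ a condition forcing $\dot{\overline b}$ to disagree with $b_n$. One application of subcompleteness then yields $\sigma'$ with $\overline b = b \rest \alpha$, and since $\overline b$ was built to differ from every $b_n$, it cannot have a node on level $\alpha$, an immediate contradiction.

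Your argument instead builds a full binary tree of conditions, obtains $2^{\aleph_0}$ many distinct length-$\alpha$ branches $\overline b_x$, and applies subcompleteness separately to each to force every $\overline b_x$ to extend to a node of $T_\alpha$, contradicting $|T_\alpha| \leq \aleph_0$. This is exactly the technique the paper uses a few pages later to prove the stronger result that subcomplete forcing adds no cofinal branches to $(\omega_1, {<}2^\omega)$-trees. So what you lose in directness (continuum-many invocations of subcompleteness and an extra absoluteness step, versus a single invocation) you gain in generality: your argument works verbatim for any tree whose $\alpha$th level has size less than $2^{\aleph_0}$, whereas the paper's diagonalization genuinely needs $T_\alpha$ to be countable so that one can enumerate and avoid each $b_n$ in turn.
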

\begin{proof} 
Assume not. Let $\dot b$ be a name for a new cofinal branch through $T \subseteq H_{\omega_1}$; let $p$ be a condition forcing that $\dot b$ is a new cofinal branch through $\check T$. 
Let $\theta$ verify the subcompleteness of $\P$, and let's place ourselves in the standard setup: \begin{itemize} 
	\item $\P \in H_\theta \subseteq N = L_\tau[A] \models \ZFC^-$ where $\tau>\theta$ and $A \subseteq \tau$
	\item $\sigma: \N \cong X \preccurlyeq N$ where $X$ is countable and $\N$ is full
	\item $\sigma(\overline \theta, \overline{\P}, \overline T, \overline p, \overline{\dot b}) = \theta, \P, T, p, \dot b$.
\end{itemize}
Let $\alpha = \omega_1^{\N}.$ By elementarity, we have that $\overline p$ forces $\overline{\dot b}$ to be a new cofinal branch over $\N$. As we construct a generic $\G$ for $\overline{\P}$ over $\N$, we will use the countability of $\N$ to diagonalize against all ``branches" through the tree as seen on level $\alpha$ of the tree $T$ in $N$, thereby obtaining a contradiction. The dotted branch in the diagram below is what we wish to construct.

\begin{center}
\begin{tikzpicture}
\draw (-3,0)--(-4,2)--(-2,2)--(-3,0);
%\draw[very thick,color=brown] (-3,0)--(-2.9,0.6)--(-3,1.5)--(-2.7,2);
%\node [left,color=brown] at (-3,1.5) {$\overline b$};
\node [above] at (-3,2) {$\overline T$};
\draw[very thick, dotted, color=red] (-3,0)--(-2.9,0.6)--(-3,1.5)--(-2.7,2);
\node [right] at (-2,2) {$\alpha = \omega_1^{\overline N}$};
\draw[color=brown] (-3,0)--(-3.2,1)--(-3.7,2);
\draw[color=brown] (-3,0)--(-3.1,0.9)--(-3,2);
\draw[color=brown] (-3,0)--(-2.9,0.8)--(-2.7,1.4)--(-2.6,2);
\draw[color=brown] (-3,0)--(-2.6,1)--(-2.3,2);
%\draw[->, thick] (-1,1)--(1,1);
\draw [->, color = teal, thick] (-1,1) arc [radius=2, start angle=140, end angle= 75];
\node [above right, color=teal] at (0,1.7) {$\sigma$};
\draw (3,0)--(1,4)--(5,4)--(3,0);
\draw[densely dotted] (2, 2)--(4,2);
\draw[very thick, color=brown] (3,0)--(2.8,1)--(2.3,2);
\draw[color=brown] (2.3,2)--(2.3,3.2);
\draw[very thick, color=brown] (3,0)--(2.9,0.9)--(3,2);
\draw[color=brown] (3,2)--(2.5, 3)--(3,3.7);
\draw[very thick, color=brown] (3,0)--(3.1,0.8)--(3.3,1.4)--(3.4,2);
\draw[color=brown] (3.4,2)--(3.3, 2.5);
\draw[very thick, color=brown] (3,0)--(3.4,1)--(3.7,2);
\draw[color=brown] (3.7,2)--(4.2,3)--(4.5,3.8);
%\node [left, color=brown] at (2.7,3) {$b$};
\node [above] at (3,4) {$T$};
\node [right] at (4,2) {$\alpha$};
\node [right] at (5,4) {$\omega_1$};
\end{tikzpicture}
%$\overline T = T \upharpoonright \alpha$
\end{center}

Toward this end, enumerate the dense sets $\seq{\overline D_n}{ n < \omega}$ of $\overline{\P}$ that belong to $\N$. Also denote the sequence of downward closures of nodes on level $\alpha$ of $T$, the ``branches" through $T\rest \alpha$ that extend to have nodes of higher height in $T$, as $\seq{b_n}{n < \omega}$. Now define a sequence of conditions of the form $\overline p_n$ for $n < \omega$ that decide values of $\overline{\dot b}$ in $\overline T$ differently from $b_n$. Ensure along the way that for all $n$, \begin{itemize}
	\item $\overline p_{n+1} \in \overline D_n$ and 
	\item $\overline p_{n+1} \leq \overline p_n$. 
\end{itemize}
	
The construction (in $V$) may go as follows:

Let $\overline p_0 := \overline p \in \N$. For each $n < \omega$, note that there must be two conditions $\overline p^0_{n+1} \perp \overline p^1_{n+1}$, both extending $\overline p_n$, that decide the value of the branch $\overline{\dot b}$ to differ on some value. This always has to be possible since these conditions always extend $\overline p$, that forces $\overline{\dot b}$ to be new. Say $\overline p^1_{n+1} \forces \check x_{n} \in \overline{\dot b}$ and $\overline p^0_{n+1} \forces \check x_n \notin \overline{\dot b}$. Let $\overline p_{n+1}$ be a condition in $\overline D_n$ extending $\overline p^1_{n+1}$ if $x_n \notin b_n$, or a condition in $\overline D_n$ extending $\overline p^0_{n+1}$ otherwise.

Let $\G$ be the generic filter generated by the $\seq{ \overline p_n}{ n < \omega}$, let $\overline{\dot b}^{\G} = \overline b$. Since $\P$ is subcomplete, there is a condition $q \in \P$ such that whenever $G$ is $\P$-generic with $q \in G$, by subcompleteness we have $\sigma' \in V[G]$ such that: \begin{itemize}
	\item $\sigma': \N \prec N$
	\item $\sigma'(\overline \theta, \overline{\P}, \overline T, \overline p, \overline{\dot b}) = \theta, \P, T, p, \dot b$
	\item $\sigma'``\, \G \subseteq G$.
\end{itemize} 
So below $q$ there is a lift $\sigma^*:\N[\G] \prec N[G]$ extending $\sigma'$ with $\sigma^*(\overline b) = \sigma'(\overline{\dot b})^G=\dot b^G = b$, and $\sigma^*(\overline T) = \sigma'(\overline T)^G=T$. The point is that since $\overline p \in \G$, we have $N[G] \models p \in G$, so $b$ is a branch through $T$.

Furthermore, $\alpha$ is the critical point of the embedding $\sigma^*$. So below $\alpha$ the tree $T$, and thus the branch $b$, is fixed. In particular, in $N[G]$, $b \rest \alpha = \overline b$. However, $\overline b$ was constructed so as to not be equal to any of the $b_n$s, so it cannot be extended to become a branch through $T$, since it can't have a node on the $\alpha$th level. 

\begin{center}
\begin{tikzpicture}
\draw (-3,0)--(-4,2)--(-2,2)--(-3,0);
\draw[very thick,color=red] (-3,0)--(-2.9,0.6)--(-3,1.5)--(-2.7,2);
\node [left,color=red] at (-3,1.5) {$\overline b$};
\node [above] at (-3,2) {$\overline T$};
\node [right] at (-2,2) {$\alpha = \omega_1^{\overline N}$};
%\draw[->, thick] (-1,1)--(1,1);
\draw [->, color = teal, thick] (-1,1) arc [radius=2, start angle=140, end angle= 75];
\node [above right, color=teal] at (-0.2,1.7) {$\sigma^*$};
\draw (3,0)--(1,4)--(5,4)--(3,0);
\draw[densely dotted] (2, 2)--(4,2);

\draw[color=brown] (3,0)--(2.8,1)--(2.3,2);
\draw[color=brown] (2.3,2)--(2.3,3.2);
\draw[color=brown] (3,0)--(2.9,0.9)--(3,2);
\draw[color=brown] (3,2)--(2.5, 3)--(3,3.7);
\draw[color=brown] (3,0)--(3.1,0.8)--(3.3,1.4)--(3.4,2);
\draw[color=brown] (3.4,2)--(3.3, 2.5);
\draw[color=brown] (3,0)--(3.4,1)--(3.7,2);
\draw[color=brown] (3.7,2)--(4.2,3)--(4.5,3.8);

\draw[very thick, color=red] (3,0)--(3.1,0.6)--(3,1.5)--(3.3,2);
\draw[color=red] (3.3,2)--(3, 3)--(3.5,4);
\node [left, color=red] at (3,3) {$b$};
\node [above] at (3,4) {$T$};
\node [right] at (4,2) {$\alpha$};
\node [right] at (5,4) {$\omega_1$};
\end{tikzpicture}
%$$\overline T = T \rest \alpha$$
\end{center}
This is a contradiction.
\end{proof}

Immediately we see the following.
\begin{cor} Subcomplete forcing preserves the properties of being Aronszajn and the property of failing to be Kurepa of an $\omega_1$-tree. \end{cor}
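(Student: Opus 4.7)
The plan is to reduce both statements to the no-new-branches theorem (\textbf{Theorem \ref{thm:scbranch}}) together with the basic fact that subcomplete forcing preserves $\omega_1$. Throughout, fix a subcomplete $\P$, a $\P$-generic $G$ over $V$, and an $\omega_1$-tree $T \in V$.

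My first step would be to verify that $T$ is still an $\omega_1$-tree in $V[G]$. Since subcomplete forcings are subproper and therefore preserve stationary subsets of $\omega_1$ (\textbf{Theorem \ref{thm:SubproperMM}}), $\omega_1^V = \omega_1^{V[G]}$, so the height of $T$ is unchanged. The tree order on $T$ is a ground-model object, and hence every level $T_\alpha$ is literally the same set in $V[G]$ as in $V$; a countable set in $V$ remains countable in $V[G]$, so all levels stay countable. Normality is a first-order property of $\langle T, <_T\rangle$ (every node has two immediate successors, limit nodes determined by their predecessors, successors on every higher level below the height), so it is automatic.

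For the Aronszajn case: $T$ is Aronszajn in $V$ iff $[T]^V = \emptyset$. By \textbf{Theorem \ref{thm:scbranch}}, $[T]^{V[G]} = [T]^V = \emptyset$, so $T$ is still Aronszajn in $V[G]$. For the failure of being Kurepa: assume $|[T]|^V < \omega_2^V$, which for an $\omega_1$-tree forces $|[T]^V|^V \leq \omega_1^V$. Then there is a surjection $f\colon \omega_1^V \twoheadrightarrow [T]^V$ lying in $V$, which remains a surjection in $V[G]$. Combined with $[T]^{V[G]} = [T]^V$ from \textbf{Theorem \ref{thm:scbranch}} and with $\omega_1^V = \omega_1^{V[G]}$, this yields
\[
\bigl|[T]^{V[G]}\bigr|^{V[G]} \;\leq\; \omega_1^{V[G]} \;<\; \omega_2^{V[G]},
\]
so $T$ is not Kurepa in $V[G]$.

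There is really no obstacle once \textbf{Theorem \ref{thm:scbranch}} is in hand; the entire content of the corollary is cardinal bookkeeping and the preservation of $\omega_1$. I would remark that this argument deliberately says nothing about preserving the Kurepa property itself, since subcomplete forcings may change the meaning of $\omega_2$ (e.g.\ by altering its cofinality, as with Namba or Prikry forcing), and pushing the analysis in that direction would require a genuinely different argument.
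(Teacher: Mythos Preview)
Your argument is correct and matches the paper's approach: the corollary is stated immediately after \textbf{Theorem \ref{thm:scbranch}} with no proof beyond the remark ``Immediately we see the following,'' and your write-up simply spells out the cardinal bookkeeping (preservation of $\omega_1$ and the fact that $[T]$ is unchanged) that makes this immediate.
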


Further investigating the motivating question of whether $sc$-generic $\mathbf{\Sigma}_2^1(H_{\omega_1})$-absoluteness follows from the maximality principle for subcomplete forcing, one might look at the countably closed case in order to simulate the argument for subcompleteness. Fuchs' argument used the fact that $\mathbf{\Sigma}_1^1(\omega_1)$-statements are absolute for countably closed forcings. This is true because countably closed forcings cannot add branches through wider trees (as in, trees with larger levels than $\kappa$-trees), and because $\mathbf{\Sigma}_1^1(\omega_1)$-absoluteness can be formulated as stating that branches cannot be added through such ``wider" Aronszajn trees. More precisely, we define what is meant by a ``wide" tree below:

\begin{defn} Let $\kappa$ and $\lambda$ be cardinals. We shall say that $T$ is a \textbf{\emph{$(\kappa, \leq \lambda)$-tree}} if $T$ is a tree of height $\kappa$ with levels of size less than or equal to $\lambda$. We shall refer to the size restriction on the levels in the tree in the second coordinate as the tree's width. 

An \textbf{\emph{Aronszajn $(\kappa, \leq \lambda)$-tree}} is a $(\kappa, \leq \lambda)$-tree with no cofinal branch. A \textbf{\emph{Kurepa $(\kappa, \leq \lambda)$-tree}} is a $(\kappa, \leq \lambda)$-tree with $\kappa^+$-many cofinal branches.\end{defn}

With this terminology, we can state the following lemma, giving an equivalent characterization of $\mathbf{\Sigma}^1_1(\omega_1)$-absoluteness.

\begin{lem} Assume $\CH$. A forcing notion $\P$ adds no new reals and no new cofinal branches to an $(\omega_1, \leq\omega_1)$-Aronszajn tree if and only if $\mathbf{\Sigma}_1^1(\omega_1)$-statements are absolute for $\P$. \end{lem}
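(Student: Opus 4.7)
The plan is to prove both implications using an encoding of $\mathbf{\Sigma}_1^1(\omega_1)$ statements as branch-existence assertions for appropriately-sized trees.

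For the forward direction $(\Rightarrow)$, assume $\mathbf{\Sigma}_1^1(\omega_1)$-absoluteness holds for $\P$. I would produce in each of three cases a specific $\mathbf{\Sigma}_1^1(\omega_1)$ formula whose failure of absoluteness would contradict the conclusion. First, ``$\exists X\subseteq\omega_1\,(X$ codes a surjection $\omega\twoheadrightarrow\omega_1)$'' is $\Sigma_1^1(\omega_1)$ and false in $V$, so absoluteness forces $\omega_1$ to be preserved. Second, using \CH\ fix $A\subseteq\omega_1$ coding an enumeration $\langle r_\alpha:\alpha<\omega_1\rangle$ of $\mathbb R^V$; then ``$\exists X\subseteq\omega$ that is not coded by any $\alpha$-section of $A$'' is $\mathbf{\Sigma}_1^1(\omega_1)$ with parameter $A$ and false in $V$, so absoluteness rules out new reals. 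Third, any $(\omega_1,\leq\omega_1)$-Aronszajn tree $T\in V$ has $|T|=\omega_1$ and so is coded by some $B\subseteq\omega_1$; ``$\exists b\subseteq\omega_1\,(b$ codes a cofinal branch through $B)$'' is $\mathbf{\Sigma}_1^1(\omega_1)$ with parameter $B$ and false in $V$, so absoluteness blocks adding cofinal branches.

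For the reverse direction $(\Leftarrow)$, assume $\P$ adds no new reals and no cofinal branches through $(\omega_1,\leq\omega_1)$-Aronszajn trees. Then $\omega_1^V=\omega_1^{V[G]}$ (a collapsing function would code a new real via a $V$-bijection $\omega\to\alpha$), and in fact $H_{\omega_1}^V=H_{\omega_1}^{V[G]}$: any new hereditarily countable set would have transitive closure codable by a new subset of $\omega$. Downward absoluteness of $\mathbf{\Sigma}_1^1(\omega_1)$ is immediate since any witness in $V$ persists. For upward absoluteness, given $\phi=\exists X\subseteq\omega_1\,\psi(X,\vec A,\vec a)$ with $\psi$ first-order in $\langle\omega_1,\in,\vec A,X\rangle$, I would construct in $V$ a tree $T_\phi$ of countable approximations to a witness: fixing large enough $\theta$, a node at level $\alpha<\omega_1$ is a pair $(Y,\bar N)$ where $Y\subseteq\alpha$ and $\bar N\in H_{\omega_1}$ is the Mostowski collapse of some countable $N\prec\langle H_\theta,\in,\vec A,\vec a\rangle$ with $N\cap\omega_1=\alpha$, such that $\bar N$ satisfies ``there exists $X^*$ with $\psi(X^*,\bar{\vec A},\vec a)$ and $X^*\cap\alpha=Y$''; order by pointwise end-extension. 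Under \CH\ there are at most $\aleph_1$ many such pairs per level, so $T_\phi$ is an $(\omega_1,\leq\omega_1)$-tree. A cofinal branch yields $X=\bigcup_\alpha Y_\alpha$ together with a continuous chain of collapses whose direct limit reflects $\psi$, giving a genuine witness; conversely, any witness in an outer model yields such a branch via a continuous chain of countable elementary submodels of $H_\theta$. Since $H_{\omega_1}^V=H_{\omega_1}^{V[G]}$, the relevant collapses live already in $V$, so $T_\phi$ is the same tree in both models and has a cofinal branch in $V[G]$ exactly when $\phi$ holds there. If $\phi$ held in $V[G]$ but not in $V$, then $T_\phi$ would be Aronszajn in $V$ while acquiring a new cofinal branch in $V[G]$, contradicting the hypothesis.

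The main obstacle will be the tree construction for upward absoluteness: one must choose the node predicate so that it is absolute between $V$ and $V[G]$, that cofinal branches correspond precisely to witnesses in either direction, and that $T_\phi$ genuinely lies within the class of $(\omega_1,\leq\omega_1)$-trees to which the hypothesis applies. This is why both \CH\ (to bound the width of $T_\phi$ at $\aleph_1$) and ``no new reals'' (to force $H_{\omega_1}^V=H_{\omega_1}^{V[G]}$, so that the pool of possible $\bar N$'s does not grow) are required in tandem; the reflection argument translating $\psi$ into satisfaction in countable collapses is routine but must be set up carefully enough that satisfaction of $\psi$ at $\omega_1$ really is the direct-limit assertion produced along a branch.
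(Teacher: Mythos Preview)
Your treatment of the direction ``absoluteness implies no new reals and no new branches'' is correct and more explicit than the paper's, which simply calls this direction clear.

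For the converse, your overall strategy---encode the $\mathbf{\Sigma}^1_1(\omega_1)$ statement as the existence of a cofinal branch through an $(\omega_1,\leq\omega_1)$-tree of approximations---matches the paper's. But your specific tree has an absoluteness problem. You declare a node at level $\alpha$ to include a structure $\bar N$ that is ``the Mostowski collapse of some countable $N\prec\langle H_\theta,\in,\vec A,\vec a\rangle$.'' This condition quantifies existentially over elementary submodels of $H_\theta$, and $H_\theta^V\neq H_\theta^{V[G]}$, so the set of admissible $\bar N$'s may differ between the two models. Concretely: to produce a branch in $V[G]$ from a witness $X\in V[G]$ you would take $N\prec H_\theta^{V[G]}$ with $X\in N$, but the collapse of such an $N$ need not arise as the collapse of any $N'\prec H_\theta^V$, so the resulting pair need not be a node of the $V$-tree. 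Your claim that ``$T_\phi$ is the same tree in both models'' does not follow from $H_{\omega_1}^V=H_{\omega_1}^{V[G]}$; that equality only tells you the \emph{objects} $\bar N$ live in $V$, not that the defining condition holds of the same ones.

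The paper sidesteps this by working directly with the structure $\langle\omega_1,\in,A,X\rangle$ rather than with $H_\theta$. Its tree has nodes $(\alpha,x)$ with $x\subseteq\alpha$ and $\langle\alpha,\in,A\cap\alpha,x\rangle\models\varphi(a)$, ordered by elementary end-extension of these small structures. This node condition is satisfaction in a fixed countable structure, hence absolute. In $V[G]$, a witness $X$ yields via downward L\"owenheim--Skolem a club of $\alpha$ with $\langle\alpha,\in,A\cap\alpha,X\cap\alpha\rangle\preccurlyeq\langle\omega_1,\in,A,X\rangle$, and since each $X\cap\alpha\in V$ (no new reals), this is a cofinal branch through the $V$-tree; a branch in $V$ then yields a witness as the union along an elementary chain. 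This is simpler than your construction and also avoids the well-foundedness issue you would face with a direct limit of your $\bar N$'s.
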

\begin{proof}
Clearly if $\mathbf{\Sigma}_1^1(\omega_1)$-statements are absolute for $\P$, then $\P$ adds no new reals and no new branches to an $(\omega_1, \leq\omega_1)$-Aronszajn tree.

For the other direction, suppose that $\P$ adds no new reals, and no new branches to any $(\omega_1, \leq \omega_1)$-Aronszajn trees. Since we are assuming $\CH$, we may assume that $\P$ adds no new branches to any $(\omega_1, \leq 2^\omega)$-trees. Let $G \subseteq \P$ be generic. Upward absoluteness between $V$ and $V[G]$ clearly holds for existential statements.
To show downward absoluteness, let $A \subseteq \omega_1$, $A \in V$. 

Let $\psi(A)$ be the following statement:
	$$\exists X \ (\omega_1, A, X) \models \varphi(a),$$
where $\varphi(a)$ is a first order sentence in the language of set theory using $A$ and $X$ as predicates, and $a < \omega_1$. Suppose that $\psi(A)$ is true in In $V[G]$, as witnessed by $X$. 

Let $T$ be the tree consisting of nodes of the form $(\alpha, x)$ such that $x \subseteq \alpha$, and $(\alpha, A \cap \alpha, x) \models \varphi(a),$ where the tree ordering is defined so that
	$$(\alpha, x) \leq (\beta, y) \iff (\alpha, A\cap \alpha, x) \preccurlyeq (\beta, A \cap \beta, y).$$
By a standard Lowenheim-Skolem style argument, the set 
	$$C = \set{ \alpha < \omega_1 }{ (\alpha, A \cap \alpha, X \cap \alpha) \preccurlyeq (\omega_1, A, X) }$$
is club in $V[G]$.	
Thus the set $\set{ (\alpha, X \cap \alpha) }{ \alpha \in C }$ defines a cofinal branch through $T$ in $V[G]$, since for all countable $\alpha$, we have $X \cap \alpha \in V$ as $\P$ doesn't add reals. Since additionally $\P$ can't add new branches to $T$, it follows that $T$ has a cofinal branch, call it $b$, in $V$. Finally, let
	$$X' = \bigcup \set{ x }{ \exists \alpha < \omega_1 \ (\alpha, x) \in b }.$$ 
Since $(\omega_1, A, X')$ is the union of an elementary chain of models satisfying $\varphi(a)$, it must also satisfy $\varphi(a)$ in $V$, and thus $\psi(A)$ holds in $V$ as witnessed by $X'$ as desired.
\end{proof}

Thus it is natural to ask whether subcomplete forcing can add cofinal branches to wider Aronszajn trees, in order to see if absoluteness results similar to those for countably closed forcing hold for subcomplete forcing.

\begin{question} \label{ques:aronszajnbranch?} Can subcomplete forcing add cofinal branches to $(\omega_1, \leq \omega_1)$-Aronszajn trees? \end{question} 

If we omit the requirement that the wider tree is Aronszajn, we already know of some trees of height $\omega_1$ that  subcomplete forcing adds branches to: 

\begin{prop} Subcomplete (or even countably closed) forcing may add a cofinal branch to an $(\omega_1, \leq 2^{\omega})$-tree. \end{prop}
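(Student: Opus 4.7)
The plan is simply to exhibit a concrete example, since the statement is an existence claim. Let $T$ denote the tree $2^{<\omega_1}$ of all binary sequences of countable length, ordered by end-extension. This is a normal tree of height $\omega_1$, and for every $\alpha < \omega_1$ the level $T_\alpha = 2^\alpha$ has cardinality at most $2^\omega$, so $T$ is an $(\omega_1, \leq 2^\omega)$-tree.

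For the forcing, take $\P = \Add(\omega_1, 1)$, the standard poset of countable partial functions from $\omega_1$ to $2$ ordered by reverse inclusion. Then $\P$ is countably closed, and hence subcomplete by the results of Subsection \ref{subsec:SCForcingandCCForcing}. In particular, $\P$ adds no new countable sequences, so $T^{V[G]} = T^V$ for any $\P$-generic filter $G$; that is, $T$ remains an $(\omega_1, \leq 2^\omega)$-tree in the extension.

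The generic object is a function $f := \bigcup G : \omega_1 \to 2$, and a standard density argument shows that $f \notin V$ and $\dom(f) = \omega_1$. The set
\[ b = \set{ f \rest \alpha }{ \alpha < \omega_1 } \]
is then a cofinal branch through $T$ in $V[G]$: it meets every level of $T$, and it is linearly ordered by end-extension. Moreover $b \notin V$, for otherwise $f = \bigcup b$ would be in $V$ as well, contradicting the genericity of $f$. There is no genuine obstacle here: the entire argument is a direct verification, and the only point worth noting is that the width $2^\omega$ of the tree cannot be lowered to $\omega_1$ without invoking Question \ref{ques:aronszajnbranch?}, since Theorem \ref{thm:scbranch} already rules out new branches through $\omega_1$-trees.
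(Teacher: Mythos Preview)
Your proof is correct and is essentially the same as the paper's: both use the tree $2^{<\omega_1}$ (equivalently, $\Add(\omega_1,1)$ viewed as a tree) and observe that forcing with $\Add(\omega_1,1)$ is countably closed, hence subcomplete, and that the generic function yields a new cofinal branch. The paper's argument is terser, simply noting that the forcing poset itself \emph{is} the tree in question, while you spell out the branch explicitly; the content is identical.
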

\begin{proof}
The point here is that the poset $\Add(\omega_1,1)$ is subcomplete since it is countably closed, but it may be viewed as a tree of height $\omega_1$ that has levels up to and including size $2^{\omega}$. Of course this tree is not Aronszajn, it is Kurepa. Every cofinal branch through the tree corresponds to a subset of $\omega_1$, which there are already more than $\omega_1$-many of in the ground model.
\end{proof}

In general, countably closed forcing can't add a (cofinal) branch to any $(\omega_1, \leq \kappa)$-Aronszajn tree for cardinals $\kappa$.

If we allow the size of the levels of the height $\omega_1$-Aronszajn tree to be large, then we can obtain a slightly more complicated example of a tree that subcomplete forcing does add a cofinal branch to, using the forcing denoted by Jensen as $\P_A$. This forcing is designed to force Friedman's principle for $A$, where $A$ is a stationary subset of cofinality $\omega$ points in $\omega_2$, which we shall denote as $A\subseteq \omega_2 \cap \cof(\omega)$ stationary. 

\begin{defn}
Let $\kappa > \omega_1$ be regular, and let $A \subseteq \kappa \cap \cof(\omega)$. We write \emph{$\P_{A}$} to denote the forcing designed to shoot a cofinal, normal sequence of order type $\omega_1$ through $A$. The conditions of $\P_A$ consist of normal functions of the form $p: \nu +1 \to A$, where $\nu< \omega_1$, and extension is defined in the usual way, where $p \leq q$ if and only if $q \subseteq p$. If $(\omega_2 \cap \cof(\omega))\setminus A$ is stationary in $\omega_2 \cap \cof(\omega)$, then $\P_A$ is not countably closed. 

Then if $G$ is $\P_A$-generic, $\cup G : \omega_1 \to A$ is normal and cofinal in $\kappa$.

The forcing $\P_A$ is used to show that Friedman's Principle holds under the subcomplete forcing axiom $\SCFA$ , where \textbf{\emph{Friedman's Principle}} at a regular cardinal $\kappa > \omega_1$ states that if $A \subseteq \kappa$ is any stationary set of $\omega$-cofinal ordinals, then there is a normal function $f: \omega_1 \to A$.
\end{defn}

\begin{fact}[Jensen] $\P_A$ is subcomplete. \end{fact}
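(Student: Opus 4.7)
The plan is to verify Definition~\ref{defn:SC} directly. Place ourselves in the standard subcompleteness setup: $\theta$ is sufficiently large, $\sigma: \N \cong X \preccurlyeq N$ with $\N$ countable and full, $\sigma(\bar\theta, \overline{\P_A}, \bar A, \bar\kappa, \bar s) = \theta, \P_A, A, \kappa, s$, and $\G \subseteq \overline{\P_A}$ is $\N$-generic. The conditions in $\G$ form a $\subseteq$-chain, so $\G$ is generated by its union $\bar f := \bigcup \G : \alpha \to \bar A$, a normal function cofinal in $\bar\kappa$, where $\alpha := \omega_1^{\N}$. Since $\N$ is countable, both $\alpha$ and $\bar\kappa$ have cofinality $\omega$ in $V$.

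The central observation is this: if $\sigma' : \N \prec N$ satisfies clauses (1)--(3) of Definition~\ref{defn:SC}, agrees with $\sigma$ on $H_{\omega_1}^{\N}$ (so $\cp(\sigma') = \alpha$), and moreover $\beta := \sup \sigma' `` \bar\kappa$ happens to lie in $A$, then $p := (\sigma' \circ \bar f) \cup \{\langle \alpha, \beta\rangle\}$ is a legitimate condition in $\P_A$ of domain $\alpha + 1$: it is normal by elementarity applied to $\bar f$ together with the choice of $p(\alpha)$ as the supremum, and it takes values in $A$. Each $\bar q \in \G$ has $\dom(\bar q) < \alpha$, so $\sigma'(\bar q) = \{\langle \xi, \sigma'(\bar q(\xi))\rangle : \xi \in \dom(\bar q)\}$ is an initial segment of $\sigma' \circ \bar f$ and hence of $p$. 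Thus $p$ forces $\sigma' `` \G \subseteq G$, and all four clauses of the definition are witnessed at once.

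The proof therefore reduces to producing such a $\sigma'$ in $V$ with $\sup \sigma' `` \bar\kappa \in A$. I would show that the set
\[ C := \{\beta < \kappa : \text{some } \sigma' : \N \prec N \text{ satisfying (1)--(3) has } \sup \sigma' `` \bar\kappa = \beta\} \]
contains a club in $\kappa$; the stationarity of $A$ then furnishes $\beta \in A \cap C$ and the accompanying $\sigma'$. To populate $C$ at a candidate $\beta \in \kappa \cap \cof(\omega)$, take a cofinal order-preserving $\pi : \bar\kappa \to \beta$, combine it with $\sigma$ to define a cofinal embedding of $H_{\bar\kappa}^{\N}$ into a transitive $H \subseteq N$ compatible with $\sigma$ on the named parameters, and apply the solid liftup theorem (Fact~\ref{fact:solidliftup}) to an $L_\gamma(\N)$ witnessing fullness of $\N$; this yields a solid $\mathfrak{A}_*$ containing an elementary $\sigma_* : \N \prec \N_*$ with $\sup \sigma_* `` \bar\kappa = \beta$. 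Encode ``there is $\sigma' : \N \prec N$ agreeing with $\sigma$ on parameters and on $H_{\omega_1}^{\N}$, with $\sup \sigma' `` \bar\kappa = \beta$, satisfying the Skolem hull condition (3)'' as a $\Sigma_1(L_{\delta_{\N}}(\N))$ $\in$-theory $\mathcal L$; invoke Transfer (Fact~\ref{fact:Transfer}) from $\mathfrak{A}_*$ to secure consistency of $\mathcal L$, and then Barwise Completeness (Fact~\ref{fact:completeness}) to extract $\sigma'$ from a solid model of $\mathcal L$ whose well-founded part contains $N$.

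The principal obstacle is the Barwise-theoretic consistency argument for $\mathcal L$: the Skolem hull equality of clause (3) must be encoded $\Sigma_1$-ly, and $\mathfrak{A}_*$ must genuinely realize every sentence of $\mathcal L$, which is delicate but follows the standard Jensen template. Closedness of $C$ under unions of suitably coherent witnessing sequences is a routine elementary-chain argument, while unboundedness follows from the freedom to choose $\pi$ at arbitrarily large $\beta \in \kappa \cap \cof(\omega)$. Combined with the stationarity of $A$ this yields $\beta \in C \cap A$, producing the $\sigma'$ and the condition $p$ described above and completing the verification of subcompleteness of $\P_A$.
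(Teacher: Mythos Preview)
The paper does not prove this statement; it is cited as a fact due to Jensen without proof. So there is no in-paper argument to compare against.

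Your high-level strategy is correct and matches Jensen's. The reduction is exactly right: once you have $\sigma'\in V$ satisfying (1)--(3) with $\beta=\sup\sigma'\,``\,\bar\kappa\in A$, the function $p=(\sigma'\circ\bar f)\cup\{\langle\alpha,\beta\rangle\}$ is a condition in $\P_A$ (normality of $\sigma'\circ\bar f$ follows because for each $\nu<\alpha$ the restriction $\bar f\rest(\nu+1)$ lies in $\N$, so elementarity applies to it as a whole), and $p$ extends every $\sigma'(\bar q)$ for $\bar q\in\G$. Likewise the target is right: one shows the set $C$ of attainable $\beta$ is $\omega$-closed and unbounded in $\kappa$, so it meets the stationary set $A\subseteq\kappa\cap\cof(\omega)$.

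Where your sketch goes wrong is in the Barwise bookkeeping. You write the theory $\mathcal L$ as $\Sigma_1(L_{\delta_{\N}}(\N))$, but the axiom ``$\sigma':\N\prec N$'' requires $N$ as a special constant, and $N$ is not an element of $L_{\delta_{\N}}(\N)$. The theory must live over an admissible structure containing $N$ (or its countable Mostowski collapse), exactly as in the diagonal Prikry argument of Chapter~5. Relatedly, your invocation of Transfer is misdirected: Transfer (\textit{Fact}~\ref{fact:Transfer}) pushes consistency forward along a cofinal $k:N_1\prec N_0$, from $\mathcal L(L_{\delta_{N_1}}(N_1))$ to $\mathcal L(L_{\delta_{N_0}}(N_0))$; your liftup $\mathfrak A_*$ sits over $\N$, not over $N$, so it cannot by itself certify a theory that speaks about maps into $N$. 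The standard route is to first pass to $N_0\cong\Sk{N}{\delta(\P_A)}{X}$ via $k_0$, verify consistency of the analogous theory over $L_{\delta_{N_0}}(N_0)$ by constructing a model through a further liftup, then collapse an admissible structure containing $N$ to countable size and apply Barwise Completeness there, finally composing with the collapse map to obtain $\sigma'$ in $V$. Your outline has all the right ingredients but assembles them over the wrong base structure.
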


\begin{prop} Suppose that Friedman's Principle fails for $\omega_2$. Then subcomplete forcing may add a cofinal branch to an $(\omega_1, \leq \omega_2 \cdot 2^\omega)$-Aronszajn tree. \end{prop}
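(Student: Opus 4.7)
The plan is to use Jensen's forcing $\P_A$ itself, viewed as a tree under end-extension, as both the subcomplete forcing and the Aronszajn tree. Fix a stationary set $A \subseteq \omega_2 \cap \cof(\omega)$ witnessing the failure of Friedman's Principle, so that no normal function $f: \omega_1 \to A$ exists in $V$. I take $T := \P_A$ with a root $\mathbbm 1$ adjoined, ordered by $s \leq_T t \iff s \subseteq t$; the tree order is then the reverse of the forcing order on $\P_A$, so stronger forcing conditions sit higher in $T$. Any $\P_A$-generic $G$ is a linearly ordered family of conditions whose domains are cofinal in $\omega_1$, and hence is a cofinal branch through $T$ lying in $V[G]$.

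The two bounds to verify about $T$ are its width and its height. For the width, each level of $T$ is a set of normal functions with countable domain and range in $A$, so its size is at most $|A|^{\aleph_0} \leq \omega_2^{\omega}$; Hausdorff's formula gives $\omega_2^{\omega} = \omega_2 \cdot 2^{\omega}$, which is the required bound. For the height, I use the fact that in any $\P_A$-generic extension $V[G]$ the union $\cup G$ is a normal cofinal function $\omega_1 \to A$, so $G$ must contain conditions of arbitrarily large countable domain; every such condition already lies in $V$, so $T$ has nonempty levels cofinally below $\omega_1$, giving $\height(T) = \omega_1$.

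It remains to see that $T$ is Aronszajn in $V$. Suppose toward a contradiction that $b \subseteq T$ is a cofinal branch in $V$. Then $b$ is a linearly ordered family of normal partial functions into $A$ whose domains are cofinal in $\omega_1$, so $f := \bigcup b$ is a total function with domain $\omega_1$ and range contained in $A$. Since $f$ extends each $t \in b$ and each such $t$ is normal on its domain, $f$ is itself a normal function $\omega_1 \to A$, contradicting our choice of $A$. Hence $T$ has no cofinal branch in $V$, while $\P_A$ adds the cofinal branch $G$ in $V[G]$, which is the conclusion of the proposition. The only somewhat delicate step in the plan is the height verification — showing that $\P_A$ itself has conditions of cofinally many countable heights — but as noted this is automatic from the design of $\P_A$ as adding a cofinal normal $\omega_1$-sequence into $A$, together with the fact that the conditions making up that generic sequence already belong to $V$.
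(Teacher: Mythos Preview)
Your proof is correct and follows essentially the same approach as the paper: both use $\P_A$ itself, viewed as a tree, as the subcomplete forcing and the Aronszajn tree, bounding the level sizes by $\omega_2^\omega = \omega_2 \cdot 2^\omega$ and using the failure of Friedman's Principle to rule out a cofinal branch in $V$. You actually supply more detail than the paper does---in particular, your verification that the tree has height $\omega_1$ via the generic sequence, and your explicit argument that a cofinal branch would yield a normal function $\omega_1 \to A$, are spelled out where the paper simply asserts them.
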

\begin{proof}
Let $A \subseteq \cof(\omega) \cap \omega_2$ witness the failure of Friedman's Prinicple, meaning that $A$ is stationary. Consider the forcing poset $\P_A$ as a tree. It has size $\omega_1 \cdot \omega_2^\omega = \omega_2^\omega$, since it consists of functions with domain in $\omega_1$, and each condition is from a countable ordinal to $\omega_2$. 

Considering $\P_A$ as a tree, it has height $\omega_1$. Each level has size less than or equal to $\omega_2^\omega = \omega_2 \cdot 2^\omega$. Moreover, since Friedman's Principle fails, the tree $\P_A$ has no cofinal branches and is thus Aronszajn, yet forcing with the tree will of course add a cofinal branch.
\end{proof}

However, we may slightly tweak the proof of \textbf{Theorem \ref{thm:scbranch}} to see that the levels of trees of height $\omega_1$ could have size less than $2^\omega$ and subcomplete forcing adds no branches through them, via virtually the same argument. The argument showing that countably closed forcing adds no branches through such trees is more or less a counting argument; refer to Fuchs~\cite{Fuchs:2008rt} for details, the proof is attributed to Silver. In the case of subcompleteness, we will obtain a contradiction by counting the number of possible generics that could add new branches and seeing that there are too many possibilities, similar to what is done in the countably closed case.
%This might be expected, since an inspection of the argument for countably closed forcing not adding cofinal branches through $\omega_1$-trees (like given by Fuchs~\cite{Fuchs:2008rt}, which is attributed to Silver) reveals that it is more or less a counting argument, on the number of possible conditions in the generic of the small model which insist that the generic branch over the small model so-to-speak ``veers off" from any of the cofinal branches we've listed in $V$.

\begin{thm} Subcomplete forcing cannot add (cofinal) branches to $(\omega_1, < 2^\omega)$-trees. \end{thm}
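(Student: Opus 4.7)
The plan is to amplify the single-generic diagonalization of Theorem~\ref{thm:scbranch} into a branching argument using continuum-many generics, and derive a contradiction from the width bound on $T$. Suppose toward a contradiction that $\P$ is subcomplete and some $p \in \P$ forces $\dot b$ to be a new cofinal branch through a $(\omega_1, {<}2^\omega)$-tree $T$. Since $|T| \leq 2^\omega$, we may assume without loss of generality that the nodes of $T$ lie in $H_{\omega_1}$ (code them as reals). Let $\theta$ verify the subcompleteness of $\P$ and place ourselves in the standard setup: $\P, T \in H_\theta \subseteq N = L_\tau[A]$, $\sigma: \N \cong X \preccurlyeq N$ with $X$ countable and $\N$ full, $\sigma(\overline \theta, \overline \P, \overline T, \overline p, \overline{\dot b}) = \theta, \P, T, p, \dot b$, and $\alpha := \omega_1^{\N}$. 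By Fact~\ref{fact:CPofourEmbeddings}, $\sigma \rest H_{\omega_1}^{\N} = \id$, so $\overline T$ is literally a subtree of $T$ whose cofinal branches are sets of actual nodes of $T$ at levels below $\alpha$.

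Working in $V$, I would build a binary tree of conditions $\overline p_s \in \overline \P$ for $s \in 2^{<\omega}$: fix an enumeration $\seq{\overline D_n}{n < \omega}$ of the dense subsets of $\overline \P$ in $\N$, set $\overline p_\emptyset := \overline p$, and inductively choose incompatible $\overline p_{s^\frown 0}, \overline p_{s^\frown 1}$ below $\overline p_s$, both meeting $\overline D_{|s|}$, that decide $\overline{\dot b}$ differently on some value (possible because $\overline p$ forces $\overline{\dot b}$ to be new). Each $x \in 2^\omega$ then yields an $\N$-generic filter $\overline G_x$ generated by $\seq{\overline p_{x \rest n}}{n < \omega}$ containing $\overline p$, and setting $\overline b_x := \overline{\dot b}^{\overline G_x}$ produces $2^\omega$ pairwise distinct cofinal branches $\set{\overline b_x}{x \in 2^\omega}$ through $\overline T$ (distinct because incompatible children of the tree decide $\overline{\dot b}$ differently on a forced value).

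For each $x \in 2^\omega$, apply subcompleteness of $\P$ to $\overline G_x$ to obtain a condition $q_x \in \P$, a $\P$-generic $G_x \ni q_x$ over $V$, and a lift $\sigma^*_x : \N[\overline G_x] \prec N[G_x]$ with $\sigma^*_x(\overline b_x) = b_x$ a cofinal branch through $T$ in $V[G_x]$. Since $\cp(\sigma^*_x) = \alpha$ and the nodes of $\overline b_x$ are reals, $\sigma^*_x$ fixes them pointwise, so $\overline b_x \subseteq b_x$; because $b_x$ has exactly one node on each level of $T$ below $\omega_1$ and $\overline T$ is normal of height $\alpha$ (by elementarity from $T$), this forces $\overline b_x = b_x \rest \alpha$. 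In particular, $t_x := b_x(\alpha) \in T_\alpha$ is a node whose set of predecessors in $T$ at levels below $\alpha$ is exactly $\overline b_x$. The statement ``there exists $t \in T_\alpha$ with $\overline b_x \subseteq \set{s \in T}{s <_T t}$'' is absolute from $V[G_x]$ to $V$ since all of its parameters lie in $V$, so we can choose such a $t_x \in V$ for each $x \in 2^\omega$.

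Finally, in $V$, the map $x \mapsto t_x$ from $2^\omega$ into $T_\alpha$ is injective: if $t_x = t_y$, then the uniquely determined set of predecessors of $t_x$ in $T$ at levels below $\alpha$ equals both $\overline b_x$ and $\overline b_y$, forcing $\overline b_x = \overline b_y$ and hence $x = y$. This embeds $2^\omega$ into $T_\alpha$, contradicting $|T_\alpha| < 2^\omega$. The main delicate point will be guaranteeing $\overline b_x = b_x \rest \alpha$ rather than merely $\overline b_x \subseteq b_x$; that step relies on the coding of nodes into $H_{\omega_1}$ so that $\sigma^*_x$ fixes the nodes of $\overline b_x$ pointwise, combined with normality of $\overline T$ to ensure the branch already meets every level below $\alpha$.
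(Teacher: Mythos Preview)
Your proposal is correct and follows essentially the same route as the paper: build a binary splitting tree of conditions in $\overline\P$ below $\overline p$, harvest continuum-many $\N$-generics $\overline G_x$ with pairwise distinct branches $\overline b_x$, apply subcompleteness to each, and use that $\cp(\sigma^*_x)=\alpha$ fixes the real-coded nodes so that each $\overline b_x$ sits below a node of $T_\alpha$, contradicting $|T_\alpha|<2^\omega$. If anything, you are more careful than the paper in isolating the absoluteness step that brings $t_x$ back into $V$. One small correction: you invoke ``normality of $\overline T$'' to conclude $\overline b_x=b_x\rest\alpha$, but $(\omega_1,{<}2^\omega)$-trees are not assumed normal here, and you don't need it---$\overline b_x$ is forced by $\overline p$ to be cofinal in $\overline T$, which has height $\alpha$ in $\N$, so it already meets every level below $\alpha$; combined with $\overline b_x\subseteq b_x$ and the fact that a branch has at most one node per level, this gives $\overline b_x=b_x\rest\alpha$ without any normality hypothesis.
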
 
\begin{proof} 
Assume not. Let $\dot b$ be a name for a new branch through $T \subseteq H_{\omega_1}$ an $(\omega_1, < 2^\omega)$-tree; let $p$ be a condition forcing that $\dot b$ is a new cofinal branch through $\check T$. 
Let $\theta$ verify the subcompleteness of $\P$ and let's place ourselves in the standard setup: \begin{itemize} 
	\item $\P \in H_\theta \subseteq N = L_\tau[A] \models \ZFC^-$ where $\tau>\theta$ and $A \subseteq \tau$
	\item $\sigma: \N \cong X \preccurlyeq N$ where $X$ is countable and $\N$ is full.
	\item $\sigma(\overline \theta, \overline{\P}, \overline T, \overline p, \overline{\dot b}) = \theta, \P, T, p, \dot b$. 
\end{itemize}
Let $\alpha = \omega_1^{\N}$, the critical point of the embedding $\sigma$. By elementarity, we have that $\overline p$ forces $\overline{\dot b}$ to be a new branch over $\N$. We will construct continuum-many generics $\G_r$ for $\overline{\P}$ over $\N$, indexed by reals, each of which will correspond to continuum-many different values of the generic branch on level $\alpha$ of the tree $T$ in $N$, to obtain a contradiction.

Toward this end, enumerate the dense sets $\seq{D_n}{ n < \omega}$ of $\overline{\P}$ that belong to $\N$, so that $\overline p \in D_0$.

We would like to construct binary trees of conditions in $\overline \P$ and branches in $\overline T$: $ P= \seq{ \overline p_x }{ x \in 2^{<\omega} }$, $B = \seq{ \overline b_x }{ x \in 2^{< \omega} }$ such that, letting $|x|$ be the length of $x$, we have the following: \begin{itemize}
	\item $\overline p_x \in D_{|x|}$
	\item $x \subseteq y \implies \overline p_y \leq \overline p_x \leq \overline p$
	\item for some $\beta > |x|$, $p_x \forces \overline{\dot b} \rest \check \beta  = \overline b_x$
	\item $\overline b_{x^\frown \langle 0 \rangle} \neq \overline b_{x^\frown \langle 1 \rangle}$.
\end{itemize}

To do this, let $\overline p_0 := \overline p \in \N$. As we noted in the proof of \textbf{Theorem \ref{thm:scbranch}}, there must be two conditions $\overline q^0_{1} \perp \overline q^1_{1}$, both extending $\overline p_0$, that decide the value of the branch $\overline{\dot b}$ differently. This always has to be true since these conditions extend $\overline p$, that forces that there is a new cofinal branch $\overline{\dot b}$. In particular, by our reasoning there is $\beta< \overline{\omega_1}$, and two conditions $\overline q_1^0 \leq \overline p_0$ and $\overline q_1^1 \leq \overline p_0$ such that $$\overline q_1^0 \forces \overline{\dot b}(\beta) = t^0 \text{ \ \ and \ \ } \overline q^1_{1} \forces \overline{\dot b}(\beta) = t^1,$$ where $t^0 \neq t^1$. Let $\overline b_{\langle 0 \rangle}$ be the branch in $\overline T$ below $t^0$ and $\overline b_{\langle 1 \rangle}$ be the branch in $\overline T$ below $t^1$. It must be that $\overline b_{\langle 0 \rangle} \neq \overline b_{\langle 1 \rangle}$. Moreover we may extend $\overline q^0_1$ and $\overline q^1_1$ to conditions $\overline p_{\langle 0 \rangle}, \overline p_{\langle 1 \rangle} \in \overline D_1$. 

Continuing in such a fashion, we may recursively continue to define $\overline p_x$ for $x \in 2^{< \omega}$. %But then we need to define it based on the length.

In particular, suppose $\overline p_x$ and $b_x$ are defined for $x$ of length $n$. By our reasoning above there is $\beta > |x|$, and two conditions above each $\overline p_x$, such that $\overline q_x^0 \leq \overline p_x$ and $\overline q_x^1 \leq \overline p_x$ so that 
	$$\overline q_x^0 \forces \overline{\dot b}(\beta) = t^0 \text{\ \ and \ \ } \overline q^1_x \forces \overline{\dot b}(\beta) = t^1,$$
where $t^0 \neq t^1$. Let $\overline b_{x^\frown \langle 0 \rangle}$ be the branch in $\overline T$ below $t^0$ and $\overline b_{x^\frown \langle 1 \rangle}$ be the branch in $\overline T$ below $t^1$. Extending each of these incompatible conditions so as to land in the $(n+1)$th dense set, we find $\overline p_{x^\frown \langle 0 \rangle} \leq \overline q^0_x$ and $\overline p_{x^\frown \langle 1 \rangle} \leq \overline q^1_x$ such that $\overline p_{x^\frown \langle 0 \rangle}, \overline p_{x^\frown \langle 1 \rangle} \in D_{n+1}$. 
Then, as desired, for each $x$ of length $n$ we have that $\overline p_{x^\frown \langle 0 \rangle}, \overline p_{x^\frown \langle 1 \rangle} \in D_{n+1}$. We've also designed it so that $\overline p_{x^\frown \langle 1 \rangle} \leq \overline p_x \leq \overline p$ and $\overline p_{x^\frown \langle 0 \rangle} \leq \overline p_x \leq \overline p$. Since we know that there is some $m'$ such that $\overline p_x \forces \overline{ \dot b} \rest m' = \overline b_x$ we know that $p_{x^\frown \langle 1 \rangle}$ and $p_{x^\frown \langle 0 \rangle}$ force the same thing since they both extend $\overline p_x$. Thus our construction gives us that $\overline p_{x^\frown \langle 0 \rangle} \forces \overline{ \dot b} \rest m = \overline b_{x^\frown \langle 0 \rangle}$ and $\overline p_{x^\frown \langle 1 \rangle} \forces \overline{ \dot b} \rest m' = \overline b_x$.

So we have our binary trees $P$ and $B$ as desired. Any chain of conditions in the binary tree $P$ will generate a generic filter $\overline G_r$; every real $r: \omega \to 2$ codes a path in the binary tree of conditions generating the generic. This is because our conditions were chosen to meet all of the dense sets in our list. Moreover, each generic filter $\overline G_r$ corresponds to a branch $\overline b_r$, where for each initial segment $t$ of $r$ satisfies that for some $\beta > |x|$, $\overline b_r \rest m = \overline b_x$. Because of how we chose $P$, this gives us that $$\N[\G] \models {\overline{\dot b}}^{\G} =\overline b_r.$$

 %Need to argue now that this implies that there are $2^\omega$-many nodes on level $\alpha$ of the tree, by getting $2^\omega$ many branches, corresponding to generics, from the splitting of decisions. Tweak the following argument so that it works for continuum many generics.
For each $r$ let $\overline{\dot b}^{\G_r} = \overline b_r$. Since $\P$ is subcomplete, for each $r$ there is a condition $q_r \in \P$ such that whenever $G$ is $\P$-generic with $q_r \in G$, by subcompleteness we have $\sigma_r \in V[G]$ such that: \begin{itemize}
	\item $\sigma_r: \N \prec N$
	\item $\sigma_r(\overline \theta, \overline{\P}, \overline T, \overline p, \overline{\dot b}) = \theta, \P, T, p, \dot b$
	\item $\sigma_r``\, \G_r \subseteq G$.
\end{itemize} 
So below each $q_r$ there is a lift $\sigma_r^*:\N[\G_r] \prec N[G]$ extending $\sigma_r$ with $\sigma_r^*(\overline b_r) = \sigma_r(\overline{\dot b})^G=\dot b^G = b_r$, and $\sigma_r^*(\overline T) = \sigma_r(\overline T)^G=T$. 
%Furthermore, we still have that $\alpha$ is the critical point of each embedding $\sigma_r^*$. So below $\alpha$ the tree $T$, and thus the branch $b_r$, is fixed. In particular, for each $G$ containing $q_r$, in $N[G]$ we have that $b_r \upharpoonright \alpha = \overline b_r$. 

This means that we may force over $N$ with $\P$ to obtain continuum many cofinal branches through the tree, $\seq{ b_r }{ r \in 2^\omega}$. Each of these branches must, of course, have a node on level $\alpha$. Since each cofinal branch $\overline b_r$ is unique, and since $\alpha$ is the critical point, this means that there are $2^\omega$-many nodes on level $\alpha$ of the tree $T$ in $N$, a contradiction.
\end{proof}

\begin{cor} The property of being Aronszajn of $(\omega_1, <2^\omega)$-trees is $sc$-absolute. \end{cor}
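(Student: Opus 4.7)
The plan is to derive this as a nearly immediate consequence of the preceding theorem. I would split ``being an $(\omega_1, <2^\omega)$-Aronszajn tree'' into two pieces, namely being an $(\omega_1, <2^\omega)$-tree and having no cofinal branch, and verify both are absolute between $V$ and a subcomplete extension $V[G]$ in each direction.

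For the upward direction, suppose $T$ is an $(\omega_1, <2^\omega)$-Aronszajn tree in $V$ and $G$ is generic for a subcomplete $\P$. Since subcomplete forcing preserves stationary subsets of $\omega_1$ by Theorem~\ref{thm:SubproperMM}, in particular $\omega_1^V = \omega_1^{V[G]}$, so $T$ still has height $\omega_1$ in $V[G]$. Since subcomplete forcing adds no reals by Proposition~\ref{prop:noreals}, $(2^\omega)^V = (2^\omega)^{V[G]}$, and every level satisfies $|T_\alpha|^{V[G]} \leq |T_\alpha|^V < (2^\omega)^V = (2^\omega)^{V[G]}$. Thus $T$ remains an $(\omega_1, <2^\omega)$-tree in $V[G]$, and by the preceding theorem $\P$ adds no cofinal branch to $T$, so $T$ is still Aronszajn in $V[G]$.

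For the downward direction, suppose $T$ is an $(\omega_1, <2^\omega)$-Aronszajn tree in $V[G]$. Any cofinal branch of $T$ in $V$ would still be a cofinal branch in $V[G]$, so $T$ already has no cofinal branch in $V$. The height is $\omega_1$ in $V$ since $\omega_1$ is preserved. For the width bound, note that if some $|T_\alpha|^V \geq (2^\omega)^V$, then combined with $|T_\alpha|^{V[G]} < (2^\omega)^{V[G]} = (2^\omega)^V$ this would collapse the cardinal $(2^\omega)^V$ in $V[G]$, contradicting the equality $(2^\omega)^V = (2^\omega)^{V[G]}$. Hence $T$ was already an $(\omega_1, <2^\omega)$-Aronszajn tree in $V$. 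Since all the substantive work is done in the preceding theorem, no real obstacle remains; the only care required is to observe that subcomplete forcing preserves both $\omega_1$ and $2^\omega$, which in turn ensures the class of $(\omega_1, <2^\omega)$-trees is preserved in both directions.
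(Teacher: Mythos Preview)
Your core observation---that the preceding theorem gives the upward direction and the downward direction is trivial---is correct and is exactly how the paper treats the corollary (it gives no proof at all). However, you take an unnecessary detour in trying to verify that $T$ remains an $(\omega_1, <2^\omega)$-tree in $V[G]$, and in that detour you assert something false: it is \emph{not} true in general that $(2^\omega)^V = (2^\omega)^{V[G]}$ for subcomplete $\P$. Not adding reals means the set $\mathcal P(\omega)$ is unchanged, but its cardinality can drop if cardinals are collapsed. For instance, if $\neg\CH$ holds then $\Coll(\omega_1, (2^\omega)^V)$ is countably closed, hence subcomplete, adds no reals, yet collapses $(2^\omega)^V$ to $\omega_1$. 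In that situation a level $T_\alpha$ of size $\omega_1$ satisfies $|T_\alpha|^V < (2^\omega)^V$ but $|T_\alpha|^{V[G]} = \omega_1 = (2^\omega)^{V[G]}$, so $T$ is no longer an $(\omega_1, <2^\omega)$-tree in $V[G]$. Your downward argument for the width bound likewise rests on the same false equality.

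The remedy is simply to drop the width verification: the corollary asserts that for a ground-model $(\omega_1, <2^\omega)$-tree $T$, the property ``$T$ is Aronszajn'' (i.e., has no cofinal branch) is absolute between $V$ and $V[G]$, not that $T$ remains in the class of $(\omega_1, <2^\omega)$-trees. Since $\omega_1$ is preserved the height is still $\omega_1$; the forward direction is the preceding theorem, and the backward direction is trivial since branches in $V$ persist to $V[G]$.
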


Thus if $2^\omega = \omega_2$, this result is optimal in the sense that it answers \textit{Question} \ref{ques:aronszajnbranch?} in the affirmative.

\subsection{Suslin Tree Preservation}
\label{subsec:SuslinTreePreservation}
As we saw in \textbf{Theorem \ref{thm:scbranch}}, ground-model Aronszajn trees are preserved by subcomplete forcing. Using a slightly different method than that of the previous theorems, Jensen shows in \cite[Chapter 3 p.~10]{Jensen:2009wc} that ground-model Suslin trees are also preserved by subcomplete forcing. We give the proof below.

\begin{fact}[Jensen] \label{fact:Suslinpres} Subcomplete forcing preserves the property of being Suslin of $\omega_1$-trees. \end{fact}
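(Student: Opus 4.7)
The plan is to argue by contradiction, mirroring the structure of the proof of \textbf{Theorem \ref{thm:scbranch}}. Suppose $\P$ is subcomplete, $T$ is Suslin in $V$, yet some $p \in \P$ forces $\dot A$ to be an uncountable maximal antichain of $T$. Place ourselves in the standard subcompleteness setup: $\P, T \in H_\theta \subseteq N = L_\tau[A] \models \ZFC^-$ for sufficiently large $\theta$, and $\sigma : \N \prec N$ with $\N$ countable and full, $\sigma(\overline \theta, \overline \P, \overline T, \overline p, \overline{\dot A}) = \theta, \P, T, p, \dot A$. Setting $\alpha = \omega_1^{\N}$, we identify $\overline T = T\rest\alpha$, since $\sigma$ fixes $H_{\omega_1}^{\N}$ pointwise and hence each node of $T$ below level $\alpha$.

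The strategy is to construct, in $V$, an $\N$-generic filter $\G \subseteq \overline \P$ with $\overline p \in \G$ for which $\overline A := \overline{\dot A}^{\G}$ is maximal not merely in $\overline T$ (automatic from elementarity, genericity, and $\overline p$ forcing maximality) but in the full ground-model tree $T$. Since levels of $T$ are countable in $V$, enumerate the level-$\alpha$ nodes of $T$ as $\{t_n : n < \omega\}$, and for each $n$ define
\[ D_n = \left\{\overline q \leq \overline p \st \exists a \in b_{t_n} \cap \overline T,\ \overline q \forces^{\N}_{\overline\P} \check a \in \overline{\dot A}\right\}. \]
Granting that each $D_n$ is dense below $\overline p$, we build $\G$ in $V$ meeting all countably many dense subsets of $\overline\P$ lying in $\N$ together with every $D_n$. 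The resulting $\overline A$ then contains an ancestor of each $t_n$; combined with maximality in $\overline T$ and the normality of $T$, this yields maximality of $\overline A$ in all of $T$ in $V$: any $s \in T$ at level $<\alpha$ is covered via maximality in $\overline T$, while any $s$ at level $\geq\alpha$ lies above $s\rest\alpha = t_n$ for some $n$, which has an ancestor in $\overline A$.

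Now we invoke subcompleteness: there is $q \in \P$ such that whenever $G \ni q$ is $\P$-generic, some $\sigma' \in V[G]$ has the standard properties with $\sigma'``\G \subseteq G$, and lifts to $\sigma^* : \N[\G] \prec N[G]$ in $V[G]$ with $\sigma^*(\overline A) = A := \dot A^G$. Because $\sigma^*$ has critical point $\alpha$ and $\overline A \subseteq T\rest\alpha \subseteq H_{\omega_1}^{\N[\G]}$ is fixed pointwise, $\overline A \subseteq A$. By elementarity $A$ is an uncountable maximal antichain of $T$ in $V[G]$, while $\overline A$ remains a maximal antichain of $T$ in $V[G]$ since maximality in the unchanged tree is absolute. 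Since $\overline A$ is countable but $A$ is uncountable in $V[G]$, some $t \in A \setminus \overline A$ exists, and maximality of $\overline A$ produces $a \in \overline A \subseteq A$ with $a \parallel t$; but $a \neq t$ and $A$ is an antichain, the desired contradiction.

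The principal obstacle is the density lemma for each $D_n$: this is where the Suslinity of $T$ in $V$ must be invoked. Were some $\overline q \leq \overline p$ to have no extension in $D_n$, then $\overline q$ would force $\overline{\dot A}$ to be a maximal antichain of $\overline T$ entirely avoiding the cofinal branch $b_{t_n}$, so that $\overline{\dot A}$ must contain off-branch witnesses above every level of $b_{t_n}$. Analyzing this configuration---either by varying the generic filter over $\N$ or by exploiting the incompatibilities among the countably many branches $b_{t_m}$ at level $\alpha$---one aims to extract an uncountable antichain of $T$ already present in $V$, contradicting Suslinity. This delicate combinatorial step is the technical heart of the argument, and the rest of the proof is a relatively routine application of the subcompleteness apparatus.
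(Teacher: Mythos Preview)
Your overall architecture is right and matches the paper's: reduce to showing that the countable antichain $\overline A = \overline{\dot A}^{\G}$ already meets every branch $b_{t_n}$ through a level-$\alpha$ node, so that $\overline A \subseteq A$ forces $A$ to be countable. The difference is in how that key step is secured.

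You isolate the density of the sets $D_n$ as the crux and then leave it open, proposing to ``extract an uncountable antichain of $T$ already present in $V$'' from a hypothetical $\overline q$ forcing $\overline{\dot A}$ to miss $b_{t_n}$. That line does not obviously go through: $\overline{\dot A}$ depends on the generic, so there is no fixed antichain in $V$ to point to, and the off-branch witnesses you describe need not cohere into anything uncountable in the ground model. This is a genuine gap.

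The missing idea is that cofinal branches through a Suslin tree are automatically generic for the tree. The paper exploits this by choosing a countable transitive $M \models \ZFC^-$ with $\N, T\rest(\alpha+1) \in M$ and taking $\G$ to be $\overline{\P}$-generic over $M$ (hence over $\N$ and over each $\N[b_t]$). By the product lemma, each $b_t$ is then $\overline T$-generic over $\N[\G]$; since $\overline A$ is a maximal antichain of $\overline T$ in $\N[\G]$, the generic branch $b_t$ must meet it. This immediately gives $b_t \cap \overline A \neq \emptyset$ for every $t \in T_\alpha$, which is exactly what your $D_n$ were designed to ensure. The same genericity-of-branches observation would also prove your density claim directly (given $\overline q$, pick $\G'$ generic over $\N[b_{t_n}]$ with $\overline q \in \G'$ and read off a condition forcing some $a \in b_{t_n}$ into $\overline{\dot A}$), but the paper's formulation via the auxiliary model $M$ handles all $t \in T_\alpha$ at once without isolating a separate density lemma.
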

\begin{proof}
Let $T$ be a Suslin tree. Let $\P$ be subcomplete. Suppose toward a contradiction that $p \in \P$ forces that $\dot A$ is a maximal antichain of size $\omega_1$. Let $\theta$ verify the subcompleteness of $\P$ and as usual we place ourselves in the standard setup: \begin{itemize} 
	\item $\P \in H_\theta \subseteq N = L_\tau[A] \models \ZFC^-$ with $\tau>\theta$ and $A \subseteq \tau$
	\item $\sigma: \N \cong X \preccurlyeq N$ where $X$ is countable and $\N$ is full
	\item $\sigma(\overline \theta, \overline{\P}, \overline T, \overline p, \overline{\dot A}) = \theta, \P, T, p, \dot A$. 
\end{itemize} 
Letting $\alpha = \omega_1^{\N}$, we have that $\overline T = T\rest \alpha$ as usual. Let $M$ be a countable, transitive $\ZFC^-$ model with both $\N, T\rest (\alpha+1) \in M$. Let $\G \subseteq \overline{\P}$ be generic over $M$ with $\overline p \in \G$. So $\G$ is also generic over $\N$.

We can now work below a condition in $G \subseteq \P$ generic to obtain a $\sigma' \in V[G]$ such that: \begin{itemize}
	\item $\sigma': \N \prec N$
	\item $\sigma'(\overline \theta, \overline{\P}, \overline T, \overline p, \overline{\dot A}) = \theta, \P, T, p, \dot A$
	\item $\sigma'``\, \G \subseteq G$.
\end{itemize} 
As usual we have a lift $\sigma^*:\N[\G] \prec N[G]$. Letting $\overline A=\overline{\dot A}^{\G}$ and $\dot A^G=A$ we have that $\sigma^*(\overline A)=A$. Let $\seq{b_t}{t \in T_\alpha}$ be the collection of partial branches below the nodes of level $\alpha$ of the tree $T$.

Every node in $T$ above level $\alpha$ has to have a predecessor in level $\alpha$. For each $t \in T_\alpha$, $\G$ is $\overline{\P}$-generic over $\N[b_t]$ since $b_t$ is $\overline T$-generic over $\N$ - as cofinal branches through Suslin trees are generic. By the product lemma, each $b_t$ is $\overline T$-generic over $\N[\G]$. Since $\overline A$ is maximal, $b_t \cap \overline A \neq \emptyset$.
Thus $\overline A$ is sealed in $\overline T = T \rest \alpha$, meaning it has no elements above level $\alpha$. But since $\overline A \subseteq A$ and $A$ is maximal, this means that $A$ is countable, so $T$ remains Suslin as desired. 
\end{proof}

Immediately we have the following:

\begin{cor} The property of being Suslin of $\omega_1$-trees is $sc$-absolute. \end{cor}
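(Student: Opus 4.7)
The plan is to combine the preservation direction given by Fact \ref{fact:Suslinpres} with an easy reverse direction that rests on the preservation of $\omega_1$ by subcomplete forcing. First I would note that being an $\omega_1$-Suslin tree can be written as a $\Pi^1_1(H_{\omega_1})$-statement about $T$ (so a fortiori a $\Sigma^1_2(H_{\omega_1})$-statement): namely, $T$ is a normal tree of height $\omega_1$ with countable levels together with the second-order assertion that every antichain $A \subseteq T$ is countable. Note that whether $T$ is an $\omega_1$-tree is itself absolute between $V$ and any subcomplete extension $V[G]$, since $T$ is a fixed ground-model set, its level sizes do not change, and subcomplete forcing preserves $\omega_1$ (because it is subproper and hence preserves stationary subsets of $\omega_1$ by \textbf{Theorem \ref{thm:SubproperMM}}).

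The forward direction, that Suslinity passes from $V$ to $V[G]$ whenever $G$ is generic for a subcomplete $\P$, is exactly the content of \textbf{Fact \ref{fact:Suslinpres}}. It remains only to verify the reverse direction: if $T \in V$ is Suslin in $V[G]$, then $T$ is already Suslin in $V$. Suppose toward a contradiction that $T$ is not Suslin in $V$, so that there is an uncountable antichain $A \subseteq T$ with $A \in V$. Since $\omega_1^V = \omega_1^{V[G]}$ by the preservation noted above, the set $A$ remains uncountable in $V[G]$, and it is clearly still an antichain there. This contradicts the assumption that $T$ is Suslin in $V[G]$.

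There is no real obstacle here, as the substantive content has been handled in \textbf{Fact \ref{fact:Suslinpres}}; the only thing to watch is that the ``$\omega_1$-tree'' part of the hypothesis is itself $sc$-absolute, which follows from $\omega_1$-preservation together with the fact that $T$ and its level structure lie in $V$.
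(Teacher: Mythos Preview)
Your argument is correct and matches the paper's approach: the paper states this corollary as immediate from \textbf{Fact \ref{fact:Suslinpres}}, and your write-up simply spells out why---the forward direction is exactly that fact, while the reverse direction is the trivial observation that a ground-model uncountable antichain stays uncountable because subcomplete forcing preserves $\omega_1$.
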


The previous theorem of course also shows that forcing with a Suslin tree is not subcomplete, even though it does not add a real. This is yet another example showing that subcomplete forcing is not the same as countably distributive forcing. Furthermore, this gives us the following, which was observed by Miha Habi\v c:

\begin{cor} \label{cor:CCCnotSC} Nontrivial $ccc$ forcings are not subcomplete. \end{cor}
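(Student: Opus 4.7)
The plan is to derive a contradiction by showing that a nontrivial, ccc, subcomplete forcing must add a cofinal branch to some ground-model $\omega_1$-tree, in violation of \textbf{Theorem \ref{thm:scbranch}}. Suppose $\mathbb{P}$ is nontrivial, ccc, and subcomplete. By \textbf{Proposition \ref{prop:noreals}}, $\mathbb{P}$ adds no reals and is therefore $\omega$-distributive, meaning any countable family of maximal antichains of $\mathbb P$ admits a common refining maximal antichain. Using nontriviality, I may pass below an atomless condition and assume that every condition of $\mathbb P$ has two incompatible extensions.

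I then build, by recursion on $\alpha<\omega_1$, a sequence of countable maximal antichains $A_\alpha \subseteq \mathbb P$: set $A_0 = \{\mathbbm 1\}$; at a successor stage let $A_{\alpha+1}$ be a maximal antichain refining $A_\alpha$ in which each element of $A_\alpha$ has two incompatible extensions (possible by atomlessness); and at a countable limit $\lambda$ let $A_\lambda$ be a common refining maximal antichain of $\{A_\beta : \beta<\lambda\}$, which exists by $\omega$-distributivity. Each $A_\alpha$ is countable by the ccc. I form $T = \bigcup_{\alpha<\omega_1} A_\alpha$ into a tree under the inverted forcing order, applying the standard normalization at each limit level so that distinct nodes on a limit level trace distinct coherent predecessor chains. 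Then $T$ is a normal $\omega_1$-tree, and its antichains are antichains in $\mathbb P$, hence countable by ccc, so $T$ is a Suslin tree in $V$ and in particular has no cofinal branch in $V$.

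When $G \subseteq \mathbb P$ is generic, $G$ meets each $A_\alpha$ in exactly one element, producing a cofinal thread through $T$; arranging the limit-level normalization compatibly with genericity, this thread yields a cofinal branch of $T$ in $V[G]$. Since $T$ has no cofinal branch in $V$, this branch is new, directly contradicting \textbf{Theorem \ref{thm:scbranch}} (and also \textbf{Fact \ref{fact:Suslinpres}}, since the added branch lets one extract an uncountable antichain off of the branch, destroying Suslinity).

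The principal technical obstacle is coordinating the limit-stage normalization with genericity, so that whichever element of $A_\lambda$ is chosen by $G$ survives as a node of the normalized tree and the generic thread remains a genuine cofinal branch. This can be arranged by merging elements of $A_\lambda$ that refine a common coherent predecessor chain via their Boolean infimum inside $\mathrm{BA}(\mathbb P)$, which retains maximality of $A_\lambda$ (and hence meets the generic) while enforcing uniqueness of limit nodes in $T$. The rest of the argument is just the combination of ``no new reals'' (\textbf{Proposition \ref{prop:noreals}}) with ``no new branches through $\omega_1$-trees'' (\textbf{Theorem \ref{thm:scbranch}}) applied to this internally manufactured Suslin tree.
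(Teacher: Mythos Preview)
Your approach is essentially the paper's: both argue that a nontrivial $ccc$ subcomplete forcing must be countably distributive, hence a Suslin algebra, and therefore adds a branch to some ground-model Suslin tree, contradicting \textbf{Theorem~\ref{thm:scbranch}} (or \textit{Fact}~\ref{fact:Suslinpres}). The paper simply cites the Suslin-algebra fact from Jech, while you unpack the tree construction explicitly.

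Two small corrections are in order. First, the step ``adds no reals, therefore $\omega$-distributive'' is not valid in general (Namba forcing under $\CH$ is a counterexample); it holds here precisely because of the $ccc$, which gives countable covering and lets you code any new $\omega$-sequence of ordinals as a new real---the paper makes this explicit and you should too. Second, in your limit-level normalization you want to merge elements of $A_\lambda$ sharing a common predecessor chain via their Boolean \emph{supremum} in $\BA(\P)$, not their infimum: the infimum of distinct antichain elements is $0$, whereas the supremum of each equivalence class still lies below every common predecessor, keeps the resulting family a maximal antichain, and is met by the generic since it dominates whatever element of the original $A_\lambda$ the generic selects.
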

\begin{proof} Suppose instead toward a contradiction that $\P$ is both $ccc$ and subcomplete. By subcompleteness, $\P$ does not add a real, making $\P$ countably distributive (meaning it doesn't add countable sequences of ordinals). This is because $ccc$ forcing notions have the countable covering property: supposing instead $\P$ adds a countable sequence of ordinals, this sequence can be covered by a countable set in the ground model. But using this cover, the new set of ordinals can be coded as a new real. This contradicts the assumption that no reals are added. Thus $\P$ doesn't add countable sequences of ordinals, and is countably distributive. 

But $ccc$ countably distributive forcings are also known as Suslin algebras, which always add a branch to to some Suslin tree~\footnote{This is stated in Jech \cite{Jech:2002qr} after Definition 30.19.}, so $\P$ can't be subcomplete. \end{proof} 

Moreover, the method of proof showing that Suslin trees are preserved is optimal in the sense that maximal antichains of $\omega_1$ trees are not necessarily preserved. Indeed the proof seems to rely on the fact that the tree is Suslin, in particular using the fact that cofinal branches through Suslin trees are always generic.

\begin{prop} If $T$ is an $\omega_1$-tree that is not Suslin, then $\Add(\omega_1, 1)$ adds a new maximal antichain to $T$. Indeed, any forcing adding a new subset to $\omega_1$ will add a new maximal antichain to $T$. \end{prop}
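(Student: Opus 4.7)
The plan is to use the failure of Suslinity to produce in $V$ a maximal antichain of $T$ of size $\omega_1$, and then to show that any new subset $X\subseteq\omega_1$ appearing in a generic extension $V[G]$ can be used to build, in $V[G]$, a new maximal antichain of $T$. First, since $T$ is not Suslin, $T$ has an uncountable antichain; by Zorn's lemma I extend it to a maximal antichain $A\subseteq T$, which must be uncountable and hence of cardinality exactly $\omega_1$ (any antichain in an $\omega_1$-tree has size at most $\omega_1$). Enumerate $A=\{a_\alpha\st \alpha<\omega_1\}$, and for each $\alpha$ let $S_\alpha\subseteq T$ denote the set of immediate successors of $a_\alpha$ in $T$, which has at least two elements by normality; fix a designated element $a_\alpha^*\in S_\alpha$. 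The maps $\alpha\mapsto S_\alpha$ and $\alpha\mapsto a_\alpha^*$ lie in $V$.

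Given $X\subseteq\omega_1$ in $V[G]$, define
\[ M_X \;=\; \bigcup_{\alpha\in X} S_\alpha \;\cup\; \{a_\alpha\st\alpha\notin X\}. \]
The key step is to verify that $M_X$ is a maximal antichain of $T$. For the antichain property, any comparability of two listed nodes with distinct parameters $\alpha\neq\beta$ would force $a_\alpha$ to be comparable to $a_\beta$ (the nodes in $S_\alpha$ sit immediately above $a_\alpha$), contradicting that $A$ is an antichain; and the nodes inside a single $S_\alpha$ all live on the same level and so are pairwise incomparable. For maximality, given $t\in T$, use maximality of $A$ to pick $a_\alpha$ comparable to $t$: if $\alpha\notin X$ then $a_\alpha\in M_X$; if $\alpha\in X$ and $t\leq a_\alpha$, then $t<s$ for every $s\in S_\alpha\subseteq M_X$; and if $\alpha\in X$ and $t>a_\alpha$, then the predecessor of $t$ on the level immediately above $a_\alpha$ lies in $S_\alpha\subseteq M_X$ and is below $t$.

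To conclude, observe that $M_X\in V$ would entail $X\in V$, since from $M_X$ and the ground-model map $\alpha\mapsto a_\alpha^*$ one recovers $X=\{\alpha<\omega_1\st a_\alpha^*\in M_X\}$. Hence every new subset $X\in V[G]\setminus V$ gives rise to a new maximal antichain $M_X\in V[G]\setminus V$ of $T$, proving the general assertion; the case of $\Add(\omega_1,1)$ follows because its generic is a new subset of $\omega_1$. The construction is entirely elementary, and the only point demanding care is the maximality of $M_X$, which combines the maximality of $A$ with the normality of $T$ at the level immediately above elements indexed by $X$.
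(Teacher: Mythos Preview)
Your proof is correct and follows essentially the same construction as the paper: starting from an uncountable maximal antichain $A=\{a_\alpha:\alpha<\omega_1\}$, you replace $a_\alpha$ by its set of immediate successors exactly when $\alpha$ lies in the new subset, and argue the resulting set is a new maximal antichain. The only cosmetic difference is in the recovery step: the paper reads off the generic as $\{\alpha:a_\alpha\notin A'\}$, while you use a designated successor $a_\alpha^*$ and read off $X=\{\alpha:a_\alpha^*\in M_X\}$; both work for the same reason, namely that $A$ is an antichain.
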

\begin{proof}
Let $A = \set{ a_\alpha }{ \alpha < \omega_1 }$ be a maximal antichain in $T$. Let $G \subseteq \omega_1$ be $\Add(\omega_1, 1)$-generic. Let $A' = \set{ a_\alpha }{\alpha \notin G} \cup \set{ t \in T }{ \exists \alpha \in G \ \ t \in \Succ_T(a_\alpha) }$. Then $A'$ is a maximal antichain in $T$ and as we have that $G = \set{ \alpha < \omega_1 }{ a_\alpha \notin A' }$, it must be that $A' \notin V$. 
\end{proof}

At this point one might wonder exactly how robust Suslin trees really are under subcomplete forcing. Suppose that $T$ is a Suslin tree. Then the following theorem shows that not only is it the case that $T$ is still Suslin after any subcomplete forcing, and not only are there no new cofinal branches added to $T$, but even after forcing with $T$ in a subcomplete extension, there are no more branches than if you had forced with $T$ without the presence of the subcomplete forcing to begin with.

\begin{thm} \label{thm:genericbranchpres} 
If $T$ is a Suslin tree and $\P$ is subcomplete, then $ [T]^{V^{\P \times T}} = [T]^{V^T}$. In other words, subcomplete forcing doesn't add to the collection of generic branches through ground-model Suslin trees. \end{thm}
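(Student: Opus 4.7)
I plan to proceed by contradiction, combining the strategies of Theorem~\ref{thm:scbranch} and Fact~\ref{fact:Suslinpres}. Assume that some $\P\times T$-name $\dot b$ and some $(p,t)\in\P\times T$ force $\dot b\in[\check T]$ while $\dot b\notin V[\dot H_T]$, where $\dot H_T$ is the canonical $T$-name. Place the whole situation in the standard subcompleteness setup: $\theta$ verifies subcompleteness of $\P$, $H_\theta\subseteq N=L_\tau[A]\models\ZFC^-$, $\sigma:\N\cong X\preccurlyeq N$ with $\N$ countable and full, $\alpha=\omega_1^{\N}$, $\overline T=T\restriction\alpha$, and $\sigma(\overline\P,\overline T,\overline{\dot b},\overline p,\overline t)=(\P,T,\dot b,p,t)$. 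Elementarity then yields that $\overline\P$ is subcomplete in $\N$ and that $(\overline p,\overline t)\Vdash_{\overline\P\times\overline T}^{\N}\overline{\dot b}\in[\overline T]\setminus\N[\dot{\overline H}]$.

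The Suslin hypothesis enters via Fact~\ref{fact:Suslinpres} applied inside $\N$: $\overline T$ remains Suslin in $\N[\G]$ for any $\overline\P$-generic $\G$ over $\N$, so every cofinal branch of $\overline T$ in any outer model is automatically $\overline T$-generic over $\N[\G]$. Choose a countable transitive $\ZFC^-$ model $M\supseteq\N$ with $T\restriction(\alpha+1)\in M$ and $\G\subseteq\overline\P$ generic over $M$ with $\overline p\in\G$. Fix $t^*\in T_\alpha$ with $t^*\geq_T\overline t$ and set $\tilde b:=b_{t^*}$; this $\tilde b$ is $\overline T$-generic over $\N[\G]$, so $\G\times\tilde b$ is $\overline\P\times\overline T$-generic over $\N$ containing $(\overline p,\overline t)$, whence $\overline b:=\overline{\dot b}^{\G\times\tilde b}$ is a cofinal branch of $\overline T$ with $\overline b\notin\N[\tilde b]$. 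Now invoke the subcompleteness of $\P$ in $V$: below some $q\leq p$, for any $\P$-generic $G\ni q$ over $V$ there is $\sigma'\in V[G]$ lifting to $\sigma^{*}:\N[\G]\prec N[G]$. Take such $G$ and a $T$-generic $H$ over $V[G]$ with $t^*\in H$; then $b^H\cap(T\restriction\alpha)=\tilde b$ and $\sigma^{*}``\tilde b=\tilde b\subseteq H$, so the standard lifting lemma extends $\sigma^{*}$ to $\tilde\sigma:\N[\G][\tilde b]\prec N[G][H]$ sending $\tilde b\mapsto H$. Writing $b:=\tilde\sigma(\overline b)=\dot b^{G\times H}$, we obtain a cofinal branch of $T$ in $N[G][H]$ whose $\alpha$-level node $s^{*}$ lies in $V$, satisfies $b_{s^{*}}=\overline b$, and differs from $t^{*}$.

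The key remaining step — and the main obstacle — is to close this configuration into a genuine contradiction by exhibiting $b\in V[H]$, since by assumption the condition $(p,t)\in G\times H$ forces $b\notin V[H]$. The natural idea is that the assignment $t^{*}\mapsto s^{*}=f_\G(t^{*})$ is a $V$-definable partial map $T_\alpha\to T_\alpha$ built from $\G$ and $\overline{\dot b}$, while the level-$\alpha$ node $t^{*}=b^H(\alpha)$ is recoverable from $H$ alone, so at each such $\alpha$ the $\alpha$-th level of $b$ is computable inside $V[H]$. Running the argument simultaneously at a club of countable $\alpha<\omega_1^V$, each with its own $\N_\alpha,\G_\alpha,\tilde\sigma_\alpha$, should in principle reconstruct $b$ level-by-level in $V[H]$. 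The delicate technical point — where I expect the bulk of the work to lie — is ensuring coherence of the choices across different $\alpha$'s and that the reconstruction is independent of the particular $\P$-generic $G$; here the Skolem-hull coincidence $\sk{N}{\delta(\P)}{\sigma'}=\Sk{N}{\delta(\P)}{X}$ guaranteed by clause~\textbf{3} of the definition of subcompleteness should play an essential role, by pinning the relevant pieces of $\sigma'$ to data already visible to $V$.
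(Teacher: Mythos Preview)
Your setup is correct and matches the paper's, but the argument breaks down precisely where you flag the ``main obstacle.'' The paper does \emph{not} try to show $b\in V[H]$; instead it avoids that question entirely by \emph{choosing} $\G$ with care. The missing idea is a diagonalization in the construction of $\G$: one builds a descending sequence $\langle \overline p_n\mid n<\omega\rangle$ below $\overline p$, hitting all dense sets of $\overline\P$ in $\N$, so that for each $n$ the condition $\overline p_n$ together with some node $\overline t'\in b_0$ forces $\overline{\ddot b}(\gamma)\neq b_n(\gamma)$ for a specific $\gamma<\alpha$, where $\langle b_n\mid n<\omega\rangle$ enumerates the downward closures of nodes in $T_\alpha$ and $b_0$ is the one containing $\overline t$. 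The trick to make this work at stage $n$ is to take two \emph{mutually} $\overline\P$-generic filters $\G_0,\G_1$ over $\N[b_0]$, both containing $\overline p$ and the current condition; the resulting branches $\overline c_i=(\overline{\ddot b}^{\G_i})^{b_0}$ must differ (otherwise their common value would lie in $\N[b_0]$, contradicting the assumption on $\overline{\dot b}$), so at least one of them differs from $b_n$, and a condition forcing that difference can be extracted. With $\G$ generated by the resulting sequence, one lifts as you describe, takes $c\ni t^*$ generic for $T$ over $V[G]$, and finds that $b=(\ddot b^G)^c$ has $b\rest\alpha$ differing from every $b_n$---impossible, since $b(\alpha)$ is some node of $T_\alpha$.

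Your proposed route, reconstructing $b$ level-by-level in $V[H]$ via maps $f_{\G_\alpha}$ on a club of $\alpha$'s, does not work as stated. The map $t^*\mapsto s^*$ genuinely depends on the particular $\overline\P$-generic $\G$ (different $\G$'s give different $\overline b$'s), and none of these generics live in $V[H]$; the Skolem-hull clause \textbf{3} constrains the range of $\sigma'$ inside $N$ but says nothing that would make $f_\G$ independent of $\G$ or definable from $H$ alone. There is no evident coherence mechanism across the various $\alpha$'s either. So the proposal as written has a real gap at the point you yourself identified, and closing it requires the diagonalization idea rather than a reconstruction argument.
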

\begin{proof}
Suppose not. Let $T$ be a Suslin tree. Let $\P$ be subcomplete. Let $\ddot b$ be a $\P$-name for a $\check T$-name for a new branch through $T$ and suppose we have $p \in \P$, $t \in T$ satisfying: 
	$$p \forces_{\P} \left( \check t \forces_{\check T} \left( \ddot b \in [\check T] \ \land \ \ddot b \notin [\check T]^{\check V[\Gamma_{\check T}]} \right) \right)$$
where $\Gamma_{\check T}$ is a ($\P$-name for a) $\check T$-name for a generic branch for $\check T$. In other words, whenever $G\times b \subseteq \P\times T$ is generic with $\langle p, t \rangle \in G \times b$ we have that $(\ddot b^G)^b \in [T]^{V[G][b]} \setminus [T]^{V[b]}$.	
Let $\theta$ verify the subcompleteness of $\P$, and let's get ourselves into the standard setup: \begin{itemize} 
	\item $\P \in H_\theta \subseteq N = L_\tau[A] \models \ZFC^-$ with $\tau>\theta$ and $A \subseteq \tau$
	\item $\sigma: \N \cong X \preccurlyeq N$ where $X$ is countable and $\N$ is full
	\item $\sigma(\overline \theta, \overline{\P}, \overline T, \overline p, \overline{\ddot b}, \overline t) = \theta, \P, T, p, \ddot b, t$. 
\end{itemize}
Let $\alpha = \omega_1^{\N}$, the critical point of $\sigma$. We have $\overline T = T \rest \alpha$ as usual. 

Additionally by elementarity, we have that $\overline p$ satisfies the same property as the condition $p$ does but relativized to $\N$:
	$$\overline p \forces_{\overline \P} \left( \check{\overline t} \forces_{\check{\overline T}} \left( \overline{\ddot b} \in [\overline{\check T}] \ \land \ \overline{\ddot b} \notin [\check{\overline T}]^{\check V[\Gamma_{\check{\overline T}}]} \right) \right)$$
Let $\vec b = \seq{b_n}{n < \omega}$ enumerate the branches below nodes on the $\alpha$th level of $T$, chosen so that $\overline t \in b_0$. Let $\seq{\overline D_n}{n< \omega}$ enumerate the dense subsets of $\overline{\P}$ in $\N$. 

Again the idea is to carefully construct a generic $\G \subseteq \overline{\P}$ over $\N$ by diagonalizing against branches in $\vec b$. We construct a $\leq_{\overline{\P}}$-sequence $\seq{\overline p_n}{ n< \omega }$ satisfying, for each $n<\omega$: \begin{enumerate}
	\item $\overline p_n \in \overline D_n$
	\item In $\N$, $\overline p_n \forces_{\overline \P} \left( \check{\overline{t'}} \forces_{\check {\overline{T}}} \overline{\ddot b}(\check \gamma) \neq (\check{b_n(\gamma)}) \right)$, for some $\gamma < \alpha$ and $\overline{t'} \in b_0$, $\sigma(\overline{t'}) = t' \geq_T t$. In other words, $\overline p_n$ forces that the canonical name for $\overline{t'}$ forces the value of the generic branch to be different from one of the ``branches" in our list in $N$.
	\end{enumerate}
In order to inductively define such a sequence $\vec{\overline p}$, let's suppose $\overline p_m$ have been defined for $m<n$. To get $\overline p_n$, choose $\overline q_n$ below each $\overline p_m$ for all $m<n$, satisfying $\overline q_n \in \overline D_n$. 

As $\overline T$ is Suslin in $\N$ and cofinal branches are generic over Suslin trees, we have that $\N[b_0]$ forms a generic extension over $\N$. Let $\G_0, \G_1$ be mutually $\overline{\P}$-generic over $\N[b_0]$ so that $\overline p, \overline q_n \in \G_0 \cap \G_1$. 	
Since both $\overline T, \overline{\P} \in \N$ we have that for $i=0,1$; $$\N[b_0][\G_i]=\N[\G_i][b_0].$$ For $i=0,1$ let $\overline c_i = (\overline{\ddot b}^{\G_i})^{b_0}$. 

Since for each $i$, the condition $\overline p \in \G_i$ and $\overline t \in b_0$, both of the $\overline c_i$ are cofinal branches through $\overline T$ and $\overline c_i \neq b_0$ by our inductive assumption of item \textbf{2} since namely $\overline q_n \leq \overline p_0$.
It follows from the mutual genericity of $\G_0$ and $\G_1$ that $\overline c_0 \neq \overline c_1$; otherwise, suppose that $\overline c = \overline c_0 = \overline c_1$. Then we'd have $$\overline c \in \N[\G_0][b_0] \cap \N[\G_1][b_0] = \N[b_0][\G_0] \cap \N[b_0][\G_1] = \N[b_0]$$ so $\overline c \in [\overline T]^{V[b_0]}$, a contradiction.

So let $\overline c \in \{ \overline c_0, \overline c_1 \}$ be such that $\overline c \neq b_n$. %Maybe we can just pick one of them to be different from $b_n$ like usual since we know one of them has to be, instead of arguing like this using the unique branch.
Thus we may choose $\gamma < \alpha$ so that the value of $\overline c$ on level $\alpha$ is not the same as $b_n(\gamma)$. Then this holds in some $\N[\G_i][b_0]$, and we can obtain a condition $\overline p_n \leq \overline q_n$ forcing this.

Now let $\G \subseteq \overline{\P}$ be generic over $\N$, generated by $\vec{\overline p}$. We can from now on work below a condition in $G \subseteq \P$ generic to obtain a $\sigma' \in V[G]$ such that: \begin{itemize}
	\item $\sigma': \N \prec N$
	\item $\sigma'(\overline \theta, \overline{\P}, \overline T, \overline p, \overline{\ddot b}, \overline t) = \theta, \P, T, p, \ddot b, t$.
	\item $\sigma'``\, \G \subseteq G$.
\end{itemize} 
As usual we have a lift $\sigma^*:\N[\G] \prec N[G]$. Now let $c$ be $T$-generic over $V[G]$, with $t \in c$, so that $b_0 \subseteq c$. Then $(\ddot b^G)^c=b$ is a cofinal branch and $c \neq b$ as before. So we have that for all $n$, there is $\gamma < \alpha$ such that $N[G][c] \models b(\gamma) \neq b_n(\gamma)$, a contradiction.
\end{proof}

The previous result yields the following result on Suslin trees with the unique branch property. The unique branch property is explored in more detail by Hamkins and Fuchs \cite{Hamkins:qf}. We give the definition here.

\begin{defn} A normal $\omega_1$-tree $T$ has the \textbf{\emph{unique branch property}} (\textit{ubp}) so long as $$\mathbbm 1 \forces_T `` \check T \text{ has exactly one new cofinal branch.}"$$ That is, after forcing with the tree, $T$ has  exactly one cofinal branch that was not in the ground model. \end{defn}

\begin{thm} \label{thm:UBP} 
Subcomplete forcing preserves the $ubp$ of Suslin trees. \end{thm}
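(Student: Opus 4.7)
The plan is to combine the previous two results in a direct way. The hypothesis gives a Suslin tree $T$ with the $ubp$ in $V$, so forcing with $T$ over $V$ adds exactly one cofinal branch not in $V$. Let $\P$ be subcomplete and let $G$ be $\P$-generic over $V$. By \textit{Fact} \ref{fact:Suslinpres}, $T$ remains Suslin in $V[G]$, so forcing with $T$ over $V[G]$ is sensible. Our goal is to show that if $b$ is $T$-generic over $V[G]$, then $[T]^{V[G][b]} \setminus [T]^{V[G]}$ has exactly one element.

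The key observation is that both of the relevant sets of branches in $V[G][b]$ can be computed purely in terms of branch sets from extensions of $V$. First, by \textbf{Theorem \ref{thm:scbranch}}, subcomplete forcing adds no cofinal branch to any $\omega_1$-tree, so $[T]^{V[G]} = [T]^V$. Second, since $b$ is $T$-generic over $V[G]$ it is also $T$-generic over $V$, and the pair $(G,b)$ is $\P \times T$-generic over $V$; so by \textbf{Theorem \ref{thm:genericbranchpres}}, $[T]^{V[G][b]} = [T]^{V[b]}$. Combining these two equalities yields
$$[T]^{V[G][b]} \setminus [T]^{V[G]} = [T]^{V[b]} \setminus [T]^V,$$
and by the $ubp$ of $T$ in $V$ the right-hand side is a singleton. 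Hence exactly one cofinal branch through $T$ is added by forcing with $T$ over $V[G]$, which is the $ubp$ for $T$ in $V[G]$.

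There is essentially no obstacle here beyond assembling the pieces; the only point worth a sentence is noting that $b$ being $T$-generic over $V[G]$ automatically makes it $T$-generic over $V$ (and makes $G \times b$ mutually generic), which is what licenses the appeal to \textbf{Theorem \ref{thm:genericbranchpres}}. Since the two preservation theorems were the substantial work, the proof of this corollary is short.
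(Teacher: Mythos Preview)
Your proof is correct and follows essentially the same approach as the paper: both arguments reduce the preservation of the $ubp$ to \textbf{Theorem \ref{thm:genericbranchpres}}. The paper argues by contradiction (a failure of the $ubp$ in $V[G]$ would produce a branch in $V^{\P\times T}$ not in $V^T$, contradicting that theorem), whereas you give the direct computation $[T]^{V[G][b]}\setminus[T]^{V[G]}=[T]^{V[b]}\setminus[T]^V$ by combining \textbf{Theorem \ref{thm:genericbranchpres}} with \textbf{Theorem \ref{thm:scbranch}}; your version is slightly more explicit but the substance is the same.
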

\begin{proof}
Let $T$ be a normal Suslin tree with the $ubp$. Let $\P$ be subcomplete. Let $\ddot b$ be a $\P$-name for a $T$-name so that there is $p \in \P$, $t \in T$ satisfying: $$ p \forces_{\P} \left( \check t \forces_{\check T} `` \ddot b \text{ is a cofinal branch different from the canonical } \check T\text{-generic branch}" \right).$$ In other words, $\P$ adds a new cofinal branch through the tree, so that $$[T]^{V^T} \neq [T]^{V^{\P \times T}},$$ contradicting \textbf{Theorem \ref{thm:genericbranchpres}}. 
\end{proof}

\begin{cor} The unique branch property of Suslin trees is $sc$-absolute. \end{cor}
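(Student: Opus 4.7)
The plan is to verify both directions of absoluteness between $V$ and $V[G]$ for any $V$-generic $G \subseteq \P$ with $\P$ subcomplete. The forward direction is immediate from Theorem \ref{thm:UBP}. Since subcomplete forcing preserves $\omega_1$ (Theorem \ref{thm:SubproperMM}) and the property of being Suslin for an $\omega_1$-tree (Fact \ref{fact:Suslinpres}), the tree $T$ remains a normal Suslin $\omega_1$-tree in $V[G]$, and the statement of $ubp$ makes sense in both models. What remains is the reverse direction: assuming $T$ has $ubp$ in $V[G]$, show $T$ has $ubp$ in $V$.

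For this I would argue by contraposition. Suppose $T$ is Suslin but fails to have $ubp$ in $V$. Since $T$ is Suslin it is also Aronszajn (a cofinal branch in $V$ would, by the normality of $T$, yield an uncountable antichain of sideways immediate successors), so every $T$-generic branch over $V$ is a new cofinal branch. The failure of $ubp$ in $V$ therefore gives a condition $t \in T$ and a $T$-name $\dot c \in V$ such that $t$ forces $\dot c$ to be a cofinal branch through $T$, not in the ground model, and distinct from the generic branch.

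To produce a corresponding failure in $V[G]$, I would first force over $V$ with $\P$ to obtain $V[G]$, and then force over $V[G]$ with $T_t$ to obtain a $T$-generic branch $b$ passing through $t$. Since $\P, T \in V$, the product lemma renders $G \times b$ generic for $\P \times T$ over $V$, so $b$ is also $T$-generic over $V$ and the extensions $V[G]$ and $V[b]$ are mutually generic, yielding $V[G] \cap V[b] = V$ by the classical mutual-genericity theorem of Solovay. Interpreting $\dot c$ in $V[b]$ produces a cofinal branch $\dot c^b \in V[b]$ with $\dot c^b \neq b$ and $\dot c^b \notin V$. If $\dot c^b$ were in $V[G]$, it would lie in $V[G] \cap V[b] = V$, contradicting $\dot c^b \notin V$. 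Hence $\dot c^b \in V[G][b] \setminus V[G]$ and $\dot c^b \neq b$, so $V[G][b]$ contains at least two distinct new cofinal branches through $T$ over $V[G]$, contradicting the $ubp$ of $T$ in $V[G]$.

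The main obstacle is the mutual-genericity step yielding $V[G] \cap V[b] = V$, but this is standard once $G$ and $b$ are arranged via the product lemma. Subcompleteness of $\P$ is used decisively only in the forward direction, via Theorem \ref{thm:UBP}; the reverse direction relies solely on preservation of $\omega_1$ and of Suslinness, and on classical product forcing.
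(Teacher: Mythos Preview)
Your argument is correct. The paper states this corollary without proof, treating it as immediate from Theorem~\ref{thm:UBP}; your forward direction is exactly that. You then go further and supply a careful proof of the reverse direction via mutual genericity, which the paper leaves implicit. That argument is sound: once $b$ is taken $T$-generic over $V[G]$ through $t$, the product lemma gives that $G\times b$ is $\P\times T$-generic over $V$, so $b$ is $T$-generic over $V$ and Solovay's lemma yields $V[G]\cap V[b]=V$; hence the second branch $\dot c^{\,b}\in V[b]\setminus V$ cannot lie in $V[G]$, and together with $b$ it witnesses failure of $ubp$ in $V[G]$. The only cosmetic point is that your parenthetical explanation of why Suslin implies Aronszajn is terse, but the underlying fact is standard and the rest of the argument does not depend on the details.
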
 

We have seen that after subcomplete forcing, the set of generic branches through a Suslin tree, obtained via forcing with the tree, is not effected. What about the set of maximal antichains? We see below that the property of being Suslin off the generic branch is also $sc$-absolute. This is another property of Suslin trees that has been explored by Hamkins and Fuchs \cite{Hamkins:qf}.

\begin{defn} A Suslin tree is \textbf{\emph{Suslin off the generic branch}} so long as after forcing with $T$ to add a generic branch $b$, for any node $t$ not in $b$, the tree $T_t$ remains Suslin. \end{defn}

\begin{thm} \label{thm:suslinoffbranch}
The property of being Suslin off the generic branch is $sc$-absolute. In other words, if $T$ is a Suslin tree that is also Suslin off the generic branch, then $T$ is still Suslin off the generic branch after subcomplete forcing.
\end{thm}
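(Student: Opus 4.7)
The plan is to imitate the proof of \textbf{Fact \ref{fact:Suslinpres}}, adapting it to the two-step forcing $\P \ast T$. Suppose toward contradiction that some subcomplete $\P$ destroys the property; unwinding the definition, one can find $p \in \P$, incomparable $s, t \in T$, and a $\P$-name $\dot A$ for a $T$-name such that
\[
p \Vdash_\P s \Vdash_T \dot A \text{ is an uncountable maximal antichain in } T_t.
\]
Set up $\sigma : \N \prec N$ in the usual way with $\N$ countable and full, sending $\overline p, s, t, \overline{\dot A}, \overline \P, \overline T$ to $p, s, t, \dot A, \P, T$; let $\alpha = \omega_1^{\N}$, so $\overline T = T \rest \alpha$. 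By elementarity, $\overline T$ is Suslin off the generic branch in $\N$ and the pulled-back forcing relation holds in $\N$.

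By normality of $T$, pick $t^* \in T_\alpha$ with $t^* \geq_T s$ and let $b_0 = b_{t^*}$; then $s \in b_0$ while $t \notin b_0$, since $s \perp_T t$. Choose a countable transitive $\ZFC^-$ model $M$ with $\N, T \rest (\alpha+1) \in M$ (and hence $b_0 \in M$), and let $\G \subseteq \overline \P$ be generic over $M$ with $\overline p \in \G$. Since $\N[b_0] \in M$, we have that $\G$ is generic over $\N[b_0]$, so by the product lemma $\G \times b_0$ is $\overline \P \times \overline T$-generic over $\N$, and $b_0$ is $\overline T$-generic over $\N[\G]$. Then $\overline A := (\overline{\dot A}^\G)^{b_0}$ is a maximal antichain in $\overline T_t$ of $\N$-size $\alpha$ in $\N[\G][b_0] = \N[b_0][\G]$. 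This is the analogue of the ``sealed'' antichain appearing in the proof of \textbf{Fact \ref{fact:Suslinpres}}, but now witnessed inside the intermediate model obtained by further forcing over $\N$ with $\overline T$.

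The hypothesis is now used inside $\N[b_0]$: since $\overline T$ is Suslin off the generic branch in $\N$ and $t \notin b_0$, the subtree $\overline T_t$ is a Suslin tree in $\N[b_0]$. However, $\overline A$ is an uncountable (from $\N[b_0]$'s perspective) antichain in $\overline T_t$ living in the further extension $\N[b_0][\G]$, so $\overline \P$-forcing over $\N[b_0]$ has destroyed the Suslin property of $\overline T_t$. The intended contradiction is then obtained by applying \textbf{Fact \ref{fact:Suslinpres}} inside $\N[b_0]$: that fact would force $\overline T_t$ to remain Suslin in $\N[b_0][\G]$, contradicting the existence of $\overline A$. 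The main obstacle, and the technical heart of the proof, is to verify that the hypotheses of \textbf{Fact \ref{fact:Suslinpres}} are in fact available inside $\N[b_0]$, i.e.\ that $\overline \P$ is still subcomplete there. This amounts to showing that subcompleteness of $\overline \P$ is preserved by the Suslin-tree forcing $\overline T$ over $\N$; this is plausible because $\overline T$ is countably distributive in $\N$, so $H_{\omega_1}^{\N} = H_{\omega_1}^{\N[b_0]}$ and the countable full structures relevant to verifying subcompleteness of $\overline \P$ are essentially the same in $\N$ and in $\N[b_0]$. Once this preservation step is established, the contradiction sketched above completes the proof.
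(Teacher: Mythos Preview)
The gap you flag is real and is not rescued by the countable-distributivity argument you sketch. Knowing $H_{\omega_1}^{\N} = H_{\omega_1}^{\N[b_0]}$ is far from enough: subcompleteness of $\overline{\P}$ is a statement quantifying over large $L_\tau[A]$-models, full countable substructures of them, and the existence of alternative embeddings $\sigma'$ in $\overline{\P}$-extensions --- none of which is controlled merely by fixing $H_{\omega_1}$. There is no theorem saying that Suslin-tree forcing preserves subcompleteness of other posets; in fact the paper later shows (in the modal-logic discussion) that even subcomplete forcing can destroy the subcompleteness of a ground-model poset, so preservation under a $ccc$ forcing like $\overline T$ is at least as unclear. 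The plan of invoking \textbf{Fact \ref{fact:Suslinpres}} internally in $\N[b_0]$ therefore does not go through as stated.

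The paper's argument avoids this issue entirely: it never appeals to subcompleteness of $\overline{\P}$ in an intermediate model, but instead uses the ``Suslin off the generic branch'' hypothesis in $\N$ \emph{directly} to produce mutual genericity of branches, exactly as in the sealing step of \textbf{Fact \ref{fact:Suslinpres}}. One first applies subcompleteness of $\P$ in $V$ to obtain $\sigma' \in V[G]$ and the lift $\sigma^*:\N[\G]\prec N[G]$, which ties $\overline A$ to the actual antichain $A$ in $N[G][b]$. Then, fixing $u$ on level $\alpha$ above your $s$, for \emph{each} $u'$ on level $\alpha$ above your $t$ the branches $b_u$ and $b_{u'}$ are mutually $\overline T$-generic over $\N$, precisely because $\overline T_t$ is Suslin in $\N[b_u]$. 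Since $\G$ was taken generic over $M$ and $b_u, b_{u'} \in M$, the filter $\G$ is generic over $\N[b_u][b_{u'}]$, so $b_{u'}$ is $\overline T_t$-generic over $\N[\G][b_u]$ and must meet the maximal antichain $\overline A \in \N[\G][b_u]$. Ranging over all such $u'$ seals $\overline A$ below level $\alpha$, so $\overline A$ is countable; via $\sigma^*$ and maximality this forces $A$ to be countable as well, giving the contradiction. The moral is that the extra hypothesis on $T$ is used to get one more layer of mutual genericity of branches over $\N$, not to transfer subcompleteness to an intermediate model.
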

\begin{proof}
Let $\P$ be subcomplete, and suppose that the desired result fails. Let $p \in \P$, with $t, t' \in T$ incompatible, and $\ddot A$ a $\P$-name for a $T$-name, so that $p$ forces that $t$ forces that $\ddot A$ is a maximal antichain in $\check{T_{t'}}$ and that $|\ddot A|=\check \omega_1$. Let $\theta$ verify the subcompleteness of $\P$, and assume we are in the following situation: \begin{itemize} 
	\item $\P \in H_\theta \subseteq N = L_\tau[A] \models \ZFC^-$ with $\tau>\theta$ and $A \subseteq \tau$
	\item $\sigma: \N \cong X \preccurlyeq N$ where $X$ is countable and $\N$ is full
	\item $\sigma(\overline \theta, \overline{\P}, \overline T, \overline{\ddot A}, \overline t, \overline{t'}, \overline p) = \theta, \P, T, \ddot A, t, t', p$. 
\end{itemize}

Let $\alpha=\omega_1^\N$, of course $\alpha = \cp(\sigma)$. 

Let $M$ be a countable, transitive $\ZFC^-$ model, with $\N, T \rest (\alpha+1) \in M$ so that $\set{ b_u }{ u \in T(\alpha) } \subseteq M$. 

Let $\G \subseteq \P$ be $M$-generic for $\overline{\P}$ with $\overline p \in \G$. By subcompleteness we can from now on work below a condition in $G \subseteq \P$ generic to obtain an elementary embedding $\sigma' \in V[G]$ such that: \begin{itemize}
	\item $\sigma': \N \prec N$
	\item $\sigma'(\overline \theta, \overline{\P}, \overline T, \overline{\ddot A}, \overline t, \overline{t'}, \overline p) = \theta, \P, T, \ddot A, t, t', p$
	\item $\sigma'``\, \G \subseteq G$.
\end{itemize} 
As usual we have a lift $\sigma^*:\N[\G] \prec N[G]$, and we have $\overline T = T \rest \alpha$. 

Let $u \in T(\alpha)$ be above the node $t$ in the tree, such that $b_u \in M$ is a branch through $\overline T = T \rest \alpha$. Since $\N \models ``\overline T \text{ is Suslin}"$, it follows that $b_u$ is $\overline T$-generic over $\N$, since cofinal branches through Suslin trees are generic. 

Let $b \subseteq T$ be generic over $V[G]$, with $u \in b$. Then $b_u \subseteq b$. Let $\overline A = (\overline{\ddot A}^{\G})^{b_u}$. Then $$\N[\G][b_u] \models ``\overline A \text{ is a maximal antichain in }\overline T_{t'} \text{ of size } \omega_1."$$ This is by elementarity, as $\overline p \in \G$. Indeed, letting $A = (\ddot A^G)^b$, we have that $(A \cap T) \rest \alpha = \overline A$. 

But the same can be done for $t'$; let $u' \in T(\alpha)$, with $u'$ above $t'$ in the tree. Then $b_{u'} \in M$. So $\G$ is $\overline{\P}$-generic over $\N[b_u][b_{u'}]$ since both are in $M$. Moreover, $b_u$ and $b_{u'}$ are mutually $\overline T$-generic over $\N$, since $$\N \models ``\overline T \text{ is Suslin off the generic branch."}$$ Thus $b_{u'}$ is $\overline T_{t'}$-generic over $\N[b_u][\G]=\N[\G][b_u]$. 

But $\overline A$ is a maximal antichain in $\overline T_{t'}$ and $\overline A \in \N[\G][b_u]$, so $\overline A \cap b_{u'} \neq \emptyset$. Thus $\set{ b_{u'} }{ u' \in T_{t'}(\alpha) }$ seals $\overline A$ in $T_{t'}$; meaning that $\overline A$ is bounded in the tree by level $\alpha$, which makes it countable. Thus by the maximality of $\overline A$ and $A$, it must be that $A$ is also countable, a contradiction.
\end{proof}
%corollary you should put in the section about maximality principles; the tree is $sc$-forceably $sc$-necessarily Suslin off the generic branch.

\begin{defn} Let $n$ be a natural number. A Suslin tree $T$ is \textbf{\emph{$n$-fold Suslin off the generic branch}} if after forcing with with the tree $n$ times, or forcing with $T^n$ that adds $n$ branches $b_1, \dots, b_n$, then $T_p$ remains Suslin for any $p$ not on any $b_i$. \end{defn}

It follows, by a similar argument to \textbf{Theorem \ref{thm:suslinoffbranch}}, that the property of an $\omega_1$-tree of being $n$-fold Suslin off the generic branch is $sc$-absolute.

\section{Iterating Subcomplete Forcing}
\label{sec:IteratingSCForcing}
The two-step iteration of subcomplete forcings is subcomplete, as is shown by Jensen~\cite[Chapter 4]{Jensen:2012fr}:
\begin{fact}[Jensen] If $\P$ is subcomplete and $\forces_\P ``\dot \Q \text{ is subcomplete}"$, then $\P * \dot \Q$ is subcomplete. \end{fact}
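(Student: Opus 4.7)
The strategy is to apply subcompleteness twice in succession---first to $\P$, then to $\dot\Q$ inside the $\P$-generic extension---and to splice the resulting embeddings into a single embedding witnessing subcompleteness of $\P * \dot\Q$. We first fix $\theta$ large enough to verify the subcompleteness of $\P$, to have $\P * \dot\Q \in H_\theta$, and such that $\mathbbm 1_\P$ forces that $\theta$ verifies the subcompleteness of $\dot\Q$ (using Lemma \ref{lem:scabsolute}). Then we place ourselves in the standard setup for $\P * \dot\Q$: $\P * \dot\Q \in H_\theta \subseteq N = L_\tau[A] \models \ZFC^-$, with $\sigma : \N \cong X \preccurlyeq N$ elementary, $\N$ countable and full, and $\sigma$ sending $\bar\theta, \bar\P, \dot{\bar\Q}, \overline{\P*\dot\Q}, \bar s$ to $\theta, \P, \dot\Q, \P*\dot\Q, s$; let $\bar G * \bar H$ be $\bar\P * \dot{\bar\Q}$-generic over $\N$. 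Applying subcompleteness of $\P$ to $\sigma$ and $\bar G$ yields a condition $p_0 \in \P$ such that, for any $\P$-generic $G \ni p_0$, there exists $\sigma_1 \in V[G]$ satisfying the four conditions of Definition \ref{defn:SC}; in particular $\sigma_1``\bar G \subseteq G$, so $\sigma_1$ lifts to $\sigma_1^* : \N[\bar G] \prec N[G]$ in $V[G]$.

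Next we verify that $\N[\bar G]$ is full in $V[G]$: if $L_\gamma(\N) \models \ZFC^-$ witnesses the fullness of $\N$, then since $\bar\P \in \N$ is small in $L_\gamma(\N)$, the model $L_\gamma(\N)[\bar G] = L_\gamma(\N[\bar G])$ is a $\ZFC^-$ model in which $\N[\bar G]$ is regular. Moreover, $N[G]$ may be rewritten in the form $L_{\tau'}[A']$ for a suitable $A'$ coding $A$ and $G$, so that the setup for subcompleteness is available in $V[G]$. Since $\dot\Q^G$ is subcomplete in $V[G]$ by hypothesis, we then apply its subcompleteness to $\sigma_1^*$ with generic $\bar H$, including crucially $\bar G$ as an additional designated parameter with image $G$. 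This will produce $q_0 \in \dot\Q^G$ such that whenever $H \ni q_0$ is $\dot\Q^G$-generic over $V[G]$, there is $\sigma_2 \in V[G][H]$ with $\sigma_2 : \N[\bar G] \prec N[G]$ mapping all designated parameters correctly (so $\sigma_2(\bar G) = G$), satisfying the hull condition at $\delta(\dot\Q^G)$, and with $\sigma_2``\bar H \subseteq H$.

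To assemble the final witness, set $\tilde\sigma := \sigma_2 \rest \N$; since $\N = L_{\bar\tau}[\bar A]$ is definable in $\N[\bar G]$ from $\bar A$, the map $\tilde\sigma : \N \prec N$ is elementary with range in $N$. Taking a $\P$-name $\dot q_0$ with $\dot q_0^G = q_0$ and $p_0' \leq_\P p_0$ forcing $\dot q_0 \in \dot\Q$, set $\tilde p := \langle p_0', \dot q_0\rangle \in \P * \dot\Q$. Any $(\P * \dot\Q)$-generic $G * H$ containing $\tilde p$ has $p_0 \in G$ and $q_0 \in H$, so $\tilde\sigma$ lives in $V[G*H]$; conditions 1 and 2 of Definition \ref{defn:SC} follow from the corresponding properties of $\sigma_2$, while condition 4 follows from $\tilde\sigma``\bar G \subseteq \sigma_2(\bar G) = G$ together with $\sigma_2``\bar H \subseteq H$.

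The main obstacle will be verifying condition 3: $\sk{N}{\delta(\P*\dot\Q)}{\tilde\sigma} = \Sk{N}{\delta(\P*\dot\Q)}{X}$, since Steps 1 and 2 deliver hull equalities only at the weights $\delta(\P)$ and $\delta(\dot\Q^G)$, both of which may be strictly smaller than $\delta(\P*\dot\Q)$. The plan is to combine Lemma \ref{lem:Ctrick}---which, applied in $V[G]$, gives $N \cap \sk{N[G]}{\delta(\P)}{\sigma_1^*} = \sk{N}{\delta(\P)}{\sigma_1} = \Sk{N}{\delta(\P)}{X}$---with the Step 2 hull equality for $\sigma_2$, and then to propagate equality upward via Lemma \ref{lem:hullequality}. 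If the direct computation proves intractable, Proposition \ref{prop:scde} together with Lemma \ref{lem:deltalottery} allow passing to forcing-equivalent posets whose weights all agree at $\delta(\P*\dot\Q)$, reducing to the case where the three weights coincide and the hull condition passes directly.
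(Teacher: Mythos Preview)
The paper does not give its own proof of this fact; it is stated with attribution to Jensen and a citation to \cite[Chapter 4]{Jensen:2012fr}. So there is nothing to compare against directly.

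Your plan is the standard one and is essentially correct, including your identification of condition~3 as the real work. A couple of points you should tighten. First, the chain of hull equalities goes through without the lottery-sum fallback once you observe two inequalities: $\delta(\P)\le\delta(\P*\dot\Q)$ (project a dense subset of $\P*\dot\Q$ onto $\P$) and $\delta(\dot\Q^G)^{V[G]}\le\delta(\P*\dot\Q)^V$ (from a dense $D\subseteq\P*\dot\Q$ in $V$, the set $\{\dot q^G:(p,\dot q)\in D,\ p\in G\}$ is dense in $\dot\Q^G$). With these, Lemma~\ref{lem:hullequality} lets you push both hull equalities up to $\delta'=\delta(\P*\dot\Q)$, and Lemma~\ref{lem:Ctrick} applies at $\delta'$ since its proof only needs a surjection in $\N$ from $\overline{\delta'}$ onto a dense subset of $\overline\P$, which exists because $\overline{\delta'}\ge\delta(\overline\P)$. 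Then
\[
\sk{N}{\delta'}{\tilde\sigma}=N\cap\sk{N[G]}{\delta'}{\sigma_2}=N\cap\sk{N[G]}{\delta'}{\sigma_1^*}=\sk{N}{\delta'}{\sigma_1}=\Sk{N}{\delta'}{X}.
\]
Second, the passage from $p_0$ and a $G$-dependent $q_0$ to a single condition $(p_0',\dot q_0)\in\P*\dot\Q$ should be phrased via names: below $p_0$, fix a $\P$-name $\dot\sigma_1$ forced to have the four properties, then a $\P$-name $\dot q_0$ forced to witness subcompleteness of $\dot\Q$ for $\dot\sigma_1^*$ and $\check{\overline H}$; the pair $(p_0,\dot q_0)$ then works uniformly. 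Your fallback via Proposition~\ref{prop:scde} and Lemma~\ref{lem:deltalottery} is more delicate than you suggest, since $\P$ does not densely embed into $\oplus_\kappa\P$; it is cleaner to avoid it.
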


However, we can see now from previous results that the converse does not hold for subcomplete forcing, even though the converse does hold for countably closed forcing. Namely, we have the following:

\begin{prop} Suppose that $\P * \dot \Q$ is subcomplete. Then it is not necessarily true that $\forces_\P ``\dot \Q \text{ is subcomplete}"$, even if $\P$ is subcomplete.  \end{prop}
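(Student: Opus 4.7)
The plan is to exhibit a concrete counterexample: a subcomplete forcing $\P$ together with a $\P$-name $\dot \Q$ such that $\P$ forces $\dot \Q$ to be a nontrivial $ccc$ poset---so by \textbf{Corollary \ref{cor:CCCnotSC}} not subcomplete in $V^\P$---while $\P * \dot \Q$ is itself subcomplete in $V$. The natural candidate is to take $T \subseteq H_{\omega_1}$ to be a Suslin tree in $V$, let $\P$ be a well-chosen nontrivial subcomplete forcing, and set $\dot \Q = \check T$. By \textbf{Fact \ref{fact:Suslinpres}}, $T$ remains Suslin in $V^\P$ and is still nontrivially $ccc$, so $\check T$ fails to be subcomplete there.

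The main obstacle is verifying that $\P * \check T \cong \P \times T$ is subcomplete in $V$ for an appropriate subcomplete $\P$. Unfolding the definition, given the standard setup $\sigma: \overline N \prec N$ with $\overline N$ countable and full together with an $\overline N$-generic $\overline G \times \overline b \subseteq \overline \P \times \overline T$, one first applies the subcompleteness of $\P$ to $\sigma, \overline G$ to obtain $p \in \P$ such that for any $\P$-generic $G \ni p$ there is $\sigma' \in V[G]$ with $\sigma'``\overline G \subseteq G$ and the other conditions of \textbf{Definition \ref{defn:SC}}. One then needs to arrange a $T$-generic $b$ over $V[G]$ through which $\overline b$ (which lies in $V$ and is fixed pointwise by $\sigma'$, being of rank below $\cp(\sigma')=\omega_1^{\overline N}$) is absorbed. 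Here the Suslin-preservation results \textbf{Theorems \ref{thm:scbranch}} and \textbf{\ref{thm:genericbranchpres}} are the key tools, together with the fact that cofinal branches through Suslin trees are generic.

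The heart of why such a counterexample is to be expected is that subcompleteness of $\P * \dot \Q$ is tested using models and embeddings available in $V$, whereas subcompleteness of $\dot \Q$ in $V^\P$ is tested using the richer family of models in $V^\P$ that can be built using the $\P$-generic. These are genuinely distinct demands, so there is room for $\dot \Q$ to fail subcompleteness internally while $\P * \dot \Q$ retains it externally. Should the direct $\P \times T$ verification resist analysis, a fallback strategy is to choose $\P$ so that $\P * \dot \Q$ admits a countably closed dense subset (hence subcomplete by the equivalence established in \textbf{Subsection \ref{subsec:SCForcingandCCForcing}}), and then invoke \textbf{Proposition \ref{prop:scde}} to transfer subcompleteness up through the dense embedding to $\P * \dot \Q$.
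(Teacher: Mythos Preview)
Your primary approach has a genuine gap at the absorption step. With $\alpha=\omega_1^{\overline N}$ and $\overline T = T\rest\alpha$, any $\sigma'$ you produce satisfies $\sigma'\rest H_{\omega_1}^{\overline N}=\id$, so $\sigma'``\overline b=\overline b$ and you must literally have $\overline b\subseteq b$. That forces $\overline b$ to extend to a node on level $\alpha$ of $T$. But $T\rest\alpha$ is a countable tree with continuum many cofinal branches, while $T_\alpha$ is countable; hence almost every cofinal branch through $T\rest\alpha$ fails to extend. Since $\overline T$ is Suslin in $\overline N$, every cofinal branch through $\overline T$ is $\overline N$-generic, and the definition of subcompleteness obliges you to handle \emph{every} such $\overline b$. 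Theorems~\ref{thm:scbranch} and~\ref{thm:genericbranchpres} do not help here: they say subcomplete forcing adds no new branches, not that arbitrary partial branches can be absorbed. So for a ground-model Suslin tree $T$ and generic subcomplete $\P$, there is no reason to expect $\P\times T$ to be subcomplete, and your direct verification cannot be completed as sketched.

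Your fallback is the right idea and is exactly what the paper does, but you need to name the construction. Take $\P$ to be the countably closed forcing that adds a Suslin tree (conditions are countable normal trees of successor height), and let $\dot T$ be the $\P$-name for the generic tree, viewed as a forcing notion. Kunen observed that $\P*\dot T$ admits a dense embedding from a countably closed forcing $\Q$ (conditions are pairs $(t,b)$ with $t$ a countable normal tree and $b$ a cofinal branch through it). Then $\Q$ is subcomplete, so $\P*\dot T$ is subcomplete by Proposition~\ref{prop:scde}; $\P$ is countably closed hence subcomplete; and in $V^\P$ the forcing $\dot T$ is a Suslin tree, hence not subcomplete by Fact~\ref{fact:Suslinpres} (or Corollary~\ref{cor:CCCnotSC}). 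The crucial difference from your primary approach is that here the tree is \emph{generic}, built alongside the branch, which is precisely what makes the dense countably closed subset exist.
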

\begin{proof}
Let $\P$ be the forcing to add a Suslin tree, where the conditions are normal subtrees of successor height. Let $\dot{T}$ denote the forcing that adds a branch through the tree added by $\P$ (forcing with the tree). By a result of Kunen, which is explained in detail by Gitman and Welch \cite[Lemma 6.11]{Gitman:2011pd}, we have that there is a countably closed forcing $\Q$ such that:
	$$\pi: \Q \To \P * \dot T \text{ is a dense embedding.}$$ 
Recall that by \textbf{Proposition \ref{prop:scde}}, since $\Q$ is countably closed and thus subcomplete (indeed, $\Q$ is complete), we have that $\P * \dot{T}$ is thus also subcomplete. (However it is not countably closed, as countably closed forcings aren't preserved under dense embeddings.) In particular, we have a scenario in which $\P * \dot{T}$ is subcomplete but it is not the case that $\forces_{\P} ``\dot{T} \text{ is subcomplete}"$, since as we have seen earlier with \emph{Fact} \ref{fact:Suslinpres}, forcing with a Suslin tree is never subcomplete.
\end{proof} 

For infinite iterations of subcomplete forcing we use revised countable support, which is required to get a sensible iteration theory. If we used only purely countable support and, for example, if Namba forcing was involved in the iteration, then the cofinality of $\omega_2$ would become $\omega$ and reals would be added at stages of cofinality $\omega_2$, making the iteration not subcomplete by \textbf{Proposition~\ref{prop:noreals}}. Indeed, revised countable support has been used to iterate Namba forcing, which is semiproper under $\CH$. Revised countable support iterations were originally invented by Shelah, but the definition worked with by Jensen is due to Donder, and works with iterations of complete Boolean algebras.

\begin{defn}
A sequence $\B=\B_\kappa =\seq{ \B_\alpha }{ \alpha < \kappa }$ is an iteration of complete Boolean algebras so long as $\B_\alpha \subseteq \B_\beta$ for $\alpha \leq \beta <\alpha$ and for limit ordinals $\lambda < \kappa$ we have that $\B_\lambda$ is a complete Boolean algebra generated by $\bigcup_{\alpha<\lambda} \B_\alpha$. By a \textbf{\textit{thread}} in $\B$ we mean a sequence $b=\seq{b_\alpha}{\alpha<\lambda}$ in $\B$ such that $b_\beta \in \B_\beta \setminus \{0\}$ and for $\alpha\leq \beta <\kappa$ we have that $b_\alpha=\Meet \set{a \in \B_\alpha}{b_\beta \leq a}$.

A \textbf{\textit{revised countable support iteration}} ($rcs$ iteration) considers revised countable threads, such that for each thread $b$ and for every limit stage $\lambda \leq \kappa$, we have that either the support of $b$ is bounded in $\lambda$ or the cofinality of the limit stage is collapsed to $\omega$, i.e., there is $\alpha < \lambda$ with $b_\alpha \forces_{\B_\alpha} \cof(\check \lambda) = \check \omega$. The $rcs$ limit is then defined as is done for inverse limits, using $rcs$ threads, and an $rcs$ iteration uses the $rcs$ limit at all limit points.
\end{defn}

I will not give the proof here, but Jensen \cite[Chapter 4]{Jensen:2012fr} shows that the iteration theorem for subcomplete forcing holds, if $rcs$ is used. Here we state it for forcing notions instead of Boolean algebras.

\begin{fact}[\textit{\textbf{Subcomplete Iteration Theorem}}] Let $\P = \P_\kappa = \seq{ ( \P_\alpha, \dot{\Q}_\alpha ) }{ \alpha < \kappa }$ be an \emph{rcs}-iteration such that for all $\alpha < \kappa$: \begin{enumerate}
	\item $\Vdash_{\P_\alpha} \dot{\Q}_\alpha \text{ is subcomplete.}$
	\item $\Vdash_{\P_{\alpha+1}} \delta(\check{\P}_\alpha) \leq \omega_1$
\end{enumerate}
Then $\P$ is subcomplete.
\end{fact}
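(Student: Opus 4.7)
The plan is to proceed by induction on the iteration length $\kappa$. The successor case is essentially free: given $\P_{\alpha+1}\cong\P_\alpha*\dot\Q_\alpha$, the inductive hypothesis gives $\P_\alpha$ subcomplete, hypothesis (1) gives $\dot\Q_\alpha$ subcomplete in $V^{\P_\alpha}$, and Jensen's two-step subcomplete iteration fact (stated immediately above the theorem) closes the loop. The entire difficulty therefore lies at limit stages $\lambda\leq\kappa$.

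For a limit $\lambda$, I would set up the standard subcompleteness scenario: fix $\theta$ large enough that $\P_\lambda\in H_\theta\subseteq N=L_\tau[A]$ and $\theta$ verifies subcompleteness of $\dot\Q_\alpha$ (in $V^{\P_\alpha}$) for every $\alpha$ we care about; take $\sigma:\N\cong X\preccurlyeq N$ with $X$ countable and $\N$ full, and $\sigma(\overline\lambda,\overline{\P}_{\overline\lambda},\overline s)=\lambda,\P_\lambda,s$. Given $\G\subseteq\overline{\P}_{\overline\lambda}$ generic over $\N$, the task is to produce a condition $p\in\P_\lambda$ forcing that an appropriate $\sigma'$ exists in $V[G]$. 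Rather than constructing $\sigma'$ directly, I would follow Jensen's Barwise-theoretic strategy: phrase the existence of $\sigma'$ together with clauses \textbf{1}--\textbf{4} of \textbf{Definition~\ref{defn:SC}} as an $\in$-theory $\mathcal L$ in the sense of Section~\ref{sec:BarwiseTheory}, over a suitable countable admissible structure such as $L_{\delta_{\N}}(\N)$. By Barwise Correctness (\emph{Fact~\ref{fact:correctness}}) a solid model of $\mathcal L$ yields the desired $p$; by Barwise Completeness and Transfer (\emph{Facts~\ref{fact:completeness}} and~\emph{\ref{fact:Transfer}}), consistency of $\mathcal L$ transfers to a liftup of $\N$, where the extra structure afforded by the induction can be deployed.

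The inductive step enters through the liftup machinery (\emph{Fact~\ref{fact:Interpolation}} and \textbf{Lemma~\ref{lem:liftupfull}}): by passing to a transitive liftup $\langle N_*,\sigma_*\rangle$ of $\langle\N,\sigma\rest H_{\overline\alpha}^{\N}\rangle$ for appropriate $\overline\alpha<\overline\lambda$, one obtains an almost full structure over which the iteration below stage $\overline\alpha$ is subcomplete by the inductive hypothesis, giving approximations to $\sigma'$ at every bounded stage. Hypothesis (2), the $\delta$-bound, is crucial at exactly this point, since it is what permits the coherence clause $\sk{N}{\delta(\P_\lambda)}{\sigma'}=\Sk{N}{\delta(\P_\lambda)}{X}$ to pass through each successor stage via \textbf{Lemma~\ref{lem:hullequality}}; without it the Skolem hulls would drift as one moves up the iteration and the stagewise witnesses could not be stitched into a single $\sigma'$. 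The fact that each intermediate subcomplete forcing does not add reals (\textbf{Proposition~\ref{prop:noreals}}) is used throughout to ensure that $\G$ can be assembled in $V$ from its bounded-stage restrictions in a way compatible with $\sigma$.

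The main obstacle will be threading the argument through the two geometrically different kinds of limit: the case $\cof(\overline\lambda)=\omega$ in $\N$, where one can directly approximate $\G$ by an $\omega$-sequence of bounded generics, and the case where $\overline\lambda$ is regular in $\N$ but $X$ is countable in $V$. This latter case is exactly where revised countable support earns its keep, because an $rcs$ thread through $\overline{\P}_{\overline\lambda}$ is forced to have either bounded support or support witnessing that the cofinality of $\overline\lambda$ is collapsed to $\omega$; since collapsing cofinalities to $\omega$ is the only way a subcomplete iterand can falsify standard countable-support iteration arguments, this dichotomy is precisely what is needed to reduce the limit case back to the bounded case, where the inductive hypothesis applies. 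Converting this dichotomy into a consistent Barwise $\in$-theory whose consistency survives transfer to the liftup is the delicate core of the proof, and I would expect it to occupy the bulk of the argument, as in Jensen's treatment in \cite[Chapter~4]{Jensen:2012fr}.
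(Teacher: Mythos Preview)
The paper does not actually prove this statement: it is recorded as a \emph{Fact} attributed to Jensen, with the explicit remark ``I will not give the proof here, but Jensen \cite[Chapter 4]{Jensen:2012fr} shows that the iteration theorem for subcomplete forcing holds, if $rcs$ is used.'' There is therefore no proof in the paper to compare your proposal against.

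That said, your outline is a reasonable high-level description of the shape of Jensen's argument: induction on length, the two-step theorem handling successors, and the Barwise/liftup machinery at limits, with the $rcs$ dichotomy (bounded support versus cofinality collapsed to $\omega$) doing the essential work at limit stages. Since the paper only cites the result, your sketch neither agrees nor disagrees with anything the paper proves; it is simply more than the paper itself supplies.
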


The following is a consequence of Proposition 16.30 from Jech's \textit{Set Theory} \cite{Jech:2002qr}:

\begin{fact}\label{sciterationbound} Let $\kappa$ be inaccessible. Let $\P = \P_\kappa = \seq{ ( \P_\alpha, \dot{\Q}_\alpha ) }{ \alpha < \kappa }$ be an $rcs$-iteration such that each $\P_\alpha$ has the $\kappa$-$cc$. Let $G_\alpha$ be $\P_\alpha$-generic, and let $G=G_\kappa$. Then if $X \subseteq \kappa$ and $|X| < \kappa$ in $V[G]$, we have that $X \in V[G_\beta]$ for some $\beta < \kappa$.
\end{fact}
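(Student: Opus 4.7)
The plan is to run the standard $\kappa$-$cc$ nice-name reflection argument in three steps. First, I would use that each $\P_\alpha$ is $\kappa$-$cc$ together with the inaccessibility of $\kappa$ to conclude that $\P_\kappa$ itself is $\kappa$-$cc$; this is the standard $\Delta$-system induction along the $rcs$ iteration, and the inductive conclusion at the limit $\kappa$ is exactly what one needs. In particular $\kappa$ remains regular in $V[G]$, so $|X| < \kappa$ in $V[G]$ implies $X \subseteq \lambda$ for some $\lambda < \kappa$.

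Second, I would verify that every condition $p \in \P_\kappa$ has support bounded below $\kappa$. In the $rcs$ setup, a thread can fail to have bounded support at a limit $\mu$ only if some initial segment of the thread forces $\cof(\check \mu) = \check \omega$. Applied at $\mu = \kappa$, this would require some $p \rest \alpha$ with $\alpha < \kappa$ to force $\cof(\check \kappa) = \check \omega$; but the $\kappa$-$cc$ of $\P_\alpha$ together with the regularity and strong-limit-ness of $\kappa$ preserve $\cof(\kappa)$, ruling this out. Hence $\mathrm{supp}(p)$ is bounded in $\kappa$ for every $p \in \P_\kappa$.

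Finally, I would take a nice $\P_\kappa$-name
$$\dot X = \bigcup_{\alpha < \lambda} \{\check \alpha\} \times A_\alpha,$$
where each $A_\alpha \subseteq \P_\kappa$ is an antichain of conditions forcing $\check \alpha \in \dot X$. By $\kappa$-$cc$ each $|A_\alpha| < \kappa$, and since $\kappa$ is a strong limit, the union $A = \bigcup_{\alpha < \lambda} A_\alpha$ also satisfies $|A| < \kappa$. By step two and the regularity of $\kappa$, there is a single $\beta < \kappa$ simultaneously bounding $\mathrm{supp}(p)$ for every $p \in A$, so $\dot X$ can be regarded as a $\P_\beta$-name and $X = \dot X^G = \dot X^{G_\beta} \in V[G_\beta]$, as required.

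I expect the main obstacle to be the boundedness of supports in step two, since that is where the $rcs$ machinery must be invoked carefully and where the hypothesis that $\kappa$ is inaccessible (rather than merely a regular cardinal) enters via the preservation of $\cof(\kappa)$. The chain-condition step and the nice-name reflection in step three are then routine, following the template from Jech's Proposition 16.30.
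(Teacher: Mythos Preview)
Your proposal is correct and follows essentially the same approach as the paper, which gives only a one-line proof sketch: ``This is because $\P_\kappa$ satisfies the $\kappa$-chain condition, by a $\Delta$-system argument,'' deferring to Jech's Proposition 16.30. Your write-up simply unpacks that reference into the standard $\kappa$-$cc$ plus bounded-support plus nice-name argument.
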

\begin{proof}[Proof sketch.]
This is because $\P_\kappa$ satisfies the $\kappa$-chain condition, by a $\Delta$-system argument.
\end{proof}

The following defintion is a kind of blueprint for the sort of iterations we will be using in the following chapter. We will want to force various axioms and principles about subcomplete forcing, which will require the use of a specific large cardinal. The following definition is specifically useful when forcing $\SCFA$ from a supercompact cardinal.

\begin{defn} Suppose $P$ is a proposition about subcomplete forcing that we would like to show is consistent relative to $\ZFC$. Let $\kappa$ be inaccessible. \textbf{\emph{The subcomplete least-counterexample to $\mathbf{P}$ lottery sum $
\mathbf{rcs}$ iteration of length $\pmb{\kappa}$}} is of the form:  $\P = \P_\kappa = \seq{ ( \P_\alpha, \dot{\Q}_\alpha ) }{ \alpha < \kappa }$, a revised countable support iteration of length $\kappa$, defined so that at stage $\alpha$, 
letting $\mathcal P$ be the collection of subcomplete forcing posets in $V^{\P_\alpha}$ of \textit{minimal rank} for which the proposition $P$ fails, we take 
	\[\text{$\P_{\alpha+1} = \P_\alpha * \dot{\Q}_\alpha * \Coll(\omega_1, |\P_\alpha|)$ where $\dot{\Q}_\alpha$ is a term for the lottery sum $\oplus \mathcal P$.} \qedhere \]  %This should be defined here in as much generality as feasible and useful for the following proofs. 
\end{defn}

%We will be using slight modifications of this definition, when specific large cardinals and propositions. In particular, we may want to look at $\mathfrak P$ as defined in a different location than $V^{\P_\alpha}$, which we will make clear when the iteration is defined.

Due to \textbf{Lemma \ref{lem:deltalottery}} we have that lottery sums of subcomplete forcings are subcomplete, so it follows that subcomplete least-counterexample lottery sum $rcs$-iterations of length $\kappa$ are subcomplete.

\begin{lem}\label{lem:lengthcollapse} Let $\kappa$ be inaccessible. Then nontrivial subcomplete least-counterexample lottery sum $rcs$-iterations of length $\kappa$ collapse $\kappa$ to be $\omega_2$. Moreover, these iterations will force $\CH$. \end{lem}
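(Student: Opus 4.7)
The plan is to first establish, by induction on $\alpha < \kappa$, that $|\P_\alpha| < \kappa$: at a successor, $|\P_{\alpha+1}| \leq |\P_\alpha| \cdot |\dot Q_\alpha|^{V^{\P_\alpha}} \cdot |\Coll(\omega_1, |\P_\alpha|)|$, each factor being below $\kappa$ by inaccessibility (the lottery at stage $\alpha$ is a set of subcomplete posets of minimal rank, hence of bounded size below $\kappa$); at limits, strong inaccessibility controls the size of the $rcs$ limit. A standard $\Delta$-system argument then gives $\P_\kappa$ the $\kappa$-$cc$, so $\kappa$ is preserved as a cardinal in $V[G_\kappa]$. Applying the subcomplete iteration theorem to the ``one-step'' factors $\dot Q_\alpha * \Coll(\omega_1, |\P_\alpha|)$---each subcomplete, being a two-step iteration of subcomplete forcings, with item 2 of the theorem ensured precisely by the $\Coll$ factor (which forces $|\P_\alpha|$ to have size $\omega_1$, so $\delta(\P_\alpha) \leq \omega_1$ in $V^{\P_{\alpha+1}}$)---yields that $\P_\kappa$ is itself subcomplete. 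Hence $\omega_1$ is preserved (by \textbf{Theorem \ref{thm:SubproperMM}}, via the fact that subcomplete forcings are subproper) and no new reals are added (by \textbf{Proposition \ref{prop:noreals}}).

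Next I would argue that every cardinal $\mu$ with $\omega_1 < \mu < \kappa$ is collapsed to $\omega_1$ in $V[G_\kappa]$. Nontriviality of the iteration produces some stage $\alpha_0$ at which $\dot Q_{\alpha_0}$ is forced nontrivial; from the next collapse onward $|\P_\beta| \geq \omega_1$, and the iterated $\Coll(\omega_1,|\P_\beta|)$-factors force the sizes $|\P_\beta|$ to be cofinal in $\kappa$ (while remaining below $\kappa$ by step one). Given $\mu < \kappa$, choose $\alpha$ with $|\P_\alpha| \geq \mu$; then $\Coll(\omega_1, |\P_\alpha|)$ adjoins a surjection $\omega_1 \twoheadrightarrow |\P_\alpha|$, collapsing $\mu$ to $\omega_1$. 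Combined with preservation of $\omega_1$ and $\kappa$, this yields $\kappa = \omega_2^{V[G_\kappa]}$.

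For the $\CH$ clause, the key input is that $\P_\kappa$ adds no reals, so $\mathbb{R}^{V[G_\kappa]} = \mathbb{R}^V$. By inaccessibility $(2^\omega)^V < \kappa$, so by the previous paragraph this cardinal is collapsed to $\omega_1$ at some intermediate stage of the iteration, and the collapse persists because no reals are added later in the iteration. Therefore $(2^\omega)^{V[G_\kappa]} = |\mathbb{R}^V|^{V[G_\kappa]} = \omega_1$, and $\CH$ holds in $V[G_\kappa]$.

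The principal obstacle is the size-and-chain-condition bookkeeping for the $rcs$ iteration, i.e., justifying $|\P_\alpha| < \kappa$ at $rcs$ limits and deducing the $\kappa$-$cc$ at stage $\kappa$; this is standard but technical, relying on inaccessibility and the countable-information nature of $rcs$ threads. A secondary subtlety is parsing ``nontrivial'' so as to confirm that $|\P_\alpha|$ really does become unboundedly large in $\kappa$; once this is clear, the collapse argument for both cardinal arithmetic and $\CH$ goes through straightforwardly.
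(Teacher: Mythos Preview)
Your proposal is correct and follows essentially the same approach as the paper, though you supply considerably more detail. The paper's proof is terse: it observes that the $\Coll(\omega_1,|\P_\alpha|)$ factors collapse every cardinal strictly between $\omega_1$ and $\kappa$, that subcompleteness preserves $\omega_1$, and that $\CH$ holds because at some stage the collapse forces $\CH$ and no reals are added thereafter. You make explicit the size bookkeeping ($|\P_\alpha|<\kappa$) and the $\kappa$-cc argument needed to preserve $\kappa$, which the paper leaves implicit (relying on the surrounding discussion, in particular \textit{Fact}~\ref{sciterationbound} and the later applications where the large cardinal hypothesis on $\kappa$ guarantees the iterands lie in $H_\kappa$).
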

\begin{proof} It is dense in the nontrivial lottery sum poset to collapse cardinals between $\omega_1$ and $\kappa$ to be $\omega_1$, since this is done for every subcomplete poset of throughout the iteration by definition. 

Meanwhile, $\omega_1$ cannot be collapsed by subcomplete forcing. Subcomplete least-counterexample lottery sum $rcs$-iterations will force $\CH$ to hold in the extension, since at some point $\CH$ will be forced by the collapse forcing, but since no reals are added by subcomplete forcing, this $\CH$ will remain true after that point.
\end{proof}

Many results are comparable to those for proper and semiproper forcing. The difference is that with subcomplete  iterations, we will obviously never choose Cohen forcing - indeed no Cohen reals will ever appear since the whole iteration is itself subcomplete.

\chapter{Axioms about Subcomplete Forcing}
\label{chap:AxiomsaboutSC}
%Questions: Connections and differences between $\SCFA$ and $\PFA$, countably closed forcing and subcomplete forcing, $\RA$ and $\MP$ and other forcing axioms? 
In this chapter we will discuss various axioms about subcomplete forcing. Forcing axioms and bounded forcing axioms are well known. As subcomplete forcing axioms have been already explored elsewhere in Jensen \cite{Jensen:2009wc}, ~\cite{Jensen:2012fr} and Fuchs~\cite{Fuchs:2016yo}, they will not be the focus of our discussion. We dive more deeply into comparing and contrasting the subcomplete maximality principle and the subcomplete resurrection axiom in the next few sections. In particular, we would like to compare the axioms for subcomplete forcing to those for countably closed forcing, in order to further elucidate the differences between properties of these forcing classes.

In the following we will let $\Gamma$ be a class of notions of forcing that is defined by some formula $\psi_\Gamma(x,p)$, where $p$ is a parameter. We will generally refer to the classes that contain trivial forcing and that have a two-step iteration theorem as \textit{reasonable}. Specifically we will be concerned mainly with the class of subcomplete forcing, but we may sometimes be interested in countably closed forcing. Of course one may also consider forcing classes such as $ccc$, properness, semiproperness, etc. 

We shall write $\SCFA$ to stand for the subcomplete forcing axiom $\textsf{FA}_{sc}$, or $\textsf{FA}_\Gamma$ where $\Gamma=\set{ \P }{ \P \text{ is subcomplete} }.$
Much like $\SPFA$ and $\PFA$, the subcomplete forcing axiom $\SCFA$ may be forced from the existence of a supercompact cardinal. Jensen~\cite{Jensen:2012fr} gives a Baumgartner-style argument for this.

\begin{fact}[Jensen] If there is a supercompact cardinal $\kappa$, then there is a forcing extension satisfying $\SCFA$. \end{fact}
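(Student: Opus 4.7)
The plan is to mimic Baumgartner's classical argument producing a model of $\PFA$ from a supercompact cardinal, substituting the subcomplete iteration theorem for the proper one. First, I would fix a Laver function $\ell : \kappa \to V_\kappa$ on the supercompact $\kappa$; recall that for every set $x$ and every $\lambda \geq |\TC{x}|$ there is a supercompactness embedding $j : V \prec M$ with critical point $\kappa$, $j(\kappa) > \lambda$, $M^\lambda \subseteq M$, and $j(\ell)(\kappa) = x$.

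Next I would define an $rcs$-iteration $\P = \seq{(\P_\alpha, \dot\Q_\alpha)}{\alpha < \kappa}$ along the lines of the subcomplete least-counterexample lottery sum iteration of length $\kappa$, but steered by $\ell$: at each stage $\alpha$, if $\ell(\alpha)$ is (forced to be) a $\P_\alpha$-name for a subcomplete forcing, set $\dot\Q_\alpha = \ell(\alpha) * \Coll(\omega_1, |\ell(\alpha)|)$; otherwise let $\dot\Q_\alpha$ be trivial. Each $\dot\Q_\alpha$ is then forced to be a two-step iteration of a subcomplete forcing with a countably closed (hence subcomplete) one, and the collapse factor ensures $\Vdash_{\P_{\alpha+1}} \delta(\check{\P}_\alpha) \leq \omega_1$. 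The Subcomplete Iteration Theorem therefore gives that $\P$ itself is subcomplete, so $\omega_1$ is preserved and no reals are added. As in Lemma \ref{lem:lengthcollapse} and Fact \ref{sciterationbound}, the $\kappa$-cc of $\P$ (which follows from inaccessibility of $\kappa$ and a $\Delta$-system argument on the $rcs$-supports, since each stage collapses its predecessor to $\omega_1$) together with the unboundedly many collapses yields $\kappa = \omega_2^{V[G]}$ whenever $G$ is $\P$-generic.

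The main work is verifying $\SCFA$ in $V[G]$. Suppose $\Q \in V[G]$ is subcomplete, say $\Q = \dot\Q^G$ for some $\P$-name $\dot\Q$, and that $\vec D = \seq{D_\xi}{\xi < \omega_1}$ enumerates $\omega_1$ many dense subsets of $\Q$ in $V[G]$, with $\vec D = \dot{\vec D}^G$. Choose $\lambda$ large enough that $\dot\Q, \dot{\vec D}$ and appropriate parameters lie in $H_\lambda^V$, and apply the Laver property to obtain $j : V \prec M$ with $\cp(j) = \kappa$, $j(\kappa) > \lambda$, $M^\lambda \subseteq M$, and $j(\ell)(\kappa) = \dot\Q$. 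Then $j(\P)$ is an iteration of the same form in $M$ whose first $\kappa$ stages are precisely $\P$, and whose stage $\kappa$ is $\P * \dot\Q * \Coll(\omega_1, |\dot\Q|)$, because the Laver function selects $\dot\Q$ at that coordinate. Force below $G$ to obtain $H$ that is $\Q$-generic over $V[G]$, and further $K$ for the collapse and for the tail $j(\P)/(G * H * \text{coll})$; then in $V[G][H][K]$ one can lift $j$ to $\hat j : V[G] \prec M[j(G)]$ by the usual master-condition/silver-style argument (using $M^\lambda \subseteq M$ to see that $\hat j \restriction \vec D$ is an element of $M[j(G)]$). The filter $H$ meets every $D_\xi$, so $\hat j(H)$ meets $\hat j(D_\xi) = j(\vec D)(\hat j(\xi))$, and by elementarity $M[j(G)]$ sees a filter on $\hat j(\Q)$ meeting all members of $\hat j \, `` \, \vec D$; pulling this back to $V[G]$ and applying a reflection/absoluteness argument between $V[G]$ and $M[j(G)]$ produces in $V[G]$ a filter on $\Q$ meeting all of $\vec D$, establishing $\SCFA$.

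The main obstacle is the lifting step: because subcomplete forcing need not have the countable covering property, the standard proper-forcing master condition argument must be replaced by a more careful construction (this is where one leans on the subcomplete iteration machinery and on $\P$ not adding reals), but the overall reflection/Laver skeleton of the proof is exactly as in the proper case.
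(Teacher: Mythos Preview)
The paper does not actually prove this fact; it cites Jensen and notes only that the argument is Baumgartner-style, which is precisely what you have sketched, so your approach matches what the paper references.

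Your outline is correct in substance, with two cosmetic fixes. First, at stage $\alpha$ you should collapse $|\P_\alpha|$ (not $|\ell(\alpha)|$) so that the hypothesis $\Vdash_{\P_{\alpha+1}} \delta(\check\P_\alpha) \leq \omega_1$ of the Subcomplete Iteration Theorem is actually met. Second, write $\hat j\,``\,H$ rather than $\hat j(H)$, since $H \notin V[G]$ and $\hat j$ is only defined on $V[G]$. Your closing worry is also misplaced: the lift $\hat j: V[G] \prec M[j(G)]$ exists simply because $j\,``\,G \subseteq G*H*G_{\mathrm{tail}}$ (every condition in $G$ has support bounded below $\kappa$), and showing $\hat j\,``\,H \in M[j(G)]$ uses only that $j \upharpoonright \dot\Q \in M$ from the $\lambda$-closure of $M$ --- neither a master condition nor any countable-covering argument is needed at this step.
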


As Jensen elaborates, $\SCFA$ has some similarities to $\SPFA$ and $\MM$ since it implies Friedman's Principle (discussed in Section \ref{sec:SCForcingAndTrees}).

Another well-known axiom about forcing is the bounded forcing axiom. Bounded forcing axioms were introduced by Martin Goldstern and Saharon Shelah~\cite{Goldstern:1995zr}. 

\begin{defn}
The \textbf{\emph{Bounded Forcing Axiom}} for a class of forcing notions $\Gamma$, denoted $\textsf{BFA}_{\Gamma}$, states that for any forcing $\P \in \Gamma$ and $\mathcal A$, a collection of maximal antichains of $\P$ such that $|\mathcal A|=\omega_1$ and furthermore for all $A \in \mathcal A$ we have $|A| \leq \aleph_1$, there exists a $\mathcal A$-generic filter on $\P$ -  meaning that there is a filter $G \subseteq \P$ meeting every antichain in $\mathcal A$.
\end{defn}

It is useful to give characterizations of the bounded forcing axiom in terms of generic absoluteness, which has been given by Bagaria \cite[Theorem 5]{Bagaria:2000fj}. The following characterization will be useful to keep in mind throughout the next few sections, since both the resurrection axiom and the maximality principle have connections to different characterizations of the bounded forcing axiom.

\begin{fact}[Bagaria] \label{Fact:BFA} For a class of forcing notions $\Gamma$, the following are equivalent:
\begin{enumerate}
	\item $\BFA_\Gamma$ 
	\item For every $\P \in \Gamma$, $G \subseteq \P$ generic,  $H_{\omega_2} \preccurlyeq_{\Sigma_1} H_{\omega_2}^{V[G]}.$ 
	\item \label{item:GammaGenericSigma1Absoluteness} $\Gamma$-generic $\Sigma_1(H_{\omega_2})$-absoluteness.\footnote{See \textbf{Definition \ref{def:GenericAbsoluteness}}.}
\end{enumerate}	
	\end{fact}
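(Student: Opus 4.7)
The plan is to handle the equivalence in two stages: first observe that $(2) \iff (3)$ by unpacking the definition of generic $\Sigma_1$-absoluteness, then prove $(1) \iff (2)$. For $(2) \iff (3)$, unfolding $\preccurlyeq_{\Sigma_1}$, condition $(2)$ asserts that for every $\Sigma_1$ formula $\varphi(a)$ with parameter $a \in H_{\omega_2}$ we have $H_{\omega_2} \models \varphi(a) \iff H_{\omega_2}^{V[G]} \models \varphi(a)$. This is precisely $\Gamma$-generic $\Sigma_1(H_{\omega_2})$-absoluteness, once we note that the additional predicates $\vec A \subseteq H_{\omega_2}$ permitted in $(3)$ can be absorbed as ordinary parameters: any such $\vec A$ of size at most $\aleph_1$ codes up as a set of $H_{\omega_2}$, and its interpretation in $V[G]$ is simply $\vec A$ itself once $H_{\omega_2} \subseteq H_{\omega_2}^{V[G]}$.

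For $(2) \Rightarrow (1)$, I would exploit the fact that existence of a sufficiently generic filter is itself a $\Sigma_1$ assertion in its natural parameters. Given $\P \in \Gamma$ and a family $\mathcal A$ of $\aleph_1$-many maximal antichains of $\P$, each of size at most $\aleph_1$, both $\P$ and $\mathcal A$ live in $H_{\omega_2}$. The statement ``there exists a filter in $\P$ meeting every antichain in $\mathcal A$'' is $\Sigma_1$ in these parameters, and it holds in $H_{\omega_2}^{V[G]}$ as witnessed by $G$ itself. By $\Sigma_1$-elementarity this reflects to $H_{\omega_2}$, producing the required $\mathcal A$-generic filter in the ground model.

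For $(1) \Rightarrow (2)$, I would split into upward and downward elementarity. Upward $\Sigma_1$-elementarity is essentially automatic once one knows that $\omega_2$ is preserved by forcings in $\Gamma$, which can itself be extracted from $\BFA_\Gamma$ by reflecting the $\Sigma_1$ assertion that $\omega_2$ is collapsed. The real work lies in the downward direction: suppose $\varphi(a) \equiv \exists x\,\varphi_0(x,a)$ is $\Sigma_1$ with $a \in H_{\omega_2}$ and $\varphi_0 \in \Sigma_0$, and that $\varphi(a)$ holds in $H_{\omega_2}^{V[G]}$ via some witness $\dot x^G$. Since the witness has hereditary cardinality at most $\aleph_1$ in $V[G]$, the name $\dot x$ can be presented as a nice $\P$-name coded by $\aleph_1$-many maximal antichains of $\P$ of size at most $\aleph_1$. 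Enlarging this family to include antichains deciding $\varphi_0(\dot x,\check a)$ and then applying $\BFA_\Gamma$ yields a generic filter $H \in V$ meeting each of these antichains; interpreting $\dot x$ along $H$ produces an object $\dot x^H \in V$ which satisfies $\varphi_0(\dot x^H,a)$ by $\Sigma_0$-absoluteness, giving a $V$-witness for $\varphi(a)$.

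The principal obstacle is this last downward step. The delicate points are arranging the nice-name presentation of $\dot x$ using only $\aleph_1$ antichains of size $\leq \aleph_1$, and engineering the family of antichains to which $\BFA_\Gamma$ is applied so that the resulting filter $H$ decides enough of the $\in$-structure of $\dot x$ and enough atomic instances of $\varphi_0$ to guarantee that $\dot x^H$ exists in $V$ as a genuine set and is a bona fide $\Sigma_0$-witness to $\varphi_0(\cdot,a)$, rather than a formal name-interpretation with no semantic content in $V$.
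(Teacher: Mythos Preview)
The paper does not prove this statement: it is recorded as a \emph{Fact} attributed to Bagaria, with a citation to \cite[Theorem 5]{Bagaria:2000fj} and no argument given. So there is nothing to compare your approach against in the paper itself.

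That said, your sketch has two genuine gaps. In $(2)\Rightarrow(1)$ you assert that ``both $\P$ and $\mathcal A$ live in $H_{\omega_2}$''. This is false in general: $\P\in\Gamma$ may have arbitrarily large cardinality. What \emph{is} in $H_{\omega_2}$ is the structure $\bigl(\bigcup\mathcal A,\leq\!\!\upharpoonright\!\bigcup\mathcal A,\mathcal A\bigr)$, and the $\Sigma_1$ statement you reflect should be about a filter on \emph{that} set meeting each $A\in\mathcal A$; one then checks that $G\cap\bigcup\mathcal A$ witnesses this in $V[G]$.

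In $(1)\Rightarrow(2)$ the claim that ``the name $\dot x$ can be presented as a nice $\P$-name coded by $\aleph_1$-many maximal antichains of $\P$ \emph{of size at most $\aleph_1$}'' is unsupported: the maximal antichains appearing in a nice name for a subset of $\omega_1$ are maximal in $\P$ and may have size $|\P|$. The standard repair is to pass to the Boolean completion, take the Boolean values $b_\alpha=\llbracket\check\alpha\in\dot x\rrbracket$, and use the two-element maximal antichains $\{b_\alpha,\neg b_\alpha\}$. One also needs to code the witness as a binary relation on $\omega_1$ and include antichains guaranteeing that the $H$-interpreted relation is well-founded and extensional and that $a$ sits inside its transitive collapse correctly; your final paragraph flags this but does not indicate how to arrange it. (Incidentally, upward $\Sigma_1$-elementarity between the transitive structures $H_{\omega_2}^V\subseteq H_{\omega_2}^{V[G]}$ is automatic and does not require $\omega_2$-preservation, so that detour is unnecessary.)
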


We shall write $\BSCFA$ to stand for $\textsf{BFA}_\Gamma$ where $\Gamma=\set{ \P }{ \P \text{ is subcomplete} }.$

The consistency strength of the bounded subcomplete forcing axiom is exactly that of a reflecting cardinal, shown by Fuchs \cite{Fuchs:2016yo}.

\begin{defn} A regular cardinal $\kappa$ is \textbf{\emph{reflecting}} so long as for any formula $\varphi(x)$ and any $a \in H_\kappa$, if there is a cardinal $\theta > \kappa$ with $H_\theta \models \varphi(a)$, then there is a cardinal $\gamma < \kappa$ with $a \in H_\gamma$ and $H_\gamma \models \varphi(a)$. \end{defn}

%\begin{lem} If $\P \in H_\kappa$ where $\kappa$ is reflecting, then $\forces_{\P} ``\kappa \text{ is reflecting}"$. \end{lem}

\begin{fact}[Fuchs] The axiom $\BSCFA$ is equiconsistent (over $\ZFC$) with the existence of a reflecting cardinal. \end{fact}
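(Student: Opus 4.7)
The plan is to exploit \emph{Fact \ref{Fact:BFA}}, which recasts $\BSCFA$ as subcomplete-generic $\Sigma_1(H_{\omega_2})$-absoluteness. Each direction is then a variant of the Goldstern--Shelah argument for $\BPFA$, adapted to the subcomplete setting; the key enabling observation is that the countably closed L\'evy collapses used in the Goldstern--Shelah proof are automatically subcomplete by the results of \ref{subsec:SCForcingandCCForcing}.

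For the upper bound, I would begin with a reflecting cardinal $\kappa$ and perform a subcomplete least-counterexample lottery sum $rcs$-iteration $\P = \P_\kappa$ of length $\kappa$ as defined at the end of Section \ref{sec:IteratingSCForcing}, where at stage $\alpha$ the bookkeeping selects subcomplete $\dot{\Q} \in V^{\P_\alpha}$ of minimal rank whose generics refute some $\Sigma_1(H_{\omega_2})$ instance of absoluteness with parameter in $V[G_\alpha]$. The subcomplete iteration theorem gives that $\P$ is subcomplete, and \textbf{Lemma \ref{lem:lengthcollapse}} gives that $\kappa = \omega_2^{V[G]}$. To verify $\BSCFA$ in $V[G]$ via the $\Sigma_1$-absoluteness reformulation, suppose $\Q$ is subcomplete in $V[G]$, $\varphi(x)$ is $\Sigma_1$, $a \in H_{\omega_2}^{V[G]}$, and $\varphi(a)$ holds in some $\Q$-generic extension $V[G][H]$. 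By \emph{Fact \ref{sciterationbound}} together with the $\kappa$-$cc$ of $\P$, both $a$ and a name for $\Q$ lie in some $V[G_\beta]$ with $\beta < \kappa$. The reflecting property of $\kappa$ in $V$, transferred up through $V[G_\beta]$, then yields a $\gamma$ with $\beta \leq \gamma < \kappa$ and a subcomplete forcing of rank $<\gamma$ in $V[G_\gamma]$ making $\varphi(a)$ true after passage to its generic extension; the bookkeeping at stage $\gamma$ places this forcing in the lottery, so that $V[G_{\gamma+1}] \models \varphi(a)$, and $\Sigma_1$-upward absoluteness yields $\varphi(a)$ in $V[G]$, as required.

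For the lower bound, I would assume $\BSCFA$ in $V$ and aim to show that $\omega_2^V$ is a reflecting cardinal in $L$. Given an $L$-cardinal $\theta \geq \omega_2^V$, a parameter $a \in H_{\omega_2^V}^L$, and a formula $\varphi(x)$ with $H_\theta^L \models \varphi(a)$, force with $\P = \Coll(\omega_1, \theta)^V$. This forcing is countably closed in $V$, hence subcomplete in $V$ by the equivalence established in \ref{subsec:SCForcingandCCForcing}. In $V[G]$ the set $H_\theta^L \in V$ has size $\omega_1$, so it is coded by a subset of $\omega_1^V$ in $V[G]$; consequently the $\Sigma_1$ statement ``there exists a transitive set $M$ which, as an $\in$-structure, is an initial segment of $L$ containing $a$ and satisfying $\varphi(a)$'' is true in $H_{\omega_2}^{V[G]}$ with parameter $a$. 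By the $\Sigma_1$-absoluteness characterization of $\BSCFA$ this statement is already true in $H_{\omega_2}^V$, providing a $\gamma < \omega_2^V$ with $L_\gamma \models \varphi(a)$, completing the reflection.

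The main obstacle I anticipate is in the upper bound: one must argue that every subcomplete $\Q$ that arises in $V[G]$ is, after reflection through the $\kappa$-$cc$ of the iteration, ``captured'' by some subcomplete $\dot{\Q}' \in V[G_\gamma]$ of minimal rank that the bookkeeping actually places into the lottery at stage $\gamma$, and that this choice is compatible with both the $rcs$ support convention and the $\delta(\P_\alpha) \leq \omega_1$ hypothesis of the subcomplete iteration theorem. This is precisely where the preliminary collapse $\Coll(\omega_1, |\P_\alpha|)$ built into the definition of the iteration does its work, and where the absoluteness of subcompleteness given by \textbf{Lemma \ref{lem:scabsolute}} is essential to transfer subcompleteness between $V[G_\beta]$ and larger set-sized fragments of $V[G]$.
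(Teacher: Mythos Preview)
The paper does not actually prove this fact; it records it as a result of Fuchs \cite{Fuchs:2016yo} and singles out one ingredient, namely \emph{Fact \ref{fact:scSigma2}} (the $\Sigma_2$-expressibility of ``$\P$ is subcomplete''), as the key point. Your outline is essentially the Goldstern--Shelah argument that Fuchs adapts, so you are on the right track and in agreement with what the paper gestures at.

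Two comments on points where your sketch could be tightened. In the upper bound, when you reflect the statement ``there is a subcomplete $R$ forcing $\varphi(a)$'' down from a large $H_\theta^{V[G_\beta]}$ to some $H_\gamma^{V[G_\beta]}$ with $\gamma<\kappa$, you must know that what $H_\gamma$ calls subcomplete is genuinely subcomplete in $V[G_\beta]$; this is exactly what \textbf{Lemma \ref{lem:scabsolute}} (equivalently, the $\Sigma_2$-definability in \emph{Fact \ref{fact:scSigma2}}) provides, so it would be worth making that dependence explicit rather than mentioning it only as an afterthought. Also, the small $R$ you obtain lives in $V[G_\beta]$ but need not remain subcomplete at later stages $V[G_\gamma]$ (subcompleteness is not preserved under subcomplete forcing, as the paper shows with Namba forcing); the way around this is to observe that at \emph{every} stage past $\beta$ at which $\varphi(a)$ still fails, the tail iteration together with $\dot\Q$ witnesses a subcomplete forcing of $\varphi(a)$, so reflection at that stage again yields a small candidate and density then guarantees one is chosen.

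In the lower bound, your $\Sigma_1$ sentence should demand that the transitive witness $M$ be of the form $H_\gamma^L$ for an $L$-cardinal $\gamma$, not merely some $L_\gamma$, since the definition of reflecting requires the reflection target to be an $H_\gamma$ with $\gamma$ a cardinal; the intended witness $H_\theta^L$ has this form, so the modified sentence still reflects.
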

A key ingredient in showing that the bounded subcomplete forcing axiom is equiconsistent with a reflecting cardinal is the following fact about the definition of subcomplete forcing,
\begin{fact}[Fuchs] \label{fact:scSigma2} The statement ``$\P$ is subcomplete" is expressible by a $\Sigma_2$-formula. \end{fact}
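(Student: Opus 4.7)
The plan is to combine the absoluteness lemma already established (\textbf{Lemma \ref{lem:scabsolute}}) with the general fact that statements of the form ``there exists a cardinal $\eta$ such that $H_\eta$ satisfies $\varphi$'' are $\Sigma_2$. First I would observe that \textbf{Lemma \ref{lem:scabsolute}} gives the reformulation
\[
\P \text{ is subcomplete} \iff \exists \theta\, \exists \eta>|H_\theta|\ \bigl(H_\eta \models \text{``}\P \text{ is subcomplete as verified by } \theta\text{''}\bigr),
\]
since subcompleteness as verified by $\theta$ in $V$ is equivalent to subcompleteness as verified by $\theta$ in any sufficiently large $H_\eta$ containing $\P$.

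Next I would argue that the inner statement is first-order over $H_\eta$ with parameters $\P,\theta$. The definition of subcompleteness quantifies universally over: the structure $N=L_\tau[A]$ with $\tau>\theta$ and $H_\theta\subseteq N$, the countable full $\N$, the embedding $\sigma:\N\prec N$ with $\sigma(\overline\theta,\overline\P,\overline s)=\theta,\P,s$, and the $\N$-generic filter $\G\subseteq\overline\P$. All of these objects belong to $H_\eta$ once $\eta$ is large enough, so these are bounded quantifiers in $H_\eta$. The existential part (``there is $p\in\P$ such that whenever $G\ni p$ is $\P$-generic, there is $\sigma'\in V[G]$ with properties \textbf{1}--\textbf{4}'') is handled by replacing ``$\sigma'\in V[G]$'' with a $\P$-name $\dot\sigma'\in H_\eta$ and translating properties \textbf{1}--\textbf{4} into forcing statements ``$p\Vdash_\P\varphi(\dot\sigma',\ldots)$''. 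Since the forcing relation for a forcing $\P\in H_\eta$ is definable over $H_\eta$ (for sufficiently large $\eta$), this gives a first-order translation.

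Finally, I would invoke the standard observation that a formula of the form $\exists\eta\,\bigl(H_\eta\models\varphi(\P)\bigr)$ is $\Sigma_2$: one can write it as
\[
\exists X\,\exists\eta\,\bigl(X \text{ is transitive} \ \wedge\ X=H_\eta\ \wedge\ X\models\varphi(\P)\bigr),
\]
where the condition ``$X=H_\eta$'' is the conjunction of the $\Pi_0$ statement ``every $x\in X$ has hereditary cardinality ${<}\eta$'' with the $\Pi_1$ statement ``every set of hereditary cardinality ${<}\eta$ lies in $X$'', and $X\models\varphi(\P)$ is absolute once $X$ is given. The outer existential quantifiers over $\eta$ and $\theta$ then yield an overall $\Sigma_2$-formula.

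The main obstacle I expect is simply the bookkeeping needed to verify that every object appearing in the definition of subcompleteness truly lives in a sufficiently large $H_\eta$ (so that the relevant quantifiers become bounded) and that the clause ``there is $\sigma'\in V[G]$ with such-and-such properties'' can be faithfully replaced by statements about $\P$-names and the forcing relation inside $H_\eta$; this is exactly what \textbf{Lemma \ref{lem:scabsolute}} has already prepared for us, so the computation ought to go through without serious complication.
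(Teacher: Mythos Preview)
The paper does not actually prove this fact: it is stated without proof and attributed to Fuchs. Your proposal is correct and is precisely the standard argument one would give, leaning on \textbf{Lemma \ref{lem:scabsolute}} to localize the definition to some $H_\eta$ and then invoking the well-known characterization of $\Sigma_2$ as ``witnessed in some $H_\eta$.'' There is nothing to compare against here; your outline is the expected route and would serve perfectly well as a proof in place of the bare citation.
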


\section{The Maximality Principle}
\label{sec:MP}
The Maximality Principle (\textsf{MP}) was originally defined by Stavi and V\"a\"an\"anen \cite{Stavi:2001qv} for the class of $ccc$ forcings. Maximality principles were defined in full generality by Hamkins~\cite{Hamkins:2003jk}, and expanded upon for different classes of forcing notions by Fuchs~\cite{Fuchs:2008rt}, \cite{Fuchs:2008ve},  and Leibman~\cite{Leibman:MP}.

One motivation behind maximality principles is their connection to modal logic. In the realm of set theory, as introduced by Hamkins~\cite{Hamkins:2003jk}, one interprets ``possible" as forceable, or true in some forcing extension, and ``necessary" as true in every forcing extension. Maximality principles have a strong connection to the Euclidean frame condition, which posits that if a statement is possible then it is necessarily possible. In modal terms, using the necessary ($\Box$) and possible ($\lozenge$) modal operators, this would state that for a sentence $\varphi$, 
	$$\lozenge \varphi \implies \Box \lozenge \varphi.$$ 
Assuming the modal theory $\textsf{S4}$, which requires some basic conditions on the language, like the duality between $\Box$ and $\lozenge$, by taking the contrapositive, the Euclidean frame condition is equivalent to adding the frame condition stating that if a sentence is possibly necessary, then it is necessary, i.e.: 
	$$\lozenge \Box \varphi \implies \Box \varphi.$$ 
Modulo $\textsf{S4}$, these two frame conditions are equivalent, so I will refer to them interchangeably as the Euclidean frame condition. Adding the Euclidean frame condition to $\textsf{S4}$ yields the modal theory $\textsf{S5}$. Since Hamkins and Leibman were exploring the realm of models of set theory as Kripke models, with the accessibility relation given by the forcing extension relation, it was naturally asked whether $\textsf{S5}$ would be satisfied. It is not hard to see that $\textsf{S4}$ is satisfied in this realm. Moreover, if trivial forcing is available, if a sentence $\varphi$ is necessary, it must be true in the trivial forcing extension, and is therefore true. The Euclidean frame condition was simplified to the following, coined as \textsf{The Maximality Principle} (\textsf{MP}): 
	$$\lozenge \Box \varphi \implies \varphi.$$ 
We have just shown that if the Euclidean frame condition holds in the realm of models of set theory with the forcing extension relation, so does $\textsf{MP}$. In fact, the two axioms are equivalent while considering any class of forcings including trivial forcings. To see why the other direction holds, suppose $\lozenge \Box \varphi$ holds for a sentence $\varphi$, i.e. $\varphi$ may be forced in such a way so as to hold in every further forcing extension. This is equivalent to saying that $\Box \varphi$ can be forced to hold in every forcing extension. Since under $\textsf{S4}$ it is clear that $\lozenge \Box \varphi \implies \lozenge \Box (\Box \varphi)$, and since $\textsf{MP}$ holds, $\lozenge \Box (\Box \varphi) \implies \Box \varphi$, putting this together we arrive at one of our equivalent formulations of the Euclidean frame conditions: $\lozenge \Box \varphi \implies \Box \varphi$ as desired.

We give the precise definition below, where the maximality principle is restricted to a particular class of forcings, thus enabling us to define the axiom restricted to only subcomplete forcings. 

\begin{defn}
Let $\Gamma$ be a class of notions of forcing that is defined by some formula $\psi_\Gamma(x,p)$, where $p$ is a parameter. In cascaded modal operator useages, this $\psi_\Gamma(x,p)$ is to be evaluated in the forcing extensions. 

We say that a sentence $\varphi(a)$ is \textbf{$\Gamma$-forceable} if there is $\P \in \Gamma$ such that for every $q \in \P$, we have that $q \forces \varphi(a)$. In other words, a statement is $\Gamma$-forceable if it is forced to be true in an extension by a forcing from $\Gamma$. 

A sentence $\varphi(a)$ is \textbf{$\Gamma$-necessary} if for all $\P \in \Gamma$ and all $q \in \P$, we have that $q \forces \varphi(a)$. So a sentence is $\Gamma$-necessary if it holds in any forcing extension by a forcing notion from $\Gamma$. If $\Gamma$ contains the trivial forcing then being $\Gamma$-necessary implies that $\varphi(a)$ is true. 

If $S$ is a term in the language of set theory, then $\textsf{MP}_{\Gamma}(S)$ is the scheme of formulae stating that every sentence with parameters from $S$ that is $\Gamma$-forceably $\Gamma$-necessary, i.e., the sentence ``$\varphi(a)$ is $\Gamma$-necessary" is $\Gamma$-forceable, is true.  

Let $\Box \textsf{MP}_{\Gamma}(S)$ be the \emph{necessary form of the principle itself}, stating that $\textsf{MP}_{\Gamma}(S)$ holds in every forcing extension obtained by a forcing notion in $\Gamma$.
\end{defn}
Here we focus on the maximality principle for subcomplete forcing notions. We are especially interested in comparing the subcomplete maximality principle to that for countably closed forcings. The maximality principle for closed forcings, (including countably closed) was explored by Fuchs \cite{Fuchs:2008rt}, and we use many of the same techniques and results used there.

We write $\MPsc$ to stand for $\textsf{MP}_\Gamma$ where $\Gamma=\set{ \P }{ \P \text{ is subcomplete} }$, and $\MPc$ in the case where $\Gamma=\set{ \P }{ \P \text{ is countably closed} }$.
From our previous comments, since both of these classes of forcings $\Gamma$ contain trivial forcing, $\textsf{MP}_\Gamma$ is equivalent to one of our characterizations of the Euclidean frame condition in this context, which would state that every sentence that is $\Gamma$-forceably $\Gamma$-necessary is $\Gamma$-necessary. 
 
Before moving on, it should be noted that since the maximality principle is a scheme, it does not technically make sense to write $\textsf{MP}_\Gamma \implies P$ for some proposition $P$ in the language of set theory. When something like this is written, it should be interpreted as saying instead $\ZFC+\textsf{MP}_\Gamma \proves P$.

First we analyze the parameter set $S$ that may be allowed in the definition. As in the case where we consider $\MPc$ as in \cite{Fuchs:2008rt}, the natural parameter set to use in our case, when considering subcomplete forcing, is $H_{\omega_2}$.
The next lemma follows Fuchs \cite[Theorem 2.4]{Fuchs:2008rt}.

\begin{lem} \label{lem:MPparams}
The subcomplete maximality principle cannot be consistently strengthened by allowing parameters that aren't in $H_{\omega_2}$. In particular, $$\MPsc(S) \implies S \subseteq H_{\omega_2}.$$\end{lem}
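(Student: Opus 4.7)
The plan is to derive a contradiction from assuming that some $a \in S$ lies outside $H_{\omega_2}$, by exhibiting a single sentence $\varphi(a)$ with parameter $a$ which fails in $V$ yet is subcompletely forceably subcompletely necessary; $\MPsc(S)$ will then force $\varphi(a)$ to hold in $V$, which is a contradiction.

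First I would fix $a \in S$ with $a \notin H_{\omega_2}$, so that $|\mathrm{TC}(a)| \geq \omega_2$ in $V$, and take $\varphi(x)$ to be the first-order assertion ``$|\mathrm{TC}(x)| \leq \omega_1$''. By choice of $a$, the sentence $\varphi(a)$ is false in $V$, so any witness to its forceable-necessity will suffice to contradict $\MPsc(S)$.

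Next I would verify $sc$-forceability via the natural candidate $\P = \Coll(\omega_1, \mathrm{TC}(a))$, which is countably closed and therefore subcomplete by the results of Subsection \ref{subsec:SCForcingandCCForcing}. In any $\P$-generic extension $V[G]$ there is a surjection $f \colon \omega_1 \twoheadrightarrow \mathrm{TC}(a)$, so $\varphi(a)$ holds in $V[G]$. To then show $\varphi(a)$ is $sc$-necessary in $V[G]$, the point is that the witness $f$ persists into any further subcomplete extension $V[G][H]$: by Theorem \ref{thm:SubproperMM} subcomplete forcing preserves stationary subsets of $\omega_1$ and in particular $\omega_1$ itself, and $\mathrm{TC}(a)$ is absolute, so $f$ remains a surjection from $\omega_1$ onto $\mathrm{TC}(a)$ in $V[G][H]$. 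Hence $\varphi(a)$ is subcompletely forceably subcompletely necessary in $V$ with parameter $a$.

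Applying $\MPsc(S)$ to the parameter $a \in S$ then yields $\varphi(a)$ in $V$, contradicting $|\mathrm{TC}(a)| \geq \omega_2$ and completing the argument. The proof runs parallel to Fuchs' corresponding result for $\MPc$ in \cite{Fuchs:2008rt}; the only points to verify specifically for the subcomplete setting are that the collapse of $\mathrm{TC}(a)$ lies in the subcomplete class (immediate from countable closure) and that the witnessing surjection is preserved by every subcomplete forcing (immediate from preservation of $\omega_1$), so I do not expect any genuine obstacle.
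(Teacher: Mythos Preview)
Your proof is correct and follows essentially the same approach as the paper: both use the countably closed (hence subcomplete) forcing $\Coll(\omega_1,\mathrm{TC}(\{a\}))$ to make $a\in H_{\omega_2}$, and then observe that this persists under further subcomplete forcing since $\omega_1$ is preserved. The only cosmetic difference is that the paper argues directly that ``$a\in H_{\omega_2}$'' is $sc$-forceably $sc$-necessary and hence true, whereas you frame it as a contradiction; your version is slightly more explicit about why the collapse survives further subcomplete extensions.
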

\begin{proof}
The point is that for any set $a$, it is $sc$-forceably $sc$-necessary for $a \in H_{\omega_2}$. Indeed, after forcing with $\Coll(\omega_1, \TC{a})$, which is subcomplete since it is countably closed, we have that $a \in H_{\omega_2}$ in the forcing extension. This must remain true in every further forcing extension. Thus, if $\MPsc(\{a\})$ holds, it follows that $a \in H_{\omega_2}$.
\end{proof}

Of course the same proof works for countably closed forcing, since the only forcing considered was the collapse forcing to $\omega_1$, which is also countably closed.

\subsection{Consistency of the Maximality Principle}
\label{subsec:ConMP}
In order to show the consistency of the maximality principle for subcomplete forcing, we use a proof technique that adapts arguments of Hamkins \cite[Lemma 1.22]{Hamkins:2003jk}. Hamkins has described the proof as ``running through the house and turning on all the lights", in the sense that the posets that are forced are those that push ``buttons", sentences that can be ``switched on" and stay on, in all forcing extensions. We will adopt the ``button" terminology in the following, where a \textit{\textbf{button}} in our case is a sentence $\varphi(a)$ that is $sc$-forceably $sc$-necessary.\footnote{Here one might imagine that the buttons are as you find in an elevator or crosswalk, where once the button is pushed it may not be subsequently undone by pushing it again, like an ``on/off" button.}

We show that the subcomplete maximality principle holds in a forcing extension, assuming there is a regular cardinal $\delta$ satisfying $V_\delta \preccurlyeq V$. As Hamkins \cite{Hamkins:2003jk} discusses in detail, the existence of a regular cardinal $\delta$ such that $V_\delta \preccurlyeq V$ is a scheme of formulas sometimes referred to as the ``L\'evy scheme," and is equiconsistent with the statement that ``$\Ord$ is Mahlo": the scheme insisting that every definable closed unbounded class of ordinals contain a regular cardinal. We will also refer to the L\'evy scheme as positing the existence of what we refer to as a \textit{\textbf{fully reflecting}} cardinal.

\begin{thm}\label{thm:forceMP} Let $\delta$ be fully reflecting. Then there is a subcomplete iteration $\P$ of length $\delta$ such that whenever $G \subseteq \P$ is generic, we have that $\delta=\omega_2$ in the extension and: $$V[G] \models \MPsc(H_{\omega_2}).$$ 
\end{thm}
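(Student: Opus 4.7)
The plan is to adapt Hamkins' ``turn on all the buttons'' argument (as used by Fuchs for $\MPc$) to the subcomplete setting. I will define an $rcs$ iteration $\P = \seq{ (\P_\alpha, \dot{\Q}_\alpha) }{ \alpha < \delta }$ of length $\delta$, where the factors $\dot{\Q}_\alpha$ are chosen via a bookkeeping function $f : \delta \to V_\delta$ enumerating (names for) potential $sc$-button witnesses together with their parameters. At stage $\alpha$, I set $\dot{\Q}_\alpha$ to be a name for the subcomplete lottery sum of those $sc$-forcings in $V^{\P_\alpha}$ that witness $f(\alpha)$ is an unpushed $sc$-button with parameter in $H_{\omega_2}^{V^{\P_\alpha}}$, and trivial otherwise; I then iterate with $\Coll(\omega_1, |\P_\alpha|)$ as in the subcomplete least-counterexample template to enforce clause (2) of the Subcomplete Iteration Theorem. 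Since lottery sums of subcomplete forcings are subcomplete (Lemma~\ref{lem:sclottery}) and $\Coll(\omega_1,\lambda)$ is countably closed and hence subcomplete, each $\dot{\Q}_\alpha$ is forced to be subcomplete, so by Jensen's iteration theorem $\P$ itself is subcomplete.

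By Lemma~\ref{lem:lengthcollapse} (and its proof), the collapses ensure $\delta = \omega_2$ in $V[G]$ and that $\CH$ holds there. Because $V_\delta \prec V$, the cardinal $\delta$ is in particular inaccessible, so $\P$ has the $\delta$-$cc$ (the iteration has size $\delta$ and each factor is absorbed into a forcing of size $<\delta$), and Fact~\ref{sciterationbound} applies: every set of size $<\delta$ in $V[G]$ lies in some $V[G_\alpha]$ with $\alpha < \delta$. In particular every $a \in H_{\omega_2}^{V[G]}$ appears at some intermediate stage.

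For the core argument, suppose $\varphi(a)$ is $sc$-forceably $sc$-necessary in $V[G]$ with $a \in H_{\omega_2}^{V[G]}$. Fix $\alpha_0 < \delta$ with $a \in V[G_{\alpha_0}]$. Since the tail forcing $\P_{[\alpha_0,\delta)}$ of a subcomplete $rcs$ iteration is itself subcomplete, and some subcomplete $\Q \in V[G]$ forces $\varphi(a)$ to be $sc$-necessary, the two-step composition shows that already in $V[G_{\alpha_0}]$ the statement ``there is a subcomplete forcing making $\varphi(a)$ $sc$-necessary'' holds. Since $V_\delta \prec V$ is preserved by set-forcing in $V_\delta$, we have $V_\delta^{V[G_{\alpha_0}]} \prec V[G_{\alpha_0}]$; combined with Fuchs' observation that subcompleteness is $\Sigma_2$ (Fact~\ref{fact:scSigma2}), this reflects the existence of such a $\Q$ down to $V_\delta^{V[G_{\alpha_0}]}$. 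Arranging the bookkeeping $f$ to enumerate every $V_\delta$-element (with cofinally many repetitions), there is some $\alpha \geq \alpha_0$ with $f(\alpha)$ naming such a $\Q$; at this stage the lottery sum contains $\Q$, and generically the filter selects a condition forcing $\varphi(a)$ to be $sc$-necessary. Since every further stage of the iteration is subcomplete, $\varphi(a)$ persists through $V[G_{\alpha+1}], \ldots, V[G]$, yielding $V[G] \models \varphi(a)$ as required.

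The main obstacle is technical rather than conceptual: verifying that the bookkeeping legitimately enumerates all potential buttons with parameters in $H_{\omega_2}$ of the \emph{final} model, despite these parameters only being named at intermediate stages. This is handled by the $\delta$-$cc$ reflection (Fact~\ref{sciterationbound}), which lets us replace an arbitrary $a \in H_{\omega_2}^{V[G]}$ by a $\P_{\alpha_0}$-name for it in $V_\delta$, so that $f$ need only enumerate $V_\delta$. A secondary subtlety is ensuring the hypotheses of the Subcomplete Iteration Theorem, specifically that $\Vdash_{\P_{\alpha+1}} \delta(\P_\alpha) \leq \omega_1$; this is exactly the reason for interleaving the $\Coll(\omega_1,|\P_\alpha|)$ factors, and is where the definition of subcompleteness above $\mu$ would play no role since we collapse down to $\omega_1$ throughout.
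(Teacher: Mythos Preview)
Your approach is essentially the same as the paper's: both implement Hamkins' ``push all the buttons'' argument via a subcomplete $rcs$ iteration of length $\delta$ with interleaved collapses, using $V_\delta \prec V$ to reflect the availability of buttons at intermediate stages and the subcompleteness of the tail to propagate the pushed button to $V[G]$. The only structural difference is that the paper takes, at each stage, the lottery sum over \emph{all} minimal-rank button-pushing forcings (for every available button simultaneously) and then runs a density argument, whereas you use a bookkeeping function $f$ to address one button per stage; both are standard and either works.

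Two points in your write-up need tightening. First, your lottery at stage $\alpha$ is over ``those $sc$-forcings in $V^{\P_\alpha}$ that witness $f(\alpha)$ is an unpushed button,'' which is a proper class; you must restrict to $V_\delta^{\P_\alpha}$ (or to minimal rank, as the paper does) so that each $\P_\alpha$ is a set forcing in $V_\delta$ and the iteration has the $\delta$-cc---without this, neither Fact~\ref{sciterationbound} nor Lemma~\ref{lem:lengthcollapse} applies. Second, there is an inconsistency between your first paragraph, where $f(\alpha)$ is a formula/parameter-name pair, and your third, where you write ``$f(\alpha)$ naming such a $\Q$''; you should settle on the former and then observe that at the relevant stage the lottery is nonempty (by the reflection you already performed) and \emph{every} option pushes the button, so the generic necessarily does---the phrase ``generically the filter selects'' is misleading since $G$ is already fixed. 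Finally, the appeal to the $\Sigma_2$-definability of subcompleteness is unnecessary here: full elementarity $V_\delta[G_{\alpha_0}] \prec V[G_{\alpha_0}]$ already suffices for the reflection step.
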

\begin{proof} 
First note that $\delta$ must be inaccessible since it is regular and $V_\delta \preccurlyeq V$. %Let $\kappa = 2^{\omega_1}$. We would like to refer to a fixed well-ordering of $V_\delta$ throughout, and especially a well-ordering on potential names that will come up in a length $\delta$ once we start with the iteration. Since $|H_{\omega_2}| = \kappa < \delta$ and $V_\delta \preccurlyeq V$, we may fix an enumeration in $V_\delta$ of all sentences in the language of set theory which use parameters from $H_{\omega_2}$, as well as sentences using names for parameters that can appear in a forcing iteration of length $\delta$. Call this list $\langle \varphi_\xi \;|\; \xi<\kappa \rangle$. 

Define the $\delta$-length subcomplete lottery sum $rcs$-iteration $\P = \P_\delta$ as follows: for $\alpha < \delta$ let $$\P_{\alpha+1} = \P_\alpha *\dot{\Q}_\alpha*\Coll(\omega_1, |\P_\alpha|),$$ where $\dot{\Q}_\alpha$ is a $\P_\alpha$-name for the lottery sum of all minimal rank posets that force some sentence to be $sc$-necessary. In particular, let $\Phi$ be the collection of sentences $\varphi(a)$ that use parameters from $H_{\omega_2}^{V_\delta^{\P_\alpha}}$ such that: %maybe use $H_{\omega_2}^{V^{\P_\alpha}}$.
	$$\text{$V^{\P_\alpha}_\delta \models $ ``$\varphi(a)$ is $sc$-forceably $sc$-necessary."}$$
So $\Phi$ is the set of all possible buttons available at stage $\alpha$. Define $\dot{\Q}_\alpha = \bigoplus_{\varphi(a) \in \Phi} \mathcal Q$ where: 
	$$\mathcal Q=\set{ \dot {\Q} \in V_\delta^{\P_\alpha} }{ \text{In $V_\delta^{\P_\alpha}$, $\dot{\Q}$ is of least rank such that $\dot{\Q}$ is $sc$ and forces `$\varphi(a)$ is $sc$-necessary.'} }$$ 
Since we will want the full iteration $\P$ to remain relatively small in size and to have the $\delta$-$cc$, notice that here we insist that the parameters for our sentences come from a fragment $H_{\omega_2}^{\P_\alpha}$ of the universe $V_\delta$, up to the stage we are in. We have also ensured that this iteration is always defined, since for any name for a set $\dot a$ in $V_\delta^{\P_\alpha}$ the sentence ``$|\dot a|=\omega_1$" is $sc$-forceably $sc$-necessary.

Now suppose that $G \subseteq \P$ is generic over $V$. Let's see that $V[G] \models \MPsc(H_{\omega_2})$. Assume $\varphi(x)$ and $a \in H_{\omega_2}$ satisfies:
	$$\text{$V[G] \models ``\varphi(a)$ is $sc$-forceably $sc$-necessary."}$$
Let $p \in G$ force that $\varphi(\dot a)$ is $sc$-forceably $sc$-necessary.	
%Furthermore, we have that the parameters in $H^{V[G]}_{\omega_2}$ used for $\varphi$ are elements of $V_\delta[G]$, since they can be coded as subsets of $\omega_1$. 

Since $\P$ has the $\delta$-$cc$, at no stage in the iteration could $\delta$ be collapsed. This means that there has to be some stage where the parameter $a$ appears.
This is given by $\textbf{Theorem \ref{sciterationbound}}$: $a$ can be coded as a subset of $\omega_1$ and thus has size less than $\delta$.
So there is some earliest stage in the iteration, say $\alpha < \delta$, where $a$ is in $V[G_\alpha]$ and so that $\alpha$ is larger than the support of the condition $p$, which forced $\varphi(a)$ to be a button. This implies that $\varphi(a)$ is an available button at stage $\alpha$, since after the rest of the subcomplete iteration $\P_{tail}$, $\varphi(a)$ is $sc$-forceably $sc$-necessary, so already $\varphi(a)$ is $sc$-forceably $sc$-necessary at this stage. Since $V_\delta[G_\alpha] \preccurlyeq V[G_\alpha]$, as $\P_\alpha$ is a bounded part of the iteration satisfying $\P_\alpha \in V_\delta$, we have: 
	$V_\delta[G_\alpha] \models ``\varphi(a) \text{ is $sc$-forceably $sc$-necessary"}$ as well.
Moreover, $\varphi(a)$ remains a button past the support of $p$.
Thus it is dense, in $\P$, for $\varphi(a)$ to be ``pushed" at some point after stage $\alpha$, say $\beta$.
In other words, there is a $\beta < \delta$ such that there is a least rank subcomplete $\Q$ forcing $\varphi(a)$ to be $sc$-necessary in $V_\delta[G_\beta]$. Let $H \subseteq \Q$ be generic over $V[G_\beta]$ so that there is some $G_{tail}$ generic for the rest of the forcing satisfying $V[G_\beta][H][G_{tail}]=V[G]$. The sentence $\varphi(a)$ is now $sc$-necessary in $V_\delta[G_\beta][H]$. But then since $V_\delta[G_\beta][H] \preccurlyeq V[G_\beta][H]$, this means that $\varphi(a)$ is $sc$-necessary in $V[G_\beta][H]$, by elementarity. Thus since the rest of the iteration is subcomplete, $\varphi(a)$ is true in $V[G_\beta][H][G_{tail}]$ as desired.

In fact, we can analyze $V[G]$ further to see that since for any name for a set $\dot a$ the sentence ``$|\dot a|=\omega_1$" is $sc$-forceably $sc$-necessary, we have that every element of $V_\delta$ gets collapsed to have size $\omega_1$ - in the next stage, by the definition of the $rcs$ iteration being used. Since $\delta$ isn't collapsed by the iteration (it has the $\delta$-$cc$) it must be that $\delta=\omega_2$ in $V[G]$. If $\CH$ did not already hold at the beginning of the iteration, it will hold by the end, since then the sentence ``$|\dot{\c}| = \omega_1$" is $sc$-forceably $sc$-necessary. 
%Furthermore, often in the iteration, on a club in $\delta$, we are probably in fact choosing the collapse. Let $\rho < \delta$ satisfy $\rho = |V_{\rho}|$ and be of uncounable cofinality (there are club many of these below $\delta$ inaccessible). Certainly the sentence ``$\rho=\omega_1$" is $sc$-forceably $sc$-necessary at any stage greater than $\rho$, and furthermore $\Coll(\omega_1, \rho)$ forces this statement to hold. But does this forcing have smallest rank? The rank of $\Coll(\omega_1, \rho)$ will be $\rho$ in this case, since $\rho$ has uncountable cofinality, and thus all of its conditions will be contained in $V_\rho$. But any forcing lower in rank than $\rho$ will also be smaller than $\rho$ in size, and thus could not collapse $\rho$; its antichains aren't big enough.
\end{proof}

As for the other direction of the relative consistency of $\MPsc(H_{\omega_2})$, we have the following result. Here we follow closely the analogous result for other classes of forcing notions, originally due to Hamkins \cite{Hamkins:2014yu}.

\begin{lem}\label{lem:MP->fullyreflectinginL} Assume that $\MPsc(H_{\omega_2})$ holds. Then $L_{\omega_2} \preccurlyeq L$. Thus, in particular, we have that $\omega_2^V$ is fully reflecting in $L$. \end{lem}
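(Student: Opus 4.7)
The plan is to verify the Tarski--Vaught criterion for $L_{\omega_2} \preccurlyeq L$: for every formula $\chi(x,\vec a)$ and every $\vec a \in L_{\omega_2}$, if $L \models \exists x\,\chi(x,\vec a)$, then some witness $y$ lies in $L_{\omega_2}$. First I would observe that parameters from $L_{\omega_2}$ are legitimate for $\MPsc(H_{\omega_2})$: if $a \in L_{\omega_2}$ then $|\TC{a}|^V \leq |\TC{a}|^L < \omega_2^L \leq \omega_2^V$, so $a \in H_{\omega_2}$.

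The key step is to show that for each natural number $n$ and each $\vec a \in L_{\omega_2}$, there is some $\beta < \omega_2^V$ with $\vec a \in L_\beta$ and $L_\beta \preccurlyeq_n L$. Once this is in hand, Tarski--Vaught follows by taking $n$ larger than the quantifier complexity of $\chi$: from $L \models \exists x\,\chi(x,\vec a)$ the $\Sigma_n$-elementarity yields $L_\beta \models \exists x\,\chi(x,\vec a)$, so any witness $y \in L_\beta \subseteq L_{\omega_2}$ automatically satisfies $L \models \chi(y,\vec a)$ by elementarity again.

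To establish the key step, I would apply $\MPsc(H_{\omega_2})$ to the statement
\[
\varphi_n(\vec a)\;\equiv\;\exists \beta < \omega_2\,(\vec a \in L_\beta\,\wedge\,L_\beta \preccurlyeq_n L).
\]
For fixed $n$, the relation $L_\beta \preccurlyeq_n L$ is expressible by a single formula using the $\Sigma_n$ satisfaction predicate over $L$, so $\varphi_n(\vec a)$ is a single formula in the parameter $\vec a$. By L\'evy's reflection theorem, the class $C_n = \set{\beta}{L_\beta \preccurlyeq_n L}$ is a club in $\Ord$; pick $\alpha \in C_n$ with $\vec a \in L_\alpha$. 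The forcing $\Coll(\omega_1,\alpha)$ is countably closed and hence subcomplete, and in any generic extension $V[G]$ by this poset we have $|\alpha|^{V[G]} = \omega_1$, so $\alpha < \omega_2^{V[G]}$. Since $L^V = L^{V[G]}$ and $\Sigma_n$-satisfaction over $L$ is absolute, $\alpha$ remains in $C_n$ in $V[G]$, and hence $\varphi_n(\vec a)$ holds there. In any further subcomplete extension $V[G][H]$, $\omega_1$ is preserved, and since cardinals of $V[G][H]$ are cardinals of $V[G]$, we have $\omega_2^{V[G][H]} \geq \omega_2^{V[G]}$, so the same $\alpha$ continues to witness $\varphi_n(\vec a)$. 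Thus $\varphi_n(\vec a)$ is $sc$-forceably $sc$-necessary, and $\MPsc(H_{\omega_2})$ yields $\varphi_n(\vec a)$ in $V$.

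For the last clause, once $L_{\omega_2} \preccurlyeq L$ is established, $\omega_2^V$ is fully reflecting in $L$: it remains a regular cardinal in $L$ (any $L$-bijection or cofinal $L$-sequence witnessing a failure to regularity would already lie in $V$), and $V_{\omega_2^V}^L = L_{\omega_2^V}$ because $\omega_2^V$ is a cardinal of $L$. The main obstacle I anticipate is the bookkeeping around the absoluteness of the class $C_n$ between $V$ and its subcomplete extensions together with the monotonicity $\omega_2^{V[G]} \leq \omega_2^{V[G][H]}$; these two ingredients are precisely what turn $\varphi_n(\vec a)$ into a genuine button rather than merely an $sc$-forceable statement.
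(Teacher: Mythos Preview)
Your argument is correct and follows the same strategy as the paper's proof: verify Tarski--Vaught by applying $\MPsc(H_{\omega_2})$ to a button asserting that a suitable ordinal has size at most $\omega_1$, where the button is pushed by a collapse and persists because subcomplete forcing preserves $\omega_1$. The paper's version is slightly more direct---it collapses the least $\gamma$ such that some $b\in L_\gamma$ satisfies $L\models\varphi(b,\vec a)$, rather than detouring through L\'evy reflection to find an $L_\beta\preccurlyeq_n L$---but the underlying mechanism is identical.
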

\begin{proof}
Assume $\vec{a} \in L_{\omega_2}$ and $L \models \exists z \ \varphi(z,\vec a)$. Consider the sentence: $$``\text{The least ordinal } \gamma \text{ such that there is } b \in L_\gamma \text{ with } \varphi^L(b, \vec a) \text{ has cardinality at most } \omega_1."$$ The parameters for the sentence come from $H_{\omega_2}$ and it is $sc$-forceably $sc$-necessary, since we can force $\gamma$ to have size $\omega_1$, and once the function witnessing this cardinality change is there, it cannot be altered by further forcing without adding a real. So the statement is true, meaning that there is a witness for the existential statement $\exists z \ \varphi(z, \vec a)$ in $L_{\omega_2}$ as desired.
\end{proof}

This implies that $\ZFC+\MPsc(H_{\omega_2})$ is equiconsistent with $\ZFC$ plus ``$\Ord$ is Mahlo". Also, it means that $\MPsc(H_{\omega_2})$ cannot hold in $L$, since then $L$ would think that $\omega_2$ is inaccessible in $L$, a contradiction.

Earlier, the necessary maximality principle was introduced, which posits that the maximality principle itself persists in every forcing extension. Like the situation with countably closed forcing, which is shown in Theorem 3.16 from \cite{Fuchs:2008rt}, this form is not consistent for subcomplete forcing, which we show below.

\begin{prop}\label{prop:Kurepa} Assume $\MPsc(S)$. Then there are no Kurepa trees in $S$. So if $S=H_{\omega_2}$ then there are no Kurepa trees.
\end{prop}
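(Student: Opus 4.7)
The plan is to assume, toward contradiction, that some $T \in S$ is a Kurepa tree, and then apply $\MPsc(S)$ with $T$ as a parameter to the sentence $\varphi(T) \equiv$ ``$T$ is not a Kurepa tree''. Since $T$ is an $\omega_1$-tree it has size $\omega_1$, so $T \in H_{\omega_2}$; and by hypothesis $T \in S$, so this sentence has its parameter in $S$ as required. It suffices to show that $\varphi(T)$ is $sc$-forceably $sc$-necessary.

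First I would force with the L\'evy collapse $\Coll(\omega_1, \omega_2)$, which is countably closed and hence subcomplete. In the extension $V[G]$ the ordinal $\omega_2^V$ has cardinality $\omega_1$, while $\omega_1$ is preserved. Since countably closed forcing does not add new cofinal branches to $\omega_1$-trees (a special case of Theorem \ref{thm:scbranch}, which applies because $\Coll(\omega_1,\omega_2)$ is subcomplete), we have $[T]^{V[G]} = [T]^V$. Thus in $V[G]$ the set $[T]$ has size at most $\omega_2^V = \omega_1^{V[G]}$, so $T$ is no longer Kurepa. This establishes that $\varphi(T)$ is $sc$-forceable.

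Next I would verify that $V[G] \models$ ``$\varphi(T)$ is $sc$-necessary''. Let $\Q$ be any subcomplete forcing over $V[G]$ and let $H$ be $\Q$-generic. By Theorem \ref{thm:scbranch}, $[T]^{V[G][H]} = [T]^{V[G]}$, and by Theorem \ref{thm:SubproperMM} (via subcomplete implies subproper) subcomplete forcing preserves stationary subsets of $\omega_1$, so $\omega_1^{V[G][H]} = \omega_1^{V[G]}$. Hence $T$ is still an $\omega_1$-tree in $V[G][H]$ whose branch-set has cardinality at most $\omega_1$, so $T$ remains non-Kurepa in $V[G][H]$. Therefore $\varphi(T)$ is $sc$-forceably $sc$-necessary over $V$, and applying $\MPsc(S)$ yields $\varphi(T)$ in $V$, contradicting the assumption that $T$ was Kurepa.

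There is no serious obstacle here; the entire argument is a straightforward ``button-pushing'' application of $\MPsc(S)$. The only points to be careful about are (i) that the L\'evy collapse truly lies in the subcomplete class (immediate from countable closure) and (ii) that both the preservation of $\omega_1$ and the non-addition of cofinal branches to $\omega_1$-trees hold for arbitrary subcomplete forcings after $V[G]$ — both of which are already established earlier in the chapter.
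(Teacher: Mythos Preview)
Your overall strategy is exactly the paper's, but there is a genuine gap in the first step. You force with $\Coll(\omega_1,\omega_2)$ and then assert that in $V[G]$ the set $[T]$ has size at most $\omega_2^V=\omega_1^{V[G]}$. This presumes $|[T]|^V\leq\omega_2^V$, but a Kurepa tree is only required to have \emph{at least} $\omega_2$ cofinal branches; it may have as many as $2^{\omega_1}$. If $\lambda=|[T]|^V>\omega_2^V$, then collapsing $\omega_2^V$ to $\omega_1$ need not collapse $\lambda$, and $T$ can remain Kurepa in $V[G]$ (for instance, if $\lambda\geq\omega_3^V$ and $\omega_3^V$ survives as $\omega_2^{V[G]}$).

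The repair is immediate and is precisely what the paper does: let $\lambda=|[T]|$ and force with $\Coll(\omega_1,\lambda)$ instead. This is still countably closed, hence subcomplete, and still adds no branches to $T$; now $|[T]|^{V[G]}\leq\omega_1$ genuinely holds. Your second paragraph---that no further subcomplete forcing can add branches (Theorem~\ref{thm:scbranch}) or collapse $\omega_1$---then finishes the argument exactly as you wrote it.
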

\begin{proof}
Assume that $T \in S$ is a Kurepa tree with $\lambda$-many branches, where $\lambda \geq \omega_2$. Then forcing with $\Coll(\omega_1, \lambda)$ doesn't add branches to $T$, since the forcing is of course countably closed. But now $T$ has $\omega_1$-many branches in the extension, so $T$ is no longer a Kurepa tree in the forcing extension. Indeed, $T$ can never become a Kurepa tree in any extension by a subcomplete forcing since subcomplete forcings cannot add branches to $\omega_1$-trees by \textbf{Theorem \ref{thm:scbranch}}. So $T$ is $sc$-forceably $sc$-necessarily \textit{not} a Kurepa tree. As $T$ is assumed to be in $S$ and is thus allowed as a parameter in $\MPsc(S)$, it follows that $T$ is not a Kurepa tree, a contradiction.
\end{proof}

\begin{prop}\label{prop:necessaryMPinconsistent} $\Box \MPsc(H_{\omega_2})$ is inconsistent with \ZFC. \end{prop}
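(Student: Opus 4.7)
The plan is to use Proposition \ref{prop:Kurepa} together with the fact that Kurepa trees can be added by subcomplete forcing. Specifically, I will exhibit a subcomplete forcing extension in which a Kurepa tree exists, and then observe that the necessary form of the principle demands $\MPsc(H_{\omega_2})$ hold in that extension as well, directly contradicting Proposition \ref{prop:Kurepa} applied inside the extension.

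In more detail: assume toward a contradiction that $\Box\MPsc(H_{\omega_2})$ holds. There is a standard countably closed forcing $\P$ that adds a Kurepa tree; for instance, the forcing whose conditions are countable, normal, level-preserving subtrees of height a countable successor ordinal with a specified (bounded) set of branches, ordered by end-extension. This $\P$ is countably closed, hence subcomplete by the results of subsection \ref{subsec:SCForcingandCCForcing}. Let $G\subseteq \P$ be generic; then $V[G]$ contains a Kurepa tree $T$.

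By the hypothesis $\Box\MPsc(H_{\omega_2})$, in particular $V[G]\models \MPsc(H_{\omega_2})$. Since $T \in H_{\omega_2}^{V[G]}$ (it is an $\omega_1$-tree), we may apply Proposition \ref{prop:Kurepa} in $V[G]$ with $T$ as parameter to conclude that $T$ is not a Kurepa tree in $V[G]$, which is the desired contradiction.

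The only substantive step to verify is that the forcing adding a Kurepa tree is indeed subcomplete; but this is immediate from the fact that it is countably closed together with the inclusion of countably closed forcings in the subcomplete class established in subsection \ref{subsec:SCForcingandCCForcing}. Everything else is a direct invocation of Proposition \ref{prop:Kurepa}, so there is no real obstacle beyond pointing to the right Kurepa-tree-adding poset.
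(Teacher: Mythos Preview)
Your proof is correct and follows essentially the same approach as the paper: force with a countably closed (hence subcomplete) poset to add a Kurepa tree, note that $\MPsc(H_{\omega_2})$ must hold in the extension by necessity, and derive a contradiction from Proposition~\ref{prop:Kurepa}. The paper's argument is terser but identical in structure.
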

\begin{proof}
Work in $\ZFC + \Box \MPsc (H_{\omega_2})$. There is a generic extension obtained by subcomplete forcing in which there is a Kurepa tree (since the forcing to add one is countably closed.) In this extension, $\MPsc(H_{\omega_2})$ has to still hold, since we are assuming the necessary form of the principle. But this is a contradiction to \textbf{Proposition \ref{prop:Kurepa}}.
\end{proof}

\subsection{The Local Maximality Principle}
\label{subsec:localMP}
Here we define the local version of the maximality principle, where the truth of a forceably necessary sentence will be checked not in $V$ but in a much smaller structure. This should be compared to one of the equivalent ways of defining the bounded forcing axiom, namely the third characterization given in \textit{Fact} \ref{Fact:BFA}.

\begin{defn} Let $\Gamma$ be a reasonable class of forcing notions, and let $S$ be a set of parameters. Let $M$ be a defined term for a structure to be evaluated in forcing extensions, and $S \subseteq M$. The \emph{\textbf{Local Maximality Principle}} relative to $M$ ($\MP^{M}_{\Gamma} (S)$) is the statement that for every $a \in S$ and every formula $\varphi(x)$, if $\varphi^{M}(a)$ is $\Gamma$-forceably $\Gamma$-necessary, then $\varphi^{M}(a)$ is true. 
\end{defn}

We consider $\lMPsc$, which stands for the local version of $\MPsc(H_{\omega_2})$, where $\Gamma$ is the class of subcomplete forcing notions, and $M$ and $S$ are both $H_{\omega_2}$. As discussed in \textbf{Lemma \ref{lem:MPparams}}, our choice of $H_{\omega_2}$ makes sense for the parameter set for the subcomplete maximality principle, and since the smallest model $M$ that makes sense to use for the local version has to at least contain the parameter set, $H_{\omega_2}$ is a natural choice.
%Need to say something about why this makes sense?

Clearly, the following implication holds:
\begin{lem}\label{lem:globalMPislocal} $\MPsc(H_{\omega_2}) \implies \lMPsc$. \end{lem}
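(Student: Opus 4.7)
The implication will be almost immediate once we recognize that the local principle is just a special case of the global one applied to relativized formulas. The plan is to observe that for any formula $\varphi(x)$ in the language of set theory, the relativization $\psi(x) := \varphi^{H_{\omega_2}}(x)$ is itself a formula in the language of set theory (the class $H_{\omega_2}$ is $\Sigma_1$-definable, so relativizing gives a well-defined formula). Consequently, for any parameter $a \in H_{\omega_2}$, the sentence $\psi(a) = \varphi^{H_{\omega_2}}(a)$ is a legitimate sentence with parameter from $H_{\omega_2}$ to which the global principle $\MPsc(H_{\omega_2})$ applies.

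Concretely, suppose $\varphi(x)$ is a formula and $a \in H_{\omega_2}$ such that the hypothesis of $\lMPsc$ holds: $\varphi^{H_{\omega_2}}(a)$ is $sc$-forceably $sc$-necessary. Unpacking the definition, this means there is a subcomplete $\P$ such that after forcing with $\P$, for every further subcomplete $\Q$, in the $\P \ast \dot\Q$-extension the statement $\varphi^{H_{\omega_2}}(a)$ holds, where $H_{\omega_2}$ is interpreted in the relevant extension. But this says precisely that the sentence $\psi(a)$ (with $\psi(x) := \varphi^{H_{\omega_2}}(x)$) is $sc$-forceably $sc$-necessary. Since $a \in H_{\omega_2}$, we can apply $\MPsc(H_{\omega_2})$ to conclude that $\psi(a)$ holds in $V$, i.e., $\varphi^{H_{\omega_2}}(a)$ is true, which is the desired conclusion of $\lMPsc$.

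There is no substantive obstacle here; the only thing to check carefully is that "being $sc$-forceably $sc$-necessary" is the same notion whether we phrase it for the local formula $\varphi^{H_{\omega_2}}(a)$ or for the corresponding sentence $\psi(a)$, and this is true by definition since the relativization is performed syntactically and $H_{\omega_2}$ is interpreted in each forcing extension on both sides. Thus the proof amounts to noting that $\lMPsc$ is the restriction of $\MPsc(H_{\omega_2})$ to those sentences of the particular syntactic form $\varphi^{H_{\omega_2}}(a)$, and so is a weakening of it.
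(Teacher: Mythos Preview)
Your proposal is correct and matches the paper's approach: the paper simply states this implication as clear without proof, and your argument spells out precisely why it is immediate---namely, that sentences of the form $\varphi^{H_{\omega_2}}(a)$ with $a\in H_{\omega_2}$ are a special case of the sentences to which $\MPsc(H_{\omega_2})$ applies.
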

And the following expresses the relationship between the local maximality principles and bounded forcing axioms, here stated for subcomplete forcing.
\begin{prop} $\lMPsc \implies \BSCFA$. \end{prop}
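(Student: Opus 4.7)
The plan is to use the characterization of $\BSCFA$ as $sc$-generic $\Sigma_1(H_{\omega_2})$-absoluteness given by item \ref{item:GammaGenericSigma1Absoluteness} of Fact \ref{Fact:BFA}. So it suffices to fix a $\Sigma_1$-formula $\varphi(x)$, a parameter $a \in H_{\omega_2}$, a subcomplete forcing $\P$, and a $V$-generic $G \subseteq \P$, and to show that $H_{\omega_2} \models \varphi(a)$ iff $H_{\omega_2}^{V[G]} \models \varphi(a)$.

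The forward direction is immediate from upward $\Sigma_1$-absoluteness between transitive structures: a witness to $\varphi(a)$ in $H_{\omega_2}$ remains a witness in $H_{\omega_2}^{V[G]} \supseteq H_{\omega_2}$. For the backward direction, suppose $H_{\omega_2}^{V[G]} \models \varphi(a)$, and write $\varphi(x) \equiv \exists y\, \psi(x,y)$ with $\psi$ a $\Sigma_0$-formula. I would then argue that the sentence ``$\varphi^{H_{\omega_2}}(a)$'' is $sc$-forceably $sc$-necessary in $V$; applying $\lMPsc$ (with the parameter $a \in H_{\omega_2}$) then yields $H_{\omega_2} \models \varphi(a)$, as required.

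To verify $sc$-forceable $sc$-necessity, first observe that $\P$ itself forces $\varphi^{H_{\omega_2}}(a)$. It remains to show that this statement is $sc$-necessary in $V[G]$, i.e., that it persists to any further subcomplete extension $V[G][H]$. Fix a witness $b \in H_{\omega_2}^{V[G]}$ with $\psi(a,b)$. Since $b$ has hereditary cardinality at most $\omega_1$ in $V[G]$, and since subcomplete forcing preserves $\omega_1$ (Proposition \ref{prop:noreals} ensures no reals are added, so in particular $\omega_1$ is preserved), $b$ still has hereditary cardinality at most $\omega_1$ in $V[G][H]$, hence $b \in H_{\omega_2}^{V[G][H]}$. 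Because $\psi$ is $\Sigma_0$ and therefore absolute between transitive models containing the relevant sets, $\psi(a,b)$ still holds in $V[G][H]$, so $H_{\omega_2}^{V[G][H]} \models \varphi(a)$.

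The main (mild) subtlety is the preservation of the witness across further subcomplete extensions — one must be careful that even if $\omega_2$ were to be collapsed, the witness still lands in $H_{\omega_2}$ of the final model; this is handled simply by noting that the relevant hereditary bound is $\omega_1$, which subcomplete forcing does preserve. Once this observation is in place, the argument is essentially the standard translation between bounded forcing axioms and (local) maximality principles, adapted to the $\Gamma = \textup{subcomplete}$ setting.
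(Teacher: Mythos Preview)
Your proof is correct and follows essentially the same approach as the paper: both use the characterization of $\BSCFA$ as $sc$-generic $\Sigma_1(H_{\omega_2})$-absoluteness from Fact~\ref{Fact:BFA}, and both argue that a $\Sigma_1$ statement forced by a subcomplete $\P$ is $sc$-forceably $sc$-necessary, then apply $\lMPsc$. The only difference is presentational: the paper invokes the general fact $H_{\omega_2} \preccurlyeq_{\Sigma_1} V$ (in each extension) to see persistence of $\varphi^{H_{\omega_2}}(a)$, whereas you track an explicit witness $b$ and argue it remains in $H_{\omega_2}$ because subcomplete forcing preserves $\omega_1$ --- these are equivalent justifications. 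One minor wording point: from $H_{\omega_2}^{V[G]}\models\varphi(a)$ you only get that some condition $p\in G$ forces $\varphi^{H_{\omega_2}}(a)$, not that $\mathbbm 1$ does; but $\P$ below $p$ is still subcomplete, so $sc$-forceability follows just the same.
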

\begin{proof}
Assume that $\lMPsc$ holds. We use characterization \ref{item:GammaGenericSigma1Absoluteness} of $\BSCFA$ from \textit{Fact} \ref{Fact:BFA}. To show that $sc$-generic $\Sigma_1(H_{\omega_2})$-absoluteness holds, let $\varphi(x)$ be a $\Sigma_1$-formula and let $a \in H_{\omega_2}$ and let $\P$ be subcomplete, satisfying $\forces_\P \varphi(\check a)$. Let $G \subseteq \P$ be generic. Since $\varphi(x)$ is $\Sigma_1$ and $H_{\omega_2} \preccurlyeq_{\Sigma_1} V$, we have that $\varphi^{H_{\omega_2}}(a)= \varphi^{H_{\omega_2}}((\check a)^G)$ holds in all future forcing extensions. Thus $\varphi^{H_{\omega_2}}(a)$ is $sc$-forceably $sc$-necessary, which means that $\varphi^{H_{\omega_2}}(a)$ is true (in $V$). Thus $\varphi(a)$ holds in $V$ as desired.
\end{proof}

So in particular, we have that $\MPsc(H_{\omega_2})$ implies $\BSCFA$. What do models of $\lMPsc$ look like? Below we see that models of the local subcomplete maximality principle naturally have Suslin trees, and imply $\CH$. %Check to see that the forcing to add different unique Suslin trees is maybe countably closed or something, because then we could have models really rich in Suslin trees, if we have a bunch of different ones.

\begin{prop} \label{prop:localproperties} The theory $\ZFC + \lMPsc$ proves the following: \begin{enumerate}
	\item There is a Suslin tree.
	\item $\lozenge$ holds.
	\item $\CH$ holds.
	%\item There is no Suslin almost-Kurepa tree.
\end{enumerate}\end{prop}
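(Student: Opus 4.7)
The plan is to show, for each of (1), (2), (3), that the statement---interpreted in $H_{\omega_2}$---is $sc$-forceably $sc$-necessary, so that $\lMPsc$ immediately yields its truth. In each case the pattern is to exhibit a countably closed (hence subcomplete) forcing that achieves the property, and then to show that the property is preserved by every subcomplete forcing.

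For (3), the collapse $\Coll(\omega_1, 2^\omega)$ is countably closed, hence subcomplete, and forces $\CH$. By \textbf{Proposition \ref{prop:noreals}} together with the preservation of $\omega_1$ by subcomplete forcing (which is subproper, so \textbf{Theorem \ref{thm:SubproperMM}} applies), the inequality $|\mathcal P(\omega)|^{V[G]} \leq \omega_1$ persists in every further subcomplete extension, making $\CH$ $sc$-forceably $sc$-necessary. For (1), any Suslin tree has cardinality $\omega_1$ and hence lies in $H_{\omega_2}$, so the assertion ``there is a Suslin tree'' is a statement of $H_{\omega_2}$. A Suslin tree is added by countably closed forcing---for instance by first forcing $\lozenge$ via Jensen's classical countably closed iteration and invoking the standard construction of a Suslin tree from $\lozenge$. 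Preservation of Suslin trees by subcomplete forcing (\textbf{Fact \ref{fact:Suslinpres}}) then makes ``there is a Suslin tree'' $sc$-forceably $sc$-necessary.

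For (2), $\lozenge$ is $sc$-forceable by Jensen's countably closed iteration forcing $\lozenge$, and ``there exists a $\lozenge$-sequence'' is expressible in $H_{\omega_2}$ since such a sequence has hereditary cardinality $\omega_1$. The main obstacle is to verify that $\lozenge$ is preserved by every subcomplete forcing. The approach is to adapt the argument of \textbf{Fact \ref{fact:Suslinpres}}: suppose $\vec S$ is a $\lozenge$-sequence in $V$ and that $p \in \P$ forces names $\dot A, \dot C$ to witness a failure---that is, $\dot C$ is forced to be a club in $\omega_1$ and $\dot A \cap \alpha \neq \vec S(\alpha)$ for every $\alpha \in \dot C$. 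Taking $\theta$ verifying subcompleteness, let $\sigma: \N \prec N$ with $\N$ countable and full and $\sigma(\overline p, \overline{\dot A}, \overline{\dot C}, \overline{\vec S}) = p, \dot A, \dot C, \vec S$, and set $\alpha = \omega_1^{\N}$, so $\overline{\vec S} = \vec S \rest \alpha$. Choose an $\N$-generic $\G \ni \overline p$ and set $\overline A = \overline{\dot A}^{\G}$, $\overline C = \overline{\dot C}^{\G}$; these lie in $\N[\G] \subseteq V$. Subcompleteness yields, below a suitable condition in $\P$, a lift $\sigma^*: \N[\G] \prec N[G]$ sending $\overline A$ to $A$ and $\overline C$ to $C$, so $A \cap \alpha = \overline A$, $C \cap \alpha = \overline C$, and by closure of $C$ and cofinality of $\overline C$ in $\alpha$, $\alpha \in C$. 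Applying $\lozenge$ in $V$ to $\overline A \in V$ yields that $T = \{\beta < \omega_1 : \overline A \cap \beta = S_\beta\}$ is $V$-stationary, hence stationary in $V[G]$ by preservation of stationarity under subcomplete forcing, so $T$ meets $C$. An intersection point $\beta < \alpha$ lies in $\overline C$ and gives $A \cap \beta = \overline A \cap \beta = S_\beta$, contradicting the failure; an intersection at $\beta = \alpha$ forces $\overline A = S_\alpha$, also contradicting the failure. The case $\beta > \alpha$ is the main technical subtlety: here one has $S_\beta = \overline A$ for stationarily many $\beta \in C$ above $\alpha$, and the refinement (in Jensen's style) involves either a careful choice of $\N$ and $\G$ arranging $A$ to be bounded by $\alpha$, or an appropriate diagonal modification of $\overline A$ above $\alpha$ in $V$, to force the intersection point to lie in $\overline C \cup \{\alpha\}$.
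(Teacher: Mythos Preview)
Your treatment of items \textbf{1} and \textbf{3} is correct and matches the paper's approach: for \textbf{1} you and the paper both use that adding a Suslin tree is countably closed and invoke \textbf{Fact \ref{fact:Suslinpres}} for preservation; for \textbf{3} you argue directly via $\Coll(\omega_1,2^\omega)$ rather than deriving it from \textbf{2} as the paper does, which is a harmless variation.

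The gap is in item \textbf{2}. You attempt to show that a fixed $\lozenge$-sequence $\vec S$ survives subcomplete forcing by adapting the Suslin-tree argument, but the case $\beta>\alpha$ that you flag is a real obstruction, and neither of your proposed fixes works. ``Arranging $A$ to be bounded by $\alpha$'' is not in your control: $A=\dot A^G$ is an arbitrary subset of $\omega_1$ handed to you by the putative counterexample. ``Diagonal modification of $\overline A$ above $\alpha$'' is meaningless as stated, since $\overline A\subseteq\alpha$; and if you mean extending $\overline A$ to a subset of $\omega_1$ and applying $\lozenge$ to that, you still cannot force the guessing point $\beta$ to satisfy $A\cap\beta=S_\beta$, because above $\alpha$ you have no information about $A$. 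More fundamentally, for a single fixed $\alpha=\omega_1^{\N}$ there is no reason that any $\N$-generic $\G$ should yield $\overline{\dot A}^{\G}=S_\alpha$: the condition $\overline p$ may already decide $\overline{\dot A}$ in a way incompatible with $S_\alpha$. The Suslin-tree proof you are imitating succeeds because the nodes of $T_\alpha$ supply an \emph{external} family of $\N$-generics (branches), and the tree structure lets you seal the antichain; there is no analogous auxiliary object for $\lozenge$.

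The paper does not prove preservation of $\lozenge$ either; it cites Jensen \cite[Chapter 3 p.~7]{Jensen:2009wc} for the statement that the \emph{principle} $\lozenge$ (not necessarily the same sequence) persists under subcomplete forcing, and explicitly notes in a parenthetical that preservation of a particular $\lozenge$-sequence has not been established here. For a self-contained argument you would need either to reproduce Jensen's proof or to give a different one; your current sketch does neither.
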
 
\begin{proof}
First we need to see that $H_{\omega_2}$ is enough to verify each of these properties.

For $\textbf{\textit{1}}$, note that the forcing to add a Suslin tree is countably closed, and thus is subcomplete. But Suslin trees are also preserved by subcomplete forcing, by Fact \ref{fact:Suslinpres}. In fact, any particular Suslin tree will continue to be a Suslin tree after any subcomplete forcing. Thus the existence of a Suslin tree is $sc$-forceably $sc$-necessary, and hence true by $\lMPsc$. 

For $\textbf{\textit{2}}$ note that Jensen \cite[Chapter 3 p.~7]{Jensen:2009wc} shows that $\lozenge$ will hold after performing subcomplete forcing if it held in the ground model (we haven't yet seen that any particular instance of a $\lozenge$-sequence will be preserved). Since forcing to add a $\lozenge$-sequence is countably closed, and it will continue to hold after any subcomplete forcing. Of course then $\textbf{\textit{3}}$ follows, since $\lozenge$ implies $\CH$. %$\textbf{\textit{4}}$ follows from \textbf{Theorem \ref{thm:suslinoffbranch}}.
\end{proof}

We've seen that $\MPsc(H_{\omega_2})$ is consistent with $\ZFC$ and with any reasonable value of $2^{\omega_1}$. In light of \textbf{Lemma \ref{lem:globalMPislocal}}, we have the following result about models of $\MPsc(H_{\omega_2})$.

\begin{cor} $\ZFC + \MPsc(H_{\omega_2})$ prove the following: \begin{enumerate}
	\item There is a Suslin tree.
	\item $\lozenge$ holds.
	\item $\CH$ holds.
	%\item There is no Suslin almost-Kurepa tree.
\end{enumerate}\end{cor}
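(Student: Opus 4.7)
The plan is to observe that this corollary is a direct consequence of the two immediately preceding results. First I would invoke \textbf{Lemma \ref{lem:globalMPislocal}}, which gives the implication $\MPsc(H_{\omega_2}) \implies \lMPsc$, reducing the current claim to \textbf{Proposition \ref{prop:localproperties}}. That proposition already establishes all three conclusions from $\lMPsc$, so there is nothing further to prove beyond citing it; the corollary is essentially a labeling of the composed implication.

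If one preferred a more direct argument bypassing the local principle, the same three items can be read off from $\MPsc(H_{\omega_2})$ in one step, since each of the sentences ``there exists a Suslin tree'' and ``$\lozenge$ holds'' is parameter-free (so trivially lies over $H_{\omega_2}$). For item \emph{\textbf{1}}, the countably closed forcing adding a Suslin tree is subcomplete, and by \textit{Fact} \ref{fact:Suslinpres} any Suslin tree produced is preserved by all further subcomplete forcing; hence ``there is a Suslin tree'' is $sc$-forceably $sc$-necessary and therefore true by $\MPsc(H_{\omega_2})$. For item \emph{\textbf{2}}, the standard countably closed forcing adds a $\lozenge$-sequence, and by the cited result of Jensen, $\lozenge$ is preserved by subcomplete forcing; so ``$\lozenge$'' is $sc$-forceably $sc$-necessary and hence true. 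Item \emph{\textbf{3}} is then immediate since $\lozenge \implies \CH$.

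There is no genuine obstacle here: the content lies in \textbf{Proposition \ref{prop:localproperties}}, which in turn relies on the preservation facts for Suslin trees and for $\lozenge$ under subcomplete forcing. The corollary itself only requires that the relevant parameters (namely, none at all) are permitted in the global principle, which is trivial.
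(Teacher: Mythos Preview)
Your proposal is correct and matches the paper's approach exactly: the corollary is stated immediately after \textbf{Lemma \ref{lem:globalMPislocal}} and \textbf{Proposition \ref{prop:localproperties}}, and the paper simply notes that in light of \textbf{Lemma \ref{lem:globalMPislocal}} the result follows, without further argument. Your additional direct argument is also fine but unnecessary.
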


\subsection{Separating and Combining the Subcomplete and Countably Closed Maximality Principles}
\label{subsec:SeparatingSCandCCMP}
Up until this point, it looks as though $\MPc$ and $\MPsc$ are very similar; all of the previous results about $\MPsc$ also hold for $\MPc$. Indeed, we have the following, which is a straightforward consequence of a result of Fuchs \cite[Theorem 2.10]{Fuchs:2008rt}:

\begin{fact} \label{fact:ForceMPc} Assume that $\delta$ is regular and $V_\delta \preccurlyeq V$. Then $\MPc(H_{\omega_2})$ holds in $V[G]$, where $G$ is generic for $\Coll(\kappa, <\delta)$. \end{fact}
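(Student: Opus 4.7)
The plan is to mimic the argument of Theorem \ref{thm:forceMP}, but in the much simpler setting where no lottery sum iteration is needed: the Levy collapse $\P := \Coll(\omega_1,<\delta)$ is itself the \emph{universal} countably closed forcing below $\delta$, and this universality already supplies every button that needs to be pushed. First I would note that $V_\delta \preccurlyeq V$ together with the regularity of $\delta$ forces $\delta$ to be inaccessible, so $\P$ is countably closed, has the $\delta$-$cc$, preserves $\omega_1$, and makes $\delta=\omega_2$ in $V[G]$. The task is then to verify in $V[G]$ the $\MPc(H_{\omega_2})$ scheme.

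Fix a formula $\varphi(x)$ and a parameter $a\in H_{\omega_2}^{V[G]}$ such that $\varphi(a)$ is $c$-forceably $c$-necessary in $V[G]$. Because $a$ has a $\P$-name of hereditary size less than $\delta$ and $\P$ has the $\delta$-$cc$, an analog of Fact \ref{sciterationbound} lets me locate some $\alpha<\delta$ with $a\in V[G_\alpha]$, where $G_\alpha$ is the projection of $G$ under the natural factorisation $\P\cong \Coll(\omega_1,<\alpha)\times\Coll(\omega_1,[\alpha,\delta))$. Since $V_\delta\preccurlyeq V$ and $\Coll(\omega_1,<\alpha)\in V_\delta$, standard forcing-preservation of elementarity gives $V_\delta[G_\alpha]\preccurlyeq V[G_\alpha]$, and hence the $\Sigma_2$-expressible statement ``there is a countably closed poset forcing $\varphi(a)$ to be $c$-necessary'' reflects from $V[G_\alpha]$ down to $V_\delta[G_\alpha]$. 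Thus I may replace the original witness by some countably closed $\Q\in V[G_\alpha]$ of size $<\delta$.

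The heart of the argument is the absorption property of the Levy collapse: in $V[G_\alpha]$ the tail $\Coll(\omega_1,[\alpha,\delta))$ absorbs any countably closed $\Q$ of size less than $\delta$, in the sense that there is a dense embedding of $\Q$ into a regular suborder of $\Coll(\omega_1,[\alpha,\delta))$. From this embedding and the generic $G^\alpha$ one extracts a $\Q$-generic filter $H\in V[G]$ with $V[G_\alpha][H]\subseteq V[G]$, and the quotient forcing taking $V[G_\alpha][H]$ up to $V[G]$ is again countably closed. Since $\Q$ forces $\varphi(a)$ to be $c$-necessary, we have $V[G_\alpha][H]\models\varphi(a)$ is $c$-necessary, and applying this to the countably closed quotient gives $V[G]\models\varphi(a)$, as required.

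The main — essentially the only — obstacle is the absorption step: checking that the required regular embedding of $\Q$ into the tail Levy collapse survives the passage from $V$ to the intermediate model $V[G_\alpha]$, and that the $\Q$-generic extracted from $G^\alpha$ genuinely lies in $V[G]$. This is a classical and well-understood feature of the Levy collapse, proved by a back-and-forth/condition-matching argument exploiting the fact that $\Coll(\omega_1,[\alpha,\delta))$ collapses $|\Q|$ to $\omega_1$ and has the strongest possible homogeneity among countably closed posets of the relevant size, but it is where the real technical work of the proof lives.
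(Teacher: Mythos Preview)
The paper does not prove this statement at all: it is labelled a \emph{Fact} and attributed to Fuchs \cite[Theorem 2.10]{Fuchs:2008rt}, so there is no proof in the paper to compare against. Your outline is precisely the standard argument behind that cited result, and it is correct in substance.

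Two small remarks. First, the phrase ``the $\Sigma_2$-expressible statement'' is inaccurate: the assertion ``there is a countably closed $\Q$ forcing $\varphi(a)$ to be $c$-necessary'' is not obviously $\Sigma_2$, since ``$c$-necessary'' itself quantifies over all further countably closed forcings. This does not matter, however, because $V_\delta[G_\alpha]\preccurlyeq V[G_\alpha]$ is \emph{full} elementarity, not merely $\Sigma_2$; you should simply invoke that. Second, you should make explicit the easy observation that $\varphi(a)$ is already $c$-forceably $c$-necessary in $V[G_\alpha]$ (not just in $V[G]$): this is because the tail collapse is countably closed, so composing it with the $V[G]$-witness gives a witness in $V[G_\alpha]$. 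With these two clarifications the argument is complete; the absorption step you flag is indeed the only nontrivial ingredient, and your description of it is accurate.
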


Perhaps, one might think, it is just the case that the subcomplete and countably closed maximality principles imply each other? As it turns out, this is simply not the case, as we shall show through a sequence of results below. In this section we look at ways in which the principles may consistently be separated, ultimately summed up by the following diagram:

\begin{center}
\begin{tikzpicture}
  \matrix (m) [matrix of math nodes,row sep=3em,column sep=4em,minimum width=2em]{
  	\MPc(H_{\omega_2}) & \MPsc(H_{\omega_2}) \\
     	\MPc(\emptyset) & \MPsc(\emptyset) \\};
  \path[-stealth]
    (m-1-1) edge [double] (m-2-1)
    		edge [double] node[pos=0.3] {$\diagdown$} node[pos=0.3] {$\diagup$} (m-2-2)
    (m-1-2) edge [double] node[pos=0.3] {$\diagdown$} node[pos=0.3] {$\diagup$} (m-2-1)
    (m-1-2) edge [double] (m-2-2);
\end{tikzpicture}
\end{center}

It is easy to see that $\MPsc(H_{\omega_2})$ implies $\MPsc(\emptyset)$, since sentences without parameters are always available when considering sentences that potentially have a larger pool of parameters to draw from. The same of course is true for the maximality principle for countably closed forcing. We shall see below that it is not generally the case that $\MPc$ implies $\MPsc$. However, we will see that it is consistent for $\MPc(H_{\omega_2})$ and $\MPsc(\emptyset)$ to both hold. 

The following two lemmas exploit the fact that subcomplete forcing, unlike countably closed forcing, can add countable sequences. This will be relied upon to elucidate the difference between the countably closed and subcomplete maximality principles. The key tool of our analysis will be Namba forcing. We give the precise definition later in \textbf{Definition~\ref{def:namba}}, but for now let us just state that Namba forcing is a poset that adds a cofinal $\omega$-sequence to $\omega_2$ and is subcomplete if $\CH$ holds.

\begin{lem}\label{lem:scseqnec} It is $sc$-forceably $sc$-necessary that $$\Ord^\omega \neq \Ord^\omega \cap L.$$ \end{lem}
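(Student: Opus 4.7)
The plan is to use Namba forcing, which is subcomplete in the presence of $\CH$, to add a new cofinal $\omega$-sequence into $\omega_2$. Such a sequence, viewed as an element of $\Ord^\omega$, will lie outside $L$, and since the $L$-construction is absolute under further forcing, this failure of $\Ord^\omega \subseteq L$ will be preserved by every subcomplete extension.

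Concretely, I would take $\P = \Coll(\omega_1, 2^\omega) * \dot{\Namba}$, where $\dot{\Namba}$ names Namba forcing as computed in the intermediate model. The first factor is countably closed, hence subcomplete, and forces $\CH$; under $\CH$, Namba forcing is subcomplete. Hence, by Jensen's two-step iteration theorem for subcomplete forcing, $\P$ is subcomplete. Let $G * H \subseteq \P$ be generic over $V$, and let $f : \omega \to \omega_2^{V[G]}$ be the Namba-generic cofinal $\omega$-sequence. Then $f \in \Ord^\omega$ in $V[G][H]$. Furthermore $f \notin V[G]$, since the Namba-generic filter $H$ can be reconstructed from $f$ together with the ground-model poset $\Namba \in V[G]$; in particular $f \notin L$, because $L \subseteq V \subseteq V[G]$.

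It then remains to show that in $V[G][H]$ the statement $\Ord^\omega \neq \Ord^\omega \cap L$ is $sc$-necessary. Let $\Q \in V[G][H]$ be any subcomplete forcing and let $K \subseteq \Q$ be generic over $V[G][H]$. By absoluteness of the $L$-construction between transitive class models of $\ZF$, one has $L^{V[G][H][K]} = L$, so $f \notin L$ persists in $V[G][H][K]$; since also $f \in \Ord^\omega$ there, we conclude that $\Ord^\omega \neq \Ord^\omega \cap L$ holds in $V[G][H][K]$, as required. The most delicate point is the verification that the Namba-generic sequence truly escapes the ground model (and hence $L$); this is standard but worth recording, and once it is in hand the rest of the argument is a straightforward invocation of absoluteness.
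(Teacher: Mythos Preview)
Your proposal is correct and follows essentially the same approach as the paper: first force $\CH$ via a countably closed (hence subcomplete) forcing so that Namba forcing becomes subcomplete, then use the Namba-generic cofinal $\omega$-sequence in $\omega_2$ as a witness to $\Ord^\omega \neq \Ord^\omega \cap L$ that persists under further subcomplete forcing by absoluteness of $L$. The only cosmetic difference is your choice of $\Coll(\omega_1, 2^\omega)$ rather than the paper's $\Add(\omega_1,1)$ for the first step; both serve the same purpose.
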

\begin{proof}
This is because after using countably closed forcing, namely $\Add(\omega_1, 1)$, to force $\CH$ if it doesn't already hold, Namba forcing is subcomplete, and adds a countable subset of $\omega_2$, which cannot be removed by further subcomplete forcing.
\end{proof}

\begin{lem}\label{lem:ctbleseqnec} If $V=L$, then it is $<\omega_1$-closed-forceably $<\omega_1$-closed-necessary that $$\Ord^\omega = \Ord^\omega \cap L.$$ \end{lem}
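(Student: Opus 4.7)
The plan is to observe that if $V=L$, then trivial forcing already witnesses the $<\omega_1$-closed-forceability, because the statement $\Ord^\omega = \Ord^\omega \cap L$ is automatically $<\omega_1$-closed-necessary. First I would note that under $V=L$, the equality $\Ord^\omega = \Ord^\omega \cap L$ holds trivially. So since trivial forcing is countably closed and is an element of our forcing class, it suffices to verify that the equality persists in every further countably closed extension.

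Next, I would invoke the standard fact that countably closed forcing does not add countable sequences of ordinals: if $\P$ is countably closed and $G \subseteq \P$ is generic over $L$, then $(\Ord^\omega)^{L[G]} = (\Ord^\omega)^L$. This follows from a density argument, where given a name $\dot f$ forced to be an $\omega$-sequence of ordinals, one constructs a decreasing $\omega$-sequence of conditions below any given condition, each deciding successive values of $\dot f(n)$, and uses countable closedness to find a lower bound, which then forces $\dot f$ to equal a ground-model sequence.

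Thus in $L[G]$ we have $\Ord^\omega = (\Ord^\omega)^{L[G]} = (\Ord^\omega)^L = \Ord^\omega \cap L$, so the equality is $<\omega_1$-closed-necessary. Since trivial forcing is countably closed, this shows that it is $<\omega_1$-closed-forceably $<\omega_1$-closed-necessary that $\Ord^\omega = \Ord^\omega \cap L$, and there is no substantive obstacle beyond invoking the standard preservation fact.
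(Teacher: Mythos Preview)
Your proposal is correct and takes essentially the same approach as the paper: the paper's one-line proof simply notes that countably closed forcings are countably distributive and hence do not add countable sequences of ordinals, which is exactly the preservation fact you invoke (with trivial forcing as the witness to forceability). Your version just spells out the density argument behind that fact in more detail.
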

\begin{proof}
This is because countably closed forcings are countably distributive, and don't add countable sequences.
\end{proof}

From these two observational lemmas, we can now say something about what models of the combined maximality principles would have to look like. The following proposition is one more ingredient we will use to show the non-implications in the above diagram:

\begin{prop} \label{prop:L[a]notV} If $\MPsc(H_{\omega_2}) + \MPc(\emptyset)$ holds, then $V \neq L[a]$ for any set $a$. \end{prop}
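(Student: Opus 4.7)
The plan is to derive a contradiction by identifying a single parameter-free sentence that $\MPsc(H_{\omega_2})$ refutes but that, under the hypothesis $V = L[a]$, $\MPc(\emptyset)$ forces to be true. Concretely, let
\[ \psi \;\equiv\; (\exists b \in H_{\omega_2})(\Ord^\omega \subseteq L[b]). \]

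First I would show that $\MPsc(H_{\omega_2})$ implies $\neg \psi$. By Lemma~\ref{lem:globalMPislocal} and Proposition~\ref{prop:localproperties}, $\CH$ holds in $V$, so Namba forcing $\Namba$ is subcomplete in $V$. Fix any $b \in H_{\omega_2}^V$ and consider $\varphi(b) \equiv (\Ord^\omega \nsubseteq L[b])$, whose only parameter is $b$. To see that $\varphi(b)$ is $sc$-forceably $sc$-necessary, force with $\Namba$: the Namba generic produces a new cofinal function $s \colon \omega \to \omega_2^V$ (as in Lemma~\ref{lem:scseqnec}) that lies outside $V \supseteq L[b]$, witnessing $\varphi(b)$ in $V^\Namba$. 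Because subcomplete forcing removes no sets and $L[b]$ is absolute between transitive class models with the same ordinals and containing $b$, this witness persists under any further subcomplete forcing, so $\varphi(b)$ is $sc$-necessary in the extension. Applying $\MPsc(H_{\omega_2})$ with parameter $b$ therefore yields $\varphi(b)$ in $V$; since $b \in H_{\omega_2}^V$ was arbitrary, $\neg \psi$ holds in $V$.

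Next, assuming $V = L[a]$, I would derive $\psi$ from $\MPc(\emptyset)$. Let $\mathbb{Q} := \Coll(\omega_1, |\TC{a}|)$, which is countably closed, and let $G$ be $\mathbb{Q}$-generic. In $V[G]$ both $a$ and $G$ have transitive closures of size at most $\omega_1$, so $c := \langle a, G \rangle \in H_{\omega_2}^{V[G]}$; and since $V[G] = L[a][G] = L[c]$, we get $\Ord^\omega \cap V[G] \subseteq L[c]$, witnessing $\psi$ in $V[G]$. In any further countably closed extension $V[G][H]$, countable distributivity preserves $\Ord^\omega$, and $c$ remains in $H_{\omega_2}$, so $c$ continues to witness $\psi$ there. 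Hence $\psi$ is $<\omega_1$-closed-forceably $<\omega_1$-closed-necessary, and $\MPc(\emptyset)$ yields $\psi$ in $V$, contradicting the previous paragraph.

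The main technical point — what makes the argument work despite the asymmetry that $a$ cannot be used as a parameter in $\MPc$ — is the existential quantification over $b \in H_{\omega_2}$ inside $\psi$: the countably closed collapse of $|\TC{a}|$ supplies a witness for $b$ from within $H_{\omega_2}$ of the extension, making $\psi$ parameter-freely forceably-necessary, while $\MPsc(H_{\omega_2})$ still uniformly refutes $\psi$ by applying Namba forcing with each candidate $b$ as its own parameter. Identifying the right $\psi$ — strong enough to clash with Namba but weak enough to be forced by a countably closed collapse — is where the nontrivial combinatorial content of the argument lies.
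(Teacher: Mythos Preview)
Your proof is correct and follows essentially the same approach as the paper: both use the collapse $\Coll(\omega_1,|\TC{a}|)$ to push a code for the universe into $H_{\omega_2}$ (invoking $\MPc(\emptyset)$), and then use Namba forcing together with $\MPsc(H_{\omega_2})$ (with that code as parameter) to produce a countable sequence of ordinals outside $L[b]$, contradicting the countable distributivity of countably closed forcing. Your packaging via the single sentence $\psi$ is a clean way to present the argument, but the underlying mechanism---and the division of labor between the two maximality principles---is the same as in the paper.
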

\begin{proof} Suppose toward a contradiction that $V=L[a]$ for some set $a$ and both $\MPsc(H_{\omega_2})$ and $\MPc(\emptyset)$ hold. Then it is $<\omega_1$-closed-forceably $<\omega_1$-closed-necessary that $V$ is a countably closed forcing extension of a model of the form $L[\overline a]$, where $\overline a$ is a bounded subset of $\omega_2$ - indeed, $\overline a$ can be used as a parameter in $\MPsc(H_{\omega_2})$. To see this, first force with $\Coll(\omega_1, \TC{a})$, then every further countably closed extension will always look as described.
In particular, by $\MPc(\emptyset)$, it follows that $V=L[\overline a][G]$, where $G$ is a generic for a countably closed forcing and $\overline a \subseteq \alpha < \omega_2$ for some ordinal $\alpha$. However, it is also $sc$-forceably $sc$-necessary that there is a countable sequence not in $L[\overline a]$, since we have $\CH$ in $L[\overline a]$ and so Namba forcing is subcomplete, thus Namba forcing over $L[\overline a]$ will give rise to such a sequence. This is a contradiction to $V=L[\overline a][G]$, whch does not contain a countable sequence not in $L[\overline a]$. \end{proof}

In other words,

\begin{cor}\label{cor:L[a]notV} If $V=L[a]$, then it is not the case that $\MPsc(H_{\omega_2})$ and $\MPc(\emptyset)$ both hold. \end{cor}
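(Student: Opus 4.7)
The plan is to observe that this corollary is immediate from the contrapositive of Proposition \ref{prop:L[a]notV}. That proposition asserts that the conjunction $\MPsc(H_{\omega_2}) + \MPc(\emptyset)$ forces $V \neq L[a]$ for every set $a$. Contrapositively, if there exists some $a$ with $V = L[a]$, then at least one of $\MPsc(H_{\omega_2})$ or $\MPc(\emptyset)$ must fail, which is exactly the statement of the corollary.

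So the only work is a one-line reformulation: assume $V = L[a]$ and suppose toward contradiction that both principles hold; apply the proposition to derive $V \neq L[b]$ for every $b$, in particular $V \neq L[a]$, contradicting the hypothesis. No new forcing argument, no new use of Namba forcing, and no new appeal to Lemmas \ref{lem:scseqnec} and \ref{lem:ctbleseqnec} is needed, since those are already absorbed into the proposition.

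The only conceivable ``obstacle'' is purely presentational: one should make clear that this is stated as a separate corollary precisely because it is the natural contrapositive form used to separate the two maximality principles along the diagram preceding Proposition \ref{prop:L[a]notV}. That is, the corollary gives, under the mild assumption $V = L[a]$ (for instance $V = L$), an explicit model-theoretic obstruction to combining $\MPsc(H_{\omega_2})$ with $\MPc(\emptyset)$, thereby confirming one of the non-implication arrows in that diagram. I would therefore keep the proof to a single sentence invoking Proposition \ref{prop:L[a]notV} and its contrapositive.
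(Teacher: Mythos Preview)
Your proposal is correct and matches the paper's approach exactly: the paper introduces this corollary with the phrase ``In other words,'' making explicit that it is nothing more than the contrapositive restatement of Proposition~\ref{prop:L[a]notV}, and gives no separate proof.
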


We will now show both of the crossed out implications in the above diagram.

\begin{thm} \label{thm:ConBFMPscandNotLFMPc} It is consistent for $\MPsc(H_{\omega_2})$ to hold while $\MPc(\emptyset)$ fails. \end{thm}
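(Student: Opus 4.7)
The strategy is to exploit Corollary \ref{cor:L[a]notV}: in any universe of the form $L[a]$ with $a$ a set, $\MPsc(H_{\omega_2})$ and $\MPc(\emptyset)$ cannot simultaneously hold. So it suffices to produce a model of $\MPsc(H_{\omega_2})$ which has the form $L[a]$ for some set $a$; in any such model $\MPc(\emptyset)$ must fail, yielding the desired separation.

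The plan is to carry out the iteration of Theorem \ref{thm:forceMP} over the constructible universe. By Lemma \ref{lem:MP->fullyreflectinginL} the consistency of $\ZFC+\MPsc(H_{\omega_2})$ already entails the consistency of ``$\Ord$ is Mahlo'', so it is reasonable to assume that $L$ contains a fully reflecting cardinal $\delta$; this is exactly the hypothesis needed to apply Theorem \ref{thm:forceMP} with $L$ as the ground model. Doing so produces, inside $L$, a set-sized subcomplete $rcs$-iteration $\P$ of length $\delta$ such that whenever $G \subseteq \P$ is $L$-generic, $L[G] \models \MPsc(H_{\omega_2})$ and $\delta = \omega_2^{L[G]}$.

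Now the punchline is almost immediate. Since $\P \in L$ is a set and $G$ is $L$-generic, the model $L[G]$ is of the form $L[a]$, taking $a = G$ (coded as a set of ordinals if one prefers). Therefore, by Corollary \ref{cor:L[a]notV}, $\MPsc(H_{\omega_2})$ and $\MPc(\emptyset)$ cannot both hold in $L[G]$; since $\MPsc(H_{\omega_2})$ does hold there by construction, $\MPc(\emptyset)$ must fail. This gives a model of $\MPsc(H_{\omega_2}) + \neg\MPc(\emptyset)$, as required.

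The only real work is to verify that nothing in the argument of Theorem \ref{thm:forceMP} depended on the ambient universe being $V$ as opposed to $L$, which it does not: the iteration is set-sized, its definition is uniform in the fully reflecting cardinal, and the proof only used the existence of such a cardinal together with standard iteration machinery. Consequently, the main conceptual obstacle is already packaged inside Corollary \ref{cor:L[a]notV}, which in turn rests on Lemmas \ref{lem:scseqnec} and \ref{lem:ctbleseqnec}: subcomplete forcing (specifically Namba forcing under $\CH$) can add a cofinal $\omega$-sequence to $\omega_2$ and cannot later destroy it, while countably closed forcing cannot add any such sequence. This asymmetry is the real reason the two maximality principles can be separated, and the $L$-ground-model trick is simply the cleanest way to weaponize it.
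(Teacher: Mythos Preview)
Your proposal is correct and follows essentially the same approach as the paper: force $\MPsc(H_{\omega_2})$ over $L$ using a fully reflecting cardinal there, and then invoke Corollary~\ref{cor:L[a]notV} to conclude that $\MPc(\emptyset)$ must fail in the resulting $L[G]$. The paper's proof is just a terser version of exactly this argument.
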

\begin{proof} Let $\delta$ be fully reflecting in $L$. Force $\MPsc(H_{\omega_2})$ over $L$ to obtain $L[G] \models \MPsc(H_{\omega_2})$. By \textbf{Corollary \ref{cor:L[a]notV}}, this means that $\MPc(\emptyset)$ fails.
\end{proof}

Moreover, if we allow parameters for the countably closed maximality principle but not for the subcomplete maximality principle, it  is possible for the subcomplete maximality principle to fail while the countably closed principle holds, as shown below.

\begin{thm} \label{thm:ConBFMPcandNotLFMPsc} It is consistent for $\MPc(H_{\omega_2})$ to hold while $\MPsc(\emptyset)$ fails. \end{thm}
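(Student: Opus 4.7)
The strategy mirrors that of Theorem~\ref{thm:ConBFMPscandNotLFMPc}, exploiting the opposite side of the asymmetry between subcomplete and countably closed forcing with respect to adding countable sequences of ordinals. The witness will be the parameter-free sentence
\[\varphi \;\equiv\; \Ord^\omega \neq \Ord^\omega \cap L.\]
By \textbf{Lemma \ref{lem:scseqnec}}, $\varphi$ is $sc$-forceably $sc$-necessary in any model: first use $\Add(\omega_1,1)$ to force $\CH$ if necessary, then pass to a subcomplete Namba extension, which adds a cofinal $\omega$-sequence through $\omega_2$ that cannot lie in $L$, and this sequence persists through any further subcomplete forcing. On the other hand, by the basic countable distributivity of countably closed forcing (as reflected in \textbf{Lemma \ref{lem:ctbleseqnec}}), any countably closed extension of $L$ satisfies $\Ord^\omega = \Ord^\omega \cap L$, making $\varphi$ false there. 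Since $\varphi$ uses no parameters, it is eligible for $\MPsc(\emptyset)$.

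Concretely, I would begin with $V = L$ together with a cardinal $\delta$ that is fully reflecting in $L$ (the consistency hypothesis, equivalent to ``$\Ord$ is Mahlo'' in $L$). Applying \textit{Fact \ref{fact:ForceMPc}} with the L\'evy collapse $\Coll(\omega_1, {<}\delta)$ yields a generic extension $L[G]$ in which $\MPc(H_{\omega_2})$ holds and in which $\delta = \omega_2$. Since $\Coll(\omega_1, {<}\delta)$ is countably closed, it adds no new countable sequences of ordinals, so
\[\Ord^\omega \cap L[G] \;=\; \Ord^\omega \cap L,\]
and hence $\varphi$ is false in $L[G]$. However, $L[G]$ satisfies $\CH$, so by \textbf{Lemma \ref{lem:scseqnec}} (applied inside $L[G]$) the sentence $\varphi$ is $sc$-forceably $sc$-necessary in $L[G]$, witnessed by Namba forcing. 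Thus $L[G] \not\models \MPsc(\emptyset)$, while $L[G] \models \MPc(H_{\omega_2})$, as desired.

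There is no substantive obstacle: the only points that merit checking are that \textit{Fact \ref{fact:ForceMPc}} applies to $L$ (it does, since the consistency assumption provides the required fully reflecting cardinal there), that $\varphi$ is genuinely parameter-free (``$\Ord^\omega \cap L$'' is defined without parameters), and that \textbf{Lemma \ref{lem:scseqnec}} applies inside $L[G]$ (it does, as its proof works in any model of $\ZFC$ in which $\CH$ can be arranged by countably closed forcing). This argument is the exact dual of the preceding theorem, interchanging the roles of the two forcing classes.
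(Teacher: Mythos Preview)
Your proof is correct and follows essentially the same approach as the paper: start over $L$ with a fully reflecting cardinal, force $\MPc(H_{\omega_2})$ via the L\'evy collapse, observe that no new countable sequences of ordinals are added, and conclude that the parameter-free sentence $\Ord^\omega \neq \Ord^\omega \cap L$ witnesses the failure of $\MPsc(\emptyset)$ via Lemmas~\ref{lem:scseqnec} and~\ref{lem:ctbleseqnec}. The only difference is cosmetic: you spell out the reference to \textit{Fact}~\ref{fact:ForceMPc} and note that $\CH$ holds in $L[G]$, neither of which the paper makes explicit.
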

\begin{proof} We work over $L$, with $\delta$ fully reflecting in $L$. We force over $L$ in order to make $\MPc(H_{\omega_2})$ hold in $L[G]$. Then $L[G]$ has the same countable sequences of ordinals as $L$ does, by \textbf{Lemma \ref{lem:ctbleseqnec}}. Thus $\MPsc(\emptyset)$ fails; in particular, the sentence
	$$\exists\, \vec a \in \Ord^\omega \setminus L$$
is $sc$-forceably $sc$-necessary by \textbf{Lemma \ref{lem:scseqnec}}, but false in $L[G]$.  \end{proof}
However, it is relatively consistent for $\MPc(H_{\omega_2})$ and $\MPsc(\emptyset)$ hold at the same time.

\begin{thm}\label{thm:ConBFMPc+LFMPsc} It is consistent for $\MPc(H_{\omega_2})$ and $\MPsc(\emptyset)$ to both hold. \end{thm}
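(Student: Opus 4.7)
The plan is to perform a two-step forcing over a ground model containing two fully reflecting cardinals $\delta<\eta$ (equivalently, work under a suitable strengthening of ``$\Ord$ is Mahlo''), combining the $\MPsc(H_{\omega_2})$-iteration of Theorem~\ref{thm:forceMP} with Fuchs's L\'evy-collapse argument from \emph{Fact~\ref{fact:ForceMPc}}. The first step forces $\MPsc(\emptyset)$ via a parameter-free version of the lottery iteration up to $\delta$, and the second step forces $\MPc(H_{\omega_2})$ via the countably closed L\'evy collapse to $\eta$. The verification then reduces to checking that the second step preserves the principle established by the first.

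More concretely, I would define $\P_1$ to be the $rcs$ iteration of length $\delta$ from Theorem~\ref{thm:forceMP}, except that at each stage the lottery sum ranges only over minimal-rank subcomplete forcings which force some \emph{parameter-free} sentence to be $sc$-necessary. The argument of that theorem adapts verbatim to show that, for $G_1$ a $\P_1$-generic filter, the extension $V_1 := V[G_1]$ satisfies $\MPsc(\emptyset)$, together with $\delta=\omega_2^{V_1}$ and $\CH$. Because $\P_1$ has the $\delta$-$cc$ and $|\P_1|=\delta<\eta$, the reflection scheme $V_\eta\preccurlyeq V$ is preserved by standard small-forcing arguments, so $\eta$ remains fully reflecting in $V_1$. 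Now, in $V_1$, force with the countably closed (hence subcomplete) L\'evy collapse $\P_2:=\Coll(\omega_1,<\eta)$; by \emph{Fact~\ref{fact:ForceMPc}} applied in $V_1$, for $G_2$ a $\P_2$-generic filter over $V_1$, the model $V_2:=V_1[G_2]$ satisfies $\MPc(H_{\omega_2})$ with $\eta=\omega_2^{V_2}$.

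It remains to verify that $\MPsc(\emptyset)$ persists in $V_2$. Suppose $\varphi$ is a parameter-free sentence that is $sc$-forceably $sc$-necessary in $V_2$, as witnessed by some subcomplete $\dot{\Q}\in V_2$. By the subcomplete two-step iteration theorem, the composition $\P_2 * \dot{\Q}$ is subcomplete in $V_1$, and it witnesses that $\varphi$ is already $sc$-forceably $sc$-necessary in $V_1$. Using $V_\delta\preccurlyeq V$ and a density argument inside $\P_1$, this $\varphi$ is pushed at some stage $\beta<\delta$, so $\varphi$ becomes $sc$-necessary in the intermediate extension $V[G_1\rest(\beta+1)]$; since the tail of $\P_1$ is subcomplete, another application of the two-step iteration theorem shows that $\varphi$ remains $sc$-necessary throughout $V_1$. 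Therefore $\varphi$ holds in every subcomplete extension of $V_1$, in particular in $V_2$, giving $\MPsc(\emptyset)$ there.

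The hard part will be precisely this upgrade from ``$\varphi$ is $sc$-forceably $sc$-necessary in $V_1$'' to ``$\varphi$ is $sc$-necessary in $V_1$.'' It relies on the lottery design of $\P_1$ pushing every parameter-free $sc$-button along the iteration, and on iterated uses of the subcomplete two-step iteration theorem ensuring that $sc$-necessity survives the composition of the pushed stage with the remaining tail and with any further subcomplete forcing. A secondary but routine point is the preservation of the full reflection of $\eta$ under the small forcing $\P_1$, which is needed to invoke \emph{Fact~\ref{fact:ForceMPc}} in the second step.
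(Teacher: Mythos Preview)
Your approach is essentially the same as the paper's: force $\MPsc(\emptyset)$ first with the parameter-free lottery iteration up to a fully reflecting $\overline\delta$, then force $\MPc(H_{\omega_2})$ with the L\'evy collapse below a second fully reflecting cardinal $\delta$, and finally check that $\MPsc(\emptyset)$ survives. The paper works over $L$ with $L_{\overline\delta}\preccurlyeq L_\delta\preccurlyeq L$, which is exactly your two fully reflecting cardinals.

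The one place where you make your life harder than necessary is the persistence step. You flag as the ``hard part'' the upgrade from ``$\varphi$ is $sc$-forceably $sc$-necessary in $V_1$'' to ``$\varphi$ is $sc$-necessary in $V_1$,'' and you propose to handle it by re-entering the lottery iteration $\P_1$ and arguing by density that the button gets pushed. This works, but it is redundant: you have already established $\MPsc(\emptyset)$ in $V_1$, and as the paper observes (just after the definition of $\textsf{MP}_\Gamma$), for classes containing trivial forcing $\MPsc$ is equivalent to the Euclidean frame condition $\lozenge\Box\varphi\implies\Box\varphi$. So the upgrade is a one-line appeal to $\MPsc(\emptyset)$ in $V_1$, and then $\varphi$ holds in the subcomplete extension $V_2$. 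The paper's proof does exactly this: having shown $\P_{\leq p}*\dot\Q$ is subcomplete in $L[\overline G]$ and witnesses $sc$-forceable $sc$-necessity of $\varphi$, it simply invokes $\MPsc(\emptyset)$ to conclude $\varphi$ is $sc$-necessary there, hence true in $L[\overline G][G]$.
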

\begin{proof} Let $L_{\overline \delta} \preccurlyeq L_\delta \preccurlyeq L$, with $\delta$ regular. Force $\MPsc(\emptyset)$ with a $\overline \delta$-length iteration of subcomplete forcings. To do this, the iteration is the same as in \textbf{Theorem \ref{thm:forceMP}}, except simpler since we don't have to worry about parameters for our available sentences. Call the result $L[\G]$. This gives us $L_\delta[\G] \preccurlyeq L[\G]$, since $\overline \delta$ is small relative to $\delta$. Then force $\MPc(H_{\omega_2})$ by forcing with $\P= \Coll(\omega_1, <\delta)$ over $L[\G]$, which works by \textit{Fact} \ref{fact:ForceMPc}.
Letting $G \subseteq \P$ be generic over $L[\G]$, in $L[\G][G]$ we certainly have that $\MPc(H_{\omega_2})$ holds. The question is whether we have somehow killed $\MPsc(\emptyset)$. To see that this does not happen, let $\varphi$ be a sentence without parameters in $L[\G][G]$ that is $sc$-forceably $sc$-necessary. This means that there is a subcomplete forcing $\Q=\dot \Q^G$ in $L[\G][G]$ such that 
	$$L[\G][G] \models ``\Q \text{ is subcomplete" and }``\forces_{\Q}`\varphi \text{ is $sc$-necessary.'}"$$
But this means that there is a condition $p \in G$ such that 
	$$L[\G] \models ``p \forces `\dot \Q \text{ is subcomplete' and }`\forces_{\dot\Q} ``\varphi \text{ is $sc$-necessary.'''}"$$
In other words, letting $\P_{\leq p}$ denote the forcing $\P$ below the condition $p$, 
	$$L[\G] \models ``\forces_{\P_{\leq p}*\dot\Q} `\varphi \text{ is $sc$-necessary.'}"$$
We have that $\P_{\leq p} * \dot \Q$ is subcomplete by the two-step iteration theorem, since also 
	$$L[\G] \models ``\forces_{\P_{\leq p}} `\dot \Q \text{ is subcomplete.'}"$$
Thus, by $\MPsc(\emptyset)$ in $L[\G]$, $\varphi$ is $sc$-necessary. Hence $\varphi$ is true in $L[\G][G]$, since it is a subcomplete forcing extension of $L[\G]$. \end{proof} 

We can generalize part of the above proof to see that $\MPsc(H_{\omega_2})$ will continue to hold in further forcing extensions so long as the extensions are subcomplete and do not add any new parameters. This is equivalent to the claim that $\Box \MPsc(\emptyset)$ is equiconsistent with $\MPsc(\emptyset)$. The analogous result stating that $\Box \MPc(\emptyset)$ is consistent is implied by work of Fuchs \cite[Lemma 4.2]{Fuchs:2008rt}.

However, it remains unclear how to answer the following question:
\begin{question} Is it consistent for $\MPsc(H_{\omega_2})$ and $\MPc(\emptyset)$ to both hold? \end{question}

The reason this would be trickier is because subcomplete forcings are certainly not necessarily countably closed.
Already we see from the result of \textbf{Corollary \ref{cor:L[a]notV}} that the same proof technique we have used say in \textbf{Theorem \ref{thm:ConBFMPcandNotLFMPsc}} does not seem to work in this example.

In fact, we can use our proof of \textbf{Proposition \ref{prop:L[a]notV}} and set-theoretic geology to go a bit further. Set-theoretic geology was explored by Fuchs, Hamkins, and Reitz \cite{Fuchs:2015fy}. The authors make precise the notion of the definability of ground models for models of $\ZFC$. Moreover, Usuba \cite{Usuba:2016pi} has shown that if there are set-many grounds for a model of $\ZFC$, then the intersection of these grounds, called the \textit{mantle}, is itself a ground (in particular, it is a model of $\ZFC$).

\begin{thm} \label{thm:BFMPsc+LFMPcImpliesGrounds} If $\MPsc(H_{\omega_2}) + \MPc(\emptyset)$ holds, then there is a proper class of grounds. \end{thm}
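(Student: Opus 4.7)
The plan is to adapt the Namba-based argument of \textbf{Proposition \ref{prop:L[a]notV}} in order to produce, for each ordinal $\alpha$, a nontrivial ground $W_\alpha$ of $V$ together with a generic $G_\alpha$ over $W_\alpha$ for some countably closed forcing $\P_\alpha \in W_\alpha$ such that $V = W_\alpha[G_\alpha]$ and some cardinal $\lambda_\alpha \geq \alpha$ of $W_\alpha$ is collapsed to $\omega_1$ by $G_\alpha$. The crucial observation is that any such $\P_\alpha$ must satisfy $|\P_\alpha|^{W_\alpha} \geq \alpha$, because a forcing of size less than $\alpha$ cannot collapse cardinals above $\alpha$. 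On the other hand, for any fixed ground $W$ of $V$, the supremum of the sizes of forcings $\P \in W$ for which $V$ is a $\P$-generic extension of $W$ is necessarily a set in $V$. Hence no single ground $W$ can witness the constructions for cofinally many $\alpha$, so the collection of grounds of $V$ must form a proper class.

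To produce the witnessing ground $W_\alpha$ for a given $\alpha$, I would combine the two available maximality principles. If $\alpha$ lies in $H_{\omega_2}$, the sentence $\varphi(\alpha)$ asserting ``$V$ is a nontrivial countably closed set-forcing extension of an inner model in which some cardinal $\lambda \geq \alpha$ is collapsed by the generic'' uses only $\alpha$ as a parameter, so it is permissible for $\MPsc(H_{\omega_2})$. I claim this sentence is $sc$-forceably $sc$-necessary: forcing with $\Coll(\omega_1,\alpha^+)$ produces an extension in which $\varphi(\alpha)$ holds (with $V$ itself as the witnessing ground), and since the ground relation is preserved under further set forcing---if $V = W[G]$ and $V[H] = W[G * H]$, then $W$ remains a ground of $V[H]$ with the same collapsed-cardinal witness---the property persists in any further subcomplete extension. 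Hence by $\MPsc(H_{\omega_2})$, $\varphi(\alpha)$ holds in $V$. For $\alpha$ outside $H_{\omega_2}$, one instead applies $\MPc(\emptyset)$ to the parameter-free scheme $\chi_n := $ ``$V$ is a nontrivial countably closed extension of an inner model in which a cardinal $\geq \aleph_n$ is collapsed'', which is $<\!\omega_1$-closed-forceably $<\!\omega_1$-closed-necessary for each standard $n$; this yields grounds whose collapsed cardinals are cofinal in the definable $\aleph_n$-sequence and, combined with further definable cardinal invariants, through all the ordinals.

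The main obstacle will be showing that these constructions actually produce a proper class of distinct grounds, rather than a single ground being reused via different generics. For this I would appeal to the Laver--Reitz ground-model-definability theorem, which parametrizes the grounds of $V$ and allows one to speak first-order about forcings realizing $V$ as a generic extension of a given ground. The key calculation is the boundedness statement mentioned above: for any fixed ground $W$, the collection of forcings $\P \in W$ that witness $V$ as a $\P$-generic extension of $W$ forms a set in $W$, so their sizes are bounded by some ordinal $\kappa_W$; but the witnesses for $\varphi(\alpha)$ (respectively $\chi_n$) require forcings of size at least $\alpha$ (respectively $\aleph_n$), so as $\alpha$ grows one is forced through unboundedly many distinct grounds. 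This delicate bookkeeping---translating the sentence-level conclusions from the two maximality principles into a genuinely class-sized collection of distinct grounds---is where the real work lies; once it is in place, the proper-class conclusion follows.
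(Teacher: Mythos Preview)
Your direct-construction strategy founders on the parameter restrictions of the two principles. In the $\MPsc(H_{\omega_2})$ step you may only use ordinals $\alpha<\omega_2$ as parameters, and for such $\alpha$ a \emph{single} ground $W$ can already witness every $\varphi(\alpha)$: if $V=W[G]$ with $G\subseteq\Coll(\omega_1,<\kappa)$ for some $\kappa$ far above $\omega_2^V$, then $W$ has collapsed cardinals cofinal in $\omega_2^V$, so it serves as $W_\alpha$ for all $\alpha<\omega_2$ simultaneously. Thus this part yields no information about the number of grounds. The $\MPc(\emptyset)$ step is worse: parameter-free sentences cannot name arbitrarily large ordinals, and your $\chi_n$ reach only the $\aleph_n$ for standard $n$; the phrase ``combined with further definable cardinal invariants, through all the ordinals'' cannot be cashed out. (There is also a smaller slip: your $\varphi(\alpha)$ requires the witnessing extension to be countably closed, but a further subcomplete tail forcing does not in general leave the composite extension countably closed, so $\varphi(\alpha)$ as stated is not $sc$-necessary after the collapse.)

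The paper proceeds by contradiction. Assuming only set-many grounds, Usuba's theorem gives that the mantle $\M$ is itself a ground, so $V=\M[G]$ for some set-generic $G$; crucially $\M$ is parameter-free definable. One then applies $\MPc(\emptyset)$ to the sentence ``for some $\alpha<\omega_2$ there is $g\subseteq\alpha$ with $V=\M[g][h]$ for some countably closed generic $h$'': this is pushed by collapsing $\TC{G}$ to $\omega_1$, after which $G$ codes as a bounded subset of $\omega_2$ and any further countably closed forcing supplies $h$. Hence $V=\M[g][h]$ with $g\in H_{\omega_2}$ and $h$ countably closed over $\M[g]$. Now Namba forcing (subcomplete, since $\CH$ holds) makes ``there is an $\omega$-sequence not in $\M[g]$'' (with parameter $g\in H_{\omega_2}$) hold necessarily, so by $\MPsc(H_{\omega_2})$ it holds in $V$; but $h$ is countably closed, so $V=\M[g][h]$ has the same $\omega$-sequences as $\M[g]$---contradiction. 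You announced you would adapt the Namba argument of Proposition~\ref{prop:L[a]notV} but never actually invoked Namba; that tension between Namba and countably closed forcing over a small intermediate model is exactly the engine of the proof.
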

\begin{proof}
Suppose toward a contradiction that there is not a proper class of grounds, and that both $\MPsc(H_{\omega_2})$ and $\MPc(\emptyset)$ hold. By Usuba's work on set-theoretic geology and proof of the downward directed grounds hypothesis \cite{Usuba:2016pi}, this means that $V=\M[G]$, where $\M$ is the intersection of all of the grounds (the mantle). Define the statement $\varphi$ as follows: 

\begin{center} $\varphi$ : ``For some $\alpha < \omega_2$, there is a $g \subseteq \alpha$ such that $V=\M[g][h]$, where $h$ is  generic for a countably closed forcing." \end{center}
Then $\varphi$ is $<\omega_1$-closed-forceably $<\omega_1$-closed-necessary, since we already have that $V= \M[G]$ but by taking $g=G*H$ in the above sentence, where $H \subseteq \Coll(\omega_1, \TC{G})$ is generic over $V=\M[G]$, it is then $<\omega_1$-closed-necessary for $V$ to be of the form $\M[g][h]$ for $h$ generic for a countably closed forcing. 

So without loss of generality we can assume $G$ is like that, ie. let $V=\M[g][h] = \M[G]$. Since $\MPc(\emptyset)$ (or indeed $\MPsc(H_{\omega_2})$ holds) in $V$, we also have $\CH$. Thus Namba forcing, which we shall denote as $\Namba$, is subcomplete. However, $\Namba$ adds a new $\omega$-sequence, call it $S$. In particular, the sentence $\psi$ holds in $V[S]=\M[G][S] = \M[g][h][S]$, where $\psi$ is defined as follows:
\begin{center} $\psi$: ``There is an $\omega$-sequence not in $\M[g].$" \end{center}

Since there is an $\omega$-sequence not in $\M[G]=V$, and $h$ couldn't have added it as $h$ is countably closed. Moreover, the sentence $\psi$ holds in $V$ by $\MPsc(H_{\omega_2})$, since it is $sc$-forceably $sc$-necessary. But this means that $\psi$ is true in $\M[G]=\M[g][h]$, a contradiction as $\M$ is forcing invariant and since $g$ and $h$ come from countably closed forcing that don't add new $\omega$-sequences.
\end{proof}

This immediately yields the following corollary, in light of Usuba's results.

\begin{cor}\label{cor:hyperhuge->not(BFMPsc+LFMPc)} If there is a hyper-huge cardinal, then $\MPsc(H_{\omega_2}) + \MPc(\emptyset)$ fails. \end{cor}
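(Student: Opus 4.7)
The plan is to proceed by contrapositive, leveraging Theorem \ref{thm:BFMPsc+LFMPcImpliesGrounds}, which has already established that $\MPsc(H_{\omega_2}) + \MPc(\emptyset)$ forces the existence of a proper class of grounds. So it suffices to show that the existence of a hyper-huge cardinal precludes a proper class of grounds, i.e., that hyper-hugeness implies there are only set-many grounds of $V$.

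First I would invoke Usuba's theorem from the same paper \cite{Usuba:2016pi} that is already cited in the preceding proof. There, Usuba proves not only the downward-directed grounds hypothesis (which was used in the proof of Theorem \ref{thm:BFMPsc+LFMPcImpliesGrounds}), but strengthens this under a large-cardinal assumption: if there exists a hyper-huge cardinal $\kappa$, then $V$ has only set-many grounds. Consequently, the mantle $\M$ is a ground and $V$ is a set-forcing extension of $\M$, but moreover the collection of all grounds forms a set rather than a proper class.

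Then the corollary follows immediately: assume for contradiction that both a hyper-huge cardinal exists and $\MPsc(H_{\omega_2}) + \MPc(\emptyset)$ holds. By Theorem \ref{thm:BFMPsc+LFMPcImpliesGrounds}, the latter hypothesis yields a proper class of grounds. But by Usuba's theorem, the former hypothesis yields that there are only set-many grounds. This is the desired contradiction.

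There is essentially no obstacle in the argument beyond correctly citing Usuba's result in the form needed; the conceptual work is packaged entirely into Theorem \ref{thm:BFMPsc+LFMPcImpliesGrounds}. The only subtlety worth flagging is to make clear that ``proper class of grounds'' in the statement of the prior theorem is indeed the negation of ``set-many grounds'' in the sense made precise by the set-theoretic geology framework of Fuchs, Hamkins, and Reitz \cite{Fuchs:2015fy}, so that Usuba's conclusion plugs in cleanly.
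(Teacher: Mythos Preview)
Your proposal is correct and follows essentially the same approach as the paper: invoke Usuba's result that a hyper-huge cardinal implies there are only set-many grounds, which directly contradicts Theorem~\ref{thm:BFMPsc+LFMPcImpliesGrounds}. The paper's proof is just a two-line citation of \cite[Theorem 1.6]{Usuba:2016pi} together with the preceding theorem, so your write-up is simply a more expanded version of the same argument.
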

\begin{proof}
By \cite[Theorem 1.6]{Usuba:2016pi}, once is a hyper-huge cardinal there are only set many ground models. This contradicts \textbf{Theorem \ref{thm:BFMPsc+LFMPcImpliesGrounds}}. \end{proof}

To bring further doubt that the lightface version of the countably closed maximality principle and the boldface subcomplete maximality principle can be combined, we have the following result comparing the local subcomplete and countably closed maximality principles.

\begin{thm} \label{thm:LMPsc+LMPcImpliesSharpClosure} If $\lMPsc$ and $\lMPc$ both hold, then $V$ is closed under sharps. \end{thm}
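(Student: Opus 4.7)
The plan is to argue by contradiction: suppose both $\lMPsc$ and $\lMPc$ hold but some set fails to have a sharp. Since $\lMPsc$ implies $\CH$ by \textbf{Proposition \ref{prop:localproperties}}, and any set lacking a sharp can be coded as a bounded subset of $\omega_2$, we may assume $x \subseteq \omega_1$ with $x^\#$ not existing, so $x \in H_{\omega_2}$.

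The first half of the argument uses $\lMPsc$ to produce, inside $V$, a cofinal $\omega$-sequence $s$ through $\omega_2$ with $s \notin L[x]$. Under $\CH$, Namba forcing is subcomplete, and it adjoins such a sequence; any further subcomplete forcing keeps $s$ in $H_{\omega_2}$ (since $s$ is countable and $\omega_1$ is preserved), preserves $L[x]$ as an absolutely defined class, and so preserves both ``$s \notin L[x]$'' and ``$\sup s = \omega_2$''. Hence the $H_{\omega_2}$-sentence asserting the existence of such an $s$ with parameter $x$ is $sc$-forceably $sc$-necessary, and $\lMPsc$ pulls it back into $V$.

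The second half uses $\lMPc$ together with Jensen's covering theorem for $L[x]$, which is applicable since $x^\#$ does not exist. Covering yields that $\omega_2^V$ is a cardinal of $L[x]$ and that the sequence $s$ from the previous step is contained in some $Y \in L[x]$ with $|Y|^{L[x]} \leq \omega_1$. Since $s$ is cofinal in $\omega_2^V$, we get $\sup Y = \omega_2^V$, forcing $\omega_2^V$ to have $L[x]$-cofinality at most $\omega_1$ — contradicting that $\omega_2^V$ is a regular cardinal in $L[x]$ (again a consequence of covering applied to the regularity of $\omega_2^V$ in $V$). The role of $\lMPc$ here is to stabilize the cardinal-theoretic features of $L[x]$ visible from $H_{\omega_2}$: any countably closed collapse used to position $x$ and the relevant witnesses inside $H_{\omega_2}$ adds no $\omega$-sequences and leaves $L[x]$ unchanged, so the covering statements transfer back to $V$.

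The main obstacle will be the precise interplay between the covering-based cofinality contradiction and the $H_{\omega_2}$-locality of the maximality principles — both the $sc$-necessary production of $s$ and the $<\!\omega_1$-closed-necessary covering statements must be expressible using only parameters and witnesses in $H_{\omega_2}$. A delicate subsidiary point is guaranteeing that the Namba sequence remains cofinal in $\omega_2^V$ (and not merely in a possibly-collapsed $\omega_2$) throughout the subcomplete iterations witnessing necessity; this may require rephrasing ``cofinal in $\omega_2$'' as ``$\sup s$ equals a fixed ordinal parameter equal to $\omega_2^V$'' or by invoking a variant of Namba provably preserving $\omega_2$ under $\CH$.
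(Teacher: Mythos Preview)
Your proposal has two substantial gaps, and they are intertwined.

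First, the cofinality clause cannot be made $sc$-necessary in the way you want. After Namba forcing you have $s$ cofinal in $\omega_2^V$, but any further subcomplete forcing that collapses $\omega_2^V$ (for instance $\Coll(\omega_1,\omega_2^V)$, which is countably closed and hence subcomplete) destroys ``$\sup s=\omega_2$''. Your suggested fix of naming $\omega_2^V$ as a parameter is not available in the \emph{local} principle: $\omega_2^V\notin H_{\omega_2}$, so it cannot serve as a parameter in $\lMPsc$. If you drop the cofinality clause and only keep ``there is a countable $s\notin L[x]$'', that statement \emph{is} $sc$-forceably $sc$-necessary, but then your covering argument no longer produces a contradiction: covering only gives $s\subseteq B\in L[x]$ with $|B|^V\le\omega_1$, which is perfectly compatible with $s\notin L[x]$ and says nothing about the cofinality of $\omega_2^V$.

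Second, and more seriously, your use of $\lMPc$ is not doing any real work. Everything you attribute to it (``stabilize the cardinal-theoretic features of $L[x]$'') is either raw covering, which holds outright in $V$ from $\neg x^\#$, or is subsumed in the $\lMPsc$ step. But $\lMPc$ \emph{must} be used nontrivially: one can force $\MPsc(H_{\omega_2})$ (and hence $\lMPsc$) over $L$, obtaining a model with no sharps. So any argument that derives closure under sharps from $\lMPsc$ alone is wrong.

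The paper's route makes the $\lMPc$ step carry the weight. From $\neg X^\#$ and covering one first produces (in $V$) a set $Y\subseteq\Ord$ with $[\Ord]^\omega\subseteq L[Y]$. The key observation is that the \emph{parameter-free} sentence ``$H_{\omega_2}\models\exists Y\,\forall a\in[\Ord]^\omega\ a\in L[Y]$'' is $<\!\omega_1$-closed-forceably $<\!\omega_1$-closed-necessary: collapse $\sup Y$ to $\omega_1$ so that $Y\in H_{\omega_2}$, and note that no further countably closed forcing adds countable sets of ordinals. Now $\lMPc$ (which is lightface, so the absence of parameters is exactly right) gives such a $Y\in H_{\omega_2}$ in $V$. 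With this $Y$ in hand as a legal parameter for $\lMPsc$, Namba forcing produces a countable set of ordinals outside $L[Y]$, a fact which persists under further subcomplete forcing; $\lMPsc$ then yields such a set in $V$, contradicting the choice of $Y$. The missing idea in your attempt is precisely this: use covering not to argue about cofinalities, but to manufacture a single $Y$ absorbing all countable sets of ordinals, and get that $Y$ into $H_{\omega_2}$ via the parameter-free $\lMPc$.
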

\begin{proof}
	Suppose toward a contradiction that $\lMPsc$ and $\lMPc$ both hold and $V$ is not closed under sharps; in particular  suppose there is a set of ordinals $X$ such that $\neg X^\sharp$. Then we first claim that there is a $Y$ such that $L[Y]$ contains all countable sets of ordinals.
	\begin{claimno} \label{claim:L[Y]ContainsCtbleSets} There is $Y \subseteq \Ord$ such that $[\Ord]^\omega \subseteq L[Y]$. \end{claimno}
	\begin{proof}[Pf.]
		Let $\tilde X$ code all of the countable subsets of $\omega_2$, so that $[\omega_2]^\omega \subseteq L[\tilde X]$. Let $Y = X \oplus \tilde X$. Then $Y$ is as desired. To see this, let $a \in [\Ord]^\omega$. By $\neg X^\sharp$, Jensen's Covering Lemma holds in $L[X]$, so let $b \in L[X]$ with $a \subseteq b$, where $b$ is a set of ordinals satisfying $|b| \leq \omega_1$. Let $f\in L[X]$ be order preserving, satisfying 
	$$f:b \longrightarrow \otp(b) < \omega_2^V,$$
and let $\overline a = f``a$. Then $\overline a \in L[ \tilde X]$. Thus both $\overline a, f \in L[Y]$ and so $a = f^{-1} ``\overline a \in L[Y]$ as desired.
	\end{proof}
Moreover we claim that the statement of \textit{Claim} \ref{claim:L[Y]ContainsCtbleSets} will necessarily hold in the $H_{\omega_2}$ of any countably closed forcing extension after a suitable collapse forcing.
\begin{claimno} It is $<\omega_1$-closed-forceably $<\omega_1$-closed-necessary	 that \begin{equation} \label{equation:H_omega2modelsclosedunderctbleords} H_{\omega_2} \models \text{ ``there is a set $Y$ such that every countable set of ordinals is in $L[Y]$"} \end{equation}	
\end{claimno}
\begin{proof}[Pf.]
	Let $G \subseteq \Coll(\omega_1, \sup{Y})$ be generic and let $\Q$ be a countably closed poset. Let $H \subseteq \Q$ be generic over $V[G]$. We have to show (\ref{equation:H_omega2modelsclosedunderctbleords}) in $V[G][H]$.
	
	The main point is that $Y \in H_{\omega_2}^{V[G][H]}$. Moreover let $\kappa = \omega_2^{V[G][H]}$, and let $a \in [\kappa]^\omega$ in $V[G][H]$ . Then since countably closed forcing doesn't add countable sets of ordinals, $a \in V$. Thus $a \in L[Y]$, so $a \in L_\kappa[Y]$ as both $a$ and $Y$ are bounded subsets of $\kappa$, which is a cardinal in $L[Y]$. But $L_\kappa[Y]=(L[Y])^{H_{\omega_2}^{V[G][H]}}$. \end{proof}
	
	By $\lMPc$, let $Y \subseteq \omega_2$ satisfy (\ref{equation:H_omega2modelsclosedunderctbleords}) in $V$. Perform  Namba forcing, $\Namba$, and let $g \subseteq \Namba$ be generic. Then 
	$$H_{\omega_2}^{V[g]} \models [\Ord]^\omega \nsubseteq L[Y],$$ 
and this persists to further forcing extensions. So it is $sc$-forceably $sc$-necessary, and thus true in $V$. So 
	$$H_{\omega_2} \models [\Ord]^\omega \nsubseteq L[Y]$$
a contradiction to \textit{Claim} \ref{claim:L[Y]ContainsCtbleSets}.	
\end{proof}

\subsection{More on the Modal Logic of Subcomplete Forcing}
\label{subsec:ModalLogicSC}
While we are using maximality principles to compare the classes of countably closed forcing and subcomplete forcing, one might wonder whether the property of a poset being subcomplete is itself $sc$-forceably $sc$-necessary. Certainly, the property of being countably closed is $<\omega_1$-closed-forceably $<\omega_1$-closed-necessary, by the following basic fact:

\begin{fact} If $\P$ and $\Q$ are countably closed, then $\forces_\P ``\check \Q \text{ is countably closed."}$ \end{fact}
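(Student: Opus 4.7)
The plan is to argue that whenever $G \subseteq \P$ is generic over $V$, every descending $\omega$-sequence of conditions in $\Q$ appearing in $V[G]$ is already in $V$, and then to apply the countable closure of $\Q$ in $V$ to produce a lower bound. Since the statement we want to force is downward absolute in a straightforward way (existence of a lower bound in $\Q$ from $V$ is also a lower bound in $V[G]$), this is enough.

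First I would fix, in $V[G]$, a descending sequence $\vec{q} = \seq{ q_n }{ n < \omega }$ of conditions in $\Q$, with $q_{n+1} \leq_\Q q_n$ for every $n$. Since $\Q \in V$, we have $\vec q \in \leftexp{\omega}{\Q} \cap V[G]$. Next I would invoke the standard fact that countably closed forcing is countably distributive: a countably closed poset adds no new $\omega$-sequences of ground-model elements. Applied to $\P$ (which is countably closed by hypothesis) and to the set $\Q \in V$, this yields $\vec q \in V$.

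Finally, working in $V$, since $\Q$ is countably closed and $\vec q$ is a descending $\omega$-sequence in $\Q$, there is some $q \in \Q$ with $q \leq_\Q q_n$ for every $n < \omega$. Such a $q$ remains a lower bound for $\vec q$ in $V[G]$, which establishes that $\Q$ is countably closed in $V[G]$. Formally, one proves this by picking a name $\dot{\vec q}$ and a condition $p \in G$ forcing $\dot{\vec q}$ to be a descending $\omega$-sequence in $\check \Q$, using distributivity to find $p^* \leq p$ and $\vec q \in V$ with $p^* \forces \dot{\vec q} = \check{\vec q}$, then letting $q$ be a lower bound for $\vec q$ in $\Q$ and noting that $p^* \forces \check q \leq \dot{\vec q}(\check n)$ for every $n$.

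There is no real obstacle here; the only thing to be slightly careful about is distinguishing ``countably closed'' from ``countably distributive'' and observing that the former implies the latter, so that countable distributivity of $\P$ is indeed available. All of this is a routine verification and does not require any of the machinery of subcompleteness developed earlier.
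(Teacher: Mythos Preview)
Your argument is correct: countably closed forcing is countably distributive, so any descending $\omega$-sequence from $\check\Q$ in $V[G]$ already lies in $V$, where a lower bound exists by hypothesis. The paper states this result as a \emph{Fact} without proof, so there is no alternative argument to compare against; what you wrote is exactly the standard verification one would expect.
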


However, the analogous statement for proper forcings is not even true. Letting $\mathbb T$ be a poset that forces with a Suslin tree (in other words $\mathbb T$ is a Suslin tree) and $\mathbb S$ is the canonical $ccc$ specialization forcing, then $\mathbb T \times \mathbb S$ is not proper, since after adding a cofinal branch the specialization function will collapse $\omega_1$. Indeed, both $\mathbb T$ and $\mathbb S$ are $ccc$ so this very argument shows that the analogous statement is not true for $ccc$ forcings either. In the case of subcompete forcing, we cannot perform a similar argument using Suslin trees and $ccc$ specialization forcing since forcing with them is not subcomplete by \textit{Fact} \ref{fact:Suslinpres} and \textbf{Corollary \ref{cor:CCCnotSC}}. Instead we will work with Namba forcing.

\begin{defn} \label{def:namba} \textit{\textbf{Namba forcing}}, denoted by $\Namba$, a forcing notion consisting of subtrees $T \neq \emptyset$ of $\omega_2^{<\omega}$ ordered by inclusion, such that $T$ is downward closed in $\omega_2^{<\omega}$ and where each node in $T$ has $\omega_2$-many eventual successors in $T$. \end{defn}

Each condition in $\Namba$ has size $\omega_2$. Namba forcing adds a cofinal sequence $S: \omega \to \omega_2^V$ to the extension, a cofinal branch through $\omega_2^{<\omega}$, so there is no way it can be countably closed or proper. We will refer to such $S$ as a Namba sequence. 
Under $\CH$, Namba forcing adds no new reals. Furthermore Namba forcing is subcomplete, which Jensen shows in \cite[Section 3.3]{Jensen:2012fr}. 

Jensen \cite[Appendix: Lemma 1]{Jensen:2009cq} shows the following fact: 
\begin{fact}[Jensen]\label{fact:Namba} Let $S$ be a Namba sequence. Let $S' \in V[S]$ be a cofinal $\omega$-sequence in $\omega_2^V$. Then $S'$ is a Namba sequence and $V[S']=V[S]$. \end{fact}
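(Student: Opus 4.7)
The proof has two parts: (i) showing $S'$ is a Namba sequence over $V$, and (ii) establishing $V[S']=V[S]$. Both will be derived from verifying that a certain filter extracted from $S'$ is $\Namba^V$-generic. Working inside $V[S]$, define
\[
G' := \{\,T \in \Namba^V : S' \in [T]\,\},
\]
where $S'\in[T]$ means $S'\!\upharpoonright\! n\in T$ for every $n<\omega$. The goal is to show $G'$ is a $\Namba$-generic filter over $V$; from this, $V[G']=V[S']$ by standard forcing, whence $S'$ is the Namba sequence of $G'$ (proving (i)), and it remains to identify $V[G']$ with $V[S]$ to obtain (ii).

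The heart of the argument is the genericity of $G'$. Fix a maximal antichain $A\in V$ of $\Namba^V$; I must show $A\cap G'\neq\emptyset$. Let $\dot S'$ be a $\Namba$-name with $\dot S'^{\,G}=S'$, and let $T_0\in G$ force that $\dot S'$ is a cofinal $\omega$-sequence in $\check\omega_2$. The key is a fusion-style density lemma: the set
\[
D_A := \{\,T \leq T_0 : \exists\, T_A \in A \ \forall n<\omega\ T \Vdash \dot S'\!\upharpoonright\!\check n \in \check T_A\,\}
\]
is dense below $T_0$ in $\Namba^V$. Granted this, since $G$ is $V$-generic, there is some $T\in G\cap D_A$, witnessing some $T_A\in A$ with $T_A\in G'$. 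The density of $D_A$ is proved by a level-by-level construction exploiting that every node of a Namba condition has $\omega_2$-many successors, combined with a pigeonhole over $A$: at each stage one refines to a subtree on which the values of $\dot S'$ are decided to lie inside one fixed element of $A$. This is precisely the sort of combinatorial argument underlying the proof that Namba forcing is subcomplete under $\CH$, and $\CH$ in $V$ is crucial for handling the number of names involved.

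For (ii), the inclusion $V[S']\subseteq V[S]$ is immediate. For the reverse, note that $V[S']=V[G']$ and $V[S]=V[G]$ are both $\Namba$-extensions of $V$ sitting inside $V[S]$. To recover $S$ (equivalently $G$) from $S'$ in $V[S']$, I would argue that once the map $S' \mapsto G'$ is available in $V[S']$, the extension $V[G']$ already accounts for all cofinal $\omega$-sequences to $\omega_2^V$ that appear in $V[S]$. In particular, the original Namba sequence $S$, being one such cofinal $\omega$-sequence, must lie in $V[G']=V[S']$; this uses that Namba forcing adds no reals (so bounded subsets of $\omega_2^V$ coded by $S$ are controlled by $G'$) together with a uniqueness property for the cofinal branch associated to a given generic.

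The main obstacle is the fusion density lemma for $D_A$. It requires a careful combinatorial analysis showing that any $\Namba$-name for a cofinal $\omega$-sequence into $\omega_2^V$ can be ``trimmed'' so its generic value is entirely captured by a single element of a prescribed maximal antichain. Once this pure-decision-style property is in hand, both the genericity of $G'$ and the recovery of $S$ from $S'$ follow; deducing $V[S']=V[S]$ is then largely bookkeeping.
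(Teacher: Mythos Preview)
The paper does not actually prove this fact; it is stated with a citation to Jensen's handwritten notes and then used as a black box. So there is no paper-proof to compare against, and your proposal must stand on its own.

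Your outline for part (i) has the right shape, but your argument for part (ii) is circular. You write that ``the original Namba sequence $S$, being one such cofinal $\omega$-sequence, must lie in $V[G']=V[S']$,'' invoking that $V[S']$ ``already accounts for all cofinal $\omega$-sequences to $\omega_2^V$ that appear in $V[S]$.'' But this is exactly the content of the statement you are proving: you only know $S\in V[S]$, and the assertion that every such sequence lies in $V[S']$ is what is at stake. The ``uniqueness property for the cofinal branch associated to a given generic'' you appeal to is not something you have established; it is a restatement of the conclusion. The honest way to get $V[S]\subseteq V[S']$ is to work inside the density/fusion argument itself: below any condition $T_0$ forcing $\dot S'$ cofinal, one refines (by a Namba fusion) to a condition $T$ on which distinct branches are forced to give distinct values of $\dot S'$, so that the generic branch $S$ through $T$ can be read off from $S'$ in $V[S']$. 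This injectivity-on-branches step is the missing idea, and it is what simultaneously delivers genericity of $G'$ and $S\in V[S']$.

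Two smaller points. First, you claim $\CH$ in $V$ is ``crucial'' for the density lemma; the statement does not assume $\CH$, and Jensen's result does not require it, so this signals that your sketch of the fusion argument is off. Second, your density set $D_A$ as written asks for a single $T_A\in A$ such that the condition forces every initial segment of $\dot S'$ into $T_A$; you have not explained why a pigeonhole over a maximal antichain $A$ (which may have size $\omega_2$) succeeds, and without the injectivity refinement above it does not obviously do so.
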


\begin{thm}
The forcing $\Namba \times \Namba$ adds a new real, and is thus not subcomplete.
\end{thm}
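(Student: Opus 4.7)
The plan is to produce a new real in $V[S_1, S_2]$ directly from the two generic Namba sequences, then apply \textbf{Proposition \ref{prop:noreals}}. Let $G_1 \times G_2$ be $V$-generic for $\Namba \times \Namba$, giving mutually generic Namba sequences $S_1, S_2 \colon \omega \to \omega_2^V$. Since each $S_i$ is cofinal in $\omega_2^V$, the definition
\[
r(n) = \min \set{ k < \omega }{ S_1(k) \geq S_2(n) }
\]
gives a function $r \colon \omega \to \omega$ in $V[S_1, S_2]$. I plan to show $r \notin V$, from which the stated ``thus'' follows immediately.

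Suppose toward contradiction that some $(p_1, p_2) \in \Namba \times \Namba$ forces $\dot r = \check u$ with $u \in V$. Write $B_1^k$ for the set of possible values of $\dot S_1(k)$ over extensions of $p_1$, and $B_2^n$ analogously. By mutual genericity of the two coordinates, every pair in $B_1^j \times B_2^n$ is realized in some common extension of $(p_1, p_2)$, so the statements $(p_1, p_2) \Vdash \dot S_1(j) < \dot S_2(n)$ for $j < u(n)$ and $(p_1, p_2) \Vdash \dot S_1(u(n)) \geq \dot S_2(n)$ reduce to the pointwise inequalities $\sup B_1^j < \inf B_2^n$ and $\inf B_1^{u(n)} \geq \sup B_2^n$. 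First, $u$ must be unbounded in $\omega$, for otherwise $u(n) \leq M$ for all $n$ would force $S_2(n) \leq \max_{k \leq M} S_1(k) < \omega_2$, contradicting the cofinality of $\dot S_2$ forced by $p_2$. Hence for every $k < \omega$ some $n$ has $k < u(n)$, and the first inequality then gives $\sup B_1^k < \inf B_2^n < \omega_2$. So $B_1^k$ is bounded in $\omega_2$, and since $\omega_2 = \omega_1^+$ we get $|B_1^k| \leq \omega_1$ for every $k$.

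Under $\CH$ the number of eventual successors of any node of $p_1$ is then at most $\prod_{k < \omega} |B_1^k| \leq \omega_1^\omega = \omega_1$, strictly less than the $\omega_2$ required of a Namba condition, contradicting $p_1 \in \Namba$. (Without $\CH$ the theorem is anyway immediate, since Namba forcing itself already adds a real in that case.) So no condition in $\Namba \times \Namba$ forces $\dot r \in \check V$, making $r$ a new real in $V[S_1, S_2]$, and by \textbf{Proposition \ref{prop:noreals}} the forcing $\Namba \times \Namba$ is not subcomplete. The main step that will need careful execution is the pointwise reduction in the previous paragraph; it relies on mutual genericity of the two coordinates, together with the ability to independently pin the finitely many initial values $\dot S_1(0), \ldots, \dot S_1(u(n))$ in one coordinate against the single value $\dot S_2(n)$ in the other, which is what is actually demanded by the ``smallest $k$'' clause in the definition of $r$.
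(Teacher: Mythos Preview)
Your construction of the real $r(n)=\min\{k:S_1(k)\geq S_2(n)\}$ is exactly the paper's, and your argument is correct. The paper establishes newness of $r$ by a different route: rather than assuming some $(p_1,p_2)\Vdash \dot r=\check u$ and deducing that $p_1$ is too small, it works in $V[S_1]$ and argues by density in the second factor that below any Namba condition $T$ one can force some $r(n)$ to exceed a prescribed $c(n)$, by thinning a splitting level of $T$ to nodes above $S_1(c(n))$. Your contradiction argument turns the same combinatorics around, reading off from a decided value $u$ that every level of the \emph{first} condition must have bounded range. Both arguments exploit the same tension between the $\omega_2$-splitting requirement and the pointwise inequalities $r$ encodes; yours just localises the contradiction to $p_1$ instead of running a density argument in the second coordinate.

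Two minor points. First, the $\CH$ case split is unnecessary: once each $B_1^k$ is bounded in $\omega_2$ and hence of size at most $\omega_1$, the number of nodes of $p_1$ at level $n$ is at most $\prod_{k<n}|B_1^k|\leq\omega_1^{\,n}=\omega_1$ (finite product, no $\CH$ needed), so $p_1$ has at most $\omega_1$ nodes in total, already contradicting that the root has $\omega_2$ eventual successors. Your bound $\prod_{k<\omega}|B_1^k|$ is counting branches rather than nodes and brings in $\CH$ gratuitously. Second, the phrase ``by mutual genericity'' in the pointwise reduction is really just the product structure: if $p_1'\leq p_1$ decides $\dot S_1(j)$ and $p_2'\leq p_2$ decides $\dot S_2(n)$, then $(p_1',p_2')\leq(p_1,p_2)$ realises both values simultaneously.
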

\begin{proof}
I will think of $\Namba \times \Namba = \Namba * \check \Namba$ as an iteration. Let $S_1:\omega \to \omega_2^V$ be the first cofinal Namba sequence added by $\Namba$, and make sure that $S_1$ increasing, which we can do by the Namba fact (\textit{Fact} \ref{fact:Namba}). Let $S_2:\omega \to \omega_2^V$ be the second Namba sequence. In $V[S_1]$ we may define a function $f: \omega_2^V \To \omega$ by letting 
	$$\text{$f(\alpha) =$ the least $m <\omega$ such that $S_1(m) \geq \alpha$.}$$
%This function $f$ give the levels of ${\omega_2^{<\omega}}^{V[S_1]}$.
Note that $f(S_1(n))=n$ since $S_1$ is increasing, and $f$ is surjective since $S_1$ has domain $\omega$. We would like to compare $S_1$ and $S_2$ by tracking how $S_2$ threads through the tree labeled by $f$. In particular, define the real $r: \omega \to \omega$ in $V[S_1][S_2]$ by the following: 
	$$r(n) = f(S_2(n)).$$
On each level, the idea is to have the real $r$ pick out the levels where $S_1(n) \geq S_2(n)$. 

It must be that $r$ is new. To see why, let $c: \omega \to \omega$ be an arbitrary ground model real. For any Namba condition $T$, there has to be a level, say $n < \omega$, with $\omega_2$-many nodes (otherwise, there is no way the condition could have size $\omega_2$). Moreover for any $\beta < \omega_2$, it is dense for conditions $T$ to satisfy that there is an $n$ such that
	$T \forces \dot{S}_2(n) > \beta,$ 
where $\dot S_2$ is the name for the generic sequence $S_2$. This is because it is always dense for Namba conditions to so-to-speak veer to the right; it is dense for some node in a condition to have a value greater than any fixed $\beta < \omega_2$, again since conditions must have size $\omega_2$. So the set of ground-model Namba conditions that force that for some $n$,
	$f(\dot S_2(n)) > c(n)$, is dense. 
Indeed, letting $T \in \Namba$, we may find a strengthening $T^*$ forcing that the least $m$ such that $S_1(m) \geq \dot S_2(n)$ is larger than $c(n)$. To do this, find a level of $T$ with $\omega_2$-many nodes, say level $\alpha$. Let $T^*$ strengthen $T$ so that $T^*_\alpha= T_\alpha \setminus S_1(c(n))$, restricting level $\alpha$ of $T$ to a tail of $\omega_2$ thus forcing $\dot S_2(n)>S_1(c(n))$ and indeed $S_1(m)$ for all $m<c(n)$. Thus below $T^*$, the least $m$ such that $S_1(m)$ exceeds $\dot S_2(n)$ must be larger than $c(n)$, as desired.
Thus $r$ is new, since $r=(f\circ \Gamma)^{S_2}$ differs from any arbitrary ground-model real somewhere.
\end{proof}

\begin{cor} Let $\Namba$ name a parameter for ground-model Namba forcing. We have that $\forces_\Namba ``\check \Namba \text{ is not subcomplete.}"$ So $``\Namba \text{ is not subcomplete}"$ is $sc$-forceable. \end{cor}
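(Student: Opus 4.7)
The plan is to deduce the corollary directly from the preceding theorem together with the two-step iteration theorem for subcomplete forcing and Proposition \ref{prop:noreals}. Suppose, for contradiction, that $\forces_{\Namba} ``\check{\Namba}$ is subcomplete." Working under $\CH$ (which is what we need to know $\Namba$ itself is subcomplete, and which persists into the $\Namba$-extension because $\Namba$ adds no new reals), we then have both that $\Namba$ is subcomplete and that it forces $\check{\Namba}$ to be subcomplete. By the two-step iteration theorem, the iteration $\Namba * \check{\Namba}$ is then subcomplete.

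The second step is to observe that $\Namba * \check{\Namba}$ is just (a dense subset of) $\Namba \times \Namba$, since $\check{\Namba}$ is a check name for the ground-model poset and so all antichain/compatibility information in the second coordinate is determined in the ground model. Hence $\Namba \times \Namba$ is forcing equivalent to a subcomplete forcing, so by Proposition \ref{prop:scde} (dense embeddings preserve subcompleteness) it too would be subcomplete. But the previous theorem shows that $\Namba \times \Namba$ adds a new real, contradicting Proposition \ref{prop:noreals}.

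This gives the first assertion of the corollary. For the second assertion, note that $\Namba$ is itself subcomplete under $\CH$, and the statement ``$\Namba$ is not subcomplete'' (with $\Namba$ interpreted as a parameter naming ground-model Namba forcing) becomes true in the $\Namba$-extension by what we have just shown. Hence the sentence is $sc$-forceable.

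The main obstacle is not really an obstacle: one must only be careful about the distinction between $\Namba$ as a poset in $V$ and $\check{\Namba}$ as its canonical name in $V^{\Namba}$, and verify that $\Namba * \check{\Namba}$ and $\Namba \times \Namba$ are genuinely forcing equivalent so that one may transfer ``adds a new real'' between them. Beyond this bookkeeping, the argument is a short application of the two-step iteration theorem together with the fact that subcomplete forcing cannot add reals.
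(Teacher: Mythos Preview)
The paper gives no separate proof of this corollary; it is meant to be an immediate consequence of the preceding theorem. Your argument captures the intended idea, but there is a genuine logical slip in how you set up the contradiction.

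The negation of $\forces_{\Namba} ``\check{\Namba}$ is not subcomplete'' is not $\forces_{\Namba} ``\check{\Namba}$ is subcomplete''; it is that \emph{some} condition $p\in\Namba$ forces $\check{\Namba}$ to be subcomplete. With the hypothesis corrected, your two-step iteration route runs into a secondary wrinkle: you would need $\Namba_{\leq p}$ to be subcomplete in order to apply the iteration theorem below $p$, and this fact, while true and not hard, is not established in the paper.

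The cleaner fix avoids the iteration theorem entirely. Suppose $p\forces_{\Namba} ``\check{\Namba}$ is subcomplete'' and let $G\ni p$ be $\Namba$-generic. Then in $V[G]$ the poset $\Namba^V$ is subcomplete, so by Proposition~\ref{prop:noreals} it adds no reals over $V[G]$. But the density argument in the theorem's proof actually shows that the real $r$ is new over $V[S_1]$, not merely over $V$: for any $c\in V[S_1]$ the strengthened condition $T^*$ constructed there lies in $\Namba^V$ (since restricting a level of $T\in V$ to a tail past the ordinal $S_1(c(n))$ keeps it in $V$), and the resulting dense set lies in $V[S_1]$, which $S_2$ is generic over. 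Hence $\Namba^V$ does add a real over $V[G]$, a contradiction. This argument does not even require $\CH$; that hypothesis is only needed for the second sentence of the corollary, where you use that $\Namba$ itself is subcomplete.
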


Even after countably closed forcing, it is possible for the ground model's version of Namba forcing to fail to be subcomplete anymore.

\begin{thm}
Forcing with $\Coll(\omega_1, \omega_2) \times \Namba$ adds a new real and is thus not subcomplete.
\end{thm}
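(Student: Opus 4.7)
The plan is to show that $\Coll(\omega_1, \omega_2) \times \Namba$ adds a real to $V$, from which failure of subcompleteness follows immediately by \textbf{Proposition \ref{prop:noreals}}. I would view the product as the two-step iteration $\Coll(\omega_1, \omega_2) * \check \Namba$; by the product lemma, if $(g, S)$ is generic for the product, then $g$ is $\Coll(\omega_1, \omega_2)$-generic over $V$, $S$ is $\Namba$-generic over $V[g]$, and $S$ is also $\Namba$-generic over $V$. The whole argument then takes place in $V[g][S]$, and proceeds by tracking what happens to the cofinality of $\omega_2^V$.

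First I would compute $\cof^{V[g]}(\omega_2^V)$. Since $\Coll(\omega_1, \omega_2)$ produces a surjection $\omega_1 \twoheadrightarrow \omega_2^V$, we have $\cof^{V[g]}(\omega_2^V) \leq \omega_1^V$; and since $\Coll(\omega_1,\omega_2)$ is countably closed, it adds no new $\omega$-sequences, so $\cof^{V[g]}(\omega_2^V) > \omega$, giving $\cof^{V[g]}(\omega_2^V) = \omega_1^V$. On the other hand, the dense subsets of $\Namba$ witnessing cofinality of the generic branch in $\omega_2^V$ already lie in $V \subseteq V[g]$, so $S$ is a cofinal $\omega$-sequence in $\omega_2^V$ in $V[g][S]$, whence $\cof^{V[g][S]}(\omega_2^V) = \omega$.

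Next I would run the standard pigeonhole argument showing that a drop in cofinality of $\omega_2^V$ from $\omega_1^V$ to $\omega$ forces a corresponding drop in the cofinality of $\omega_1^V$. Fix in $V[g]$ a strictly increasing cofinal sequence $\seq{c_\xi}{\xi < \omega_1^V}$ in $\omega_2^V$, and in $V[g][S]$ define $n_\xi$ to be the least $n < \omega$ with $S(n) \geq c_\xi$. Each preimage set $\set{\xi < \omega_1^V}{n_\xi = n}$ must be bounded in $\omega_1^V$, for otherwise $S(n)$ would exceed every $c_\xi$ and thereby bound $\omega_2^V$, contradicting the cofinality of $S$. Setting $\gamma_n := \sup\set{\xi}{n_\xi = n}$ produces an $\omega$-sequence cofinal in $\omega_1^V$, i.e.\ $\cof^{V[g][S]}(\omega_1^V) = \omega$.

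Finally, since $\omega_1^V$ is singular of cofinality $\omega$ in $V[g][S]$ while every proper initial segment is countable in $V$, the ordinal $\omega_1^V$ is countable in $V[g][S]$, so there is a bijection $\omega \to \omega_1^V$ in $V[g][S]$. Coding this bijection yields a real in $V[g][S]$ that is not in $V$ (since $V \models |\omega_1^V| = \omega_1$), so $\Coll(\omega_1,\omega_2) \times \Namba$ adds a new real and therefore cannot be subcomplete. The only mildly delicate step is the cofinality-collapsing pigeonhole in the third paragraph; everything else is a routine application of the product lemma and the countable closure of $\Coll$.
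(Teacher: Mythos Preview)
Your proof is correct and follows essentially the same approach as the paper: both arguments fix a cofinal $\omega_1^V$-sequence in $\omega_2^V$ from the collapse side, interleave it with the cofinal Namba $\omega$-sequence $S$ to produce a cofinal $\omega$-sequence in $\omega_1^V$, and conclude that $\omega_1^V$ becomes countable so a new real appears. The only cosmetic difference is that the paper defines the cofinal map $\omega\to\omega_1^V$ directly via $g(n)=\text{least }\alpha$ with $f(\alpha)>S(n)$, whereas you go through the preimages $\{\xi:n_\xi=n\}$ and take suprema; these are equivalent bookkeeping for the same interleaving idea. One tiny wording slip: the boundedness of each preimage follows because $S(n)<\omega_2^V$, not from the cofinality of $S$ per se.
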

\begin{proof} 
Let $S:\omega \to \omega_2^V$ be a cofinal Namba sequence added by $\Namba$. Let $\P = \Coll(\omega_1, \omega_2)$, as defined in $V$. Let $G \subseteq \P$ be generic. 
%In $V[G]$ there are no new $\omega$-sequences since the forcing is countably closed. So not only is $\omega_2^V$ collapsed to $\omega_1^V$ by $\P$, in any subcomplete extension it must be that $\omega_2^V$ has cardinality $\omega_1^V$, since subcomplete forcing can't collapse cardinalities to $\omega$. 
Thus in $V[G]$, there is a cofinal, normal function $f:\omega_1^V \longrightarrow \omega_2^V$. Thus in $V[G][S]$ we may define a cofinal function $g: \omega \longrightarrow \omega_1^V$ by letting 
$$g(n) = \text{ the least } \alpha < \omega_1 \text{ such that } f(\alpha) > S(n).$$ 
To see that $g$ is cofinal, let $\beta < \omega_1$. Then there is $n<\omega$ such that $S(n) > f(\beta)$, since both $S$ and $f$ are cofinal. But again, since $f$ is cofinal, there is $\alpha < \omega_1$ such that $f(\alpha)>S(n)>f(\beta)$; indeed we may take the least such $\alpha$ satisfying $f(\alpha) > S(n)$. But by normality of $f$, we have that $g(n)=\alpha>\beta$, as desired.

So in $V[G][S]$, we have that $\omega_1^V$ is collapsed to $\omega$ as witnessed by $g$. Thus a real has been added as desired. (So we may see that in the extension, there is a bijection $h:\omega \cong \omega_1$. Now define a real $r$ by letting 
	$$r = \set{ (m,n) }{ h(m) < h(n) } \subseteq \omega \times \omega.$$ 
Then $r$ codes $h$ since each $h(n)$ is an ordinal, and so it is determined exactly by its order-type, in $V[G][S]$.)
%The point is that both $\mathbb N$ and $\mathcal Coll(\omega_1, \omega_2)$ collapse the cardinality of $\omega_2^V$ to $\omega_1$. But $\mathbb N$ collapses the cofinality of $\omega_2$ to $\omega$ while $\mathcal Coll(\omega_1, \omega_2)$ collapses the cofinality of $\omega_2^V$ to $\omega_1^V$.
\end{proof}

Of course in the above theorem, we have also shown that $\Coll(\omega_1, \omega_2) \times \Namba$ collapses $\omega_1$, so it is no longer subproper either and doesn't preserve stationary subsets of $\omega_1$.

\begin{cor} We have that $\forces_\Namba ``\check \P \text{ is not subcomplete.}"$ So the statement ``\,$\Coll(\omega_1, \omega_2)$ is not subcomplete" is $sc$-forceable, where $\Coll(\omega_1, \omega_2)$ is a parameter for the ground model's version of the collapse forcing. \end{cor}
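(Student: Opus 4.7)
The plan is to deduce the corollary directly from the preceding theorem together with the elementary fact that, under $\CH$, Namba forcing is subcomplete and adds no new reals. First, I would reinterpret the product $\Coll(\omega_1,\omega_2) \times \Namba$ as the two-step iteration $\Namba * \check\P$, where $\check\P$ is a $\Namba$-name for the ground-model poset $\Coll(\omega_1,\omega_2)^V$. By the theorem just proved, this iteration adds a new real to $V$.

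Next, working under $\CH$ (the standing assumption that makes $\Namba$ itself subcomplete), Namba forcing alone adds no reals, so the new real introduced by $\Namba * \check\P$ must arise at the second stage. That is, in $V^\Namba$ the ground-model poset $\check\P$ adds a real. By \textbf{Proposition~\ref{prop:noreals}}, subcomplete forcing never adds reals, so $\check\P$ cannot be subcomplete in $V^\Namba$. This is exactly the statement $\Vdash_\Namba ``\check\P$ is not subcomplete$"$.

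To conclude that ``$\Coll(\omega_1,\omega_2)$ is not subcomplete" is $sc$-forceable, I would invoke the fact that under $\CH$ the poset $\Namba$ is itself subcomplete. Then $\Namba$ is a subcomplete poset every condition of which forces the desired sentence (with $\Coll(\omega_1,\omega_2)$ permitted as a parameter by \textbf{Lemma~\ref{lem:MPparams}} or just as an allowable name). There is no real obstacle here; the only thing that must be checked is that $\CH$ is assumed, so that $\Namba$ simultaneously adds no reals on its own (allowing us to pin the blame for the new real on $\check\P$) and is itself subcomplete (providing the required $sc$-witness).
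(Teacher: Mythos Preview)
Your argument is correct and is exactly the intended deduction: the paper states this corollary without proof, since it follows immediately from the preceding theorem by factoring the product as $\Namba * \check\P$, noting that (under $\CH$) $\Namba$ is subcomplete and hence adds no reals and preserves $\omega_1$, and concluding that $\check\P$ must be responsible for the new real (equivalently, the collapse of $\omega_1$) in $V^{\Namba}$. The only minor quibble is that your reference to \textbf{Lemma~\ref{lem:MPparams}} is not really relevant here---that lemma concerns the parameter set for $\MPsc$, whereas the corollary is simply asserting $sc$-forceability of a sentence with a ground-model parameter, which needs no justification beyond $\Namba$ being subcomplete under $\CH$.
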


\begin{cor} Let $\P$ be a parameter for a poset in $V$. The sentence ``\,$\P$ is subcomplete" is not $sc$-forceably $sc$-necessary. \end{cor}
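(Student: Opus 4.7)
The plan is to use $\P = \Namba$ (ground-model Namba forcing), assuming $V \models \CH$ so that $\P$ is subcomplete in $V$; if $\CH$ fails one can first force it by countably closed forcing and absorb this step, since countably closed forcings are subcomplete and compose with subcomplete forcing by the two-step iteration theorem. The idea is to derive a contradiction by iterating $\P$ twice over a hypothetical ``witnessing'' subcomplete extension, exploiting that $\Namba \times \Namba$ adds a new real.

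Suppose, for contradiction, that ``$\P$ is subcomplete'' is $sc$-forceably $sc$-necessary, witnessed by a subcomplete forcing $\Q_0$. Then in $V^{\Q_0}$, for every subcomplete $\Q_1$, $\P$ is subcomplete in $V^{\Q_0 * \Q_1}$. Taking $\Q_1$ trivial gives that $\P$ is subcomplete in $V^{\Q_0}$, so by the two-step iteration theorem, $\Q_0 * \P$ is subcomplete. Applying $sc$-necessity once more (with $\Q_1 = \P$ over $V^{\Q_0}$), we get that $\P$ is still subcomplete in $V^{\Q_0 * \P}$, and a second application of the two-step iteration theorem yields that $\Q_0 * \P * \P$ is subcomplete. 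By Proposition~\ref{prop:noreals}, this iteration therefore adds no reals.

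The crux is to produce a real in $V^{\Q_0 * \P * \P} \setminus V^{\Q_0}$. Let $G_0 * G * H$ be generic for $\Q_0 * \P * \P$ over $V$. Since $V \subseteq V^{\Q_0}$, every $V$-dense subset of $\P * \P$ is also in $V^{\Q_0}$, so $G * H$ is automatically $\P * \P$-generic over $V$. By the earlier theorem that $\Namba \times \Namba$ adds a new real, there is some $r \in V[G][H] \setminus V$. Since $\Q_0$ is subcomplete, $\R^{V^{\Q_0}} = \R^V$ by Proposition~\ref{prop:noreals}, which gives $r \notin V^{\Q_0}$; but $r \in V[G][H] \subseteq V^{\Q_0}[G][H] = V^{\Q_0 * \P * \P}$, contradicting the fact that this iteration adds no reals.

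The main obstacle is conceptual rather than computational: one must track the interplay between $V$ and $V^{\Q_0}$ carefully, ensuring that a real new over $V$ (produced by $\Namba \times \Namba$) is also new over $V^{\Q_0}$. This is handled uniformly by Proposition~\ref{prop:noreals}. Everything else is a direct chaining of the two-step iteration theorem with the earlier $\Namba \times \Namba$ theorem.
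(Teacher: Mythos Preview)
Your argument is correct and in fact more complete than the paper's. The paper simply writes ``Above we gave examples of subcomplete forcing notions that necessarily force that another subcomplete forcing is not subcomplete,'' pointing to the two preceding corollaries. Taken at face value those corollaries only show that ``$\P$ is subcomplete'' fails to be $sc$-\emph{necessary} over $V$ (for $\P=\Namba^V$ or $\P=\Coll(\omega_1,\omega_2)^V$); they do not rule out that some subcomplete $\Q_0$ could push the button permanently. Your contradiction argument supplies precisely this missing step: assuming such a $\Q_0$ exists, you iterate $\check{\Namba}^V$ twice over $V^{\Q_0}$, note that $\Q_0*\Namba^V*\Namba^V$ is then subcomplete and hence adds no reals, and observe that a $\Namba^V\times\Namba^V$-generic over $V^{\Q_0}$ is automatically generic over $V$, so the real manufactured by the $\Namba\times\Namba$ theorem is new over $V$ --- and therefore new over $V^{\Q_0}$ as well, by Proposition~\ref{prop:noreals}. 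That is exactly the lift from ``not $sc$-necessary'' to ``not $sc$-forceably $sc$-necessary.''

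One quibble: your reduction to $\CH$ is not quite right as phrased. Forcing $\CH$ over $V$ collapses $\omega_2^V$, so Namba forcing of the extension is a different poset from $\Namba^V$ and cannot serve as the parameter $\P\in V$ the corollary refers to. The clean fix is just to assume $\CH$ in $V$, which the paper does implicitly throughout this subsection (Namba is only known to be subcomplete under $\CH$); under that assumption your proof goes through verbatim.
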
 
\begin{proof} Above we gave examples of subcomplete forcing notions that necessarily force that another subcomplete forcing is not subcomplete. \end{proof}

The following theorem is in contrast with the previous results. It states that if a ground-model forcing is $sc$-forceably subcomplete, then it is subcomplete. %this is probably true for proper forcings 

\begin{thm}
Suppose both $\P, \Q$ are forcing notions in $V$ satisfying $\delta(\Q) \geq \delta(\P)$, such that $\P$ is subcomplete and $\forces_{\P} ``\check{\Q}$ is subcomplete." Then $\Q$ is subcomplete (in $V$).
\end{thm}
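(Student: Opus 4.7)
The plan is to combine Jensen's two-step iteration theorem with a Barwise-theoretic transfer in order to pull the $\Q$-subcompleteness witness down into $V[H]$. Since $\P$ is subcomplete in $V$ and $\Vdash_\P \check\Q$ is subcomplete, the two-step iteration theorem gives that $\P*\check\Q = \P\times\Q$ is subcomplete in $V$. First I compute that $\delta(\P\times\Q)=\delta(\Q)$: the factor $\Q$ projects densely from $\P\times\Q$, so $\delta(\Q)\leq\delta(\P\times\Q)$, while conversely $\delta(\P\times\Q)\leq\delta(\P)\cdot\delta(\Q)=\delta(\Q)$ by the hypothesis $\delta(\Q)\geq\delta(\P)$. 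This coincidence of weights is exactly what is needed for the Skolem-hull condition from $\P\times\Q$-subcompleteness to match the one demanded for $\Q$-subcompleteness.

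Next, I set up the standard verification for $\Q$-subcompleteness in $V$: fix $\theta$ large enough to verify subcompleteness of $\P\times\Q$, let $\P,\Q\in H_\theta\subseteq N=L_\tau[A]$, and take $\sigma:\overline N\cong X\prec N$ with $\overline N$ countable and full, $\sigma$ fixing $\overline\theta,\overline\P,\overline\Q,\overline s$. Given $\overline H\subseteq\overline\Q$ that is $\overline N$-generic, I use the countability of $\overline N[\overline H]$ to pick, inside $V$, a filter $\overline G\subseteq\overline\P$ generic over $\overline N[\overline H]$; by mutual genericity $\overline G\times\overline H$ is $\overline\P\times\overline\Q$-generic over $\overline N$. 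Applying subcompleteness of $\P\times\Q$ produces a master condition $(p^*,q^*)\in\P\times\Q$ such that whenever $G\times H\ni(p^*,q^*)$ is $\P\times\Q$-generic over $V$, there is $\sigma^*\in V[G\times H]$ with $\sigma^*:\overline N\prec N$ fixing the parameters, $\sk{N}{\delta(\Q)}{\sigma^*}=\Sk{N}{\delta(\Q)}{X}$, $\sigma^*``\overline G\subseteq G$, and $\sigma^*``\overline H\subseteq H$. I claim $q^*$ is the desired master condition for $\Q$-subcompleteness in $V$: given any $H\ni q^*$ that is $\Q$-generic over $V$, I choose in $V[H]$ some $G$ that is $\P$-generic over $V[H]$ with $p^*\in G$; by the product lemma $G\times H$ is $\P\times\Q$-generic over $V$, and so $\sigma^*$ exists in $V[H][G]=V[G\times H]$ satisfying every condition required of the $\Q$-subcompleteness witness.

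The hard part is that $\sigma^*$ lives a priori in $V[H][G]$ rather than $V[H]$, and I plan to close this gap using Barwise theory. Encode the existence of the desired witness $\sigma'''$ as the consistency of a $\Sigma_1$-definable $\in$-theory $\mathcal L$ over a suitable admissible structure $L_{\delta_M}(M)$, where $M$ is a transitive set containing $N,\overline N,\sigma,H,\overline H,X,\delta(\Q)$ and the remaining parameters, with $\mathcal L$ axiomatized to extend $\textsf{ZF}^-$. All these parameters lie in $V[H]$, and since admissible closures are absolute, $\mathcal L$ is literally the same theory in $V[H]$ and in $V[H][G]$. The witness $\sigma^*\in V[H][G]$ yields a solid model of $\mathcal L$ there, so by Barwise Correctness (\emph{Fact}~\ref{fact:correctness}) $\mathcal L$ is consistent in $V[H][G]$; since consistency is a $\Pi_1$ assertion about the fixed set $L_{\delta_M}(M)$, it is absolute between $V[H]$ and $V[H][G]$, making $\mathcal L$ consistent in $V[H]$. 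Applying Barwise Completeness (\emph{Fact}~\ref{fact:completeness}) in $V[H]$ then produces a solid model of $\mathcal L$ whose well-founded core contains the desired $\sigma'''\in V[H]$, completing the verification. The most delicate point will be confirming that $\mathcal L$ is genuinely $\Sigma_1$ over $L_{\delta_M}(M)$ with the stated parameters, particularly encoding the Skolem-hull condition $\sk{N}{\delta(\Q)}{\sigma'''}=\Sk{N}{\delta(\Q)}{X}$ uniformly.
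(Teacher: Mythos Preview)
Your first half is a genuine and pleasant alternative to the paper's argument. The paper never invokes the two-step iteration theorem; instead it applies the subcompleteness of $\P$ to get $\sigma_0\in V[G]$, lifts to $\sigma_0^*:\N[\G]\prec N[G]$, checks that $\N[\G]$ is full, then applies the subcompleteness of $\Q$ in $V[G]$ to get $\sigma_1\in V[G][H]$, and finally restricts $\overline\sigma=\sigma_1\rest\N$. To verify the Skolem-hull condition for $\overline\sigma$ the paper needs Lemma~\ref{lem:Ctrick} twice. By packaging all of this inside the iteration theorem and using $\delta(\P\times\Q)=\delta(\Q)$, you get the hull condition for free, which is a real saving. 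One small point: the iteration theorem gives you subcompleteness of $\P*\check\Q$, not literally of $\P\times\Q$; since $\P\times\Q$ densely embeds into $\P*\check\Q$, this is harmless (Lemma~\ref{lem:deltasize} equates their weights, and a short argument like that of Proposition~\ref{prop:scde} passes subcompleteness down along the dense embedding), but you should say a word about it.

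There is, however, a real gap in your Barwise step. You apply Barwise Completeness (\emph{Fact}~\ref{fact:completeness}) directly to the admissible structure $L_{\delta_M}(M)$, but that fact requires the admissible structure to be \emph{countable}, and your $M$ contains $N$, so it is not. Knowing that $\mathcal L$ is consistent over an uncountable admissible fragment does not by itself produce a model in $V[H]$; completeness for $M$-finitary logic can fail when $M$ is uncountable. Your absoluteness observation (that consistency of $\mathcal L$ is $\Pi_1$ over the fixed set and hence transfers from $V[H][G]$ to $V[H]$) is correct and is exactly what the paper uses implicitly, but it is not the end of the story. The fix, which is what the paper does, is to pass to a countable elementary substructure: take $\pi:\tilde M\prec M$ with $\tilde M$ countable transitive in $V[H]$, observe that $\mathcal L(\tilde M)$ is consistent (any inconsistency proof would push up via $\pi$), apply Barwise Completeness to the \emph{countable} $\tilde M$ to get a solid model $\tilde{\mathfrak A}$, and then set $\sigma'=\pi\circ\mathring\sigma^{\tilde{\mathfrak A}}$. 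The composition with $\pi$ is essential: it is what guarantees the target of the embedding is the actual $N$ rather than merely $\tilde{\mathfrak A}$'s interpretation of $\underline N$. This, rather than the $\Sigma_1$-encoding of the Skolem-hull axiom, is the delicate point you flagged.
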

\begin{proof} %Need to just assume $\delta(\P) > \delta(\Q)$. So replace $\delta$ appropriately, and change some of the proofs.
Let $\P$ be subcomplete and suppose $\forces_{\P} ``\check{\Q}$ is subcomplete." We may assume that $\delta(\P) = \delta(\Q) = \delta$, since we can always increase the size of $\delta(\P)$ by replacing $\P$ with an appropriately sized lottery sum, as is seen in $\textbf{Lemma \ref{lem:deltalottery}}$. To show that $\Q$ is subcomplete, let $\theta$ verify the subcompleteness of $\P$ and that $\check \theta$ verifies the subcompleteness of $\check \Q$ in $V^\P$, so that we are in the standard setup where we have an embedding $\sigma \in V$ as follows: \begin{itemize} 
	\item $\P, \Q \in H_\theta \subseteq N = L_\tau[A] \models \ZFC^-$ where $\tau>\theta$ and $A \subseteq \tau$
	\item $\sigma: \N \cong X \preccurlyeq N$ where $X$ is countable and $\overline N$ is full
	\item $\sigma(\overline \theta, \overline{\Q}, \overline{\P}, \overline s) = \theta, \Q, \P, s$ for some $s \in N$.
\end{itemize}

Toward showing subcompleteness, let $\overline H$ be $\overline{\Q}$-generic over $\N$. We would like to find a generic $H \subseteq \Q$ such that in $V[H]$ there is an embedding $\sigma': \N \preccurlyeq N$, where $\N$ and $N$ are the same as above, and where $\sigma'$ is sufficiently similar to $\sigma$ but satisfying that $\sigma'$ has a lift in $V[H]$, i.e., $\sigma' ``\H \subseteq H$.

Let $\G \subseteq \overline{\P}$ be generic over $\N$. Of course $\N$ is countable, so we may build such a generic in $V[\H]$. Then $\G$ is also generic over $\N[\H]$. By subcompleteness of $\P$, we may let $G \subseteq \P$ be generic over $N$ so that we have that there is $\sigma_0 \in V[G]$ satisfying: \begin{itemize}
	\item $\sigma_0: \N \prec N$
	\item $\sigma_0(\overline \theta, \overline{\Q}, \overline{\P}, \overline s)=\theta, \Q, \P, s$
	\item $\sk{N}{\delta}{\sigma_0} = \Sk{N}{\delta}{X}$
	\item $\sigma_0 ``\, \G \subseteq G$.
\end{itemize}
As the last bullet tells us that $\sigma_0$ lifts in $V[G]$, let $\sigma^*_0: \N[\G] \prec N[G]$ denote the lift of $\sigma_0$, which is also an elementary embedding in $V[G]$. 
In $V[G]$ we have that $\Q$ is subcomplete by our assumption. Furthermore, in $V[G]$ we have that 
\begin{itemize} 
	\item $\Q \in H^{V[G]}_\theta \subseteq N[G] = L_\tau[A][G] \models \ZFC^-$
	\item $\sigma_0^*: \N[\G] \cong \ran(\sigma_0^*) \preccurlyeq N[G]$ where $\ran(\sigma_0^*)$ is countable and $\N[\G]$ is full
	\item $\sigma_0^*(\overline \theta, \overline{\Q}, \overline s) = \theta, \Q, s$.
\end{itemize}
We have fullness of $\N[\G]$ since $\N$ is full: indeed, as $\N$ is transitive, so is $\N[\G]$, and additionally we have that there is a $\gamma$ such that $\N$ is regular in $L_\gamma(\N) \models \ZFC^-$ by fullness of $\N$. But this clearly means that $\N[\G]$ is regular in $L_\gamma(\N[\G]) \models \ZFC^-$.

So by subcompleteness of $\Q$ in $V[G]$ let $H \subseteq \Q$ be generic over $N[G]$ so that there is $\sigma_1 \in V[G][H]$ satisfying: \begin{itemize}
	\item $\sigma_1: \N[\G] \prec N[G]$
	\item $\sigma_1(\overline \theta, \overline{\Q}, \overline s)=\theta, \Q, s$
	\item $\sk{N[G]}{\delta}{\sigma_1} = \sk{N[G]}{\delta}{\sigma_0^*}$
	\item $\sigma_1 `` \H \subseteq H$.
\end{itemize}
Note that here we take $H$ generic over $N[G]$, which implies that $H$ is generic over $N$, and furthermore $H \subseteq N$ as $\Q \in N$.

Toward defining the embedding we ultimately need, let $\overline \sigma=\sigma_1 \rest \N$.

\begin{claimno} \label{claim:BarSigmaGood}The map $\overline \sigma \in V[G][H]$ satisfies: \begin{enumerate}
	\item $\overline \sigma: \N \prec N$ 
	\item $\overline \sigma(\overline \theta, \overline{\Q}, \overline s)=\theta, \Q, s$
	\item $\sk{N}{\delta}{\overline \sigma} = \Sk{N}{\delta}{X}$
	\item $\overline \sigma `` \H \subseteq H$.
\end{enumerate} \end{claimno}
\begin{proof}[Pf.] In order to see \textbf{\textsl{1}}, let's say that $\varphi[\overline \sigma(\overline a)]$ holds in $N$. Then $\varphi[\overline a]$ holds in $\overline N$, since $\sigma_1$ is elementary.
Also \textbf{\textsl{2}} is clear, since $\overline \theta$, $\overline\Q$, and $\overline s$ are all elements of $\N$. Item \textbf{\textsl{4}} must hold, since $\H \subseteq \N$ and since $H$ is a subset of $N$. All that is left to be shown is item \textbf{\textsl{3}}: that $$\sk{N}{\delta}{\overline{\sigma}} = \sk{N}{\delta}{\sigma}.$$
We already have that 
	\begin{equation} \label{eqn:SkSigma1=SkSigma0*} \sk{N[G]}{\delta}{\sigma_1} = \sk{N[G]}{\delta}{\sigma_0^*} \end{equation}
and
	\begin{equation} \label{eqn:SkSigma0=SkX} \sk{N}{\delta}{\sigma_0} = \Sk{N}{\delta}{X}. \end{equation}
The following hold by \textbf{Lemma \ref{lem:Ctrick}}:
%\begin{equation} \label{eqn:SkBarSigma=SkSigma1} \sk{N}{\delta}{\overline \sigma} =\sk{N[G]}{\delta}{\sigma_1} \cap N \end{equation}
 \begin{enumerate}[label=(\roman*)]
	\item $\sk{N}{\delta}{\overline \sigma} =\sk{N[G]}{\delta}{\sigma_1} \cap N$
	\item $\sk{N}{\delta}{\sigma_0}=\sk{N[G]}{\delta}{\sigma_0^*} \cap N$.
\end{enumerate}
Item \textbf{\textsl{3}} follows:
\begin{align*}
	\sk{N}{\delta}{\overline \sigma} \ = & \ \ \sk{N[G]}{\delta}{\sigma_1} \cap N && \text{by (i)}\\
							= & \ \ \sk{N[G]}{\delta}{\sigma_0^*} \cap N  && \text{by (\ref{eqn:SkSigma1=SkSigma0*})} \\
%By \textbf{Lemma \ref{lem:hullequality}}, this implies that 	\begin{align*}
%	\sk{N}{\delta(\P)}{\overline \sigma} \ = & \ \ \sk{N[G]}{\delta(\P)}{\sigma_0^*} \cap N && \\
							= & \ \ \sk{N}{\delta}{\sigma_0} && \text{by (ii)}\\
							= & \ \ \Sk{N}{\delta}{X}  && \text{by (\ref{eqn:SkSigma0=SkX})}
\end{align*}
%This implies that $\sk{N}{\delta(\Q)}{\overline \sigma} = \sk{N}{\delta(\Q)}{\sigma_0}$.
% This should work since if $N[G]$ thinks that $t \in \sk{N}{\delta(\Q)}{\overline{\sigma}}$ then there is a formula $\varphi$ which $t$ uniquely satisfies, namely $t$ is unique such that $\varphi(t, \xi, \overline{\sigma}(\overline a))$ where $\overline a \in \N$. This means that $\varphi(t, \xi, \sigma_1(\overline a))$ holds. But this means that $\varphi(t, \xi, \sigma_0(\overline a))$
This finishes the proof of \textit{Claim} \ref{claim:BarSigmaGood}.
%By (i) we have that $\sk{N}{\delta}{\overline \sigma} = \sk{N[G]}{\delta}{\sigma_1} \cap N$. By (\ref{eqn:SkSigma1=SkSigma0*}), we know that $\sk{N[G]}{\delta}{\sigma_1} \cap N = \sk{N[G]}{\delta}{\sigma_0^*} \cap N$. Thus by (ii), this gives us $\sk{N}{\delta}{\overline \sigma} = \sk{N[G]}{\delta}{\sigma_0^*} \cap N$. Finally using (\ref{eqn:SkSigma0=SkX}) we obtain $\sk{N}{\delta}{\overline \sigma} = \Sk{N}{\delta}{X}$ as desired.
\end{proof}
Although $\overline \sigma$ has all of the properties that we desire, $\overline \sigma$ is alas only in $V[G][H]$. We need such an embedding to exist in $V[H]$, in order to show that $\Q$ is really subcomplete in $V$ not just in $V[G]$. To find the required embedding, we shall use Barwise theory, and show that, roughly speaking, the existence of such an embedding is consistent. Using Barwise Completeness (\textit{Fact} \ref{fact:completeness}), we will obtain our desired embedding. First we shall define the infinitary language we will be using to analyze consistency of the existence of the embedding. Refer to Section \ref{sec:BarwiseTheory}, especially \textbf{Definition \ref{def:InTheoriesAndBasicAxioms}}, for a review of the terminology (such as $\in$-theories and the \textsf{Basic Axioms}).

Let $\mathfrak M$ be an admissible structure. We define the infinitary $\in$-theory $\mathcal L(\mathfrak M)$ as follows: 
\begin{description}
	\item[predicates] $\in$ 
	\item[constants] $\mathring{\sigma}, \underline x$ for $x \in \mathfrak M$
	\item[axioms] \begin{itemize} \item $\ZFC^-$ and \textsf{Basic Axioms}
		\item $\mathring \sigma : \underline\N \prec \underline N$
		\item $\mathring{\sigma}(\underline{\overline \theta}, \underline{\overline{\Q}}, \underline{\overline s})=\underline{\theta}, \underline{\Q}, \underline{s}$
		\item $\sk{\underline N}{\underline{\delta}}{\mathring \sigma} = \sk{\underline N}{\underline \delta}{\underline \sigma}$
		\item $\mathring{\sigma}``\underline{\overline H} \subseteq \underline H$
	\end{itemize}
\end{description}
This $\in$-theory is $\Sigma_1(\mathfrak M)$, as the only axioms that are $\mathfrak M$-$re$ are the \textsf{Basic Axioms}, everything else is $\mathfrak M$-finite.

Let $\mu$ be regular in $V[H]$ with $N \in H_\mu$, so that 
	$$M= \langle H_\mu; N, H, \theta, \Q, \delta, s; \sigma \rangle \text{ is an admissible structure.}$$
In fact, by \textit{Claim} 1 we have that $\langle M, \overline \sigma \rangle$ models the consistency of $\mathcal L(M)$; that is to say that letting $\mathfrak M$ be $M$ and $\mathring \sigma$ be $\overline \sigma$, the axioms will be satisfied, since all of hte required constants are then available to $M$. So the theory $\mathcal L(M)$ is consistent by Barwise Correctness (\textit{Fact} \ref{fact:correctness}). However this is still in $V[G][H]$ that we have $\overline \sigma$ available. In order to produce a model in $V[H]$ we shall use Barwise Completeness, which requires a countable admissible structure in the place of $M$. In fact, using the countable Mostowski collapse of $M$ works.

Take $\pi: \tilde M \prec M$ where $\tilde M$ is countable transitive. Then the corresponding theory $\mathcal L(\tilde{M})$ is a consistent, $\Sigma_1(\tilde M)$ $\in$-theory. In other words, using $\tilde M$ as our source of special constants is consistent, since any inconsistency could be mapped via $\pi$ to be an inconsistency for $\mathcal L(M)$. Thus by Barwise Completeness , we have a solid model 
	$\tilde{\mathfrak A} = \langle \tilde{\mathfrak A}, \mathring \sigma^{\tilde{\mathfrak A}} \rangle $
in $V$ such that there is agreement between $\tilde M$ and $\mathfrak A$ on the ordinals.
	$$\Ord \cap \wfc(\tilde{\mathfrak A})=\Ord \cap \tilde M.$$ 
We shall show that $\sigma'=\pi \circ \mathring \sigma^{\tilde{\mathfrak A}} \in V[H]$ is as desired. 
\begin{claimno} The map $\sigma' \in V[H]$ satisfies: \begin{enumerate}
	\item $\sigma': \N \prec N$
	\item $\sigma'(\overline \theta, \overline{\Q}, \overline s)=\theta, \Q, s$
	\item $\sk{N}{\delta}{\sigma'} = \Sk{N}{\delta}{X}$
	\item $\sigma' `` \H \subseteq H$.
\end{enumerate} \end{claimno}
\begin{proof}[Pf] We shall use the agreement between $\tilde{\mathfrak A}$ and $\tilde M$ on the special constants of $\tilde M$ and on the ordinals.
In order to see \textsl{\textbf{1}}, suppose that $\varphi[\sigma'(\overline a)]$ holds in $N$. So $\varphi[\pi(\mathring \sigma^{\tilde{\mathfrak A}}(\overline a))]^N$ holds in $M$. Thus $\varphi[\mathring \sigma^{\tilde{\mathfrak A}}(\overline a)]^{\pi^-1(N)}$ holds in $\tilde M$, and thus also in $\tilde A$. Indeed we know that $\underline{\N}^{\tilde{\mathfrak A}} = \N$, since $\tilde M$ has the correct interpretation of $\N$ as $\N$ is transitive and countable, and thus $\tilde{\mathfrak A}$ has the correct interpretation of $\N$ as well. This means that $\varphi[\overline a]$ holds in $\N$, as desired.
To see \textbf{\textsl{2}}, we have $\sigma'(\overline \theta, \overline{\Q}, \overline s)= \pi(\underline{\theta}^{\tilde{\mathfrak A}}, \underline{\Q}^{\tilde{\mathfrak A}}, {\underline s}^{\tilde{\mathfrak A}})=\theta, \Q, s$.
For item \textbf{\textsl{3}}, let $\tilde N = \underline N^{\tilde{\mathfrak A}}$, $\tilde \delta = \underline{\delta}^{\tilde{\mathfrak A}}$, and $\tilde \sigma = \underline{\sigma}^{\tilde{\mathfrak A}}$. So $\pi(\tilde \delta)= \delta$ and $\pi(\tilde \sigma)=\sigma$. We already have that 
	\begin{equation} \label{eqn:SkRingSigma=SkTildeSigma} \sk{\tilde N}{\tilde \delta}{\mathring \sigma^{\tilde{\mathfrak A}}} = \sk{\tilde N}{\tilde \delta}{\tilde \sigma}. \end{equation} 

	We show that $\sk{N}{\delta}{\sigma'} = \Sk{N}{\delta}{X}$.
	
To see $\sk{N}{\delta}{\sigma'} \subseteq \Sk{N}{\delta}{X}$, suppose $x \in \sk{N}{\delta}{\sigma'}$. Then we have that $N$ sees that there is some formula $\varphi$, $\overline z \in \N$, and $\xi < \delta$ where $x$ is unique such that $\varphi(x, \pi(\mathring \sigma^{\tilde{\mathfrak A}}(\overline z)), \xi)$. In particular, $x$ is in the range of $\pi$. Thus $\tilde x = \pi^{-1}(x) \in \tilde N$ and $\tilde \xi < \tilde \delta$ such that $\varphi(\tilde x, \mathring \sigma^{\tilde{\mathfrak A}}(\overline z), \tilde \xi)$ holds. Thus by (\ref{eqn:SkRingSigma=SkTildeSigma}), we have that $\tilde x$ is unique such that 
$\varphi(\tilde x, \tilde \sigma(\overline y), \tilde \zeta)$ for some $\overline y \in \N$ and $\tilde \zeta < \tilde \delta$. Thus pushing back up through $\pi$, letting $\zeta = \pi(\tilde \zeta)$, we have that $x = \pi(\tilde x)$ is unique such that $\varphi(x, \sigma(\overline y), \zeta)$ holds, so $x \in \Sk{N}{\delta}{X}$.

To see that $\Sk{N}{\delta}{X} \subseteq \sk{N}{\delta}{\sigma'}$ works similarly. Let $x \in \Sk{N}{\delta}{X}$. Then there is $\overline z \in \N$ and $\xi < \delta$ such that $\varphi(x, \sigma(\overline z), \xi)$ holds. So in particular, $x$ is in the domain of $\pi$. So we may find $\tilde \xi < \tilde \delta$ such that $\tilde x = \pi^{-1}(x)$ is unique such that $\varphi(\tilde x, \tilde \sigma(\overline z), \tilde \xi)$. So by (\ref{eqn:SkDotSigma=SkTildeSigma}), we have that there is $\overline y \in \N$ and $\tilde \zeta < \tilde \delta$ such that $\tilde x$ is unique satisfying $\varphi(\tilde x, \mathring{\sigma}^{\mathfrak A}(\overline y), \tilde \zeta)$. Finally, by pushing back up through $\pi$, letting $\pi(\tilde \zeta) = \zeta$, we have that $x = \pi(\tilde x)$ is unique such that $\varphi(x, \sigma'(\overline y), \zeta)$, so $x \in \sk{N}{\delta}{\sigma'}$ as desired.
	
%\begin{align*}
%	x \in \sk{N}{\delta}{\sigma'} 
%		&\iff N \models \left( x = \text{that } y \text{ such that } \varphi(y, \pi(\mathring \sigma^{\tilde{\mathfrak A}}(\overline z)), \xi) \right) &&\text{where } \overline z \in \N, \xi<\delta \\
%		&\iff \tilde N \models \left( \pi^{-1}(x) = \text{that } y \text{ such that } \varphi(y, \mathring \sigma^{\tilde{\mathfrak A}}(\overline z), \tilde \xi) \right) &&\text{where } \overline z \in \N, \tilde \xi< \tilde \delta \\
%		&\iff \tilde N \models \left( \pi^{-1}(x) = \text{that } y \text{ such that } \varphi(y, \tilde \sigma(\overline z), \tilde \xi) \right) &&\text{where } \overline z \in \N, \tilde \xi<\tilde \delta\\
%		&\iff N \models \left( x = \text{that } y \text{ such that } \varphi(y, \sigma(\overline z), \xi) \right) &&\text{where } \overline z \in \N, \xi<\delta\\
%		&\iff x \in \sk{N}{\delta}{\sigma}.
%\end{align*}
To see item \textbf{\textsl{4}}, let $\overline h \in \overline H$. Then $\tilde M$ sees that $\tilde \sigma(\overline h) \in \tilde H$. Thus $M$ sees that $\sigma'(\overline h) \in H$ as desired.
This completes the proof of \textit{Claim} 2.
\end{proof}
So $\sigma' \in V[H]$ certifies the subcompleteness of $\Q$, completing the proof of the theorem.
\end{proof}

%\newpage

\section{The Resurrection Axiom}
\label{sec:RA}
Joel David Hamkins and Thomas Johnstone \cite{Hamkins:2013qv} first introduced the notion of resurrection in set theory. The idea behind resurrection axioms is to look at the model-theoretic concept of existential closure in the realm of forcing, because, as Hamkins and Johnstone point out, the notions of resurrection and existential closure are tightly connected in model theory. In model theory, a submodel $\mathcal M \subseteq \mathcal N$ is \textit{existentially closed} in $\mathcal N$ if existential statements in $\mathcal N$ using parameters from $\mathcal M$ are already true in $\mathcal M$, i.e., $\mathcal M$ is a $\Sigma_1$-elementary substructure of $\mathcal N$. Many forcing axioms can be expressed informally by stating that the universe is existentially closed in its forcing extensions, since forcing axioms posit that generic filters, which normally exist in a forcing extension, exist already in the ground model. It turns out that in set theory the appropriate notion of resurrection implies the truth of its associated forcing axiom, but not the other way around. So what is obtained with resurrection axioms is described by Hamkins and Johnstone as a more ``robust" formulation of forcing axioms for various forcing classes.

\begin{defn} Let $\Gamma$ be a fixed, definable class of forcing notions. Let $\tau$ be a term for a cardinality to be computed in various models; e.g. $\c$, $\omega_1$, etc. The \textbf{\emph{Resurrection Axiom}} $\textsf{RA}_\Gamma(H_\tau)$ asserts that for every forcing notion $\Q \in \Gamma$ there is a further forcing $\dot{\R}$ with $\forces_{\Q} \dot{\R}  \in \Gamma$ such that if $g*h \subseteq \Q * \dot{\R}$ is $V$-generic, then $$H_{\tau}^V \preccurlyeq H_{\tau}^{V[g*h]}.$$
Just like with the maximality principle, we will write $\RAsc$ to denote the resurrection axiom for the class of subcomplete forcing notions, and $\RAc$ for the class of countably closed forcing notions. 

We may define $\Box \RAsc(H_\tau)$ as the \textit{necessary form of the principle itself}, asserting that $\RAsc(H_{\tau})$ holds in every forcing extension obtained by subcomplete forcing, with $H_{\tau}$ interpreted in the extension.
\end{defn}

%The weak resurrection axiom for subcomplete forcing ($\textsf{wRA}_{sc}(H_\kappa)$) states that after an arbitrary further forcing (so in the above, $\R$ is not necessarily subcomplete), the conclusion holds. This will imply that $H_\kappa$ is existentially closed in all subcomplete forcing extensions.
Hamkins and Johnstone  \cite{Hamkins:2013qv} examine the resurrection axiom for various forcing notions, which is a great reference for an array of similar results on other forcing classes. In particular, they looked at $\textsf{RA}_\Gamma(H_\c)$ for various classes $\Gamma$ such as that of proper, $ccc$, countably closed, and so on. 

The reason $H_\c$ is required in general is that if some forcing notion in $\Gamma$ adds new reals, then $H_\kappa$, where $\kappa>\c$ in $V$, simply cannot be existentially closed in the forcing extension; the added real itself is witnessing the lack of existential closure. So certainly $\textsf{RA}_\Gamma(H_\kappa)$ for any class of forcing notions $\Gamma$ that potentially add new reals, cannot hold. However, by the following result (due to \cite[Theorem 6]{Hamkins:2013qv}) we see that $\textsf{RA}_{\Gamma}(H_\c)$ is in fact equivalent to $\CH$, if $\Gamma$ is any class of forcing notions necessarily closed under finite iterations and containing a poset forcing $\CH$ without adding reals. So in particular, $\RAc(H_\c)$ and $\RAsc(H_\c)$ are both equivalent to $\CH$. 
Below I give the argument for subcomplete forcing, but it is the same proof as given by Hamkins and Johnstone \cite[Theorem 6]{Hamkins:2013qv}. 

\begin{prop} \label{prop:CHiffRAscHc} $\CH \iff \RAsc(H_\c)$. \end{prop}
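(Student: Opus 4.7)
The plan is to prove both directions separately, with the strategy in each case hinging on the fact that subcomplete forcing adds no new reals (\textbf{Proposition \ref{prop:noreals}}) and preserves $\omega_1$, combined with the observation that under $\CH$ we have $H_{\c} = H_{\omega_1}$.

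For the forward direction, I would assume $\CH$ and, given a subcomplete $\Q$, take $\dot \R$ to be a $\Q$-name for $\Coll(\omega_1, \c^{V[g]})$, which is countably closed and therefore subcomplete. Writing out the chain of equalities, $H_{\c}^V = H_{\omega_1}^V$ by $\CH$; this equals $H_{\omega_1}^{V[g]}$ because $\Q$ adds no reals and preserves $\omega_1$; which equals $H_{\omega_1}^{V[g*h]}$ since $\R$ is countably closed; and finally equals $H_\c^{V[g*h]}$ since $V[g*h] \models \CH$ (no new reals have appeared, and $\c^{V[g]}$ was collapsed to $\omega_1$). Thus $H_\c^V = H_\c^{V[g*h]}$ as sets, and the required elementarity is trivial.

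For the backward direction, I would assume $\RAsc(H_\c)$ and argue by contradiction that $\CH$ must hold. Let $\Q = \Coll(\omega_1, \c^V)$, which is countably closed and thus subcomplete, and apply $\RAsc(H_\c)$ to obtain $\dot\R$ with $\Q \forces \dot\R$ subcomplete and $H_\c^V \preccurlyeq H_\c^{V[g*h]}$. The key computation is that in $V[g*h]$, no new reals have been added (neither by $\Q$ nor by the subcomplete $\R$), and $\omega_1$ is preserved throughout, so $\c^{V[g*h]} = \omega_1^{V[g*h]} = \omega_1^V$. Consequently $\omega_1^V \notin H_\c^{V[g*h]}$, while if $\neg\CH$ held in $V$ we would have $\omega_1^V < \c^V$ and hence $\omega_1^V \in H_\c^V$. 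This would contradict even the set inclusion $H_\c^V \subseteq H_\c^{V[g*h]}$ required for the elementary substructure claim, so $\CH$ must hold in $V$.

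The main obstacle, such as it is, is really just bookkeeping around what $\c$ means in each intermediate model and verifying that the cardinal arithmetic lines up as claimed; there is no deep set-theoretic difficulty here, since all the required preservation properties of subcomplete forcing (no new reals, preservation of $\omega_1$) have already been established, and the structure of the argument is essentially the same as the corresponding result of Hamkins and Johnstone for other well-behaved forcing classes.
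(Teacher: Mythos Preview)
Your proof is correct and follows essentially the same approach as the paper: both directions exploit that subcomplete forcing adds no reals and preserves $\omega_1$, so that under $\CH$ the structure $H_\c = H_{\omega_1}$ is literally unchanged, while for the converse one forces $\CH$ with a countably closed collapse and reads the conclusion back via the elementarity. The only cosmetic differences are your explicit choice of $\Coll(\omega_1,\c)$ (which under $\CH$ is trivial anyway in the forward direction) and your use of the concrete witness $\omega_1^V$ in place of the paper's appeal to ``$H_\c$ contains only one infinite cardinal'' via elementarity.
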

\begin{proof}
For the forward implication, suppose that $\CH$ holds. Then since subcomplete forcing doesn't add new reals, $H_{\omega_1}$ is unaffected by subcomplete forcing, and moreover, $\c$ remains $\omega_1$ in every subcomplete extension. 

For the backward direction, assume $\RAsc(H_\c)$ holds. Let $\P$ be the canonical forcing to force $\CH$, which is countably closed and hence subcomplete. By the resurrection axiom $\RAsc(H_\c)$, there is a further forcing $\dot \R$ such that $\forces_\P ``\dot \R \text{ is subcomplete}"$ such that letting $g *h \subseteq \P * \dot \R$ be generic, we have $H_\c \preccurlyeq H_\c^{V[g*h]}.$ We know that $\CH$ has to hold still in $V[g*h]$ since subcomplete forcing cannot add new reals to make $\c$ larger. Thus $\CH$ holds in $V$ by elementarity, as desired.

Indeed, $\CH$ is equivalent to the statement that $H_{\c}$ contains only one infinite cardinal, which can be expressed in $H_{\c}$. 
\end{proof}
This result is certainly interesting in its own right, but it also indicates that the $\RAsc(H_\c)$, or indeed $\RAc(H_\c)$, is not necessarily the right axiom to look at. So what is the correct axiom to examine? I will discuss two reasonable possibilities: $\RAsc(H_{\omega_2})$ and $\RAsc(H_{2^{\omega_1}})$, and the same for the countably closed resurrection axiom. First we will look at what $\RAsc(H_{2^{\omega_1}})$ and $\RAc(H_{2^{\omega_1}})$ imply about the size of $2^{\omega_1}$.

\begin{prop} $\RAsc(H_{2^{\omega_1}}) \implies 2^{\omega_1} = \omega_2$. 
Indeed, $\RAc(H_{2^{\omega_1}}) \implies 2^{\omega_1} = \omega_2$. \end{prop}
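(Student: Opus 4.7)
The approach is by contradiction, with a single argument handling both $\RAsc(H_{2^{\omega_1}})$ and $\RAc(H_{2^{\omega_1}})$ uniformly. Suppose toward a contradiction that $\lambda := (2^{\omega_1})^V > \omega_2$. As a preliminary observation, I would note that $\RAsc(H_{2^{\omega_1}})$ implies $\RAsc(H_\c)$, since $\c$ is definable as $|\mathcal{P}(\omega)|$ inside $H_{2^{\omega_1}}$ and subcomplete forcing preserves $\c$; hence by Proposition~\ref{prop:CHiffRAscHc}, $\CH$ holds in $V$, and the analogous reasoning applies to $\RAc$. This is not strictly required for what follows, but it normalizes cardinal arithmetic.

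The key move is to apply the resurrection axiom with $\Q = \Coll(\omega_1,\omega_2^V)$, which is countably closed and hence lies in both forcing classes under consideration. If $g \subseteq \Q$ is generic, then $V[g]$ contains a bijection $f : \omega_1 \to \omega_2^V$ witnessing that $\omega_2^V$ has been collapsed. Applying the appropriate resurrection axiom to $\Q$, one obtains $\dot{R} \in V[g]$ of the correct class and a $\dot{R}$-generic $h$ such that
\[ H_\lambda^V \preccurlyeq H_\tau^{V[g*h]}, \qquad \tau := (2^{\omega_1})^{V[g*h]}. \]

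The contradiction then comes from inspecting the cardinal $\omega_2^V$. Since $\lambda > \omega_2^V$, the ordinal $\omega_2^V$ lies in $H_\lambda^V$, and $H_\lambda^V$ correctly sees that $\omega_2^V$ is a cardinal because every potential collapsing bijection has hereditary size below $\lambda$. By elementarity, ``$\omega_2^V$ is a cardinal'' must also hold in $H_\tau^{V[g*h]}$. On the other hand, since subcomplete (resp.\ countably closed) forcing preserves $\omega_1$, and since no forcing can restore a collapsed cardinal, we have $|\omega_2^V|^{V[g*h]} = \omega_1$; likewise $|\mathrm{TC}(f)|^{V[g*h]} = \omega_1$. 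Both $\omega_2^V$ and $f$ therefore lie in $H_{\omega_2^{V[g*h]}}^{V[g*h]}$, which is contained in $H_\tau^{V[g*h]}$ by Cantor's theorem in $V[g*h]$. But then $f$ is a witness inside $H_\tau^{V[g*h]}$ that $\omega_2^V$ is not a cardinal, contradicting the elementarity-preserved sentence.

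The main technical obstacle I anticipate is bookkeeping the substructure inclusion $H_\lambda^V \subseteq H_\tau^{V[g*h]}$ that is implicit in the elementary substructure relation, since $\lambda$ itself may or may not be preserved by $\dot{R}$. Fortunately the argument only requires that the specific parameters $\omega_2^V$ and $f$ lie in the target structure, and these are handled by the hereditary-size calculations above; the rest of $H_\lambda^V$ need not concern us. A secondary convenience is that the single choice $\Q = \Coll(\omega_1, \omega_2^V)$ is simultaneously countably closed, so the same proof gives both implications stated in the proposition.
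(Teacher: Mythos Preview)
Your proof is correct and follows essentially the same approach as the paper: collapse $\omega_2^V$ with $\Coll(\omega_1,\omega_2^V)$ and observe that no further subcomplete (or countably closed) forcing can restore its cardinality, contradicting the elementarity demanded by resurrection. The only cosmetic difference is that the paper phrases the violated sentence as ``$\kappa = \omega_2$'' rather than ``$\kappa$ is a cardinal,'' and omits your preliminary $\CH$ observation (which, as you note, is not needed).
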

\begin{proof}
We show the contrapositive. Let $2^{\omega_1} \geq \omega_3$. Let $\kappa = \omega_2^V$. Then $H_{2^{\omega_1}}$ can see that $\kappa = \omega_2$. But after forcing with $\Coll(\omega_1, \kappa)$, which is subcomplete since it is countably closed, we have that $H_{2^{\omega_1}}^{V[g]} \models ``\kappa < \omega_2"$ where $g \subseteq \Coll(\omega_1, \kappa)$ is generic. Moreover, if $\R$ is any further subcomplete (or countably closed) forcing, we will still have that for $h \subseteq \R$ generic, $H_{2^{\omega_1}}^{V[g][h]} \models ``\kappa < \omega_2"$. So $\RAsc(H_{2^{\omega_1}})$ (or $\RAc(H_{2^{\omega_1}})$) must fail.
\end{proof}

The next proposition gives a relationship between $\RAsc(H_{2^{\omega_1}})$ and $\RAsc(H_{\omega_2})$. However, the answer to the following question is unknown.
%It is not known whether the result can be improved to see that the two axioms are equivalent, with the cardinality assumption dropped, since we can't seem to see that $\textsf{RA}_{sc}(\omega_2)$ implies $2^{\omega_1}=\omega_2$.
\begin{question} Is it the case that $\RAsc(H_{2^{\omega_1}}) \iff \RAsc(H_{\omega_2})$? 
Indeed, is it the case that $\RAc(H_{2^{\omega_1}}) \iff \RAc(H_{\omega_2})$?\end{question}

\begin{prop} $\RAsc(H_{2^{\omega_1}}) \iff 2^{\omega_1}=\omega_2 + \RAsc(H_{\omega_2})$.

Indeed, $\RAc(H_{2^{\omega_1}}) \iff 2^{\omega_1}=\omega_2 + \RAc(H_{\omega_2})$.\end{prop}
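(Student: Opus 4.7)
The previous proposition already supplies $2^{\omega_1} = \omega_2$ from either $\RAsc(H_{2^{\omega_1}})$ or $\RAc(H_{2^{\omega_1}})$, so the forward directions reduce to deriving $\RAsc(H_{\omega_2})$ (resp.\ $\RAc(H_{\omega_2})$). Given subcomplete $\Q$ and the name $\dot R$ witnessing $\RAsc(H_{2^{\omega_1}})$, the plan is to show that in any resulting extension $V[g*h]$ one has $(2^{\omega_1})^{V[g*h]} = \omega_2^{V[g*h]}$; once this is in hand, the elementarity $H_{\omega_2^V}^V \preccurlyeq H_{(2^{\omega_1})^{V[g*h]}}^{V[g*h]}$ becomes precisely $\RAsc(H_{\omega_2})$. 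The cleanest way to verify it is via the first-order sentence $\varphi \equiv \exists \alpha\,(\omega_1 < \alpha \wedge \alpha \text{ is a cardinal})$. First, $\varphi$ is false in $H_{\omega_2^V}^V$: every ordinal $\alpha \in (\omega_1, \omega_2^V)$ has a ground-model bijection with $\omega_1$ of size $\omega_1$ sitting inside $H_{\omega_2^V}^V$, so $\alpha$ is not a cardinal there. Next, if $(2^{\omega_1})^{V[g*h]}$ strictly exceeded $\omega_2^{V[g*h]}$, then $\omega_2^{V[g*h]}$ would lie inside $H_{(2^{\omega_1})^{V[g*h]}}^{V[g*h]}$ and would witness $\varphi$ there (no $V[g*h]$-bijection $\omega_1 \to \omega_2^{V[g*h]}$ exists, and $\omega_1$ is preserved by subcomplete forcing), contradicting elementarity. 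The same reasoning handles the $\RAc$ case.

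For the backward direction, given subcomplete $\Q$, I would first invoke $\RAsc(H_{\omega_2})$ to obtain a $\Q$-name $\dot R_0$ for a subcomplete forcing yielding $H_{\omega_2^V}^V \preccurlyeq H_{\omega_2^{V[g*h_0]}}^{V[g*h_0]}$ for $g*h_0 \subseteq \Q * \dot R_0$ generic, and then compose with a ``cleanup'' step that restores $2^{\omega_1} = \omega_2$ without disturbing anything below $\omega_2$. The natural choice is $\dot R_1$, a name for $\Coll(\omega_2^{V[g*h_0]}, (2^{\omega_1})^{V[g*h_0]})$ as computed in $V[g*h_0]$. This forcing is $<\omega_2$-closed there, hence countably closed and therefore subcomplete. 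Being $<\omega_2$-closed, it preserves $\omega_2^{V[g*h_0]}$ and adds no new subsets of $\omega_1$, leaving $H_{\omega_2^{V[g*h_0]}}$ untouched and so preserving the elementarity obtained from $\dot R_0$. Meanwhile it collapses $\lambda := (2^{\omega_1})^{V[g*h_0]}$ to have cardinality $\omega_2^{V[g*h_0]}$, and since $\mathcal P(\omega_1)$ is unchanged we obtain $(2^{\omega_1})^{V[g*h_0*h_1]} = |\lambda|^{V[g*h_0*h_1]} = \omega_2^{V[g*h_0*h_1]}$. Setting $\dot R = \dot R_0 * \dot R_1$, which is subcomplete by the two-step iteration theorem for subcomplete forcing, the elementarity and cardinal computation combine to give $H_{(2^{\omega_1})^V}^V \preccurlyeq H_{(2^{\omega_1})^{V[g*h]}}^{V[g*h]}$, i.e.\ $\RAsc(H_{2^{\omega_1}})$. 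The $\RAc$ analogue is identical because $\Coll(\omega_2, \lambda)$ is countably closed and countably closed forcings are closed under two-step iteration.

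The main obstacle will be choosing the cleanup step in the backward direction correctly. A less closed collapse such as $\Coll(\omega_1, \lambda)$ is tempting but will not suffice: it can add new subsets of $\omega_1$ and leave $(2^{\omega_1})^{V[g*h_0*h_1]}$ strictly larger than $\omega_2^{V[g*h_0*h_1]}$, undoing the point of the cleanup. Using the more closed $\Coll(\omega_2, \lambda)$ sidesteps this by preserving $\mathcal P(\omega_1)$ entirely while still surjecting $\lambda$ onto $\omega_2$.
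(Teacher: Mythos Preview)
Your proof is correct and follows essentially the same route as the paper's. The backward direction is identical: both you and the paper use $\Coll(\omega_2,(2^{\omega_1}))$ as the cleanup step after invoking $\RAsc(H_{\omega_2})$, exploiting its $<\omega_2$-closure to leave $H_{\omega_2}$ untouched while forcing $2^{\omega_1}=\omega_2$. In the forward direction your argument is actually tighter than the paper's: you observe directly, via the first-order sentence about the number of infinite cardinals, that the elementarity $H_{\omega_2}^V\preccurlyeq H_{(2^{\omega_1})^{V[g*h]}}^{V[g*h]}$ already forces $(2^{\omega_1})^{V[g*h]}=\omega_2^{V[g*h]}$, so no further collapse is needed. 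The paper instead gestures at a further collapse step, which is unnecessary once one has your observation (and indeed would require the same $\Coll(\omega_2,\cdot)$ trick to work, which the paper does not make explicit in that direction). Your remark about why $\Coll(\omega_1,\lambda)$ would fail as the cleanup is also correct and worth recording.
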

\begin{proof}
For the forward direction for the subcomplete resurrection axiom, we already have that $\RAsc(H_{2^{\omega_1}}) \implies 2^{\omega_1}=\omega_2$ by the previous proposition. Moreover, if $\RAsc(H_{2^{\omega_1}})$ holds, so does $\RAsc(H_{\omega_2})$, since even if some subcomplete forcing makes $2^{\omega_1} \geq \omega_3$ hold in the extension, we can always then force again to make $2^{\omega_1} = \omega_2$ in a further collapse (and thus countably closed, so subcomplete) extension. %write this out in more detail

For the backward direction, suppose that $\RAsc(H_{\omega_2})$ holds and $2^{\omega_1}=\omega_2$. We would like to show that $\RAsc(H_{2^{\omega_1}})$ holds.
Toward that end, suppose that $\Q$ is subcomplete and let $g \subseteq \Q$ be generic. Then we have that there is some forcing $\R$ with $h \subseteq \R$ generic over $V[g]$, such that 
	$$H_{2^{\omega_1}}^V=H_{\omega_2}^V \preccurlyeq H_{\omega_2}^{V[g*h]}.$$ 
So if in $V[g*h]$ we have that $2^{\omega_1}=\omega_2$, then we are done. If not, i.e. if $2^{\omega_1}> \omega_2$ in $V[g]$, then let $G \subseteq \Coll(\omega_2, 2^{\omega_1})$ be generic over $V[g*h]$. Then $H_{\omega_2}^{V[g*h]}=H_{\omega_2}^{V[g*h*G]}=H_{2^{\omega_1}}^{V[g*h*G]}$, so we are done.

Since the ony forcing used in this proof is the collapse forcing, we also have the desired result for countably closed forcing.
\end{proof}

In comparison to \textbf{Proposition \ref{prop:CHiffRAscHc}} and \cite[Theorem 6]{Hamkins:2013qv}, the lack of any obvious restraints for the size of $2^{\omega_1}$ which is witnessed by both $\RAsc(H_{\omega_2})$ and $\RAc(H_{\omega_2})$ lend credibility to $H_{\omega_2}$ being the right parameter set to consider for our purposes. So this is what we will be using.

In the following we show that necessary form of the resurrection axiom is false, using the same argument as with the inconsistency of the necessary form of subcomplete maximality principle (\textbf{Proposition \ref{prop:necessaryMPinconsistent}}) that was originally demonstrated for countably closed forcings by Fuchs \cite{Fuchs:2008rt}.

\begin{prop}\label{prop:KurepaRA} Assume $\RAsc(H_\tau)$. Then there are no Kurepa trees in $H_\tau$. Indeed, in the case $H_\tau=H_{\omega_2}$, there are no Kurepa trees.
\end{prop}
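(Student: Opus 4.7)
The plan is to adapt the argument of Proposition \ref{prop:Kurepa}, exploiting the fact that subcomplete forcing cannot add cofinal branches to $\omega_1$-trees (Theorem \ref{thm:scbranch}) and that the collapse forcing $\Coll(\omega_1, \lambda)$ is countably closed, hence subcomplete. Suppose toward a contradiction that $T \in H_\tau$ is a Kurepa tree, and let $\lambda = |[T]|^V \geq \omega_2$. I would first force with $\Q = \Coll(\omega_1, \lambda)$; in the resulting extension $V[g]$ the set of ground-model cofinal branches $[T]^V$ has cardinality $\omega_1$, and since $\Q$ is subcomplete no new branches through $T$ appear, so $[T]^{V[g]} = [T]^V$ still has size $\omega_1^{V[g]}$.

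Next I would apply $\RAsc(H_\tau)$ to $\Q$: there is a name $\dot{\R}$ for a subcomplete forcing with $\forces_\Q \dot\R \in \Gamma_{sc}$, so that whenever $h \subseteq \dot\R^g$ is $V[g]$-generic, we have $H_\tau^V \preccurlyeq H_\tau^{V[g*h]}$. Since $T \in H_\tau^V$ and $V$ sees $T$ as a Kurepa tree, by elementarity $H_\tau^{V[g*h]}$ sees $T$ as a Kurepa tree as well. In particular, $T$ must have at least $\omega_2^{V[g*h]}$-many cofinal branches in $V[g*h]$.

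On the other hand, the composition $\Q * \dot\R$ is a two-step iteration of subcomplete forcings, hence subcomplete, so by Theorem \ref{thm:scbranch} applied to each stage we still have $[T]^{V[g*h]} = [T]^V$. Since subcomplete forcing preserves $\omega_1$, and we already collapsed $|[T]^V|$ to have size $\omega_1$ in $V[g]$, the set $[T]^{V[g*h]}$ has cardinality at most $\omega_1^{V[g*h]}$. This contradicts $|[T]|^{V[g*h]} \geq \omega_2^{V[g*h]} > \omega_1^{V[g*h]}$, completing the proof of the first claim. For the parenthetical second claim, note that any $\omega_1$-tree is an element of $H_{\omega_2}$, so $\RAsc(H_{\omega_2})$ rules out Kurepa trees altogether.

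The only mildly delicate point is verifying that $H_\tau^{V[g*h]}$ correctly recognizes the collection of cofinal branches of $T$; this is immediate because $\tau \geq \omega_2$ and each cofinal branch of an $\omega_1$-tree has hereditary cardinality $\omega_1$, hence lies in $H_{\omega_2} \subseteq H_\tau$. Everything else is routine given the tools already developed.
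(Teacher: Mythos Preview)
Your proof is correct and follows essentially the same approach as the paper's: collapse the number of branches to $\omega_1$, observe that no further subcomplete forcing can add branches (Theorem~\ref{thm:scbranch}), and derive a contradiction from the elementarity $H_\tau^V \preccurlyeq H_\tau^{V[g*h]}$ given by resurrection. Your write-up is in fact more explicit than the paper's about why $H_\tau$ correctly recognizes the set of cofinal branches and hence the (non-)Kurepa property.
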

\begin{proof}
Assume that $T \in H_\tau$ is a Kurepa tree with $\lambda$-many branches, where $\lambda \geq \omega_2$. Then forcing with $\Coll(\omega_1, \lambda)$ doesn't add branches to $T$, since the forcing is of course countably closed. But now $T$ has $\omega_1$-many branches in the extension $V[G]$, so $T$ is no longer a Kurepa tree in $V[G]$. Indeed, $T$ can never become a Kurepa tree in any further extension by a subcomplete forcing since subcomplete forcing does not add branches to $\omega_1$-trees by \textbf{Theorem \ref{thm:scbranch}}. Furthermore, the $H_{\tau}$ of any further extension of $V[G]$ may verify that ``$T$ does not have set-many branches." As $T$ is assumed to be in $H_{\tau}$ and is thus allowed as a parameter in $\RAsc(H_\tau)$, it follows that $T$ is not a Kurepa tree in further extensions, a contradiction.
\end{proof}

\begin{prop} $\Box \RAsc(H_{\omega_2})$ is inconsistent with $\ZFC$. \end{prop}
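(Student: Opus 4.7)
The plan is to mimic the proof of Proposition \ref{prop:necessaryMPinconsistent}, substituting Proposition \ref{prop:KurepaRA} for Proposition \ref{prop:Kurepa}. Working in $\ZFC + \Box \RAsc(H_{\omega_2})$, I will produce a subcomplete forcing extension in which $\RAsc(H_{\omega_2})$ must hold (by the necessary form) but in which there also exists a Kurepa tree, directly contradicting Proposition \ref{prop:KurepaRA}.

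First I would recall the standard countably closed forcing $\mathbb{K}$ that generically adds a Kurepa tree; since $\mathbb{K}$ is countably closed it is subcomplete by our earlier work. Let $G \subseteq \mathbb{K}$ be generic over $V$. Then in $V[G]$ there is a Kurepa tree $T$. Because $\Box \RAsc(H_{\omega_2})$ is assumed, and because $\mathbb{K}$ is subcomplete, the axiom $\RAsc(H_{\omega_2})$ holds in $V[G]$.

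Now, $T \in H_{\omega_2}^{V[G]}$, so we may apply Proposition \ref{prop:KurepaRA} inside $V[G]$ with $T$ as a parameter, concluding that $T$ is not a Kurepa tree in $V[G]$. This is the contradiction, completing the proof.

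The only step that needs a moment of care is ensuring that the Kurepa-adding poset is genuinely subcomplete (so that $\RAsc(H_{\omega_2})$ propagates to $V[G]$) and that the previously proved Proposition \ref{prop:KurepaRA} applies in the extension; both are immediate since countably closed forcings are subcomplete and Proposition \ref{prop:KurepaRA} is a theorem of $\ZFC + \RAsc(H_{\omega_2})$ and hence available in any model of this theory. There is no serious obstacle — this is a short corollary-style argument parallel to the one already given for the maximality principle.
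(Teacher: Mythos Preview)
Your proposal is correct and follows essentially the same approach as the paper's own proof: work in $\ZFC + \Box \RAsc(H_{\omega_2})$, pass to a subcomplete (countably closed) extension adding a Kurepa tree, note that $\RAsc(H_{\omega_2})$ persists by the necessary form, and invoke Proposition~\ref{prop:KurepaRA} for the contradiction. The paper's argument is slightly terser but identical in substance.
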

\begin{proof}
Work in $\ZFC + \Box \RAsc (H_{\omega_2})$. There is a generic extension obtained by subcomplete forcing in which there is a Kurepa tree (since forcing to add one is countably closed.) In this extension, $\RAsc(H_{\omega_2})$ has to still hold, since we are assuming the necessary form of the principle. But this is a contradiction to \textbf{Proposition \ref{prop:KurepaRA}}. 
\end{proof}

\subsection{Consistency of the Resurrection Axiom}
\label{subsec:ConRA}
Hamkins and Johnstone \cite[Section 5]{Hamkins:2013qv} show that the resurrection axiom for various classes of forcings is equiconsistent with the existence of an uplifting cardinal. By adapting their methods, we show that the large cardinal strength of $\RAsc(H_{\omega_2})$ is also that of an uplifting cardinal.

\begin{defn} An inaccessible cardinal $\kappa$ is \textbf{\emph{uplifting}} so long as  for every ordinal $\theta$ it is $\theta$-uplifting, meaning that there is an inaccessible $\gamma \geq \theta$ such that $V_\kappa \preccurlyeq V_\gamma$ is a proper elementary extension. \end{defn}

In the remarks after defining uplifting cardinals, Hamkins and Johnstone add that whenever $V_\kappa \preccurlyeq V_\gamma$ for large enough $\gamma$, where $\kappa$ and $\gamma$ are not necessarily inaccessible, it must be that $\kappa$ and thus $\gamma$ are $\beth$-fixed points. This gives rise to the following useful equivalent characterization of uplifting cardinals:

\begin{fact} A cardinal $\kappa$ is uplifting iff $\kappa$ is regular and for arbitrarily large regular cardinals $\gamma$ we have $H_\kappa \preccurlyeq H_\gamma$. \end{fact}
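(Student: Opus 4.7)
The plan is to prove both directions of the biconditional by translating between the $V_\alpha$-hierarchy and the $H_\alpha$-hierarchy, using the well-known fact that $V_\alpha = H_\alpha$ exactly when $\alpha$ is a $\beth$-fixed point. Since inaccessible cardinals are $\beth$-fixed points, and since the hint preceding the statement already notes that $V_\kappa \preccurlyeq V_\gamma$ forces $\kappa, \gamma$ to be $\beth$-fixed points, this coincidence is exactly what we need.

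For the forward direction, suppose $\kappa$ is uplifting. Then by definition $\kappa$ is inaccessible, hence regular. Given any ordinal $\theta$, there is an inaccessible $\gamma \geq \theta$ with $V_\kappa \preccurlyeq V_\gamma$. Since $\kappa$ and $\gamma$ are inaccessible (in particular $\beth$-fixed points), $V_\kappa = H_\kappa$ and $V_\gamma = H_\gamma$, so $H_\kappa \preccurlyeq H_\gamma$ with $\gamma$ regular. As $\theta$ was arbitrary, this gives the characterization in the forward direction.

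For the backward direction, suppose $\kappa$ is regular and the $H_\kappa \preccurlyeq H_\gamma$ property holds for a proper class of regular $\gamma$. First I would show $\kappa$ is inaccessible, which amounts to showing $\kappa$ is a strong limit. Given $\lambda < \kappa$, pick a regular $\gamma$ in the hypothesized class large enough that $\gamma > 2^\lambda$; then $\mathcal{P}(\lambda) \in H_\gamma$, so $H_\gamma \models \exists y\,(y = \mathcal{P}(\check\lambda))$. Since $\lambda \in H_\kappa$, elementarity transfers this existential statement to $H_\kappa$, forcing $\mathcal{P}(\lambda) \in H_\kappa$ and hence $2^\lambda < \kappa$. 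Thus $\kappa$ is inaccessible, and in particular $V_\kappa = H_\kappa$.

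Finally, to verify upliftingness, fix $\theta$ and choose a regular $\gamma \geq \theta$ with $H_\kappa \preccurlyeq H_\gamma$. The first-order sentence $\forall x\,\exists y\,(y = \mathcal{P}(x))$ holds in $H_\kappa$ because $\kappa$ is inaccessible, so by elementarity it also holds in $H_\gamma$, which means $\gamma$ is a strong limit; combined with regularity, $\gamma$ is inaccessible. Hence $V_\gamma = H_\gamma$ and we get $V_\kappa \preccurlyeq V_\gamma$ with $\gamma$ inaccessible and at least $\theta$, as required. The main conceptual step is the strong-limit transfer via elementarity of the power set existence sentence; the rest is bookkeeping between the two hierarchies.
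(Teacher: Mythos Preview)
Your proof is correct and follows essentially the same approach the paper indicates: the paper does not spell out a proof but merely records (citing Hamkins and Johnstone) that $V_\kappa \preccurlyeq V_\gamma$ forces $\kappa$ and $\gamma$ to be $\beth$-fixed points, which is exactly the bridge between the $V$- and $H$-hierarchies you exploit. Your filling-in of the details, in particular the elementarity transfer of the power-set existence sentence to recover inaccessibility on both sides, is the natural way to complete the sketch.
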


In terms of consistency strength, if $\kappa$ is Mahlo, then $V_\kappa$ contains a proper class of uplifting cardinals. If $\kappa$ is uplifting, then letting $V_\kappa \preccurlyeq V_\gamma$, we have that $V_\gamma$ is a transitive set model of the existence of a fully reflecting cardinal. For more details, see \cite[Theorem 11]{Hamkins:2013qv}.
%The consistency strength of an uplifting cardinal lies somewhere between that of the existence of a Mahlo cardinal and a fully reflecting cardinal.

We define some ``niceness" properties of class forcing, which will be needed in the following argument.

\begin{defn} A forcing notion $\P \subseteq M$ is \textbf{\textit{nice}} for a class forcing over $\mathcal M= \langle M, \in, A \rangle$ if $\P$ is definable in $\mathcal M$ and the truth lemma - stating that a sentence is true in a forcing extension if and only if it is forced by a condition in the generic filter - holds for forcing with $\P$ over $\mathcal M$. 

If $\P$ is nice for class forcing over $\mathcal M$ then we say that the niceness of $\P$ is \textbf{\textit{preserved}} to $\mathcal M^*=\langle M^*, \in, A^* \rangle$ if $\P^*$ is nice for class forcing over $\mathcal M^*$, is defined by the same formula over $M^*$, and the forcing relation for forcing with $\P^*$ over $\mathcal M^*$ are defined in $\mathcal M^*$ by the same formulas and same parameters as those for forcing with $\P$ over $\mathcal M$.
\end{defn} 

We will make use of the following fact from \cite[Lemma 17]{Hamkins:2014yu}, allowing the lifting of embeddings to generic extensions while performing class forcing. 

\begin{fact}[Lifting Lemma] \label{fact:liftinglemma}
Suppose that $\langle M, \in A \rangle  \preccurlyeq \langle M^*, \in, A^* \rangle$ are transitive models of $\ZFC$, that $\P$ is a definable class in $\langle M, \in, A \rangle$ that is nice for forcing and that the niceness of $\P$ is preserved to the analogous class $\P^*$ defined in $\langle M^*, \in, A^* \rangle$. If $G \subseteq \P$ is an $M$-generic filter and $G^* \subseteq \P$ is $M^*$-generic with $G = G^* \cap \P$, then $$\langle M[G], \in, A, G \rangle \preccurlyeq \langle M^*[G^*], \in, A^*, G^* \rangle.$$
\end{fact}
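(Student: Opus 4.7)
The plan is to prove elementarity by induction on formula complexity, reducing everything in $M[G]$ and $M^*[G^*]$ back to their respective ground models via the forcing theorem, and then invoking the given elementarity $\langle M, \in, A\rangle \preccurlyeq \langle M^*, \in, A^*\rangle$ together with the hypothesis that $\mathbb{P}^*$ is defined by the same formula over $\langle M^*, \in, A^*\rangle$ and has its forcing relation defined by the same formulas as that for $\mathbb{P}$ over $\langle M, \in, A\rangle$. Concretely, I would work with the Tarski--Vaught test: let $\varphi(x_1,\dots,x_n)$ be a formula in the language with $\in$, a predicate for $A$, and a predicate for the generic, and let $a_1,\dots,a_n \in M[G]$ with $a_i = \dot a_i^G$ for names $\dot a_i \in M$.

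For the forward direction, suppose $\langle M[G], \in, A, G\rangle \models \varphi(a_1,\dots,a_n)$. By the truth lemma for $\mathbb{P}$ over $\langle M, \in, A\rangle$ (part of the niceness hypothesis), there is $p \in G$ with $p \Vdash^{\langle M,\in,A\rangle}_{\mathbb{P}} \varphi(\dot a_1,\dots,\dot a_n)$. This forcing statement is definable in $\langle M, \in, A\rangle$ by a fixed formula $\psi_\varphi$, and the preservation-of-niceness hypothesis says $\psi_\varphi$ also defines the corresponding forcing relation for $\mathbb{P}^*$ over $\langle M^*, \in, A^*\rangle$. By $\langle M, \in, A\rangle \preccurlyeq \langle M^*, \in, A^*\rangle$ applied to $\psi_\varphi(p,\dot a_1,\dots,\dot a_n)$, we obtain $p \Vdash^{\langle M^*,\in,A^*\rangle}_{\mathbb{P}^*} \varphi(\dot a_1,\dots,\dot a_n)$. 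Since $p \in \mathbb{P}$ and $G = G^* \cap \mathbb{P}$, we have $p \in G^*$, so by the truth lemma for $\mathbb{P}^*$ over $\langle M^*, \in, A^*\rangle$, $\langle M^*[G^*], \in, A^*, G^*\rangle \models \varphi(\dot a_1^{G^*},\dots,\dot a_n^{G^*})$. A short but essential check is that $\dot a_i^{G^*} = \dot a_i^{G^*\cap \mathbb{P}} = \dot a_i^G = a_i$, which holds because $\dot a_i$ is a $\mathbb{P}$-name in $M$ and so only mentions conditions in $\mathbb{P}$.

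For the reverse direction, I would use the density of deciding conditions. The set $D$ of $q \in \mathbb{P}$ with $q \Vdash \varphi(\dot a_1,\dots,\dot a_n)$ or $q \Vdash \neg\varphi(\dot a_1,\dots,\dot a_n)$ is dense in $\mathbb{P}$ in $\langle M, \in, A\rangle$, so pick $p \in G \cap D$. By the argument of the forward direction, $p$ also decides $\varphi$ for $\mathbb{P}^*$ over $\langle M^*, \in, A^*\rangle$ in the same way. Since $p \in G^*$ and $\langle M^*[G^*], \in, A^*, G^*\rangle \models \varphi(a_1,\dots,a_n)$, it cannot be that $p$ forces $\neg\varphi$; hence $p \Vdash \varphi$ already over $\langle M, \in, A\rangle$, and the truth lemma yields $\langle M[G], \in, A, G\rangle \models \varphi(a_1,\dots,a_n)$.

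The bookkeeping around the predicates for $A$, $A^*$, $G$, $G^*$ is routine once one adopts the convention that parameters from $A$ are treated as constants and the generic is interpreted via the canonical name $\dot G$ (respectively $\dot G^*$), which by niceness is definable by the same formula in both ground models. The only part that requires real care is confirming that the hypothesis ``niceness is preserved'' is being used in exactly the right places: namely, that the \emph{same} formula defines $\mathbb{P}^*$ and its forcing relation in $\langle M^*, \in, A^*\rangle$, and that the truth lemma holds for the $\mathbb{P}^*$-extension. Granting this, the proof is an entirely standard application of the forcing theorem plus the elementarity transfer, and no substantial obstacle remains.
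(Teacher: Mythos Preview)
The paper does not prove this statement; it records it as a fact cited from \cite[Lemma 17]{Hamkins:2014yu}. Your argument is the standard one and is correct: reduce truth in the extensions to forcing statements in the ground models via the truth lemma, transfer those statements using the elementarity $\langle M,\in,A\rangle\preccurlyeq\langle M^*,\in,A^*\rangle$ together with the hypothesis that the forcing relation is defined by the same formulas on both sides, and use $G=G^*\cap\P$ to ensure the relevant condition lies in $G^*$ and that $\P$-names evaluate identically under $G$ and $G^*$.
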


%We will also make use of the following fact, which is Observation 12 in Hamkins and Johnstone \cite{Hamkins:2014yu}:
%
%\begin{fact}[Hamkins, Johnstone] \label{fact:sigma3reflection} Every uplifting cardinal is $\Sigma_3$-reflecting. \end{fact}
%
%Moreover, we will use the following fact:

We deviate slightly from the proofs presented by Hamkins and Johnstone that use an uplifting cardinal to force specific resurrection axioms. Instead of using an ordinal-anticipating uplifting Laver function, we use a least-counterexample lottery sum iteration.

\begin{thm} \label{thm:uplifting->RA} If $\kappa$ is an uplifting cardinal, then there is a subcomplete iteration of length $\kappa$ such that in the forcing extension, $\RAsc(H_{\omega_2})$ holds and $\kappa = \omega_2$. \end{thm}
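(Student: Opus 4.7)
The plan is to take $\P$ to be the subcomplete least-counterexample to $\RAsc(H_{\omega_2})$ lottery sum $rcs$ iteration of length $\kappa$, and to exploit the uplifting property of $\kappa$ together with the Lifting Lemma (\emph{Fact}~\ref{fact:liftinglemma}) to verify that $\RAsc(H_{\omega_2})$ holds in the extension. The Subcomplete Iteration Theorem ensures $\P$ is subcomplete, and \textbf{Lemma \ref{lem:lengthcollapse}} ensures that in any $\P$-generic extension $V[G]$, we have $\kappa = \omega_2$ and $\CH$.

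To verify $\RAsc(H_{\omega_2})$ in $V[G]$, I would suppose toward a contradiction that it fails there. Let $\Q \in V[G]$ be a subcomplete witness of minimal rank, so that no further subcomplete $\R$ over $V[G][\Q]$ yields $H_{\omega_2}^{V[G]} \preccurlyeq H_{\omega_2}^{V[G][\Q * \R]}$. Since $\kappa$ is uplifting, fix an inaccessible $\gamma > \kappa$ with $H_\kappa \preccurlyeq H_\gamma$ and large enough that a $\P$-name for $\Q$ lies in $H_\gamma$. Running the same definition inside $H_\gamma$ then produces an iteration $\P^*$ of length $\gamma$ whose first $\kappa$-segment agrees with $\P$; thus $\P^* \cong \P * \dot\P_{tail}$, where $\dot\P_{tail}$ is subcomplete over $V[G]$ by the iteration theorem.

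The key observation is that because $H_{\omega_2}^{V[G]} = H_{\omega_2}^{V_\gamma[G]}$ and $\Q$ together with its candidate witnesses $\R$ lie at ranks below $\gamma$, absoluteness of rank and of subcompleteness (\emph{Fact}~\ref{fact:scSigma2}) between $V[G]$ and $V_\gamma[G]$ implies that $\Q$ is still a minimal-rank subcomplete counterexample from the viewpoint of $V_\gamma[G]$. Hence $\Q$ appears as an option in the lottery at stage $\kappa$ of $\P^*$. The plan is then to build a filter $G^* \supseteq G$ that is $\P^*$-generic over $V$ and that selects $\Q$ at stage $\kappa$, yielding a factoring $\P^* \cong \P * \dot\Q * \dot R$ with $\dot R$ subcomplete over $V[G][\Q]$.

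Finally I will apply the Lifting Lemma to the elementary pair $H_\kappa \preccurlyeq H_\gamma$ together with the definable class forcings $\P$ and $\P^*$ (which are given by the same formula in each structure) to conclude $H_\kappa[G] \preccurlyeq H_\gamma[G^*]$. Since $\kappa = \omega_2^{V[G]}$ and $\gamma = \omega_2^{V[G^*]}$, this reads exactly $H_{\omega_2}^{V[G]} \preccurlyeq H_{\omega_2}^{V[G][\Q * R]}$, contradicting the failure of resurrection witnessed by $\Q$. The main obstacle I anticipate is verifying the technical hypotheses of the Lifting Lemma for the lottery-sum iteration: confirming that $\P$ is a nice class forcing over $H_\kappa$ whose niceness is preserved to $\P^*$ over $H_\gamma$, and checking that the absoluteness of the minimal-rank counterexample between $V[G]$ and $V_\gamma[G]$ really does place $\Q$ into the stage-$\kappa$ lottery of $\P^*$.
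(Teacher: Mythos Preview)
Your proposal is correct and follows essentially the same route as the paper: define the least-counterexample lottery $rcs$ iteration, pick a minimal-rank failure $\Q$ in $V[G]$, use uplifting to find an inaccessible $\gamma$ so that the analogously defined iteration $\P^*$ over $H_\gamma$ extends $\P$ and admits $\Q$ at stage $\kappa$, and apply the Lifting Lemma to derive the contradictory elementarity $H_{\omega_2}^{V[G]}\preccurlyeq H_{\omega_2}^{V[G^*]}$. The two technical points you flag as obstacles are exactly where the paper invests its effort: it argues explicitly that the iteration is definable over $H_\kappa$ (so that niceness is preserved to $H_\gamma$), and it chooses $\gamma$ large enough to contain, for every lower-rank subcomplete poset, a witness $\dot\R$ to its resurrectability, so that $H_\gamma[G]$ agrees that $\Q$ is the least-rank counterexample.
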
 
\begin{proof}
Let $\kappa$ be uplifting. Suppose that $\RAsc(H_{\omega_2})$ fails. We shall define $\P$ to be the subcomplete least-counterexample to $\RAsc(H_{\omega_2})$ lottery sum $rcs$ iteration of length $\kappa$. The iteration is of the form:  $\P = \P_\kappa = \seq{ ( \P_\alpha, \dot{\Q}_\alpha ) }{ \alpha < \kappa }$, and is a revised countable support iteration of length $\kappa$, defined so that at stage $\alpha$: 
first let $\mathcal Q$ be the collection of subcomplete forcing posets in $V^{\P_\alpha}$ of \textit{minimal rank} for which resurrection fails, and define
$$\text{$\P_{\alpha+1} = \P_\alpha * \dot{\Q}_\alpha * \Coll(\omega_1, |\P_\alpha|)$ where $\dot{\Q}_\alpha$ is a term for the lottery sum $\oplus \mathcal Q$.}$$
So at each stage $\alpha$, we ask whether the resurrection axiom has been forced yet in $V^{\P_\alpha}$. If not, i.e., if there is a further subcomplete forcing in $V^{\P_\alpha}$ that is of least rank that cannot be resurrected, then force with such a poset. 

We shall refer to this definition as the subcomplete least-counterexample to $\RAsc(H_{\omega_2})$ lottery sum iteration of length $\kappa$.

Notice that this iteration may as well have been defined in $H_\kappa$, since $\kappa$ is uplifting.\footnote{Indeed, uplifting cardinals are $\Sigma_3$-reflecting, as Hamkins and Johnstone show.} In particular, if $V^{\P_\alpha}$ thinks that $\dot \Q$ is a least rank counterexample to resurrection at stage $\alpha+1$, let $\gamma$ be large enough so that $\dot \Q \in H_\gamma$ and $H_\gamma^{\P_\alpha} \models ``\dot \Q$ is subcomplete." Since $\kappa$ is uplifting, this means that there is a $\dot \Q'$ in $H_\kappa^{\P_\alpha}$ which is a counterexample to resurrection. But since $\dot \Q$ is chosen to be of least rank, it must in fact be that $\dot \Q$ has rank at most that of $\dot \Q'$, which means that $\dot \Q$ is in $H^{\P_\alpha}_\kappa$, and so $H_\kappa$ is enough to recognize least-rank counterexamples to resurrection. Furthermore each of the posets $\P_\alpha$, for $\alpha <\kappa$, are in $H_\kappa$, and $H_\kappa$ agrees that these posets are subcomplete and cannot be resurrected. Indeed, if instead it were possible for $H_\kappa$ to think that there is some subcomplete forcing $\Q'$ that cannot be resurrected, i.e., all further subcomplete $\dot \R$ have the property that if $g*h \subseteq \Q'*\dot \R$ then $H_{\omega_2}=H_{\omega_2}^{H_\kappa}$ is not elementary in $H_{\omega_2}^{V[g][h]}= H_{\omega_2}^{H_\kappa[g][h]}$, then it is correct. For the only way that $H_\kappa$ could possibly be wrong is that instead there is some subcomplete $\dot \R$ that in fact does resurrect $\Q'$ in $V$. But this implies that there must be such a poset that is an element of $H_\kappa$. To see why, take $\gamma$ to be large enough so that $\dot \R \in H_\gamma$, where $\gamma$ is larger than the verification of $\dot \R$'s subcompleteness in $H_\gamma^{\Q'}$. Then $H_\gamma$ sees that there is a further subcomplete forcing $\dot \R$ that resurrects $\Q'$, namely, letting $H \subseteq \R$ be generic over $V[g]$, we have that $H_{\omega_2}^V = H_{\omega_2}^{V_\kappa}$ is elementary in $H_{\omega_2}^{V[g][H]}=H_{\omega_2}^{H_\gamma[g][H]}$. As $\kappa$ is uplifting, $H_\kappa \preccurlyeq H_\gamma$, so there must be a witness to resurrection in $H_\kappa$. Thus least rank counterexamples of the iteration fall in $H_\kappa$. So each of the iterants of $\P$ are in $H_\kappa$, as they are always assumed to have least rank. This also shows that the forcing $\P$ is $\kappa$-$cc$, since each $\P_\alpha$ is relatively small and $\kappa$ is inaccessible. 

We then have that letting $\mathcal M = \langle H_\kappa, \in \rangle$, the subcomplete least-counterexample to  $\RAsc(H_{\omega_2})$ lottery sum iteration of length $\kappa$, $\P \subseteq H_\kappa$, is nice for class forcing over $\mathcal M$.

Let us now show that this iteration $\P$ actually works as planned, and shows that $\RAsc(H_{\omega_2})$ holds in the extension. Let $G \subseteq \P$ be generic. Suppose toward a contradiction that $\RAsc(H_{\omega_2})$ does not hold in $V[G]$. Then there is a subcomplete forcing $\Q$ of least rank in $V[G]$ for which resurrection fails. Let $\dot{\Q}$ be a name for $\Q$ such that there is $p \in G$ forcing that resurrection fails for this $\dot{\Q}$: in particular, $p$ forces that $\Q$ is a subcomplete poset of least rank so that after any further forcing $\dot \R$ satisfying $\forces_{\Q} ``\dot{\R} \text{ is subcomplete}"$, letting $h \subseteq \Q *\dot \R$ be $V[G]$-generic, it is not the case that $H^{V[G]}_{\omega_2} \preccurlyeq H_{\omega_2}^{V[G*h]}$. Let $H \subseteq \Q$ be generic over $V[G]$.

We will use the uplifting property of $\kappa$ to argue that $\Q$ appears at stage $\kappa$ of the same exact iteration, except the iteration as defined in some larger inaccessible $\gamma$, toward a contradiction.

To find a suitable $\gamma$ we use the fact that $\kappa$ is highly uplifting.
%at least $2^{2^{\sup(|\Q|,|\P|)}}$-uplifting certainly, so there are arbitrarily large inaccessible $\gamma \geq 2^{2^{\sup(|\Q|,|\P|)}}$ satisfying $H_\kappa \preccurlyeq H_\gamma$. 
In particular we may let $\gamma > \theta$, where $\theta$ verifies the subcompleteness of $\P$. Also let $\gamma$ be larger than the verification of $\Q$'s subcompleteness. Then any such $\gamma$ will also verify the subcompletenes of $\P$ and $\Q$ by \textbf{Lemma \ref{lem:scabsolute}}. Moreover, to make sure that $H_\gamma[G]$ will agree that $\Q$ is of least rank, make sure that $\gamma$ is large enough so that $H_\gamma[G]$ contains the further, set-many subcomplete posets $\dot \R$, which provide the reasons that lower-rank posets are not in fact counterexamples to $\RAsc(H_{\omega_2})$ in $V[G]$. In particular, for every subcomplete poset $\Q' \in V[G]$ of lower rank than $\Q$, make sure that there is a further forcing $\dot \R$ in $H_\gamma[G]$ such that $\forces_{\Q'} ``\dot{\R} \text{ is subcomplete}"$ in $H_\gamma[G]$, such that letting $h \subseteq \Q' *\dot \R$ be $V[G]$-generic, it is the case that $H^{V[G]}_{\omega_2} \preccurlyeq H_{\omega_2}^{V[G*h]}$ -- in other words, in $H_\gamma[G]$, we have that $\dot \R$ resurrects $H_{\omega_2}$. This ensures that $H_\gamma[G]$ agrees with $V[G]$ that $\Q$ is a least-rank counterexample. This follows since for any stage $\alpha<\kappa$, we have $H_\kappa[G_\alpha] \preccurlyeq H_\gamma[G_\alpha]$ where $G_\alpha \subseteq \P_\alpha$.
%Since we would like for the iteration up to stage $\kappa$ to be defined the same way in $H_\gamma$ as in $V$, $H_\gamma$ should also agree that the whole iteration was indeed choosing least-rank counterexamples. 
%Let $\zeta_\alpha$ denote the rank of the least-rank counterexample to the resurrection axiom at stage $\alpha<\kappa$ in the iteration $\P$. We need to make sure that $H_\gamma$ has at least one witnessing poset $\dot \R$ as follows: for each $\Q'$ of rank less than $\zeta_\alpha$, there is a $\dot \R \in H_\gamma^{\P_\alpha*\Q'}$ such that $\forces_{\Q'} ``\dot{\R} \text{ is subcomplete}"$ in $H_\gamma$, but also $\dot \R$ resurrects $H_{\omega_2}$. So $\dot \R$ witnesses the fact that $\Q'$ is not a counterexample to the resurrection axiom at stage $\alpha$ in the iteration, according to $H_\gamma$. Note that in the above, we also need $\gamma$ to be larger than the verifications to the subcompleteness of each such $\dot \R$ and $\Q'$  for every stage in the iteration, since $H_\gamma$ needs to confirm the subcompleteness of these posets to agree with the failure of resurrection. 
Finding such an inaccessible $\gamma$ is possible, since this process is bounded by the size of the iteration $\P$ and the size of the poset $\Q$.

% Thus, letting $sc$ stand for ``subcomplete" as usual:
%	\begin{eqnarray*} H_\gamma[G] &\models &``\P \text{ is a $sc$ least-counterexample to $\RAsc(H_{\omega_2})$ lottery sum iteration of length $\kappa$"} \\ && \text{ and } \\ && ``\dot \Q \text{ is $sc$ of minimal rank witnessing the failure of $\RAsc(H_{\omega_2})$.}" \end{eqnarray*} 

Let $\P^*=\P_\gamma$ be defined in the same way as the least-rank counterexample to $\RAsc(H_{\omega_2})$ lottery sum $rcs$ iteration of length $\kappa$ as we defined above, except with length $\gamma$ instead of $\kappa$. Above, we ensured that $\P$ and $\P^*$ agree below stage $\kappa$. 

Thus letting $\mathcal M^*=\langle H_\gamma, \in \rangle$, we have that the niceness of $\P$ is preserved to $\mathcal M^*$, as $\P_\gamma \subseteq H_\gamma$ is defined the same way over $H_\gamma$ as $\P$ is over $\mathcal M$.

Above we have shown that 
$\dot \Q$ may be chosen at stage $\kappa$. Thus we may say that below a condition picking $\dot \Q$ at stage $\kappa$, namely $p \in \P$, we have that $\P^*$ factors as $\P * \dot{\Q} * \Ptail$. Let $G^*=G*H*G_{\text{tail}} \subseteq \P * \dot{\Q} * \Ptail$ be generic over $V$. Then by the Lifting Lemma (\textit{Fact} \ref{fact:liftinglemma}), as we have argued that the iteration is defined over $H_\kappa$, we have that $H_\kappa \preccurlyeq H_\gamma$ given by $\kappa$ uplifting lifts to 
	$$H_\kappa[G] \preccurlyeq H_\gamma[G^*]$$ in $V[G^*]$.

Note that we already have that $H_\kappa[G]=H_\kappa^{V[G]} = H_{\omega_2}^{V[G]}$, since $\kappa$ is regular and $\P$ has the $\kappa$-cc, and by \textbf{Lemma \ref{lem:lengthcollapse}} the length of the iteration is collapsed to $\omega_2$.
We can argue the same way to get that $H_\gamma[G^*]=H_\gamma^{V[G^*]} = H_{\omega_2}^{V[G^*]}$. 

Thus we have that $H_{\omega_2}^{V[G]} \preccurlyeq H_{\omega_2}^{V[G*H*G_{\text{tail}}]}$, contradicting the choice of $\Q$. So $\RAsc(H_{\omega_2})$ holds as desired.
\end{proof}

The same proof may of course be used to show that $\RAc(H_{\omega_2})$ may be forced from the existence of an uplifting cardinal. For the other direction in showing the equiconsistency of $\RAsc(H_{\omega_2})$ (or indeed $\RAc(H_{\omega_2})$,) with an uplifting cardinal, we shall use the same methods as in \cite{Hamkins:2013qv} to see that if the resurrection axiom holds, then $\omega_2$ is uplifting in $L$.

\begin{thm} $\RAsc(H_{\omega_2})$ implies that $\omega_2^V$ is uplifting in $L$. \end{thm}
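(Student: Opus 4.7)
The plan is to adapt the Hamkins--Johnstone argument from \cite{Hamkins:2013qv} to the subcomplete setting, invoking their useful equivalent characterization of uplifting: a cardinal $\kappa$ is uplifting iff $\kappa$ is regular and $H_\kappa \preccurlyeq H_\gamma$ for arbitrarily large regular $\gamma$. Relativized to $L$, this says $\kappa$ is uplifting in $L$ iff $\kappa$ is regular in $L$ and $L_\kappa \preccurlyeq L_\gamma$ for arbitrarily large $\gamma$ regular in $L$. Let $\kappa = \omega_2^V$. Regularity of $\kappa$ in $L$ is immediate, since $\kappa$ is regular in $V \supseteq L$.

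The central construction is as follows. Given any ordinal $\mu$, choose $\theta \geq \max(\mu, \omega_2^V)$ and force with $\Coll(\omega_1, \theta)$, which is countably closed and hence subcomplete; let $g$ be the generic. Apply $\RAsc(H_{\omega_2})$ with this as the initial forcing to produce a subcomplete $\dot{\R}$ in $V[g]$ and a generic $h$ such that
$$H_{\omega_2}^V \preccurlyeq H_{\omega_2}^{V[g*h]}.$$
By the standard argument that $L$ is uniformly definable and that $\omega_2^V$ and $\omega_2^{V[g*h]}$ are cardinals in $L$, this descends to
$$L_{\omega_2^V} \preccurlyeq L_\gamma, \qquad \gamma := \omega_2^{V[g*h]}.$$
Since $\theta$ has cardinality $\omega_1$ in $V[g]$ and subcomplete forcing preserves $\omega_1$ (\textbf{Proposition \ref{prop:noreals}}, \textbf{Theorem \ref{thm:SubproperMM}}), we obtain $\gamma > \theta \geq \mu$. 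Furthermore $\gamma$ is regular in $V[g*h]$, hence regular in $L$.

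Thus for every ordinal $\mu$ there is a regular-in-$L$ cardinal $\gamma \geq \mu$ with $L_\kappa \preccurlyeq L_\gamma$. By the characterization of uplifting above, $\kappa$ is uplifting in $L$, completing the argument.

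The main obstacle I foresee is justifying the descent from the $H_{\omega_2}$-level elementarity to the $L$-level elementarity, i.e.\ the implication $H_{\omega_2}^V \preccurlyeq H_{\omega_2}^{V[g*h]} \Rightarrow L_{\omega_2^V} \preccurlyeq L_{\omega_2^{V[g*h]}}$. This will use the uniform definability of the constructible hierarchy together with the fact that $\omega_2^V$ and $\omega_2^{V[g*h]}$ are cardinals in $L$; the delicate point is that $V[g*h]$ may contain collapsing bijections absent from $L$, so one must verify that this does not disturb the formulas relativized to $L$. A secondary but minor point will be confirming that $\Coll(\omega_1, \theta)$ is a legitimate initial forcing for $\RAsc(H_{\omega_2})$ for arbitrary $\theta$, which is immediate from its countable closedness and hence subcompleteness.
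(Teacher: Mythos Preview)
Your proposal is correct and follows essentially the same approach as the paper: force with $\Coll(\omega_1,\theta)$, apply $\RAsc(H_{\omega_2})$ to obtain $H_{\omega_2}^V \preccurlyeq H_{\omega_2}^{V[g*h]}$, and then relativize to $L$ to conclude $H_\kappa^L \preccurlyeq H_\gamma^L$ (equivalently $L_\kappa \preccurlyeq L_\gamma$) with $\gamma = \omega_2^{V[g*h]} > \theta$ regular in $L$. The paper dispatches your anticipated ``main obstacle'' in one line---``by relativizing formulas to the constructible universe''---which is exactly the uniform-definability argument you outline; since $L$ is absolute between $V$ and $V[g*h]$, the elementarity $H_\kappa^V \preccurlyeq H_\gamma^{V[g*h]}$ transfers directly to $H_\kappa^V \cap L \preccurlyeq H_\gamma^{V[g*h]} \cap L$, and these intersections are $H_\kappa^L$ and $H_\gamma^L$ respectively.
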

\begin{proof}
Let $\kappa = \omega_2^V$. To see that $\kappa$ is uplifting in $L$, we need to show that for arbitrarily large ordinals $\gamma$ that are regular cardinals in $L$, we have that
	$H_\kappa^L \preccurlyeq H_\gamma^L.$
Since $\kappa$ is regular, $\kappa$ is regular in $L$. 
For any $\theta>\kappa$, let $\Q$ be the poset forcing $\Coll(\omega_1, \theta)$, which is of course subcomplete. By $\RAsc(H_{\omega_2})$ we have a further forcing $\dot \R$ such that $g*h \subseteq \Q * \dot \R$ is generic over $V$ and letting $\gamma = \omega_2^{V[g][h]}$, we have that 
	$$H_{\kappa}^V \preccurlyeq H_{\gamma}^{V[g*h]}.$$ 
Since $\theta$ was forced to have cardinality $\omega_1$ in $V[g]$, $\theta < \gamma$. Since $\gamma$ is regular in $V[g*h]$ we have that $\gamma$ is regular in $L$. Thus by relativizing formulas to the constructible universe, we have that 
	$$H_\kappa^L = (H_\kappa^V \cap L) \preccurlyeq (H_\gamma^{V[g*h]} \cap L) = H_\gamma^L.$$ Thus $\kappa$ is uplifting in $L$ as desired. \end{proof}

Of course we may also show using a virtually identical proof that $\RAc(H_{\omega_2})$ implies $\omega_2^V$ is uplifting in $L$.

What do models of $\RAsc(H_{\omega_2})$ look like? Many similar results shown in models of the maximality principle for subcomplete forcing also hold in models of the resurrection axiom for subcomplete forcing. One can view this as a consequence of the fact that both of these axioms imply the local maximality principle.

\begin{lem} \label{lem:RAsc->LMPsc} $\RAsc(H_{\omega_2}) \implies \lMPsc$. \end{lem}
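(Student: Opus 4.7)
The plan is to directly unpack both definitions and observe that resurrection supplies exactly the extension needed to transfer truth from $H_{\omega_2}^V$ down from a subcomplete extension where the relevant sentence becomes necessary. First, I would suppose that $\varphi^{H_{\omega_2}}(a)$ is $sc$-forceably $sc$-necessary, with $a \in H_{\omega_2}$, and fix a subcomplete forcing $\Q$ that witnesses this; that is, $\forces_{\Q} ``\varphi^{H_{\omega_2}}(\check a) \text{ is } sc\text{-necessary}"$.

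Next, apply $\RAsc(H_{\omega_2})$ to $\Q$ to obtain a $\Q$-name $\dot{\R}$ with $\forces_\Q ``\dot{\R}$ is subcomplete$"$ and such that for any $V$-generic $g * h \subseteq \Q * \dot{\R}$,
$$H_{\omega_2}^V \preccurlyeq H_{\omega_2}^{V[g*h]}.$$
Fix such a generic $g*h$. In $V[g]$, the statement ``$\varphi^{H_{\omega_2}}(a)$ is $sc$-necessary'' holds, and in that model $\R = \dot{\R}^g$ is a subcomplete forcing notion over $V[g]$ with $V[g]$-generic filter $h$. Hence $\varphi^{H_{\omega_2}^{V[g*h]}}(a)$ holds in $V[g*h]$.

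Since $a \in H_{\omega_2}^V$ and $H_{\omega_2}^V \preccurlyeq H_{\omega_2}^{V[g*h]}$, the parameter $a$ has the same interpretation on both sides, so by elementarity $\varphi^{H_{\omega_2}^V}(a)$ holds in $V$. This is exactly $\lMPsc$. No real obstacle is anticipated; the argument is essentially an elementarity chase, with the only subtle point being to note that $a$ remains a legitimate parameter in the resurrected $H_{\omega_2}^{V[g*h]}$ (which is automatic since $a \in H_{\omega_2}^V$ and $H_{\omega_2}^V$ is $\Sigma_\omega$-elementary in $H_{\omega_2}^{V[g*h]}$). This mirrors, on the local level, the standard observation that bounded forcing axioms follow from resurrection.
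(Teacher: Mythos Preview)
Your proposal is correct and follows essentially the same approach as the paper: apply resurrection to the subcomplete forcing $\Q$ that makes $\varphi^{H_{\omega_2}}(a)$ necessary, obtain $H_{\omega_2}^V \preccurlyeq H_{\omega_2}^{V[g*h]}$, observe that the sentence holds in the larger structure since $\R$ is subcomplete over $V[g]$, and pull back by elementarity. Your version is slightly more explicit about the parameter $a$ surviving into the extension, but the argument is the same.
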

\begin{proof}
Suppose that subcomplete resurrection $\RAsc(H_{\omega_2})$ holds. To see that the local subcomplete maximality principle holds, suppose that $\varphi(a)$ is a sentence such that the sentence ``$H_{\omega_2} \models \varphi(a)$" is $sc$-forceably $sc$-necessary. So there is a subcomplete forcing $\P$ such that after any further forcing, we have that ``$H_{\omega_2} \models \varphi(a)$" holds in the extension. By resurrection, there is a further $\dot \R$ such that $\forces_\P ``\text{$\dot \R$ is $sc$}"$ such that letting $G*h \subseteq \P *\dot \R$ be generic we have $H_{\omega_2} \preccurlyeq H_{\omega_2}^{V[G*h]}$. Since $H_{\omega_2}^{V[G*h]} \models \varphi(a)$, this means that $H_{\omega_2} \models \varphi(a)$ holds by elementarity, so $\lMPsc$ holds as desired.
\end{proof}

\begin{prop} \label{prop:RAsc->SuslinDiamondCH} $\RAsc(H_{\omega_2})$ implies the following: \begin{enumerate}
	\item There is a Suslin tree.
	\item $\lozenge$ holds.
	\item $\CH$ holds.
\end{enumerate} \end{prop}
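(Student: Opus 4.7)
The plan is to derive all three items as an immediate consequence of results already in hand. By Lemma \ref{lem:RAsc->LMPsc}, $\RAsc(H_{\omega_2})$ implies the local subcomplete maximality principle $\lMPsc$, and by Proposition \ref{prop:localproperties}, $\lMPsc$ already proves each of the three conclusions. So the proof is essentially a one-line citation of these two earlier results.

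That said, it is illuminating to spell out a direct argument that uses resurrection explicitly, parallel to the proof of Proposition \ref{prop:localproperties}. For each of the three properties $\varphi$, the plan is the same three-step schema: (i) exhibit a countably closed (hence subcomplete) poset $\P$ forcing $\varphi$; (ii) observe that $\varphi$ is preserved by any subcomplete forcing; (iii) invoke $\RAsc(H_{\omega_2})$ to get some $\dot \R$ with $\forces_\P \dot\R \text{ is subcomplete}$ such that, for $g * h \subseteq \P * \dot \R$ generic, $H_{\omega_2}^V \preccurlyeq H_{\omega_2}^{V[g*h]}$. Since $\varphi$ holds in $H_{\omega_2}^{V[g*h]}$ by (i) and (ii), and $\varphi$ is first-order expressible in $H_{\omega_2}$, elementarity transfers $\varphi$ down to $H_{\omega_2}^V$, hence to $V$.

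For (1), take $\P$ to be the standard countably closed forcing to add a Suslin tree; preservation under subcomplete forcing is \emph{Fact} \ref{fact:Suslinpres}, and ``there is a Suslin tree'' is expressible in $H_{\omega_2}$ since $\omega_1$-trees and their antichains live there. For (2), take $\P$ to be the canonical countably closed forcing adding a $\lozenge$-sequence; preservation of $\lozenge$ under subcomplete forcing is the result of Jensen cited in Proposition \ref{prop:localproperties}, and $\lozenge$ is clearly an $H_{\omega_2}$-statement. For (3), $\CH$ follows from (2), or can be argued directly: force $\CH$ with $\Add(\omega_1,1)$ if it does not already hold, preservation is automatic because subcomplete forcing adds no reals (Proposition \ref{prop:noreals}), and $\CH$ is expressible in $H_{\omega_2}$.

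There is no real obstacle; the only thing to verify carefully is point (iii) in each case, namely that each property is genuinely first-order over $H_{\omega_2}$ so that the elementarity provided by resurrection can be applied. Once this is noted, the proof reduces to assembling the preservation facts already established earlier in the manuscript.
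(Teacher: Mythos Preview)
Your proposal is correct and matches the paper's approach exactly: the paper's proof is precisely the one-line citation of Lemma~\ref{lem:RAsc->LMPsc} followed by Proposition~\ref{prop:localproperties}. Your additional direct argument via the three-step resurrection schema is a faithful unfolding of what those two results do, so there is nothing to add or correct.
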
 
\begin{proof}
By the above \textbf{Lemma \ref{lem:RAsc->LMPsc}}, $\RAsc(H_{\omega_2})$ implies $\lMPsc$, 
 the local subcomplete maximality principle. Indeed, looking at \textit{Fact} \ref{Fact:BFA} we see that the resurrection axiom is a kind of generalization of the bounded subcomplete forcing axiom. So we are done by \textbf{Proposition \ref{prop:localproperties}}.
\end{proof}

Just as in the case of the maximality principle, cf. \textbf{Proposition \ref{prop:L[a]notV}}, if $V=L[a]$ for some set $a$, then it is not possible for both $\RAc(H_{\omega_2})$ and $\RAsc(H_{\omega_2})$ to both hold.

\begin{thm} \label{thm:V=L[a]+RA} If $V=L[a]$ where $a$ is a set, and $\RAc(H_{\omega_2})$ holds, then $\RAsc(H_{\omega_2})$ fails. \end{thm}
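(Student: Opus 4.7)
The plan is to reduce the statement to \textbf{Theorem \ref{thm:LMPsc+LMPcImpliesSharpClosure}} (\lMPsc\ and \lMPc\ together imply closure under sharps), by showing that each of the two resurrection axioms implies its associated local maximality principle. Then, since $V = L[a]$ precludes the existence of $a^\sharp$, we get a contradiction.

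First, I would prove the countably closed analogue of \textbf{Lemma \ref{lem:RAsc->LMPsc}}: namely, $\RAc(H_{\omega_2}) \implies \lMPc$. The proof is line-for-line the same as that of Lemma \ref{lem:RAsc->LMPsc}. Suppose $\varphi(b)$ with parameter $b \in H_{\omega_2}$ is $<\!\omega_1$-closed-forceably $<\!\omega_1$-closed-necessary, witnessed by a countably closed $\P$ forcing that $H_{\omega_2} \models \varphi(b)$ in every further countably closed extension. By $\RAc(H_{\omega_2})$, there is a further $\dot{\R}$ with $\forces_\P``\dot\R\text{ is countably closed}"$ such that for $g*h \subseteq \P*\dot\R$ generic, $H_{\omega_2}^V \preccurlyeq H_{\omega_2}^{V[g*h]}$. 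Since $H_{\omega_2}^{V[g*h]} \models \varphi(b)$ by $<\!\omega_1$-closed-necessity, elementarity yields $H_{\omega_2}^V \models \varphi(b)$, as required.

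Next, I would invoke the hypothesis $\RAsc(H_{\omega_2})$ together with \textbf{Lemma \ref{lem:RAsc->LMPsc}} to conclude $\lMPsc$, and combine this with the conclusion of the previous paragraph to obtain both \lMPsc\ and \lMPc. By \textbf{Theorem \ref{thm:LMPsc+LMPcImpliesSharpClosure}}, $V$ is then closed under sharps. In particular, $a^\sharp$ exists in $V$. But $V = L[a]$ contradicts the existence of $a^\sharp$, since $a^\sharp$ cannot belong to $L[a]$. Hence $\RAsc(H_{\omega_2})$ must fail.

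Since the heavy lifting (closure under sharps) has already been carried out in Theorem \ref{thm:LMPsc+LMPcImpliesSharpClosure}, there is no real obstacle here; the only thing to verify carefully is that the proof of Lemma \ref{lem:RAsc->LMPsc} transfers verbatim to the countably closed setting, which it does because the argument uses only the elementarity $H_{\omega_2}^V \preccurlyeq H_{\omega_2}^{V[g*h]}$ provided by resurrection, together with preservation of $<\!\omega_1$-closed-necessity under $<\!\omega_1$-closed forcing — both of which hold just as well for countably closed as for subcomplete forcing.
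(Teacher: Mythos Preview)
Your argument is correct, but it takes a genuinely different route from the paper's own proof. The paper argues directly: assuming $V=L[a]$ with $a\subseteq\theta$, it forces with $\Coll(\omega_1,\theta)$ and observes that in the extension $H_{\omega_2}$ satisfies ``there is a set $b$ such that every countable set of ordinals lies in $L[b]$'' (witnessed by $b=a$). Applying $\RAc(H_{\omega_2})$ to this collapse, elementarity pulls this statement back to $H_{\omega_2}^V$, yielding a bounded $b\subseteq\omega_2$ with $(\omega_2)^\omega\subseteq L_{\omega_2}[b]$. Then Namba forcing is exhibited as an explicit counterexample to $\RAsc(H_{\omega_2})$: it adds a countable cofinal sequence in $\omega_2^V$ which cannot lie in $L[b]$, and no further subcomplete forcing can restore elementarity for this reason.

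Your approach instead factors through the machinery already developed: you pass from each resurrection axiom to the corresponding local maximality principle (via Lemma~\ref{lem:RAsc->LMPsc} and its countably closed analogue), then invoke Theorem~\ref{thm:LMPsc+LMPcImpliesSharpClosure} to get closure under sharps, which is incompatible with $V=L[a]$. This is cleaner and more modular---indeed the paper itself notes this connection in the discussion following the theorem---but it hides the concrete obstruction. The paper's direct proof has the advantage of naming the specific forcing (Namba) that fails to be resurrected, and it avoids appealing to Jensen's Covering Lemma, which underlies Theorem~\ref{thm:LMPsc+LMPcImpliesSharpClosure}. One minor notational point: what your argument actually establishes from $\RAc(H_{\omega_2})$ is the boldface principle $\bflMPc$, not the lightface $\lMPc$ as written; but since the former trivially implies the latter, this does not affect the validity of your reduction.
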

\begin{proof}
Without loss of generality we may assume that the set $a$ is a set of ordinals, $a \subseteq \theta$ for some $\theta$. Let $V=L[a]$, and assume that $\RAc(H_{\omega_2})$ holds. 

Let $G \subseteq \Coll(\omega_1, \theta)$ be generic. Then as $V=L[a]$ we have that 
	$$H_{\omega_2}^{V[G]} \models ``\text{There is a set $b \subseteq \Ord$ such that every countable set of ordinals is in $L[b]$.}"$$ 
Namely, this sentence is witnessed by $b=a$. In particular, letting $\kappa = \omega_2^{V[G]}$, we have that 
	$$\kappa^\omega \cap L[a] = \kappa^\omega \cap L_\kappa[a] = \left( \Ord^\omega \cap L[a] \right)^{H_\kappa^{V[G]}}.$$ 
Then by $\RAc(H_{\omega_2})$, letting $H$ be generic for some countably closed forcing over $V[G]$, we have that 
	$H_{\omega_2}^V \preccurlyeq H_{\omega_2}^{V[G][H]}.$ 
So 
	$$H_{\omega_2}^V \models ``\text{There is a set $b \subseteq \Ord$ such that every countable set of ordinals is in $L[b]$.}"$$
Let $b$ be such a set in $H_{\omega_2}^V$. Then $b$ is a bounded subset of $\omega_2$, and in particular, we have that
	$\omega_2^\omega = \left(\omega_2^\omega \right)^{L_{\omega_2}[b]}$, i.e., countable subsets of $\omega_2$ of $V$ are the same as those of $L_{\omega_2}[b]$.
But this means that Namba forcing, which adds a new countable $\omega_2$-sequence not in $V$, can't be resurrected, so $\RAsc(H_{\omega_2})$ fails.	
\end{proof}

As with the maximality principle, cf. \textbf{Theorems \ref{thm:ConBFMPscandNotLFMPc}, \ref{thm:ConBFMPcandNotLFMPsc}}, it is not surprising that it is possible for the subcomplete resurrection axiom to hold while the countably closed resurrection axiom fails. 

\begin{cor} It is consistent for $\RAsc(H_{\omega_2})$ to hold while $\RAc(H_{\omega_2})$ fails, and vice-versa. \end{cor}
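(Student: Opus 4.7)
The plan is to exploit Theorem \ref{thm:V=L[a]+RA} in both its direct and contrapositive forms. That theorem says that whenever $V = L[a]$ for some set $a$, the axioms $\RAc(H_{\omega_2})$ and $\RAsc(H_{\omega_2})$ cannot both hold. So if one can force each axiom separately over $L$ in such a way that the resulting universe is of the form $L[a]$, the other axiom must automatically fail. This is the strategy for both directions of the corollary.

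For the direction where $\RAsc(H_{\omega_2})$ holds but $\RAc(H_{\omega_2})$ fails, start with the assumption that $L$ contains an uplifting cardinal $\kappa$. Apply Theorem \ref{thm:uplifting->RA} inside $L$ to obtain a subcomplete set-sized iteration $\P$ of length $\kappa$ such that, for any $L$-generic $G \subseteq \P$, the model $L[G]$ satisfies $\RAsc(H_{\omega_2})$. Since $\P$ is a set forcing, $G$ is a set, and thus $L[G] = L[a]$ with $a = G$. The contrapositive of Theorem \ref{thm:V=L[a]+RA} then yields that $\RAc(H_{\omega_2})$ fails in $L[G]$.

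For the other direction, we use the remark made after the proof of Theorem \ref{thm:uplifting->RA}: the same least-counterexample lottery sum argument, applied to the class of countably closed forcings in place of subcomplete ones, produces a forcing iteration from an uplifting cardinal making $\RAc(H_{\omega_2})$ hold in the extension. Running this construction in $L$ from the uplifting $\kappa$ yields $L[G] \models \RAc(H_{\omega_2})$ for the generic $G$, which is again a set. Hence $L[G] = L[a]$ with $a = G$, and now Theorem \ref{thm:V=L[a]+RA} applies \emph{directly} to conclude that $\RAsc(H_{\omega_2})$ fails.

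The only real subtlety, and the only thing to verify carefully, is that in each case the resulting model genuinely has the form $L[a]$ for a \emph{set} $a$; this uses that both iterations are of set length $\kappa$, so their generics are sets. No consistency strength beyond one uplifting cardinal is required (i.e.\ an uplifting cardinal in $L$), and the entire argument is essentially a two-line application of the already-established Theorems \ref{thm:uplifting->RA} and \ref{thm:V=L[a]+RA}.
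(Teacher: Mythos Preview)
Your proof is correct and follows essentially the same approach as the paper: force the desired resurrection axiom over $L$ from an uplifting cardinal, obtaining a model of the form $L[G]$, and then invoke Theorem~\ref{thm:V=L[a]+RA} (in one direction directly, in the other via its contrapositive under the hypothesis $V=L[a]$) to conclude that the other resurrection axiom fails. The paper's proof is terser but structurally identical.
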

\begin{proof}
Force $\RAsc(H_{\omega_2})$ to hold over $L$, with $\delta$ an uplifting cardinal. Then we obtain $L[G]$, which models the subcomplete resurrection axiom. Thus by the previous \textbf{Theorem \ref{thm:V=L[a]+RA}}, we have that $\RAc(H_{\omega_2})$ fails, as desired. The same proof may be used to show that it is possible for $\RAsc(H_{\omega_2})$ to fail while $\RAc(H_{\omega_2})$ holds.
\end{proof}

It is then reasonable to ask whether it is possible for the subcomplete resurrection axiom and the countably closed resurrection axiom to hold at the same time. 

\begin{question} Is it consistent for $\RAsc(H_{\omega_2})$ and $\RAc(H_{\omega_2})$ to both hold? \end{question}

This question is similar to the question for the local maximality principle, since if the above resurrection axioms did both hold, then both $\lMPsc$ and $\bflMPc$ hold, meaning that $\lMPsc$ and $\lMPc$ both hold. This would imply that $V$ is closed under sharps (\textbf{Theorem \ref{thm:LMPsc+LMPcImpliesSharpClosure}}). Furthermore, we know that if there is a hyper-huge cardinal, then $\lMPsc$ and $\lMPc$ do not both hold, by relativizing the proof of \textbf{Theorem \ref{thm:BFMPsc+LFMPcImpliesGrounds}} to $H_{\omega_2}$. Additionally, if the above resurrection axioms both hold, then $\lflMPsc$ and $\bflMPc$ both hold. This is consistent, for example if $\MPc(H_{\omega_2})$ and $\MPsc(\emptyset)$ both hold, as in \textbf{Theorem \ref{thm:ConBFMPc+LFMPsc}}. Thus we leave this question to future research.

\subsection{Consistency of the Local Maximality Principle}
\label{subsec:ConlocalMP}
We will now introduce the large cardinal property that is equiconsistent with the local maximality principle. We give the proof after showing the equiconsistency of the resurrection axiom because of the similarities in method and large cardinal used. When showing the consistency of the subcomplete resurrection axiom in \ref{subsec:ConRA}, we defined the notion of an \textit{uplifting cardinal}, of which the following property is the suitable ``local" version.

\begin{defn} An inaccessible cardinal $\delta$ is \textbf{\emph{locally uplifting}} so long as for every formula $\varphi(x)$ and $a \in V_\delta$, for every $\theta$ we have that $\theta$-locally uplifting, meaning that there is an inaccessible $\gamma > \theta$ such that 
	$V_\delta \models \varphi(a) \iff V_\gamma \models \varphi(a).$ \end{defn}
Note that if a regular cardinal $\delta$ has the property of being locally uplifting, without necessarily being inaccessible, then $\delta$ must be inaccessible, since otherwise if $2^\alpha \geq \delta$ for some $\alpha<\delta$, this is seen by some larger $V_\gamma$, i.e., $V_\gamma \models \exists \beta \ 2^\alpha = \beta$. So by elementarity there is some $\beta' = 2^\alpha$ in $V_\delta$, a contradiction.

We have the following relationship between locally uplifting and reflecting cardinals.

\begin{prop} If $\kappa$ is locally uplifting then $\kappa$ is reflecting. \end{prop}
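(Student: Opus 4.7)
The plan is to reduce the statement to a single application of the local upliftingness property, reflecting the existence of a witnessing $H_\theta$ downward.

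First I would note the easy structural facts: since $\kappa$ is locally uplifting it is inaccessible, hence regular and $H_\kappa = V_\kappa$, so only the reflection-of-formulas clause needs work. Given a formula $\varphi(x)$ and a parameter $a \in H_\kappa = V_\kappa$ together with a cardinal $\theta > \kappa$ such that $H_\theta \models \varphi(a)$, my goal is to produce a cardinal $\mu < \kappa$ with $a \in H_\mu$ and $H_\mu \models \varphi(a)$.

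The key step is to package the desired conclusion into a single first-order formula about $V_\kappa$ that the local upliftingness property can act on. Let $\psi(x)$ be the formula ``there exists a cardinal $\mu$ with $x \in H_\mu$ and $H_\mu \models \varphi(x)$''. Use local upliftingness applied to $\psi$ and $a$ with threshold above $\theta$ to obtain an inaccessible $\gamma > \theta$ such that $V_\kappa \models \psi(a) \iff V_\gamma \models \psi(a)$. Since $\gamma$ is inaccessible and larger than $\theta$, we have $H_\theta \in V_\gamma$ and $V_\gamma$ computes satisfaction in $H_\theta$ correctly; hence $V_\gamma \models \psi(a)$, with $\theta$ as a witness. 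By the equivalence, $V_\kappa \models \psi(a)$, so there is $\mu < \kappa$ a cardinal (in $V_\kappa$, hence in $V$, since $\kappa$ is inaccessible) with $a \in (H_\mu)^{V_\kappa}$ and $V_\kappa \models H_\mu \models \varphi(a)$.

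The last thing to check is that $V_\kappa$ computes $H_\mu$ correctly, i.e.\ $(H_\mu)^{V_\kappa} = H_\mu$, and that $\mathord{\models}$ on this set-sized structure is absolute between $V_\kappa$ and $V$; both follow immediately from $\mu < \kappa$ together with the inaccessibility of $\kappa$ (so that $H_\mu$ has size less than $\kappa$ and lies inside $V_\kappa$). There is no real obstacle here; the only subtle point is making sure the reflection formula $\psi$ quantifies over cardinals $\mu$ and demands $a \in H_\mu$ explicitly, rather than quantifying over $\theta > \kappa$, so that the statement truly becomes internal to $V_\kappa$ and can be transported back down by the local upliftingness equivalence.
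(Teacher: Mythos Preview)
Your proof is correct and follows essentially the same approach as the paper: both package the desired conclusion into a single formula $\psi(a)$ asserting the existence of some $H_\mu$ in which $\varphi(a)$ holds, apply local upliftingness with a threshold above $\theta$ to get an inaccessible $\gamma$ where $V_\gamma \models \psi(a)$, and reflect this back to $V_\kappa$. Your version is a bit more careful in explicitly requiring $\mu$ to be a cardinal with $a \in H_\mu$ and in checking the absoluteness of $H_\mu$ and its satisfaction relation between $V_\kappa$ and $V$, but the underlying idea is identical.
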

\begin{proof} 
Suppose that $\kappa$ is locally uplifting. To show that $\kappa$ is reflecting, let $\varphi(x)$ be a formula and let $a \in H_\kappa$, and that there is $\theta > \kappa$ where $H_\theta \models \varphi(a)$. Define $\psi$ as follows:
	$$\psi(a): \ \  \exists \delta \ H_\delta \models \varphi(a).$$
Then if we take $\gamma>\theta$ satisfying $H_\theta \in H_\gamma$, we have that $H_\gamma \models \psi(a)$. As $\kappa$ is locally uplifting, this implies that $H_\kappa \models \psi(a)$. Thus there is $\delta < \kappa$ such that $H_\delta \models \varphi(a)$ as desired.
\end{proof}

It is not hard to see that the local maximality principle is implied by the resurrection axiom, as has been pointed out before in \textbf{Lemma \ref{lem:RAsc->LMPsc}}. Here we show that the local maximality principle is equiconsistent with the existence of a locally uplifting cardinal, using the same method as with the proof of the maximality principle but with some care in relativizing to $H_{\omega_2}$.
	
\begin{thm} If $\delta$ is locally uplifting, then there is a subcomplete forcing extension in which $\lMPsc$ holds and $\delta = \omega_2$. \end{thm}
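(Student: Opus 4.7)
The plan is to adapt the proof of Theorem \ref{thm:forceMP} to the local setting, with the locally uplifting property of $\delta$ playing the role of the full reflection $V_\delta \preccurlyeq V$ used there. I would define $\P = \P_\delta$ to be a subcomplete lottery sum $rcs$-iteration of length $\delta$, where at stage $\alpha$ we set $\P_{\alpha+1} = \P_\alpha * \dot{\Q}_\alpha * \Coll(\omega_1, |\P_\alpha|)$, and $\dot{\Q}_\alpha$ names the lottery sum, taken over all pairs $(\varphi(x), a)$ with $a \in H_{\omega_2}^{V_\delta^{\P_\alpha}}$ such that $V_\delta^{\P_\alpha} \models$ ``$H_{\omega_2} \models \varphi(a)$ is $sc$-forceably $sc$-necessary,'' of subcomplete posets of minimal rank forcing $H_{\omega_2} \models \varphi(a)$ to be $sc$-necessary. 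Since $\delta$ is inaccessible, each $\P_\alpha$ lies in $V_\delta$, and a standard $\Delta$-system argument gives $\P$ the $\delta$-$cc$, so $\delta$ is preserved, while the embedded collapses ensure via Lemma \ref{lem:lengthcollapse} that $\delta = \omega_2$ in $V[G]$.

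To verify $\lMPsc$ in $V[G]$, let $\varphi(x)$ and $a \in H_{\omega_2}^{V[G]}$ be such that $V[G] \models$ ``$H_{\omega_2} \models \varphi(a)$ is $sc$-forceably $sc$-necessary.'' Applying Fact \ref{sciterationbound} to the $\delta$-$cc$ iteration, one gets $\alpha_0 < \delta$ with $a \in V[G_{\alpha_0}]$, and since cardinalities are preserved upward, $a \in H_{\omega_2}^{V[G_\alpha]}$ for all $\alpha \geq \alpha_0$. Because the tail of $\P$ from stage $\alpha$ onward is subcomplete, the ``button'' $(\varphi, a)$ persists in being $sc$-forceably $sc$-necessary in $V[G_\alpha]$. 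The crucial reflection step is then: fix such an $\alpha$ and a name for $a$ in $V^{\P_\alpha}$; the statement ``there exists a subcomplete $\Q$ whose two-step iteration with $\P_\alpha$ forces $H_{\omega_2} \models \varphi(a)$ to be $sc$-necessary'' holds in some $V_\theta$ for $\theta$ large enough to verify the relevant subcompleteness. Since this is a first-order sentence with parameters $\P_\alpha, a, \varphi$ all lying in $V_\delta$, the locally uplifting property of $\delta$ yields the same statement in $V_\delta$, so a minimal-rank witness is available for the lottery sum at stage $\alpha$. A density argument, identical to that of Theorem \ref{thm:forceMP}, then shows the button is pushed at some stage $\beta \geq \alpha$, producing a factor $H \subseteq \dot{\Q}_\beta^{G_\beta}$ after which $H_{\omega_2} \models \varphi(a)$ is $sc$-necessary; since the rest of $\P$ is subcomplete, this truth is preserved to $V[G]$.

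The principal obstacle is the reflection step itself. I must argue that ``$\Q$ is a subcomplete poset forcing $H_{\omega_2} \models \varphi(a)$ to be $sc$-necessary'' is expressible as a single first-order formula with parameters in $V_\delta$, which uses Fact \ref{fact:scSigma2} that subcompleteness is $\Sigma_2$, together with care in separating the meta-quantifier over formulas from the object-level assertion (i.e., fixing $\varphi$ as a concrete syntactic object coded in $V_\delta$). I must also check that the reflection delivers a witness of \emph{minimal} rank inside $V_\delta^{\P_\alpha}$, rather than merely some witness at a higher level; this follows once one observes that the conjunction of ``$\Q$ is subcomplete'' with ``no subcomplete poset of lower rank has the required forcing property'' is again first-order and hence reflects. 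Once these formalities are dispatched, the density-of-pushing argument, the persistence of the button under the subcomplete tail, and the collapse of $\delta$ to $\omega_2$ are essentially transcriptions of the corresponding portions of Theorem \ref{thm:forceMP}.
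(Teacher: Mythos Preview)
Your approach is essentially the paper's, and the iteration, the $\delta$-$cc$ bookkeeping, the appearance of the parameter $a$, and the density-of-pushing argument are all in order. There is, however, one genuine gap in the final step.

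When you say ``producing a factor $H \subseteq \dot{\Q}_\beta^{G_\beta}$ after which $H_{\omega_2} \models \varphi(a)$ is $sc$-necessary,'' you are tacitly asserting this in $V[G_\beta][H]$. But the lottery at stage $\beta$ only offered posets $\Q$ that \emph{$V_\delta$} believed to force the sentence to be $sc$-necessary. In the proof of Theorem~\ref{thm:forceMP} this sufficed because $V_\delta \preccurlyeq V$ gave $V_\delta[G_\beta][H] \preccurlyeq V[G_\beta][H]$, lifting the assertion upward for free. Here you have no such full elementarity, so you must argue separately that $V_\delta$'s assessment is correct in $V$. The paper does this by contradiction: if $V$ disagrees, there is some further subcomplete $\dot{\R}'$ (outside $V_\delta^{\P_\beta * \dot{\Q}}$) forcing $\neg\varphi(a)^{H_{\omega_2}}$; choosing $\gamma$ large enough to see $\dot{\R}'$ and its verification of subcompleteness, the biconditional in the locally uplifting property reflects the existence of such an $\dot{\R}$ down into $V_\delta^{\P_\beta * \dot{\Q}}$, contradicting $V_\delta$'s belief that $\Q$ made the sentence $sc$-necessary. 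Your obstacle paragraph handles downward reflection (finding a witness in $V_\delta$) and minimality, but never this upward correctness direction; without it the conclusion that $\varphi(a)^{H_{\omega_2}}$ holds in $V[G]$ does not follow.
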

\begin{proof}
Let $\delta$ be locally uplifting.
Define the $\delta$-length subcomplete lottery sum $rcs$ iteration $\P = \P_\delta$ as follows: for $\alpha < \delta$ let 
	$$\P_{\alpha+1} = \P_\alpha *\dot{\Q}_\alpha*\Coll(\omega_1, |\P_\alpha|),$$ 
where $\dot{\Q}_\alpha$ is a $\P_\alpha$-name for the lottery sum of all minimal rank subcomplete posets that force some sentence relativized to $H_{\omega_2}^{V^{\P_\alpha}}$ to be $sc$-necessary. In particular, let $\Phi$ be the collection of formulas $\varphi(x)$ with parameter $a \in H_{\omega_2}^{V^{\P_\alpha}}$ such that:
	$$\text{$V^{\P_\alpha} \models $ ``$\varphi^{H_{\omega_2}}(a)$ is $sc$-forceably $sc$-necessary,"} \text{ in other words,}$$
	$$\text{$V^{\P_\alpha} \models $ ``$`H_{\omega_2} \models \varphi(a)$' is $sc$-forceably $sc$-necessary."}$$
So $\Phi$ is the set of all possible $H_{\omega_2}$-buttons available at this point in the iteration. Then we let 
	$$\dot{\Q}_\alpha = \bigoplus_{\varphi \in \Phi} \set{ \dot {\Q} \in V^{\P_\alpha} }{ \dot \Q \text{ is least rank,  $V^{\P_\alpha} \models$ ``$\dot{\Q}$ is $sc$ and forces `$\varphi(a)^{H_{\omega_2}}$ is $sc$-necessary.'"} }$$ 
Since we will want the full iteration $\P$ to remain relatively small in size and to have the $\delta$-$cc$, notice that here we insist that the parameters for our sentences come from $H_{\omega_2}^{V^{\P_\alpha}}$, and that the iteration may as well have been defined in $V_\delta$. To see why amounts to the same argument as given in the proof of \textbf{Theorem \ref{thm:uplifting->RA}}. Firstly, as $\delta$ is inaccessible, it is large enough so that $H_{\omega_2}^V = H_{\omega_2}^{V_\delta}$, and moreover this remains true in each subsequent extension in the iteration, i.e., $V_\delta^{\P_\alpha} = V_\delta^{V^{\P_\alpha}}$, so $H_{\omega_2}$ in the subsequent extensions gets interpreted the same in $V_\delta^{\P_\alpha}$ as in $V^{\P_\alpha}$. If at stage $\alpha+1$ we have that $\dot \Q' \in V^{\P_\alpha}$ is subcomplete and forces that a sentence $\varphi(a)^{H_{\omega_2}}$ is $sc$-necessary, where $a \in H_{\omega_2}$, then take $\theta$ large enough so that $\dot \Q' \in V^{\P_\alpha}_\theta$ and $\theta$ verifies the subcompleteness of $\dot \Q'$. Then for all $\gamma > \theta$ we have by \textbf{Lemma \ref{lem:scabsolute}} that
	$$V^{\P_\alpha}_\gamma \models \text{``There is a subcomplete forcing $\dot \Q$ forcing that `$\varphi(a)^{H_{\omega_2}}$ is $sc$-necessary.'"}$$ 
As $\delta$ is locally uplifting, this means that we have $\gamma >\theta$ satisfying the above, which implies that
	$$V^{\P_\alpha}_\delta \models \text{``There is a subcomplete forcing $\dot \Q$ forcing that `$\varphi(a)^{H_{\omega_2}}$ is $sc$-necessary.'"}$$
So since each of the iterants of the forcing $\P$ are taken to be of least rank, they are all in $V_\delta$. If on the other hand at stage $\alpha+1$ we have that $\dot \Q' \in V_\delta^{\P_\alpha}$ is subcomplete and of least rank forcing that a sentence $\varphi(a)^{H_{\omega_2}}$ is $sc$-necessary, then the only way it could be wrong is that there is some further subcomplete forcing $\dot \R'$ that is not in $V_\delta^{\P_\alpha*\dot \Q'}$ that forces the sentence to be false. But then we may take $\gamma$ larger than the verification of this forcing $\dot \R'$, and use the fact that $\delta$ is locally uplifting to see that $$V^{\P_\alpha*\dot \Q'}_\delta \models ``\text{There is a subcomplete forcing $\dot \R$ forcing $\neg \varphi(a)^{H_{\omega_2}}$.'}$$ This contradicts the choice of $\dot \Q'$ in $V_\delta$, which means that $V_\delta$ is correct.
Thus the iteration is the same as if it were defined over $V_\delta$ as claimed.

Of course $\P$ is always defined, and for any name for a set $\dot x$ in $H_{\omega_2}^{V^{\P_\alpha}}$ the sentence ``$H_{\omega_2} \models |\dot x|=\omega_1$" is $sc$-forceably $sc$-necessary in $V^{\P_\alpha}$ via collapse forcing.

We shall refer to this definition as the subcomplete least-rank $\lMPsc$ lottery sum iteration of length $\delta$. 
%This iteration may as well have been defined in $V_\delta$, taking each $\P_\alpha$ in $V_\delta$. 

Now suppose that $G \subseteq \P$ is generic over $V$. Let's see that $V[G] \models \lMPsc$. Assume toward a contradiction that it fails: namely $\varphi(x)$ is a formula and $a \in H^{V[G]}_{\omega_2}$ are such that in $V[G]$:
	$$\text{``$\varphi^{H_{\omega_2}}(a)$ is $sc$-forceably $sc$-necessary,"} \text{ in other words,}$$
	$$\text{ ``$`H_{\omega_2} \models \varphi(a)$' is $sc$-forceably $sc$-necessary,"}$$
however, $\varphi(a)^{H_{\omega_2}}$ is not true in $V[G]$. Let us also take $p \in G$ forcing the above to be the case.

Note that $H_\delta^{V[G]} = H_{\omega_2}^{V[G]}$, since $\delta$ is regular and $\P$ has the $\delta$-cc, and by \textbf{Lemma \ref{lem:lengthcollapse}} the length of the iteration is collapsed to $\omega_2$.		

Let $\dot{\Q}$ be a name for $\Q$, a least rank poset in $V^{\P}$ and $\dot a$ be a name for $a$ such that in $V^\P$, we have that
``$\varphi(\dot a)^{H_{\omega_2}}$ is $sc$-necessary."

Since $\P$ has the $\delta$-$cc$, at no stage in the iteration could $\delta$ be collapsed. This means that there is some stage where the parameter $\dot a$ appears, as with the proof of the Maximality Principle from a fully reflecting cardinal (\textbf{Theorem \ref{thm:forceMP}}). Thus we may find a stage in the iteration where the parameter $a$ is available, past the support of $p$, say $a \in V_\delta[G_\alpha]$. 
%Thus $\varphi(a)$ is an available button at this stage since after the subcomplete $\P_{tail}$ we have that $\varphi(a)$ is a button. Moreover, it is dense in $\P_{tail}$ to force with a poset which makes $\varphi(a)$, since after any stage later than the support of $p$ in the iteration, $\varphi(a)$ remains a button. Thus we may say that there is $\beta < \delta$ such that there is a subcomplete $\Q$ forcing $\varphi(a)$ to be $sc$-necessary in $V_\delta[G_\beta]$. 

Now we let $\theta$ be larger than the verification needed for $\P$ and $\dot \Q$'s subcompleteness, and so that $\P \in V_\theta$ and $\dot \Q \in V_\theta^\P$. Then as $\delta$ is locally uplifting, we have that there is $\gamma >\theta$ satisfying 
	$$V_\gamma[G_\alpha] \models ``\varphi(a)^{H_{\omega_2}} \text{ is $sc$-forceably $sc$-necessary."}$$
Namely, $\P_{tail} * \dot \Q$ makes $\varphi(a)^{H_{\omega_2}}$ necessary, and as $\gamma$ is larger than the verification needed for the subcompleteness of $\P$ and $\dot \Q$, we have that this is true in $V_\gamma$.  So by the fact that $\delta$ is uplifting, we have that 
	$$V_\delta[G_\alpha] \models ``\varphi(a)^{H_{\omega_2}} \text{ is $sc$-forceably $sc$-necessary."}$$
Moreover, $\varphi(a)^{H_{\omega_2}}$ must continue to be $sc$-forceably $sc$-necessary in later stages, as $\varphi(a)^{H_{\omega_2}}$ continues to be a button in $V_\gamma$ in later stages of the iteration since we took $\alpha$ larger than the support of $p$. Thus it is dense for the button to be pushed, so that we have $\varphi(a)^{H_{\omega_2}}$ is forced to be $sc$-necessary at some stage $V_\delta[G_\beta]$. But this means that $\varphi(a)^{H_{\omega_2}}$ is $sc$-necessary in $V_\delta[G_\beta]$, since the rest of the iteration is subcomplete. Thus $\varphi(a)^{H_{\omega_2}}$ is true in $V[G]$, a contradiction as it was chosen as evidence of the failure of $\lMPsc$ in $V[G]$.
\end{proof}

For the other direction of the equiconsistency of $\lMPsc$ with a locally uplifting cardinals, we show that in $L$ there is a locally uplifting cardinal if the local maximality principle for subcomplete forcing holds.

\begin{thm} If $\lMPsc$ holds, then $\omega_2$ is locally uplifting in $L$. \end{thm}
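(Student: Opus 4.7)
The plan is to first derive the full elementarity $L_\delta \preccurlyeq L$ for $\delta = \omega_2^V$, by adapting the proof of \textbf{Lemma \ref{lem:MP->fullyreflectinginL}} so that it runs from $\lMPsc$ rather than from boldface $\MPsc(H_{\omega_2})$, and then to extract local upliftingness via one further application of $\lMPsc$.

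First, I would re-examine the argument of \textbf{Lemma \ref{lem:MP->fullyreflectinginL}}: for each formula $\varphi$ and $\vec a \in L_\delta$, the sentence ``the least $\gamma$ with an $L_\gamma$-witness to $\exists z\,\varphi(z,\vec a)$ has cardinality $\leq \omega_1$'' can be written as an $H_{\omega_2}$-sentence (noting that $L_\gamma \in H_{\omega_2}$ for $\gamma < \omega_2$, and that set-theoretic satisfaction is available in $H_{\omega_2}$), and is $sc$-forceably $sc$-necessary by collapsing the least witness via $\Coll(\omega_1,\cdot)$, which is countably closed and hence subcomplete. Applying $\lMPsc$ then yields witnesses inside $L_\delta$, giving $L_\delta \preccurlyeq L$. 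From this I deduce, exactly as in that lemma, that $\delta$ is $L$-inaccessible: $\delta$ is $V$-regular hence $L$-regular, and elementarity forbids $\delta$ from being an $L$-successor cardinal (otherwise $L_\delta$ would by elementarity identify a largest $L$-cardinal, contradicting $L$'s view), so $\delta$ is an $L$-limit cardinal, hence $L$-inaccessible by $\GCH$ in $L$.

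Next, fix $\varphi(x)$, $a \in L_\delta$ and an arbitrary ordinal $\theta$; replacing $\varphi$ by $\neg\varphi$ if needed, I may assume $L_\delta \models \varphi(a)$. For each $\theta_0 < \delta$ I consider the $H_{\omega_2}$-sentence with parameters $\theta_0, a$:
$$\Upsilon(\theta_0) \;:=\; \exists \gamma < \omega_2 \ \bigl[\gamma > \theta_0 \ \wedge \ \gamma \text{ is inaccessible in } L \ \wedge \ L_\gamma \models \varphi(a)\bigr].$$
The clause ``$\gamma$ is inaccessible in $L$'' is formalizable in $H_{\omega_2}$ for $\gamma < \omega_2$, since only $L$-cardinality, $L$-regularity and $L$-limit-cardinal-hood of $\gamma$ are involved, all witnessed by $L$-objects of $L$-cardinality $< \omega_2$. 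This sentence is $sc$-forceably $sc$-necessary via $\Coll(\omega_1, \delta)$: in $V[G]$ one has $\delta < \omega_2^{V[G]}$, $\delta$ is still $L$-inaccessible (as $L$ is forcing-invariant), and $L_\delta \models \varphi(a)$ by the persistence of $L_\delta \preccurlyeq L$, so $\delta$ itself witnesses $\Upsilon(\theta_0)$; any further subcomplete extension preserves $\omega_1$ and hence keeps $\delta$ strictly below $\omega_2$. By $\lMPsc$, $\Upsilon(\theta_0)$ holds in $V$ for every $\theta_0 < \delta$.

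Aggregating these instances, $L_\delta$ satisfies
$$\neg\,\Xi(a) \;:=\; \forall \theta_0 \ \exists \gamma \ \bigl[\gamma > \theta_0 \ \wedge \ \gamma \text{ is inaccessible} \ \wedge \ L_\gamma \models \varphi(a)\bigr],$$
using absoluteness of inaccessibility and of $L_\gamma$-satisfaction between $L_\delta$ and $L$ for $\gamma < \delta$. The full elementarity $L_\delta \preccurlyeq L$ transfers $\neg\,\Xi(a)$ to $L$, and instantiating at $\theta_0 = \theta$ produces an $L$-inaccessible $\gamma > \theta$ with $L_\gamma \models \varphi(a)$; combined with $L_\delta \models \varphi(a)$ this is precisely the witness required for $\theta$-local upliftingness of $\delta$ with respect to $(\varphi, a)$. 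The main obstacle will be Step~1: carefully verifying that the sentence driving \textbf{Lemma \ref{lem:MP->fullyreflectinginL}} can really be recast inside $H_{\omega_2}$ so that $\lMPsc$ (rather than the stronger $\MPsc(H_{\omega_2})$) suffices to conclude $L_\delta \preccurlyeq L$; everything downstream is then a direct elementarity argument on top of a single further application of $\lMPsc$.
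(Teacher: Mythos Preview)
Your Step 1 does not go through as sketched, and the difficulty is not just one of careful formalization: the recast sentence you propose is strictly weaker than what is needed. The sentence in \textbf{Lemma \ref{lem:MP->fullyreflectinginL}} is ``the least $\gamma$ such that some $b\in L_\gamma$ satisfies $\varphi^L(b,\vec a)$ has cardinality $\le\omega_1$,'' where $\varphi^L$ denotes truth in the full proper class $L$. That clause is precisely what makes the original argument work for Tarski--Vaught, and it is not $H_{\omega_2}$-expressible. When you replace it by ``the least $\gamma$ with $L_\gamma\models\exists z\,\varphi(z,\vec a)$ has cardinality $\le\omega_1$'' (which \emph{is} $H_{\omega_2}$-expressible), $\lMPsc$ hands you a $\gamma<\delta$ and a $b\in L_\gamma$ with $L_\gamma\models\varphi(b,\vec a)$. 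But Tarski--Vaught for $L_\delta\preccurlyeq L$ requires a $b\in L_\delta$ with $L\models\varphi(b,\vec a)$; your witness satisfies $\varphi$ only in the small set model $L_\gamma$, not in $L$. So the derivation of $L_\delta\preccurlyeq L$ collapses, and with it the elementarity transfer in your Step~7. (Your Steps 2--8 are fine on their own; the problem is solely the intermediate goal $L_\delta\preccurlyeq L$.)

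The paper never attempts to prove $L_\delta\preccurlyeq L$ and proceeds directly instead. After showing $\delta$ is $L$-inaccessible (as you do), it considers the single $H_{\omega_2}$-sentence $\psi(a)$: ``$\varphi^L(a)$ holds and there are unboundedly many $L$-cardinals.'' Here $\varphi^L$ is relativization to $L^{H_{\omega_2}}=L_{\omega_2}$, so this \emph{is} $H_{\omega_2}$-expressible. Since $H_{\omega_2}^V\models\psi(a)$, $\lMPsc$ forbids $\neg\psi(a)$ from being $sc$-forceably $sc$-necessary; by modal duality this means $\psi(a)$ is $sc$-\emph{necessarily} $sc$-forceable. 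Now given any ordinal $\theta$, collapse $\theta$ to $\omega_1$ (countably closed, hence subcomplete), and in that extension further subcomplete-force $\psi(a)$. In the resulting model, $\gamma':=\omega_2$ exceeds $\theta$, is regular and (by the second conjunct of $\psi$) a limit of $L$-cardinals, hence $L$-inaccessible, and $L_{\gamma'}\models\varphi(a)$ by the first conjunct. This yields $\theta$-local-upliftingness outright, with no appeal to any elementarity between $L_\delta$ and $L$.
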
	
\begin{proof}
%First we should check that $\omega_2$ is then inaccessible in $L$, and that $H_{\omega_2} \models$ that there are unboundedly many cardinals in $L$, and that there are unboundedly many inaccessibles in $L$.
Let $\kappa=\omega_2$ and suppose that the subcomplete local maximality principle holds.

Firstly, $\kappa$ is a limit cardinal in $L$, since for $\gamma < \kappa$, the statement $H_{\omega_2} \models$ ``there is a cardinal in $L$ greater than $\gamma$" is $sc$-forceably $sc$-necessary (by taking $\Coll(\omega_1, \kappa)$) and thus true in $H_{\omega_2}$. So we have that $\kappa$ is inaccessible.

Assume $L_\kappa \models \varphi(a)$. In other words, $H_{\omega_2} \models \varphi^L(a)$. We need to show that there is a larger $\gamma$ such that $L_\gamma \models \varphi(a)$. In order to do this, let's work in $L$ and first see that the following is $sc$-forceably $sc$-necessary:
	\begin{equation} \label{eqn:ForcesHmodelsPhiAndUnbdedlyCardsInL} H_{\omega_2} \models (\varphi^L(a) \land \text{ ``there are unboundedly many cardinals in $L$"}). \end{equation}
This holds since otherwise it is $sc$-forceably $sc$-necessary that $H_{\omega_2} \models \neg \varphi^L(a)$, so $H_{\omega_2} \models \neg \varphi^L(a)$ holds, a contradiction.

So, given some $\gamma > \kappa$, we may use subcomplete (indeed, countably closed) forcing over $L$ to collapse $\gamma$ to $\omega_1$. Then by (\ref{eqn:ForcesHmodelsPhiAndUnbdedlyCardsInL}), there is further forcing to reach a model $V[G][H]$ such that 
	$$H_{\omega_2}^{V[G][H]} \models \varphi^L(a).$$
Thus in $V[G][H]$, $\varphi^{L_{\omega_2}}(a)$ holds. Since $\omega_2^{V[G][H]} = \gamma'>\gamma>\kappa$ in this extension now, and furthermore by (\ref{eqn:ForcesHmodelsPhiAndUnbdedlyCardsInL})
	$$L_{\gamma'} \models \varphi(a) \text{ and ``there are unboundedly many cardinals"},$$ 
we now have a suitable $\gamma' = \omega_2^{V[G][H]}$ that is inaccessible in $L$ and $L_{\gamma'} \models \varphi(a)$ as desired.
\end{proof}

Of course, this theorem would also work to show that if the countably closed, proper, or semiproper local maximality principle holds, then $\omega_2$ is uplfiting in $L$, since all we use is countably closed forcing for the proof.

\section{Combining Resurrection and Maximality}
\label{sec:RA+MP}
Hamkins and Johnstone \cite[Section 6]{Hamkins:2013qv} combine the resurrection axiom with forcing axioms, like $\PFA$ for example, to hold after a forcing iteration. The same proof techniques show that the subcomplete forcing axiom can hold alongside the subcomplete resurrection axiom. 

This section focuses on a different question in a similar vein: is it possible for the resurrection axiom and the maximality principle to hold at the same time? The axioms do not imply each other directly,
$\MPsc(H_{\omega_2})$ surely does not imply $\RAsc(H_{\omega_2})$, since the consistency strength of $\MPsc(H_{\omega_2})$ is that of a fully reflecting cardinal, while $\RAsc(H_{\omega_2})$ has the consistency strength an uplifting cardinal. If $\kappa$ is fully reflecting, take the least $\gamma$ such that $V_\kappa \preccurlyeq V_\gamma$. If there isn't such a $\gamma$, then $\kappa$ isn't uplifting anyway. Then in $V_\gamma$, we have that $\kappa$ is not uplifting.
There is no implication in the other direction as well. This is because working in a minimal model of 
	$$T \ = \ \ZFC \; + \; ``V=L" \; + \; \text{ ``there is an uplifting cardinal"}$$ 
(i.e., no initial segment of the model satisfies this theory), we may force over this minimal model to obtain $\RAsc(H_{\omega_2})$. Now $\MPsc(H_{\omega_2})$ can't hold in the extension, since letting $\kappa$ be the $\omega_2$ of the extension, if $\MPsc(H_{\omega_2})$ were true, then that would imply that $L_\kappa$ is elementary in $L$, i.e., $L_\kappa$ would be a model of $T$ - contradicting the minimality of the model we started with.

However, both $\MPsc(H_{\omega_2})$ and $\RAsc(H_{\omega_2})$ imply the local version of the maximality principle, $\lMPsc$.

Here we show that it is possible for subcomplete maximality and resurrection to both hold, by combining the techniques showing the consistency of each principle, all in one minimal counterexample iteration. To do this, we need a combination of two large cardinal properties. Although showing the maximality principle will again need the length of the iteration to be fully reflecting, we will need a stronger notion of uplifting than we have previously stated. This is because as before, proving that the resurrection axiom holds relies upon being able to define the iteration in some larger class. However, the iteration designed to force the maximality principle from a fully uplifting cardinal $\kappa$ is not first order definable over $V_\kappa$, which is what we would need to use the uplifting criterion. It is definable over $V$, but not $V_\kappa$, as the sentences have parameters in $V_\kappa$ and truth is evaluated over $V_\kappa$. But this is remedied by using the methods of Hamkins and Johnstone in \cite{Hamkins:2014yu}, where they show the consistency of strongly uplifting cardinals is exactly that of the boldface resurrection axiom, which we will define below. Here we are allowed to carry around predicates in our structures.

\begin{defn} An inaccessible cardinal $\kappa$ is \textbf{\textit{strongly uplifting fully reflecting}} so long as: \begin{itemize}
	\item $\kappa$ is fully reflecting, i.e. $V_\kappa \preccurlyeq V$
	\item $\kappa$ is \textbf{\textit{strongly uplifting}}; meaning that it is strongly $\theta$ uplifting for every ordinal $\theta$. This means that for every $A \subseteq V_\kappa$ there is an inaccessible cardinal $\gamma \geq \theta$ and a set $A^* \subseteq V_\gamma$ such that $\langle V_\kappa , \in, A \rangle \preccurlyeq \langle V_\gamma, \in, A^* \rangle$ is a proper elementary extension.\footnote{As described by Hamkins and Johnstone in the comments on page 5 of their paper, we may let $\gamma$ be regular, uplifting, weakly compact, etc. Additionally, if $\kappa$ is strongly uplifting then $\kappa$ is inaccessible.} \qedhere
	\end{itemize} \end{defn}

Combining these two large cardinal notions is almost natural. If $\kappa$ is uplifting then there are unboundedly many $\gamma$ such that $V_\kappa \preccurlyeq V_\gamma$, and if on top of that $\kappa$ is reflecting, we add that $V_\kappa \preccurlyeq V$ as well, where $V$ is in some sense the limit of the $V_\gamma$'s.

\begin{defn} A cardinal $\delta$ is \textbf{\emph{subtle}} so long as for any club $C \subseteq \delta$ and for any sequence $\mathcal A = \seq{ A_\alpha }{ \alpha \in C }$ with $A_\alpha \subseteq \alpha$, there is a pair of ordinals $\alpha < \beta$ in $C$ such that $A_\alpha=A_\beta \cap \alpha$. \end{defn}

\begin{fact} \label{fact:subtleinacc}
If a cardinal $\delta$ is subtle, then $\delta$ is inaccessible.
\end{fact}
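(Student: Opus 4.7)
The plan is to establish inaccessibility by verifying the two defining properties separately: that $\delta$ is a strong limit cardinal, and that $\delta$ is regular. In both cases, I would argue by contradiction, assuming the relevant failure and then cooking up an indexed family $\langle A_\alpha \mid \alpha \in C\rangle$ on a suitable club $C \subseteq \delta$ designed to violate the subtlety coherence condition. The overall strategy is essentially a diagonalization: use the hypothesized smallness to make the $A_\alpha$'s ``distinguishable from below'' in a way that no pair $\alpha < \beta$ in $C$ can satisfy $A_\alpha = A_\beta \cap \alpha$.

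For the strong limit step, I would suppose toward a contradiction that there is some $\lambda < \delta$ with $2^\lambda \geq \delta$, and fix an injection $\alpha \mapsto x_\alpha \in \mathcal P(\lambda)$ for $\alpha < \delta$. Taking $C := \delta \setminus (\lambda+1)$ (clearly club in $\delta$) and setting $A_\alpha := x_\alpha$, one has $A_\alpha \subseteq \lambda \subseteq \alpha$ as required. Subtlety then yields $\alpha < \beta$ in $C$ with $A_\alpha = A_\beta \cap \alpha$, but since $A_\beta \subseteq \lambda \subseteq \alpha$, the intersection collapses to $A_\beta$ itself, giving $x_\alpha = x_\beta$ — a contradiction to injectivity.

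For regularity, I would suppose $\mu := \cof(\delta) < \delta$ and pick a strictly increasing cofinal sequence whose range, together with its limit points below $\delta$, forms a club $C \subseteq \delta \setminus (\mu+1)$ of order type $\mu$. Enumerating $C$ in increasing order as $\langle c_i \mid i < \mu \rangle$, I would define $A_{c_i} := \{i\}$; this is legitimate since $i < \mu \leq c_i$. Any two elements $c_i < c_j$ of $C$ satisfy $i < j < \mu \leq c_i$, so $A_{c_j} \cap c_i = \{j\} \cap c_i = \{j\} \neq \{i\} = A_{c_i}$, directly contradicting the subtlety requirement. Combining this with the strong limit step completes the proof (together with the trivial observation that $\omega$ is likewise not subtle, handled by the same coding trick using $A_n = \{n-1\}$).

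The main obstacle is mostly bookkeeping: one must choose the club $C$ so that it avoids the ``small'' region (below $\lambda+1$ or $\mu+1$) where the encoding would fail to land inside $\alpha$. This is easy in both cases since the complement of a bounded set is club, but it is the key move that makes the encoding work. No deep idea beyond careful indexing is required; the content of the proof lies entirely in exploiting the fact that subtlety forces coherence on club-many levels, which is incompatible with having enough room below $\delta$ to inject either $\mathcal P(\lambda)$ or the indexing ordinals of a short cofinal sequence.
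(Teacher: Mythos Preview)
The paper states this as a \emph{Fact} without proof, treating it as a standard background result, so there is no argument in the paper to compare yours against. Your proof is correct and is the standard one: the strong-limit step via injecting $\delta$ into $\mathcal P(\lambda)$ and the regularity step via indexing a short club by its own enumeration both work as you describe. One tiny point of care in the regularity step: to guarantee the closed-up cofinal sequence still has order type exactly $\mu$, it is cleanest to start from a \emph{normal} (continuous increasing) cofinal $\mu$-sequence rather than closing an arbitrary one; since $\mu=\cof(\delta)$ is regular this is unproblematic. Likewise, for the $\omega$ case your trick needs $C=\omega\setminus\{0\}$ (so that $A_0=\emptyset$ does not give a trivial coherent pair), which is implicit in your phrase ``the same coding trick'' but worth making explicit.
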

%\begin{proof} First we show that $\delta$ is regular. To do this, suppose not, suppose $\delta$ has order type $\gamma$, of shorter ordinality than $\delta$. Then the idea is to find a club $C \subseteq \delta$ such that $C=\bigcup_{\alpha < \delta} A_\alpha$ of disjoint $A_\alpha \subseteq \alpha$ such that $|A_\alpha| < \delta$. One way to do this is to let $C$ be a club in $\delta$ of order type $\gamma$, such that $C \cap \delta = \emptyset$; so $C \subseteq \kappa \setminus \delta$. Then define the sequence so that for $\alpha \in C$, if $\alpha$ is the $\nu$th element of $C$, let $A_\alpha = \alpha \setminus \nu$. Then each of these $A_\alpha$'s are disjoint, as desired.
%
%To see that $\delta$ is inaccessible, suppose not. Then there is some $\gamma < \delta$ that has $\delta$-many distinct subsets. So let $C = \delta \setminus \gamma$, and let the $\delta$-list $\mathcal A$ consist of these subsets of $\delta$. Since sublety requires that the subsets be the same up to to some initial segment above $\delta$, since these subsets are distinct, we have a contradiction.
%\end{proof}

\begin{prop} If $\delta$ is subtle, then it is consistent that there is a strongly uplifting fully reflecting cardinal. Namely, $\set{ \kappa < \delta }{ \text{$V_\delta \models ``\kappa$ is strongly uplifting and $V_\kappa \preccurlyeq V$"} }$ is stationary in $\delta$. \end{prop}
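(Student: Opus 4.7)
The plan is to adapt the standard subtle-cardinal reflection argument. Since $\delta$ is subtle, it is inaccessible by \emph{Fact} \ref{fact:subtleinacc} and in fact Mahlo, as is standard. Let $C\subseteq\delta$ be an arbitrary club; I want to exhibit $\kappa\in C$ for which $V_\delta\models$ ``$\kappa$ is strongly uplifting and $V_\kappa\preccurlyeq V$,'' noting that ``$V_\kappa\preccurlyeq V$'' read inside $V_\delta$ is simply $V_\kappa\preccurlyeq V_\delta$. The collection $D$ of inaccessible $\kappa<\delta$ with $V_\kappa\preccurlyeq V_\delta$ is stationary in $\delta$ by a standard L\"owenheim--Skolem chain argument combined with the Mahloness of $\delta$, so $C\cap D$ is also stationary.

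Suppose toward a contradiction that no $\kappa\in C\cap D$ is strongly uplifting inside $V_\delta$. Then for each such $\kappa$ I select a pair $(A_\kappa,\theta_\kappa)$ with $A_\kappa\subseteq V_\kappa$ and $\theta_\kappa<\delta$ witnessing failure: no inaccessible $\gamma\in(\theta_\kappa,\delta)$ admits $A^*\subseteq V_\gamma$ for which $\langle V_\kappa,\in,A_\kappa\rangle\preccurlyeq\langle V_\gamma,\in,A^*\rangle$ is a proper elementary extension. Since every $\kappa\in D$ is inaccessible, $A_\kappa$ can be coded as $B_\kappa\subseteq\kappa$ using a bijection $V_\kappa\leftrightarrow\kappa$ definable uniformly across $\kappa$; the elementarity $V_\alpha\preccurlyeq V_\beta$ for $\alpha<\beta$ in $D$ then guarantees that $B_\alpha=B_\beta\cap\alpha$ translates back to $A_\alpha=A_\beta\cap V_\alpha$. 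The ordinal $\theta_\kappa$ is folded into the code on a separate coordinate, and after further thinning (Fodor-style, if needed) I may assume $\theta_\kappa<\kappa$ along a stationary $S\subseteq C\cap D$.

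Applying the subtlety of $\delta$ to the coherent sequence $\langle B_\kappa:\kappa\in S\rangle$ produces $\alpha<\beta$ in $S$ with $B_\alpha=B_\beta\cap\alpha$, hence $A_\alpha=A_\beta\cap V_\alpha$. Combined with $V_\alpha\preccurlyeq V_\beta$ this promotes to a proper elementary extension $\langle V_\alpha,\in,A_\alpha\rangle\preccurlyeq\langle V_\beta,\in,A_\beta\rangle$, and $\beta\in D$ is inaccessible with $\beta>\alpha>\theta_\alpha$. Thus $(\beta,A_\beta)$ is exactly the sort of witness whose absence defined $(A_\alpha,\theta_\alpha)$, contradicting the choice of the latter. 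Consequently some $\kappa\in C\cap D$ is strongly uplifting in $V_\delta$, and the claim follows.

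The main technical obstacle is the middle paragraph: setting up a coding $V_\kappa\leftrightarrow\kappa$ along $D$ that is uniformly definable and whose coherence really does translate $B_\alpha=B_\beta\cap\alpha$ into $A_\alpha=A_\beta\cap V_\alpha$, together with a coordinate for $\theta_\kappa$ so that the subtle conclusion actually yields $\beta>\theta_\alpha$. Both pieces are standard bookkeeping in subtle-cardinal arguments, but the contradiction step collapses if either is handled carelessly.
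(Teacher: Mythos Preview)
Your approach differs from the paper's, which is a two-line argument: cite Hamkins--Johnstone \cite[Theorem 7]{Hamkins:2014yu} for the fact that the strongly uplifting cardinals below a subtle $\delta$ are stationary in $V_\delta$, observe that the $\kappa<\delta$ with $V_\kappa\preccurlyeq V_\delta$ form a club (L\"owenheim--Skolem), and intersect. You are instead attempting to reprove the Hamkins--Johnstone result from scratch, which is reasonable, but the argument as written has a genuine gap.

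The problem is in the third paragraph, not the second. You claim that $V_\alpha\preccurlyeq V_\beta$ together with $A_\alpha=A_\beta\cap V_\alpha$ ``promotes to'' $\langle V_\alpha,\in,A_\alpha\rangle\preccurlyeq\langle V_\beta,\in,A_\beta\rangle$. This is false in general: take any $\alpha<\beta$ with $V_\alpha\preccurlyeq V_\beta$, set $A_\beta=\{\alpha\}$ and $A_\alpha=A_\beta\cap V_\alpha=\emptyset$; then $\langle V_\beta,\in,A_\beta\rangle\models\exists x\,\dot A(x)$ while $\langle V_\alpha,\in,A_\alpha\rangle$ does not. Coherence of the predicates plus elementarity in the base language simply does not yield elementarity in the expanded language. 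The actual Hamkins--Johnstone argument handles this by encoding into $B_\kappa$ not just $A_\kappa$ but the full elementary diagram of $\langle V_\kappa,\in,A_\kappa\rangle$; then $B_\alpha=B_\beta\cap\alpha$ directly says that the $\alpha$-structure's diagram is the restriction of the $\beta$-structure's, which \emph{is} elementarity. For this to make sense the coding must be coherent, and the clean way to arrange that is not a ``uniformly definable'' bijection (which need not exist in $\ZFC$) but rather a single fixed bijection $f:V_\delta\to\delta$, after thinning to the club of $\alpha$ with $f[V_\alpha]=\alpha$. With these two repairs your outline goes through; without them the contradiction step does not fire. Your closing paragraph flags the bookkeeping as the obstacle, but the real missing idea is encoding the diagram.
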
 %Can we go the other way around?
\begin{proof}
Hamkins and Johnstone \cite[Theorem 7]{Hamkins:2014yu} show that if $\delta$ is subtle, then the set of cardinals $\kappa$ below $\delta$ that are strongly uplifting in $V_\delta$ is stationary. But since $\delta$ is subtle, it must also be inaccessible by \textit{Fact} \ref{fact:subtleinacc}. Thus in $V_\delta$, by the proof of the downward Lowenheim-Skolem theorem, there is a club $C \subseteq \delta$ of cardinals $\kappa$ such that $V_\kappa \preccurlyeq V_\delta$; meaning that $\kappa$ is fully reflecting in $V_\delta$. This means that there is some $\alpha < \kappa$ that is both strongly uplifting and fully reflecting in $V_\delta$, giving us the required consistency.
\end{proof}

Since we will be using the stronger large cardinal notion while showing that the subcomplete maximality principle and the subcomplete resurrection principle can hold at the same time, we might as well show that the stronger subcomplete resurrection principle holds as well, which we define below. 

\begin{defn} Let $\Gamma$ be a fixed, definable class of forcing notions. Let $\tau$ be a term for a cardinality to be computed in various models; e.g. $\c$, $\omega_1$, etc. The \textbf{\emph{Boldface Resurrection Axiom}} $\textsf{\textbf{RA}}_\Gamma(H_\tau)$ asserts that for every forcing notion $\Q \in \Gamma$ and $A \subseteq H_\tau$ there is a further forcing $\dot{\R}$ with $\forces_{\Q} \dot{\R}  \in \Gamma$ such that if $g*h \subseteq \Q * \dot{\R}$ is $V$-generic, then there is an $A^* \in V[G*h]$ such that $\langle H_{\tau}^V, \in, A \rangle \preccurlyeq \langle H_{\tau}^{V[g*h]}, \in, A^* \rangle. $ \end{defn}

As discussed in the previous section, it doesn't make too much sense to talk about the subcomplete resurrection axiom at the continuum, like is done in \cite{Hamkins:2014yu}, so as before we will be relativizing the notion to subcomplete forcing using $\omega_2$ instead.
Thus the notion we will be looking at is $\bfRAsc(H_{\omega_2})$. We will be following the proof of  \cite[Theorem 19]{Hamkins:2014yu} to show that the boldface resurrection axiom holds from a strongly uplifting cardinal. Of course the following theorem may be tweaked to work for for other classes of forcing notions such as proper forcing, but our focus here is on subcomplete forcing.

\begin{thm} Let $\kappa$ be a strongly uplifting fully reflecting cardinal. Then there is a forcing extension in which both $\bfRAsc(H_{\omega_2})$ and $\MPsc(H_{\omega_2})$ hold, and $\kappa=\omega_2$. \end{thm}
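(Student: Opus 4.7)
The plan is to combine the iterations used in Theorem \ref{thm:forceMP} and Theorem \ref{thm:uplifting->RA}, taking advantage of the fact that a strongly uplifting fully reflecting cardinal simultaneously provides the reflection needed to verify maximality and the uplifting (now with predicates) needed to verify boldface resurrection. Specifically, I would define a revised countable support iteration $\P=\P_\kappa=\seq{(\P_\alpha, \dot \Q_\alpha)}{\alpha<\kappa}$ of length $\kappa$ where at stage $\alpha$ we take $\P_{\alpha+1}=\P_\alpha * \dot \Q_\alpha * \Coll(\omega_1, |\P_\alpha|)$ and $\dot \Q_\alpha$ is a $\P_\alpha$-name for the lottery sum of: (i) all least-rank subcomplete posets in $V^{\P_\alpha}$ which force some sentence with parameters in $H_{\omega_2}^{V^{\P_\alpha}}$ to be $sc$-necessary (the $\MPsc$-buttons), together with (ii) all least-rank subcomplete posets in $V^{\P_\alpha}$ for which some pair $(\Q,A)$ with $A\subseteq H_{\omega_2}^{V^{\P_\alpha}}$ fails to be boldface resurrected. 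The collapse factor ensures $\kappa=\omega_2$ in the extension, as in Lemma \ref{lem:lengthcollapse}, and the $\kappa$-$cc$ of $\P$ follows from inaccessibility of $\kappa$ together with the least-rank choice.

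First I would verify, exactly as in the proof of Theorem \ref{thm:forceMP}, that $\MPsc(H_{\omega_2})$ holds in $V[G]$: any $sc$-forceably $sc$-necessary $\varphi(a)$ with $a\in H_{\omega_2}^{V[G]}$ has its parameter appearing at some $V[G_\alpha]$ past the support of a forcing condition, and since $V_\kappa\preccurlyeq V$ the button is recognized inside $V_\kappa$; density then ensures the button is pressed cofinally often, so $\varphi(a)$ becomes $sc$-necessary by some stage $\beta<\kappa$ and persists to $V[G]$. The reflection $V_\kappa\preccurlyeq V$ is essential here precisely because $\MPsc$ is a parametrized scheme evaluated in $V$ rather than a first-order statement over $V_\kappa$.

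Next I would verify $\bfRAsc(H_{\omega_2})$ by adapting Theorem \ref{thm:uplifting->RA}. Suppose toward contradiction that resurrection fails for some subcomplete $\Q\in V[G]$ together with a predicate $A\subseteq H_{\omega_2}^{V[G]}$; choose a least-rank such $\Q$ with an appropriate $A$. Let $H\subseteq \Q$ be generic over $V[G]$. Using that $\kappa$ is strongly uplifting, pick an inaccessible $\gamma$ large enough to verify the subcompleteness of $\P$ and $\Q$ and to contain all witnesses to resurrection of strictly lower-rank posets, and with a predicate $A^*\subseteq V_\gamma$ extending the natural predicate coding $\P$, $A$, and the iteration's defining class, such that $\langle V_\kappa,\in,A\rangle \preccurlyeq \langle V_\gamma,\in,A^*\rangle$. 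Defining $\P^*=\P_\gamma$ by the same formulas relative to $\langle V_\gamma, \in, A^*\rangle$, the two iterations agree below stage $\kappa$, and the Lifting Lemma (Fact \ref{fact:liftinglemma}) with predicates carried through yields $\langle H_{\omega_2}^{V[G]},\in,A\rangle \preccurlyeq \langle H_{\omega_2}^{V[G*H*G_{\mathrm{tail}}]},\in,A^{**}\rangle$ for the appropriate image $A^{**}$, contradicting the failure of boldface resurrection at $(\Q,A)$.

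The main obstacle will be coordinating the two sides of the lottery sum so that neither axiom is sabotaged by the forcing designed to achieve the other, and in particular ensuring that the least-rank counterexamples to boldface resurrection genuinely appear in the reflected iteration $\P^*$. This requires the \emph{strong} uplifting property (carrying predicates), so that when $\kappa$ reflects to $\gamma$, the predicate $A$ reflects along with the structure and the iteration remains definable by the same formulas over $\langle V_\gamma, \in, A^*\rangle$. A secondary point to check is that adding the maximality-principle factors into the lottery sum does not obstruct the resurrection argument: since these factors are already subcomplete and are incorporated uniformly into the definition of $\P^*$, the factorization $\P^*\cong \P * \dot{\Q} * \Ptail$ below the appropriate condition still goes through, and the $rcs$ iteration theorem keeps the whole thing subcomplete.
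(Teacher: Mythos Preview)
Your proposal is correct and follows essentially the same approach as the paper: a single $rcs$ lottery-sum iteration of length $\kappa$ that at each stage offers both the minimal-rank button-pushers for $\MPsc$ and the minimal-rank counterexamples to $\bfRAsc$, with the fully reflecting hypothesis used to verify $\MPsc(H_{\omega_2})$ exactly as in Theorem~\ref{thm:forceMP} and the strongly uplifting hypothesis (carrying the predicate for $A$ and the iteration) used with the Lifting Lemma to verify $\bfRAsc(H_{\omega_2})$ exactly as in Theorem~\ref{thm:uplifting->RA}. You have also correctly isolated the reason strong uplifting is needed rather than mere uplifting, namely that the combined iteration is not first-order definable over $V_\kappa$ without a predicate.
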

\begin{proof}
Let $\kappa$ be strongly uplifting fully reflecting. Below we define $\P$ to be the subcomplete least-counterexample to $\bfRAsc(H_{\omega_2}) + \MPsc(H_{\omega_2})$ lottery sum $rcs$ iteration of length $\kappa$. In particular, generically pick, using the lottery sum, whether at each stage to force with the least-rank counterexamples to the maximality principle or the least-rank counterexamples to the boldface resurrection axiom.

In particular, we are defining the poset $\P = \P_\kappa = \seq{ ( \P_\alpha, \dot{\Q}_\alpha ) }{ \alpha < \kappa } $ as follows: 

At stage $\alpha+1$, look at all the formulas with parameters in $H_{\omega_2}^{V^{\P_\alpha}}$ that are not true in $V_\kappa^{\P_\alpha}$, but can be forced by some subcomplete poset $\dot \Q$ to be $sc$-necessary, where $\forces_{\P_\alpha} ``\dot \Q \text{ is subcomplete.}"$ Let $\mathcal M$ be the collection of such possible subcomplete forcing notions $\dot \Q$ in $V_\kappa^{\P_\alpha}$ of minimal rank in $V_\kappa$ for which the above holds. So $\mathcal M$ contains the minimal rank counterexamples to the maximality principle.

Additionally, let $\mathcal R$ be the collection of subcomplete forcings of minimal rank, $\dot \Q$, such that there is $\dot A \subseteq H^{V^{\P_\alpha}}_{\omega_2}$ where after any further forcing $\ddot \R$ satisfying $\forces_{\P_\alpha *\dot \Q} ``\ddot{\R} \text{ is subcomplete}"$, it is not the case that $\langle H^{V^{\P_\alpha}}_{\omega_2}, \in, \dot A \rangle  \preccurlyeq \langle H_{\omega_2}^{V^{\P_\alpha *\dot \Q *\ddot \R}}, \in, \dot A^* \rangle$ for all $\dot A^*$. Thus $\mathcal R$ contains the minimal rank counterexamples to the boldface resurrection axiom.

%$\Q \in \Gamma$ and $A \subseteq \tau$ there is a further forcing $\dot{\R}$ with $\forces_{\Q} \dot{\R}  \in \Gamma$ such that if $g*h \subseteq \Q * \dot{\R}$ is $V$-generic, then $$\langle H_{\tau}^V, \in, A \rangle \preccurlyeq \langle H_{\tau}^{V[g*h]}, \in, A^* \rangle.$

Then take:
$$\P_{\alpha+1} = \P_\alpha * \dot{\Q}_\alpha * \Coll(\omega_1, |\P_\alpha|)$$ where $\dot{\Q}_\alpha$ is a term for the lottery sum $\oplus \mathcal R \bigoplus \oplus \mathcal M$.
%if $\MPsc(H_{\omega_2})$ fails in $V^{\P_\alpha}$. If $\MPsc(H_{\omega_2})$ holds but still $\bfRAsc(H_{\omega_2})$ fails in $V^{\P_\alpha}$, then we let $\dot{\Q}_\alpha$ be a term for the lottery sum $\oplus \mathfrak R$. Otherwise, let $\dot \Q_\alpha$ be trivial forcing.

Let $G \subseteq \P$ be generic. We need to show that both $\bfRAsc(H_{\omega_2})$ and $\MPsc(H_{\omega_2})$ hold in $V[G]$. 

First we show that $\MPsc(H_{\omega_2})$ holds in $V[G]$, by assuming toward a contradiction that it fails. Assume $\varphi(a)$ satisfies that:
	$$\text{$V[G] \models ``\varphi(a)$ is $sc$-forceably $sc$-necessary but $\varphi(a)$ is false."}$$
Additionally choose a condition $p \in G$ that forces the above statement.
%This means that in $V[G]$, there is some subcomplete forcing $\Q$, which we might as well assume has minimal rank, such that after any further subcomplete forcing, $\varphi(a)$ holds, even though $\varphi(a)$ fails in $V[G]$. 
$\P$ has the $\kappa$-$cc$, which follows from arguing that the separate iterations are (see \textbf{Theorems \ref{thm:forceMP}} and \textbf{\ref{thm:uplifting->RA}}), so at no stage in the iteration is $\kappa$ collapsed. This means that there has to be some stage where $a$ appears.
So there is some stage in the iteration beyond the support of $p$, say $\alpha < \kappa$, where $a \in V_\kappa[G_\alpha]$. Specifically $\varphi(a)$ is an available button at stage $\alpha+1$, since after the rest of the iteration, $\P \rest [\alpha+1, \kappa)$ where $\P=\P_\alpha * \P \rest [\alpha+1, \kappa)$, which is subcomplete, we have that $\varphi(a)$ is $sc$-forceably $sc$-necessary. Thus $V[G_\alpha]$ sees that $\varphi(a)$ is $sc$-forceably $sc$-necessary, so as $\kappa$ is fully reflecting,
	$$V_\kappa[G_\alpha] \models ``\varphi(a) \text{ is $sc$-forceably $sc$-necessary"}.$$ 
From that point on, $\varphi(a)$ continues to be a button, since we have that $\alpha$ is beyond the support of $p$, so $\forces_{\P \rest [\alpha+1, \kappa)} ``\varphi(a)$ is $sc$-forceably $sc$-necessary."
Thus it is dense, in $\P$, for $\varphi(a)$ to be ``pushed" at some point after stage $\alpha$, say $\beta$. 
So we have $\beta < \kappa$ such that there is $\Q$ forcing $\varphi(a)$ to be $sc$-necessary in $V_\kappa[G_\beta]$. Let $H \subseteq \Q$ be generic over $V[G_\beta]$ so that there is some $G_{tail}$ generic for the rest of $\P$ satisfying $V[G_\beta][H][G_{tail}]=V[G]$. The sentence $\varphi(a)$ is now $sc$-necessary in $V_\kappa[G_\beta][H]$. But then since $V_\kappa[G_\beta][H] \preccurlyeq V[G_\beta][H]$, as we are still in an initial segment of the full iteration, we have that $\varphi(a)$ is $sc$-necessary in $V[G_\beta][H]$, by elementarity. Thus since the rest of the iteration is subcomplete, $\varphi(a)$ is true in $V[G_\beta][H][G_{tail}]=V[G]$, contradicting our assumption that $\varphi(a)$ is false in $V[G]$.

Thus $\MPsc(H_{\omega_2})$ holds in $V[G]$. 

In order to show that $\bfRAsc(H_{\omega_2})$ holds in $V[G]$, assume toward a contradiction that it fails. This means we can choose a least rank counterexample, a subcomplete forcing $\Q$ in $V[G]$ that supposedly cannot be resurrected. Let $A \subseteq H_\kappa$, where $\kappa = \omega_2^{V[G]}$, be its associated predicate. Let $\dot{\Q}$ be a name for $\Q$ of minimal rank that necessarily yields a subcomplete poset. Since $\P$ has the $\kappa$-$cc$, there must be a name for the predicate in the extension such that $\dot A \subseteq H_{\kappa}$ with $A = \dot A^G$.
%So after any further forcing $\dot \R$ satisfying $\forces_{\Q} ``\dot{\R} \text{ is subcomplete}"$, letting $h \subseteq \Q *\dot \R$ be $V[G]$-generic, it is not the case that $H_{\omega_2} \preccurlyeq H_{\omega_2}^{V[G*h]}.$ 

We will argue that $\dot \Q$ appears at stage $\kappa$ of the same exact iteration, except defined in some larger $V_\gamma[G]=V_\gamma^{V[G]}$ where $\gamma$ is inaccessible. 
Use the strong uplifting property of $\kappa$, and code the iteration $\P$ as a subset of $\kappa$, to find a sufficiently large inaccessible cardinal $\gamma$ so that 
	$$\langle V_\kappa, \in, \P, \dot A \rangle \preccurlyeq \langle V_\gamma, \in, \P^*, \dot A^* \rangle,$$
where $\P^*$ is the subcomplete least-counterexample to $\bfRAsc(H_{\omega_2})$ lottery sum $rcs$ iteration of length $\gamma$ as defined in $V_\gamma$. However, obtaining a large enough $\gamma$ again requires a process of closing under least-rank counterexamples, as was done in the proof of \textbf{Theorem \ref{thm:uplifting->RA}}.  The difference in this proof is that not only does $V_\gamma$ need to agree about the rank of least-rank counterexamples to the resurrection axiom throughout the iteration $\P$, it must also compute the least-rank counterexamples to the maximality principle appropriately as well. Since $V_\kappa \in V_\gamma$, and the ranks throughout the maximality iteration were computed in $V_\kappa$, the minimal ranks are guaranteed to be computed properly in $V_\gamma$.
%Thus we need to make sure that $V_\gamma$ is large enough so that for each stage $\alpha$ in the iteration, for \textit{any} subcomplete poset $\dot \Q$ of lower rank than the minimal rank counterexample to the maximality principle at stage $\alpha$, we have that $V_\gamma^{\P_\alpha}$ contains a further subcomplete forcing $\ddot \R$ such that there is a sentence $\varphi(a)$ with $a \in H_{\omega_2}^{V_\gamma^{\P_\alpha}} = H_{\omega_2}^{V^{\P_\alpha}}$ such that $\forces_{\P_\alpha * \dot \Q * \ddot \R} \neg \varphi(a)$.
And so we obtain: 
	\begin{eqnarray*} V_\gamma &\models &``\P \text{ is a $sc$ least-counterexample to $\bfRAsc(H_{\omega_2})$ lottery sum iteration"}\\ &&\text{ and } \\ && ``\dot \Q \text{ is $sc$ of minimal rank witnessing the failure of $\bfRAsc(H_{\omega_2})$.}" \end{eqnarray*} 
%Since at stage $\kappa$, we have already shown that $V^\P$ satisfies $\MPsc(H_{\omega_2})$, and we are assuming we are in a situation where $\bfRAsc(H_{\omega_2})$ fails, 
Indeed we have argued above that $\P^*$ is defined the same way as $\P$ below stage $\kappa$, so we may assume below a condition that $\dot \Q$ may be picked at stage $\kappa$. So below a condition that opts for $\dot \Q$ at the stage $\kappa$ lottery we may say that $\P^*$ factors as $\P * \dot{\Q} * \Ptail$. 

Let $H*\Gtail \subseteq \Q * \Ptail$ be $V[G]$-generic. Letting $G^*=G*H*\Gtail$, this means that $G^* \subseteq \P^*$ generic is over $V$. Moreover, by \textit{Fact} \ref{fact:liftinglemma} the strongly uplifting embedding 
	$$\langle V_\kappa, \in, \P, \dot A \rangle \preccurlyeq \langle V_\gamma, \in, \P^*, \dot A^* \rangle$$ 
lifts to 
	$$\langle V_\kappa[G], \in, \P, \dot A, G \rangle \preccurlyeq \langle V_\gamma[G^*], \in, \P^*, \dot A^*, G^* \rangle $$
in $V[G^*]$. Since $A$ is definable from $\dot A$ and $G$, we may say that 
	$$\langle V_\kappa[G], \in, \P, A \rangle \preccurlyeq \langle V_\gamma[G^*], \in, \P^*, A^* \rangle.$$
As we had in the proof of $\RAsc(H_{\omega_2})$ from an uplifting cardinal, again we have that 
	$$V_\kappa[G]=H_\kappa^{V[G]} = H_{\omega_2}^{V[G]},$$
since $\kappa$ is inaccessible and $\P$ has the $\kappa$-$cc$, and by \textbf{Lemma \ref{lem:lengthcollapse}}. We can argue the same way as above, replacing $\kappa$ with $\gamma$, to get that 
	$$V_\gamma[G^*]=H_\gamma^{V[G^*]} = H_{\omega_2}^{V[G^*]}.$$ 
This establishes 
	$$\langle H_{\omega_2}^{V[G]}, \in, A \rangle \preccurlyeq \langle H_{\omega_2}^{V[G^*]}, \in, A^* \rangle,$$ 
so $\bfRAsc(H_{\omega_2})$ in fact holds as desired. 
\end{proof}

%To show the other direction of the equiconsistency, we follow Hamkins and Johnstone and use the following characterizaiton of the boldface resurrection axiom, Theorem 16 of \cite{Hamkins:2014yu}. 

%\begin{fact} If $|H_{\omega_2}| = \omega_2$, then $\bfRAsc(H_{\omega_2})$ is equivalent to the statement that for every subcomplete $\Q$ and every transitive set $M \models \ZFC^-$ with $|M|=\omega_2 \in M$ and $H_{\omega_2} \subseteq M$, there is $\dot \R$ which is forced to be subcomplete by $\Q$ such that in any forcing extension $V[g*h]$ by $\Q * \dot \R$, there is an elementary embeddinng $j: M \to N$ with $N$ transitive, $j \rest H_{\omega_2} = \id$, $j(\omega_2)=\omega_2^{V[g*h]}$ and $H_{\omega_2}^{V[g*h]} \subseteq N$. \end{fact}

In fact $\bfRAsc(H_{\omega_2}) + \MPsc(H_{\omega_2})$ is equiconsistent with the existence of a strongly uplifting fully reflecting cardinal.

\begin{thm}
If both $\bfRAsc(H_{\omega_2})$ and $\MPsc(H_{\omega_2})$ hold, then $\omega_2$ is strongly uplifting fully reflecting in $L$. \end{thm}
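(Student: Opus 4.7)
The plan is to split the conclusion into its two constituent parts. The first is that $\omega_2^V$ is fully reflecting in $L$, i.e.\ $L_{\omega_2} \preccurlyeq L$. This follows at once from $\MPsc(H_{\omega_2})$ by the argument already given in \textbf{Lemma \ref{lem:MP->fullyreflectinginL}}, so I would simply invoke that lemma. The remaining work is therefore to prove that $\kappa := \omega_2^V$ is strongly uplifting in $L$, and this is where $\bfRAsc(H_{\omega_2})$ is used; the strategy mirrors the earlier $\RAsc(H_{\omega_2})$ argument for ordinary upliftingness, but must now track a predicate.

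Given an arbitrary ordinal $\theta$ and an arbitrary $A \subseteq L_\kappa$ (viewed as an element of, or at least subset of, $H_{\omega_2}^V$), I would first force with the countably closed, hence subcomplete, collapse $\Q = \Coll(\omega_1, \theta)$, and then apply $\bfRAsc(H_{\omega_2})$ to $\Q$ with the predicate $A$ to obtain a further subcomplete $\dot\R$ and, for a generic $g*h \subseteq \Q*\dot\R$, some $A^* \subseteq H_{\omega_2}^{V[g*h]}$ with
$$\langle H_{\omega_2}^V, \in, A\rangle \preccurlyeq \langle H_{\omega_2}^{V[g*h]}, \in, A^*\rangle.$$
Letting $\gamma = \omega_2^{V[g*h]}$, we have $\gamma > \theta$ (the collapse made $\theta$ have size $\omega_1$) and $\gamma$ is regular in $V[g*h]$, hence inaccessible in $L$. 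The claim is then that the inaccessible $\gamma$ together with $A^* \cap L_\gamma$ serves as a strongly $\theta$-uplifting witness against $A$ in $L$.

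The main obstacle, and the only nontrivial step, is verifying
$$\langle L_\kappa, \in, A\rangle \preccurlyeq \langle L_\gamma, \in, A^*\cap L_\gamma\rangle.$$
For this I would use the standard relativization trick: for any formula $\varphi$ in the language with predicate $\dot A$ and parameters from $L_\kappa$, satisfaction in $\langle L_\kappa, \in, A\rangle$ is equivalent to satisfaction of $\varphi^L$ in $\langle H_{\omega_2}^V, \in, A\rangle$, since $L_\kappa = L \cap H_{\omega_2}^V$ and $A \subseteq L$; the boldface elementarity from $\bfRAsc$ transfers this to satisfaction of $\varphi^L$ in $\langle H_{\omega_2}^{V[g*h]}, \in, A^*\rangle$, and restricting $L$-quantifiers and the predicate yields the desired satisfaction in $\langle L_\gamma, \in, A^* \cap L_\gamma\rangle$. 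The bookkeeping one must be careful about is that $A \cap L = A$ (immediate since $A \subseteq L_\kappa$) and that $A^*$ restricts cleanly to a predicate on $L_\gamma = L \cap H_\gamma^{V[g*h]}$, so that the relativized formula genuinely encodes the intended predicate on each side. Once this is in place, combining the two halves shows that $\omega_2^V$ is strongly uplifting fully reflecting in $L$, as required.
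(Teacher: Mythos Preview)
Your first step, invoking \textbf{Lemma \ref{lem:MP->fullyreflectinginL}} for the fully reflecting part, is fine and is exactly what the paper does. The gap is in the strongly uplifting part.

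The statement to be proved is that $\kappa=\omega_2^V$ is strongly uplifting \emph{in $L$}. This means that for every $A\in L$ with $A\subseteq L_\kappa$ and every $\theta$, there exist $\gamma>\theta$ inaccessible in $L$ and a set $A^*\in L$ with $A^*\subseteq L_\gamma$ such that $\langle L_\kappa,\in,A\rangle\preccurlyeq\langle L_\gamma,\in,A^*\rangle$. Your relativization argument does show that $\langle L_\kappa,\in,A\rangle\preccurlyeq\langle L_\gamma,\in,A^*\cap L_\gamma\rangle$, but the predicate $A^*$ handed to you by boldface resurrection is just some subset of $H_{\omega_2}^{V[g*h]}$, and there is no reason for $A^*\cap L_\gamma$ to lie in $L$. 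Elementarity will force every bounded piece of the resulting predicate to be constructible, but that does not make the predicate itself constructible (think of an $\Add(\gamma,1)$-generic over $L$). So you have produced an uplifting witness in $V[g*h]$, not in $L$.

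The paper gets around this with a coding trick: instead of using $A$ itself as the predicate, it picks $\xi<(\kappa^+)^L$ with $A\in L_\xi$, codes the whole structure $\langle L_\xi,\in\rangle$ by a binary relation $E\subseteq\kappa\times\kappa$, and applies $\bfRAsc(H_{\omega_2})$ to $E$. Elementarity then forces the resurrected predicate $E'\subseteq\kappa'\times\kappa'$ to code some $\langle L_{\xi'},\in\rangle$, and the induced elementary map $\sigma\colon L_\xi\prec L_{\xi'}$ fixes $L_\kappa$ pointwise. Since $A$ is an \emph{element} of $L_\xi$, the witness $A^*:=\sigma(A)$ automatically lands in $L_{\xi'}\subseteq L$. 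That is the missing idea in your argument.
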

\begin{proof}
Assume that both $\bfRAsc(H_{\omega_2})$ and $\MPsc(H_{\omega_2})$ hold.

We already have established that since $\MPsc(H_{\omega_2})$ holds, $\omega_2$ is fully reflecting in $L$ by \textbf{Lemma \ref{lem:MP->fullyreflectinginL}}.

Furthermore we claim that $\kappa=\omega_2^V$ is strongly uplifting in $L$. Fix any subset $A \subseteq H_\kappa$ in $L$, and let $\theta$ be a large ordinal in $L$. So $A \subseteq L_\kappa$ and $A \in L$. Thus $A \in L_\xi$ for some $\xi < (\kappa^+)^L$. We need to find a regular $\kappa' > \theta$ and a set $A^* \subseteq H^L_{\kappa'}$ such that 
	$$\langle H^L_\kappa , \in, A \rangle \preccurlyeq \langle H^L_{\kappa'}, \in, A^* \rangle$$ 
is an elementary extension. Note that here we are using the characterization of strongly uplifting using $H_\kappa$ and $H_{\kappa'}$, and as is outlined after the proof of Theorem 3 in \cite{Hamkins:2014yu}, we do not have to ensure that $\kappa'$ is inaccessible. 

Code $L_\xi$ by a relation $E \subseteq \kappa \times \kappa$, so that we have $\pi_E: \langle \kappa, E \rangle \cong \langle L_\xi, \in \rangle$.
Let $\Q = \Coll(\omega_1, \theta)$. By $\bfRAsc(H_{\omega_2})$ we have a further forcing $\dot{\R}$ with $\forces_{\Q} ``\dot{\R} \text{ is subcomplete}"$ such that if $g*h \subseteq \Q * \dot{\R}$ is $V$-generic, then there is an $E' \in V[g*h]$ such that 
	\begin{equation} \label{eqn:H^VelemH^V[g*h]} \langle H_{\omega_2}^V, \in, E \rangle \preccurlyeq \langle H_{\omega_2}^{V[g*h]}, \in, E' \rangle. \end{equation}
Since $\theta$ was forced to have cardinality $\omega_1$ in $V[g]$, it must be that $\theta < \kappa' = \omega_2^{V[g*h]}$. Since $\kappa'$ is regular in $V[g*h]$ we have that $\kappa'$ is regular in $L$. 

We have that $E' \subseteq \kappa' \times \kappa'$ is well founded, and we have $\pi_{E'}: \langle \kappa', E' \rangle \cong \langle L_{\xi'}, \in \rangle$. Thus there is an elementary embedding $$\pi_{E'} \circ \pi_E^{-1}=\sigma: \langle L_\xi, \in \rangle \prec \langle L_{\xi'}, \in \rangle.$$ By (\ref{eqn:H^VelemH^V[g*h]}) we have that $\pi_E^{-1} \rest \kappa = \pi_{E'}^{-1} \rest \kappa$, and thus $\sigma \rest L_\kappa = \id$. Therefore $\sigma \rest L_\kappa$ gives rise to the elementary embedding:
	$ \langle H_\kappa^L, \in, A \rangle \preccurlyeq \langle H_{\kappa'}^L, \in, A^* \rangle$, where $A^*=\sigma(A)$, as desired.
\end{proof}

\chapter{Generalized Diagonal Prikry Forcing}
\label{chap:GenDiagonalPrikryForcing}
Jensen \cite[Section 3.3]{Jensen:2012fr} shows that Prikry forcing and Namba forcing (under $\CH$) are subcomplete. Below we use an adaptation of Jensen's proof showing that Prikry forcing is subcomplete to see that some kinds of generalized diagonal Prikry forcing, in particular those Prikry forcings that we refer to here as generalized diagonal Prikry forcing, studied by Fuchs \cite{Fuchs:2005kx}, are subcomplete.

%\section{Generalized Diagonal Prikry Forcing}
%\label{sec:GenDiagonalPrikryForcing}
\begin{defn}
Let $D$ be an infinite discrete set of measurable cardinals, meaning a set of measurable cardinals that does not contain any of its limit points. For $\kappa \in D$ let $U(\kappa)$ be a normal measure on $\kappa$, and let $\U$ denote the sequence of the $U(\kappa)$'s.

Define $\D=\D(\U)$, \emph{\textbf{generalized diagonal Prikry forcing}} from the list of measures $\U$, by taking conditions of the form  
$( s, A )$ satisfying the following:
\begin{itemize}
	\item The \textit{stem} of the condition, $s$, is a function with domain in $[D]^{<\omega}$ taking each measurable cardinal $\kappa \in \dom(s)$ to some ordinal $s(\kappa) < \kappa$.
	\item The \textit{upper part} of the condition, $A$, is a function with domain $D \setminus {\dom(s)}$ taking each measurable cardinal $\kappa \in \dom(A)$ to some measure-one set $A(\kappa) \in U(\kappa)$.
\end{itemize}
The extension relation on conditions in $\D$ is defined so that $( s, A ) \leq ( t, B )$ so long as 
\begin{itemize}
	\item $s \supseteq t$.
	\item The points in $s$ not in $t$ come from $B$, i.e., for all $\kappa \in \dom(s) \setminus \dom(t)$, $s(\kappa) \in B(\kappa)$.
	\item For all $\kappa \in \dom(A)$, $A(\kappa) \subseteq B(\kappa)$.
\end{itemize}
If $G$ is a generic filter for $\D$, then its associated $\D$-generic sequence is \[S = S_G = \bigcup \set{ s }{ \exists A \ ( s, A ) \in G }.\qedhere\]
\end{defn}

Note that our definition of $\D(\U)$ differs from that as in Fuchs \cite{Fuchs:2005kx}. The main difference is that here we only add one point below each measurable cardinal $\kappa \in D$, which is done for simplicity's sake. It is not hard to see that the following theorem showing generalized diagonal Prikry forcing is subcomplete also shows that the forcing adding countably many points below each measurable cardinal in $D$ (where the conditions consist of finite stems) is subcomplete. Adding countably many points below each measurable cardinal in $D$ would collapse the cofinality of each $\kappa \in D$ to be $\omega$, as one expects of a Prikry-like forcing. %Maybe show this more rigorously

Also in the above definition we haven't enforced that the stem of our conditions only consist of ordinals that are wedged between successive measurables in $D$; ie. for $\kappa \in D$, we do not explicitly insist that $s(\kappa) \in [\sup(D\cap \kappa), \kappa)$. However, it is dense in $\D(\U)$ for the conditions to be that way, since we can always strengthen conditions by restricting their upper parts to a tail. Thus in the following characterization, we may freely add the condition to the following genericity condition on $\D(\U)$.
Thie following is a genericity criterion on generalized diagonal Prikry forcing similar to the Mathias criterion for Prikry forcing. It was shown in \cite[Theorem 1]{Fuchs:2005kx}. 

\begin{fact}[Fuchs] \label{fact:diagprikrymathias} Let $D$ be an infinite discrete set of measurable cardinals, with $\U$ a corresponding list of measures $\langle U(\kappa) \;|\; \kappa \in D \rangle$. Then an increasing sequence of ordinals $S = \langle S(\kappa) \;|\; \kappa \in D \rangle$, where for each $\kappa \in D$ $$\sup(D \cap \kappa) < S(\kappa) < \kappa,$$ is a $\D(\U)$-generic  sequence if and only if for all $\mathcal X = \langle X_\kappa \in U(\kappa) \;|\; \kappa \in D \rangle$, the set $$\{ \kappa \in D \;|\; S(\kappa) \notin X_\kappa\} \text{ is finite.}$$
\end{fact}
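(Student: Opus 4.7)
The plan is to establish the two directions separately, with the forward (``generic implies Mathias'') direction being a direct density argument, and the backward direction resting on a Prikry-style property for $\D(\U)$.

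For the forward direction, suppose $G$ is $\D(\U)$-generic with associated sequence $S$, and let $\mathcal X = \langle X_\kappa \st \kappa \in D \rangle$ with $X_\kappa \in U(\kappa)$. I would define $E_{\mathcal X}$ to be the set of conditions $(s, A)$ such that $A(\kappa) \subseteq X_\kappa$ for every $\kappa \in \dom(A)$. This set is dense: given any $(s, A)$, set $A'(\kappa) = A(\kappa) \cap X_\kappa$ for $\kappa \in D \setminus \dom(s)$; each $A'(\kappa)$ lies in $U(\kappa)$ since $U(\kappa)$ is an ultrafilter, and $(s, A') \leq (s, A)$ is in $E_{\mathcal X}$. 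Picking some $(s, A') \in G \cap E_{\mathcal X}$, the description of $S$ as $\bigcup \set{s'}{\exists B\ (s',B) \in G}$ forces $S \supseteq s$ and $S(\kappa) \in A'(\kappa) \subseteq X_\kappa$ for every $\kappa \in D \setminus \dom(s)$, so the exception set $\set{\kappa \in D}{S(\kappa) \notin X_\kappa}$ is contained in the finite set $\dom(s)$.

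For the backward direction, suppose $S$ has the almost-containment property, and define $G_S = \set{(s, A) \in \D(\U)}{s \subseteq S \text{ and } S(\kappa) \in A(\kappa) \text{ for all } \kappa \in \dom(A)}$. It is routine to check that $G_S$ is a filter, so the core task is genericity: for every dense open $\mathcal E \in V$ I need to exhibit a condition in $\mathcal E \cap G_S$. This is where I would invoke a Prikry-type lemma for $\D(\U)$: given $\mathcal E$ and a stem $t$ with upper part $B$, there exist a refinement $B^* \leq B$ (in the pointwise sense, $B^*(\kappa) \in U(\kappa)$ and $B^*(\kappa) \subseteq B(\kappa)$) and a finite $F \subseteq D \setminus \dom(t)$ such that whenever $s \supseteq t$ has $\dom(s) = \dom(t) \cup F$ and $s(\kappa) \in B^*(\kappa)$ for $\kappa \in F$, the condition $(s, B^* \rest (D \setminus \dom(s)))$ belongs to $\mathcal E$. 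Given such $B^*$ and $F$, form $\mathcal X$ by putting $X_\kappa = B^*(\kappa)$ for $\kappa \in D \setminus \dom(t)$ and $X_\kappa$ any element of $U(\kappa)$ otherwise. By hypothesis only finitely many $\kappa$ satisfy $S(\kappa) \notin X_\kappa$; I would first extend $t$ by $S$-values at these finitely many coordinates (re-applying the Prikry lemma after this bookkeeping extension), thereby arranging that $S \rest F \subseteq B^*$ and producing a stem $s \subseteq S$ with $(s, B^* \rest (D \setminus \dom(s))) \in \mathcal E \cap G_S$.

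The main obstacle is proving the Prikry-style lemma for $\D(\U)$. For a single normal measure this is classical and uses normality through an argument of Rowbottom. For the diagonal version, one must combine such a decision-making procedure at each $\kappa \in D$ with a diagonal intersection across $D$ (made possible because elements of $D$ are discrete and far apart, so the shrinking at one coordinate does not interact with shrinking at another). The subtle points are: (i) giving a precise definition of ``deciding $\mathcal E$'' that is preserved under the product/diagonal construction, and (ii) showing the length $|F|$ of the required stem extension can be bounded (or at least chosen in $V$, in order that the Mathias hypothesis applies). With this lemma in hand, the remainder of the proof of the backward direction is the bookkeeping described above, and the equivalence follows.
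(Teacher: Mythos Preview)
The paper does not actually contain a proof of this fact: it is stated as a citation of \cite[Theorem~1]{Fuchs:2005kx} and used as a black box in the subsequent subcompleteness argument. So there is no ``paper's own proof'' to compare against.

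Your outline is the standard route and the forward direction is complete. For the backward direction, your reliance on a strong Prikry lemma for $\D(\U)$ is the right key ingredient, and your formulation (a direct extension $B^*$ together with a finite set $F$ such that any stem extension through $B^*$ along $F$ lands in the dense open set) is essentially what Fuchs proves. One point to be careful with: the step where you ``re-apply the Prikry lemma after this bookkeeping extension'' is not obviously terminating as written---each new application produces a new $B^*$ with its own finite exceptional set where $S$ may miss $B^*$, and you need an argument that this does not loop forever. The clean way around this is to phrase the strong Prikry property so that the finite set $F$ depends only on the stem and the dense open set, and then begin by absorbing \emph{all} of the finitely many coordinates where $S$ lies outside the first $B^*$ into the stem before invoking openness of $\mathcal E$; alternatively, run the Prikry lemma once, intersect the resulting $B^*$ with the original measure-one sets, and use that the exceptional set for $S$ against this single sequence is finite. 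Either way the fix is routine, but the proposal as written leaves that termination point implicit.
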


\begin{thm} Let $D$ be an infinite discrete set of measurable cardinals. %, such that $\otp(D)$ is less than the first limit point of $D$. 
Let $\U = \langle U(\kappa) \;|\; \kappa \in D \rangle$ be a list of measures associated to $D$. Then $\D=\D(\U)$ is subcomplete. \end{thm}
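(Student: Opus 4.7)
The plan is to adapt Jensen's proof that Prikry forcing is subcomplete to the diagonal setting, with Fuchs' Mathias-style genericity criterion (Fact~\ref{fact:diagprikrymathias}) playing the role of the classical Mathias criterion. I would place myself in the standard setup: choose $\theta$ verifying subcompleteness so that $\D, D, \U \in H_\theta \subseteq N = L_\tau[A]$, and fix $\sigma: \N \prec N$ with $\N$ countable and full, sending $(\overline\theta, \overline\D, \overline D, \overline\U, \overline s)$ to $(\theta, \D, D, \U, s)$. Let $\alpha = \omega_1^{\N} = \cp(\sigma)$. Given an $\N$-generic filter $\G \subseteq \overline\D$, extract the generic diagonal Prikry sequence $\overline S \in \N[\G]$ indexed by $\overline D$.

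First I would define the ``master'' condition $p = (\emptyset, A_p) \in \D$ as follows. For each $\overline\kappa \in \overline D$, the collection $\sigma``(\N \cap U(\overline\kappa))$ is a countable subfamily of $U(\sigma(\overline\kappa))$, so by countable completeness of the measure its intersection $B_{\sigma(\overline\kappa)}$ lies in $U(\sigma(\overline\kappa))$. Setting $A_p(\sigma(\overline\kappa)) = B_{\sigma(\overline\kappa)} \setminus \sup(D \cap \sigma(\overline\kappa))$, and choosing $A_p(\kappa) \in U(\kappa)$ arbitrarily for $\kappa \in D \setminus \sigma``\overline D$, guarantees that any $\D$-generic $G \ni p$ with associated sequence $S$ satisfies $S(\sigma(\overline\kappa)) \in \sigma(X)$ for every $X \in U(\overline\kappa) \cap \N$ and every $\overline\kappa \in \overline D$.

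To produce the required embedding $\sigma' \in V[G]$ I would use Barwise theory, as in Jensen's Prikry argument. In $V[G]$, fix an admissible structure $M$ containing $N, \sigma, G, S$, and form a $\Sigma_1(M)$ $\in$-theory $\mathcal L$ with a fresh constant $\mathring\sigma$ whose axioms assert $\mathring\sigma: \underline\N \prec \underline N$, $\mathring\sigma(\overline\theta, \overline\D, \overline s) = \theta, \D, s$, the hull equality $\sk{\underline N}{\underline\delta}{\mathring\sigma} = \sk{\underline N}{\underline\delta}{\underline\sigma}$, and the coherence $\underline S(\mathring\sigma(\overline\kappa)) = \mathring\sigma(\underline{\overline S}(\overline\kappa))$ for each $\overline\kappa \in \underline{\overline D}$. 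Consistency of $\mathcal L$ is witnessed in $V[G]$ by a transitive liftup of $\langle \N, \sigma \rest H_\alpha^{\N}\rangle$ (Fact~\ref{fact:Interpolation}), whose $\alpha$-cofinal extension matches $S$ by the choice of $A_p$. Collapsing $M$ to a countable admissible $\tilde M$ via a hull and applying Barwise Completeness (Fact~\ref{fact:completeness}) then yields a solid model in $V[G]$ from whose wellfounded part the required $\sigma'$ is read off; items \textbf{1}--\textbf{3} of the definition of subcompleteness will follow by routine verification using the agreement between $\tilde M$ and its Barwise model on the special constants, as in the proofs above.

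The main obstacle will be verifying $\sigma' `` \G \subseteq G$, which amounts to showing that for each $(\overline t, \overline B) \in \G$ the image $(\sigma'(\overline t), \sigma'(\overline B))$ lies in $G$. Via Fuchs' criterion this reduces to two requirements on $S$: a coherence requirement $S(\sigma'(\overline\kappa)) = \sigma'(\overline S(\overline\kappa))$ on $\sigma'``\overline D$, which is guaranteed by the axioms of $\mathcal L$; and a finite-exception requirement $\{\kappa \in D : S(\kappa) \notin \sigma'(\overline B)(\kappa)\}$ being finite, which for $\kappa \in \sigma'``\overline D$ is secured by the choice of $A_p$ through countable completeness, and for $\kappa \notin \sigma'``\overline D$ is secured by density of conditions extending $p$ below any single measure-one prescription. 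The technical subtlety compared to the single-measurable case is coordinating these finite-exception conditions across all of the infinite discrete set $D$ simultaneously; here the discreteness of $D$ rules out accumulation-point conflicts, and normality of each $U(\kappa)$ permits the diagonal measure-one choices, so the argument ultimately goes through in the same spirit as Jensen's proof for ordinary Prikry forcing.
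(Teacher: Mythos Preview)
There is a genuine gap in the approach. Your liftup of $\langle \N, \sigma \rest H_\alpha^{\N}\rangle$ with $\alpha = \omega_1^{\N} = \cp(\sigma)$ is the trivial one: since $\sigma \rest H_{\omega_1^{\N}}^{\N} = \id$, the resulting embedding is just the identity on $\N$, and cannot witness consistency of the theory $\mathcal L$ whose axioms demand an embedding into $N$ hitting $\theta, \D, \U, c$. More substantively, the ``master condition'' strategy cannot succeed as stated. Your condition $p=(\emptyset,A_p)$ is built from $\sigma$ and only constrains $S(\sigma(\overline\kappa))$ to lie in certain measure-one sets, whereas the coherence axiom you place in $\mathcal L$ demands the \emph{equality} $S(\mathring\sigma(\overline\kappa)) = \mathring\sigma(\overline S(\overline\kappa))$; for a generic $G\ni p$ with its associated $S$, there is no reason any elementary $\sigma':\N\prec N$ should satisfy $S(\sigma'(\overline\kappa)) = \sigma'(\overline S(\overline\kappa))$ at every coordinate, and you give no argument producing one. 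The ``finite-exception'' clause is also not what is needed: for $(\sigma'(\overline t),\sigma'(\overline B))\in G$ one needs $S(\kappa)\in\sigma'(\overline B)(\kappa)$ for \emph{every} $\kappa\in D\setminus\sigma'{``}\dom(\overline t)$, and $\sigma'(\overline B)(\kappa)$ at $\kappa\notin\sigma'{``}\overline D$ is not something $p$ can anticipate.

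The paper's proof takes a different route that avoids master conditions entirely. It uses two nested liftups: first $\langle N_0,\sigma_0\rangle$, the liftup of $\langle \N,\sigma\rest H_{\overline\nu}^{\N}\rangle$ with $\overline\nu=\overline\delta^{\,+}$, which realizes $N_0\cong\Sk{N}{\delta}{X}$ and handles the hull condition; then $\langle N_1,\sigma_1\rangle$, the liftup at $\overline\kappa(0)$, the \emph{first measurable} of $\overline D$. One then forces with $\D_1$ over $V$, modifies the resulting sequence by overwriting coordinates in $\sigma_1{``}\overline D$ with $\sigma_1(\overline S(\overline\kappa))$, and uses $\overline\kappa(0)$-cofinality of $\sigma_1$ together with $\kappa$-completeness of the measures to verify the Mathias criterion over $N_1$. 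Transfer (Fact~\ref{fact:Transfer}) along $k_1:N_1\prec N_0$ then gives consistency of the corresponding theory over $L_{\delta_{N_0}}(N_0)$; a collapse and Barwise Completeness yield a specific $S$ that is $\D$-generic over $V$ (verified via $\delta\subseteq C$) together with an embedding $\overset{*}{\sigma}$; a second Barwise step inside $V[S]$ then extracts $\sigma'\in V[S]$. The essential point you are missing is that $S$ is \emph{constructed} so that the stem coherence holds, rather than fixed arbitrarily above a condition and then matched by a $\sigma'$ found after the fact.
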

\begin{proof} 
Let $\theta >> \delta(\D) =\delta$ be large enough, so that $[\delta]^{<\omega_1} \in H_\theta$. 

It must be the case that $\delta \geq \sup D$: to see this, suppose instead that there is a dense $E \subseteq \D$ such that $\sup D \geq \kappa^* >|E|$ for some $\kappa^* \in D$. Then for each condition $(s,A) \in E$ either $\kappa^* \in \dom s$ or $\kappa^* \in \dom A$. So taking $E^*=\set{ (s, A) \in E }{ \kappa \in \dom s } \subseteq E$, since $|E^*| < \kappa^*$ as well, there is an $\alpha<\kappa^*$ such that $\sup_{(s,A) \in E^*} s(\kappa^*)$. Let $p=(t, B) \in \D$ be defined so that $t(\kappa^*)=\alpha$ and $B(\kappa)=\kappa$ for all $\kappa \in D \setminus \{ \kappa^*\}$. Then $p$ cannot be strengthened by any condition in $E$ since $\kappa^*$ is not in any of the stems of conditions in $E$. So dense subsets of $\D$ must have size at least $\sup D$.

Let $\nu = \delta^+$. Let $\kappa(0)$ be the first measurable cardinal in $D$. %Since we have that $\lambda$ is less than the first limit point of $D$, there is $\kappa' \in D$ such that $\lambda < \kappa' $ for some $\kappa' \in D$, where there are finitely many measurables of $D$ below $\kappa'$. 

In order to show that $\D$ is subcomplete, suppose we are in the following situation: \begin{itemize}
	\item $\D \in H_\theta \subseteq N = L_\tau[A] \models \ZFC^-$ where $\tau>\theta$ and $A \subseteq \tau$
	\item $\sigma: \N \cong X \preccurlyeq N$ where $X$ is countable and $\N$ is full
	\item $\sigma(\overline \theta, \overline{\D}, \overline{\U}, \overline c)=\theta, \D, \U, c$ for some $c \in N$.
\end{itemize}
By our requirement on $\theta$, we've ensured that $N$ is closed under countable sequences of ordinals less than $\delta$.

In what follows, we will be taking a few different transitive liftups of restrictions of $\sigma$, and it will useful to keep track of embeddings between $\N$ and $N$ pictorially. Although it's not extraordinarily illuminating at this point in our discussion, the following figure shows the situation we are currently in, where $\sigma(\overline \delta)=\delta$, $\overline \nu = {\overline \delta^+}^{\N}$, $\overline D$ is the discrete set of measurables in $\N$ that each measure in $\overline \U$ comes from, and $\overline \kappa(0)$ is the first measurable in $\overline D$, in the sense of $\N$. %$\overline \lambda = \otp(\overline D)$, and where there is some $\overline \kappa'$ such that $\overline \lambda < \overline \kappa'$ and $\overline \kappa'$ has finitely many measurables below it in $\overline D$.

\begin{center}
\begin{tikzpicture}
\draw (-1,0)--(-1,3);
\node [below] at (-1,0) {$\N$};
\node at (-1,2.95) {$\frown$};

\node [left] at (-1,1) {$\overline \kappa(0) \in \overline D$};
\node at (-1,1) {-};
\node [left] at (-1,2) {$\overline \delta$};
\node at (-1,2) {-};
\node [left] at (-1,2.5) {$\overline \nu$};
\node at (-1,2.5) {-};

\draw [->] (-.8,1)--(.8,2);
\draw [->] (-.8,2)--(.8,3);
\draw [->] (-.8,2.5)--(.8,3.5);

\node [right] at (1,2) {$\kappa(0) \in D$};
\node at (1,2) {-};
\node [right] at (1,3) {$\delta$};
\node at (1,3) {-};
\node [right] at (1,3.5) {$\nu$};
\node at (1,3.5) {-};

\draw (1,0)--(1,4);
\node [below] at (1,0) {$N$};
\node at (1,3.95) {$\frown$};

\draw [->, thick] (-.7,-0.3)--(.7,-0.3);
\node [above] at (0,-0.3) {$\sigma$};
\end{tikzpicture}
\end{center}

Toward showing that $\D$ is subcomplete, we are additionally given some $\G \subseteq \overline{\D}$ that is generic over $\N$. Rather than working with $\G$, we will work with $\S = \seq{ \S(\overline \kappa) }{ \overline \kappa \in \overline D }$, its associated $\overline \D$-generic sequence. We must show following, where $C=\Sk{N}{\delta}{X}$:

\begin{claim}[Main] There is a $\D$-generic sequence $S$ and a map $\sigma' \in V[S]$ such that: \begin{enumerate}
	\item $\sigma': \N \prec N$
	\item $\sigma'(\overline \theta, \overline{\D}, \overline{\U}, \overline c)=\theta, \D, \U, c$
	\item $\sk{N}{\delta}{\sigma'} = C$	
	\item $\sigma' `` \S \subseteq S$
\end{enumerate} \end{claim}
\begin{proof}[Pf.]  This proof uses Barwise theory (key definitions, facts, and theorems are summarized Section \ref{sec:BarwiseTheory}) heavily, and ultimately amounts to showing that a certain 
$\in$-theory, $\mathcal T$, which posits the existence of such a $\sigma'$, is consistent. Such an embedding $\sigma'$ can only possibly exist in a suitable generic extension, $V[S]$, where $S$ is $\D$-generic sequence that we will find later. 
Once we have such a suitable $V[S]$, we will use Barwise theory to find an appropriate admissible structure in $V[S]$ for which the theory $\mathcal T$, positing the existence of such a suitable $\sigma'$, defined below, has a model.

\begin{samepage}
For an admissible structure $\mathfrak M$ with $S, \overline S, \sigma, N, \theta, \D, \U, c \in \mathfrak M$ let the infinitary $\in$-theory $\mathcal T(\mathfrak M)$ be defined over $\mathfrak M$ as follows: 
\begin{description}
	\item[predicates] $\in$ 
	\item[constants] $\dot{\sigma}, \underline x$ for $x \in \mathfrak M$
	\item[axioms] \begin{itemize} \item $\ZFC^-$ and \textsf{Basic Axioms}.
		\item $\dot \sigma : \overline{\underline N} \prec \underline N$
		\item $\dot{\sigma}(\overline{\underline{\theta}}, \overline{\underline{\D}}, \overline{\underline{\U}}, \overline{\underline c})=\underline{\theta}, \underline{\D}, \underline{\U}, \underline{c}$
		\item $\sk{\underline N}{\underline{\delta}}{\dot \sigma} = \sk{\underline N}{\underline \delta}{\underline \sigma}$
		\item $\dot \sigma ``\overline{\underline S} \subseteq \underline S$.
	\end{itemize}
\end{description}
\end{samepage}

The $\in$-theory is $\Sigma_1(\mathfrak M)$, since all of the axioms are $\mathfrak M$-finite except for the \textsf{Basic Axioms}, which altogether are $\mathfrak M$-$re$ as each of them are $\mathfrak M$-finite.

We need to find an appropriate $\D$-generic sequence $S$ and a suitable admissible structure $\mathfrak M$ containing $S$ so that $\mathcal T(\mathfrak M)$ is consistent. To do this we use transitive liftups and Barwise theory. Transitive liftups will give us the consistency of certain embeddings that approximate the one we are looking for, and we will rely on Barwise Completeness (\textit{Fact} \ref{fact:completeness}) to obtain the existence of a model with our desired properties. 

Toward this end, let's take what will turn out to be our first transitive liftup, which is in some sense ensuring the consistency of having property \textsl{\textbf{3}} of our main claim.

Let $k_0 : N_0 \cong C$ where $N_0$ is transitive, and set $\sigma_0 = k_0^{-1} \circ \sigma$ and $\sigma_0(\overline \theta, \overline{\D}, \overline{\U}, \overline c) = \theta_0, \D_0, \U_0, c_0.$
Since $\delta \subseteq C$ and $N_0$ is transitive, $\sigma_0(\overline \delta)=\delta$. 

Indeed $N_0$ is actually a transitive liftup:

\begin{claimno} $\langle N_0, \sigma_0 \rangle$ is the transitive liftup of $\langle \N, \sigma \upharpoonright H_{\overline \nu}^{\N} \rangle$. \end{claimno}
\begin{proof}[Pf.] Recall that $\nu=\delta^+$, and $\overline \nu={\overline \delta^+}^{\N}$. It must be shown that the embedding $\sigma_0: \overline N \prec N_0$ is $\overline \nu$-cofinal and that $\sigma_0 \upharpoonright H_{\overline \nu}^{\N}=\sigma \upharpoonright H_{\overline \nu}^{\overline N}$. 

To see that $\sigma_0$ is $\overline \nu$-cofinal, let $x \in N_0$. Then $k_0(x) \in C = \Sk{N}{\delta}{X}$ so $k_0(x)$ is uniquely $N$-definable from $\xi < \delta$ and $\sigma(\overline z)$ where $\overline z \in \N$. In other words, $$k_0(x) = \text{that } y \text{ such that } N \models \varphi(y, \xi, \sigma(\overline z)).$$ Let $u \in \N$ be defined as 
	$$u=\set{ w \in \N }{  w=\text{that } y \text{ such that } \N \models \varphi(y, \zeta, \overline z) \text{ for some } \zeta < \overline \delta }.$$
Certainly $u$ is non-empty by elementarity, since $k_0(x) \in \sigma(u).$
Furthermore, $|u| \leq \overline \delta < \overline \nu$ since every $w \in u$ is unique, and needs a corresponding $\zeta<\overline \delta$ to satisfy the formula $\varphi$ with.
Thus $x \in k_0^{-1}(\sigma(u))=\sigma_0(u)$ with $|u| < \overline \nu$ in $\N$, as desired. 

Since $X \cup \delta \subseteq C$, the Skolem hull in $N$, we know that $\sigma``H_{\overline \nu}^{\N} \subseteq C$. Thus $k_0^{-1} \upharpoonright \sigma``H_{\overline \nu}^{\overline N} = \text{id}$. Therefore $\sigma_0 \rest H_{\overline \nu}^{\N}=\sigma \rest H_{\overline \nu}^{\overline N}$, finishing the proof of the claim.
\end{proof}

Since $\overline \nu$ is regular in $\N$, in $N_0$ so is $\nu_0=\sigma_0(\overline \nu)=\sup \sigma_0``\overline \nu$. By Interpolation (\textit{Fact} \ref{fact:Interpolation}), we may say that $k_0$ is defined by 
	$$k_0: N_0 \prec N \text{ where } k_0 \circ \sigma_0=\sigma \text{ and } k_0 \rest \nu_0 = \id.$$
In particular, $\nu_0$ is the critical point of $k_0$, which is continuous below $\nu_0$. Thus we are in a situation that we will represent with the following  diagram:
\begin{center}
\begin{tikzpicture}
\draw (-3,0)--(-3,3);
\node [below] at (-3,0) {$\N$};
\node at (-3,2.95) {$\frown$};

\node [left] at (-3,0.7) {$\overline \kappa(0) \in \overline D$};
\node at (-3,0.7) {-};
\node [left] at (-3,2) {$\overline \delta$};
\node at (-3,2) {-};
\node [left] at (-3,2.5) {$\overline \nu$};
\node at (-3,2.5) {-};

\draw [->] (-2.8,0.7)--(0.8,2.2);
\draw [->] (-2.8,2)--(0.8,3.6);
%\draw [->] (-2.8,2.5)--(0.8,4.5);

\node at (1,2.2) {-};

\node at (1,3.6) {-};
\node [above left] at (1,4.2) {$\nu_0$};
\node at (1,4.2) {$\bullet$};

\draw (1,0) -- (1,5.5);
\node [below] at (1,0) {$N_0$};
\node at (1,5.45) {$\frown$};

\draw [->] (1.2,2.2)--(2.8,2.2);
\draw [->] (1.2,3.6)--(2.8,3.6);
\draw [->] (-2.8,2.5)--(2.8,5);

\node [right] at (3,2.2) {$\kappa(0) \in D$};
\node at (3,2.2) {-};
\node [right] at (3,3.6) {$\delta$};
\node at (3,3.6) {-};
\node [right] at (3,5) {$\nu$};
\node at (3,5) {-};

\draw (3,0)--(3,6);
\node [below] at (3,0) {$N$};
\node at (3,5.95) {$\frown$};

\draw [->, thick] (-2.7,-0.3)--(0.7,-0.3);
\draw [->, thick] (1.3, -0.3)--(2.7,-0.3);
\node [above] at (-1,-0.3) {$\sigma_0$};
\node [above] at (2,-0.3) {$k_0$};
\draw[->,thick] (-2.75,-0.4) to [out = -30, in =-150] node[above]{$\sigma$} (2.9, -0.5);
\end{tikzpicture}
\end{center}

Already, we can say that $\sigma_0$ looks like it has one nice property: $\sk{N_0}{\delta}{\sigma_0}= C$, which somewhat looks like \textsl{\textbf{3}} of the main claim. However, we have not yet performed forcing, $\sigma_0$ is definable in $V$, and we still need to find a way to extend the generic sequence $\S$ to a $\D$-generic sequence over $N$. We still have a lot more work to do before finding $\sigma'$.

We shall define another $\in$-theory, $\mathcal L_*$ that will assist us in obtaining the diagonal Prikry extension $V[S]$ we need to satisfy our main claim. In order to do this, we will take another transitive liftup and apply Transfer (\textit{Fact} \ref{fact:Transfer}), in order to see that this new $\in$-theory is consistent over an admissible structure on $N_0$. 

Since we will be referring to the same $\in$-theory over two different transitive liftups, I would like to think of $``*"$ as a kind of placeholder for a transitive liftup in the following definition.

Suppose that $\langle N_*, \sigma_* \rangle$ is a transitive liftup of $\N$ along with some reasonable restriction of $\sigma$, ie. the liftup of $\langle \N, \sigma \rest H_{\alpha}^{\N} \rangle$, where $\alpha \geq \overline \kappa(0)$ is regular in $\N$, and say $$\sigma_*(\overline \theta, \overline{\D}, \overline{\U}, \overline c) = \theta_*, \D_*, \U_*, c_*.$$ Recall that $\delta_{N_*}$ is the least such that $L_{\delta_{N_*}}(N_*)$ is admissible. 

Define the infinitary $\in$-theory $\mathcal L(N_*, \sigma_*)=\mathcal L_*$ as follows: 

\begin{description}
	\item[predicates] $\in$ 
	\item[constants] $\mathring{\sigma}, \mathring S, \underline x$ for $x \in L_{\delta_{N_*}}(N_*)$
	%\item[parameters] $N_*$ and $\theta_*, \mu_*, \D_*, \U_*, \lambda_*, D_*, c_* \in N_*$
	\item[axioms] \begin{itemize} \item $\ZFC^-$ and \textsf{Basic Axioms}
		\item $\mathring \sigma : \underline \N \prec \underline{N_*}$ is $\underline{\overline \kappa(0)}$--cofinal
		\item $\mathring{\sigma}(\overline{\underline{\theta}}, \overline{\underline{\D}}, \overline{\underline{\U}}, \overline{\underline c})=\underline{\theta_*}, \underline{\D_*}, \underline{\U_*}, \underline{c_*}$
		\item $\mathring S$ is a $\underline{\D_*}$-generic sequence over $\underline{N_*}$
		\item $\mathring \sigma ``\overline{\underline S} \subseteq \mathring S$.
	\end{itemize}
\end{description} 

As defined, we have that $\mathcal L_*$ is a $\Sigma_1(L_{\delta_{N_*}}(N_*))$-theory, since altogether the \textsf{Basic Axioms} are $\Sigma_1(L_{\delta_{N_*}}(N_*))$.

We claim that the theory is consistent.

\begin{claimno} $\mathcal L_0=\mathcal L(N_0, \sigma_0)$ is consistent. \end{claimno}
\begin{proof}[Pf] Of course, it is not the case that $\sigma_0$ is $\overline \kappa(0)$-cofinal - all we know is that it is $\overline \nu$-cofinal. However, we know how to find an elementary embedding that is $\overline \kappa(0)$ cofinal: by taking a suitable transitive liftup.

Let $\langle N_1, \sigma_1 \rangle$ be the transitive liftup of $\langle \N, \sigma \rest H^{\N}_{\overline \kappa(0)} \rangle$, which exists by Interpolation (\textit{Fact} \ref{fact:Interpolation}). So we have that $\sigma_1 \rest H^\N_{\overline \kappa(0)} = \sigma \rest H^\N_{\overline \kappa(0)}$. Let $$\sigma_1(\overline \theta, \overline{\D}, \overline{\U}, \overline c) = \theta_1, \D_1, \U_1, c_1.$$ 
%Then from $\U_1$ we can define its associated discrete set of measures in $N_1$, call it $D_1$, with $\otp(D_1)=\lambda_1$. Furthermore, by elementarity we have $\kappa_1= \sigma_1(\overline \kappa') > \lambda_1$, where there are finitely many measurables of $D_1$ below $\kappa_1'$. 
Since $\sigma_1 \rest H^\N_{\overline \kappa(0)} = \sigma \rest H^\N_{\overline \kappa(0)} = \sigma_0 \rest H^\N_{\overline \kappa(0)}$, we also have a unique $$ \text{$k_1: N_1 \prec N_0$ where $k_1\circ \sigma_1 = \sigma_0$ and $k_1 \rest \kappa_1(0) = \text{id}$ where $k_1(0)=k_1(\kappa(0))$.}$$ 

Indeed $k_1$ is continuous below $\kappa_1(0)$. We illustrate the final picture below, with $\sigma$ and all of the relevant transitive liftups.
\begin{center}
\begin{tikzpicture}
\draw (-3,0)--(-3,3);
\node [below] at (-3,0) {$\overline N$};
\node at (-3,2.95) {$\frown$};

\node [left] at (-3,0.4) {$\overline \kappa(0) \in \overline D$};
\node at (-3,0.4) {-};
\node [left] at (-3,2) {$\overline \delta$};
\node at (-3,2) {-};
\node [left] at (-3,2.5) {$\overline \nu$};
\node at (-3,2.5) {-};

\draw [->] (-2.8,0.4)--(1.3,2);

\draw (0,0)--(0,5.3);
\node [below] at (0,0) {$N_1$};
\node at (0,5.25) {$\frown$};
\node at (0,1.5) {$\bullet$};
\node [above left] at (0,1.5) {$\kappa_1(0) \in D_1$};

\draw [->] (-2.8,2)--(1.3,3.8);

\draw (1.5,0) -- (1.5,5.7);
\node [below] at (1.5,0) {$N_0$};
\node at (1.5,5.65) {$\frown$};
\node at (1.5,2) {-};
\node at (1.5,3.8) {-};
\node [above left] at (1.5,4.4) {$\nu_0$};
\node at (1.5,4.4) {$\bullet$};

\draw [->] (1.7,2)--(2.8,2);
\draw [->] (1.7,3.8)--(2.8,3.8);

\draw [->] (-2.8,2.5)--(2.8,5);

\node [right] at (3,2) {$\kappa(0) \in D$};
\node at (3,2) {-};
\node [right] at (3,3.8) {$\delta$};
\node at (3,3.8) {-};
\node [right] at (3,5) {$\nu$};
\node at (3,5) {-};

\draw (3,0)--(3,6);
\node [below] at (3,0) {$N$};
\node at (3,5.95) {$\frown$};

\draw [->, thick] (-2.7,-0.3)--(-0.3,-0.3);
\draw [->, thick] (0.3, -0.3)--(1.2,-0.3);
\draw [->, thick] (1.8, -0.3)--(2.7,-0.3);
\draw[->, thick] (-2.8,-0.4) to  [out=-30, in=-145] node[above]{$\sigma_0$} (1.3,-0.5);
\draw[->,thick] (-2.9,-0.5) to [out = -50, in =-140] node[above]{$\sigma$} (2.9, -0.5);

\node [above] at (-1.5,-0.3) {$\sigma_1$};
\node [above] at (0.75,-0.3) {$k_1$};
\node [above] at (2.25, -0.3) {$k_0$};
\end{tikzpicture}
\end{center} 
We first show that $\mathcal L_1=\mathcal L(N_1, \sigma_1)$ is consistent, by seeing that it has a model. To do this, we will find a sequence extending $\sigma_1``\S$ that is $\D_1$-generic over $N_1$. Then we will use the Transfer Lemma to see that this transfers to the consistency of $\mathcal L_0$. %To do this, we will need to look at what happens to our diagonal Prikry sequence $\overline S$ when $\sigma_1$ is applied to it. 

First, force with $\D_1$, which is diagonal Prikry over $N_1$, to obtain a diagonal Prikry sequence $S_1'$. Define, in $V[S_1']$, a new sequence $S_1$ as follows:

$$S_1(\kappa) = \begin{cases} S_1'(\kappa) &\text{ if } \kappa \in D_1 \setminus \sigma_1``\overline D \\
					\sigma_1(\overline S(\overline \kappa)) &\text{ if } \kappa = \sigma_1(\overline \kappa) \in \sigma_1``\overline D. \end{cases}$$

\begin{claim} The sequence $S_1$ is a $\D_1$-generic sequence over $N_1$. \end{claim}
\begin{proof}[Pf.]
We will show that $S_1$ satisfies the generalized diagonal Prikry genericity criterion (\textit{Fact} \ref{fact:diagprikrymathias}) over $N_1$. To do this, let $\mathcal X = \seq{ X_\kappa \in U_1(\kappa) }{ \kappa \in D_1 }$, with $\mathcal X \in N_1$, be a sequence of measure-one sets in the sequence of measures $\U_1$.

Note first that $S_1'$ is a generic sequence, it already satisfies the generalized diagonal Prikry genericity criterion, namely:
$$\set{ \kappa \in D_1}{ S_1'(\kappa) \notin X_\kappa } \text{ is finite.}$$
Recall that $\S = \seq{ \S(\overline \kappa) }{ \overline \kappa \in \overline D }$ is a $\overline{\D}$-generic sequence as well.
%$$\{ \kappa \in D \;|\; S(\kappa) \notin X_\kappa\} \text{ is finite.}$$
%Let $\mathcal X = \langle X_\kappa \in U_1(\kappa) \;|\; \kappa \in \sigma_1``\overline D \rangle$, with $\mathcal X \in N_1$, be a sequence of measure one sets in the sequence of measures $\U_1$ with only coordinates coming from $\sigma_1``\overline D$.
We need to see that in addition,
$$\set{ \overline \kappa \in \overline D}{\sigma_1(\S(\overline \kappa)) \notin X_{\sigma_1(\overline \kappa)} }  \text{ is finite,}$$
since then 
$$\set{ \kappa \in D_1}{ S_1(\kappa) \notin X_\kappa } = \set{ \kappa \in D_1 \setminus \sigma_1``\overline D}{ S_1'(\kappa) \notin X_\kappa } \cup \set{ \kappa = \sigma_1(\overline \kappa) \in \sigma_1``\overline D}{\sigma_1(\S(\overline \kappa)) \notin X_\kappa }$$
is finite as well, completing the proof as desired.

By the $\overline{\kappa}(0)$-cofinality of $\sigma_1$, there is some $w \in \N$ such that $\mathcal X \in \sigma_1(w)$, where $|w| < \overline{\kappa}(0)$ in $\N$. Thus in $N_1$, we have that $|\sigma_1(w)| < \kappa_1(0)$. 
We may assume that $w$ consists of functions $f \in \prod_{\overline \kappa \in \overline D} \overline U(\overline \kappa)$.
So for each $\kappa \in \sigma_1``\overline D$, we have that $X_\kappa \in \sigma_1(w)_\kappa = \set{\sigma_1(f)(\overline \kappa) }{ f \in \prod_{\overline \kappa \in \overline D} \overline U(\overline \kappa) \ \land \ f \in w }$ and also $|\sigma_1(w)_\kappa|<\kappa_1(0).$ So all $\kappa \in \sigma_1``\overline D$ of course satisfy $\kappa \geq \kappa_1(0)$ and thus by the $\kappa$-completeness of $U_1(\kappa)$, we have that $W_\kappa := \cap \, \sigma_1(w)_\kappa \in  U_1(\kappa).$
So we have established that $\mathcal W$, the sequence of $W_\kappa$ for $\kappa \geq \kappa_1(0)$, is also a sequence of measure-one sets in $N_1$. Note in addition that for $\kappa \in \sigma_1``\overline D$, we have that $W_\kappa \subseteq X_\kappa$. 

By elementarity, for each $\overline \kappa \in \overline D$, we have $\overline W_{\overline \kappa} = \cap \set{f(\overline \kappa) }{ f \in \prod_{\overline \kappa \in \overline D} \overline U(\overline \kappa) \ \land \ f \in w }$ is a measure-one set in $\overline U(\overline \kappa)$ and we also have that $\sigma_1(\overline W_{\overline \kappa}) = W_{\sigma_1(\overline \kappa)}$. Moreover, 
$$\set{\overline \kappa \in \overline D}{\overline S(\overline \kappa) \notin \overline W_{\overline \kappa}} \text{ is finite}$$ by the generalized diagonal Prikry genericity criterion for $\overline{\D}$, which must be satisfied by $\overline S$.
Thus by elementarity,
$$\set{ \overline \kappa \in \overline D }{ \sigma_1(\S(\overline \kappa)) \notin W_{\sigma_1(\overline \kappa)} } \supseteq \set{ \overline \kappa \in \overline D }{ \sigma_1(\S(\overline \kappa)) \notin X_{\sigma_1(\overline \kappa)}} \text{ is finite,}$$
as is desired, completing the proof of our claim.
\end{proof}

Moreover we have now shown that $\mathcal L(N_1, \sigma_1) = \mathcal L_1$ is consistent by Barwise Correctness (\textit{Fact} \ref{fact:correctness}), since we have just shown that $\langle H_{\delta}; \sigma_1, S_1 \rangle$ is a model of $\mathcal L_1$.

Let's check that we may now apply Transfer (\textit{Fact} \ref{fact:Transfer}) to the embedding $k_1:N_1 \prec N_0$. By \textbf{Lemma \ref{lem:liftupfull}} we have that $N_1$ is almost full. We also have that $\mathcal L_1 = \mathcal L(L_{\delta_{N_1}}(N_1))$ is $\Sigma_1$ over parameters $$\text{$N_1$ and $\overline \theta, \overline{\D}, \overline{\U}, \overline c, \theta_1, \D_1, \U_1, c_1 \in N_1$}$$ while $\mathcal L_0=\mathcal L(L_{\delta_{N_0}}(N_0))$ is $\Sigma_1$ over parameters 
	$$\text{$N_0$ and $k_1(\overline \theta, \overline{\D}, \overline{\U}, \overline c, \theta_1, \D_1, \U_1, c_1) \in N_0$.}$$ 
Furthermore $k_1$ is cofinal, since for each element $x \in N_0$, as $\sigma_0$ is cofinal, there is $u \in \N$ such that $x \in \sigma_0(u)$. Thus $\sigma_1(u) \in N_1$, and moreover $x \in k_1(\sigma_1(u))=\sigma_0(u)$. Therefore, we have that since $\mathcal L_1$ is consistent, $\mathcal L_0$ is consistent as desired. This completes the proof of \textit{Claim} 2. \end{proof}

From the consistency of $\mathcal L_0$, we would now like to use Barwise Completeness (\textit{Fact} \ref{fact:completeness}) to obtain a model of $\mathcal L_0$. To do this, we need the admissible structure the theory is defined over to be countable. So let's work in $V[F]$, a generic extension that collapses $L_{\delta_{N_0}}(N_0)$ to be countable. Then by Barwise Completeness, $\mathcal L_0$ has a solid model 
	$$\mathfrak A = \langle \mathfrak A; \mathring{S}^{\mathfrak A}, \mathring{\sigma}^{\mathfrak A} \rangle$$ such that $$\Ord \cap \wfc(\mathfrak A) = \Ord \cap L_{\delta_{N_0}}(N_0).$$ 
Thus we have that $\mathring \sigma^{\mathfrak A}: \overline{\underline N}^{\mathfrak A} \prec \underline{N_0}^{\mathfrak A}$. By the \textsf{Basic Axioms} we have that $\overline{\underline N}^{\mathfrak A}=\N$ and $N_0=\underline{N_0}^{\mathfrak A}$. Thus we may say that  $\mathring \sigma^{\mathfrak A}: \N \prec N_0$. 

Let $S = \mathring{S}^{\mathfrak A}$ and $\overset{*} {\sigma}=k_0 \circ \mathring{\sigma}^{\mathfrak A}$. 

Then $S$ is a $\D_0$-generic sequence over $N_0$, and as $k_0:N_0 \cong C$ we also have that $k_0``S$ is $C$-generic for $\D$. We need to see that $S$ is $\D$-generic over $V$. To do this, let $\mathcal X = \seq{ X_\kappa \in U_1(\kappa) }{ \kappa \in D }$ be a sequence of measure-one sets in the sequence of measures $\U$. We will verify the generalized diagonal Prikry genericity criterion. To do this, let $E \subseteq \D$ be dense and have size $\delta$ with $E \in C$. Since $\delta \subseteq C$, we have that $E \subseteq C$ as well. Find a condition $(s,A) \in E$ that strengthens $(\emptyset, \mathcal X)$. Thus for $\kappa \in \dom A$, we have that $A(\kappa) \subseteq X_\kappa$. Define a sequence of measure-one sets $B$ in $C$ so that 
$$B(\kappa) = \begin{cases} A(\kappa) &\text{if} \ \kappa \in \dom A \\ \kappa &\text{if} \ \kappa \in \dom s. \end{cases}$$
So we have that $B$ is a sequence of measure-one sets in $C$. So $\set{ \kappa \in D }{S(\kappa) \notin B(\kappa) }$ is finite. Thus $\set{\kappa \in D}{ S(\kappa) \notin X_\kappa}$ is finite.

This will be the $\D$-generic sequence we need to satisfy our main claim. We will see in the following claim that $\overset{*} \sigma$ has all of the desired properties of our main claim, but it fails to be in $V[S]$, which is what we need. However, based on the following claim, $\overset{*}{\sigma}$ will at least enable us to see that our $\in$-theory $\mathcal T$ from long ago, defined to assist us in proving the main claim, is consistent over a suitable admissible structure.
\begin{claimno} The map $\overset{*}{\sigma}$ satisfies:
\begin{enumerate}
	\item $\overset{*} {\sigma}: \N \prec N$
	\item $\overset{*} {\sigma}(\overline \theta, \overline{\D}, \overline{\U}, \overline c)=\theta, \D, \U, c$
	\item $\sk{N}{\delta}{\overset{*}{\sigma}} = C$	
	\item $\overset{*} {\sigma} `` \S \subseteq S$
\end{enumerate}
\end{claimno}
\begin{proof}[Pf.]
For item \textsl{\textbf{1}}, we have already seen above that $\mathring \sigma^{\mathfrak A}: \N \prec N_0$. Since $k_0:N_0 \prec N$, the desired result follows.

For item \textsl{\textbf{2}}, $\overset{*} {\sigma}(\overline \theta, \overline{\D}, \overline{\U}, \overline c)= k_0(\theta_0, \D_0, \U_0, c_0) =\theta, \D, \U, c$.

Item \textsl{\textbf{3}} holds since $N_0 = \sk{N_0}{\delta}{\mathring \sigma^{\mathfrak A}}$. To see this, clearly we have that $\sk{N_0}{\delta}{\mathring \sigma^{\mathfrak A}} \subseteq N_0$, since $\delta \in N_0$ as $N_0 \cong C$, and certainly $\ran(\mathring \sigma^{\mathfrak A})) \subseteq N_0$ as well. Then because $\mathring \sigma^{\mathfrak A}$ is $\overline \kappa'$-cofinal, by \textbf{Lemma \ref{lem:liftupchar}}, we have, since $\mathring \sigma^{\mathfrak A}(\overline \kappa') < \delta$, that:
$$N_0 = \set{\mathring \sigma^{\mathfrak A}(f)(\beta)}{ f: \gamma \longrightarrow N_0, \ \gamma < \overline \kappa(0) \text{ and } \beta < \mathring \sigma^{\mathfrak A}(\gamma) } \subseteq \sk{N}{\delta}{\mathring \sigma^{\mathfrak A}}.$$
Thus $C = k_0``N_0=\sk{N_0}{\delta}{k_0 \circ \mathring \sigma^{\mathfrak A}}$ as desired. 

To see \textsl{\textbf{4}}, note that $\overset{*} {\sigma} \rest \overline \kappa(0) = \mathring{\sigma}^{\mathfrak A} \rest \overline \kappa(0)$ since $k_0 \rest \nu_0 = \id$.

This completes the proof of \textit{Claim} 3.
\end{proof}

We are almost done, but like we stated above, $\mathring{\sigma}^{\mathfrak A}$ is in $V[F]$, the generic extension needed to obtain a countable admissible structure to apply Barwise Completeness to. But $V[F]$ is not in $V[S]$, and so $\overset{*} {\sigma}$ is not necessarily in $V[S]$. We will use Barwise Completeness one last time, to finally find an embedding $\sigma'$ with which to satisfy the main claim along with the $S$ we found above.

Let $\lambda$ be regular in $V[S]$ with $N \in H_\lambda^{V[S]}$. Then 
	$$M = \langle H^{V[S]}_\lambda;\ N, \sigma, S;\ \theta, \delta, \D, \U, c \rangle \text{ is admissible.}$$

In order to satisfy our main claim, we need a model of $\mathcal T(M)$ in $V[S]$. By \textit{Claim} 3, we have that $\langle M , \overset{*} {\sigma}\rangle$ is a model of $\mathcal T(M)$, but in $V[F]$, which is not $V[S]$. Still this means that by Barwise Correctness, $\mathcal T(M)$ is consistent.
 
Consider the Mostowski collapse of $M$; let 
	$$\pi: \tilde M \prec M \text{ where $\tilde M$ is countable and transitive.}$$
Note that $\tilde M \in H_{\omega_1}^{V[S]}=H_{\omega_1}^V$ since it is countable and diagonal Prikry forcing doesn't add bounded subsets to any $\kappa \in D$.\footnote{As Fuchs \cite{Fuchs:2005kx} points out, this result is a modification to the proof that generalized diagonal Prikry forcing preserves cardinalities.} We also have that $\overline{\underline N}^{\tilde{\mathfrak A}} = \N$, since $M$ sees that $\N$ is countable so $\tilde{M}$ sees that $\pi^{-1}(\N)$ is, and it follows that $\pi^{-1}(\N)=\N$.

Plus, $\mathcal T(\tilde M)$ is consistent, since otherwise its inconsistency could be pushed up via $\pi$ to one in $\mathcal T(M)$, contradicting the model witnessing its consistency that we found in $V[F]$. 

So by Barwise Completeness, $\mathcal T(\tilde M)$ has a solid model $$\tilde{\mathfrak A} = \langle \tilde{\mathfrak A}; \dot{\sigma}^{\tilde{\mathfrak A}} \rangle$$ such that $$\text{Ord} \cap \text{wfc}(\tilde{\mathfrak A}) = \Ord \cap \tilde M.$$ 

Letting $\sigma'=\pi \circ \dot{\sigma}^{\tilde{\mathfrak A}}$, the main claim is now satisfied with $\sigma'$ and our $\lambda$-diagonal Prikry sequence $S$. 

Let us verify each of the properties of $\sigma'$ required by the main claim. The verification of these properties shall use the agreement between $\tilde{\mathfrak A}$ and $\tilde M$ on the special constants of $\tilde M$ and on the ordinals. The fact that $\pi$ does not affect $\N$ will be greatly taken advantage of.

First we show \textsl{\textbf{1}} of the main claim. Let's say that $\varphi[\sigma'(\overline a)]$ holds in $N$. So $\varphi[\pi(\dot \sigma^{\tilde{\mathfrak A}}(\overline a))]^N$ holds in $M$. Thus $\varphi[\dot \sigma^{\tilde{\mathfrak A}}(\overline a)]^{\pi^{-1}(N)}$ holds in $\tilde M$, and thus also in $\tilde{\mathfrak A}$. Indeed we know that $\underline{\N}^{\tilde{\mathfrak A}} = \N$, since $\tilde{\mathfrak A}$ agrees with $\tilde M$ about countable ordinals, which we may use to code $\N$. This means that $\varphi[\overline a]$ holds in $\N$, as desired.

To see \textsl{\textbf{2}}, we have $\sigma'(\overline \theta, \overline{\D}, \overline{\U}, \overline c)= \pi(\underline{\theta}^{\tilde{\mathfrak A}}, \underline{\D}^{\tilde{\mathfrak A}}, \underline{\U}^{\tilde{\mathfrak A}}, \underline{c}^{\tilde{\mathfrak A}})=\theta, \D, \U, c$.

For item \textsl{\textbf{3}}, let $\tilde N = \underline N^{\tilde{\mathfrak A}}$, $\tilde \sigma = \underline \sigma^{\tilde{\mathfrak A}}$ and $\tilde \delta = \underline{\delta}^{\tilde{\mathfrak A}}$. So $\pi(\tilde \delta)= \delta$ and $\pi(\tilde \sigma)=\sigma$. Because of the way the $\in$-theory $\mathcal T$ was defined, we already have: 
\begin{equation}\label{eqn:SkDotSigma=SkTildeSigma} \sk{\tilde N}{\tilde \delta}{\dot \sigma^{\tilde{\mathfrak A}}} = \sk{\tilde N}{\tilde \delta}{\tilde \sigma}. \end{equation}

To see $\sk{N}{\delta}{\sigma'} \subseteq \Sk{N}{\delta}{X}$, suppose $x \in \sk{N}{\delta}{\sigma'}$. Then we have that $N$ sees that there is some formula $\varphi$, $\overline z \in \N$, and $\xi < \delta$ where $x$ is unique such that $\varphi(x, \pi(\dot \sigma^{\tilde{\mathfrak A}}(\overline z)), \xi)$. In particular, $x$ is in the range of $\pi$. Thus $\tilde x = \pi^{-1}(x) \in \tilde N$ and $\tilde \xi < \tilde \delta$ such that $\varphi(\tilde x, \dot \sigma^{\tilde{\mathfrak A}}(\overline z), \tilde \xi)$ holds. Thus by (\ref{eqn:SkDotSigma=SkTildeSigma}), we have that $\tilde x$ is unique such that 
$\varphi(\tilde x, \tilde \sigma(\overline y), \tilde \zeta)$ for some $\overline y \in \N$ and $\tilde \zeta < \tilde \delta$. Thus pushing back up through $\pi$, letting $\zeta = \pi(\tilde \zeta)$, we have that $x = \pi(\tilde x)$ is unique such that $\varphi(x, \sigma(\overline y), \zeta)$ holds, so $x \in \Sk{N}{\delta}{X}$.

To see that $\Sk{N}{\delta}{X} \subseteq \sk{N}{\delta}{\sigma'}$ works similarly. Let $x \in \Sk{N}{\delta}{X}$. Then there is $\overline z \in \N$ and $\xi < \delta$ such that $\varphi(x, \sigma(\overline z), \xi)$ holds. So in particular, $x$ is in the domain of $\pi$. So we may find $\tilde \xi < \tilde \delta$ such that $\tilde x = \pi^{-1}(x)$ is unique such that $\varphi(\tilde x, \tilde \sigma(\overline z), \tilde \xi)$. So by (\ref{eqn:SkDotSigma=SkTildeSigma}), we have that there is $\overline y \in \N$ and $\tilde \zeta < \tilde \delta$ such that $\tilde x$ is unique satisfying $\varphi(\tilde x, \dot{\sigma}^{\mathfrak A}(\overline y), \tilde \zeta)$. Finally, by pushing back up through $\pi$, letting $\pi(\tilde \zeta) = \zeta$, we have that $x = \pi(\tilde x)$ is unique such that $\varphi(x, \sigma'(\overline y), \zeta)$, so $x \in \sk{N}{\delta}{\sigma'}$ as desired.

%\begin{eqnarray*}
%	x \in \sk{N}{\delta}{\sigma'} 
%	&\iff& N \models \left( x = \text{that } y \text{ such that } \varphi(y, \pi(\dot \sigma^{\tilde{\mathfrak A}}(\overline z)), \xi) \right) \text{ where } \overline z \in \overline N, \xi<\delta \\
%	&\iff& \tilde N \models \left( \pi^{-1}(x) = \text{that } y \text{ such that } \varphi(y, \dot \sigma^{\tilde{\mathfrak A}}(\overline z), \tilde \xi) \right) \text{ where } \overline z \in \overline N, \tilde \xi< \tilde \delta \\
%	&\iff& \tilde N \models \left( \pi^{-1}(x) = \text{that } y \text{ such that } \varphi(y, \tilde \sigma(\overline z), \tilde \xi) \right) \text{ where } \overline z \in \overline N, \tilde \xi<\tilde \delta\\
%	&\iff& N \models \left( x = \text{that } y \text{ such that } \varphi(y, \sigma(\overline z), \xi) \right) \text{ where } \overline z \in \overline N, \xi<\delta\\
%	&\iff& x \in \Sk{N}{\delta}{X}.
%\end{eqnarray*}

To see item \textsl{\textbf{4}}, note that $\overline{\underline S}^{\tilde{\mathfrak A}} = \overline S$ since $\overline S \subseteq \overline N$. So we have already by the definition of $\mathcal T$  that $\dot{\sigma}^{\tilde{\mathfrak A}}``\overline S \subseteq \underline S^{\tilde{\mathfrak A}}$. Thus $\pi \circ \dot{\sigma}^{\tilde{\mathfrak A}}``\overline S \subseteq \pi `` \underline S^{\tilde{\mathfrak A}} \subseteq S$ as desired. %Thus $\sigma'``\overline S = \langle \sigma'(\overline \xi_i) \;|\; i<\overline \lambda \rangle = \langle \pi (\dot{\sigma}^{\tilde{\mathfrak A}}(\overline \xi_i)) \;|\; i<\overline \lambda \rangle$. 

This completes the proof of the main claim.
\end{proof}
We have satisfied the main claim, so we are done, we have shown that $\D$ is subcomplete.
\end{proof}

Some slight modifications to the above proof give the following two corollaries.

The first point is that the above proof also shows that generalized diagonal Prikry forcing that adds a countable sequence to each measurable cardinal is subcomplete. Before stating the corolloary let's define the forcing. Again let $D$ be an infinite discrete set of measurable cardinals. Let $\U = \seq{ U(\kappa) }{ \kappa \in D }$ be a list of measures associated to $D$. 
Let $\D^*(\U) = \D^*$ be defined the same as $\D(\U)$ except the stem of a condition, $s$, in $\D^*(\U)$ is a function with domain in $[D]^{<\omega}$ taking each measurable cardinal $\kappa \in \dom(s)$ to finitely many ordinals $s(\kappa) \subseteq \kappa$. The upper part and extension relation is defined in the same way; the only slight modification is that again we have $(s, A) \leq (t, B)$ so long as points in $s$ not in $t$ come from $B$, which in the case means that for $\kappa \in \dom(s)$ we have that each element of $s(\kappa)$ not in $t(\kappa)$ is in $B(\kappa)$. We may again form a $\D^*$-generic sequence $S = S_G$ for a generic $G \subseteq \D^*$, and we may write $S = \seq{ S(\kappa) }{ \kappa \in D }$ where $S(\kappa)$ is a countable sequence of ordinals less than $\kappa$. The genericity criterion for generic diagonal Prikry sequences is as that for $\D$, which is given in  \cite[Theorem 1]{Fuchs:2005kx}, as stated in \textbf{\emph{Fact} \ref{fact:diagprikrymathias}}, with the modification that $S$ is $\D^*$ generic if and only if for all $\mathcal X$, the set $\set{ \alpha }{ \exists \kappa \in D \ \alpha \in S(\kappa) \setminus X_\kappa }$ is finite.

\begin{cor} 
Let $D$ be an infinite discrete set of measurable cardinals. Let $\U = \seq{ U(\kappa) }{ \kappa \in D }$ be a list of measures associated to $D$. Then $\D^*(\U)$ is subcomplete.
\end{cor}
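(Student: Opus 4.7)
The plan is to mimic the proof of the previous theorem almost verbatim, making only those adjustments that are forced by the change from finite stems with a single value below each measurable to finite-domain stems whose values are finite subsets of the corresponding measurable. First I would verify that $\delta(\D^*) \geq \sup D$ by the same argument as before: if some dense set $E$ had size less than $\kappa^* \in D$, then the finite restrictions $s(\kappa^*)$ would be bounded in $\kappa^*$, and I could build a condition incompatible with all of $E$. Then I would set up the standard situation with $H_\theta \subseteq N = L_\tau[A]$, $\sigma:\N \prec N$ full and countable with $\sigma(\overline{\D}^*, \overline \U, \overline c)=\D^*, \U, c$, and an $\N$-generic $\G \subseteq \overline{\D}^*$ with associated sequence $\overline S = \langle \overline S(\overline\kappa) \st \overline\kappa \in \overline D\rangle$ where each $\overline S(\overline\kappa)$ is now a countable set of ordinals below $\overline\kappa$.

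Next I would form the Skolem hull $C = \Sk{N}{\delta}{X}$ and take the transitive liftup $\langle N_0, \sigma_0 \rangle$ of $\langle \N, \sigma \rest H^\N_{\overline\nu}\rangle$ via $k_0:N_0 \cong C$, exactly as before. To verify that the analogous $\in$-theory $\mathcal L_0$ is consistent I would again pass through the auxiliary transitive liftup $\langle N_1, \sigma_1\rangle$ of $\langle \N, \sigma \rest H^\N_{\overline{\kappa}(0)}\rangle$, force with $\D_1^* = \sigma_1(\overline{\D}^*)$ to obtain a $\D_1^*$-generic sequence $S_1'$ over $N_1$, and splice to define
\[ S_1(\kappa) = \begin{cases} S_1'(\kappa) & \text{if } \kappa \in D_1 \setminus \sigma_1``\overline D \\ \sigma_1``\overline S(\overline\kappa) & \text{if } \kappa = \sigma_1(\overline\kappa) \in \sigma_1``\overline D. \end{cases} \]
The critical adjustment is the verification of the Mathias-style genericity criterion for $\D^*$: given a measure-one sequence $\mathcal X \in N_1$, I use $\overline{\kappa}(0)$-cofinality of $\sigma_1$ to cover $\mathcal X$ by a $\sigma_1$-image of some $w \in \N$ of $\N$-cardinality less than $\overline{\kappa}(0)$, intersect the associated measure-one sets using $\kappa$-completeness of each $U_1(\kappa)$, and pull back through $\sigma_1$. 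The only difference from the proof in the excerpt is that the relevant finite set is $\{\alpha \st \exists \kappa \in D \ \alpha \in S_1(\kappa) \setminus X_\kappa\}$ instead of the set of $\kappa$'s where $S_1'(\kappa) \notin X_\kappa$; finiteness still follows because $\overline S$ satisfies the criterion for $\overline{\D}^*$ by assumption, and spliced-in values already lie in the intersected measure-one sets.

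Once $\mathcal L_1$ has a model, Transfer along the cofinal embedding $k_1 : N_1 \prec N_0$ (noting that $N_1$ is almost full by Lemma \ref{lem:liftupfull}) gives consistency of $\mathcal L_0$. I would then collapse $L_{\delta_{N_0}}(N_0)$ to be countable, apply Barwise Completeness to extract a solid model $\mathfrak A$ providing an embedding $\mathring{\sigma}^{\mathfrak A}: \N \prec N_0$ and a $\D_0^*$-generic sequence $S$, and push $S$ up through $k_0$; the verification that $S$ is in fact $\D^*$-generic over $V$ (not just over $C$) uses that every dense $E \subseteq \D^*$ of size $\delta$ lies in $C$ by the same argument as in the main theorem. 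Finally I would repeat verbatim the Barwise-completeness step on the admissible structure $M = \langle H_\lambda^{V[S]}; N, \sigma, S; \theta, \delta, \D^*, \U, c\rangle$ and its countable collapse $\tilde M$ to produce the required $\sigma' = \pi \circ \dot{\sigma}^{\tilde{\mathfrak A}} \in V[S]$ satisfying properties \textbf{1}--\textbf{4} of the main claim. The only place where I expect to pause is the genericity-criterion verification for the spliced sequence $S_1$: I want to be careful that the finiteness of the exceptional set for $\overline S$ translates cleanly under $\sigma_1$ to finiteness of the exceptional set of ordinals (not cardinals) for $S_1$, but since $\sigma_1$ restricted to $H_{\overline{\kappa}(0)}^\N$ is just $\sigma$, each $\sigma_1``\overline S(\overline\kappa)$ is the image of a countable set and the counting argument goes through.
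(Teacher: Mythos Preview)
Your proposal is correct and follows essentially the same approach as the paper: the paper's proof sketch likewise states that the modifications are ``mostly notational'' and focuses precisely on adjusting the genericity-criterion verification inside Claim 2, replacing the finite set of exceptional $\kappa$'s by the finite set $\set{\alpha}{\exists \kappa \in D\ \alpha \in S_1(\kappa) \setminus X_\kappa}$, exactly as you do. Your use of $\sigma_1``\overline S(\overline\kappa)$ for the spliced value is arguably more careful than the paper's $\sigma_1(\overline S(\overline\kappa))$, since $\overline S(\overline\kappa) \notin \N$, but the intended meaning and the argument are the same.
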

\begin{proof}[Proof Sketch.]
The modifications are mostly notational, and the main one that needs to be made is to adjust the proof of the \textit{Claim} within the proof of \textit{Claim} 2. Here we have $\D^*_1$, the generalized diagonal Prikry forcing as computed in $N_1$, as well as $\overline{\D}$ of $\N$, and $S_1$, which we would like to show is a $\D_1^*$-generic sequence over $N_1$ in this case. $S_1$ is defined as $\sigma_1 ``\overline S$, using a diagonal Prikry sequence $S_1'$ to fill in the missing coordinates, where $S_1'$ is obtained by forcing with $\D_1$ over $V$.

We will show that $S_1$ satisfies the generalized diagonal Prikry genericity criterion over $N_1$ and follow the above proof. To do this, let $\mathcal X = \seq{ X_\kappa \in U_1(\kappa) }{ \kappa \in D_1 }$, with $\mathcal X \in N_1$, be a sequence of measure-one sets in the sequence of measures $\U_1$.

Note first that $S_1'$ is a generic sequence, it already satisfies the generalized diagonal Prikry genericity criterion, namely:
%%%%%$$\set{ \kappa \in D_1}{ S_1'(\kappa) \notin X_\kappa } \text{ is finite.}$$
$\set{ \alpha }{ \exists \kappa \in D_1 \ \alpha \in S_1'(\kappa) \setminus X_\kappa }$ is finite.
Recall that $\S = \seq{ \S(\overline \kappa) }{ \overline \kappa \in \overline D }$ is a $\overline{\D}$-generic sequence over $\N$ as well.
We need to see that in addition, $\set{ \alpha }{ \exists \overline \kappa \in \overline D \ \alpha \in \sigma_1(\S(\overline \kappa)) \setminus X_{\sigma_1(\overline \kappa)} }$ is finite.

By the $\overline{\kappa}(0)$-cofinality of $\sigma_1$, there is some $w \in \N$ such that $\mathcal X \in \sigma_1(w)$, where $|w| < \overline{\kappa}(0)$ in $\N$. Thus in $N_1$, $|\sigma_1(w)| < \kappa_1(0)$. 
For each $\kappa \in \sigma_1``\overline D$, we have that $X_\kappa \in \sigma_1(w)_\kappa = \set{\sigma_1(f)(\overline \kappa) }{ f \in \prod_{\overline \kappa \in \overline D} \overline U(\overline \kappa) \ \land \ f \in w }$ and also $|\sigma_1(w)_\kappa|<\kappa_1(0).$ All $\kappa \in \sigma_1``\overline D$ of course satisfy $\kappa \geq \kappa_1(0)$ so by the $\kappa$-completeness of $U_1(\kappa)$, we have that $W_\kappa := \cap \, \sigma_1(w)_\kappa \in U_1(\kappa)$ since $X_\kappa \in \sigma_1(w)_\kappa$.
So we have established that $\mathcal W$, the sequence of $W_\kappa$ for $\kappa \geq \kappa_1(0)$, is also a sequence of measure-one sets in $N_1$. Note in addition that for $\kappa \in \sigma_1``\overline D$, we have that $W_\kappa \subseteq X_\kappa$. 

By elementarity, for each $\overline \kappa \in \overline D$, we have $\overline W_{\overline \kappa} = \cap \set{f(\overline \kappa) }{ f \in \prod_{\overline \kappa \in \overline D} \overline U(\overline \kappa) \ \land \ f \in w }$ is a measure-one set in $\overline U(\overline \kappa)$ and we also have that $\sigma_1(\overline W_{\overline \kappa}) = W_{\sigma_1(\overline \kappa)}$. Moreover, 
$$\set{ \alpha }{ \exists \overline \kappa \in \D \ \alpha \in \overline S(\overline \kappa) \setminus \overline W_{\overline \kappa} } \text{ is finite}$$ by the generalized diagonal Prikry genericity criterion for $\overline{\D}$, which must be satisfied by $\overline S$.
Thus by elementarity,
$$\set{ \alpha }{ \exists \overline \kappa \in \D \ \sigma_1(\S(\overline \kappa)) \setminus W_{\sigma_1(\overline \kappa)} } \supseteq \set{ \alpha }{ \exists \overline \kappa \in \D \ \sigma_1(\S(\overline \kappa)) \setminus X_{\sigma_1(\overline \kappa)}} \text{ is finite,}$$
as is desired, completing the proof of the claim. 
%Any other modifications are similarly notational, as verifying the genericity criterion is basically the same for $\D^*$.
\end{proof}

One might consider a forcing like $\D$ and $\D^*$ that adds one point below each measurable cardinal sometimes, and other times adds a cofinal $\omega$-sequence below the measurable cardinal. This forcing is clearly subcomplete as well.

Below we refer to the concept of subcompleteness above $\mu$, which was introduced in \ref{subsec:LevelsofSubcompleteness}.

\begin{cor} Let $D$ be an infinite discrete set of measurable cardinals. Let $\U = \seq{ U(\kappa) }{ \kappa \in D }$ be a list of measures associated to $D$. 

Furthermore, let $\mu < \lambda$ be a regular cardinal, where $\lambda = \sup_{n<\omega} \kappa_n$, the first limit point of $D$.
Then $\D=\D(\U)$ is subcomplete above $\mu$. \end{cor}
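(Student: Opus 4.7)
The plan is to run the previous theorem's proof with one structural modification: replace the liftup taken at level $\overline{\kappa}(0)$ by one taken at a higher regular cardinal that sits above $\overline{\mu}$. Since $\mu < \lambda = \sup_{n<\omega} \kappa_n$, elementarity gives $\overline{\mu} < \overline{\lambda} = \sup_{n<\omega} \overline{\kappa}(n)$, so I would pick the least $n < \omega$ with $\overline{\kappa}(n) > \overline{\mu}$. Because $\overline{\kappa}(n)$ is measurable (hence regular) in $\N$, the transitive liftup $\langle N_1, \sigma_1\rangle$ of $\langle \N, \sigma\rest H^{\N}_{\overline{\kappa}(n)}\rangle$ exists by Interpolation, and by construction $\sigma_1\rest H^{\N}_{\overline{\kappa}(n)} = \sigma \rest H^{\N}_{\overline{\kappa}(n)}$, which in particular forces $\sigma_1\rest H^{\N}_{\overline{\mu}} = \sigma \rest H^{\N}_{\overline{\mu}}$ --- the new condition demanded by subcompleteness above $\mu$.

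With this shift, I would modify both the intermediate $\in$-theory $\mathcal{L}(N_*,\sigma_*)$ and the final $\in$-theory $\mathcal{T}(\mathfrak{M})$ by replacing the cofinality clause ``$\mathring{\sigma}$ is $\underline{\overline{\kappa}(0)}$-cofinal'' with ``$\mathring{\sigma}$ is $\underline{\overline{\kappa}(n)}$-cofinal,'' and by adding the clause $\mathring{\sigma}\rest H^{\N}_{\overline{\mu}} = \underline{\sigma}\rest H^{\N}_{\overline{\mu}}$. The first $n$ coordinates of any candidate $\D_1$-generic $S_1$ extending $\sigma_1``\overline{S}$ are now pinned down: at $\kappa = \sigma_1(\overline{\kappa}(i)) = \sigma(\overline{\kappa}(i))$ for $i < n$, $S_1$ must take the fixed value $\sigma(\overline{S}(\overline{\kappa}(i)))$, while the values at the tail measurables $\kappa \geq \kappa_1(n) = \sigma_1(\overline{\kappa}(n))$ come from a generic sequence $S_1'$ obtained from $\D_1$ over $V$.

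The key genericity verification follows the pattern of the previous theorem, only one notch higher. Given $\mathcal{X} = \seq{X_\kappa}{\kappa \in D_1} \in N_1$, the $\overline{\kappa}(n)$-cofinality of $\sigma_1$ yields $w \in \N$ with $\mathcal{X} \in \sigma_1(w)$ and $|w| < \overline{\kappa}(n)$, so $|\sigma_1(w)_\kappa| < \kappa_1(n)$ in $N_1$; for each $\kappa \geq \kappa_1(n)$ in $\sigma_1``\overline{D}$, the $\kappa$-completeness of $U_1(\kappa)$ (which at least gives $\kappa_1(n)$-completeness) then lets me intersect to a measure-one $W_\kappa \subseteq X_\kappa$, and the Mathias-style criterion for $\overline{S}$ over $\N$ transfers to give that $S_1$ escapes $\mathcal{X}$ only finitely often. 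The finitely many already-fixed early coordinates contribute only finite discrepancy, so the criterion of \emph{Fact}~\ref{fact:diagprikrymathias} is satisfied. From here, Barwise Correctness gives consistency of $\mathcal{L}(N_1,\sigma_1)$, Transfer yields consistency of $\mathcal{L}(N_0,\sigma_0)$, and then the admissible structure $M \in V[S]$ and its countable Mostowski collapse $\tilde M$ produce via Barwise Completeness the desired $\sigma' = \pi \circ \dot{\sigma}^{\tilde{\mathfrak{A}}}$.

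The verification of properties \textsl{\textbf{1}}--\textsl{\textbf{4}} of subcompleteness for $\sigma'$ is exactly as in the original proof. The new clause \textsl{\textbf{5}}, $\sigma'\rest H^{\N}_{\overline{\mu}} = \sigma\rest H^{\N}_{\overline{\mu}}$, is the main point where care is required: it will follow because (i) the axiom was included in $\mathcal{T}(\tilde M)$ and is witnessed by $\dot{\sigma}^{\tilde{\mathfrak{A}}}$ on the constants $\underline{x}$ for $x \in H^{\N}_{\overline{\mu}}$, (ii) by the \textsf{Basic Axioms} each such constant is interpreted faithfully in $\tilde{\mathfrak{A}}$, and (iii) the countable transitive structure $H^{\N}_{\overline{\mu}}$ sits inside $\tilde M$ in such a way that $\pi$ fixes it pointwise (since $\pi$ collapses only the ambient hull, not its transitive members). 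The principal obstacle throughout is really the bookkeeping around the distinction between $\overline{\kappa}(n)$-cofinality (needed so completeness of the measures still bites) and pointwise agreement on $H^{\N}_{\overline{\mu}}$ (needed to satisfy clause \textsl{\textbf{5}}); once the liftup is taken at $\overline{\kappa}(n)$ rather than $\overline{\kappa}(0)$, these two requirements become compatible and the remainder of the Barwise-theoretic machinery runs without further change.
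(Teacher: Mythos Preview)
Your proposal is correct and follows essentially the same approach as the paper's proof sketch: take the liftup at some $\overline{\kappa}' = \overline{\kappa}(n)$ above $\overline{\mu}$ (so that only finitely many measurables of $\overline{D}$ lie below it), add the agreement clause $\mathring{\sigma}\rest\overline{\mu} = \underline{\sigma}\rest\overline{\mu}$ to the two $\in$-theories, and note that the finitely many ``low'' coordinates cause only finite discrepancy in the Mathias-style genericity check. One minor inaccuracy: $\mathcal{T}(\mathfrak{M})$ has no cofinality clause to replace (only $\mathcal{L}$ does), but this does not affect the argument.
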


\begin{proof}[Proof Sketch] The idea is to follow the same exact proof as in the above theorem, except we achieve the following diagram: 

\begin{center}
\begin{tikzpicture}
\draw (-3,-1)--(-3,3);
\node [below] at (-3,-1) {$\N$};
\node at (-3,2.95) {$\frown$};

\node [left] at (-3,-0.4) {$\overline \mu$};
\node at (-3,-0.4) {-};
\node [left] at (-3,0) {$\overline \lambda$};
\node at (-3,0) {-};
\node [left] at (-3,0.4) {$\overline \kappa' \in \overline D$};
\node at (-3,0.4) {-};
\node [left] at (-3,2) {$\overline \delta$};
\node at (-3,2) {-};
\node [left] at (-3,2.5) {$\overline \nu$};
\node at (-3,2.5) {-};

\draw [->] (-2.8,0.4)--(1.3,2);

\draw (0,-1)--(0,5.3);
\node [below] at (0,-1) {$N_1$};
\node at (0,5.25) {$\frown$};
\node at (0,1.5) {$\bullet$};
\node [above left] at (0,1.5) {$\kappa_1' \in D_1$};

\draw [->] (-2.8,2)--(1.3,3.8);

\draw (1.5,-1) -- (1.5,5.7);
\node [below] at (1.5,-1) {$N_0$};
\node at (1.5,5.65) {$\frown$};
\node at (1.5,2) {-};
\node at (1.5,3.8) {-};
\node [above left] at (1.5,4.4) {$\nu_0$};
\node at (1.5,4.4) {$\bullet$};

\draw [->] (1.7,2)--(2.8,2);
\draw [->] (1.7,3.8)--(2.8,3.8);

\draw [->] (-2.8,2.5)--(2.8,5);

\node [right] at (3,.9) {$\mu$};
\node at (3,.9) {-};
\node [right] at (3,1.4) {$\lambda$};
\node at (3,1.4) {-};
\node [right] at (3,2) {$\kappa' \in D$};
\node at (3,2) {-};
\node [right] at (3,3.8) {$\delta$};
\node at (3,3.8) {-};
\node [right] at (3,5) {$\nu$};
\node at (3,5) {-};

\draw (3,-1)--(3,6);
\node [below] at (3,-1) {$N$};
\node at (3,5.95) {$\frown$};

\draw [->, thick] (-2.7,-1.3)--(-0.3,-1.3);
\draw [->, thick] (0.3, -1.3)--(1.2,-1.3);
\draw [->, thick] (1.8, -1.3)--(2.7,-1.3);
\draw[->, thick] (-2.8,-1.4) to  [out=-30, in=-145] node[above]{$\sigma_0$} (1.3,-1.5);
\draw[->,thick] (-2.9,-1.5) to [out = -50, in =-140] node[above]{$\sigma$} (2.9, -1.5);

\node [above] at (-1.5,-1.3) {$\sigma_1$};
\node [above] at (0.75,-1.3) {$k_1$};
\node [above] at (2.25, -1.3) {$k_0$};
\end{tikzpicture}
\end{center} 

Here we replace $\kappa(0)$ with some $\kappa' \in D$ such that $\lambda < \kappa' $, where there are finitely many measurables of $D$ below $\kappa'$. So in particular, we let $\langle N_1, \sigma_1 \rangle$ be the liftup of $\langle \N, \sigma \rest H_{\overline{\kappa'}}^\N\rangle$ in \textit{Claim} 2. In order to show the \textit{Claim} that we have a generic sequence over $\D_1$, we follow the same argument as follows:

Let $\mathcal X = \seq{ X_\kappa \in U_1(\kappa) }{ \kappa \in \sigma_1``\overline D }$, with $\mathcal X \in N_1$, be a sequence of measure one sets in the sequence of measures $\U_1$ with only coordinates coming from $\sigma_1``\overline D$.
We need to see that 
$$\set{ \overline \kappa \in \overline D}{\sigma_1(\S(\overline \kappa)) \notin X_{\sigma_1(\overline \kappa)} }  \text{ is finite.}$$
By the $\overline{\kappa}'$-cofinality of $\sigma_1$, there is some $w \in \N$ such that $\mathcal X \in \sigma_1(w)$, where $|w| < \overline{\kappa}'$ in $\N$. Thus in $N_1$, $|\sigma_1(w)| < \kappa_1'$. For each $\kappa \in \sigma_1``\overline D$, we have that $X_\kappa \in \sigma_1(w)_\kappa = \set{\sigma_1(f)(\overline \kappa) }{ f \in \prod_{\overline \kappa \in \overline D} \overline U(\overline \kappa) \ \land \ f \in w }$ and also $$|\sigma_1(w)|<\kappa_1'.$$ So for all but finitely many $\kappa \in \sigma_1``\overline D$, namely for $\kappa \geq \kappa_1'$, by the $\kappa$-completeness of $U_1(\kappa)$, we have that $$\cap \, \sigma_1(w)=W_\kappa \in U_1(\kappa).$$

So we have established that $\mathcal W$, the sequence of $W_\kappa$ for $\kappa>\kappa_1'$, is also a sequence of measure-one sets in $N_1$. Note in addition that for $\kappa \in \sigma_1``\overline D$, $\kappa> \kappa_1'$, we have that $W_\kappa \subseteq X_\kappa$. 

By elementarity, for each $\overline \kappa \in \overline D$, where $\overline \kappa > \overline \kappa'$, we have $$\overline W_{\overline \kappa} =\cap \set{f(\overline \kappa) }{ f \in \textstyle\prod_{\overline \kappa \in \overline D} \overline U(\overline \kappa) \ \land \ f \in w }$$ is a measure-one set in $\overline U(\overline \kappa)$ and we also have that $\sigma_1(\overline W_{\overline \kappa}) = W_{\sigma_1(\overline \kappa)}$. Moreover, 
	$$\set{\overline \kappa \in \overline D}{\overline S(\overline \kappa) \notin \overline W_{\overline \kappa}} \text{ is finite}$$ by the generalized diagonal Prikry genericity criterion for $\overline{\D}$, which must be satisfied by $\overline S$, and since there are only finitely many measurables in $\overline D$ less than $\overline \kappa'$ in $\N$.
Thus by elementarity,
	$$\set{ \overline \kappa \in \overline D }{ \sigma_1(\overline S(\overline \kappa)) \notin W_{\sigma_1(\overline \kappa)} } \supseteq \set{ \overline \kappa \in \overline D }{ \sigma_1(\overline S(\overline \kappa) \notin X_{\sigma_1(\overline \kappa)}} \text{ is finite.}$$

Additionally the $\in$-theories $\mathcal L$ and $\mathcal T$ would have to be defined so as to include as an axiom that $\mathring \sigma \rest \overline{\underline{\mu}} = \underline{\sigma} \rest \overline{\underline \mu}$ and $\dot \sigma \rest \overline{\underline{\mu}} = \underline{\sigma} \rest \overline{\underline \mu}$ respectively. 
We would then need to show that $\overset{*}\sigma \rest \overline{\mu} = \sigma \rest \overline{\mu}$, but this would follow since $k_0$ is the identity on $\nu_0$. 
Furthermore, it would need to be shown that $\sigma' \rest \overline{\mu} = \sigma \rest \overline{\mu}$, but this would follow from the requirement that $\dot \sigma^{\tilde{\mathfrak A}} \rest \overline{\mu} = \underline{\sigma}^{\tilde{\mathfrak A}} \rest \overline{\mu}$, and since ordinals are computed properly by $\tilde{\mathfrak A}$. \end{proof}

% end matter
\singlespacing
\bibliographystyle{amsalpha}
\bibliography{../Thesis/BIB}

\end{document}